\documentclass[10pt,reqno]{amsart}
\usepackage{amsmath,amsthm,amssymb,mathrsfs,stmaryrd,color}
\usepackage[all]{xy}
\usepackage{url}
\usepackage{geometry}
\usepackage{yhmath}
\usepackage[utf8]{inputenc}
\usepackage[T1]{fontenc}

\usepackage{relsize}
\usepackage[bbgreekl]{mathbbol}
\usepackage{amsfonts}

\DeclareSymbolFontAlphabet{\mathbb}{AMSb}
\DeclareSymbolFontAlphabet{\mathbbl}{bbold}
\newcommand{\Prism}{\mathbbl{\Delta}}

\usepackage{enumitem}
\setlist[enumerate]{itemsep=2pt,parsep=2pt,before={\parskip=2pt}}

\usepackage[colorlinks=true,hyperindex, linkcolor=magenta, pagebackref=false, citecolor=cyan,pdfpagelabels]{hyperref}

\newcommand{\cosimp}[3]{\xymatrix@1{#1 \ar@<.4ex>[r] \ar@<-.4ex>[r] & {\ }#2 \ar@<0.8ex>[r] \ar[r] \ar@<-.8ex>[r] & {\ } #3 \ar@<1.2ex>[r] \ar@<.4ex>[r] \ar@<-.4ex>[r] \ar@<-1.2ex>[r] & \cdots }}

\newcommand{\adjunction}[4]{\xymatrix@1{#1{\ } \ar@<0.3ex>[r]^{ {\scriptstyle #2}} & {\ } #3 \ar@<0.3ex>[l]^{ {\scriptstyle #4}}}}

\usepackage{amsfonts}
\usepackage{amsmath}
\usepackage{amssymb}
\usepackage{amsthm}
\usepackage{mathrsfs}
\usepackage{graphicx}
\usepackage{tikz}
\usepackage{tikz-cd}
\usepackage{tabu}
\usepackage{hyperref}
\hypersetup
{
	colorlinks=true,
    linkcolor=red,
    filecolor=magenta,
    urlcolor=cyan,
    linkbordercolor=0 0 1,
    bookmarks=true,
}

\newcommand{\Ainf}[0]{\mathbb{A}_{\operatorname{inf}}}

\newcommand{\ad}[0]{\operatorname{ad}}

\newcommand{\aff}[0]{\mathbb{A}}

\newcommand{\arc}[0]{\operatorname{arc}}

\newcommand{\Betti}[0]{\operatorname{Betti}}

\newcommand{\bra}[0]{\langle}
\newcommand{\C}[0]{\mathcal{C}}

\newcommand{\cn}[0]{\mathbb{C}}

\newcommand{\cofib}[0]{\operatorname{cofib}}
\newcommand{\coker}[0]{\operatorname{coker}}

\newcommand{\D}[0]{\mathcal{D}}

\newcommand{\Div}[0]{\operatorname{Div}}

\newcommand{\dR}[0]{\operatorname{dR}}

\newcommand{\disk}[0]{\mathbb{D}}

\newcommand{\E}[0]{\mathcal{E}}

\newcommand{\End}[0]{\operatorname{End}}

\newcommand{\Ext}[0]{\operatorname{Ext}}
\newcommand{\Exp}[0]{\operatorname{Exp}}
\newcommand{\e}[0]{\exists\,}

\newcommand{\F}[0]{\mathcal{F}}
\newcommand{\FF}[0]{\operatorname{FF}}

\newcommand{\f}[0]{\forall\,}
\newcommand{\ff}[0]{\mathbb{F}}

\newcommand{\GL}[0]{\mathrm{GL}}

\newcommand{\Ga}[0]{\mathbb{G}_{a}}
\newcommand{\gff}[0]{\mathbb{G}_{m}}

\newcommand{\Hom}[0]{\operatorname{Hom}}

\newcommand{\I}[0]{\mathcal{I}}

\newcommand{\id}[0]{\operatorname{id}}

\newcommand{\im}[0]{\operatorname{im}}

\newcommand{\itg}[0]{\mathbb{Z}}

\newcommand{\K}[0]{\mathcal{K}}
\newcommand{\ket}[0]{\rangle}

\renewcommand{\L}[0]{\mathcal{L}}

\newcommand{\M}[0]{\mathcal{M}}

\newcommand{\N}[0]{\mathcal{N}}

\renewcommand{\O}[0]{\mathcal{O}}

\newcommand{\op}[0]{\operatorname{op}}

\newcommand{\perf}[0]{\operatorname{perf}}

\newcommand{\proj}[0]{\mathbb{P}}

\newcommand{\R}[0]{\mathcal{R}}

\newcommand{\rig}[0]{\operatorname{rig}}

\newcommand{\rn}[0]{\mathbb{R}}

\newcommand{\rt}[0]{\mathbb{Q}}

\newcommand{\Spa}[0]{\operatorname{Spa}}
\newcommand{\Spd}[0]{\operatorname{Spd}}
\newcommand{\Spf}[0]{\operatorname{Spf}}
\newcommand{\Spec}[0]{\operatorname{Spec}}

\newcommand{\st}[0]{\operatorname{st}}

\newcommand{\X}[0]{\mathfrak{X}_{\eta}}
\newcommand{\Xf}[0]{\mathfrak{X}}
\newcommand{\Y}[0]{\mathcal{Y}}

\newcommand{\cat}[1]{\textsf{\upshape{#1}}}

	\newtheorem{theorem}{Theorem}[subsection]
	\newtheorem*{theorem*}{Theorem}
	\newtheorem*{definition*}{Definition}
	\newtheorem{proposition}[theorem]{Proposition}
	\newtheorem{lemma}[theorem]{Lemma}
	\newtheorem{corollary}[theorem]{Corollary}
	\newtheorem{definition-theorem}[theorem]{Definition-Theorem}

	\theoremstyle{definition}
	\newtheorem{definition}[theorem]{Definition}
	
	\newtheorem{remark}[theorem]{Remark}
	
	\newtheorem{example}[theorem]{Example}

	\newtheorem{construction}[theorem]{Construction}

  \newtheorem{discussion}[theorem]{}

\newcommand{\red}[0]{\operatorname{red}}
\newcommand{\AnSpec}[0]{\operatorname{AnSpec}}
\newcommand{\an}[0]{\operatorname{an}}

\newcommand{\Nil}[0]{\operatorname{Nil}}

\newcommand{\qfd}[0]{\operatorname{qfd}}
\newcommand{\RIG}[0]{\widetilde{\operatorname{HK}}}
\newcommand{\arith}[0]{\operatorname{arith}}
\newcommand{\torus}[0]{\mathbb{T}}
\newcommand{\Be}[0]{\operatorname{Be}}

\newcommand{\HK}[0]{\operatorname{HK}}
\makeatletter
\newcommand*{\doublerightarrow}[2]{\mathrel{
  \settowidth{\@tempdima}{$\scriptstyle#1$}
  \settowidth{\@tempdimb}{$\scriptstyle#2$}
  \ifdim\@tempdimb>\@tempdima \@tempdima=\@tempdimb\fi
  \mathop{\vcenter{
    \offinterlineskip\ialign{\hbox to\dimexpr\@tempdima+1em{##}\cr
    \rightarrowfill\cr\noalign{\kern.5ex}
    \rightarrowfill\cr}}}\limits^{\!#1}_{\!#2}}}
\makeatother
\begin{document}
\title{The arithmetic de Rham stack}
\author{Junhui Qin}
\address{
  Institut de Recherche Mathématique Avancée, 7 rue René Descartes,
  67084 Strasbourg, France
}
\email{junhui.qin@unistra.fr}

\begin{abstract}
  We define and study the arithmetic de Rham stack $X^{\arith}$ of a scheme $X$ over a field of characteristic $p$,
  and analyze its relation with related stacks such as the Hyodo--Kato stack.
  We show that $X^{\arith}$ gives a stack-theoretic approach to rigid cohomology and its coefficients,
  known as overconvergent isocrystals and arithmetic $D$-modules,
  which avoids the long-standing problem of frame-choosing in earlier approaches.
  We finally give some arithmetic applications of our formalism,
  such as a proof of Berthelot's conjecture on the preservation of overconvergent isocrystals by smooth proper pushforward.
\end{abstract}
\maketitle
\begingroup
\setlength{\baselineskip}{0.9\baselineskip}
\tableofcontents
\endgroup
\section{Introduction}

Let $p$ be a prime number.

\subsection{Stacky approach and transmutation}

A general way to study a cohomology theory is through the stacky approach.
Viewing a cohomology theory as a general mechanism 
\[\begin{array}{ccc}
  \{\text{geometric objects e.g. schemes}\} &  \longrightarrow &\{\text{linear-algebraic objects}\}\\
  X & \longmapsto & R\Gamma_{?}(X),
\end{array}\]
then a stacky approach to the cohomology theory,
in one sentence,
is to factorize this mechanism through an intermediate category of stacks whose coherent cohomology recovers the given cohomology theory:
\[\begin{array}{ccccc}
  \{\text{geometric objects}\} &  \longrightarrow &\{\text{stacks}\} & \longrightarrow &\{\text{linear-algebraic objects}\}\\
  X & \longmapsto & X^? & \longmapsto & R\Gamma(X^?,\O_{X^?})\cong R\Gamma_{?}(X).
\end{array}\]
The advantage of having a stacky approach to a given cohomology theory is that
 it provides for it a theory of the coefficients together with six operations: indeed, it suffices to consider the category of quasi-coherent sheaves on the associated stack.
For example,
Simpson's algebraic de Rham stack in \cite{Sim96} is a stacky approach to algebraic de Rham cohomology in characteristic $0$,
where the cohomology of its structure sheaf will recover algebraic de Rham cohomology,
but also the category of quasi-coherent sheaves on it will recover the category of algebraic $D$-modules,
cf. \cite{GR14} and \cite{RC25a}.

Under mild conditions,
all stacky approaches come from the method of \emph{transmutation}.
Pioneered by Bhatt--Lurie \cite{BL22} and Drinfeld \cite{Dri20},
the transmutation means that the stack $X^?$ is determined by the ring stack $\aff^{1,?}$:
\[
X^?(-)=X(\aff^{1,?}(-)).
\]
Thus,
to implement such a stacky approach,
it suffices to construct $\aff^{1,?}$ and then do transmutation,
and many properties about $X^?$ reduce to properties about $\aff^{1,?}$.

Finally,
let us mention that not all cohomology theories come from stacky approaches.
For example,
the theory of $\ell$-adic étale cohomology does not come from \textit{any} stack in the classical sense due to the failure of the categorical Künneth formula.
A better way is to work at higher categorical levels, replacing the notion of (analytic) stacks by Gestalten,
cf. \cite{Sch25}. This generality will not be needed in this work.

\subsection{Rigid cohomology and its coefficients}\label{padiccoefficientintroduction}

The study of the geometry of schemes over a finite field of characteristic $p$
lies at the core of arithmetic geometry,
for example the Weil conjecture and the geometric Langlands program. One would like a $p$-adic companion to $\ell$-adic étale cohomology for such schemes.
It should have good properties such as being a Weil cohomology theory,
producing the right Betti numbers and so on.
Some naïve candidates,
such as de Rham cohomology or $p$-adic étale cohomology,
fail to be finite-dimensional and $\aff^1$-invariant in general.
For example,
for affine $X$,
the algebra $\D_X$ has non-trivial center due to the existence of the $p$-curvature morphism.
Since de Rham cohomology is an algebra over the center of $\D_X$,
this implies it is generally infinite-dimensional over $k$.

The correct $p$-adic companion is \emph{rigid cohomology}.
It was introduced by Berthelot \cite{Ber86},
and shown to be a Weil cohomology theory.
The idea of its construction can be summarized in one sentence:
let $X$ be a separated scheme of finite type over a perfect field $k$ of characteristic $p$.
$W(k)$ be the ring of Witt vectors of $k$,
and $K=W(k)[p^{-1}]$,
then the rigid cohomology $R\Gamma_{\rig}(X/K)$ of $X$ is the overconvergent de Rham cohomology of the generic fiber of a smooth formal lift of $X$ (when it exists) to $W(k)$.

There exists a theory of coefficients for rigid cohomology,
the study of which was pioneered by Berthelot \cite{Ber86}, \cite{Ber96a} \cite{Ber96b}.
The category $\cat{Isoc}^{\dagger}(X/K)$ of overconvergent isocrystals,
serves as the analogue for rigid cohomology of de Rham local systems (i.e. vector bundles with flat connections) for algebraic de Rham cohomology.
Several categories of arithmetic $D$-modules are also defined,
and serve, informally speaking, as an analogue of algebraic $D$-modules.

However,
the magic and also the complexity of the whole theory is that the definition of rigid cohomology involves the choice of a formal lift and an embedding;
while finally it is proved to be independent of these choices and thus well defined, this makes it sometimes difficult to check that various constructions are well defined or functorial.
Another subtlety is that the sheaf of rings of arithmetic differential operators is naturally equipped with a topology,
which makes the category of arithmetic $D$-modules hard to define. Contrary to crystalline cohomology with which it shares some relation, the theory of rigid cohomology is inherently analytic in nature.

\subsection{Main results of the paper}
The main goal of this paper is to construct stacky approaches to rigid cohomology and its coefficients, study their properties in this abstract framework and derive some applications. 

Since rigid cohomology is not defined in an algebro-geometric way,
a stacky approach in the setting of algebraic stacks does not seem appropriate.
The theory of analytic stacks \cite{CS23} provides a good framework for dealing with rigid geometry,
hence rigid cohomology.
One may work in the category of analytic stacks over $\rt_{p,\blacksquare}$,
but the test category is still too big for our theory. We want a category of analytic stacks closer to the world of rigid geometry. 
We will work in the setting of Gelfand stacks,
cf. \cite{ABLBRCS25}.
They are defined analogously to analytic stacks, but replacing general analytic rings by Gelfand rings\footnote{Strictly speaking, it is most convenient to restrict to separable and quasi-finite dimensional Gelfand rings, see the main body of the text, but we will gloss over this technicality in this introduction.}, a class of analytic rings roughly formed by overconvergent thickenings of Banach $\rt_p$-algebras. This leads to the test category of (separable qfd) Gelfand rings.
It has as a basis (for the descendable topology) a subcategory consisting of uniformly strictly totally disconnected perfectoid rings, that is the Gelfand rings whose uniform completion is strictly totally disconnected perfectoid.
Let $\cat{uPerfd}_{\omega_1}^{\operatorname{std},\qfd}\subset \cat{GelfRing}_{\omega_1}^{\qfd}$ be this fully faithful embedding.
We fix terminology on Gelfand rings.
Let $A$ be a Gelfand ring.
We denote $\operatorname{GSpec}A$ the Gelfand stack represented by $A$.
We let $A^{\le 1}$ be the subring of ``elements of norm $\le 1$'',
$A^{<1}\subset A^{\le 1}$ be the ideal of ``elements of norm $< 1$'',
and $A^u$ be the uniform completion of $A$.
If $A$ is further uniformly perfectoid,
then we denote $A^{u,\flat}$ as the tilting of $A^u$.

Let $k$ be a field of characteristic $p$ (which is allowed to be non-perfect!),
and $X$ be a qcqs separated scheme of finite type over $k$.
Let $K$ be a complete discrete valuation field with residue field $k$.
We define the (absolute) arithmetic de Rham stack as follows.

\begin{definition}[Definition \ref{arithdeRhamdefinition}]
  Define the \emph{arithmetic de Rham stack} $X^{\arith}$ of $X$,
  as a (qfd) Gelfand stack,
  to be the sheafification of the following pre-stack
  \[
  X^{\arith,\operatorname{pre}}(A):=X(A^{\le 1}/A^{<1}),\,A\in\cat{uPerfd}^{\qfd}_{\omega_1}.
  \]
\end{definition}

\begin{remark}
  As a toy model that we will use throughout this section,
  let $X=\aff^1_{\ff_p}$,
  then $\aff^{1,\arith}_{\ff_p}=\disk_{\rt_p}^{\dagger}/\disk^{\circ}_{\rt_p}$,
  which is already mentioned in \cite[Lecture XII]{Sch23}.
  Here $\disk^{\dagger}_{\rt_p}$ (resp. $\disk^{\circ}_{\rt_p}$) is the overconvergent (resp. open) unit disk over $\rt_p$.
\end{remark}

Intuitively,
if $\Xf$ is a formal lift of $X$ to $\itg_p$,
then there is a map from the \emph{overconvergent}\footnote{With respect to some embedding into a partially proper rigid-analytic space.} generic fiber $\X^{\dagger}$ to $X^{\arith}$,
and $X^{\arith}$ is universal among such lifts.
In particular,
if $X=\Spec k$,
then we have a morphism $\operatorname{GSpec}K\rightarrow\Spec k$.
We can define a relative version of arithmetic de Rham stack as the fiber product below:
\[\xymatrix{
    X^{\arith/K}\ar[r]\ar[d] & \operatorname{GSpec}K\ar[d]\\
    X^{\arith}\ar[r] & (\Spec k)^{\arith}
  }\]

The main six-functor properties of arithmetic de Rham stack are as follows.

\begin{theorem}[Theorem \ref{arithsixfunctor}]
  The arithmetic de Rham stack $X^{\arith}$ has the following properties.
  \begin{enumerate}
    \item Let $f\colon X\rightarrow Y$ be an étale (resp. proper; smooth) morphism.
    Then $f^{\arith}\colon X^{\arith}\rightarrow Y^{\arith}$ is a cohomologically proper (resp. cohomologically étale; prim with invertible prim dual) morphism.
    \item Strong $\aff^1$-invariant property holds,
    i.e. $\F\rightarrow f^{\arith}_*f^{\arith,*}\F$ is an isomorphism for all $X$ and $\F\in D(X^{\arith})$,
    where $f\colon \aff^1_X\rightarrow X$ is the projection.
    \item If we have a closed-open decomposition $i\colon Z\rightarrow X\leftarrow U\colon j$,
    then $U^{\arith}$ is a closed subspace of $X^{\arith}$ with complement open $Z^{\arith}$ in the sense of Gelfand stacks.
    \item Assume that $f\colon X\rightarrow Y$ is smooth of relative dimension $n$.
    Then the prim dual of $f^{\arith}\colon X^{\arith}\rightarrow Y^{\arith}$ is identified as $\mathbb{P}_f(1_X)=1_X[-2n]$.
  \end{enumerate}
\end{theorem}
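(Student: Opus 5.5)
Since $X^{\arith}$ is obtained by transmutation, $X^{\arith}(A)=X(\aff^{1,\arith}(A))$ with $\aff^{1,\arith}(A)=A^{\le 1}/A^{<1}$ after sheafification, the plan is to reduce every statement to the two local models $\aff^{1,\arith}_{\ff_p}=\disk^{\dagger}_{\rt_p}/\disk^{\circ}_{\rt_p}$ and $(\Spec\ff_p)^{\arith}$. The first step is the basic gluing: transmutation preserves finite limits and sends Zariski open covers of $X$ to open covers of $X^{\arith}$. For an open $U\subset X$, a $A^{\le1}/A^{<1}$-point of $X$ lands in $U$ exactly when a certain function on $A$ has norm $\ge 1$, which is an open condition for Gelfand stacks. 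This reduces everything to affine $X$, and then, using closed immersions into $\aff^n$ and smooth/étale charts, to $\aff^n$.

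For (3), I would first show that $\aff^{1,\arith}$ decomposes as $\{0\}^{\arith}=\disk^{\circ}/\disk^{\circ}=*$ (closed) and $\gff^{\arith}=\{|x|=1\}^{\dagger}/(1+\disk^{\circ})$ (open). The general case follows by pulling back along $X\to\aff^n$ when $Z$ is cut out by equations. The ``closed'' here is in the Gelfand sense, i.e.\ idempotent algebras. The dual phenomenon that $U^{\arith}$ is closed while $Z^{\arith}$ is open arises because $\{|f|<1\}$ is open and $\{|f|\ge1\}^\dagger$ is overconvergent-closed. I would then check that the excision sequence holds using $j_!,i_*$.

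For (1), étale: I would first treat finite étale maps, which lift uniquely along $A^{\le1}\to A^{\le1}/A^{<1}$ by Henselianity of the pair $(A^{\le1},A^{<1})$, so $f^{\arith}$ is locally an overconvergent tube map. An open immersion $U^{\arith}\to X^{\arith}$ is a closed immersion by (3), hence cohomologically proper, and Zariski's main theorem then gives the general étale case. Proper: by Chow's lemma and (3) I would reduce to $\proj^n$ and closed immersions. Closed immersions give open immersions, hence cohomologically étale maps. For $\proj^n$ I would cover by the arith-affine charts and show that $(\proj^n)^{\arith}\to *$ is cohomologically étale via the gluing. Smooth: locally $X\to\aff^n_Y$ is étale, so it suffices to show $\aff^{1,\arith}\to *$ is prim with invertible dual. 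Writing it as $\disk^\dagger\to\disk^\dagger/\disk^\circ$ followed by $\disk^\dagger\to *$, the first map is a $\disk^\circ$-torsor and the second is an overconvergent disk, both known to be prim with computable duals.

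For (2), the claim is local and reduces to $\aff^{1,\arith}\to *$ satisfying $1\simeq f_*1$. This amounts to the computation $R\Gamma(\disk^\dagger/\disk^\circ,\O)=\rt_p$: the de Rham-type complex of the overconvergent disk modulo the open disk action gives a $\disk^\circ$-equivariant cohomology computable as the Lie-algebra complex $\O(\disk^\dagger)\xrightarrow{d}\O(\disk^\dagger)$, which is $\rt_p$ in degree $0$, with the $H^1$ vanishing by overconvergent integration. Base change then gives the statement for all $\F$. For (4), I would compute the dualizing object for $\aff^{1,\arith}$: the dual of $\disk^\dagger\to *$ is $\O[?]$ twisted by $\Omega^1$, and the torsor contributes $[-1]$ with the Lie algebra dual. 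Combining these yields $1[-2]$, and for relative dimension $n$ the étale-local product gives $1_X[-2n]$, with étale invariance of the dual from (1). The main obstacle I expect is (1) for proper maps, specifically cohomological étaleness of $(\proj^n)^{\arith}$, since it requires controlling gluing of non-affine arith stacks. For that step I would first try to deduce it from the excision in (3) together with the $\aff^1$-invariance in (2), by induction on the cell decomposition $\proj^n=\aff^n\sqcup\proj^{n-1}$.
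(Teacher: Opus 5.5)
Your outline of (2) and (3) and the reduction of the smooth case to $\aff^{1}$ are essentially fine. Your Koszul computation of $R\Gamma(\disk^{\dagger}/\disk^{\circ},\O)$ is a valid substitute for the paper's de Rham argument. However, (1) and (4) have genuine gaps.

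\textbf{The proper case.} This is the step you flag yourself. Cohomological étaleness of $(\proj^n)^{\arith}\to *$ means suaveness plus cohomological étaleness of the diagonal. Excision together with $\aff^1$-invariance cannot produce suaveness: they only compute cohomology stratum by stratum. Suaveness is neither local for a closed--open stratification nor along the cover of $(\proj^n)^{\arith}$ by the $(\aff^n_i)^{\arith}$, which are \emph{closed} substacks.

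Your first step and the labels in (3) have this orientation reversed. $\{0\}^{\arith}$ is open, and $\gff^{\arith}$, cut out by the overconvergent condition $|x|=1$, is closed. Zariski descent therefore has to be obtained differently; the paper gets it from $h$-descent, Theorem \ref{fpqcdescentarith}.

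The missing idea is a suave chart. For $X$ proper smooth with proper smooth lift $\Xf$, the generic fibre $\X$ surjects onto $X^{\arith/K}$ by ($\dagger$-)formal smoothness. Its \v{C}ech nerve consists of overconvergent tubes of diagonals. Berthelot's strong fibration theorem makes $]X[^{\dagger}_{\Xf^2}\to\X$ locally a $\disk^{\circ,n}$-fibration, so the chart is a suave cover (Proposition \ref{propersmootharith}). Hence $(\proj^n_{\ff_p})^{\arith}$ is suave over the point. Its diagonal comes from a closed immersion, hence is an open immersion, so $(\proj^n_{\ff_p})^{\arith}$ is cohomologically étale (Proposition \ref{P1etale}).

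Chow's lemma then needs a descent step you do not supply. A projective surjection $X'\to X$ gives a cohomologically étale map $X'^{\arith}\to X^{\arith}$, which is a cover by $h$-descent, and cohomological étaleness is checked along it. Part (3) alone cannot do this, for the reason just given.

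\textbf{The étale case.} Zariski's main theorem factors an étale map as an open immersion followed by a merely \emph{finite} map. That finite map is in general ramified, since étale maps are not Zariski-locally open in finite étale ones. Finite maps are not sent to cohomologically proper maps in general: closed immersions become open immersions.

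Your Henselian-lifting idea is the right one, but it must be applied to arbitrary étale maps. It also has to be paired with a cover on which the base change is visibly cohomologically proper. The paper uses the Cartesian square of Lemma \ref{etaleCartesian} relating the small diamonds $X^{\circ}_{\arc}$ and $\overline{X}$. After applying $(\Y^{\diamond}_{(-)})^{\dR}$, the vertical maps become epimorphisms onto $X^{\arith}=(\Y^{\diamond}_{\overline{X}})^{\dR}$. The top map is cohomologically proper because maps of small diamonds of separated finite type schemes are proper. Cohomological properness then descends.

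\textbf{The global dual in (4).} The étale-local isomorphisms $\mathbb{P}_f(1)|_U\cong 1[-2n]$ depend on coordinates and do not glue by themselves; an invertible object locally isomorphic to $1[-2n]$ need not be trivial. The paper deforms the diagonal to the normal cone. It shows that the pushforward of $1$ along the deformed diagonal is pulled back from $X$, and compares the fibres at $1$ and $0$. This reduces to the tangent bundle, where the vector bundle case gives $1[-2n]$ canonically.

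\textbf{The factorization for $\aff^{1,\arith}$.} The map $\disk^{\dagger}\to\disk^{\dagger}/\disk^{\circ}$ goes into $\aff^{1,\arith}$, so it does not factor $\aff^{1,\arith}\to *$. The usable factorization is $\disk^{\dagger}/\disk^{\circ}\to B\disk^{\circ}\to *$. The first map is cohomologically proper, as a base change of $\disk^{\dagger}\to *$ along the suave cover $*\to B\disk^{\circ}$. The second is prim with dual $1[-2]$ by analytic Cartier duality. This gives $1[-2]$ directly, with no $\Omega^1$-twist to determine.
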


\begin{remark}
  There is also a similar statement for the relative arithmetic de Rham stacks;
  see Theorem \ref{relativearithsixfunctor}.
\end{remark}

\begin{remark}
  As the previous statement shows, the cohomological properties of arithmetic de Rham stacks are opposite to our geometric intuition.
  It may sound suprising at first,
  but let us mention that there is a similar phenomenon happening in the theory of algebraic de Rham stacks,
  cf. \cite[Appendix to Lecture VIII]{Sch23};
   still,
  some important results, e.g. Poincaré duality,
  can be obtained as consequences of these ``opposite'' cohomological properties;
  see Corollary \ref{Poincaréarith}.
\end{remark}

The advantage of the arithmetic de Rham stack is that
its category of quasi-coherent sheaves in the sense of analytic stacks \cite{CS23}
recovers classical objects that have been studied previously.
Recall that in Section \ref{padiccoefficientintroduction},
we introduced the category $\cat{Isoc}^{\dagger}(X/K)$ of overconvergent isocrystals,
which is generalized to the notion of overconvergent crystals in \cite[Definition 3.3.7]{LS11},
and several categories of arithmetic $D$-modules but only for a realizable\footnote{Here, following \cite[Definition 1.1.3]{AC13},
a scheme is called \emph{realizable} if it can be embedded into a proper smooth scheme $Y$ over $k$ with a proper smooth lift $\mathfrak{Y}$ over $K^{\circ}$.
Quasi-projective schemes are realizable, hence every $k$-scheme of finite type is Zariski locally a realizable scheme.} scheme $X$,
over which some cohomological operations are defined.
We can form similarly the category $\cat{Crys}^{\dagger}(X/K)$ of \emph{solid} overconvergent crystals and $D_{\blacksquare}(\D_{X/K}^{\dagger})$ of \textit{$\rt_p$-solid} arithmetic $D$-modules over $X$.
The following theorem is a combination of Theorem \ref{arithrigidcoh2} and Theorem \ref{Xarithcoefficients}.

\begin{theorem}[Theorem \ref{arithrigidcoh2} and Theorem \ref{Xarithcoefficients}] \label{theorem-comparison-arithmetic-d-modules}
  Let $X$ be a qcqs separated scheme of finite type over $k$.
  The following statements hold.
  \begin{enumerate}
    \item $D(X^{\arith/K})$ is equivalent to the category $\cat{Crys}^{\dagger}(X/K)$ of solid overconvergent crystals over $X$,
    in which we also have an equivalence of subcategories $\cat{Perf}(X^{\arith/K})\cong D^b(\cat{Isoc}^{\dagger}(X/K))$.
    Under this equivalence,
    $f^{\arith/K}_*$ identifies with the pushforward of overconvergent isocrystals there.
    In particular,
    $R\Gamma(X^{\arith/K},\O_{X^{\arith/K}})\cong R\Gamma_{\rig}(X/K)$ computes the rigid cohomology.
    \item Let $X$ be a realizable scheme with a frame $(X,\overline{X},P)$,
    then $D_{\blacksquare}(\D_{X/K}^{\dagger})\cong \cat{LMod}_{\D^{\dagger}_{]X[^{\dagger}_P}}(D(]X[^{\dagger}_P))$ is described in the form of $D$-modules.
    Under this equivalence,
    in the case of liftable\footnote{It means that the schemes and morphisms can be lifted to morphisms between proper smooth formal schemes over $K^{\circ}$.} proper smooth schemes,
    $f^{\arith/K}_*$ identifies with the naïve pullback functor $f^{\triangle}$ and $f_!^{\arith/K}$ with a shift of the extraordinary pushforward functor $f_+[\dim X-\dim Y]$.
  \end{enumerate}
\end{theorem}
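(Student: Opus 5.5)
The plan is to reduce both parts to a local computation on a frame, using descent on both sides.

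\emph{Part (1).} First treat $X$ affine and embedded as a closed subscheme of a smooth affine $\overline{X}$ with smooth formal lift $P$ over $K^{\circ}$. Here the key claim is that the map $]X[^{\dagger}_P\to X^{\arith/K}$ is a descendable cover. Its Čech nerve is then $]X[^{\dagger}_{P^{n+1}}$, the tube of $X$ in the $(n+1)$-fold product of $P$.

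To prove this claim:
\begin{enumerate}
\item Test on uniformly strictly totally disconnected $A$.
\item Lift a point $\Spec A^{\le1}/A^{<1}\to X\subset\overline{X}$ to $P$. This is possible, after a descendable cover of $A$, by formal smoothness of $P$ and henselianity of $(A^{\le1},A^{<1})$.
\item Any two lifts differ by an element of the open polydisc. This identifies the fibers with the tube of the diagonal, i.e.\ the stack is the quotient of the tube by the formal groupoid $]X[^{\dagger}_{P\times P}$.
\end{enumerate}

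Quasi-coherent sheaves on the quotient are then computed by descent. We get $D(X^{\arith/K})\simeq\lim_{\Delta}D(]X[^{\dagger}_{P^{\bullet+1}})$, which is by definition the category of solid overconvergent crystals in the sense of \cite{LS11}, adapted to solid modules.

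Passing to dualizable objects gives $\cat{Perf}(X^{\arith/K})\simeq$ perfect crystals. Identifying these with $D^b(\cat{Isoc}^{\dagger}(X/K))$ requires that perfect complexes on the dagger tubes have coherent cohomology. It also requires that the heart is the category of overconvergent isocrystals, and that the bounded derived category of that heart agrees with the limit. For the last point I would use an Ext comparison via de Rham complexes.

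Globalization: the general case follows by Zariski descent on $X$. This uses part (3) of Theorem~\ref{arithsixfunctor} (opens go to closed subspaces), together with separatedness to handle the intersections.

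Pushforward: $f_*^{\arith/K}$ is computed on the Čech nerve by the relative overconvergent de Rham complex, i.e.\ the Spencer/de Rham resolution of the groupoid. Taking $f$ to the point gives $R\Gamma(X^{\arith/K},\O)\cong R\Gamma_{\rig}(X/K)$.

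\emph{Part (2).} For a frame $(X,\overline X,P)$, use the same cover $]X[^{\dagger}_P\to X^{\arith/K}$. The descent category is then modules over the convolution algebra of the groupoid $]X[^{\dagger}_{P\times P}$, namely the dual of functions on the tube of the diagonal. One identifies this with Berthelot's $\D^{\dagger}$ after solidification, via the duality between overconvergent functions on the diagonal tube and $\widehat{\D}^{(m)}$-completions. By the Barr--Beck theorem this gives $\cat{LMod}_{\D^{\dagger}_{]X[^{\dagger}_P}}(D(]X[^{\dagger}_P))$.

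For liftable proper smooth $f$:
\begin{enumerate}
\item $f^{\arith/K}_*$ is the restriction along the groupoid map, which is the naive pullback $f^{\triangle}$.
\item $f_!^{\arith/K}$ is the left adjoint up to the prim dual $1[-2n]$ of Theorem~\ref{arithsixfunctor}(4). This matches $f_+$ after the shift $[\dim X-\dim Y]$, by comparing with the relative duality for $\D^{\dagger}$-modules.
\end{enumerate}

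\emph{Main obstacle.} The hardest step is the descendability of the tube and the identification of its Čech nerve with the tubes in products. It needs careful control of overconvergence, i.e.\ that the $\dagger$-structure matches $A^{\le1}/A^{<1}$-points, and showing descendability in the Gelfand setting. I would try first to reduce to $X=\aff^n$, where $\aff^{n,\arith}=(\disk^{\dagger})^n/(\disk^{\circ})^n$, and use closed immersions for the general case.
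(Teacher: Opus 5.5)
\textbf{Gap 1 (the chart in Part (1)).} Your Part (1) has a genuine gap in the choice of chart. You take $P$ to be a smooth \emph{formal} lift of a smooth \emph{affine} $\overline{X}$ and claim that $]X[^{\dagger}_P\to X^{\arith/K}$ is a cover. For non-proper $P$ the tube of Construction~\ref{tubes} is not overconvergent at infinity, and the claim is false.

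\emph{Counterexample.} Take $X=\overline{X}=\aff^1_k$ and $P=\Spf K^{\circ}\langle T\rangle$.
\begin{enumerate}
\item Then $]X[^{\dagger}_P=P_\eta=\operatorname{GSpec}K\langle T\rangle$ (the Tate algebra).
\item Its \v{C}ech nerve over $\aff^{1,\arith/K}=\disk^{\dagger}_K/\disk^{\circ}_K$ is the action groupoid $\operatorname{GSpec}K\langle T\rangle\times(\disk^{\circ}_K)^{\bullet}$.
\item $R\Gamma(\operatorname{GSpec}K\langle T\rangle/\disk^{\circ}_K,\O)$ is the Koszul complex $K\langle T\rangle\xrightarrow{d/dT}K\langle T\rangle$, whose $H^1$ is infinite-dimensional.
\item By contrast, $R\Gamma(\aff^{1,\arith/K},\O)=K$ by Proposition~\ref{A1cosmooth}.
\end{enumerate}
So no refinement by covers of $A$ rescues the map: descent along it computes convergent, not rigid, cohomology.

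\emph{Where it breaks.} The error is in your step 2. Formal smoothness of $P$ plus henselianity only gives a lift $\O(P)\to A^{u,\circ}$, i.e.\ an $A^{u}$-point of $P_\eta$. But $A^{\le 1}$ is not $p$-adically complete for non-uniform $A$, and passing from $A^{u}$-points to $A$-points is a $\dagger$-formal smoothness property that a Tate affinoid does not have.

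\emph{What the paper does.} This is exactly where properness enters. The paper works Zariski locally with $X\subset Y$, where $Y$ is proper smooth with a proper smooth lift $\mathfrak{Y}$. It then:
\begin{enumerate}
\item proves $\mathfrak{Y}_\eta\to Y^{\arith/K}$ is surjective by chaining $\dagger$-formal smoothness of the proper smooth $\mathfrak{Y}_\eta$, formal smoothness of $\mathfrak{Y}$, and smoothness of $Y$;
\item identifies $]X[^{\dagger}_{\mathfrak{Y}}=X^{\arith/K}\times_{Y^{\arith/K}}\mathfrak{Y}_\eta$;
\item gets the \v{C}ech nerve $]X[^{\dagger}_{\mathfrak{Y}^{n+1}}$ by base change;
\item gets suaveness from Berthelot's strong fibration theorem (Proposition~\ref{propersmootharith}).
\end{enumerate}
Le Stum's \cite[Theorem 2.5.11]{LS11} then matches the limit over this \v{C}ech nerve with the overconvergent-site limit.

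Your fallback of reducing to $\aff^n$ is closer to correct. The map $X^{\arith/K}\times_{\aff^{n,\arith/K}}(\disk^{\dagger}_K)^n\to X^{\arith/K}$ is a suave cover by base change from $(\disk^{\dagger}_K)^n\to\aff^{n,\arith/K}$. However, that chart uses the overconvergent polydisc, not a formal lift. To compare with Berthelot's $\cat{Isoc}^{\dagger}(X/K)$ and $R\Gamma_{\rig}$, and with the given frame $(X,\overline{X},P)$ of Part (2), you still need the proper-frame description.

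\textbf{Gap 2 (the identification of $f_!^{\arith/K}$ in Part (2)).} You identify $f_!^{\arith/K}$ with $f_+[\dim X-\dim Y]$ ``by comparing with relative duality for $\D^{\dagger}$-modules.'' In the solid setting this is circular. The classical adjunction/duality for $f_+$ is known only for coherent (or holonomic) modules; on all solid modules it is a consequence of the statement, not an input.

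Moreover, $f$ there is an arbitrary morphism between liftable proper smooth schemes and need not be smooth, so Theorem~\ref{arithsixfunctor}(4) does not apply. Only cohomological \'etaleness, i.e.\ $f_!^{\arith/K}\dashv f^{\arith/K,*}$, is available.

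The paper instead computes directly on generators. It shows $f_!^{\arith/K}\pi_{U,!}M\cong\pi_{V,!}g_*M$. Under the right-module realization $p_*\pi^{\arith/K,!}$, the object $\pi_{U,!}M$ becomes the induced module $M\otimes_{\O}D^{\dagger}$, on which $f_+$ is explicit. It then extends by colimits.

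Finally, the ``$f^{\arith/K}_*$'' matched with $f^{\triangle}$ in the statement is a typo for $f^{\arith/K,*}$ (see Theorem~\ref{Xarithcoefficients}(3)). Your ``restriction along the groupoid map'' is indeed the pullback.
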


To understand this equivalence,
one can think about a toy model which is $X=\aff^1_k$.
Then $X^{\arith/K}=\disk^{\dagger}_K/\disk^{\circ}_K$ where $\disk^{\dagger}_K$ (resp. $\disk^{\circ}_K$) is the overconvergent (resp. open) unit disk over $K$ (Proposition \ref{arithadditivegroup}),
and an element $M\in D(X^{\arith/K})=D(\disk^{\dagger}_K/\disk^{\circ}_K)$ is equipped with a $T$-action that converges overconvergently (coming from $\disk^{\dagger}_K$),
and a $\partial$-action that $\left\{\frac{\partial^n}{n!}\right\}$ converges overconvergently (coming from $\disk^{\circ}_K$ via a ``twisted version'' of Cartier duality),
where they satisfy the Weyl relation $[\partial,T]=1$ (coming from the non-trivial action of $\disk^{\circ}_K$ on $\disk^{\dagger}_K$).
This is exactly the algebra of arithmetic differential operators over $\aff^1_k$.

\begin{remark}
Let us highlight the contrast between the approach of this paper to the problem of defining a category of coefficients for rigid cohomology and earlier approaches following the seminal work of Berthelot.
Once the definition of the arithmetic de Rham stack is made, we can use the full power of the theory of analytic stacks. First, the category of quasi-coherent sheaves on $X^{\arith}$ provides for any $X$ a suitable category of ``arithmetic $D$-modules''.
The descent of $X\mapsto X^{\arith}$ ensures that this construction glues well and locally on $X$ (to make it realizable), after picking a frame, this category can be explicitly described in terms of modules over a ring of differential operators defined by the choice of a frame, as shown by Theorem \ref{theorem-comparison-arithmetic-d-modules};
basically, this comes from the fact that these data supply an explicit presentation of the arithmetic de Rham stack.
In the more classical approach, one rather starts from these local descriptions and tries to glue them, running into the difficulty that these a priori depend on extra, non-canonical, choices.
Second,  the category of quasi-coherent sheaves on $X^{\arith}$ comes equipped (for free) with a $6$-functor formalism. This critically makes use of the solid analytic ring structure:
in earlier approaches, constructing the six operations has been a major difficulty.
\end{remark}

The equivalence of Theorem \ref{theorem-comparison-arithmetic-d-modules} is very useful,
because it provides a bridge that allows us to deduce classical results on overconvergent isocrystals and arithmetic $D$-modules from the abstract theory.
One application is to prove Berthelot's conjecture:
\begin{theorem}[Theorem \ref{Berthelotconj}]
  Let $f\colon X\rightarrow Y$ be a proper smooth morphism between qcqs separated schemes of finite type over $k$.
  Then the derived pushforward in each degree along $f$ sends overconvergent isocrystals over $X$ to overconvergent isocrystals over $Y$.
\end{theorem}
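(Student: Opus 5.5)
We translate the statement into the stacky language through Theorem \ref{theorem-comparison-arithmetic-d-modules}(1) and argue on $X^{\arith/K}$. An overconvergent isocrystal $\E$ on $X$ then corresponds to a perfect complex on $X^{\arith/K}$. The pushforward of isocrystals corresponds to $f^{\arith/K}_*$. So it suffices to prove that $f^{\arith/K}_*$ carries $\cat{Perf}(X^{\arith/K})$ into $\cat{Perf}(Y^{\arith/K})$. Granting this, the claim in each degree follows from the identification $\cat{Perf}(Y^{\arith/K})\cong D^b(\cat{Isoc}^{\dagger}(Y/K))$. This category has a t-structure whose heart is $\cat{Isoc}^{\dagger}(Y/K)$, and the cohomology sheaves $R^if_*\E$ are the cohomology objects of $f^{\arith/K}_*\E$ for that t-structure. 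We must check that this t-structure is compatible with the one used to define $R^if_*$ classically; I expect this to follow from the comparison, since degreewise cohomology can be computed on a frame.

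For perfectness we use that in $D(Y^{\arith/K})$ the perfect objects are exactly the dualizable ones. So the task is to show that $f^{\arith/K}_*$ preserves dualizable objects. Here we use the ``opposite'' cohomological properties in Theorem \ref{arithsixfunctor}, in its relative form Theorem \ref{relativearithsixfunctor}. Since $f$ is proper, $f^{\arith/K}$ is cohomologically étale, which gives $f^{\arith/K,!}\cong f^{\arith/K,*}$. Since $f$ is smooth of relative dimension $n$, $f^{\arith/K}$ is prim with invertible prim dual $\mathbb{P}_f(1)=1[-2n]$, which gives $f^{\arith/K}_*\cong f^{\arith/K}_!(-\otimes 1[-2n])$ up to the twist convention. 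Being both prim and cohomologically étale, $f^{\arith/K}$ is cohomologically smooth and proper in the six-functor sense. For such maps the standard formal argument applies: $f_!=f_*$ up to an invertible twist has both adjoints given by pullback twisted by invertible objects, and the projection formula holds. Therefore $f_*$ sends a dualizable $\E$ to a dualizable object, with dual $f_*(\E^\vee\otimes\omega)$ for the appropriate invertible dualizing object $\omega$. This is the usual proof that smooth proper pushforward preserves dualizables, which comes from the symmetric monoidal 2-category of kernels.

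To reduce from general $X,Y$, we first note that everything is local on $Y$ by descent of $Y\mapsto Y^{\arith/K}$ and of $D(-)$. We may therefore shrink $Y$ to be affine. This is harmless because perfectness is local for the descendable topology.

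The main obstacle is the formal dualizability step. One needs both the identification $f_!\cong f_*$ up to an invertible twist and the existence of the required units and counits. In particular, the cohomologically étale property must give the counit $f_!f^*\to\id$, and primness must give the unit. I would first try to verify these directly from the definitions of ``prim'' and ``cohomologically étale'' as recorded in Theorem \ref{arithsixfunctor}, and then invoke the general result that a prim morphism with invertible dualizing object which is also cohomologically proper in the opposite sense is ``smooth-proper''. A second, smaller obstacle is the t-structure compatibility in the first step.
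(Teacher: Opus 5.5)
This is essentially the paper's proof: view $\E$ in $\cat{Perf}(X^{\arith/K})$ via Theorem \ref{arithrigidcoh2}, use that $f^{\arith/K}$ is cohomologically étale and prim with prim dual $1[-2n]$ to get that $f^{\arith/K}_*$ preserves dualizable ($=$ perfect) objects, and then take $H^i$ in the heart $\cat{Isoc}^{\dagger}(Y/K)$; in Corollary \ref{Poincaréarith} the paper phrases the dualizability via ``nuclear and compact'' and then computes the dual by exactly your formal argument. Your ``smaller obstacle'' is handled as you guess: for a frame $\pi\colon ]U[^{\dagger}_P\rightarrow Y^{\arith/K}$ one has $\pi^*f^{\arith/K}_*\E(*)\cong Rf_{\rig,*}\E|_{(U,\overline{U},P)}$, and since $\pi^*$ is t-exact on perfect complexes with each $\pi^*H^i(f^{\arith/K}_*\E)$ finite locally free, the complex splits and taking underlying modules commutes with $H^i$ (the reduction to affine $Y$ is not needed).
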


Let us say one sentence about the proof.
We can identify overconvergent isocrystals as perfect complexes over the arithmetic de Rham stack,
and the pushforward of overconvergent isocrystals as the pushforward functor in $6$-functor formalisms.
Then the conclusion is translated to a property in abstract $6$-functor formalism,
which follows from the cohomological properties of arithmetic de Rham stacks.

Another application is to study the Fourier transform of arithmetic $D$-modules,
recovering some results in \cite{Huy04}.
Using the language of \cite{RC25b},
the following theorem is an abstract way of stating the Fourier transform of arithmetic $D$-modules.
Let $K_D$ be the category of kernels for the $6$-functor formalism of Gelfand stacks.
and $[-]^*$, $[-]_!$ the two constructions in \cite[Construction 2.4.1]{RC25b}.
Let $\pi$ be a $(p-1)$-th root of $-p$\footnote{The choice of $-p$ is not crucial,
which we only use to match with the convention of the Cartier duality of $[\aff^1]_{\operatorname{SH}}$ in \cite[Lecture X]{Sch25}.}.
Denote $L=\rt_p(\pi)$,
over which we can define a Fourier--Mukai kernel $\L_{\pi}$.

\begin{theorem}[Theorem \ref{CartierarithmeticdR} and Remark \ref{FMkernel}]
  The Fourier--Mukai kernel $\L_{\pi}$ defines a Fourier--Mukai functor $\operatorname{FM}_{\pi}\colon [\aff^{1,\arith/L}]_!\rightarrow [\aff^{1,\arith/L}]^*$,
  and it is an equivalence in $\cat{bCAlg}(K_D)$.
\end{theorem}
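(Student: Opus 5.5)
The plan is to reduce the statement to a Cartier-duality statement for the two group stacks $\disk^{\dagger}_L$ and $\disk^{\circ}_L$, and then assemble the answer for the quotient $\aff^{1,\arith/L}=\disk^{\dagger}_L/\disk^{\circ}_L$ (Proposition \ref{arithadditivegroup}). The kernel $\L_{\pi}$ will live on $\aff^{1,\arith/L}\times\aff^{1,\arith/L}$ and is built from the Dwork exponential $\exp(\pi(x-x^p))$, or rather its multiplicative analogue $\exp(\pi xy)$. The choice of $\pi$ with $\pi^{p-1}=-p$ is what makes the relevant exponential converge on exactly the right polydisc: overconvergently on $\disk^{\dagger}\times\disk^{\circ}$, and compatibly with the quotient.

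The first step is a twisted Cartier duality. I will check that $(x,y)\mapsto\exp(\pi xy)$ defines a bilinear pairing $\disk^{\dagger}_L\times\disk^{\circ}_L\to\gff$, and that it identifies $D(\disk^{\circ}_L)$ with modules over the overconvergent distribution algebra. Concretely, the functions on $\disk^{\circ}_L$ are dual to the algebra generated by $\partial^n/n!$ with overconvergent decay, as in the toy model. This gives an equivalence, in $\cat{bCAlg}(K_D)$, between $[\disk^{\circ}_L]_!$ and $[\widehat{\disk^{\dagger}_L}]^*$. Here the hat denotes the dual group, which the pairing should identify with $\disk^{\dagger}_L$ modulo $\disk^{\circ}_L$ in the dual variable. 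I will use the formalism of \cite{RC25b}: a bialgebra map is an equivalence as soon as the Fourier--Mukai functor and its adjoint (given by the kernel $\L_{-\pi}$, with shift and twist) compose to the identity. That composition reduces to computing $f_!$ of the kernel along the projection, which is a Gaussian-type integral and must equal the unit supported on the origin.

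The second step is to pass to the quotient. I will express $[\aff^{1,\arith/L}]_!$ and $[\aff^{1,\arith/L}]^*$ through descent along $\disk^{\dagger}_L\to\disk^{\dagger}_L/\disk^{\circ}_L$. Then the statement becomes a self-duality of the short exact sequence $0\to\disk^{\circ}\to\disk^{\dagger}\to\aff^{1,\arith}\to 0$, in which the Cartier dual of each term is exchanged with another: the dual of $\disk^{\circ}$ is $\aff^{1,\arith}$ and the dual of $\disk^{\dagger}$ is $\disk^{\dagger}$. Orthogonality of $\disk^{\circ}\times\disk^{\circ}$ under $\exp(\pi xy)$ would follow from $\pi xy$ being topologically nilpotent, which gives the descent of the kernel.

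The main obstacle is the core computation in step one. One must show that $\pi_{!}$ of $\L_{\pi}$ along one factor of $\disk^{\dagger}\times\disk^{\circ}$ is the delta sheaf at the origin, with the correct shift. This requires control of the cohomologically étale and proper behaviour of these disks, which is opposite to intuition as in Theorem \ref{arithsixfunctor}, together with the precise radius where $\exp(\pi t)$ converges. I would first reduce to a calculation of solid cohomology of the overconvergent and open disks with coefficients twisted by the exponential. This is analogous to the classical computation that the Dwork crystal has vanishing rigid cohomology on $\aff^1$, and I would compare it to \cite{Huy04} as a check.
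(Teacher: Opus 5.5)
There is a genuine gap, and it lies in how you identify the Cartier duals. In this formalism the duality for the disks shifts degree. A character of $\disk^{\dagger}$ has the form $x\mapsto\exp(\pi xy)$ with $y\in\disk^{\circ}$, and vice versa, and the dual of a group is (morally) $\underline{\Hom}(-,B\gff)$. So Lemma \ref{1catCartierdual}, i.e.\ \cite[Proposition 3.2.21]{RC25b}, gives $[\disk^{\dagger}]_!\cong[B\disk^{\circ}]^*$ and $[\disk^{\circ}]_!\cong[B\disk^{\dagger}]^*$. Your claims that $\disk^{\dagger}$ is self-dual and that $\disk^{\circ}$ is dual to $\aff^{1,\arith}$ are false. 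They would not even give the theorem: dualizing $\disk^{\circ}\to\disk^{\dagger}\to\aff^{1,\arith}$ with your identifications yields $(\aff^{1,\arith})^{\vee}\cong\operatorname{fib}(\disk^{\dagger}\to\aff^{1,\arith})=\disk^{\circ}$, not $\aff^{1,\arith}$.

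For the same reason, the ``delta sheaf'' computation on $\disk^{\dagger}\times\disk^{\circ}$ is ill-posed. There $\exp(\pi xy)$ is just an invertible function, so the associated kernel is trivial. The content of the pairing is an equivariance datum, i.e.\ a line bundle on $\disk^{\dagger}\times B\disk^{\circ}$.

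The kernel $\L_{\pi}$ is also not obtained by descending $\exp(\pi xy)$ through an ``orthogonality'' of $\disk^{\circ}\times\disk^{\circ}$. That orthogonality fails, since $\exp(\pi xy)\neq 1$ there, and the pairing is not even defined on $\disk^{\dagger}\times\disk^{\dagger}$. Instead, $\L_{\pi}$ is $m^*\Exp_{\pi}$ with $\Exp_{\pi}=p^*\exp_{\pi}^*\O(1)$, where $p\colon\disk^{\dagger}/\disk^{\circ}\to B\disk^{\circ}$ classifies the $\disk^{\circ}$-torsor. So the pairing on $\aff^{1,\arith}$ takes values in $B\gff$, which is exactly the degree shift missing from your plan.

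The argument that works, and the one used in the proof of Theorem \ref{CartierarithmeticdR}, runs as follows.
\begin{enumerate}
\item The map $[\disk^{\dagger}]^*\to[\aff^{1,\arith/L}]^*$ is a $1$-\'etale cover, so $[\disk^{\circ}]^*\to[\disk^{\dagger}]^*\to[\aff^{1,\arith/L}]^*$ is a fiber/cofiber sequence. Cartier duality turns it into one for $[-]_!$.
\item To identify the dual sequence you need a compatibility statement that your proposal lacks. Under the two disk dualities, $f_!$ for $f\colon\disk^{\circ}\to\disk^{\dagger}$ must correspond to $g^*$ for $g\colon B\disk^{\circ}\to B\disk^{\dagger}$. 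By base change this reduces to checking that the two kernels agree on $\disk^{\circ}\times B\disk^{\circ}$. They do, because both are pulled back from the single pairing $B\disk^{\dagger}\otimes\disk^{\circ}\cong B\disk^{\circ}\otimes\disk^{\dagger}\to B\gff$.
\item One then gets $[\aff^{1,\arith/L}]_!\simeq\operatorname{fib}\bigl(g^*\colon[B\disk^{\circ}]^*\to[B\disk^{\dagger}]^*\bigr)$. The Cartesian square exhibiting $\disk^{\dagger}/\disk^{\circ}$ as $B\disk^{\circ}\times_{B\disk^{\dagger}}*$ identifies this fiber with $[\disk^{\dagger}/\disk^{\circ}]^*$.
\end{enumerate}
The self-duality thus comes precisely from $\operatorname{fib}(B\disk^{\circ}\to B\disk^{\dagger})\cong\disk^{\dagger}/\disk^{\circ}$.

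If you prefer a Fourier-inversion argument, run it directly on $\aff^{1,\arith/L}$ rather than on $\disk^{\dagger}\times\disk^{\circ}$. It would use the multiplicativity $p_{12}^*\L_{\pi}\otimes p_{23}^*\L_{\pi}\cong\Sigma_{13}^*\L_{\pi}$ and the identity $p_{2,!}\L_{\pi}\cong i_!1[-2]$. The latter is the Dwork-crystal computation you allude to.
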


In fact,
from this abstract theorem,
one can deduce the following theorem about the Fourier transform of arithmetic $D$-modules.
Let $D^{\dagger}(\infty)_L$ be the following solid $\rt_p$-algebra:
\[D^{\dagger}(\infty)_L=\left\{\sum_{m,n=0}^{\infty} a_{m,n}T^m\frac{\partial^n}{n!}\mid a_{m,n}\in L,\e c>0,\eta<1,\f n,m,|a_{m,n}|<c\eta^{m+n}\right\}.\]

\begin{theorem}[Theorem \ref{FouriertransformarithmeticDmodule}]
  The Fourier--Mukai functor defines a Fourier transform functor,
  which is an equivalence: 
  \[
  \operatorname{Four}_{\pi}\colon \cat{LMod}_{D^{\dagger}(\infty)_L}(D_{\blacksquare}(\rt_p))\overset{\cong}{\longrightarrow}\cat{LMod}_{D^{\dagger}(\infty)_L}(D_{\blacksquare}(\rt_p)).
  \]
  Moreover,
  given a left $D^{\dagger}(\infty)_L$-module $M$,
  $\operatorname{Four}_{\pi}(M)=D^{\dagger}(\infty)_L\otimes_{D^{\dagger}(\infty)_L,S_{\pi}}M[1]$,
  where $S_{\pi}\colon D^{\dagger}(\infty)_L\cong D^{\dagger}(\infty)_L$ is the isomorphism sending $T$ to $\pi^{-1}\partial$ and $\partial$ to $-\pi T$.
\end{theorem}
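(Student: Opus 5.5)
The plan is to deduce this from the abstract Fourier--Mukai equivalence of Theorem \ref{CartierarithmeticdR} (with Remark \ref{FMkernel}), by identifying both sides with modules over $D^{\dagger}(\infty)_L$ and computing the kernel explicitly.

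\textbf{Step 1: both sides are $D^{\dagger}(\infty)_L$-modules.} Since $\aff^{1,\arith/L}=\disk^{\dagger}_L/\disk^{\circ}_L$ (Proposition \ref{arithadditivegroup}), I would first identify $D(\aff^{1,\arith/L})$ with $\cat{LMod}_{D^{\dagger}(\infty)_L}(D_{\blacksquare}(\rt_p))$. Descent along the cover $\operatorname{GSpec}\O(\disk^{\dagger}_L)\to\disk^{\dagger}_L/\disk^{\circ}_L$ exhibits $D$ as modules over a monad on $D(\disk^{\dagger}_L)$. This monad is $\O(\disk^\dagger_L)$ smashed with the dual of functions on $\disk^\circ_L$. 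The twisted Cartier duality of the open disk identifies that dual with overconvergent series in $\partial^n/n!$, and the action of $\disk^\circ_L$ on $\disk^\dagger_L$ gives the Weyl relation $[\partial,T]=1$. This yields exactly $D^{\dagger}(\infty)_L$; it is the $X=\aff^1$ case of Theorem \ref{Xarithcoefficients}.

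\textbf{Step 2: extract a functor from the kernel equivalence.} An equivalence $[\aff^{1,\arith/L}]_!\simeq[\aff^{1,\arith/L}]^*$ in $K_D$ gives, after evaluating $\Hom_{K_D}(1,-)$, an equivalence of categories $D(\aff^{1,\arith/L})\to D(\aff^{1,\arith/L})$. This functor is $M\mapsto p_{2,*}(p_1^*M\otimes\L_\pi)$ (or with $p_{2,!}$, according to the construction). I define $\operatorname{Four}_\pi$ as this functor transported through Step 1, so it is automatically an equivalence.

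\textbf{Step 3: compute $\operatorname{Four}_\pi$.} The functor is $D^\dagger(\infty)_L$-linear up to the automorphism induced on the kernel. Hence $\operatorname{Four}_\pi(M)$ is $\operatorname{Four}_\pi(D^{\dagger}(\infty)_L)\otimes_{D^\dagger(\infty)_L}M$, and it suffices to compute the image of the free module. The kernel $\L_\pi$ is built from the Dwork-type exponential $\exp(\pi xy)$, the overconvergent character of $\Ga$ attached to $\pi^{p-1}=-p$. It satisfies
\[
\partial_x\L_\pi=\pi y\L_\pi,\qquad \partial_y\L_\pi=\pi x\L_\pi .
\]
Moving operators across the kernel therefore intertwines $T$ with $\pi^{-1}\partial$ and $\partial$ with $-\pi T$, the sign arising from integration by parts or adjunction. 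This gives the twist $S_\pi$, and one checks directly that $S_\pi$ preserves the growth condition $|a_{m,n}|<c\eta^{m+n}$.

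\textbf{Step 4: the shift.} The $[1]$ comes from the pushforward along the projection $\aff^{1,\arith}\times\aff^{1,\arith}\to\aff^{1,\arith}$. By Theorem \ref{arithsixfunctor}(4), the prim dual there is $1[-2]$, and $f_!$ versus $f_*$ differ by such a dualizing twist. Concretely, $p_{2,*}$ of the kernel is computed by a two-term de Rham complex in the $x$-variable. Its cohomology is concentrated in a single degree and equals the twisted free module, which produces the shift $[1]$.

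\textbf{Main obstacle.} I expect Step 3 to be the hard part: pinning down the precise normalisation, meaning the signs, the factors of $\pi$, and which of $T,\partial$ picks up $\pi^{-1}$. This requires tracking the twisted Cartier duality conventions of \cite[Lecture X]{Sch25} through the kernel. I would first verify the statement on the rank-one module $D^\dagger(\infty)_L/D^\dagger(\infty)_L\partial=\O(\disk^\dagger_L)$. That case reduces to showing that $\O(\disk^\dagger_L)$ is sent to a delta module supported at the origin.
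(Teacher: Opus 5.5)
Your proof of the equivalence is the paper's: $\operatorname{Four}_\pi$ is the Fourier--Mukai equivalence of Theorem \ref{CartierarithmeticdR}, transported through \ref{A1arithcoefficient}. For the explicit formula you take a genuinely different route.

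\textbf{The paper's route.} It never writes a de Rham complex. It uses Lemma \ref{compatibilityFM2}, namely $f^*\circ\operatorname{FM}\cong\operatorname{FM}'\circ g_!$ for $f\colon\disk^{\dagger}\to\disk^{\dagger}/\disk^{\circ}$ and $g\colon\disk^{\dagger}/\disk^{\circ}\to B\disk^{\circ}$. So the underlying $L\langle T\rangle^{\dagger}$-module of $\operatorname{Four}_\pi(M)$ is the analytic Cartier dual of $g_!M$. Base change identifies its underlying module with $p_!\pi^*M$, and the gauge \ref{choosegauge} turns the divided-power $\partial$-action into the $T$-action. The $[1]$ is the $t$-shift of Cartier duality, and the $\partial$-action is then read off from the Weyl relation.

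\textbf{Your route.} You reduce to the free module by Eilenberg--Watts. A colimit-preserving $D_{\blacksquare}(L)$-linear functor is $\operatorname{Four}_\pi(D^{\dagger}(\infty)_L)\otimes_{D^{\dagger}(\infty)_L}(-)$, with right action on the bimodule coming from $\End(D^{\dagger}(\infty)_L)=(D^{\dagger}(\infty)_L)^{\op}$; this is the precise form of your ``linear up to an automorphism''. You then compute the transform of the free module as a relative de Rham complex in the integrated variable $x$.

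\textbf{What each buys.} Your route gives both actions and the shift from one explicit computation. It does not infer $\partial$ from $[\partial,T]=1$, which alone fixes $\partial$ only up to elements commuting with the image of $T$. It also shows exactly where $\pi^{p-1}=-p$ enters: $|n!/\pi^n|=p^{s_p(n)/(p-1)}$ (with $s_p$ the base-$p$ digit sum) makes $g(y)\mapsto g(-\pi^{-1}\partial_x)$ land in $D^{\dagger}(\infty)_L$. The price is an input the paper's route avoids. You need $!$-pushforward along $\aff^{1,\arith/L}\times\aff^{1,\arith/L}\to\aff^{1,\arith/L}$, with its $D$-module structure in the surviving variable, to equal the classical $p_+[1]$ and hence the relative de Rham complex (Proposition \ref{Xarithsixfunctorcalssical}, Remark \ref{deRhamresolution}). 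The paper proves this only for morphisms of proper smooth formal schemes. You must transport it to this projection, e.g.\ from $\proj^1\times\proj^1\to\proj^1$ via excision, or by rerunning the local dagger-affinoid argument.

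\textbf{Two normalisations to fix.} First, the functor is forced to be $p_{1,!}(p_2^*(-)\otimes\L_\pi)$, since composition in $K_D$ uses $!$-pushforward (Remark \ref{FMkernel}), and the $!$ matters. By Corollary \ref{f_*smooth}, $p_*=p_![-2]$, and the two-term de Rham complex in degrees $0,1$ computes $p_*$. Since $\partial_x+\pi y$ is injective on $\O_y\widehat{\otimes}D^{\dagger}(\infty)_L$, the only cohomology is the cokernel in degree $1$. So $p_{2,*}$, as in your Step 4, yields $[-1]$; only $p_!$ yields $[1]$.

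Second, the sign that you call the main obstacle is settled by that cokernel computation. With $\partial_x\L_\pi=\pi y\L_\pi$, you find $y$ acting as $-\pi^{-1}\partial_x$ and $\partial_y$ acting as $\pi x$; the latter comes from commuting $g(-\pi^{-1}\partial_x)$ past $x$. This is exactly $D^{\dagger}(\infty)_L\otimes_{D^{\dagger}(\infty)_L,S_\pi}M$, because in that base change an element $b$ acts through $S_\pi^{-1}(b)$. Keep track of whether you are describing $S_\pi$ or the resulting action.
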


Till now,
we have systematically studied the theory of arithmetic de Rham stacks,
and showed some powerful applications from the theory.
We turn to the study of another type of objects,
which are Hyodo--Kato stacks (introduced in \cite{ABLBRCS25}) associated to $k$-schemes.
It has strong connections with arithmetic de Rham stacks,
hence also (partly) recovers $p$-adic coefficients.
Its interest beyond the arithmetic de Rham stack is twofold:
\begin{enumerate}
  \item Its $6$-functor formalism is compatible with geometric intuition.
  \item It serves not only as a part of the Hyodo--Kato stack,
  but also a part of the analytic prismatization developed by Anschütz--Le Bras--Rodríguez Camargo--Scholze \cite{ALBRCS};
  see Proposition \ref{analyticprismatization}.
\end{enumerate} 

We will use the theory of Hyodo--Kato stacks in \cite{ABLBRCS25} to define and study these stacks.
However,
in the introductory section,
we prefer to define these stacks directly via functor-of-points description,
to imitate the definition of the arithmetic de Rham stack.

\begin{definition}[Remark \ref{bigrigiddefinition}]
  Let $X$ be a scheme over $k$. Define the stack $X^{\RIG}$ as the (sheafification of) the following qfd Gelfand (pre-)stack:
  \[
  X^{\RIG}(A):=X(A^{u,\flat}),\,A\in\cat{uPerfd}^{\qfd}_{\omega_1}.
  \]
  It is equipped with a Frobenius morphism $\varphi_X\colon X^{\RIG}\rightarrow X^{\RIG}$,
  by acting as $x\mapsto x^p$ on functor of points.
  Then define
  \[
  X^{\HK}:=X^{\RIG}/\varphi^{\itg}_X.
  \]
\end{definition}

Intuitively,
the way one can visualize $X^{\HK}$ is to choose a universal $\delta$-lift $\mathcal{X}$ of $X$ to $\rt_p$ if $X$ admits one,
then $X^{\HK}$ is the perfect analytic de Rham stack of $\mathcal{X}$ modulo Frobenius $\varprojlim_{\varphi}\mathcal{X}^{\dR}/\varphi^{\itg}$.
Let $k$ be a \emph{perfect} field of characteristic $p$,
and $K$ admit a Frobenius lift $\varphi_K$ and also be perfect.
Then we have a morphism $\operatorname{GSpec}K/\varphi_K^{\itg}\rightarrow (\Spec k)^{\HK}$ and we can define a relative version of the Hyodo--Kato stack;
see Definition \ref{relativeHK}.

The main six-functor properties of $X^{\HK}$ are as follows.

\begin{theorem}[Theorem \ref{PoincaredualityrigidFrobenius}]
  The $6$-functor formalism $X\rightarrow D(X^{\HK})$ satisfies the following properties.
  \begin{enumerate}
    \item Let $f\colon X\rightarrow Y$ be an étale (resp. proper; smooth) morphism,
    then $f^{\HK}\colon X^{\HK}\rightarrow Y^{\HK}$ is cohomologically étale (resp. cohomologically proper; cohomologically smooth).
    \item Strong $\aff^1$-invariant property holds,
    i.e. $\F\rightarrow f_*^{\HK}f^{\HK,*}\F$ is an isomorphism for all $X$ and $\F\in D(X^{\HK})$,
    where $f\colon \aff^1_X\rightarrow X$ is the projection;
    \item If we have a closed-open decomposition $i\colon Z\rightarrow X\leftarrow U\colon j$,
    then $D(Z^{\HK})$ is a closed subspace of $D(X^{\HK})$ with complement open $D(Y^{\HK})$.
    \item For a smooth map $f\colon X\rightarrow Y$ of relative dimension $n$,
    we have $f^{\HK,!}1\cong 1(n)[2n]$\footnote{Here $(-)(n)$ denotes the $n$-th product of the Tate twist in Definition \ref{Tatetwist}.}.
  \end{enumerate}
\end{theorem}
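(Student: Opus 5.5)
The plan is to reduce every assertion to statements about the ring stack underlying $X^{\RIG}$ and then descend along $\varphi^{\itg}_X$. Since $X^{\RIG}(A)=X(A^{u,\flat})$ is obtained by transmutation from the ring-valued functor $A\mapsto A^{u,\flat}$, and the quotient map $X^{\RIG}\to X^{\HK}$ is a $\itg$-torsor (hence cohomologically étale), each property of $f^{\HK}$ can be checked on $f^{\RIG}\colon X^{\RIG}\to Y^{\RIG}$. All the properties are local on the target, so I would first base change along $\operatorname{GSpec}A\to Y^{\RIG}$ with $A$ uniformly strictly totally disconnected perfectoid. The fiber is then the functor on perfectoid $A$-algebras $B\mapsto X_{A^{u,\flat}}(B^{u,\flat})$. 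By tilting, this fiber is the analytification (in the sense of Gelfand stacks) of the perfection of the $A^{u,\flat}$-scheme $X\times_Y\Spec A^{u,\flat}$, untilted back over $A$ via the equivalence between perfectoid spaces over $A$ and over $A^{u,\flat}$. This identification is the key structural step. I would prove it by comparing functors of points on the basis $\cat{uPerfd}^{\operatorname{std},\qfd}_{\omega_1}$, using that $X$ is of finite type, so maps into it are determined on the (perfect) tilt.

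With this description, (1) becomes a question about analytifications of perfections of finite type morphisms over a perfectoid field or ring.
\begin{enumerate}
\item An étale morphism remains étale after perfection, and its analytification is cohomologically étale; here I would use the standard fact for analytic étale maps of Gelfand stacks.
\item A proper morphism gives a proper analytification, hence a cohomologically proper map. This is the analogue of the fact that analytic proper maps are $!$-able with $f_!=f_*$.
\item For a smooth morphism, étale locally $X\to\aff^n_Y$, so it suffices to treat the perfection of $\aff^1$. Its analytification is the perfectoid affine line, which should be cohomologically smooth with dualizing sheaf $1(1)[2]$. The Tate twist is the one of Definition \ref{Tatetwist}, which records how Frobenius acts once we descend to $X^{\HK}$.
\end{enumerate}
This computation for the perfectoid $\aff^1$, including its compatibility with $\varphi$, is the main obstacle. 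My first attempt would be to compare with the Fargues--Fontaine/$\operatorname{Spd}$ picture and import the known dualizing complex of the perfectoid closed disk and affine line in the Gelfand setting from \cite{ABLBRCS25}. Statement (4) follows from (3) by multiplicativity of dualizing objects under composition and étale base change.

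For (2), it suffices to show that $f_*^{\HK}1\cong1$ for $\aff^1_X\to X$ universally; projection formula plus base change then give the statement for all $\F$. This in turn reduces, by the description above, to the assertion that the analytic perfect $\aff^1$ over $A$ has trivial coherent cohomology, $\O(\aff^{1,\an})\to$ its $\varphi$-invariants recovering $A$. I would verify it by covering $\aff^1$ by the increasing disks $|T|\le p^{n}$ and using that Frobenius contracts/dilates radii. After quotienting by $\varphi^{\itg}$, only constants survive in $R\Gamma$, computed as the fiber of $\varphi-1$. For (3), $U^{\RIG}\subset X^{\RIG}$ is open because $U$ is cut out by the nonvanishing of functions, which transmutes to an open condition on tilts. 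Its complement is represented by $Z^{\RIG}$ because a point of $X(A^{u,\flat})$ lands in $Z$ exactly when the ideal of $Z$ pulls back to an ideal whose radical is everything. The reduced/perfect nature of $A^{u,\flat}$ makes nilpotent thickenings invisible, so the closed complement is exactly $Z^{\RIG}$. Finally, everything passes to the $\varphi^{\itg}$-quotient by étale descent.
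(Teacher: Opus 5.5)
\textbf{The fiber is misidentified, and this breaks (1) for smooth maps and (2).} Over $\operatorname{GSpec}A\to Y^{\RIG}$, the $B$-points of the fiber are $B^{u,\flat}$-points of $X\times_Y\Spec A^{u,\flat}$, so they depend only on $B^u$. The fiber is therefore the relative analytic de Rham stack $P^{\dR}\times_{(\operatorname{GSpec}A)^{\dR}}\operatorname{GSpec}A$ of the perfectoid space $P$ you describe, not $P$ itself, whose $B$-points involve $B$. A functor-of-points comparison on $\cat{uPerfd}^{\operatorname{std},\qfd}_{\omega_1}$, which contains non-uniform rings, detects exactly this difference.

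For $P$ itself the statements you need fail: for the perfectoid line, $f_*1=\O(P)$ contains every $T^{1/p^n}$, so it is not $A$. Cohomological smoothness and full faithfulness of $f^*$ are de Rham phenomena. What you can actually use is that $\aff^{1,\RIG}=\varprojlim_{\varphi}\aff^{1,\an,\dR}$ is cohomologically smooth with $f_!f^!1\xrightarrow{\sim}1$ \cite[Cor.~5.7.3, Lemma~6.2.2]{ABLBRCS25}; the latter rests on the analytic Poincar\'e lemma.

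Your reduction of (2) has two further defects. First, $f$ is suave but not cohomologically proper, so $f_*$ satisfies no projection formula for general $\F$. In fact $f_*f^*\F\cong\underline{\Hom}(f_{\natural}1,\F)$, so the right criterion is that $f_{\natural}1=f_!f^!1\to1$ be an isomorphism. This does not follow formally from $f_*1\cong(f_{\natural}1)^\vee\cong1$. Second, $\varphi$-invariants play no role: the base change of $(\aff^1_X)^{\HK}\to X^{\HK}$ along $X^{\RIG}\to X^{\HK}$ is $\aff^{1,\RIG}\times X^{\RIG}\to X^{\RIG}$. There is no Frobenius quotient on the fiber, so the statement must already hold before quotienting by $\varphi^{\itg}$.

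\textbf{Parts (3) and (4) also lack their essential input.} Openness or closedness in the six-functor sense is not detected by a pointwise nonvanishing condition. The same heuristic applied to $\mathbb{G}_{m,\ff_p}^{\arith,\operatorname{pre}}(A)=(A^{\le1}/A^{<1})^\times$ would make $\mathbb{G}_{m,\ff_p}^{\arith}\subset\aff^{1,\arith}_{\ff_p}$ open, whereas Proposition~\ref{basicexcision} shows it is closed. What you need is that $\Spd\ff_p\hookrightarrow\aff^1_{\ff_p,\arc}\hookleftarrow\mathbb{G}_{m,\ff_p,\arc}$ is a closed-open decomposition of arc-stacks ($|x|\neq0$ is an analytically open condition, unlike $|x|=1$). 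You also need that the Hyodo--Kato construction preserves such decompositions \cite[Remark 6.1.11]{ABLBRCS25}. General $Z\subset X$ then follows from $\{0\}\subset\aff^1$ by base change and induction on the number of equations, as in Corollary~\ref{contraryexcision}.

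For (4), étale charts $X\to\aff^n_Y$ only give chart-dependent isomorphisms $f^{\HK,!}1|_U\cong1(n)[2n]$. Gluing them requires a canonical trace. The twist is invisible on $X^{\RIG}$, so it has to be computed from the Frobenius action rather than imported. The paper gets both from the strong theory of first Chern classes (Proposition~\ref{strongchernclass}) via Zavyalov's argument. The twist traces back to $g_*1\cong1\oplus1(-1)[-1]$ on $\gff^{\HK}$, i.e.\ $i^!1\cong1(-1)[-2]$ (Proposition~\ref{ishriek}).

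The paper needs no fiberwise description at all. It checks the ring-stack hypotheses for $\aff^{1,\RIG}$ (Proposition~\ref{rigidringstack}) and applies the transmutation Theorem~\ref{Aoki}, obtaining (1)--(3) for all $X$ at once. If you replace $P$ by its de Rham stack, your étale and proper cases become the alternative route through \cite[Theorem 6.3.1]{ABLBRCS25} that the paper mentions after the relative version.
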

\begin{remark}
  A similar statement holds for the relative Hyodo--Kato stack;
  see Theorem \ref{PoincaredualityrigidFrobeniusrelative}.
\end{remark}

One can ask about the classification of coefficients $D(X^{\HK})$ associated to $X$.
However,
we don't expect this category to correspond to any classical objects that people studied previously.
Nevertheless,
we can still classify the category of perfect complexes on it in the following sense.
First let us introduce an object connecting $X^{\arith}$ and $X^{\HK}$.

\begin{definition}[Remark \ref{smallrigiddefinition}]
  Define the stack $X^{\RIG}_{\le 1}$ as the (sheafification of) the following qfd Gelfand (pre-)stack:
  \[
    X^{\RIG}_{\le 1}(A):=X(A^{u,\circ,\flat}),\,A\in\cat{uPerfd}^{\qfd}_{\omega_1}.
  \]
  It is equipped with a Frobenius morphism $\varphi_X\colon X^{\RIG}_{\le 1}\rightarrow X^{\RIG}_{\le 1}$,
  by acting as $x\mapsto x^p$ on functor of points.
  Then define
  \[
  X^{\HK}_{\le 1}:=X^{\RIG}_{\le 1}/\varphi^{\itg}_X.
  \]
  Let $K$ admit a Frobenius lift $\varphi_K$ and be perfect.
  Then we also have a relative version of these stacks given in Definition \ref{relativeHK}.
\end{definition}

\begin{theorem}[Theorem \ref{equivalenceperfectcomplex}]
  Let $X$ be a scheme of finite type over a perfect field $k$.
  Let $K$ admit a Frobenius lift and be perfect.
  Then there exists a natural diagram 
  \[
  \xymatrix{
    & X^{\HK/K}_{\le 1}\ar[dr]^{\alpha}\ar[dl]_{\beta} & \\
   X^{\arith/K}/\varphi^{\itg}_{X} & & X^{\HK/K}
  }
  \]
  It induces an equivalence of categories of perfect complexes,
  i.e.
  we have 
  $\alpha^*\colon\cat{Perf}(X^{\HK/K})\cong \cat{Perf}(X^{\HK/K}_{\le 1})$
  and $\beta^*\colon\cat{Perf}(X^{\arith/K}/\varphi^{\itg}_{X})\cong \cat{Perf}(X^{\HK/K}_{\le 1})$.
\end{theorem}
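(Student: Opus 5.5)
First I construct the two maps on functors of points, and then I reduce both equivalences to statements about perfect complexes on the local models. For $A\in\cat{uPerfd}^{\qfd}_{\omega_1}$, the map $\alpha$ comes from the ring map $A^{u,\circ,\flat}\to A^{u,\flat}$, which gives $X(A^{u,\circ,\flat})\to X(A^{u,\flat})$. The map $\beta$ comes from the composite
\[
A^{u,\circ,\flat}\to A^{u,\circ}/p \to A^{\le 1}/A^{<1}.
\]
Here the first map is projection to the zeroth coordinate, and the second exists because $p\in A^{<1}$ and $A^{u,\circ}/A^{u,\circ\circ}=A^{\le 1}/A^{<1}$. Both maps commute with $x\mapsto x^p$; on the target of $\beta$ this is the absolute Frobenius $\varphi_X$ acting on $X^{\arith}$. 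Both are compatible with the structure maps to the $K$-bases. So they descend to the quotients by $\varphi^{\itg}$ and to the relative versions after base change along $\operatorname{GSpec}K/\varphi_K^{\itg}$.

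Since all stacks involved satisfy descent in $X$ (étale and Zariski, using the six-functor properties stated above for $X^{\arith}$ and $X^{\HK}$), and $\cat{Perf}$ is a sheaf, I reduce to $X$ affine, and then to $X$ smooth affine, or even to $\aff^n_k$ via closed immersions together with the excision property (3). For the closed case I would instead use that both constructions take closed immersions to maps identifying tubes, and check compatibility of these identifications.

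Next I treat $\alpha$, the passage from $\le 1$ to all of $A^{u,\flat}$. The fiber of $X^{\RIG}_{\le1}\to X^{\RIG}$ is where coordinates of norm $>1$ are excluded. After quotienting by Frobenius, any coordinate $x$ with $|x|>1$ is contracted into the unit ball by $\varphi^{-n}$, because $|x^{p^{-n}}|\to 1$. So $X^{\HK}_{\le 1}\to X^{\HK}$ should be a ``closure/overconvergence'' type map: an equivalence up to an open/closed piece on which perfect complexes cannot distinguish. Concretely, for $X=\aff^1$ I would show that $\alpha^*$ is fully faithful on $\cat{Perf}$. This amounts to $\O\to R\alpha_*\O$ being an isomorphism after $\varphi$-invariants, via a computation of the cohomology of the annuli $\{1\le |x|\le r\}$ modulo $\varphi$. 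Essential surjectivity would then follow from a Frobenius-descent argument on perfect complexes.

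For $\beta$, I use that $X^{\RIG}_{\le1}\to X^{\arith}$ has fibers which are ``perfectoid open-disk'' type: the kernel of $A^{u,\circ,\flat}\to A^{\le1}/A^{<1}$ is topologically nilpotent. Its $\varphi$-quotient should therefore be cohomologically trivial on perfect complexes, in the spirit of Dwork's trick: Frobenius pullback makes perfect complexes with $\varphi$-structure canonically extend across topologically nilpotent thickenings. I would prove $\O\cong R\beta_*\O$ together with the projection formula, and deduce essential surjectivity by Frobenius-twisted deformation along the fiber.

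I expect the main obstacle to be the full faithfulness computations: showing that the relevant unit maps are isomorphisms on perfect complexes while the map fails to be an equivalence on all of $D(-)$. This is the most delicate point for $\alpha$, which involves non-overconvergent versus overconvergent behaviour at $|x|=1$.
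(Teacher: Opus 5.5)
Your construction of $\alpha$ and $\beta$ on functors of points is fine. The proof of the two equivalences, however, has two genuine gaps. The first is the reduction. Zariski and étale descent cannot take you from an arbitrary finite-type $X$ to a smooth one. The detour through closed immersions into $\aff^n_k$ plus excision also fails for perfect complexes, for two reasons. First, a recollement does not restrict to $\cat{Perf}$: the gluing functor $i^*j_*$ does not preserve perfectness, and $\cat{Perf}$ of a closed substack is not controlled by $\cat{Perf}$ of the ambient stack. Second, a closed immersion $Z\subset X$ becomes closed on the Hyodo--Kato side but open on the arithmetic side, so the two recollements do not even correspond. The paper instead combines $h$-hyperdescent of both sides (Theorem \ref{fpqcdescentarith}, Remark \ref{hdescentrigid}), $!$-descent of $\cat{Perf}$, and de Jong's alterations to reduce to smooth $X$. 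It then Zariski-localizes to $X$ sitting openly in a proper smooth scheme with a proper smooth Frobenius-lifted model $\mathfrak{Y}$. Only there are the explicit charts of Propositions \ref{Xarithdescription} and \ref{smallrigidstackdescription} available.

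The second, more serious, gap is that the Frobenius mechanism you invoke needs a non-formal input that you never supply. After perfection, $\varphi$ is an automorphism preserving the locus $\{|x|\le 1\}$ (resp.\ the overconvergent neighbourhood $\Delta^{\dagger}$ of the diagonal). Points with $|x|>1$ only approach the unit disc, since $|x|^{p^{-n}}>1$ for all $n$. So a $\varphi$-structure on a perfect complex on $X^{\HK/K}_{\le 1}$ can never by itself extend it past $|x|=1$. Likewise, Dwork's trick needs a positive initial radius of convergence around the diagonal, and producing that from an object on $\Delta^{\dagger}$ is precisely the non-formal step.

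The missing ingredient is the spreading-out lemma (Lemma \ref{lemmaprototype2}). It says that a perfect complex on an overconvergent normed tube $Z$ (resp.\ divisor) comes, essentially uniquely, from some $Z_r$ with $r>1$ (resp.\ $Z_\epsilon$ with $\epsilon>0$). Only then can the Frobenius identifications $X_n\cong X_{n+1}$ of the nested exhaustion propagate it to the whole space. This is Lemma \ref{prototype}, and it gives full faithfulness and essential surjectivity on $\cat{Perf}$ simultaneously.

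Your separate full-faithfulness step for $\alpha$ is not right as stated. $\O\to R\alpha_*\O$ cannot be an isomorphism for a non-invertible closed immersion. A global-sections computation with trivial coefficients on annuli says nothing about $\Hom$ between arbitrary perfect complexes.

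For $\beta$, $\O\cong R\beta_*\O$ does hold; the paper gets full faithfulness of $\beta^*$ on all of $D$ via Remark \ref{fullfaithfulanalyticdeRham}. But essential surjectivity is exactly what ``Frobenius-twisted deformation along the fiber'' leaves open. The paper obtains it by taking \v{C}ech nerves of the common chart $\varprojlim_{\varphi}]X[^{\dagger}_{\mathfrak{Y}}/\varphi^{\itg}$ of source and target. Levelwise, $\beta$ becomes the inclusion $\varprojlim_{\varphi}\Delta^{\dagger}_{n+1}(]X[^{\dagger}_{\mathfrak{Y}})/\varphi^{\itg}\to\varprojlim_{\varphi}]X[^{\dagger}_{\mathfrak{Y}^{n+1}}/\varphi^{\itg}$ of an overconvergent normed divisor. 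Lemma \ref{prototype} applies to each level, and one concludes by descent.
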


The main input of the proof is the ``spreading out'' property of perfect complexes,
cf. \cite[Section 2.3]{Hau26}.
Let us explain it in the toy model $X=\aff^1$ over $k$.
Let $\mathbb{G}^{\dagger}_{a,K}=\operatorname{GSpec}K\left\{T\right\}^{\dagger}$ be the overconvergent neighborhood of $0$ in $\Ga$.
In this case,
the picture becomes 
\[
  \xymatrix{
    & (\varprojlim_{\varphi}(\disk^{\dagger}_K/\mathbb{G}^{\dagger}_{a,K}))/\varphi^{\itg}\ar[dr]^{\alpha}\ar[dl]_{\beta} & \\
   (\disk^{\dagger}_K/\disk^{\circ}_K)/\varphi^{\itg}=(\varprojlim_{\varphi}(\disk^{\dagger}_K/\disk^{\circ}_K))/\varphi^{\itg} & & (\varprojlim_{\varphi}(\aff^{1,\an}_K/\mathbb{G}^{\dagger}_{a,K}))/\varphi^{\itg}
  }
\]
The result that $\alpha$ (resp. $\beta$) is an equivalence essentially comes from two steps:
\begin{enumerate}
  \item Spreading out of perfect complexes over $\disk^{\dagger}_K$ (resp. $\mathbb{G}^{\dagger}_{a,K}$) to a small neighborhood of it.
  \item Use Frobenius-equivariance to extend the perfect complex over the small neighborhood to a perfect complex over $\aff^{1,\an}_K$ (resp. $\disk^{\circ}_K$).
\end{enumerate} 

The equivalence of perfect complexes actually shows that $X^{\HK/K}$ is also a stacky approach to rigid cohomology and overconvergent $F$-isocrystals.

\begin{theorem}[Theorem \ref{rigidstackrigidcohomology}]
  Let $X$ be a scheme of finite type over a perfect field $k$.
  Let $K$ admit a Frobenius lift $\varphi_K$ and be perfect\footnote{A similar statement for non-perfect field also holds;
  see Remark \ref{passingtononperfect}.}.
  Then we have an equivalence of categories
  $\cat{Perf}(X^{\HK/K})\cong D^b(\cat{F-Isoc}^{\dagger}(X/K))$,
  and a natural isomorphism
  \[
  R\Gamma(X^{\HK/K},\O_{X^{\HK/K}})\cong R\Gamma_{\rig}(X/K)\in  D_{\blacksquare}(K)^{\varphi_K\textnormal{-equiv}}.
  \]
\end{theorem}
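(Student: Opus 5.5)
The plan is to reduce both claims to the arithmetic de Rham side, using the equivalence of perfect complexes in Theorem \ref{equivalenceperfectcomplex} together with Theorem \ref{arithrigidcoh2}.

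\textbf{Step 1: identify $F$-isocrystals with perfect complexes on the Frobenius quotient.} By Theorem \ref{arithrigidcoh2} we already have $\cat{Perf}(X^{\arith/K})\cong D^b(\cat{Isoc}^{\dagger}(X/K))$, compatibly with pullback along the relative Frobenius. Since $K$ is perfect with Frobenius lift $\varphi_K$, the endomorphism $\varphi_X$ of $X^{\arith/K}$ covers $\varphi_K$. Perfect complexes on the quotient stack $X^{\arith/K}/\varphi_X^{\itg}$ are then perfect complexes $\mathcal{E}$ on $X^{\arith/K}$ together with an isomorphism $\varphi_X^*\mathcal{E}\cong\mathcal{E}$; this is descent for a $\itg$-quotient, i.e. an equalizer of $\mathrm{id}$ and $\varphi^*$. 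Under the equivalence above, $\varphi_X^*$ corresponds to the Frobenius pullback of overconvergent isocrystals. Hence $\cat{Perf}(X^{\arith/K}/\varphi^{\itg})$ is the category of objects of $D^b(\cat{Isoc}^{\dagger}(X/K))$ with a Frobenius structure.

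\textbf{Step 2: compare with $D^b(\cat{F-Isoc}^{\dagger}(X/K))$.} The category from Step 1 must be identified with the bounded derived category of $F$-isocrystals. Because the Frobenius pullback is exact on overconvergent isocrystals, taking cohomology sheaves gives $F$-isocrystals in each degree. Full faithfulness then follows by comparing the mapping complexes: both sides are the fiber of $\varphi-1$ on the $\Hom$ complex of underlying isocrystals. This fiber identification is formal on the stack side, and on the classical side it is the standard description of $\Ext$ in $F$-isocrystals. Essential surjectivity is obtained by induction on amplitude, using that extensions are classified by these fibers. Combining Step 2 with Theorem \ref{equivalenceperfectcomplex}, i.e. $\alpha^*$ and $\beta^*$ composed, yields $\cat{Perf}(X^{\HK/K})\cong D^b(\cat{F-Isoc}^{\dagger}(X/K))$.

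\textbf{Step 3: cohomology.} The structure sheaf is perfect and is preserved by $\alpha^*$ and $\beta^*$, so $R\Gamma(X^{\HK/K},\O)$ can be computed as $R\mathrm{Hom}(\O,\O)$ in $\cat{Perf}$ of any of the three stacks. Since $\alpha^*$ and $\beta^*$ are fully faithful on perfect complexes, we get
\[R\Gamma(X^{\HK/K},\O)\cong R\Gamma(X^{\arith/K}/\varphi^{\itg},\O)\cong \operatorname{fib}\bigl(R\Gamma(X^{\arith/K},\O)\xrightarrow{\varphi-1}R\Gamma(X^{\arith/K},\O)\bigr).\]
This is not yet the claimed statement: the target is $R\Gamma_{\rig}(X/K)$ as an object of $D_{\blacksquare}(K)^{\varphi_K\text{-equiv}}$, and not its Frobenius fixed points. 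So the argument should be run relative to the base $\operatorname{GSpec}K/\varphi_K^{\itg}$, pushing forward to it rather than to a point. The pushforward of $\O$ to $\operatorname{GSpec}K/\varphi_K^{\itg}$ is a $\varphi_K$-equivariant solid $K$-module. Its pullback to $\operatorname{GSpec}K$ is computed by base change from $R\Gamma(X^{\arith/K},\O)\cong R\Gamma_{\rig}(X/K)$ (Theorem \ref{arithrigidcoh2}), with the Frobenius structure induced by $\varphi_X$.

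\textbf{Main obstacle.} The hard point is transporting this pushforward across $\alpha$ and $\beta$. Full faithfulness on $\cat{Perf}$ only controls homs between perfect complexes; it does not automatically give $\alpha_*\O\cong\O$ after pushforward to the base. My first attempt is to realize $f_*\O$ for the structure map $f$ to the base as $R\mathrm{Hom}(\O, f^*(-))$ against perfect coefficients pulled back from the base. On $\operatorname{GSpec}K/\varphi_K^{\itg}$, compact generators of the form $f^*V$ with $V$ perfect should suffice, and these are preserved by the equivalences; this would identify the $\varphi_K$-equivariant pushforwards on all three stacks. A further check is needed that the Frobenius on $R\Gamma_{\rig}$ obtained this way agrees with the classical one. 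Since both are induced by functoriality along the absolute Frobenius, this should follow from the naturality in Theorem \ref{arithrigidcoh2}.
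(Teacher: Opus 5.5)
Your Steps 1 and 2 follow the paper's route. The paper just cites Theorem \ref{equivalenceperfectcomplex} and the remark after Theorem \ref{arithrigidcoh2} for $\cat{Perf}(X^{\arith/K}/\varphi_X^{\itg})\cong D^b(\cat{F-Isoc}^{\dagger}(X/K))$. You also correctly see that the cohomology claim concerns the pushforward to $\operatorname{GSpec}K/\varphi_K^{\itg}$, not the mapping spectrum $R\Hom(\O,\O)$.

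However, the argument you propose for that pushforward does not work. Testing against perfect objects $V$ of $D(\operatorname{GSpec}K/\varphi_K^{\itg})=D_{\blacksquare}(K)^{\varphi_K\text{-equiv}}$ cannot detect isomorphisms there. By the Fredholm property, those perfect objects are only finite-dimensional $K$-complexes with a semilinear Frobenius, and they do not generate. Concretely, take $K=\rt_p$ with $\varphi_K=\id$, and let $\rt_p(T)$ carry $\varphi$ acting by multiplication by $T$. This object is nonzero, yet $R\Hom(V,\rt_p(T))=0$ for every perfect $V$, because $V$ is torsion over $\rt_p[T^{\pm1}]$ while $\rt_p(T)$ is torsion-free and injective.

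So full faithfulness of $\alpha^*$ and $\beta^*$ on $\cat{Perf}$ only shows that the cofibers of the natural maps $f^{\arith/K}_*1\to f^{\HK/K}_{\le 1,*}1\leftarrow f^{\HK/K}_*1$ are right-orthogonal to all perfect objects. That is not enough. You also cannot rescue it with finiteness of rigid cohomology, since the paper deduces Kedlaya's finiteness (Theorem \ref{Kedlayafinitedimensionality}) from this very theorem.

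The missing inputs are the two relative statements collected in Remark \ref{compatibilityperfectcomplex}.
\begin{itemize}
\item The full-faithfulness half of Theorem \ref{equivalenceperfectcomplex} proves that $\beta^*$ is fully faithful on all of $D(X^{\arith/K}/\varphi_X^{\itg})$, not just on $\cat{Perf}$; it does so by reduction to Remark \ref{fullfaithfulanalyticdeRham}. Hence $\beta_*\beta^*\cong\id$, and $f^{\arith/K}_*1\cong f^{\arith/K}_*\beta_*\beta^*1= f^{\HK/K}_{\le 1,*}1$ in $D_{\blacksquare}(K)^{\varphi_K\text{-equiv}}$.
\item After the $h$-descent, de Jong and Zariski reductions, $\alpha$ is literally of the form $\iota$ in Lemma \ref{prototype}. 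The equivalence there is built from the colimit, spreading-out and equalizer descriptions, all relative to the base $Y=\operatorname{GSpec}K/\varphi_K^{\itg}$. It is therefore compatible with pushforward to $D(Y)$, which gives $f^{\HK/K}_*1\cong f^{\HK/K}_{\le 1,*}1$.
\end{itemize}
Once these two identifications are in place, Theorem \ref{arithrigidcoh2}, together with the induced Frobenius structure, finishes the proof as you outline.
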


\begin{remark}
  The full category of sheaves $D(X^{\HK})$ on the Hyodo--Kato stack $X^{\HK}$ will not give the theory of arithmetic $D$-module.
  Literally,
  it gives the category of \emph{analytic $D$-modules}\footnote{In the sense of the forthcoming paper \cite{RCRJ}.} over the Fargues--Fontaine curve $\FF_X$ of $X$.
  Nevertheless,
  the category of solid arithmetic $D$-modules with Frobenius structure over $X$ embeds fully faithfully into this category;
  see Theorem \ref{equivalenceperfectcomplex},
  and this category has relations with Frobenius-equivariant \emph{regular} $D$-modules on a $\delta$-lift $\mathcal{X}$ over $\rt_p$,
  at least in the case of perfect complexes;
  see Remark \ref{regularDmodule} for the toy model $X=\aff^1_k$.
\end{remark}

The equivalence of perfect complexes is quite useful.
An application is to reprove finite dimensionality of rigid cohomology à la Kedlaya,
cf. the main theorem of \cite{Ked06}.
The new proof is quite direct,
instead of using a lot of geometry as the original proof.

\begin{theorem}[Theorem \ref{Kedlayafinitedimensionality}]
  Let $X$ be a smooth variety over $k$,
  and $\E$ be an overconvergent $F$-isocrystal on $X$ over $K$.
  Then $H^i_{\rig}(X/K,\E)$ is finite-dimensional over $K$.
\end{theorem}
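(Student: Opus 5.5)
We reduce the finite-dimensionality of $H^i_{\rig}(X/K,\E)$ to a finiteness property of the $6$-functor formalism on Hyodo--Kato stacks. First we reduce to $k$ perfect and $K$ perfect with a Frobenius lift. Base change along a perfection $K\to K'$ (e.g. $K'$ the completed perfection of $W(k^{\operatorname{perf}})[p^{-1}]$ adapted to $K$) is faithfully flat in the solid sense, and rigid cohomology commutes with this base change; compare Remark \ref{passingtononperfect}. It therefore suffices to show finiteness after base change. By Theorem \ref{rigidstackrigidcohomology}, $\E$ corresponds to a perfect complex $\E^{\HK}\in\cat{Perf}(X^{\HK/K})$, and
\[
R\Gamma_{\rig}(X/K,\E)\cong R\Gamma(X^{\HK/K},\E^{\HK})=f^{\HK/K}_*\E^{\HK},
\]
where $f\colon X\to\Spec k$ is the structure map. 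The claim thus becomes: $f^{\HK/K}_*$ sends perfect complexes on $X^{\HK/K}$ to perfect (i.e. dualizable) objects of $D(\operatorname{GSpec}K/\varphi_K^{\itg})$. Such objects are $\varphi$-modules whose underlying $K$-complex is perfect, because pullback to $\operatorname{GSpec}K$ preserves dualizable objects. This gives finite-dimensional cohomology in each degree.

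To prove the dualizability we use the relative version of Theorem \ref{PoincaredualityrigidFrobenius} (Theorem \ref{PoincaredualityrigidFrobeniusrelative}). Choose a compactification $j\colon X\hookrightarrow \overline{X}$ with $\overline{f}\colon \overline{X}\to\Spec k$ proper. Here $X$ smooth but not proper is the main difficulty, so we proceed by duality. Since $f$ is smooth of dimension $n$, $f^{\HK}$ is cohomologically smooth with $f^{\HK,!}1\cong 1(n)[2n]$, so $f^{\HK,!}\cong f^{\HK,*}(n)[2n]$. For dualizable $\E^{\HK}$ with dual $\E^\vee$, the standard $6$-functor manipulation gives
\[
f_*\E^{\HK}\cong f_*\underline{\Hom}(\E^\vee,1)\cong \underline{\Hom}\bigl(f_!(\E^\vee(n)[2n]),1\bigr).
\]
So it suffices to show that $f_!$ preserves dualizable objects, since duals of dualizable objects are dualizable.

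We have $f_!=\overline{f}_*\circ j_!$, with $\overline{f}^{\HK}$ cohomologically proper. If $\overline{X}$ can be chosen smooth, we would like to use the fact that for $g$ proper and smooth, $g_*=g_!$ preserves dualizable objects: $g$ is then both proper and smooth, so $g_*$ has a right adjoint $g^!$ and a left adjoint $g_\natural$, the latter given by a twist of $g^*$. Together with the projection formula, this shows $g_*$ preserves dualizable objects by the standard argument (as in Berthelot's conjecture, Theorem \ref{Berthelotconj}). The problem is that $j_!\E^\vee$ need not be dualizable on $\overline{X}^{\HK}$.

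This is the main obstacle, and we handle it by dévissage rather than compactification. We induct on $\dim X$ using the closed-open decomposition of Theorem \ref{PoincaredualityrigidFrobenius}(3): for $Z\subset X$ closed with complement $U$ there is a fiber sequence $j_!j^*\to\id\to i_*i^*$, and finiteness of compactly supported cohomology passes along it. The key local step is Kedlaya's semistable-reduction-free route: after shrinking, $X$ is finite étale over an open of $\aff^n$ (Noether normalization; étale since $X$ is smooth). Since $f^{\HK}$ for $f$ étale is cohomologically étale, pushforward along the finite étale map preserves perfect complexes, which reduces us to $X$ an open in $\aff^n$. Using strong $\aff^1$-invariance (Theorem \ref{PoincaredualityrigidFrobenius}(2)) together with the same dévissage, we reduce to curves. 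For curves we would finally need a Frobenius-structure argument, namely the spreading-out plus Frobenius extension used in Theorem \ref{equivalenceperfectcomplex}, which shows that a perfect complex on $X^{\HK}$ with $X$ a curve extends across the boundary points as a perfect complex up to a finite-length correction. I expect this extension step to be the real work: it is exactly where the Frobenius structure is essential, consistent with finiteness being known to fail without it.
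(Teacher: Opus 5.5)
There is a genuine gap, at exactly the step you flag, and the reductions leading to it also fail. Asking $f^{\HK/K}_!$ to preserve dualizable objects amounts to finiteness of compactly supported rigid cohomology with $F$-coefficients. That statement is as deep as the theorem itself, and your dévissage does not bring it within reach:
\begin{enumerate}
\item Strong $\aff^1$-invariance only controls objects pulled back from the base. It cannot reduce a general $\E$ on an open of $\aff^n$ to curves. For that you would need pushforward along a relative-curve fibration to be, at least generically, an overconvergent $F$-isocrystal. This is not a formal consequence of the six-functor properties (the paper notes smooth pushforward fails to preserve overconvergent $F$-isocrystals, e.g.\ for $\mathbb{G}_{m,k}\to\aff^1_k$). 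It is exactly where Kedlaya needs a relative $p$-adic local monodromy theorem.
\item The closed pieces $Z$ in your induction are singular, so the induction would also have to cover the non-smooth case.
\item Most seriously, Theorem \ref{equivalenceperfectcomplex} does not supply the curve case. Its spreading-out and Frobenius-extension compare $X^{\HK/K}_{\le 1}$, $X^{\HK/K}$ and $X^{\arith/K}/\varphi^{\itg}_X$ for one and the same $X$ (normed tubes inside a single space). It says nothing about extending a perfect complex across the boundary points of a compactification. That extension ``up to a finite-length correction'' is essentially quasi-unipotence, i.e.\ the $p$-adic local monodromy theorem, and you leave it unproved.
\end{enumerate}

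The missing idea is that you never need dualizability of $f_!$. You need only the much weaker fact that $f_!$ preserves locally nuclear $\omega_1$-compact (basic nuclear) objects, which holds in general, together with the arithmetic de Rham stack as a second realization.
\begin{enumerate}
\item Your duality formula already shows that $R\Gamma_{\rig}(X/K,\E)\cong f^{\HK/K}_*M$ is the dual of $f^{\HK/K}_!$ applied to a dualizable object. It is therefore the dual of a basic nuclear object.
\item On the other side, view $\E$ as $N\in\cat{Perf}(X^{\arith/K}/\varphi^{\itg}_X)$. The Frobenius structure enters here, via Theorem \ref{equivalenceperfectcomplex} and the compatibility of the pushforwards to the point (Remark \ref{compatibilityperfectcomplex}).
\item By Theorem \ref{arithsixfunctor}, $f^{\arith/K}$ is prim with invertible prim dual. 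So $f^{\arith/K}_*N$ is a shift of $f^{\arith/K}_!N$, and the same complex is itself basic nuclear.
\item A bounded object of $D_{\blacksquare}(K)$ that is both basic nuclear and the dual of a basic nuclear object is perfect. This is Lemma \ref{basicnucleardualizable}, proved by a Baire-category and trace-class perturbation argument.
\end{enumerate}
This yields the finiteness with no dévissage, compactification, or local monodromy input.
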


Let us say a few words about the proof.
First we can reduce to the case that $k$ is perfect.
Then using the equivalence of categories of perfect complexes,
one can view an overconvergent $F$-isocrystal as an object in both $\cat{Perf}(X^{\HK/K})$ and $\cat{Perf}(X^{\arith/K}/\varphi^{\itg}_{X})$.
Working separately in these two categories,
we can prove that $R\Gamma_{\rig}(X/K,\E)$ is both basic nuclear and is the dual of a basic nuclear object in $D_{\blacksquare}(K)$.
Solid functional analysis tells us it is also dualizable,
hence its cohomology $H^i_{\rig}(X/K,\E)$ in each degree is finite-dimensional.

Another application is to recover the overconvergence of rigid cohomology for varieties over $\mathbb{F}_q(\!(t)\!)$ in Theorem \ref{LazdaPal},
recovering part of the work of Kedlaya \cite{Ked00},
and Lazda--P\'{a}l \cite{LP14a}, \cite{LP14b} \cite{LP15}.

\begin{theorem}[Theorem \ref{LazdaPal}]
  Let $X$ be a variety over $\Spec\mathbb{F}_q(\!(t)\!)$.
  Let $f^{\HK}\colon X^{\HK}\rightarrow (\Spec\mathbb{F}_q(\!(t)\!))^{\HK}$ be the induced morphisms on Hyodo--Kato stacks.
  Then $f^{\HK}_*1$ gives the information of a finite-dimensional Frobenius-equivariant $(\varphi,\nabla)$-module over the bounded Robba ring $\E^{\dagger}_{\rt_q}$.
\end{theorem}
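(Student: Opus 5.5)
We reduce the statement to the theory of $(\varphi,\nabla)$-modules over the Robba ring, using the comparison results for the relative Hyodo--Kato stack. Write $S=\Spec\mathbb{F}_q(\!(t)\!)$. Choose the Cohen-type lift $\E^{\dagger}_{\rt_q}$ of $\mathbb{F}_q(\!(t)\!)$, i.e.\ the bounded Robba ring with Frobenius lift $t\mapsto t^p$ (or $t^q$). It is a discretely valued field with residue field $\mathbb{F}_q(\!(t)\!)$, but it is not perfect.

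\textbf{Step 1: describe $S^{\HK}$.} We first describe $S^{\HK}$, or at least its category of perfect complexes, in terms of $\E^{\dagger}_{\rt_q}$.

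\begin{itemize}
\item Apply Theorem \ref{equivalenceperfectcomplex} and Theorem \ref{rigidstackrigidcohomology}, in the non-perfect variant of Remark \ref{passingtononperfect}, to the base scheme $S$ over $\ff_q$. This yields $\cat{Perf}(S^{\HK})\cong D^b(\cat{F-Isoc}^{\dagger}(S/\rt_q))$.
\item Identify overconvergent $F$-isocrystals on $\Spec\mathbb{F}_q(\!(t)\!)$, relative to $W(\ff_q)[1/p]$, with $(\varphi,\nabla)$-modules over $\E^{\dagger}_{\rt_q}$. Concretely, the frame is $\mathbb{F}_q(\!(t)\!)$ sitting in a boundary annulus of $\proj^1$, and the overconvergent sections of its tube form $\E^{\dagger}_{\rt_q}$.
\item If this classical identification is not available in the required form, compute the arithmetic de Rham stack directly: $S^{\arith/\rt_q}$ is the quotient of $\operatorname{GSpec}\E^{\dagger}_{\rt_q}$ (the germ of annuli near the boundary) by the open-disk-type groupoid, as in the toy model. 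Its perfect complexes with Frobenius are then modules with overconvergent connection and Frobenius.
\end{itemize}

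\textbf{Step 2: show $f^{\HK}_*1$ is perfect.} We need $f^{\HK}_*1\in\cat{Perf}(S^{\HK})$, so that the equivalence of Step 1 applies.

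\begin{itemize}
\item By Theorem \ref{PoincaredualityrigidFrobenius}, together with Nagata compactification and dévissage along closed-open decompositions, reduce to the case where $X$ is smooth.
\item Then $f^{\HK}_*1$ is Poincaré dual to $f^{\HK}_!1$ up to twist and shift, since $f^{\HK}$ is cohomologically smooth.
\item Perfectness follows in the proper smooth case: $f^{\HK}_*=f^{\HK}_!$ preserves dualizable objects because $f$ is proper and smooth.
\item For general $X$, after alterations or a stratification, use closed-open triangles to reduce to relative curves or compactifiable pieces.
\end{itemize}

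This is the main obstacle. Perfectness over a non-perfect, non-finite base is essentially Kedlaya's finiteness theorem in relative form, and de Jong alterations cannot be avoided naively.

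What I would try first is to mimic the proof of Theorem \ref{Kedlayafinitedimensionality}. Work over a perfect base, taking the perfection of $\mathbb{F}_q(\!(t)\!)$ together with an unramified extension, and test perfectness fiberwise using descent along $\operatorname{GSpec}$ of the perfect Robba-type field. On such a field we have:

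\begin{itemize}
\item the $\HK$ side gives a basic nuclear complex;
\item the $\arith/\varphi$ side gives the dual of a basic nuclear complex;
\item hence the complex is dualizable, i.e.\ perfect.
\end{itemize}

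Descendability of the cover then descends perfectness back to $S^{\HK}$.

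\textbf{Step 3: translate back.} Take the object corresponding to $f^{\HK}_*1$ under the equivalence of Step 1. Its cohomology objects are $(\varphi,\nabla)$-modules over $\E^{\dagger}_{\rt_q}$. They are finite-dimensional because perfect complexes correspond to bounded complexes of isocrystals, which are finite projective modules. The Frobenius structure is inherited from the $\varphi^{\itg}$-quotient in the definition of $X^{\HK}$.

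Finally, we check compatibility with rigid cohomology: the fibre at the generic point should recover $R\Gamma_{\rig}(X/\E^{\dagger}_{\rt_q})$ by the relative version of Theorem \ref{rigidstackrigidcohomology}. This recovers the overconvergence statement of Kedlaya and Lazda--P\'al.
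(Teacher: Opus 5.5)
There is a genuine gap, and it starts in Step 1.

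\textbf{Step 1.} Theorems \ref{equivalenceperfectcomplex} and \ref{rigidstackrigidcohomology}, and Remark \ref{passingtononperfect}, concern schemes of finite type over a field. Their proofs run through de Jong alterations and frames with proper smooth Frobenius lifts. $S=\Spec\ff_q(\!(t)\!)$ is not of finite type over $\ff_q$, and Berthelot's frame formalism does not cover it. Applied to $S$ over itself, these results only return $\varphi$-modules over $\E_{K,\perf}$ (with $K=W(\ff_q)[p^{-1}]$), with no connection and no $\E^\dagger_K$. So neither $\cat{Perf}(S^{\HK})\cong D^b(\cat{F-Isoc}^{\dagger}(S/K))$ nor its identification with $(\varphi,\nabla)$-modules over $\E^\dagger_K$ is available.

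Your fallback, presenting $S^{\arith}$ as $\operatorname{GSpec}\E^\dagger_K$ modulo an open-disk groupoid, is exactly what is missing:
\begin{itemize}
\item The only chart in the paper is $(\operatorname{GSpec}\E_K)^{\dR}\to S^{\arith}$ (Example \ref{Laurentarith}).
\item Since $\E_K$ is the uniform completion of $\E^\dagger_K$, the de Rham stack cannot tell them apart: $(\operatorname{GSpec}\E^\dagger_K)^{\dR}\cong(\operatorname{GSpec}\E_K)^{\dR}$.
\item With the $p$-adic topology, $\disk^{\circ\circ}_K$ has neither a group structure nor a $\mathbb{G}^{\dagger}_{a,K}$-action. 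This is why Proposition \ref{geometryRobba} replaces the de Rham stack by a $\widehat{\mathbb{G}}_{a,K}$-quotient.
\end{itemize}
Moreover, $S^{\HK}=\varprojlim_{\varphi}\disk^{\circ\circ,\times,\dR}_K/\varphi^{\itg}$ is not $S^{\arith}/\varphi^{\itg}$. Besides the closed stratum $i$ coming from $\E_K$, it has an open stratum $j$, namely $(\Spd k(\!(t)\!))^{\HK}$, whose perfect complexes are $(\varphi,\nabla)$-modules over the unbounded Robba ring $\R_K$. The paper never identifies $\cat{Perf}(S^{\HK})$ with anything over $\E^\dagger_K$; the $\E^\dagger_K$-structure is only extracted on charts.

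\textbf{Step 2.} This fails for a related reason. The perfect field on which your Kedlaya-type argument runs sees only a point of the closed stratum: $\operatorname{GSpec}\E_{K,\perf}/\varphi^{\itg}\to S^{\HK}$ factors through $S^{\HK}_{\le 1}\xrightarrow{i}S^{\HK}$. So it is not a cover, and no descent along it yields perfectness on $S^{\HK}$. Perfectness of $i^*M$ and $j^*M$ would not suffice anyway (compare $j_!1$).

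The paper does not claim $f^{\HK}_*1\in\cat{Perf}(S^{\HK})$. Instead it argues chart by chart:
\begin{enumerate}
\item It pulls $f^{\HK}_*1$ back along $\varprojlim_{\varphi}\operatorname{GSpec}\E^\dagger_K/\varphi^{\itg}\to\varprojlim_{\varphi}(\operatorname{GSpec}\E^\dagger_K)^{\dR}/\varphi^{\itg}\xrightarrow{i}S^{\HK}$.
\item Pulled back further to $\operatorname{GSpec}\E_{K,\perf}/\varphi^{\itg}$, this is $f^{\HK/\E_{K,\perf}}_*1\cong R\Gamma_{\rig}(X/\E_K)\otimes_{\E_K}\E_{K,\perf}$ by Remark \ref{passingtononperfect}. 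It is perfect by Theorem \ref{Kedlayafinitedimensionality}. This is where your ``mimic Kedlaya'' idea enters, but only as a citation. Note also that in that argument the $\arith/\varphi$ side is basic nuclear and the $\HK$ side is the dual of a basic nuclear object, not the reverse.
\item It decompletes along the field tower $\E_{K,\perf}\supset\E_K\supset\E^\dagger_K$, a separate argument rather than descent, to get a perfect $\varphi$-equivariant complex over $\E^\dagger_K$.
\item Descent to the $\widehat{\mathbb{G}}_{a,K}$-quotient, together with the spreading-out equivalence of Proposition \ref{geometryRobba}(1) (an instance of Lemma \ref{prototype}), makes it a $(\varphi,\nabla)$-module over $\E^\dagger_K$.
\item Proposition \ref{geometryRobba}(3) identifies $j^*f^{\HK}_*1$ with its base change to $\R_K$.
\end{enumerate}
The chart-wise decompletion and the $\widehat{\mathbb{G}}_{a,K}$-quotient are the ideas your proposal lacks.
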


\subsection{Relation to other work}
In \cite[Lecture XII]{Sch23},
Scholze introduced an object called the ``tempered disks'',
and then $X^{\operatorname{temp}}$ by transmutation for any scheme $X$ over $k$,
which is related to the tempered disk cohomology studied in \cite{BCV25}.
The relation of our results to their work is also briefly mentioned in \cite[Lecture XII]{Sch23},
that $X^{\operatorname{temp}}$ is a modification of $X^{\arith}$ which yields a realization of motives and gives a theory of arithmetic $D$-modules.
Recently, Marco D'Addezio has attached to any scheme in characteristic $p$ its convergent stack in \cite{D'A26},
geometrizing convergent cohomology.
We do not investigate here the relation between the constructions of \cite{D'A26} and the ones of this paper.

During the writing of this paper,
we learnt from Marco D'Addezio that he has independently obtained a proof of Berthelot's conjecture by different methods,
using his theory of edged crystalline cohomology in \cite{D'A24}.

\subsection{Organization of the paper}
We recall some results about Gelfand stacks in Section \ref{recollectionofGelfand}.
The theory of analytic de Rham stacks and analytic de Rham stacks of Fargues--Fontaine curve are also useful,
which we will review briefly in Sections \ref{recollectionofGelfand} and \ref{recollectiondR}.

After these preliminaries,
we define and study the arithmetic de Rham stack $X^{\arith}$ in Section \ref{arithmeticstackfirstproperties},
where a main result is the $h$-hyperdescent property of the arithmetic de Rham stack.
Then we study the relative arithmetic de Rham stack $X^{\arith/K}$ in Section \ref{sectionrelativearith},
where we focus on the geometry and give a chart-description of it.
We next study the $6$-functor formalism of arithmetic de Rham stacks in Section \ref{Sectionsixfunctorarith}.
ly,
in Section \ref{Cartierduality},
we move on to establishing the Cartier duality for $\aff^{1,\arith}$.

Using the result of Section \ref{SectionarithdR},
we are able to develop the theory of solid sheaves on arithmetic de Rham stacks.
There are usually two ways to describe these sheaves,
namely crystals and $D$-modules.
In Section \ref{arithmeticcrystals},
we study sheaves on the relative arithmetic de Rham stack by arithmetic crystals and solid overconvergent crystals,
and relate them with classical objects such as rigid cohomology and overconvergent isocrystals.
In Section \ref{reviewpadiccoefficient},
we relate sheaves on the relative arithmetic de Rham stack to solid arithmetic $D$-modules,
with an emphasis on the case of realizable schemes.

We then turn to the study of the Hyodo--Kato stacks associated to $X$.
In Section \ref{rigidexplicitproperty},
we define the stacks $X^{\HK}$ and $X^{\HK}_{\le 1}$,
show $h$-hyperdescent properties,
give some basic examples,
and end up by studying the $6$-functor formalism for $X^{\HK}$.
After a discussion about the relation with analytic prismatization in Section \ref{Sectionanalyticprismatization},
we turn to study the relation between the categories of sheaves on the arithmetic de Rham stacks and the Hyodo--Kato stacks in Section \ref{Sectionidentificationperfectcomplex}.
We can finally establish the stacky approach by the Hyodo--Kato stack $X^{\HK}$ to rigid cohomology and overconvergent $F$-isocrystals of $X$ in Section \ref{Sectionrigidstackrigidcoh}.

Finally,
we study applications to arithmetic by stacky approaches to $p$-adic coefficients.
There are two applications using the arithmetic de Rham stacks,
namely Berthelot's conjecture in Section \ref{SectionBerthelotconj},
and Fourier--Huyghe transform in Section \ref{FourierHuyghe}.
We also give two further applications involving the Hyodo--Kato stacks too,
namely Kedlaya's finite dimensionality in Section \ref{SectionKedfinite},
and Kedlaya--Lazda--P\'{a}l's overconvergence of rigid cohomology in Section \ref{sectionoverconvergent}.

\subsection*{Notations and conventions}\label{sectionnotation}
Throughout the paper,
we work in the context of $\infty$-categories following \cite{Lur09}.
We write $\cat{Cat}_{\infty}$ as the $\infty$-category of $\infty$-categories,
$\cat{Pr}^L$ as the $\infty$-category of presentable $\infty$-categories,
and $\cat{Pr}^{L,\operatorname{st}}$ as the $\infty$-category of stable presentable $\infty$-categories.

Throughout the paper,
we will freely use the language of condensed mathematics,
especially the theory of solid modules over $\rt_p$ and solid $p$-adic functional analysis.
Standard notations and definitions can be found in \cite{RCRJ22}, \cite{RC24b} and \cite[Appendix A]{ABLBRCS25}.
We will also freely use the language of $6$-functor formalisms,
where the reader can find all of the terminologies in \cite{Sch23} and \cite{HM24}.

We also fix a (possibly non-perfect) field $k$ of characteristic $p$.
Let $K^{\circ}$ be a (possibly ramified) complete discrete valuation ring of mixed characteristic with residue field $k$,
and $K$ be the fraction field of $K^{\circ}$.
If $k$ is perfect and $K^{\circ}$ is unramified,
then $K^{\circ}$ can be uniquely chosen as $W(k)$.
All $k$-schemes $X$ will be assumed qcqs and separated of finite type over $k$.

We will also freely use the language of frames.
A frame of $X$ is a triple $(X,\overline{X},P)$ such that $P$ is a formal scheme over $K^{\circ}$ and $\overline{X}$ is a $k$-scheme,
equipped with an open immersion $X\hookrightarrow \overline{X}$,
and a closed immersion $\overline{X}\rightarrow P$.
We say that a frame $(X,\overline{X},P)$ is proper (resp. smooth) if $\overline{X}$ is proper (resp. if $P$ is smooth in a neighbourhood of $X$).

\subsection*{Acknowledgements}
I would like to thank my advisors Arthur-César Le Bras and Juan Esteban Rodríguez Camargo sincerely,
for suggesting the subject to me and guiding me during the project.
I also thank them warmly for many helpful conversations
and for carefully reading preliminary versions of this paper.
I would also like to thank Peter Scholze for helpful suggestions and discussions,
as well as for his broad influence on the related materials.
I am also grateful to Zhenghui Li for pointing out a subtlety in $p$-adic functional analysis in Section \ref{SectionKedfinite}.
Finally,
I would also like to thank Ko Aoki,
Marco D'Addezio,
Maximilian Hauck, Alberto Vezzani and Daming Zhou for helpful discussions and exchanging ideas.
This work was written when I was a PhD student at the Université de Strasbourg and Institut de Recherche Mathématique Avancée,
and I thank them for hospitality.
I also want to thank Max Planck Institute for Mathematics in Bonn and Tsinghua University for giving the opportunity to present some results of the paper,
and for their hospitality during a short-term visit.
This work is financially supported by Contrat Doctoral Spécifique Normalien of École Normale Supérieure de Paris,
and the Agence Nationale de la Recherche (project ID: ANR-25-CE40-7869-01).

\section{Recollections on analytic de Rham stacks of Fargues--Fontaine curves}

Since this paper is based on and closely related to the theory of Gelfand stacks and analytic de Rham stacks of Fargues--Fontaine curves,
developed in \cite{ABLBRCS25},
it is worth giving a detailed recollection of results we need.

\subsection{Gelfand rings and Gelfand stacks}\label{recollectionofGelfand}

To realize the stacky approach to rigid cohomology,
we need to work in the situation of Gelfand rings and Gelfand stacks,
which first appeared in \cite{ABLBRCS25}.
We recall some basic notions.

Fix terminology as follows.
We freely use the notions in condensed mathematics and analytic rings for which a good written reference is \cite{RC24b}.
Let $A$ be a bounded ring over $\rt_p$ with induced solid structure,
defined in \cite[Definition 2.6.10]{RC24a}.
Let $A^{\le r}\subset A$ be the subring of elements of norm $\le r$ for a nonnegative real number $r$,
defined in \cite[Definition 2.2.8]{ABLBRCS25}.
Briefly,
$f\in\pi_0(A)(*)=\Hom(\rt_p[T],A)$ lies in $\pi_0(A^{\le r})(*)$ if it extends to a morphism $\rt_p\bra T\ket_{\le r}\rightarrow A$ where $\rt_p\bra T\ket_{\le r}$ is the ring of overconvergent functions on the disk of radius $r$.
Denote $\Nil^{\dagger}(A)=A^{\le 0}$ and $A^{\dagger-\red}=\cofib(\Nil^{\dagger}(A)\rightarrow A)$.

\begin{definition}
  The uniform completion of a bounded ring $A$ is defined as 
  \[
  A^u:=\left(\varprojlim_{r\rightarrow 0} A^{\le 1}/A^{\le r}\right)[p^{-1}],
  \]
  with the induced solid structure.
  A bounded ring is called uniform if the canonical morphism $A\rightarrow A^u$ is an isomorphism.
\end{definition}

\begin{remark}
  There is a canonical morphism $A^{\dagger-\red}\rightarrow A^u$.
  It is injective
  but not surjective in general.
  Hence if $A$ is $\dagger$-reduced,
  then the map $A\rightarrow A^u$ is injective and $A^u$ can be seen as the completion of $A$.
\end{remark}

\begin{definition}[{\cite[Definition 3.1.1, Proposition 3.1.4]{ABLBRCS25}}]
  A bounded ring $A$ over $\rt_p$ is called a Gelfand ring if its uniform completion is the condensification of a Banach $\rt_p$-algebra.
  It is called separable if the uniform completion is separable as a Banach algebra.
  Denote the category of (resp. separable) Gelfand rings as $\cat{GelfRing}$ (resp. $\cat{GelfRing}_{\omega_1}$).
\end{definition}

\begin{definition}[{\cite[Definition 3.1.5]{ABLBRCS25}}]
  The Berkovich spectrum $\M(A)$ of a Gelfand ring $A$ is defined as $\M(A):=\M(A^u)$,
  the Berkovich spectrum of its uniform completion,
  which is a Banach $\rt_p$-algebra on which the notion of Berkovich spectrum is classical.
\end{definition}

Recall that for any light condensed set,
we can associate to it its Betti stack,
which is an analytic stack over $\AnSpec\itg$.

\begin{proposition}\cite[Proposition 3.2.10]{ABLBRCS25}
  Let $A$ be a Gelfand ring over $\rt_p$ such that $\M(A)$ is metrizable and cohomologically finite-dimensional.
  Then there is a natural map 
  \[
  \AnSpec A\longrightarrow \M(A)_{\Betti}\times_{\AnSpec\itg}\AnSpec \rt_{p,\blacksquare},
  \]
  satisfying universal $!$-descent.
\end{proposition}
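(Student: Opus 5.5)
The natural map comes from the Berkovich spectrum, and the descent statement reduces to two ingredients: the usual closed-cover descent for Betti stacks of profinite sets, and the fact that solid $\rt_p$-modules over $A$ can be localized over closed subsets of $\M(A)$.

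\textbf{Constructing the map.} For $f\in A$ and a closed subset $Z\subset \M(A)$, we get an idempotent algebra $A_Z$ by taking the colimit of $A\langle f/g\rangle$-type overconvergent localizations cut out by $|f|\le 1$ neighbourhoods of $Z$. Equivalently, continuous functions $\M(A)\to[0,\infty)$ act through the norms. This gives a symmetric monoidal functor from sheaves on $\M(A)$, i.e. from $D(\M(A)_{\Betti})$, to $D(A)$. Concretely, I would produce a map $C(\M(A),\itg)\to A$ at the level of idempotent (closed-subset) localizations, as in the analytic-ring description of Betti stacks of compact Hausdorff spaces. The solid structure over $\rt_{p,\blacksquare}$ then factors the map through $\AnSpec\rt_{p,\blacksquare}$.

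\textbf{Proving universal $!$-descent.} The key steps are:
\begin{enumerate}
\item Reduce to the profinite case. Since $\M(A)$ is metrizable compact Hausdorff, it receives a surjection from a light profinite set $S$ (e.g. a Cantor set). The map $S_{\Betti}\to\M(A)_{\Betti}$ satisfies universal $!$-descent; this is standard for proper surjections of compact Hausdorff spaces whose cohomological dimension is finite, and it is where the finite-dimensionality hypothesis is used.
\item Base change. Pulling back along this surjection, it suffices to show that $\AnSpec A\times_{\M(A)_{\Betti}}S_{\Betti}\to S_{\Betti}\times\AnSpec\rt_{p,\blacksquare}$ has universal $!$-descent, together with a compatibility of the descent along the two surjections.
\item Work fibrewise over the profinite set. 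Over a profinite base the pullback is $\AnSpec$ of $\varinjlim_{U\ni s}A_U$, a filtered colimit of idempotent localizations. Each localization is an open-closed, hence cohomologically étale, piece. Descent then follows because $!$-descent for idempotent localizations covering the space reduces to finite closed covers. For a finite closed cover $Z_1\cup Z_2$ one uses the pushout square $A\to A_{Z_1}\times_{A_{Z_1\cap Z_2}}A_{Z_2}$, which is an isomorphism by excision in the Gelfand setting.
\end{enumerate}

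The main obstacle is step 1 and its interaction with step 3. Universal $!$-descent for Betti stacks of compact Hausdorff spaces requires finite cohomological dimension, to control the hypercovers by profinite sets. I would invoke the corresponding result for $\M(-)_{\Betti}$ from the analytic stacks literature \cite{CS23}, and check that base change to $\AnSpec A$ preserves descendability of the relevant algebras. This last check holds because descendability is preserved under symmetric monoidal functors, and the index of descendability is bounded by the cohomological dimension plus one.
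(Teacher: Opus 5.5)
The paper gives no argument of its own here (it only cites \cite[Proposition 3.2.10]{ABLBRCS25}), so your proposal has to stand on its own. Your reduction in Steps 1--2 is fine, since universal $!$-descent is local on the target.

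\textbf{The gap is Step 3.} What you establish there is descent for the source $\AnSpec A$ along covers of itself: the clopen decompositions over $S$, and the gluing $A\cong A_{Z_1}\times_{A_{Z_1\cap Z_2}}A_{Z_2}$ along finite closed covers. That is an ingredient for constructing the map to $\M(A)_{\Betti}$. It is not a proof that this map is a cover, and nothing in it uses that the fibres of the map are non-zero.

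A quick test shows the problem. Every step applies verbatim to $\AnSpec A_Z\to\M(A)_{\Betti}\times_{\AnSpec\itg}\AnSpec\rt_{p,\blacksquare}$ for a closed $Z\subsetneq\M(A)$. Its fibre over $x\notin Z$ is $\AnSpec(A_Z\otimes_A A_{\{x\}})=\AnSpec(0)$, so that map is not a cover.

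Your closing remark does not fill the gap. Base changing the descent of $S_{\Betti}\to\M(A)_{\Betti}$ to $\AnSpec A$ shows that $\AnSpec A\times_{\M(A)_{\Betti}}S_{\Betti}\to\AnSpec A$ is a cover, which is the wrong map.

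\textbf{The missing input is fibrewise surjectivity, uniform in the point.}
\begin{itemize}
\item The fibre over $x\in\M(A)$ is $\AnSpec A_{\{x\}}$, with $A_{\{x\}}=\varinjlim_{Z\ni x}A_Z$.
\item The character $A\to\mathcal{H}(x)$ factors through $A_{\{x\}}$.
\item Since $\rt_p$ is spherically complete, Hahn--Banach gives a continuous $\rt_p$-linear functional $\mathcal{H}(x)\to\rt_p$ with $1\mapsto 1$.
\item Hence $\rt_p\to A_{\{x\}}$ is split in $D(\rt_{p,\blacksquare})$, i.e.\ descendable of index $1$, for every $x$.
\end{itemize}
This is where the actual points of $\M(A)$ and their residue fields enter.

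You then still have to globalize. That means deducing descent for the base-changed map over $S_{\Betti}\times_{\AnSpec\itg}\AnSpec\rt_{p,\blacksquare}$ from these fibrewise splittings, using the uniform bound on the index. This needs a real argument, because a splitting of a single stalk need not extend to a neighbourhood.

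\textbf{A separate point on the construction.} The map should not be phrased as a ring map $C(\M(A),\itg)\to A$. $\M(A)_{\Betti}$ is not affine unless $\M(A)$ is profinite, and for connected $\M(A)$ (e.g.\ a closed disc) $C(\M(A),\itg)=\itg$, so such a map carries no information. The right datum is the assignment $Z\mapsto A_Z$ of idempotent algebras to closed subsets. It must be compatible with finite intersections, finite unions (this is where your excision square belongs) and cofiltered intersections, with $A_{\emptyset}=0$.
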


\begin{definition}
  An analytic ring is called Fredholm if all dualizable $A$-modules come from perfect $\underline{A}(*)$-modules via the fully faithful embedding $D(\underline{A}(*))\rightarrow D(A)$.
\end{definition}

\begin{proposition}[{\cite[Proposition 3.3.5]{ABLBRCS25}}]\label{Fredholmproperty1}
  Gelfand rings are Fredholm.
\end{proposition}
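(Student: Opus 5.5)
The plan is to reduce to the uniform completion, which is a Banach algebra, and use the classical fact that dualizable objects over a Banach ring are perfect over the underlying ring. Let $A$ be a Gelfand ring and $M\in D(A)$ dualizable. First I handle the case where $A$ is uniform, so that $A=A^u$ is the condensification of a Banach $\rt_p$-algebra with its induced solid structure. Here I invoke the known statement in solid functional analysis (cf. \cite{RC24b} and \cite[Appendix A]{ABLBRCS25}): for a Banach $\rt_p$-algebra $B$, dualizable objects of $D(B_{\blacksquare})$ are exactly the perfect $B(*)$-modules. The sketch is as follows. A dualizable $M$ is compact, since the unit is compact for the induced analytic structure on a bounded ring, and dualizable objects in a compactly generated symmetric monoidal category with compact unit are compact. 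Hence $M$ is a retract of a finite complex built from compact generators $\prod_{\mathbb N}B$ (or $B\otimes\rt_p[\mathbb{N}]^{\blacksquare}$-type objects). Nuclearity of dualizable objects then forces $M$ to be nuclear and compact at once. In the Banach setting such an object is a retract of a finite complex of finite free $B$-modules; this is the usual ``compact $\Rightarrow$ trace-class identity $\Rightarrow$ finite rank'' argument, via a Fredholm/compact-perturbation argument on the identity map.

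Second, I pass from $A$ to $A^u$ using the bounded structure. By the definition of $A^u$, the kernel of $A\to A^u$ is controlled by $\Nil^{\dagger}(A)=A^{\le 0}$ together with the limit $\varprojlim_{r\to 0}A^{\le 1}/A^{\le r}$. I would show that $A\to A^u$ is a ``nil-thickening up to overconvergence''. Concretely, $A$ is a filtered colimit of rings $A_r$ (functions overconvergent on radius $r$ neighbourhoods), each Banach, with $\varinjlim A_r$ having uniform completion $A^u$. Dualizable objects then descend along the filtered colimit: $D(A)$ is the colimit in $\cat{Pr}^L$ of the $D(A_r)$, so any compact dualizable $M$ comes from some $A_r$. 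It is dualizable there after possibly enlarging $r$, because the evaluation and coevaluation maps and the triangle identities involve only finitely many compact data. Applying the Banach case to $A_r$, $M$ is perfect over $A_r(*)$, and base change shows it is perfect over $A(*)$.

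Finally, full faithfulness of $D(\underline{A}(*))\to D(A)$ on perfect objects is immediate, since the unit is compact and $\Hom(A,A)=A(*)$. So the essential image of perfect modules consists exactly of dualizable ones.

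I expect the main obstacle to be the Banach case: showing that a dualizable solid module is finite projective requires genuine functional analysis. If the trace-class argument is awkward, the fallback is the idempotent argument. Write $M$ as a retract of a compact projective $P=\prod_{\mathbb N}B$, and show that an idempotent endomorphism of $P$ which is trace-class (because it factors through a dualizable object) has image generated by finitely many vectors. This uses that nuclear maps between Banach spaces are limits of finite-rank maps, and that an idempotent close to a finite-rank map has finite rank.
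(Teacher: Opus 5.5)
Your Banach case is fine in outline. A dualizable $M$ is compact with trace-class identity. The idempotent $e$ on a finite complex $Q$ of compact projectives exhibiting $M$ as a retract is then trace-class. Over a Banach ring, $e$ splits as a finite-rank part plus a part of norm $<1$, so $\operatorname{cone}(1-e)\simeq M\oplus M[1]$ is perfect over $B(*)$. (Your idempotent fallback with a single compact projective only covers static $M$.)

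The gap is the reduction to this case. The claim that a Gelfand ring is a filtered colimit of Banach rings $A_r$ holds for dagger-type algebras but is false in general. The definition (a bounded ring whose uniform completion is Banach) gives no such presentation. For example, take $V=(\prod_{\mathbb N}\mathbb{Z}_p)[1/p]$ and the trivial square-zero extension $A=\mathbb{Q}_p\oplus V$.
\begin{itemize}
\item The map $T\mapsto(a,v)$ extends, via $g\mapsto(g(a),g'(a)v)$, to every $\mathbb{Q}_p\langle T\rangle_{\le r}$ with $r\ge |a|$. Hence $A$ is bounded, with $A^{\le 1}=\mathbb{Z}_p\oplus V$, $\Nil^{\dagger}(A)=V$ and $A^u=\mathbb{Q}_p$. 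So $A$ is a separable qfd Gelfand ring.
\item But $A$ is a compact projective solid $\mathbb{Q}_p$-vector space. A presentation $A=\varinjlim A_i$ with $A_i$ Banach would make $\operatorname{id}_A$ factor through some $A_i$, so $A$ would be a retract of a Banach space, hence Banach.
\item That is impossible: $V$ is infinite-dimensional and covered by the countably many compacts $p^{-n}\prod_{\mathbb N}\mathbb{Z}_p$, so it is not Banach by Baire.
\end{itemize}
Moreover, animated Gelfand rings cannot be filtered colimits of static Banach rings. The framework appears to allow them: the paper works with $\pi_0(A)$, with $A^{\dagger-\red}=\cofib(\Nil^{\dagger}(A)\to A)$, and with derived Berkovich spaces.

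So the phrase ``nil-thickening up to overconvergence'' is exactly where the content of the proposition lies, and it is not proved. You must handle a $\Nil^{\dagger}(A)$ that may be an arbitrary (non-Banach, possibly non-static) solid module. You must also handle the non-surjective map $A^{\dagger-\red}\to A^u$. Knowing that $M\otimes_A A^u$ is perfect over $A^u(*)$ does not by itself show that $M$ comes from a perfect $A(*)$-module.

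What is needed is an argument that uses the Gelfand hypothesis over $A$ itself. One option is to show that for every compact projective $P$ and trace-class $h\colon P\to P$, the object $\operatorname{cone}(1-h)$ is perfect over $A(*)$; Fredholmness then follows from $\operatorname{cone}(1-e)\simeq M\oplus M[1]$. Here the small-norm part of $h$ would be inverted by an overconvergent geometric series. Its convergence would come from the boundedness of $A$ and the Banach property of $A^u$, not from a Banach presentation of $A$. As written, your argument proves the statement only for Banach rings and their filtered colimits, such as dagger affinoid algebras.
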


Now we introduce a subclass of Gelfand rings,
which are called quasi-finite dimensional (or for simplicity qfd).

\begin{definition}[{\cite[Definition 4.1.11]{ABLBRCS25}}]
  A morphism $f\colon A\rightarrow B$ between Banach algebras over $\rt_p$ is called quasi-finite-dimensional
  if the induced morphism on the associated arc-stacks $\M_{\arc}(B)\rightarrow\M_{\arc}(A)$ is quasi-pro-étale over $\aff^n_{\M_{\arc}(A)}$ for some $n$.
  A Gelfand ring $A$ is called quasi-finite-dimensional if $\rt_p\rightarrow A^u$ is quasi-finite-dimensional.
  Denote the category of qfd separable Gelfand rings as $\cat{GelfRing}_{\omega_1}^{\qfd}$.
\end{definition}

\begin{definition}[{\cite[Definition 4.1.11]{ABLBRCS25}}]
  Define the category of qfd Gelfand stacks
  \[
  \cat{GelfStk}^{\qfd}:=\cat{AnStk}(\cat{GelfRing}_{\omega_1}^{\op,\qfd})
  \]
  as the category of ``analytic stacks'' (cf. \cite[Definition 4.2.1]{ABLBRCS25}) on $\cat{GelfRing}_{\omega_1}^{\op,\qfd}$.

  For any qfd separable Gelfand ring $A$,
  we denote the qfd Gelfand stack represented by $A$ by $\operatorname{GSpec}A$.
\end{definition}

\begin{example}[Disks and tori]\label{disk}
  We define an important class of qfd Gelfand stacks.
  Let $\rt_p\langle T\rangle_{\le r}$ be the Gelfand ring of overconvergent functions on the disk of radius $r$.
  Let $\rt_p\langle T\rangle^{\sharp}_{\le 1}=\{\sum a_n\frac{T^n}{n!}\mid \e \eta>1, |a_n|\eta^n\rightarrow 0\}$ be the Gelfand ring of overconvergent functions on the PD-disk (PD for divided power) of radius $1$,
  and similarly define $\rt_p\langle T\rangle^{\sharp}_{\le r}=\{\sum a_n\frac{T^n}{n!}\mid \e \eta>r, |a_n|\eta^n\rightarrow 0\}$ to be the Gelfand ring of overconvergent functions on the PD-disk of radius $r$.
  \begin{enumerate}
    \item The closed unit disk $\disk^{\dagger}:=\operatorname{GSpec}\rt_p\langle T\rangle_{\le 1}$.
    \item The open unit disk $\disk^{\circ}:=\varinjlim_{r<1}\operatorname{GSpec}\rt_p\langle T\rangle_{\le r}$.
    \item The analytic affine line $\aff^{1,\an}:=\varinjlim_{r\rightarrow\infty}\operatorname{GSpec}\rt_p\langle T\rangle_{\le r}$.
    \item The closed unit PD-disk $\disk^{\dagger,\sharp}:=\operatorname{GSpec}\rt_p\langle T\rangle^{\sharp}_{\le 1}$.
    \item The open unit PD-disk $\disk^{\circ,\sharp}:=\varinjlim_{r<1}\operatorname{GSpec}\rt_p\langle T\rangle^{\sharp}_{\le r}$.
    \item The analytic torus $\gff:= \aff^{1,\an}\backslash\{0\}$.
    \item The overconvergent torus $\mathbb{T}^{\dagger}:=\disk^{\dagger}\backslash\disk^{\circ}=\operatorname{GSpec}\rt_p\langle T^{\pm 1}\rangle_{\le 1}$.
  \end{enumerate}
  Working with Gelfand stacks eliminates some pathologies.
  Let $(-)_{\operatorname{Gelf}}$ the Gelfandification be the functor taking an analytic stack to its restriction to Gelfand rings on the test categories.
  Then the algebraic affine line and the pathological open disk become analytic affine line and analytic open disk after Gelfandification:
  \[
  (\AnSpec\rt_p[T])_{\operatorname{Gelf}}=\aff^{1,\an},\,(\AnSpec\itg_p[\![T]\!][p^{-1}])_{\operatorname{Gelf}}=\disk^{\circ}.
  \]
\end{example}

\begin{remark}\label{Fredholmproperty}
  The Fredholm property of Gelfand rings (\ref{Fredholmproperty1}) tells us that we can define the category of perfect complexes for any qfd Gelfand stack.
  Indeed,
  this implies that sending $A\in \cat{GelfRing}_{\omega_1}^{\qfd}$ to $\cat{Perf}(A)$ satisfies $!$-descent,
  because perfect complexes over $A$ are the same as dualizable objects in $D(A)$ and being dualizable is local in the $!$-topology.
  Then the left Kan extension of this functor to the presheaf category $\cat{Psh}(\cat{GelfRing}^{\op,\qfd}_{\omega_1})$ inverts $!$-equivalences,
  hence induces a functor $\cat{GelfStk}^{\qfd}\rightarrow \cat{Cat}_{\infty}$,
  sending $X$ to $\cat{Perf}(X)$.
\end{remark}

Within the category of qfd separable Gelfand rings,
we have a subclass consisting of those that behave like perfectoid algebras,
which we will define as follows.

\begin{definition}
  A Gelfand ring $A$ over $\rt_p$ is called uniformly perfectoid (resp. nilperfectoid) if $A^u$ (resp. $A^{\dagger-\red}$) is perfectoid.
  It is called uniformly strictly totally disconnected if
  the Berkovich spectrum $\M(A)$ is profinite with all Berkovich residue fields being algebraically closed.

  Denote $\cat{uPerfd}_{\omega_1}$ (resp. $\cat{uPerfd}_{\omega_1}^{\qfd}$; $\cat{uPerfd}_{\omega_1}^{\operatorname{std}}$; $\cat{uPerfd}_{\omega_1}^{\operatorname{std},\qfd}$) as the category of separable uniformly
  (resp. qfd; strictly totally disconnected; qfd strictly totally disconnected) perfectoid rings.
  Denote similarly $\cat{NilPerfd}^{(\operatorname{std}),(\qfd)}_{\omega_1}$ the category of separable (qfd) (strictly totally disconnected) nilperfectoid rings.
\end{definition} 

Here comes the key property of qfd separable Gelfand rings.

\begin{proposition}[{\cite[Proposition 4.6.4]{ABLBRCS25}}]
  Let $A$ be a qfd separable Gelfand ring.
  Then there is a descendable cover $A\rightarrow B$ where $B$ is qfd separable strictly totally disconnected nilperfectoid.
\end{proposition}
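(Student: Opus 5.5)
The plan is to reduce to a statement about Banach algebras and then construct the cover in two stages: first make the Berkovich spectrum profinite with algebraically closed residue fields, then make the ring perfectoid. Descendability is stable under composition and base change, so we may freely compose covers as long as each step is descendable. We may also replace $A$ by a convenient descendable cover at any stage.

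\textbf{Step 1 (reduction to the uniform part).} The map $A\to A^{\dagger-\red}$ has nil-type kernel $\Nil^{\dagger}(A)$. Since the conclusion only asks that $B$ be nilperfectoid, i.e. that $B^{\dagger-\red}$ be perfectoid, we will not kill this kernel. Instead we will build $B$ as a base change
\[
B=A\otimes_{A'}B',
\]
where $A'$ is a suitable ``standard'' Gelfand ring mapping to $A$ and $A'\to B'$ is a descendable map of uniform rings. The key point to check is that the $\dagger$-reduction of such a base change is computed from the uniform data. This relies on the fact that the subrings $A^{\le r}$ are functorial and behave well under base change along maps of bounded rings.

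\textbf{Step 2 (profinite, algebraically closed residue fields).} Using the qfd hypothesis, $\M_{\arc}(A^u)$ is quasi-pro-étale over $\aff^n$ for some $n$. Locally on $\M(A)$, which is compact metrizable, we proceed as follows.
\begin{enumerate}
\item We choose a finite closed cover by rational (Laurent) subdomains. Each corresponds to a descendable (finite, idempotent-type) cover in the Gelfand setting, and gluing finitely many closed pieces is descendable of bounded index.
\item We pass to a pro-finite-étale cover making all residue fields algebraically closed. This is a countable colimit of finite étale covers, hence descendable of index $\le 2$ in the separable setting, by the $\omega_1$-condition.
\item We take a ``Berkovich totally disconnected'' cover, replacing $\M(A)$ by a profinite set surjecting onto it, which is realized via the Betti map $\AnSpec A\to \M(A)_{\Betti}$. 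Here we use that a profinite surjection onto a compact metrizable finite-dimensional space gives a descendable map, from the finite cohomological dimension in the Betti-stack proposition.
\end{enumerate}

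\textbf{Step 3 (perfectoidization).} We use the qfd structure to adjoin compatible $p$-power roots of the $n$ coordinates and of $p$, namely
\[
A\to A\widehat\otimes_{\rt_p\langle T_1,\dots,T_n\rangle^{\dagger}}\rt_p(p^{1/p^\infty})\langle T_i^{1/p^\infty}\rangle^{\dagger}.
\]
We then check that the uniform completion becomes perfectoid, arguing fiberwise over the residue fields, which are now algebraically closed and quasi-pro-étale over $\aff^n$. Descendability of the toric perfectoid cover of the overconvergent disk is the standard computation: $\rt_p\langle T^{1/p^\infty}\rangle$ splits off $\rt_p\langle T\rangle$ through the normalized trace, which gives a retraction and hence descendability.

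\textbf{Main obstacle.} The hardest point will be Step 2(3): proving that the map to a profinite cover is genuinely descendable rather than merely a $!$-cover. For this I would try to bound the index using the covering dimension of $\M(A)$, which is finite by the qfd hypothesis, via an explicit Čech resolution of bounded length.
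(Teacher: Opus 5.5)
The paper gives no proof of this statement; it is quoted from ABLBRCS25 (Proposition 4.6.4 there). So I judge your outline on its own. It has a genuine gap, located exactly at what your Step 1 calls ``the key point to check''.

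\emph{The main gap: nilperfectoid versus uniformly perfectoid.} As the paper recalls, $B^{\dagger-\red}\to B^u$ is injective. A perfectoid ring is complete and uniform. So requiring $B^{\dagger-\red}$ to be perfectoid forces $B^{\dagger-\red}\cong B^u$: the map $B\to B^u$ must be \emph{surjective}, with kernel $\Nil^{\dagger}(B)$.

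Your Step 3 only checks that $B^u$ is perfectoid, i.e.\ that $B$ is uniformly perfectoid, which is strictly weaker. None of your operations produces the surjectivity. Take $A=\rt_p\langle T\rangle_{\le 1}$ and a point $s$ of your final $B$ lying over the Gauss point $\eta$.
\begin{itemize}
\item Localized at $s$, all your operations (overconvergent rational localizations, base change along a profinite $S\to\M(A)$, finite \'etale extensions, countable colimits, overconvergent $p$-power roots) give rings built from the ring of overconvergent germs at $\eta$.
\item That germ ring is $\dagger$-reduced, since a nonzero germ has $|f|_\eta\neq 0$.
\item Its image in the completed residue field $k(\eta)$ is dense but proper. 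For instance, take $P_n\in\itg_p[T]$ monic lifts of pairwise distinct monic irreducible polynomials over $\ff_p$. Then $\sum_n p^n/P_n(T)$ converges in $k(\eta)$ but is not a germ: it would need poles in infinitely many residue discs.
\item Normalized traces show that your finite \'etale and $p$-power-root extensions still meet $k(\eta)$ only in germs.
\item On the other hand, once $\M(B)$ is profinite, $B^u\to k(s)$ is surjective: sum an approximating series using idempotents of shrinking clopen neighbourhoods of $s$. Nilperfectoidness would then force $B\to k(s)$ to be surjective, which we have just seen fails.
\end{itemize}
Hence your $B$ is at best uniformly strictly totally disconnected and uniformly perfectoid. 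It is \emph{not} nilperfectoid as soon as $\M(A)$ has non-classical points.

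You cannot repair this by routing the construction through $A^u$, e.g.\ by base-changing a perfectoid cover of $A^u$. For this $A$ one computes $A^u\otimes_A A^u\simeq A^u$, so $A\to A^u$ is a non-trivial idempotent algebra and hence not descendable. Consequently no $A$-algebra $B$ for which $A\to B$ factors through $A^u$ can be a descendable cover.

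What is missing is a way to build $\dagger$-nilpotent thickenings of perfectoid Banach rings that receive maps from $A$ \emph{not} factoring through $A^u$, together with a proof that the resulting map is descendable. This, rather than the profinite step you single out, is the real content of the statement.

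\emph{Two smaller points.} First, Step 2(2) is stated too strongly. Enlarging the residue field at a non-classical point generally requires finite \'etale algebras that exist only over small neighbourhoods of that point; this is why strictly totally disconnected covers are pro-\'etale rather than pro-finite-\'etale. You should first make $\M$ profinite as in 2(3), and then extend residue fields clopen by clopen. There, compatible normalized traces split the resulting countable colimit, which gives descendability.

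Second, once $\M(B)$ is profinite with algebraically closed residue fields, $B^u$ is already perfectoid, so Step 3 does no work. This is another sign that the substance lies in passing from ``uniformly perfectoid'' to ``nilperfectoid''.
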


\begin{remark}
  The proposition above implies that $\cat{NilPerfd}_{\omega_1}^{\operatorname{std},\qfd}$ form a basis for $\cat{GelfRing}_{\omega_1}^{\qfd}$ under $!$-covers,
  i.e. to determine a qfd Gelfand stack it suffices to test on $\cat{NilPerfd}_{\omega_1}^{\operatorname{std},\qfd}$\footnote{Since nilperfectoid rings are uniformly perfectoid,
  using $\cat{NilPerfd}_{\omega_1}^{\operatorname{std},\qfd}$ is equivalent to using $\cat{uPerfd}_{\omega_1}^{\operatorname{std},\qfd}$.
  We will use both of them throughout the remaining of the paper.}.
  The category $\cat{uPerfd}_{\omega_1}^{\qfd}$ is stable under pushouts,
  and this also implies that the category $\cat{AnStk}(\cat{uPerfd}_{\omega_1}^{\op,\qfd})$ is equivalent to $\cat{GelfStk}^{\qfd}$.
\end{remark}

Let $\cat{Perfd}^{\qfd}_{\rt_p,\omega_1}$ be the category of separable qfd perfectoid rings over $\rt_p$.
The arc-topology on it is studied in \cite{Sch24b},
and one can define the category $\cat{ArcStk}^{\qfd}_{\rt_p}$ of qfd arc-stacks,
which is the category of arc-hypersheaves on $\cat{Perfd}^{\qfd}_{\rt_p,\omega_1}$,
cf. \cite[Section 4.1]{ABLBRCS25}.
Finally, in the section,
we study the relation between Gelfand stacks and arc-stacks.
One should think of the analogue over $\cn$ as the discussion about the relation between analytic de Rham stacks,
Betti stacks and stacks on totally disconnecteds in \cite[Chapter 2; Section V.3]{Sch24a}.

\begin{proposition}\label{adjointnessui}
  There is an adjoint pair of functors $(u\dashv \iota)$ between categories $u\colon \cat{uPerfd}_{\omega_1}^{\operatorname{std},\qfd}\leftrightarrows \cat{Perfd}_{\omega_1}^{\operatorname{std},\qfd}\colon \iota$,
  where $u$ takes the uniform completion and $\iota$ is the embedding.
  Moreover,
  both functors preserve $!$-equivalences on $\cat{uPerfd}_{\omega_1}^{\operatorname{std},\qfd}$ and $\infty$-connective maps in arc-topos on $\cat{Perfd}_{\omega_1}^{\operatorname{std},\qfd}$.
\end{proposition}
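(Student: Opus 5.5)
To prove the adjunction, I first check that both functors are well defined. For $A\in\cat{uPerfd}^{\operatorname{std},\qfd}_{\omega_1}$, the uniform completion $A^u$ is by definition a perfectoid Banach $\rt_p$-algebra. It has the same Berkovich spectrum $\M(A^u)=\M(A)$, which is profinite with algebraically closed residue fields. It is separable and qfd, since both conditions are defined through $A^u$. Hence $u(A)=A^u$ lies in $\cat{Perfd}^{\operatorname{std},\qfd}_{\omega_1}$.

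Conversely, a perfectoid Banach algebra $R$ is uniform, so $R^u=R$ and $R$ is a uniformly perfectoid Gelfand ring. This shows $\iota$ is well defined and $u\circ\iota\simeq\id$.

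For the adjunction I will show $\Hom(A^u,R)\cong\Hom(A,R)$ via precomposition with $A\to A^u$, for $R$ uniform. The map $A\to A^u$ factors as $A\to A^{\dagger-\red}\to A^u$. Any map $A\to R$ sends $\Nil^{\dagger}(A)=A^{\le 0}$ into $R^{\le 0}=0$, because $R$ is uniform. It also sends $A^{\le r}$ into $R^{\le r}$, so it induces maps $A^{\le 1}/A^{\le r}\to R^{\le 1}/R^{\le r}$. Passing to the limit as $r\to 0$ and inverting $p$, we obtain a map $A^u\to R^u=R$, which is functorial and unique by construction. Thus $u$ is left adjoint to $\iota$: uniform completion is the universal map to a uniform ring.

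For $!$-equivalences and $\infty$-connective maps, the key inputs are two facts.
\begin{enumerate}
\item $A\to A^u$ is itself a $!$-equivalence of Gelfand stacks, i.e. $\operatorname{GSpec}A^u\to\operatorname{GSpec}A$ is an isomorphism after sheafification. Indeed, the kernel is $\dagger$-nilpotent and $A^{\dagger-\red}\to A^u$ induces an isomorphism on overconvergent, descendable data, as in \cite{ABLBRCS25}.
\item For uniform perfectoid rings, $!$-covers (descendable maps) between strictly totally disconnected objects are exactly those surjective on Berkovich spectra, and these are arc-covers. Conversely, arc-covers of strictly totally disconnected perfectoids are $v$-covers that split or are descendable by \cite[Proposition 4.6.4]{ABLBRCS25}.
\end{enumerate}
Using these, $\iota$ sends arc-hypercovers to $!$-hypercovers, and $u$ sends $!$-covers to arc-covers, since $\M$ is unchanged by $u$. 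Fact (1) gives that $u$ sends $!$-equivalences to equivalences, because $A\mapsto A^u$ is a $!$-equivalence. So $u$ and $\iota$ preserve the relevant equivalences in both directions, by checking on Čech nerves and using that $u$ commutes with finite pushouts up to $!$-equivalence.

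The main obstacle I expect is fact (1), together with the comparison of topologies. The comparison of topologies requires knowing that descendability for uniform strictly totally disconnected perfectoids is equivalent to surjectivity on $\M$ (an arc-type condition). This uses the profiniteness of $\M$ and algebraic closedness of the residue fields, so that every cover essentially splits after passing to stalks. I would first try to reduce to the case where $\M(A)$ is a point, i.e. $A^u$ is an algebraically closed perfectoid field, where the statements become almost tautological.
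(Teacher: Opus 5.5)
Your check that $u$ and $\iota$ are well defined and that $u\dashv\iota$ is fine. Uniqueness needs a small density argument, but the paper itself treats the adjunction as immediate from the definitions.

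\textbf{Fact (1) is false.} $A\to A^u$ is not a $!$-equivalence in general. Since $D(-)$ satisfies $!$-descent, fact (1) would make base change $D(A)\to D(A^u)$ an equivalence. This already fails for $A=C[\epsilon]/\epsilon^2$ with $C$ a separable algebraically closed perfectoid field; this $A$ lies in $\cat{uPerfd}^{\operatorname{std},\qfd}_{\omega_1}$ and has $A^u=C$.

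More conceptually, (1) would give $Y(A)=Y(A^u)$ for every Gelfand stack $Y$. Then the unit $Y\to(Y^{\diamond})^{\dR}$ would always be an isomorphism. That contradicts the paper's remark that $\operatorname{GSpec}K\to(\operatorname{GSpec}K)^{\dR}$ is not an isomorphism unless $K$ is étale over $\rt_p$; analytic de Rham stacks exist precisely because $A$ and $A^u$ differ.

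Neither half of your justification holds either. Killing $\Nil^{\dagger}(A)$ changes $D(-)$, and $A^{\dagger-\red}\to A^u$ is injective but generally not surjective.

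So your step ``$u$ sends $!$-equivalences to equivalences'' collapses. It is also not what is needed: you need $u$ to carry $!$-equivalences to $\infty$-connective maps of the arc-topos. That holds for a different reason:
\begin{itemize}
\item descendable maps of Gelfand rings induce arc-covers, i.e.\ surjections on Berkovich spectra; the paper takes this from \cite[Lemma 4.5.1]{ABLBRCS25};
\item $\M(A^u)=\M(A)$;
\item uniform completion preserves pushouts, being a left adjoint, so no ``up to $!$-equivalence'' correction is needed.
\end{itemize}

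\textbf{The $\iota$-direction is not established.} This is the substantive direction. You need that $\infty$-connective maps in the arc-topos of qfd separable strictly totally disconnected perfectoids are $!$-equivalences, which essentially means arc-(hyper)covers there are descendable. The paper imports this from \cite[Lemma 4.4.3]{ABLBRCS25}. Your fact (2) asserts it, but the reasons given do not work:
\begin{itemize}
\item Arc-covers of strictly totally disconnected perfectoids need not admit sections, e.g.\ $C\to C'$ for a proper extension of algebraically closed perfectoid fields.
\item \cite[Proposition 4.6.4]{ABLBRCS25} only provides \emph{some} descendable cover of a qfd Gelfand ring by a strictly totally disconnected nilperfectoid. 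It says nothing about arbitrary arc-covers being descendable.
\item The proposed reduction to $\M(A)$ a point fails. Descendability cannot be checked one point of $\M(A)$ at a time; you need a bound on the descendability index uniform over $\M(A)$. That is exactly where finiteness hypotheses such as qfd and $\omega_1$ must enter, and your argument never uses them.
\end{itemize}

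As written, the proposal establishes the adjunction but not the compatibility with the two topologies.
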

\begin{proof}
  The adjointness of the pair is literally from the definition.
  To show $u$ sends $!$-equivalences to $\infty$-connective maps in arc-topos,
  we refer to the proof of \cite[Lemma 4.5.1]{ABLBRCS25}.
  To show $\iota$ sends $\infty$-connective maps in arc-topos to $!$-equivalences,
  we refer to \cite[Lemma 4.4.3]{ABLBRCS25} for the proof.
\end{proof}

\begin{proposition}[{\cite[Section 4.5]{ABLBRCS25}}]\label{ABLBRCS25Section4.5}
  We have the following diagram
  \[
    \begin{tikzcd}
    \cat{GelfStk}^{\qfd}\arrow[r, shift left=0ex, "\iota_*=u^*"]& \arrow[l, shift left=-3ex, "u_*"']\arrow[l, shift left=3ex, "\widehat{(-)}:=\iota^{*}", "\top"']\cat{ArcStk}^{\qfd}_{\rt_p}
    \end{tikzcd}
  \]
  Equivalently,
  the perfectoidization functor $(-)^{\diamond}:=u^*=\iota_*$ admits both a left adjoint $\widehat{(-)}:=\iota^{*}$ and a right adjoint,
  called the analytic de Rham stack functor $(-)^{\dR}:=u_*$.
\end{proposition}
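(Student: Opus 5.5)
The statement is formal once Proposition \ref{adjointnessui} is available. My plan is to deduce the whole diagram of adjoints from the adjunction $u\dashv\iota$ on sites together with the basis properties of $\cat{uPerfd}^{\operatorname{std},\qfd}_{\omega_1}$ and $\cat{Perfd}^{\operatorname{std},\qfd}_{\omega_1}$.

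First I identify both categories of stacks with sheaves on these bases. By the remark following \cite[Proposition 4.6.4]{ABLBRCS25}, $\cat{GelfStk}^{\qfd}$ is equivalent to analytic stacks on $\cat{uPerfd}^{\operatorname{std},\qfd,\op}_{\omega_1}$, i.e. presheaves localized at $!$-equivalences. Similarly, $\cat{ArcStk}^{\qfd}_{\rt_p}$ is the arc-hypersheaf category on $\cat{Perfd}^{\operatorname{std},\qfd}_{\omega_1}$, since strictly totally disconnected perfectoids form a basis for the arc-topology. Write $\mathcal{C}=\cat{uPerfd}^{\operatorname{std},\qfd}_{\omega_1}$ and $\mathcal{D}=\cat{Perfd}^{\operatorname{std},\qfd}_{\omega_1}$.

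Next I use that an adjunction $u\colon\mathcal{C}\rightleftarrows\mathcal{D}\colon\iota$ between sites induces, on presheaves, the chain of adjoints
\[
\iota^*\dashv\iota_*=u^*\dashv u_*.
\]
Here $\iota_*F=F\circ\iota^{\op}$ is precomposition with $\iota$. The equality $\iota_*=u^*$ holds because the left Kan extension $u^*$ along $u$ is computed as precomposition with the right adjoint $\iota$. The functor $\iota^*$ is left Kan extension along $\iota$, and $u_*$ is precomposition with $u$.

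The remaining work, and the only nonformal step, is to check that these functors descend to the localized categories. By Proposition \ref{adjointnessui}:
\begin{enumerate}
\item Since $\iota$ sends arc-$\infty$-connective maps to $!$-equivalences, precomposition $u_*$ with $u$ preserves $!$-sheaves. Since $u$ sends $!$-equivalences to $\infty$-connective maps, $\iota_*=u^*$ preserves arc-hypersheaves.
\item Dually, the left adjoints $\iota^*$ and $u^*$ send local equivalences to local equivalences. Hence they descend to left adjoints after sheafification.
\end{enumerate}
Composing with sheafification then gives the diagram, with $(-)^{\diamond}=u^*=\iota_*$, left adjoint $\widehat{(-)}=\iota^*$, and right adjoint $(-)^{\dR}=u_*$.

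The main obstacle I expect is matching the topologies. One has to make sure that $!$-sheaves on $\mathcal{C}$ really correspond to qfd Gelfand stacks, which is a hypercompleteness and basis issue, and that arc-hypercompleteness is respected. I would handle this by citing \cite[Lemmas 4.4.3, 4.5.1]{ABLBRCS25}.
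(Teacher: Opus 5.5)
Your route is the paper's. The paper states this proposition as a citation of \cite[Section 4.5]{ABLBRCS25} with no separate proof, and it is exactly the formal consequence of the site-level adjunction $u\dashv\iota$ together with the two preservation properties in Proposition \ref{adjointnessui}; Remark \ref{noneedsheafify} reads off $X^{\dR}(A)=X(A^u)$ and $\widehat{X}$ in just this way. Your chain $\iota^*\dashv\iota_*=u^*\dashv u_*$ is correct, as is your identification of $u^*=\mathrm{Lan}_u$ with precomposition by $\iota$.

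One correction: in your item (1) the two justifications are swapped. A precomposition functor $v_*=(-)\circ v$ sends sheaves to sheaves exactly when its left adjoint $v^*=\mathrm{Lan}_v$ preserves local equivalences. That is, $v$ itself must send covers to local equivalences.
\begin{itemize}
\item $u_*Y=Y\circ u$ is a $!$-sheaf for an arc-hypersheaf $Y$ because $u$ sends $!$-equivalences to arc-$\infty$-connective maps. This is the input from the proof of \cite[Lemma 4.5.1]{ABLBRCS25}; for instance, a $!$-cover $A\to B$ goes to an arc-cover $A^u\to B^u$.
\item $\iota_*X=X\circ\iota$ is an arc-hypersheaf for a qfd Gelfand stack $X$ because $\iota$ sends arc-$\infty$-connective maps to $!$-equivalences. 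This is \cite[Lemma 4.4.3]{ABLBRCS25}.
\end{itemize}
As you wrote it, each conclusion is attributed to the property that does not imply it.

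Your item (2) is stated correctly and is equivalent to the correctly paired version of (1). Since Proposition \ref{adjointnessui} supplies both properties, the argument goes through once you exchange the two ``since'' clauses. Note also that at the sheaf level the left adjoint of $u_*$ is $\iota_*$ itself, with no sheafification needed, because $\iota_*$ already preserves sheaves. Sheafification is needed only for $\iota^*$.
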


\begin{remark}\label{noneedsheafify}
  It follows directly from Proposition \ref{adjointnessui} that $X^{\dR}(A)=X(A^u)$ for $A$ qfd uniformly strictly totally disconnected perfectoid,
  and $\widehat{X}=\varinjlim_{\M_{\arc}(A)\rightarrow X}\operatorname{GSpec}A$ where $A$ is qfd strictly totally disconnected perfectoid.
  The coherent cohomology of $X^{\dR}$ (resp. $\widehat{X}$) computes the de Rham cohomology (resp. $\widehat{\O}$-cohomology) of $X$,
  which explains the notations.
\end{remark}

\begin{remark}
  Let $X$ be a qfd Gelfand stack.
  Then by adjoitness,
  there is a morphism $X\rightarrow (X^{\diamond})^{\dR}$,
  and we sometimes simplify the notation as $X\rightarrow X^{\dR}$.
  Let $X$ be a qfd Gelfand stack over $\operatorname{GSpec}K$,
  we can also define its relative analytic de Rham stack $X^{\dR/K}$,
  cf. \cite[Definition]{ABLBRCS25}.
\end{remark}

As promised,
a major advantage of restricting to qfd Gelfand rings is to ensure that the analytic de Rham stack functor $(-)^{\dR}$ has strong descent property,
i.e. it preserves colimits.

\begin{proposition}[{\cite[Theorem 5.6.6]{ABLBRCS25}}]\label{dRcolimit}
  The functor $(-)^{\dR}$ in the qfd case commutes with colimits.
  In particular,
  if $X_{\bullet}\rightarrow X$ is an arc-hypercover of qfd arc-stacks,
  then $|X^{\dR}_{\bullet}|\rightarrow X^{\dR}$ is an equivalence.
\end{proposition}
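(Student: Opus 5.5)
We need to prove that $(-)^{\dR}=u_*$ commutes with colimits in the qfd setting, and in particular sends arc-hypercovers to colimit diagrams. The plan is to use the formula of Remark \ref{noneedsheafify}: for $A\in\cat{uPerfd}^{\operatorname{std},\qfd}_{\omega_1}$ we have $X^{\dR}(A)=X(A^u)$. Since $\cat{uPerfd}^{\operatorname{std},\qfd}_{\omega_1}$ is a basis for $!$-covers, it suffices to check that the functor $X\mapsto X(A^u)$, followed by $!$-sheafification, commutes with colimits.

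\textbf{Step 1: evaluation at $A^u$ preserves colimits up to $\infty$-connective maps.} Evaluation at the strictly totally disconnected perfectoid $A^u$ commutes with colimits of arc-hypersheaves on the nose. The reason is that strictly totally disconnected perfectoids are weakly contractible for the arc-topology: every arc-cover of $\M_{\arc}(A^u)$ splits up to the profinite base, and on profinite bases arc-hypercovers are effective in the naive sense. The only issue is that a colimit in $\cat{ArcStk}^{\qfd}$ is the sheafification of the presheaf colimit. Evaluation at a weakly contractible object commutes with this sheafification, at least after hypercompletion. This is where the qfd hypothesis enters: qfd guarantees finite cohomological dimension, so the arc-topos restricted to qfd objects is hypercomplete and Postnikov complete, and hypersheafification equals sheafification.

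\textbf{Step 2: $!$-sheafification on the Gelfand side.} We have $X^{\dR}=u_*X$, and by Proposition \ref{adjointnessui} both $u$ and $\iota$ exchange $!$-equivalences on $\cat{uPerfd}^{\operatorname{std},\qfd}_{\omega_1}$ with $\infty$-connective maps in the arc-topos. So for a presheaf colimit $P=\varinjlim X_i$ whose sheafification is $X$, the map $u_*P\to u_*X$ is a $!$-equivalence on the Gelfand side. Hence $\varinjlim X_i^{\dR}\simeq (u_*P)^{\operatorname{sh}}\simeq X^{\dR}$.

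The hypercover statement follows by applying this to the geometric realization $|X_\bullet|\simeq X$, which holds in the hypercomplete qfd arc-topos.

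The main obstacle is Step 1, specifically proving that arc-hypercovers and $\infty$-connective maps are detected and preserved by evaluation at strictly totally disconnected objects. This needs a bound on cohomological dimension, coming from quasi-pro-étaleness over $\aff^n$, so that Postnikov towers converge. I would first reduce to $\M_{\arc}(A)$ being quasi-pro-étale over a finite-dimensional base, and then use the standard result that for spatial diamonds of finite cohomological dimension, v-hypercovers are effective.
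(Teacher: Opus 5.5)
Your proposal has a genuine gap. Neither step holds for the reason you give, and the one nontrivial input is missing.

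\textbf{Step~1 is false.} Strictly totally disconnected perfectoids are \emph{not} weakly contractible for the arc-topology; strict total disconnectedness only splits (pro-)étale covers. Take an extension $C\subsetneq C'$ of algebraically closed perfectoid fields, e.g.\ $C=\mathbb{C}_p$ and $C'$ the completed algebraic closure of $\widehat{C(T)}$ for the Gauss norm, which is still separable and qfd. Then $\M_{\arc}(C')\to\M_{\arc}(C)$ is an arc-cover with no section: a $C$-algebra map $s\colon C'\to C$ is injective with image containing $C$, which forces $C'=C$. So the presheaf geometric realization of its \v{C}ech nerve is empty at $C$, while the sheaf-theoretic one is $\M_{\arc}(C)$. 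Evaluation at $A^u$ therefore does not commute with arc-sheafification, hypercompleteness or not.

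\textbf{Step~2 assumes the theorem.} You assert that $u_*P\to u_*(P^{\#})$ is a $!$-equivalence for the presheaf colimit $P$; that is equivalent to the proposition itself. It does not follow from Proposition~\ref{adjointnessui}, which is a continuity statement: $u$ and $\iota$ carry covers to covers. Continuity is what gives $\iota^*\dashv u^*\dashv u_*$ and the formula $X^{\dR}(A)=X(A^u)$ without sheafification. Commuting $u_*$ past sheafification needs the converse, cocontinuity: every arc-cover $A^u\to B$ must be refined by $A^u\to (A')^u$ for some $!$-cover $A\to A'$ of $A$ itself. Applying $\iota$ only gives a $!$-cover of $\iota(A^u)=A^u$. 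But $\operatorname{GSpec}A^u\to\operatorname{GSpec}A$ is in general not a $!$-cover. For instance, if $A$ is the ring of germs at $0$ of functions on a perfectoid disc, then $A^u=C$, and base change to $A^u$ kills $A[1/T]\neq 0$. For uniform $A$, Proposition~\ref{adjointnessui} does suffice; all the content lies in the non-uniform case, which your argument never reaches.

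\textbf{What is missing.} The key idea is lifting arc-covers along the $\dagger$-nil-thickening $A\to A^u$. Once that is available, restriction along $u$ commutes with (hyper)sheafification, so $u_*$ commutes with colimits, and your final deduction for hypercovers goes through.

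This lifting, not finite cohomological dimension, is where qfd is used; arc-stacks are hypersheaves by definition, so hypercompleteness is not the issue. Schematically:
\begin{enumerate}
\item Refine $B$ to be quasi-pro-étale over a finite-dimensional affine space or polydisc over $A^u$.
\item Lift the coordinates $!$-locally to $A$ using $\dagger$-formal smoothness, in the form that a $\dagger$-formally smooth $Y$ surjects onto $Y^{\dR}$ (cf.\ \cite[Corollary 4.7.5]{ABLBRCS25}).
\item Lift the quasi-pro-étale part using that (pro-)étale maps are insensitive to the thickening $A\to A^u$, since $\M(A)=\M(A^u)$.
\end{enumerate}
This is the input behind \cite[Theorem 5.6.6]{ABLBRCS25}, which the paper cites rather than reproves.
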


\begin{remark}
  Combining with Proposition \ref{ABLBRCS25Section4.5},
  we know that the analytic de Rham stack functor $(-)^{\dR}$ preserves both colimits and limits.
\end{remark}

As an application of analytic de Rham stacks,
let us discuss how we can define the overconvergent neighborhood of tubes in the setting of Gelfand stacks.

\begin{construction}[Tubes]\label{tubes}
  Let $(X,\overline{X},P)$ be a smooth frame in Section \ref{sectionnotation},
  and view the generic fiber $P_{\eta}$ of $P$ as a Gelfand stack.

  Define the adic tube $]X[_P$ of $X$ in $P_{\eta}$,
  as the adic subspace of $P_{\eta}$ whose underlying topological space is the preimage $(\operatorname{sp})^{-1}(X)$ of $X$ under the specialization map $\operatorname{sp}\colon P_{\eta}\rightarrow P_k$.

  Define the Berkovich tube $]X[_P^{\operatorname{Berk}}$ as the image of the adic tube $]X[_P$ in the underlying Berkovich spectrum,
  and define the arc-tube $]X[_P^{\operatorname{arc}}$ as the fiber product in the following diagram:
  \[\xymatrix{
    ]X[_P^{\operatorname{arc}}\ar[r]\ar[d] & (P_{\eta})^{\diamond}\ar[d]\\
    \underline{]X[_P^{\operatorname{Berk}}}\ar[r] & \underline{\M(P_{\eta})}
  }\]
  where the vertical morphisms come from \cite[Example 4.1.9]{ABLBRCS25}.
  Define the analytic de Rham stack of the tube as $]X[^{\dR}_P:=(\underline{]X[_P^{\operatorname{arc}}})^{\dR}$.
  Finally,
  we define the overconvergent tube $]X[_P^{\dagger}$ as the fiber product in the following diagram:
  \[
    \xymatrix{
      ]X[_P^{\dagger}\ar[r]\ar[d] & P_{\eta}\ar[d]\\
      ]X[^{\dR}_P\ar[r] & P_{\eta}^{\dR}
    }
  \]
  Note that the analytic de Rham stack of $]X[_P^{\dagger}$ agrees with $]X[_P^{\dagger,\dR}=]X[^{\dR}_P$.
\end{construction}

\begin{example}
  A standard example of computing tubes is the open embedding $(\aff^1_k\subset\proj^1_k,\proj^1_{K^{\circ}})$ and the closed embedding $(\{0\}\subset \proj^1_k,\proj^1_{K^{\circ}})$
  (where they are complements to each other after the change of coordinates $t\mapsto t^{-1}$).
  In this case,
  $]\aff^1_k[_{\proj^1_{K^{\circ}}}^{\dagger}=\disk^{\dagger}_K$,
  and $]\{0\}[_{\proj^1_{K^{\circ}}}^{\dagger}=\disk^{\circ}_K$ are closed unit disk and open unit disk respectively in Example \ref{disk}.
  Their analytic de Rham stacks of the tubes, arc-tubes, Berkovich tubes and adic tubes are,
  respectively,
  their analytic de Rham stacks, underlying arc-stacks,
  underlying Berkovich spaces and underlying adic spaces respectively.
\end{example}

Finally, in this section,
we briefly discuss the $6$-functor formalism related to Gelfand stacks.
Applying \cite[Theorem 1.2.7]{HM24} to qfd Gelfand rings,
we get a $6$-functor formalism on $\cat{GelfStk}^{\qfd}$ as follows.
In particular,
we obtain the notions of suave and prim maps between Gelfand stacks.

\begin{proposition}
  There exists a $6$-functor formalism $D\colon (\cat{GelfStk}^{\qfd},\widetilde{!\textnormal{-able maps}})\rightarrow \cat{Pr}^{L,\st}$
  extending $D\colon \operatorname{GSpec}A\mapsto D_{\blacksquare}(A)$.
\end{proposition}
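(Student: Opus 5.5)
We plan to obtain this as a direct application of the extension theorem of Heyer--Mann \cite[Theorem 1.2.7]{HM24}, applied to the site $\cat{GelfRing}_{\omega_1}^{\op,\qfd}$ with the $!$-topology (descendable covers). Following the construction of the $6$-functor formalism on analytic stacks in \cite{CS23} and \cite{HM24}, we first describe the formalism on affines. On $\cat{GelfRing}_{\omega_1}^{\op,\qfd}$ we take the assignment $\operatorname{GSpec}A\mapsto D_{\blacksquare}(A)$ (modules over the induced analytic ring structure), with $f^*$ given by base change and $f_*$ by restriction. The class $E$ of $!$-able maps between affines is generated by two kinds of maps. The first kind are the maps $\operatorname{GSpec}B\to\operatorname{GSpec}A$ for which $A\to B$ is ``proper'', meaning that $B$ carries the induced analytic structure from $A$; for these we set $f_!=f_*$. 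The second kind are open immersions, namely idempotent-algebra localizations (complements of closed idempotent algebras); for these we set $f_!$ equal to the left adjoint of $f^*$. By the general construction of \cite{CS23} (the Liu--Zheng/Mann machinery in the form of \cite[Theorem 1.2.7]{HM24}), such a pair of a proper class and an open class gives a $6$-functor formalism on affines.

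The key verification is that these classes are compatible with the qfd Gelfand world. Concretely, we need two facts:
\begin{itemize}
  \item pushouts of qfd separable Gelfand rings along maps in $E$ remain qfd separable Gelfand, so that the subcategory is stable under the base changes used in the construction;
  \item base change and the projection formula hold.
\end{itemize}
The first point is stability under pushouts. For the uniformly perfectoid basis this is recorded in the remark following \cite[Proposition 4.6.4]{ABLBRCS25}. For general qfd Gelfand rings we will use that Gelfand rings are closed under the relevant pushouts and under idempotent localizations, as in \cite[Section 3]{ABLBRCS25}. The second point is inherited from the ambient formalism on $\AnSpec$ of analytic rings, because the embedding of Gelfand rings into analytic rings over $\rt_{p,\blacksquare}$ is fully faithful and preserves these pushouts.

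Next we apply the extension step. \cite[Theorem 1.2.7]{HM24} requires that $D$ satisfy descent for the chosen topology, with $*$-descent for covers and $!$-descent along $E$-covers. Since $\cat{GelfStk}^{\qfd}$ is defined as sheaves for the $!$-topology generated by descendable maps, the required descent follows from Lurie--Mathew descendability: a descendable map $A\to B$ gives $D(A)\simeq\varprojlim D(B^{\otimes\bullet+1})$, compatibly with both $*$- and $!$-functoriality. The theorem then extends $D$ to all of $\cat{GelfStk}^{\qfd}$, with the class $\widetilde{E}$ of $!$-able maps being those which are $!$-locally on source and target in $E$. The restriction to affines is $D_{\blacksquare}(A)$ by construction.

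The main obstacle is the compatibility check that the ambient analytic-stack $6$-functor formalism restricts well to the qfd separable Gelfand subcategory. Specifically, we must check that the class $E$ and the $!$-topology are stable under base change within $\cat{GelfRing}_{\omega_1}^{\qfd}$, since fibre products in analytic rings of qfd Gelfand rings must stay qfd. We would first try to reduce this to the strictly totally disconnected nilperfectoid basis by \cite[Proposition 4.6.4]{ABLBRCS25}, where pushout-stability is already known, and then use descent to conclude.
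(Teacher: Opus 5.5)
Your proposal is correct in outline and matches the paper, whose entire argument is to apply \cite[Theorem 1.2.7]{HM24} to the site of qfd separable Gelfand rings; the pushout-stability you flag as the main obstacle is taken from the framework of \cite{ABLBRCS25} rather than reproved (the paper records it for the basis $\cat{uPerfd}^{\qfd}_{\omega_1}$, over which $\cat{GelfStk}^{\qfd}$ can equivalently be defined). One correction: every Gelfand ring carries the analytic structure induced from $\rt_{p,\blacksquare}$, so every map $\operatorname{GSpec}B\to\operatorname{GSpec}A$ already has $f_!=f_*$ and your open-immersion class is unnecessary; moreover, your claim that Gelfand rings are closed under open (idempotent-complement) localizations is false, e.g.\ $\disk^{\circ}\subset\disk^{\dagger}$ is not affine.
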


\begin{remark}\label{categoricalKunneth}
  The \emph{categorical Künneth formula} holds for Gelfand stacks\footnote{Equivalently saying,
  the $6$-functor formalism of Gelfand stacks is \emph{$1$-affine} in the sense of \cite{Sch25}.}.
  Indeed,
  it is shown for analytic stacks in \cite[Corollary 1.5.1]{Kes25},
  and for the case of Gelfand stacks,
  we note that the functor $(-)_{\operatorname{An}}$ preserves finite limits in \cite[Lemma 4.2.7]{ABLBRCS25},
  and the $6$-functor formalism for Gelfand stacks is defined from the $6$-functor formalism for analytic stacks via the composition with $(-)_{\operatorname{An}}$.
\end{remark}

Our main goal is to use Gelfand stacks to give a stacky approach to rigid cohomology.
As already discussed in the introductory section,
the stack approach usually comes from transmutation,
and some properties of the $6$-functor formalism of stacks coming from transmutation are easier to check.
The following theorem is essentially in \cite{Aok26}.

\begin{theorem}\label{Aoki}
  Let $k$ be a ring and let $R$ be a $k$-algebra stack in $\cat{GelfStk}^{\qfd}$.
  Assume the following conditions:
  \begin{enumerate}
    \item the map $f\colon R\rightarrow\operatorname{GSpec}\rt_p$ is $!$-able and cohomologically smooth,
    and $f_!f^!1\rightarrow 1$ is an equivalence;
    \item the maps $i\colon \operatorname{GSpec}\rt_p\overset{0}{\hookrightarrow}R \hookleftarrow R^{\times}\colon j$ yield a closed-open decomposition.
  \end{enumerate} 
  Define the transmutation functor
  \[
  X\mapsto X_R
  \]
  by sending a separated scheme $X$ of finite type over $k$ to the qfd Gelfand stack $X_R$ which is the sheafification (sheafification in the sense of analytic stacks) of the presheaf taking $A$ to $X(R(A))$.
  Then the transmutation functor commutes with finite limits and Zariski glueing,
  and sends étale (resp. proper; smooth) morphisms of schemes to cohomologically étale (resp. cohomologically proper; cohomologically smooth) morphisms of qfd Gelfand stacks.
\end{theorem}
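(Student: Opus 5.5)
We follow Aoki's argument \cite{Aok26}: first treat finite limits and gluing at the level of functors of points, then reduce the cohomological statements to the basic building blocks $\aff^1$, open immersions and closed immersions.

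\textbf{Finite limits and Zariski gluing.} The functor $X\mapsto X(R(A))$ on presheaves commutes with finite limits, because $X\mapsto X(S)$ does so for any ring $S$. Here $R(A)$ is an animated $k$-algebra, or an $E_\infty$-ring valued sheaf, and $X(R(A))$ is understood via the functor of points of $X$ extended to animated rings. Sheafification is left exact, so $X_R$ commutes with finite limits.

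For Zariski gluing, let $U\subset X$ be a Zariski open. We claim $U_R\to X_R$ is an open immersion in the Gelfand sense. By the finite limit compatibility, it suffices to treat $U=D(f)\subset \aff^1$, i.e.\ the map $R^\times\to R$. This is open by hypothesis (2). A Zariski cover $\{U_i\}$ of $X$ then produces the Čech nerve $\{(U_{i_0}\cap\dots\cap U_{i_n})_R\}$. To see that $\coprod (U_i)_R\to X_R$ is a cover, reduce to the case $\aff^n$ covered by the $D(x_i)$, where the $x_i$ generate the unit ideal. Using the closed-open decomposition in (2) inductively, an $A$-point of $R^n$ avoiding all $D(x_i)$ factors through $0$. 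Since $1=\sum a_ix_i$, the empty locus is $\emptyset$, so the opens jointly cover.

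\textbf{Cohomological properties.} By Zariski gluing and finite limits, every smooth morphism locally factors as an étale map $X\to\aff^n_Y$ followed by a projection. Properness is handled by Chow's lemma together with closed immersions into $\proj^n$. So we reduce to three cases.

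\begin{enumerate}
\item \emph{The projection $R^n_Y\to Y$.} It is cohomologically smooth by base change from (1), since $R\to \operatorname{GSpec}\rt_p$ is smooth and smoothness is stable under products and base change.
\item \emph{Closed immersions.} Writing $Z=V(f_1,\dots,f_m)\subset X$, the map $Z_R\to X_R$ is a base change of $0\to R^m$. By (2) this is the closed complement of an open, hence cohomologically proper, and indeed a closed immersion in the six-functor sense.
\item \emph{Étale maps.} Use the local structure $\Spec (k[x][g^{-1}]/(h))$ with $h'$ invertible. The map is a composite of an open immersion, which is cohomologically étale, and a closed immersion cut out by one equation with invertible derivative. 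For the latter we show that $\{h=0\}\subset R\times Y'$ is also open, via a Hensel-type argument using $f_!f^!1\simeq1$ to trivialize the dualizing sheaf. Alternatively, $\Delta$ is both open and closed: a diagonal of an étale map is an open immersion of schemes, hence open after transmutation, and closed since the map is separated.
\end{enumerate}

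Proper maps then reduce, via Chow's lemma and $h$-type descent, to the projection $\proj^n_R\to \operatorname{GSpec}\rt_p$. This map is covered by the $n+1$ copies of $R^n$ glued along opens. We show it is cohomologically proper by checking that $\proj^n_R$ is closed in a product of the form $R^{N}$, via the Segre/Veronese embedding into the affine cone, combined with the closed-open decomposition $R^{n+1}\setminus 0\subset R^{n+1}$ and the quotient by $R^\times$.

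\textbf{Main obstacle.} The hardest step is properness of $\proj^n_R$. The quotient $(R^{n+1}\setminus0)/R^\times$ must be shown to be cohomologically proper. We plan to do this by exhibiting $R^\times$-torsors and using $f_!f^!1\simeq1$ to get $!$-pullback full faithfulness, mirroring \cite{Aok26}.
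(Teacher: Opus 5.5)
Your skeleton matches the standard argument behind \cite[Theorem 10.6]{Sch23}, which is all the paper cites: finite limits, open and closed immersions via (2), the projections $R^n_Y\to Y$, then reducing smooth maps to étale maps over $\aff^n_Y$ and proper maps to $\mathbb{P}^n$. However, the two steps that carry the content do not work as written.

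\textbf{The étale case.} Your claim that $\{h=0\}$ is open in $R\times Y'$ is false under the hypotheses. Take $h=x$, $g=1$, $Y=\Spec k$, so that $X\to Y$ is the identity.
\begin{enumerate}
\item Your claim would make $0\colon *\to R$ open as well as closed, hence $D(R)\simeq D(*)\times D(R^\times)$ and $f_*1\simeq 1\oplus h_*1$ with $h\colon R^\times\to *$.
\item The condition $f_!f^!1\simeq 1$ says $f_\natural 1\simeq 1$, i.e.\ $f^*$ is fully faithful. So the unit $1\to f_*1$ is an isomorphism, and its composite with restriction to $0$ is the identity. Hence the summand $h_*1$ vanishes.
\item Then $\End(1_{R^\times})=0$. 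This is absurd, since pulling back along the unit section $*\to R^\times$ gives $1\neq 0$.
\end{enumerate}
Your fallback only shows that $\Delta$ is open and closed. Cohomological étaleness also requires the map itself to be cohomologically smooth, and your smooth case is built on the étale one, so neither is established. What is missing is a purity statement for $i\colon X\hookrightarrow D(g)\subset\aff^1_Y$. One must compute $i^!$ on objects pulled back from $Y_R$ and deduce $f^!\simeq f^*\otimes f^!1$ with $f^!1$ invertible, stably under base change. One way is deformation to the normal cone plus $\aff^1$-invariance, reducing to the zero section, where $0^!1\otimes 0^*f^!1\simeq 0^!f^!1\simeq 1$.

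\textbf{Properness of $\mathbb{P}^n_R$.} You rightly call this the main obstacle, but it is not actually addressed.
\begin{enumerate}
\item $\mathbb{P}^n$ has no closed immersion into affine space. Segre and Veronese embeddings land in projective space, and the affine cone is not $\mathbb{P}^n$.
\item A closed immersion into $R^N$ would only help if $R^N\to *$ were cohomologically proper. That is not among the hypotheses, and it fails in the motivic situations the theorem is designed for (think of $\aff^1$ in étale cohomology).
\item Passing to the $R^\times$-torsor $R^{n+1}\setminus 0\to\mathbb{P}^n_R$ does not help either, since cohomological properness is not local on the source.
\end{enumerate}
Because $\Delta_{\mathbb{P}^n_R}$ is a closed immersion, what must be proved is that the canonical map $p_!\to p_*$ for $p\colon\mathbb{P}^n_R\to *$ is an isomorphism, stably under base change. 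This needs a genuine argument from $\aff^1$-invariance and excision. It is precisely the nontrivial input of the cited result, and your proposal leaves it as a plan.

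\textbf{Minor point.} The ``$h$-type descent'' in your Chow reduction is not available from (1)--(2). Use Noetherian induction, excision, and base change along the projective modification instead.
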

\begin{proof}
  This is the same as the proof in \cite[Theorem 10.6]{Sch23}.
\end{proof}

\begin{remark}
  The association $X\mapsto D(X_R)$ yields a motivic $6$-functor formalism.
  On the other hand,
  every motivic $6$-functor formalism comes from transmutation by \textit{Ringgestalten} in a higher categorical sense, cf. \cite[Lecture IX]{Sch25} and \cite{Aok26}.
\end{remark}

\subsection{Hyodo-Kato stacks of arc-stacks}\label{recollectiondR}

The first step towards the definition of Hyodo-Kato stacks is 
to study the relationship between arc-stacks over $\ff_p$ and $\rt_p$.
It recovers the tilting functor and the Fargues--Fontaine curve in \cite{FS21}.

\begin{construction}
  Consider the tilting functor $\flat\colon \cat{Perfd}_{\omega_1,\rt_p}^{\qfd}\rightarrow\cat{Perfd}_{\omega_1,\ff_p}^{\qfd}$.
  It sends arc-hypercovers to arc-hypercovers.
  Hence we have an adjoint pair
  \[
    \begin{tikzcd}
    \cat{ArcStk}_{\rt_p}^{\qfd}\arrow[r, shift left=-1.5ex, "(-)^{\diamondsuit}:=\flat^*"', "\top"]& \arrow[l, shift left=-1.5ex, "\Y_{(-)}^{\diamond}:=\flat_*"']\cat{ArcStk}_{\ff_p}^{\qfd}.
    \end{tikzcd}
    \]
  The functor $\flat^*$ is the diamond functor: $\flat^*(X)=X^{\diamondsuit}$,
  and the functor $\flat_*$ generalizes the Fargues--Fontaine curve (in fact punctured Fargues--Fontaine disk) seen as an arc-stack:
  if $X=\M_{\operatorname{arc}}(S)$ for a perfectoid ring $S$,
  then $\flat_*(X)=\Y_{S,\FF}^{\diamond}=(\Spa W(S^{\circ,\flat})\backslash V(p[\varpi^{\flat}]))^{\diamond}$ is the arc-stack of the punctured Fargues--Fontaine disk of $S$,
  whence the notations come.
\end{construction}

\begin{remark}
  Let $\Spd\rt_p:=\flat^*\M_{\operatorname{arc}}(\rt_p)$.
  Via the functor $\flat^*$,
  we can identify $\cat{ArcStk}_{\rt_p}^{\qfd}$ as a slice category $\cat{ArcStk}_{\ff_p,/\Spd\rt_p}^{\qfd}$ over $\cat{ArcStk}_{\ff_p}^{\qfd}$,
  as a map $X\rightarrow \Spd\rt_p$ just collects the information of $S$-points of $X$ for those perfectoid rings $S$ over $\rt_p$ with the same tiltings.
  Under this identification,
  $\flat^*$ becomes the forgetful functor and $\flat_*$ becomes the functor sending $X$ to $X\times_{\M_{\arc}(\ff_p)}\Spd\rt_p$.
\end{remark}

\begin{remark}\label{FFcurvecolimit}
  As a right adjoint,
  $\Y_{(-)}^{\diamond}$ also preserves colimits.
  Indeed,
  it suffices to prove it after restricting to every slice category $\cat{ArcStk}_{\M_{\arc}(A)}^{\qfd}$ for some perfectoid $A$.
  However,
  on this slice category,
  $\flat$ induces an equivalence between $\cat{ArcStk}_{\M_{\arc}(A)}^{\qfd}$ and $\cat{ArcStk}_{\M_{\arc}(A^{\flat})}^{\qfd}$,
  hence $\Y_{(-)}^{\diamond}=\flat_*$ preserves colimits.
\end{remark}

\begin{example}\label{computingflat}
  Let us compute some basic examples of $\Y_{(-)}^{\diamond}$:
  \begin{itemize}
    \item if $X=\M_{\arc}(S)$ for some perfectoid ring $S$,
    then $\Y_{X}^{\diamond}=\Y_{S,\FF}$ as arc-stacks;
    \item $\Y_{\M_{\arc}(\ff_p[T])}^{\diamond}=\varprojlim_{x\mapsto x^p}\aff^{1,\an,\diamond}_{\rt_p}$,
    as both sides have $R$-points given by $\varprojlim_{x\mapsto x^p}R\cong R^{\flat}$;
    \item $\Y_{\M_{\arc}(\ff_p[T^{\pm1}])}^{\diamond}=\varprojlim_{x\mapsto x^p}\mathbb{G}^{\an,\diamond}_{m,\rt_p}$ for the same reason as above.
  \end{itemize}
\end{example}

We now consider arc-stacks associated with schemes.

\begin{definition}\label{diamondofscheme}
  Let $X$ be a scheme of characteristic $p$.
  We define the diamonds\footnote{Despite their names, they are not actually diamonds in the sense of \cite{Sch17}.} associated with $X$.
  \begin{enumerate}
    \item The small diamond associated to $X$,
    denoted by $X^{\circ}_{\operatorname{arc}}$,
    is a qfd arc-stack over $\ff_p$ with functor of points given as the sheafification of $X^{\circ,\operatorname{pre}}_{\operatorname{arc}}(R):=X(R^{\circ})$.
    \item The big diamond $X_{\arc}$ associated to $X$ is a qfd arc-stack over $\ff_p$ defined as the sheafification of $X_{\operatorname{arc}}^{\operatorname{pre}}(R):=X(R)$.
  \end{enumerate}
\end{definition}

\begin{remark}
  Both associated diamonds satisfy Zariski descent for $X$,
  i.e. if $U\rightarrow X$ is a Zariski cover,
  then $U_{\arc}^{(\circ)}\rightarrow X_{\arc}^{(\circ)}$ is an arc-cover.
  Indeed,
  taking any $A$-point of $X_{\arc}^{(\circ),\operatorname{pre}}$ which is an element in $X(A^{(\circ)})$,
  then we can find an arc-cover $S\rightarrow \M_{\arc}(A)$ such that the map lifts to $S\rightarrow U^{(\circ)}_{\arc}$,
  by using the rational localization of a Zariski cover of $\Spec A^{(\circ)}$.
\end{remark}

\begin{remark}\label{preadicspace}
  Let us mention the relation with another construction in \cite[Lecture 18]{SW20}.
  There they associate to a pre-adic space $Y$ a $v$-sheaf $Y^{\diamondsuit}$.
  Let $X$ be a scheme of characteristic $p$.
  Then we can associate to it two pre-adic spaces,
  namely the big one $X^{\ad/\ff_p}$ by the glueing of $\Spec A\mapsto \Spa(A,\widetilde{\ff_p})$ and the small one $X^{\ad}$ by the glueing of $\Spec A\mapsto \Spa(A,A)$,
  cf. \cite[Discussion after Definition 9.5]{Sch19}.
  Then we can associate to them $v$-sheaves $X^{\ad(/\ff_p),\diamondsuit}$.
  The relation is that under the construction\footnote{Strictly speaking,
  we use the modified version $a'^*$ mentioned later in \cite[Section 12]{Sch24b}.} in \cite[Proposition 12.1]{Sch24b},
  their associated arc-stacks recover the aforementioned definition:
  \[
    a^*X^{\ad,\diamondsuit}=X^{\circ}_{\arc},\,
    a^*X^{\ad/\ff_p,\diamondsuit}=X_{\arc}.
  \]
  Indeed,
  both agree for affine schemes $X=\Spec A$ and both satisfy Zariski descent.
\end{remark}

The following proposition describes the punctured Fargues--Fontaine disk of small diamonds associated to a scheme.

\begin{proposition}\label{FFcurvesmalldiamond}
  The following properties hold.
  \begin{enumerate}
    \item Let $X$ be a scheme over $\ff_p$.
    Then $\Y_{X^{\circ}_{\arc}}^{\diamond}=(X^{\perf}_{\Ainf})^{\diamond}_{\eta}$,
    where $X^{\perf}=\varprojlim_{\varphi}X$ is the perfection of $X$;
    $X^{\perf}_{\Ainf}$ is a $p$-adic formal scheme,
    obtained by taking the Witt vectors on $X_{\perf}$ Zariski locally and then glueing;
    and $(X^{\perf}_{\Ainf})^{\diamond}_{\eta}$ is the arc-sheaf associated to the generic fiber of $X^{\Ainf}_{\perf}$.
    \item If $X$ further admits a $\delta$-lift to $\itg_p$,
    i.e. a lift to a formal scheme $\Xf$ with Frobenius lift,
    and if we denote its generic fiber by $\X$ over $\rt_p$ with Frobenius lift $\varphi$.
    Then $X^{\perf}_{\Ainf}=\varprojlim_{\varphi}\Xf$ and hence $\Y_{X^{\circ}_{\operatorname{arc}}}^{\diamond}\cong\varprojlim_{\varphi}\X^{\diamond}$.
  \end{enumerate}
\end{proposition}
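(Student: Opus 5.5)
We first reduce (1) to the affine case and then compute there using the tilting correspondence. Both sides satisfy Zariski descent in $X$. For the left side, $X\mapsto X^{\circ}_{\arc}$ sends Zariski covers to arc-covers, and $\Y^{\diamond}_{(-)}=\flat_*$ preserves colimits by Remark \ref{FFcurvecolimit}. For the right side, perfection, the Witt vector lift $X^{\perf}_{\Ainf}$ and the generic-fibre arc-sheaf are all glued Zariski locally, and open covers of the formal scheme give covers of the generic fibre. Overlaps of affines are affine because $X$ is separated, so it suffices to treat $X=\Spec A$ and to check that the identification is natural in $A$.

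For $X=\Spec A$ we compute $R$-points for $R$ a qfd perfectoid $\rt_p$-algebra, where we may take $R$ strictly totally disconnected. By the adjunction defining $\flat_*$,
\[
\Y^{\diamond}_{X^{\circ}_{\arc}}(R)=X^{\circ}_{\arc}(R^{\flat}),
\]
which is the sheafification of $\Hom(A,R^{\flat,\circ})$. Since $R^{\flat,\circ}$ is perfect, this equals $\Hom(A^{\perf},R^{\flat,\circ})$. By the universal property of Witt vectors of perfect rings, this in turn equals $\Hom(W(A^{\perf}),W(R^{\flat,\circ}))=\Hom(W(A^{\perf}),\Ainf(R^{\circ}))$. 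The next step is to compose with Fontaine's map $\theta\colon \Ainf(R^{\circ})\to R^{\circ}$, which sends $[x]\mapsto x^{\sharp}$, to get $\Hom_{\mathrm{cont}}(W(A^{\perf}),R^{\circ})$, i.e.\ the $R$-points of the generic fibre of $\Spf W(A^{\perf})$.

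This last step is the main obstacle: we must show that the induced map is a bijection after sheafification. Concretely, a continuous map $W(A^{\perf})\to R^{\circ}$ should be determined by its restriction to Teichmüller lifts $a\mapsto [a]\mapsto r_a$, together with the requirement that $a\mapsto r_a$ be multiplicative and compatible with $p$-th roots. Tilting then recovers a map $A^{\perf}\to R^{\circ,\flat}=R^{\flat,\circ}$. We would prove this by identifying $\Spd$ of the generic fibre of $\Spf W(B)$, for $B$ perfect, via the standard description of untilts: a map $W(B)\to R^{\circ}$ is equivalent to a map $B\to R^{\flat,\circ}$ together with the untilt $R$, and the untilt is fixed by working over $\rt_p$. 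This matches the identification of $\flat^*$ as the forgetful functor to $\Spd\rt_p$, which reduces the claim to the tilting equivalence for perfectoid $R^{\circ}$-algebras. Note that $W(A^{\perf})$ carries only the $p$-adic topology, so no extra boundedness conditions arise, and the small diamond's use of $R^{\circ}$ exactly matches integrality.

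For (2), given a $\delta$-lift $\Xf$ with Frobenius lift $\varphi$, we reduce to affines $\Xf=\Spf B$. We show that $\varprojlim_{\varphi}B$ (the $p$-completed colimit of $B\xrightarrow{\varphi}B\to\cdots$) is $p$-torsion free, $p$-complete, with perfect reduction $A^{\perf}$. By the uniqueness of $p$-torsion-free $\delta$-lifts of perfect rings, it is therefore $W(A^{\perf})$, hence $X^{\perf}_{\Ainf}=\varprojlim_{\varphi}\Xf$. Taking generic fibres and arc-sheafifying commutes with this cofiltered limit of formal schemes: $R^{\circ}$-points of $\varprojlim_{\varphi}\Xf$ are compatible systems of $R^{\circ}$-points, since $p$-adic completion does not affect maps into the $p$-complete ring $R^{\circ}$. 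Combining with (1) gives $\Y^{\diamond}_{X^{\circ}_{\arc}}\cong\varprojlim_{\varphi}\X^{\diamond}$.
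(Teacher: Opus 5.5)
Your proposal is correct and follows essentially the same route as the paper: reduce to affines, rewrite $\Hom(A,R^{\flat,\circ})$ as $\Hom(A^{\perf},R^{\flat,\circ})=\Hom(W(A^{\perf}),W(R^{\flat,\circ}))$ and compose with $\theta$, and for (2) identify $(\varinjlim_{\varphi}\mathfrak{R})^{\wedge}_p$ with $W(A^{\perf})$ using the equivalence between $p$-complete perfect $\delta$-rings and perfect $\mathbb{F}_p$-algebras. The only difference is in how the inverse to composition with $\theta$ is built: the paper reduces $W(A^{\perf})\to R^{\circ}$ modulo $p$ and lifts $A^{\perf}\to R^{\circ}/p$ uniquely to $R^{\circ,\flat}=\varprojlim_{\varphi}R^{\circ}/p$, whereas your Teichm\"uller-lift construction uses the equivalent multiplicative description $R^{\circ,\flat}=\varprojlim_{x\mapsto x^p}R^{\circ}$ (so the appeal to the tilting equivalence there is not needed).
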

\begin{proof}
  We first prove that $\Y_{X^{\circ}_{\arc}}^{\diamond}=(X_{\Ainf}^{\perf})^{\diamond}_{\eta}$.
  By Zariski localization,
  it suffices to prove the claim for an affine $X=\Spec R$.
  Then by definition,
  $\Y_{X^{\circ}_{\arc}}^{\diamond}(A)$ is the sheafification of $A\mapsto \Hom(R,A^{\circ,\flat})$.
  Let $R_{\perf}=\varinjlim_{\varphi} R$ be the perfection of $R$,
  then we have $\Hom(R,A^{\circ,\flat})=\Hom(R_{\perf},A^{\circ,\flat})$ by the universal property of perfections,
  which by Remark \ref{Kedlaya336} is identified with $\Hom_{\delta}(W(R_{\perf}),W(A^{\circ,\flat}))$.
  By composition with the $\theta$-map $\theta\colon W(A^{\circ,\flat})\rightarrow A^{\circ}$,
  we get a map $\Hom_{\delta}(W(R_{\perf}),W(A^{\circ,\flat}))\rightarrow \Hom_{p\text{-adic}}(W(R_{\perf}),A^{\circ})$,
  hence we obtain a map $\Hom(R_{\perf},A^{\circ,\flat})\rightarrow \Hom_{p\text{-adic}}(W(R_{\perf}),A^{\circ})$.
  On the other hand,
  if we are given an element of $\Hom_{p\text{-adic}}(W(R_{\perf}),A^{\circ})$,
  then it induces an element of $\Hom(W(R_{\perf})/p,A^{\circ}/p)$,
  which will induce an element of $\Hom(R_{\perf},A^{\circ,\flat})$ by the universal property of co-perfections.
  Moreover,
  these two maps are inverse to each other,
  hence we get a natural identification 
  \[
    \Hom(R^{\perf},A^{\circ,\flat})\cong \Hom_{p\text{-adic}}(W(R^{\perf}),A^{\circ}).
  \]
  This identification then gives 
  \[
    \Hom(R,A^{\circ,\flat})\cong \Hom(R^{\perf},A^{\circ,\flat})\cong \Hom_{p\text{-adic}}(W(R^{\perf}),A^{\circ})\cong \Hom_{p\text{-adic}}(W(R^{\perf})[p^{-1}],A).
  \]
  This proves the identification between $\Y_{X^{\circ}_{\arc}}^{\diamond}$ and the arc-sheaf associated to $\Spa W(R^{\perf})[p^{-1}]$ by passing to the sheafification,
  which is $(X^{\perf}_{\Ainf})^{\diamond}_{\eta}$ in our terminology.
  Then we prove that $X^{\perf}_{\Ainf}=\varprojlim_{\varphi}\Xf$ whenever $\Xf$ is a $\delta$-lift of $X$ to $\itg_p$.
  By Zariski localization,
  we can assume $\Xf=\Spf \mathfrak{R}$ with special fiber $X=\Spec R$ (hence $R=\mathfrak{R}/p$).
  Then $X^{\perf}_{\Ainf}=\Spf W(R^{\perf})$ and $\varprojlim_{\varphi}\Xf=\Spf (\mathfrak{R}^{\perf})_{p}^{\wedge}$,
  where $(\mathfrak{R}^{\perf})_{p}^{\wedge}$ is the $p$-adic completion of the perfection of $\mathfrak{R}$.
  However,
  both $W(R^{\perf})$ and $(\mathfrak{R}^{\perf})_{p}^{\wedge}$ are $p$-adically complete perfect $\delta$-rings with the same residue perfect ring $R^{\perf}$,
  hence by Remark \ref{Kedlaya336},
  they are naturally identified.
  Therefore,
  we have $X^{\perf}_{\Ainf}=\varprojlim_{\varphi}\Xf$ and hence $\Y_{X^{\circ}_{\operatorname{arc}}}^{\diamond}\cong\varprojlim_{\varphi}\X^{\diamond}$.
\end{proof}

\begin{remark}\label{Kedlaya336}
  Here we state the property about $\delta$-rings and Witt vectors that we use in Proposition \ref{FFcurvesmalldiamond} as follows.
  There exist equivalences of categories:
  \begin{enumerate}
    \item The category of $p$-adically complete perfect $\delta$-rings.
    \item The category of $p$-torsion-free,
    $p$-adically complete rings with perfect reduction mod $p$.
    \item The category of perfect rings of characteristic $p$.
  \end{enumerate}
  We refer to \cite[Proposition 3.3.6]{Ked21} for a proof.
\end{remark}

Finally, in this section,
we define the analytic de Rham stack of the punctured Fargues--Fontaine disk and the Hyodo--Kato stack.

\begin{definition}\cite[Section 6]{ABLBRCS25}\label{dRFFstk}
  Let $X$ be an object in $\cat{ArcStk}_{\ff_p}^{\qfd}$.
  Define the analytic de Rham stack of the punctured Fargues--Fontaine disk of $X$ as the qfd Gelfand stack $(\Y^{\diamond}_X)^{\dR}$.
  Note that $\Y^{\diamond}_X$ is equipped with a Frobenius $\varphi_X$ and we can form the Fargues--Fontaine curve $\FF_X^{\diamond}:=\Y^{\diamond}_X/\varphi_X^{\itg}$ of $X$.
  Define the Hyodo--Kato stack (or analytic de Rham stack of the Fargues--Fontaine curve) of $X$ as the following qfd Gelfand stack
  \[
    X^{\HK}:=(\Y^{\diamond}_X)^{\dR}/\varphi^{\itg}_X=(\FF_X^{\diamond})^{\dR}.
  \]
\end{definition}

The key descent property of these stacks is as follows.

\begin{proposition}\cite[Section 6]{ABLBRCS25}\label{dRFFpreservecolimit}
  The functors of taking the analytic de Rham stack of punctured Fargues--Fontaine disk and the Hyodo--Kato stack $\Y^{\diamond,\dR}_{(-)},\,(-)^{\HK}\colon \cat{ArcStk}_{\ff_p}^{\qfd}\rightarrow \cat{GelfStk}^{\qfd}$ preserve colimits.
\end{proposition}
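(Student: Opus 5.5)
The plan is to write both functors as composites of functors already known to preserve colimits, and then to handle the quotient by $\varphi^{\itg}_X$ separately. By definition, $\Y^{\diamond,\dR}_{(-)}$ is the composite
\[
\cat{ArcStk}_{\ff_p}^{\qfd}\xrightarrow{\ \Y^{\diamond}_{(-)}=\flat_*\ }\cat{ArcStk}_{\rt_p}^{\qfd}\xrightarrow{\ (-)^{\dR}\ }\cat{GelfStk}^{\qfd}.
\]
Remark \ref{FFcurvecolimit} shows that $\flat_*$ preserves colimits. The argument there is to work in the slice over $\M_{\arc}(A)$, where tilting identifies $\cat{ArcStk}^{\qfd}_{\M_{\arc}(A)}$ with $\cat{ArcStk}^{\qfd}_{\M_{\arc}(A^\flat)}$, together with the fact that colimits in an $\infty$-topos are universal. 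Equivalently, we use the description of $\flat_*$ as $X\mapsto X\times_{\M_{\arc}(\ff_p)}\Spd\rt_p$ followed by the forgetful functor from the slice. Pullback along a fixed map preserves colimits by universality of colimits in the arc-topos, and the forgetful functor from a slice preserves colimits (in particular all small ones, including the empty colimit). Proposition \ref{dRcolimit} shows that $(-)^{\dR}$ preserves colimits in the qfd setting. Hence the composite $\Y^{\diamond,\dR}_{(-)}$ preserves colimits.

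For $(-)^{\HK}$, Definition \ref{dRFFstk} gives
\[
X^{\HK}=(\Y^{\diamond}_X)^{\dR}/\varphi_X^{\itg},
\]
the colimit over $B\itg$ of the $\itg$-action generated by $\varphi_X$ (the geometric realization of the action groupoid). The key point is that $\varphi_X$ is natural in $X$, since it is induced by the absolute Frobenius on points. So $X\mapsto \Y^{\diamond,\dR}_X$ lifts to a functor
\[
\cat{ArcStk}^{\qfd}_{\ff_p}\to \operatorname{Fun}(B\itg,\cat{GelfStk}^{\qfd}),
\]
and $(-)^{\HK}$ is the composite of this lift with $\operatorname{colim}_{B\itg}$. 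Colimits in a functor category are computed pointwise, and colimits commute with colimits. Therefore, once the lift preserves colimits, which it does because the underlying functor does and the forgetful functor $\operatorname{Fun}(B\itg,\mathcal{C})\to\mathcal{C}$ is conservative and preserves and detects colimits, the composite preserves colimits. The same holds with the quotient formed after $(-)^{\dR}$ or before it, as $\FF^{\diamond}_X$: the two agree because $(-)^{\dR}$ commutes with the colimit over $B\itg$.

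The main obstacle I expect is making the Frobenius naturality precise at the $\infty$-categorical level, so that the lift to $\operatorname{Fun}(B\itg,-)$ really exists. I would construct it by noting that Frobenius is a natural automorphism of the identity functor on perfect $\ff_p$-algebras. This gives an $\itg$-action on $\cat{ArcStk}^{\qfd}_{\ff_p}$ by the identity with a natural automorphism, and it is transported through $\flat_*$ and $(-)^{\dR}$ functorially. A secondary check is that the quotient in the definition of $X^{\HK}$ is taken in $\cat{GelfStk}^{\qfd}$, that is, as a sheafified colimit, so that it matches the categorical colimit used above.
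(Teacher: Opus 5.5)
Your argument is correct and follows the route the paper relies on. The paper only cites the result, but it sets up exactly Remark \ref{FFcurvecolimit} and Proposition \ref{dRcolimit} for this purpose: $\Y^{\diamond}_{(-)}=\flat_*$ and $(-)^{\dR}$ each preserve colimits, and $X^{\HK}$ is the colimit over $B\itg$ of the functorial Frobenius action, which commutes with colimits.

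Two small remarks. First, under $\cat{ArcStk}^{\qfd}_{\rt_p}\simeq\cat{ArcStk}^{\qfd}_{\ff_p,/\Spd\rt_p}$, the functor $\flat_*$ is just base change $X\mapsto X\times_{\M_{\arc}(\ff_p)}\Spd\rt_p$ into the slice; no forgetful functor is involved, since the forgetful functor is $\flat^*$. Its colimit preservation is then exactly universality of colimits. Second, the coherence issue you flag is harmless: since $B\itg\simeq S^1$, a single natural automorphism of the identity (Frobenius on perfect test objects) already determines the lift to $\operatorname{Fun}(B\itg,-)$.
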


\section{The arithmetic de Rham stack}\label{SectionarithdR}

We study the absolute arithmetic de Rham stack $X^{\arith}$ and the relative arithmetic de Rham stack $X^{\arith/K}$ of a scheme $X$ over $k$ in this section.
The motivation is that $X\mapsto X^{\arith/K}$ is a ``stacky approach'' to rigid cohomology and its coefficients,
playing a role similar to that of algebraic de Rham stacks for algebraic $D$-modules.
In this section,
we study its geometry and $6$-functor formalism.
We postpone the connection with rigid cohomology,
overconvergent isocrystals and arithmetic $D$-modules to the next section.

\subsection{Definition of the arithmetic de Rham stack and first properties}\label{arithmeticstackfirstproperties}
The stack is defined via transmutation,
so we begin with the basic case of $X=\aff^1_{\ff_p}$ over $\ff_p$.

\begin{definition}\label{arithmeticstackA1}
  Define the arithmetic de Rham stack of $\aff^1_{\ff_p}$ as the following qfd Gelfand stack over $\rt_p$:
  \[
  \aff^{1,\arith}_{\ff_p}:=\disk^{\dagger}_{\rt_p}/\disk^{\circ}_{\rt_p},
  \]
  where $\disk^{\dagger}_{\rt_p}$ is the overconvergent closed unit disk and $\disk^{\circ}_{\rt_p}$ is the open unit disk.
  It is equipped with a Frobenius morphism $\varphi$,
  which is induced from the Frobenii on $\disk^{\dagger}_{\rt_p}$ and $\disk^{\circ}_{\rt_p}$,
  both given by $T\mapsto T^p$ on the coordinate.
\end{definition}

\begin{remark}
  This stack is already mentioned in \cite[Lecture XII]{Sch23}.
\end{remark}

\begin{proposition}
  The stack $\aff_{\ff_p}^{1,\arith}$ is a perfect (i.e. $\varphi$ is an isomorphism) $\ff_p$-ring stack.
\end{proposition}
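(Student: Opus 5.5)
The plan is to exhibit $\aff^{1,\arith}_{\ff_p}=\disk^{\dagger}_{\rt_p}/\disk^{\circ}_{\rt_p}$ as the quotient of a ring object by an ideal, and then to read off characteristic $p$ and perfectness from the functor of points on $\cat{uPerfd}^{\operatorname{std},\qfd}_{\omega_1}$.

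\textbf{Ring and ideal.} For $A$ a qfd uniformly perfectoid ring, $\disk^{\dagger}(A)=A^{\le 1}$ (by definition of $A^{\le 1}$ via maps from $\rt_p\langle T\rangle_{\le 1}$) and $\disk^{\circ}(A)=A^{<1}=\varinjlim_{r<1}A^{\le r}$. By the definition of norms in \cite{ABLBRCS25}, $A^{\le 1}$ is a subring and $A^{<1}$ is an ideal of it: sums and products of overconvergent bounded elements stay bounded (products of radii multiply, sums use the ultrametric inequality). This functoriality is represented by the comultiplication maps $\rt_p\langle T\rangle_{\le 1}\to\rt_p\langle T_1,T_2\rangle_{\le 1}$, $T\mapsto T_1+T_2,\,T_1T_2$, and by their analogues on the open disk, together with the action map $\disk^{\dagger}\times\disk^{\circ}\to\disk^{\circ}$, $(x,y)\mapsto xy$. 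Hence $\disk^{\dagger}$ is a ring object in $\cat{GelfStk}^{\qfd}$, $\disk^{\circ}\subset\disk^{\dagger}$ is an ideal (a sub-group object stable under multiplication), and the quotient stack $\disk^{\dagger}/\disk^{\circ}$ inherits a ring stack structure. Its presheaf of points is $A\mapsto A^{\le 1}/A^{<1}$, a discrete ring, since $A^{<1}\to A^{\le 1}$ is injective.

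\textbf{Characteristic $p$.} On $\cat{uPerfd}^{\qfd}_{\omega_1}$, $|p|=p^{-1}<1$, so $p\in A^{<1}$. Thus $p=0$ in $A^{\le 1}/A^{<1}$, and the quotient presheaf is an $\ff_p$-algebra, functorially in $A$. Sheafification preserves finite products, so after sheafifying, $\aff^{1,\arith}_{\ff_p}$ is an $\ff_p$-algebra stack. At the level of group stacks one can see this more directly: multiplication by $p$ on $\disk^{\dagger}$ factors through $\disk^{\circ}$.

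\textbf{Perfectness.} The Frobenius is $T\mapsto T^p$ on both disks. Modulo $A^{<1}$ it is additive: $(x+y)^p-x^p-y^p\in pA^{\le 1}\subset A^{<1}$. So $\varphi$ is a ring endomorphism of the quotient, namely the absolute Frobenius of an $\ff_p$-algebra. Since the stack is a sheafification of the quotient presheaf, it suffices to show that $\varphi$ is bijective on $A^{\le 1}/A^{<1}$ for $A$ in the basis $\cat{uPerfd}^{\operatorname{std},\qfd}_{\omega_1}$.
\begin{itemize}
\item \emph{Injectivity} amounts to: $x^p\in A^{<1}$ implies $x\in A^{<1}$. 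This holds because norms are spectral and multiplicative, which one checks on the Berkovich spectrum: $|x|^p=|x^p|\le r<1$ pointwise gives $|x|\le r^{1/p}$. The remaining step is to transfer this pointwise bound back to membership in $A^{\le r^{1/p}}$ using the description of $A^{\le r}$ via $\M(A)$ in \cite{ABLBRCS25}.
\item \emph{Surjectivity}: for a perfectoid $A^u$, every element of $A^{u,\le 1}/p$ is a $p$-th power. Given $x\in A^{\le 1}$, its image in $A^u$ is $y^p+pz$ with $y,z$ power-bounded. We must then lift $y$ to $A^{\le 1}$ modulo $A^{<1}$.
\end{itemize}

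The main obstacle is this last lifting step, namely comparing $A^{\le 1}/A^{<1}$ with $A^{u,\le 1}/A^{u,<1}$. I would first show that the map $A^{\le 1}/A^{<1}\to A^{u,\le1}/A^{u,<1}$ is an isomorphism. Write $A^{u,\le 1}=\varprojlim_{r\to 0}A^{\le 1}/A^{\le r}$. Quotienting by the ideal of elements of norm $<1$ kills the kernel $\Nil^{\dagger}(A)$, and also kills the completion: every element of $A^{u,\le 1}$ is congruent to an element of $A^{\le 1}$ modulo something of norm $\le r<1$. With this identification, the problem reduces to the classical uniform perfectoid Banach case, where Frobenius on $A^{\circ}/A^{\circ\circ}$ is bijective by the standard perfectoid theory.
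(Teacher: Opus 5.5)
Your proposal is correct and is essentially the paper's proof: check everything on the presheaf $A\mapsto A^{\le 1}/A^{<1}$ before sheafifying, use $pA^{\le 1}\subset A^{<1}$ to get an $\ff_p$-algebra, and obtain perfectness from the identification $A^{\le 1}/A^{<1}\cong A^{u,\le 1}/A^{u,<1}$ (via $A^{u,\le 1}=\varprojlim_{r\to 0}A^{\le 1}/A^{\le r}$) together with the perfectoidness of $A^u$. Once that identification is in place, the direct injectivity and surjectivity analysis in your bullets is unnecessary.
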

\begin{proof}
  To prove it is a perfect $\ff_p$-ring stack,
  it suffices to prove all the properties before sheafification.
  As a prestack,
  the functor-of-points description is given by $\disk^{\dagger}_{\rt_p}/\disk^{\circ}_{\rt_p}(A)=A^{\le 1}/A^{<1}$ where $A\in\cat{uPerfd}^{\qfd}_{\omega_1}$.
  Since $A^{<1}$ is an ideal in $A^{\le 1}$ with $pA^{\le 1}\subset A^{<1}$,
  the quotient $A^{\le 1}/A^{<1}$ is a ring over $\ff_p$.
  It is furthermore a perfect ring,
  as $A^{\le 1}/A^{<1}=\varprojlim_{r\rightarrow 0}(A^{\le 1}/A^{\le r})/(A^{<1}/A^{\le r})=A^{u,\le 1}/A^{u,<1}$,
  and $A^{u,\le 1}/A^{u,<1}$ is perfect since $A^u$ is perfectoid.
\end{proof}

\begin{definition}\label{arithdeRhamdefinition}
  Let $X\in\cat{Psh}(\cat{Ring}_{\ff_p}^{\op})$.
  We define its pre-arithmetic de Rham stack as the following qfd Gelfand pre-stack via transmutation:
  \[
  X^{\arith,\operatorname{pre}}(A):=X(\aff^{1,\arith}_{\ff_p}(A)),\,A\in\cat{uPerfd}^{\qfd}_{\omega_1}.
  \]
  Then we define its (absolute) arithmetic de Rham stack $X^{\arith}$ as the sheafification of $X^{\arith,\operatorname{pre}}$.
\end{definition}

\begin{remark}
  Let $X$ be a scheme.
  There is no need for sheafification if we only test on strictly totally disconnected qfd uniformly perfectoid rings.
  Indeed,
  if $A\rightarrow B$ is a $!$-cover between such rings,
  then $A^u\rightarrow B^u$ is an arc-cover of qfd strictly totally disconnected perfectoid rings from Proposition \ref{adjointnessui}.
  Then $A^{u,\circ}/A^{u,\circ\circ}\rightarrow B^{u,\circ}/B^{u,\circ\circ}$ is a flat cover,
  cf. \cite[Proposition 7.23]{Sch17},
  and therefore,
  the maps to $X$ satisfy descent.
  Moreover,
  if $A\rightarrow B$ is a $!$-equivalence,
  then $A^u\rightarrow B^u$ is also an arc-hypercover,
  hence $A^{u,\circ}/A^{u,\circ\circ}\rightarrow B^{u,\circ}/B^{u,\circ\circ}$ is a flat hypercover and $X$ satisfies flat hyperdescent.
\end{remark}

\begin{remark}\label{Zariskidescentarith}
  The stack $X^{\arith}$ satisfies Zariski descent.
  This can either be proved directly by showing a Zariski cover induces a surjection on the associated arithmetic de Rham stacks,
  or deduced from the stronger $h$-descent result proved below (Theorem \ref{fpqcdescentarith}).
\end{remark}

\begin{remark}\label{arithKunneth}
  The arithmetic de Rham stack preserves finite limits by construction.
  In particular,
  it preserves fiber products.
\end{remark}

Now we prove a very strong descent property of the arithmetic de Rham stack.

\begin{theorem}\label{fpqcdescentarith}
  Let $Y_{\bullet}\rightarrow X$ be an $h$-hypercover of affine schemes of finite type over $\ff_p$.
  Then $Y^{\arith}_{\bullet}\rightarrow X^{\arith}$ induces a $!$-equivalence.
\end{theorem}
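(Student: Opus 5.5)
We reduce the statement to a hyperdescent statement about the residue rings $A^{\le 1}/A^{<1}$ and then appeal to $h$-hyperdescent for perfect rings.

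\textbf{Step 1: reduce to a test on $\cat{uPerfd}^{\operatorname{std},\qfd}_{\omega_1}$.} Since $\cat{uPerfd}_{\omega_1}^{\operatorname{std},\qfd}$ is a basis for the $!$-topology, it suffices to check that the geometric realization $|Y_\bullet^{\arith}|\rightarrow X^{\arith}$ becomes an equivalence after sheafification. Equivalently, every $A$-point of $X^{\arith}$ with $A$ strictly totally disconnected lifts, $!$-locally on $A$, compatibly along the hypercover. By the remark following Definition \ref{arithdeRhamdefinition}, on such $A$ no sheafification is needed, so $X^{\arith}(A)=X(R_A)$ with $R_A:=A^{\le 1}/A^{<1}=A^{u,\circ}/A^{u,\circ\circ}$. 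By the proof that $\aff^{1,\arith}_{\ff_p}$ is perfect, $R_A$ is a perfect $\ff_p$-algebra. Moreover, since $\M(A)$ is profinite with algebraically closed residue fields, $R_A$ is a filtered colimit of finite products of algebraically closed fields (or at least is absolutely integrally closed with totally disconnected spectrum, and is a product of perfect valuation-type rings over a profinite set).

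\textbf{Step 2: use $h$-hyperdescent for such rings.} The key input is that $h$-hypercovers of finite type $\ff_p$-schemes become split, or at least effective up to refinement, on $\Spec$ of rings like $R_A$. More precisely, for an $h$-cover $Y\rightarrow X$ of finite type schemes and a point $x\colon\Spec R_A\rightarrow X$, the pullback $Y\times_X \Spec R_A\rightarrow\Spec R_A$ is a finitely presented $h$-cover, hence a $v$-cover. Over a product of algebraically closed valuation rings indexed by a profinite set, it admits sections after passing to a finite clopen decomposition, i.e. locally on $\M(A)$. That step uses the stalkwise statement that $v$-covers of $\Spec V$, with $V$ absolutely integrally closed, split (Bhatt--Scholze, Rydh), together with spreading out from a stalk to a clopen neighborhood by finite presentation.

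Clopen decompositions of $\M(A)$ correspond to idempotent decompositions of $A$, which are $!$-covers. So surjectivity lifts after a $!$-cover, and applying the same argument to the matching objects $Y_n\rightarrow (\operatorname{cosk}_{n-1}Y)_n$ of the hypercover, which are again $h$-covers, gives hypercover surjectivity at each level. This shows that $Y_\bullet(R_A)\rightarrow X(R_A)$ is, stalkwise on $\M(A)$, a hypercover of sets with sections, hence an $\infty$-connective map of sheaves on the basis. It is also convenient to observe that $Y^{\arith}$ only depends on $Y^{\perf}$ because $R_A$ is perfect, so one may freely replace the hypercover by its perfection, where $h$-covers and $v$-covers behave well.

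\textbf{Step 3: pass from $\infty$-connectivity to a $!$-equivalence.} An $\infty$-connective map for the $!$-topology on the basis yields a $!$-equivalence after hypersheafification; analytic stacks here are defined so that $!$-equivalences include such hypercovers, in analogy with Proposition \ref{adjointnessui}. Finally, compatibility of $(-)^{\arith}$ with finite limits (Remark \ref{arithKunneth}) ensures that $(\operatorname{cosk}Y)^{\arith}=\operatorname{cosk}(Y^{\arith})$ levelwise, so the matching-object argument in Step 2 is legitimate.

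\textbf{Main obstacle.} The main obstacle is Step 2: showing that $R_A$ is local enough, i.e. that its stalks at points of $\M(A)$ are perfect absolutely integrally closed valuation rings and that sections spread to clopen neighborhoods. I would first try to identify the stalk of $R_A$ at $x\in\M(A)$ with $\mathcal{H}(x)^{\circ}/\mathcal{H}(x)^{\circ\circ}$, which is an algebraically closed field. If that works, it trivializes the splitting and reduces Step 2 to finite-presentation spreading out.
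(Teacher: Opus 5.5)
\textbf{The gap is in Step 2.} It is exactly the point you flag as the main obstacle, and the identification you hope for is false. For $A\in\cat{uPerfd}^{\operatorname{std},\qfd}_{\omega_1}$ and $x\in\M(A)$, consider the localization $(R_A)_x=\varinjlim_{U\ni x}R_Ae_U$ over clopen neighbourhoods of $x$. It injects into $\mathcal{H}(x)^{\circ}/\mathcal{H}(x)^{\circ\circ}$, but in general it is neither equal to it nor a field. The reason is that a germ $f$ with $|f(x)|=1$ lies in $A^{u,\circ}e_U$ only if $|f|\le 1$ on all of $U$, and $\{|f|\le1\}$ need not be a neighbourhood of $x$. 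So your descriptions of $R_A$ (a filtered colimit of products of algebraically closed fields, or von Neumann regular) do not hold.

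Here is a concrete counterexample. Let $A$ be a std cover, as in \cite[Proposition 4.6.4]{ABLBRCS25}, of the closed annulus $1\le|T|\le p$. Suppose $\{|T|\le 1\}\subset\M(A)$ were clopen. Then its complement would be compact and would surject onto $\{|T|>1\}$ in the annulus. That set is not closed, since the Gauss point is the limit of $\zeta_{0,r}$ as $r\downarrow 1$. Hence some $y_0$ with $|T(y_0)|=1$ has points with $|T|>1$ in every clopen neighbourhood. The class of $T^{-1}\in A^{u,\circ}$ is then a nonzero non-unit of $(R_A)_{y_0}$.

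The stalks need not even be valuation rings. Suppose every clopen neighbourhood of $y_0$ contains points with $|T_1|<|T_2|$ and $|T_2|\to 1$, and also points with $|T_2|<|T_1|$ and $|T_1|\to 1$. This happens, for instance, for a pro-finite-étale std cover of a sequence of Gauss points of the bidisk converging to $\zeta_{1,1}$ with radii alternating between $(p^{-1/n},p^{-2/n})$ and $(p^{-2/n},p^{-1/n})$. Then neither of $\bar T_1,\bar T_2$ divides the other in $R_Ae_U$ for any clopen $U\ni y_0$. Since idempotents of $R_A$ are those of $A^u$ (because $(A^{u,\circ},A^{u,\circ\circ})$ is henselian), the $h$-cover of $\aff^2$ by the two affine charts of the blow-up at the origin has no section over any finite clopen decomposition. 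So matching maps are not surjective on $R_A$-points, and lifts exist only after genuine $!$-covers that refine $\M(A)$ non-clopenly.

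\textbf{Consequence for Step 3.} Once you allow such covers, you only get that $|Y^{\arith}_\bullet|\to X^{\arith}$ is $\infty$-connective. Your Step 3, ``$\infty$-connective implies $!$-equivalence'', then carries the entire difficulty: it is a hyperdescent statement for a right-adjoint, de Rham-type construction, and you only gesture at it. The paper supplies exactly this in three steps.
\begin{enumerate}
\item It rewrites $X^{\arith}=(\Y^{\diamond}_{\overline X})^{\dR}$ (Lemma \ref{dRFFarith}).
\item It proves that $\overline{Y}_\bullet\to\overline X$ is an arc-hypercover (Lemmas \ref{smallrigidepimorphismarith} and \ref{hcoverarccover}). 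Your stalkwise lifting argument is essentially this step, provided it is run after passing to arc-covers such as $S^{\circ}=\prod K_i^{\circ}$ rather than on clopen pieces of $\M(A)$.
\item It invokes colimit preservation of $\Y^{\diamond,\dR}_{(-)}$ (Proposition \ref{dRFFpreservecolimit}), which rests on the qfd theorem that $(-)^{\dR}$ commutes with colimits.
\end{enumerate}
Without that last input, your argument does not close.
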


\begin{remark}
  Using the same argument in the proof of \cite[Theorem 5.6.6]{ABLBRCS25},
  one can reformulate Theorem \ref{fpqcdescentarith} in several equivalent ways.
  Let $\cat{Sh}^{\operatorname{hyper}}_h(\cat{Ring}_{\ff_p}^{\operatorname{f.t.},\op})$ be the category of $h$-stacks,
  i.e. $h$-hypersheaves over the category $\cat{Ring}_{\ff_p}^{\operatorname{f.t.},\op}$ of affine schemes of finite type over $\ff_p$.
  Then the induced functor $(-)^{\arith}\colon \cat{Sh}^{\operatorname{hyper}}_h(\cat{Ring}_{\ff_p}^{\operatorname{f.t.},\op})\rightarrow \cat{GelfStk}^{\qfd}$ preserves colimits.
  Equivalently,
  the functor $(-)^{\arith}$ factors through a diagram:
  \[
  \xymatrix{
    \cat{Psh}(\cat{Ring}_{\ff_p}^{\operatorname{f.t.},\op})\ar[r]^{(-)^{\arith}}\ar[d] & \cat{GelfStk}^{\qfd}\\
    \cat{Sh}_h^{\operatorname{hyper}}(\cat{Ring}_{\ff_p}^{\operatorname{f.t.},\op})\ar@{-->}[ru]
  }\]
  In particular,
  an $h$-cover of $h$-stacks of finite type induces a $!$-cover of their associated arithmetic de Rham stacks.
\end{remark}

Before proving the theorem,
we set up some lemmas.

\begin{lemma}\label{dRFFarith}
  Let $X$ be an affine scheme of finite type over $\ff_p$.
  Let $\overline{X}$ be the qfd arc-stack over $\ff_p$ defined by sheafifying $\overline{X}^{\operatorname{pre}}(A):=X(A^{\circ}/A^{\circ\circ})$.
  Then we have $X^{\arith}=(\Y_{\overline{X}}^{\diamond})^{\dR}$ as the analytic de Rham stack of punctured Fargues--Fontaine disk of $\overline{X}$.
\end{lemma}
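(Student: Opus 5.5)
The plan is to compare functors of points on the basis $\cat{uPerfd}^{\operatorname{std},\qfd}_{\omega_1}$. Both sides are qfd Gelfand stacks, so it suffices to produce an isomorphism, natural in $A$, between their values on qfd uniformly strictly totally disconnected perfectoid rings $A$.

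\textbf{Step 1: the right-hand side.} By Remark \ref{noneedsheafify}, $(\Y^{\diamond}_{\overline{X}})^{\dR}(A)=\Y^{\diamond}_{\overline{X}}(A^u)$. Here $A^u$ is a qfd strictly totally disconnected perfectoid ring over $\rt_p$. Since $\Y^{\diamond}_{(-)}=\flat_*$ is right adjoint to $\flat^*$, we get
\[
\Y^{\diamond}_{\overline{X}}(A^u)=\Hom(\flat^*\M_{\arc}(A^u),\overline{X})=\overline{X}(A^{u,\flat}).
\]
The tilt $A^{u,\flat}$ is again strictly totally disconnected, with the same profinite spectrum and algebraically closed residue fields.

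\textbf{Step 2: no sheafification for $\overline{X}$.} I would show that on such rings $\overline{X}(S)=X(S^{\circ}/S^{\circ\circ})$ with no sheafification needed. This is the same argument as in the remark following Definition \ref{arithdeRhamdefinition}. An arc-cover (or arc-hypercover) $S\to S'$ of strictly totally disconnected perfectoids induces a flat (hyper)cover $S^{\circ}/S^{\circ\circ}\to S'^{\circ}/S'^{\circ\circ}$ by \cite[Proposition 7.23]{Sch17}. Since $X$ is an affine scheme, it satisfies flat hyperdescent, so the presheaf is already an arc-hypersheaf on this basis. Because such rings form a basis of the arc-topology, this identifies $\overline{X}(A^{u,\flat})=X(A^{u,\flat,\circ}/A^{u,\flat,\circ\circ})$.

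\textbf{Step 3: comparing residue rings.} It remains to give natural isomorphisms
\[
A^{u,\flat,\circ}/A^{u,\flat,\circ\circ}\cong A^{u,\circ}/A^{u,\circ\circ}\cong A^{\le 1}/A^{<1}.
\]
The second isomorphism is the computation already made in the proof that $\aff^{1,\arith}_{\ff_p}$ is a perfect ring stack, namely $A^{\le 1}/A^{<1}=A^{u,\le 1}/A^{u,<1}$. For a uniform Banach ring the elements of norm $\le 1$ (resp. $<1$) for the spectral norm are exactly the power-bounded (resp. topologically nilpotent) elements.

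For the first isomorphism, choose a pseudo-uniformizer $\varpi^{\flat}$ of $A^{u,\flat}$ with $\varpi=(\varpi^{\flat})^{\sharp}$ dividing $p$. Tilting gives $A^{u,\flat,\circ}/\varpi^{\flat}\cong A^{u,\circ}/\varpi$, and the ideals $A^{\circ\circ}$ on both sides correspond under this isomorphism, since they are the radicals of $(\varpi^{\flat})$ and $(\varpi)$ respectively. Alternatively, both quotients are perfect rings, and the sharp map $x\mapsto x^{\sharp}$ induces a multiplicative map between them that is additive modulo $A^{u,\circ\circ}$ and bijective. All of these identifications are functorial in $A$.

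Combining Steps 1 to 3 with the remark that $X^{\arith}(A)=X(A^{\le 1}/A^{<1})$ needs no sheafification on this basis gives $X^{\arith}=(\Y^{\diamond}_{\overline{X}})^{\dR}$.

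The main obstacle is Step 2: checking that sheafifications in the two different topologies (the $!$-topology for Gelfand stacks and the arc-topology on the $\ff_p$-side) match on the basis. This is handled by the flat-hyperdescent argument together with Proposition \ref{adjointnessui}. One must also verify that tilting sends qfd strictly totally disconnected rings to qfd strictly totally disconnected rings, so that the basis is preserved.
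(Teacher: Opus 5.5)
Your proposal is correct and follows essentially the same route as the paper: compare $A$-points on the strictly totally disconnected basis, identify the right side (before sheafification) with $X(A^{u,\flat,\circ}/A^{u,\flat,\circ\circ})$, and conclude via $A^{\le 1}/A^{<1}\cong A^{u,\circ}/A^{u,\circ\circ}\cong A^{u,\flat,\circ}/A^{u,\flat,\circ\circ}$. The difference is that you spell out steps the paper leaves implicit: the adjunction computation $\Y^{\diamond}_{\overline{X}}(A^u)=\overline{X}(A^{u,\flat})$, the argument that no sheafification is needed for $\overline{X}$ on this basis, and the tilting identification of the residue rings.
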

\begin{proof}
  It suffices to verify on $A$-points,
  where before sheafification the left side is $X(A^{\le 1}/A^{<1})$,
  and the right side is $X(A^{u,\flat,\circ}/A^{u,\flat,\circ\circ})$.
  Then the lemma follows because $A^{\le 1}/A^{<1}\cong A^{u,\circ}/A^{u,\circ\circ}\cong A^{u,\flat,\circ}/A^{u,\flat,\circ\circ}$.
\end{proof}

\begin{lemma}\label{smallrigidepimorphismarith}
  Let $X$ be an affine scheme of finite type over $\ff_p$.
  The natural morphism of qfd arc-stacks $X^{\circ}_{\arc}\rightarrow \overline{X}$ is an epimorphism.
\end{lemma}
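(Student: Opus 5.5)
To prove that $X^{\circ}_{\arc}\rightarrow \overline{X}$ is an epimorphism of qfd arc-stacks, it suffices to show the following. For every qfd strictly totally disconnected perfectoid ring $A$ over $\ff_p$ and every point $x\in \overline{X}^{\operatorname{pre}}(A)=X(A^{\circ}/A^{\circ\circ})$, there is an arc-cover $A\rightarrow B$ such that the image of $x$ in $X(B^{\circ}/B^{\circ\circ})$ lifts to $X(B^{\circ})$. Here we test on strictly totally disconnected objects, which form a basis of the arc-topology. Since $X=\Spec R$ is affine of finite type, we fix a presentation $R=\ff_p[T_1,\dots,T_n]/(f_1,\dots,f_m)$. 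The point $x$ is then a tuple $\bar a\in (A^{\circ}/A^{\circ\circ})^n$ with $f_j(\bar a)=0$.

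The first step is to reduce to fields. Since $A$ is strictly totally disconnected, $\M(A)$ is profinite and each Berkovich residue field $C_s$ is an algebraically closed perfectoid field. Lifting problems for finitely presented algebras spread out from a point to a clopen neighbourhood. This is an approximation argument: $A^{\circ}$ is the colimit of the $A_U^{\circ}$ over clopen $U\ni s$, up to completion, and $A^{\circ}/A^{\circ\circ}$ is likewise the filtered colimit of the $A_U^{\circ}/A_U^{\circ\circ}$, with stalk $C_s^{\circ}/C_s^{\circ\circ}=\bar k_s$. A finite disjoint union of clopens then covers, so it suffices to treat $A=C$ an algebraically closed perfectoid field, possibly after passing to an arc-cover. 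To handle completion in the spreading-out, I would use that $X$ is of finite type and that ``lifting up to $A^{\circ\circ}$'' is an open condition, so a lift over the stalk gives a lift on some neighbourhood.

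The key case is $A=C$, where $C^{\circ}/C^{\circ\circ}=\bar k$ is the residue field, which is algebraically closed. We need to lift a $\bar k$-point of $X$ to a $C^{\circ}$-point. This is not automatic when $X$ is singular, since $C^{\circ}\rightarrow \bar k$ is henselian but $X$ need not be smooth. I would argue as follows. The residue field $\bar k$ maps to $C^{\circ}$ via Teichmüller-type sections. Because $C$ has characteristic $p$ here (we are over $\ff_p$) and is perfect, $C^{\circ}$ contains a canonical copy of the perfect residue field $\bar k$: the valuation ring of a perfect complete valued field of characteristic $p$ has a unique multiplicative, hence additive, section of the residue map, since $\bar k$ is perfect. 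Composing $R\rightarrow \bar k\rightarrow C^{\circ}$ gives the required lift. This is the cleanest route, and it needs no smoothness.

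The main obstacle is making this section work uniformly over a strictly totally disconnected $A$ rather than pointwise. In characteristic $p$ one could try to directly construct a ring section $A^{\circ}/A^{\circ\circ}\rightarrow A^{\circ}$ using perfectness, via $\varprojlim_{\varphi}$ and Teichmüller lifts $[\,\cdot\,]=\lim (\tilde a^{1/p^k})^{p^k}$. The limit $\lim_k (\widetilde{a^{1/p^k}})^{p^k}$ converges in $A^{\circ}$ because differences of lifts lie in $A^{\circ\circ}$ and raising to $p^k$-th powers contracts them. Additivity follows from $(x+y)^{p^k}=x^{p^k}+y^{p^k}$. This gives a global section $s\colon A^{\circ}/A^{\circ\circ}\rightarrow A^{\circ}$ for any perfectoid $A$ over $\ff_p$, provided $A^{\circ\circ}$ is topologically nilpotent, which holds here. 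With $s$ in hand, $x\mapsto s\circ x$ lifts every point, so the map is surjective even before sheafification. Hence it is an epimorphism, and the pointwise reduction above is not needed. I would check carefully that convergence is uniform, using $p$-adic, or rather $\varpi$-adic, completeness of $A^{\circ}$.
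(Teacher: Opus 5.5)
\textbf{There is a genuine gap: the global Teichm\"uller section does not exist.} Your argument ultimately rests on a Teichm\"uller-type section $s\colon A^{\circ}/A^{\circ\circ}\to A^{\circ}$, and in characteristic $p$ there is no such section. If $\tilde a_k$ lifts $\bar a^{1/p^k}$, consecutive terms of $\tilde a_k^{p^k}$ differ by $d_k^{p^k}$, where $d_k=\tilde a_{k+1}^{p}-\tilde a_k$ can be any element of $A^{\circ\circ}$. Each element of $A^{\circ\circ}$ is topologically nilpotent, but not uniformly so, since $A^{\circ\circ}=(A^{\circ\circ})^2$. For example, $d_k=\varpi^{1/p^{2k}}$ gives $d_k^{p^k}=\varpi^{1/p^k}\not\to 0$, so the limit need not exist.

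Worse, the choice $\tilde a_k=\tilde a^{1/p^k}$ (available because $A^\circ$ is perfect) makes the sequence constant, equal to the arbitrary lift $\tilde a$. So nothing is canonical. The $p$-adic mechanism behind Teichm\"uller lifts, $x\equiv y \bmod p^m\Rightarrow x^p\equiv y^p \bmod p^{m+1}$, has no analogue for the idempotent ideal $A^{\circ\circ}$.

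Sections are not unique even for fields. For $C$ the completed algebraic closure of $\ff_p(s)(\!(t)\!)$, one section can send $\bar s$ to $s$ and another can send it to $s+t$. Globally, sections need not exist at all. Take $A$ the perfectoid annulus $|\varpi|\le |T|\le 1$ in characteristic $p$ and $X=V(xy)$. The point $(\bar T,\overline{\varpi/T})\in X(A^{\circ}/A^{\circ\circ})$ has no lift to $X(A^\circ)$. Indeed, $A$ is a domain because its Gauss norms are multiplicative. Any lifts $T+\epsilon$ and $\varpi/T+\delta$ have norm $1$ at the Gauss points of radius $1$ and $|\varpi|$ respectively, so both are nonzero and their product is nonzero. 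Hence your claim of surjectivity before sheafification is false, and passing to arc-covers is genuinely necessary.

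\textbf{The fallback route does not close the gap either.} At a single complete algebraically closed field $C$, a non-canonical lift does exist. One lifts a separating transcendence basis of the residue field of the image point of $X$ arbitrarily, then lifts the finite separable remainder by Hensel's lemma. So the field case is fine once you replace uniqueness by existence.

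What is not justified is the passage from a strictly totally disconnected $A$ to its residue fields. Approximating a lift in $X(C_s^\circ)$ by elements of $A_U^\circ$ only yields $f_j(c)\in A_U^{\circ\circ}$, not $f_j(c)=0$. For singular $X$, such an approximate solution cannot be corrected to an exact one.

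The same problem appears on a cover with $S^\circ=\prod_i K_i^\circ$. There $S^{\circ\circ}$ consists of tuples with $\sup_i|x_i|<1$. So componentwise lifts must stay uniformly close to a fixed lift, and arbitrary coefficient fields give no such control.

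The paper's argument is better suited to this. It makes a linear change of variables over $\overline{\mathbb{F}}_p$ so that the equation $g$ is monic in $y_1$, and takes any lift $b$, so that $g(b)=y\in K^{\circ\circ}$. It then corrects $b_1$ by a root $r$ of the monic polynomial $g(b_1+x,b_2,\dots,b_n)$. Since the roots multiply to $\pm y$ in the algebraically closed field, one can take $|r|\le |y|^{1/\deg}$. This gives an exact lift whose correction is bounded in terms of the error, with a bound independent of the component.
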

\begin{proof}
  We can prove it before sheafification as sheafification preserves epimorphisms.
  It suffices to show that for any qfd perfectoid ring $R$ with $\M_{\arc}(R)\rightarrow \overline{X}^{\operatorname{pre}}$,
  there exists a qfd perfectoid $R$-algebra $S$ such that $\M_{\arc}(S)\rightarrow \M_{\arc}(R)$ is an arc-cover and the following diagram commutes
  \[\xymatrix{
    \M_{\arc}(S)\ar[r]\ar[d] & X^{\circ,\operatorname{pre}}_{\arc}\ar[d]\\
    \M_{\arc}(R)\ar[r] & \overline{X}^{\operatorname{pre}}
  }\]
  For the case $X=\aff^n$,
  $X^{\circ}_{\arc}\rightarrow \overline{X}$ is an epimorphism by the description of functor of points.
  For general $X$,
  choose a closed embedding $X\hookrightarrow\aff^n$ which exists by the assumption of being of finite type,
  then $X$ is an intersection of hypersurfaces.
  Because both functors of taking small diamonds $X\mapsto X^{\circ,\operatorname{pre}}_{\arc}$ and taking $X\mapsto \overline{X}^{\operatorname{pre}}$ preserve finite products,
  we reduce to the case of a hypersurface,
  which is the zero locus of a single polynomial equation
  denoted by $f(x_1,\ldots,x_n)\in \ff_p[x_1,\ldots,x_n]$.
  In this case,
  a map $\M_{\arc}(R)\rightarrow \overline{X}^{\operatorname{pre}}$ describes $n$ elements in $R^{\circ}/R^{\circ\circ}$ satisfying the polynomial equation given by $f$.

  For every $R$,
  there exists an arc-cover of $\M_{\arc}(R)$ by the arc-stack of a strictly totally disconnected perfectoid ring $S$ such that:
  \[
  S^{\circ}=\prod_{i\in I} K_i^{\circ}.
  \]
  It suffices to prove the lifting property of this $S$ (which may not be qfd),
  as if that holds,
  then we can take the perfectoid $R$-subalgebra in $S$ generated by the images of $x_1,\ldots, x_n$ which is qfd and an arc-cover of $R$.
  To prove the lifting property for this $S$,
  it reduces to the lifting property for each component,
  hence we can further assume that $S=K$ is an algebraically closed perfectoid field.
  In this case,
  it suffices to find lifts $a_1,\ldots,a_n\in K^{\circ}$ of $\overline{a_1},\ldots,\overline{a_n}\in K^{\circ}/K^{\circ\circ}$ such that $f(\overline{a_1},\ldots,\overline{a_n})=0$ lifts to $f(a_1,\ldots,a_n)=0$.
  We can find an invertible matrix $A\in \GL_{n}(\overline{\ff}_p)$ such that $f(x_1,\ldots,x_n)=g(y_1,\ldots,y_n)$ with $(y_1,\ldots,y_n)=(x_1,\ldots,x_n)\cdot A$ such that $g$ is of the form 
  \[
  g(y_1,\ldots,y_n)=y_1^n+\text{lower-degree terms in }y_1\in \overline{\ff}_p[y_1,\ldots,y_n].
  \]
  Being algebraically closed,
  both $K^{\circ}$ and $K^{\circ}/K^{\circ\circ}$ are $\overline{\ff}_p$-algebras,
  so it suffices to find lifts of $(\overline{a_1},\ldots,\overline{a_n})\cdot A$ to $K^{\circ}$ as solutions of $g$.
  First choose an arbitrary lift of $(\overline{a_1},\ldots,\overline{a_n})\cdot A$ to $K^{\circ}$,
  which we denote by $(b_1,\ldots,b_n)$.
  Then they satisfy $g(b_1,\ldots,b_n)=y\in K^{\circ\circ}$.
  Consider the polynomial $h(x)=g(b_1+x,\ldots,b_n)$ in $x$,
  since $K$ is algebraically closed,
  we can find a factorization $h(x)=\prod_{i=1}^n(x-r_i)$ with $\prod_{i=1}^nr_i=y\in K^{\circ\circ}$.
  Since $K$ is a field,
  at least one $r_i$ lies in $K^{\circ\circ}$.
  Then $(b_1+r_i,\ldots,b_n)$ is a desired lift of the solution to $K^{\circ}$.
\end{proof}

\begin{lemma}\label{hcoverarccover}
  Let $Y_{\bullet}\rightarrow X$ be an $h$-hypercover of affine schemes of finite type over $\ff_p$.
  Then the associated small diamond $Y_{\arc,\bullet}^{\circ}\rightarrow X_{\arc}^{\circ}$ is an arc-hypercover,
  and $\overline{Y}_{\bullet}\rightarrow\overline{X}$ is an arc-hypercover too.
\end{lemma}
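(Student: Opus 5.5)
The plan is to reduce both statements to a pointwise lifting statement over algebraically closed perfectoid fields. The reduction uses the fact that arc-hypercovers of qfd arc-stacks can be detected on strictly totally disconnected perfectoid test objects, and these in turn reduce to their points, which are products of rank-one valuation rings $K_i^{\circ}$ of algebraically closed perfectoid fields. This is exactly the reduction already carried out in the proof of Lemma \ref{smallrigidepimorphismarith}.

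Concretely, being an arc-hypercover amounts to asking that each map
\[
Y_{n}\longrightarrow (\operatorname{cosk}_{n-1}Y)_n
\]
induce an epimorphism of arc-sheaves after applying the relevant functor. Since $(-)^{\circ}_{\arc}$ and $X\mapsto\overline{X}$ preserve finite limits (they are defined by evaluating the scheme on rings, and sheafification is exact), the coskeleton terms are computed as the coskeleta of the associated diamonds. Each such map is again an $h$-cover of finite type $\ff_p$-schemes. It therefore suffices to show the following: for an $h$-cover $f\colon Y\to X$ of finite type $\ff_p$-schemes (with affine source after refining), both $Y^{\circ}_{\arc}\to X^{\circ}_{\arc}$ and $\overline{Y}\to\overline{X}$ are epimorphisms.

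For the small diamonds I would argue as follows. A point $\M_{\arc}(R)\to X^{\circ}_{\arc}$ is, up to cover, an $R^{\circ}$-point of $X$. After passing to a strictly totally disconnected cover and then to a single factor, it becomes a $K^{\circ}$-point $\Spec K^{\circ}\to X$ with $K$ algebraically closed perfectoid. Here the key input is that $h$-covers satisfy the valuative lifting property for valuation rings with algebraically closed fraction field (Rydh; Bhatt--Scholze's $v$-topology). Every $h$-cover is a $v$-cover, and absolutely integrally closed valuation rings are $v$-local. Hence the point lifts along $Y\to X$ after possibly extending $K$ to a larger algebraically closed field, which I would arrange to be perfectoid by completing. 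Finite type of $Y$ lets us cut the lifted data back down to a qfd perfectoid subalgebra, as in the previous lemma. Gluing componentwise over the product $\prod K_i^{\circ}$ gives the lift over $S$. A subtlety I expect to need care with is uniformity: the product $\prod_i K_i^{\circ}$ need not be a valuation ring. I would handle this by invoking that finite-type schemes commute with such products (points are given by finitely many coordinates), so componentwise lifts assemble.

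For $\overline{X}$, I would not redo the argument. I would deduce it from the commutative square with $Y^{\circ}_{\arc}\to \overline{Y}$ and $X^{\circ}_{\arc}\to\overline{X}$. By Lemma \ref{smallrigidepimorphismarith} the right vertical map is an epimorphism, and by the previous step the top map is one. The composite $Y^{\circ}_{\arc}\to\overline{X}$ is then an epimorphism, hence so is $\overline{Y}\to\overline{X}$. Applying this to each coskeletal map gives the hypercover statement, using again that $\overline{(-)}$ commutes with the finite limits defining coskeleta.

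I expect the main obstacle to be the valuative lifting step: making sure that $v$-locality of $K^{\circ}$ applies exactly to $h$-covers of finite type, and that the field extension needed stays within qfd separable perfectoid rings.
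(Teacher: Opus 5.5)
Your argument is correct. Its second half is exactly the paper's proof: $\overline{Y}\to\overline{X}$ is an epimorphism by the commutative square and Lemma \ref{smallrigidepimorphismarith}, and one passes to hypercovers because $(-)^{\circ}_{\arc}$ and $\overline{(-)}$ commute with the finite limits computing coskeleta.

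For the first half the paper argues differently. It never reduces to points. Instead it identifies $X^{\circ}_{\arc}$ with $a^*X^{\ad,\diamondsuit}$ (Remark \ref{preadicspace}) and quotes Gleason's result \cite[Proposition 3.7]{Gle24} that $X\mapsto X^{\ad,\diamondsuit}$ sends $h$-covers to $v$-covers. It then concludes because $a^*$ preserves colimits, and finite limits for the hypercover statement. Your route unpacks that citation directly on arc-sheaves, reusing the reduction to $S^{\circ}=\prod_i K_i^{\circ}$ and the qfd cut-down from Lemma \ref{smallrigidepimorphismarith}. This is more self-contained and avoids the comparison between $v$-sheaves and arc-stacks. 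The cost is relying once more on that reduction and cut-down, which the paper's citation sidesteps.

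On the step you flagged: the splitting you need holds only for finitely presented covers. For arbitrary $v$-covers it fails; for instance $\Spec\overline{\ff_p(t)}\to\Spec\overline{\ff}_p$ is a $v$-cover with no section. Finitely presented covers are all you use, though, and no enlargement of $K$ is needed:
\begin{enumerate}
\item Refine the $h$-cover over the noetherian base $X$ into normal form: a Zariski cover $\{U_j\}$ of some $X'$ with $X'\to X$ proper surjective. Then pull back to $\Spec K^{\circ}$.
\item The generic fibre of $X'\times_X\Spec K^{\circ}$ is a nonempty $K$-scheme of finite type, so it has a $K$-point because $K$ is algebraically closed.
\item The valuative criterion of properness extends this $K$-point to a map $\Spec K^{\circ}\to X'$.
\item The image of the closed point lies in some $U_j$. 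The preimage of $U_j$ in $\Spec K^{\circ}$ is an open containing the closed point of a local scheme, hence all of $\Spec K^{\circ}$.
\end{enumerate}
So the lift exists over each $K_i^{\circ}$ itself, and the concern about keeping an extension perfectoid never arises.
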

\begin{proof}
  The association $X\mapsto X^{\ad,\diamondsuit}$ sends $h$-covers to $v$-covers,
  which is basically a reinterpretation of \cite[Proposition 3.7]{Gle24}.
  Under Remark \ref{preadicspace},
  we know that the associated small diamond $X^{\circ}_{\arc}$ is the arc-stack $a^*X^{\ad,\diamondsuit}$ corresponding to $X^{\ad,\diamondsuit}$.
  This proves that $X\mapsto X^{\circ}_{\arc}$ sends $h$-covers to arc-covers because $a^*$ commutes with colimits.
  Then $X\mapsto X^{\circ}_{\arc}$ also sends $h$-hypercovers to arc-hypercovers,
  since it commutes with finite limits.

  To prove that $\overline{Y}_{\bullet}\rightarrow\overline{X}$ is also an arc-hypercover,
  first we prove $X\mapsto\overline{X}$ sends $h$-covers to arc-covers.
  Let $Y\rightarrow X$ be an $h$-cover.
  We have a commutative diagram 
  \[\xymatrix{
    X'^{\circ}_{\arc}\ar[r]\ar[d] & X^{\circ}_{\arc}\ar[d]\\
    \overline{X'}\ar[r] & \overline{X}
  }\]
  with vertical maps being epimorphisms by Lemma \ref{smallrigidepimorphismarith}.
  Since $X'^{\circ}_{\arc}\rightarrow X^{\circ}_{\arc}$ is an arc-cover,
  we also deduce that $\overline{X'}\rightarrow \overline{X}$ is an arc-cover.
  Then the construction $X\mapsto\overline{X}$ also sends $h$-hypercovers to arc-hypercovers,
  since it commutes with finite limits.
\end{proof}

We can now prove the descent theorem.

\begin{proof}[Proof of Theorem \ref{fpqcdescentarith}]
  Assume $Y_{\bullet}\rightarrow X$ is an $h$-hypercover.
  Then by Lemma \ref{hcoverarccover},
  $\overline{Y}_{\bullet}\rightarrow\overline{X}$ is an arc-hypercover.
  Then by Proposition \ref{dRFFpreservecolimit},
  $(\Y_{\overline{Y}_{\bullet}}^{\diamond})^{\dR}\rightarrow(\Y_{\overline{X}}^{\diamond})^{\dR}$ induces a $!$-equivalence.
  By Lemma \ref{dRFFarith},
  this gives that $Y^{\arith}_{\bullet}\rightarrow X^{\arith}$ induces a $!$-equivalence,
  proving the descent property.
\end{proof}

\subsection{Geometry of the relative arithmetic de Rham stack}\label{sectionrelativearith}
We are interested in the case of $X$ being a separated scheme of finite type over $k$.
First note that we have a morphism
\[
\operatorname{GSpec}K\longrightarrow (\Spec k)^{\arith}.
\]
Indeed,
testing on points,
we need to give a morphism from $\Hom_{\rt_p}(K,A)$ to $\Hom_{\ff_p}(k,A^{\le 1}/A^{<1})$ for any $A\in\cat{uPerfd}^{\qfd}_{\omega_1}$.
An element in $\Hom_{\rt_p}(K,A)$ induces an element in $\Hom_{\rt_p}(K^{\circ},A^{\le 1})$,
hence induces an element in $\Hom_{\ff_p}(k,A^{\le 1}/A^{<1})$ for any $A\in\cat{uPerfd}^{\qfd}_{\omega_1}$ after reduction modulo the pseudo-uniformizer of $V$,
which gives the expected morphism.

\begin{definition}
  Let $X$ be a separated scheme of finite type over $k$.
  The relative arithmetic de Rham stack $X^{\arith/K}$ is defined by the Cartesian diagram 
  \[\xymatrix{
    X^{\arith/K}\ar[r]\ar[d] & \operatorname{GSpec}K\ar[d]\\
    X^{\arith}\ar[r] & (\Spec k)^{\arith}
  }\]
  If $K$ admits a Frobenius lift $\varphi_K$ and is perfect,
  then we define the relative arithmetic de Rham stack modulo Frobenius $X^{\arith/K}/\varphi_X^{\itg}$ by the following Cartesian diagram:
  \[\xymatrix{
    X^{\arith/K}/\varphi_X^{\itg}\ar[r]\ar[d] & \operatorname{GSpec}K/\varphi_K^{\itg}\ar[d]\\
    X^{\arith}/\varphi_X^{\itg}\ar[r] & (\Spec k)^{\arith}/\varphi_k^{\itg}
  }\]
\end{definition}

\begin{remark}
  The relative arithmetic de Rham stack can be alternatively defined via transmutation from $\aff^{1,\arith/K}_k=\disk^{\dagger}_K/\disk^{\circ}_K$,
  which is also a perfect ring stack under the Frobenius.
\end{remark}

\begin{remark}\label{hdescentrelativearith}
  The $h$-descent property (Theorem \ref{fpqcdescentarith}) also holds for relative arithmetic de Rham stacks.
\end{remark}

First we observe that,
if $k$ is a finite field and $K=W(k)[p^{-1}]$,
then the relative arithmetic de Rham stack $X^{\arith/K}$ agrees with the absolute one $X^{\arith}$,
by the following proposition.

\begin{proposition}[Finite field]\label{arithfinitefield}
  Let $k$ be a finite field of characteristic $p$ and $X=\Spec k$.
  Then we have $X^{\arith}=\operatorname{GSpec} K$ where $K=W(k)[p^{-1}]$.
\end{proposition}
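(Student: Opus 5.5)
We want to show $(\Spec k)^{\arith}=\operatorname{GSpec}K$ for $k=\mathbb{F}_q$, $q=p^f$, and $K=W(k)[p^{-1}]$. The plan is to compare functors of points on $A\in\cat{uPerfd}^{\operatorname{std},\qfd}_{\omega_1}$. By the remark after Definition \ref{arithdeRhamdefinition}, no sheafification is needed there, so
\[
(\Spec k)^{\arith}(A)=\Hom_{\ff_p}(k,A^{\le 1}/A^{<1}).
\]
On the other side, $\operatorname{GSpec}K(A)=\Hom_{\rt_p}(K,A)$. Since $K=\rt_p[x]/(P(x))$ with $P$ the (monic, separable) minimal polynomial of a Teichmüller generator, for instance a primitive $(q-1)$-th root of unity $\zeta$, this equals the set of roots of $P$ in $A$. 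Both functors are then sheaves on the basis, and the natural map $\operatorname{GSpec}K\to(\Spec k)^{\arith}$ constructed just before the definition of $X^{\arith/K}$ is reduction modulo $A^{<1}$.

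The key step is to show that reduction gives a bijection from roots of $\Phi(x)=x^{q}-x$ in $A$ lying in $A^{\le 1}$ to roots of $x^q-x$ in $A^{\le 1}/A^{<1}$. Both the source and target are identified with $\Hom(k,-)$, using $\mathbb{Z}_p[\zeta]=W(k)$ and the fact that $k$ is generated over $\ff_p$ by $\bar\zeta$.

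For existence (Hensel lifting), take $\bar a$ with $\bar a^q=\bar a$ and lift it to $a\in A^{\le 1}$. Then $a^{q}-a\in A^{<1}$, and $\Phi'(a)=qa^{q-1}-1$ is a unit in $A^{\le 1}$ because $qa^{q-1}\in A^{<1}$. I would like to run Newton iteration, but $A^{\le 1}$ is not complete. Two routes are available.

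First route: pass to $A^u$, where the Newton/Teichmüller limit $\lim a^{q^n}$ converges, since $A^{u,\le 1}/A^{u,<1}=A^{\le1}/A^{<1}$ as in the proof that $\aff^{1,\arith}$ is a perfect ring stack. One then uses that $\Hom(K,A)=\Hom(K,A^u)$, because $K$ is finite étale over $\rt_p$ and $A\to A^u$ has $\dagger$-nilpotent kernel together with an injective $A^{\dagger-\red}\to A^u$.

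Second route, which is cleaner: use the overconvergence built into $A^{\le 1}$. An element of $A^{<1}$ lies in some $A^{\le r}$ with $r<1$, so overconvergent power series converge on it. The correction $\delta$ solving $\Phi(a+\delta)=0$ is then given by an overconvergent analytic function of $\Phi(a)$, which is étale-locally the inverse of a coordinate change on the open disk.

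For uniqueness, two roots $a,a'\in A$ of $\Phi$ with $a-a'\in A^{<1}$ must coincide. Indeed, $\Phi(a)-\Phi(a')=(a-a')u$ with $u\equiv -1$ modulo $A^{<1}$, hence $u$ is a unit. Finally, I would check that every root of $P$ in $A$ automatically lies in $A^{\le 1}$, since it is a root of a monic polynomial with integral coefficients. This matches the two $\Hom$-sets.

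I expect the main obstacle to be making the Hensel-lifting step rigorous inside $A^{\le 1}$ itself rather than in the completion, i.e. correctly handling non-uniform $A$ and the $\dagger$-nilpotents. The first thing I would try is the reduction to $A^u$ via finite étaleness of $K/\rt_p$: étale maps lift uniquely along $A\to A^{\dagger-\red}\to A^u$, the first being a nil-type thickening and the second Henselian along $A^{<1}$ in the overconvergent sense. This gives $\Hom(K,A)\cong\Hom(K,A^u)$, after which the classical Teichmüller argument in the perfectoid Banach ring $A^u$ finishes the proof.
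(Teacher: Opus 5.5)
Your proposal is correct and follows the paper's proof. Both replace $A$ by $A^u$, using $A^{u,\le 1}/A^{u,<1}\cong A^{\le 1}/A^{<1}$ and $\Hom_{\rt_p}(K,A)\cong\Hom_{\rt_p}(K,A^u)$. Both Hensel-lift roots from $A^{\le 1}/A^{<1}$, and both get mutual inverseness from the integrality of roots of $P$ together with uniqueness of Hensel lifts. Your explicit unit computation for uniqueness, and your observation that the overconvergent Hensel property of $(A^{\le 1},A^{<1})$ is what actually justifies $\Hom(K,A)\cong\Hom(K,A^u)$, fill in points the paper only asserts.

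One small fix is needed: roots of $x^q-x$ are not $\Hom(k,-)$, since for instance $0$ is always a root. So restrict your bijection to roots of $P$ and of $\bar P$. This is legitimate because $x^q-x=PQ$ with $\bar P,\bar Q$ coprime, so $Q(a)$ is a unit whenever $\bar P(\bar a)=0$, which forces the lift $a$ to be a root of $P$.
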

\begin{proof}
  It suffices to identify $\Hom_{\ff_p}(k,A^{\le 1}/A^{<1})$ with $\Hom_{\rt_p}(K,A)$ which implies the result after sheafifying.
  Moreover,
  we can replace $A$ by $A^u$ as $A^{u,\le 1}/A^{u,<1}\cong A^{\le 1}/A^{<1}$ and $\Hom_{\rt_p}(K,A)\cong \Hom_{\rt_p}(K,A^u)$.
  Let $R$ be a perfectoid ring.
  \begin{enumerate}
    \item Let $f\in\Hom_{\ff_p}(k,R^{\le 1}/R^{<1})$.
    We claim that we can lift it to $g\in \Hom_{\itg_p}(W(k),R^{\le 1})$.
    Because $k$ is étale over $\ff_p$,
    it suffices to lift a root $\alpha$ of some polynomial equation $f(x)=0$ with coefficients in $R^{\le 1}/R^{<1}$.
    Choose an arbitrary lift $F(x)$ of $f(x)$,
    then the equation is lifted to $F(x)=y$ for some $y\in R^{<1}$.
    Then we apply Hensel's lemma to the ring $R^{\le 1}$ with $I=(y)$,
    and we can find a lift of the root.
    After inverting $p$,
    the morphism $g$ induces a morphism $h\in \Hom_{\rt_p}(K,R)$.
    \item Conversely,
    a morphism $h\in\Hom_{\rt_p}(K,R)$ must induce a morphism in $\Hom_{\itg_p}(W(k),R^{\le 1})$,
    and we get a morphism $f\in\Hom_{\ff_p}(k,R^{\le 1}/R^{<1})$ after reduction modulo $p$.
  \end{enumerate}
  The two constructions are mutually inverse because of the uniqueness of the lift in Hensel's lemma.
\end{proof}

\begin{remark}
  For general $k$ and $K$,
  $\operatorname{GSpec}K$ may be different from $(\Spec k)^{\arith}$.
  Indeed,
  the natural morphism $\operatorname{GSpec}K\rightarrow (\Spec k)^{\arith}$ factors through 
  \[
  \pi\colon (\operatorname{GSpec}K)^{\dR}\longrightarrow (\Spec k)^{\arith}.
  \]
  In fact,
  we have $(\Spec k)^{\arith}=((\Spec k)^{\arith})^{\dR}$ as $A^{\le1}/A^{<1}\cong A^{u,\le1}/A^{u,<1}$.
  In general,
  the morphism $\operatorname{GSpec} K\rightarrow (\operatorname{GSpec}K)^{\dR}$ will not be an isomorphism unless $K$ is étale over $\rt_p$.
\end{remark}

Let us give an example which will reappear later.
\begin{example}[Laurent series]\label{Laurentarith}
  Let $k=\ff_p(\!(t)\!)$ be the field of Laurent series over $\ff_p$.
  Then we can take $K$ to be the Amice ring of $\rt_p$ equipped with the $p$-adic topology:
  \[K=\E_{\rt_p}:=\left\{
    \sum_i a_i t^i\in \rt_p[\![t^{\pm 1}]\!]\bigg| \sup|a_i|<\infty,\,\lim_{i\rightarrow-\infty}|a_i|=0.
  \right\}\]
  There is a natural morphism
  \[
    \pi\colon (\operatorname{GSpec} \E_{\rt_p})^{\dR}\longrightarrow(\Spec \ff_p(\!(t)\!))^{\arith}.
  \]
  Similar descriptions also hold after replacing $\ff_p$ by $\ff_q$ and $\rt_p$ by $W(\ff_q)[p^{-1}]$.
\end{example}

We focus on the geometry of the relative de Rham stack which admits a cleaner description.

\begin{proposition}[Additive group]\label{arithadditivegroup}
  Let $X$ be $\aff^1_k=\Spec k[T]$.
  We have that $X^{\arith/K}\cong\disk^{\dagger}_K/\disk^{\circ}_K$.
\end{proposition}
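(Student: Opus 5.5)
We argue on functors of points before sheafification. By definition and Remark \ref{arithKunneth}, $X^{\arith/K}=\aff^{1,\arith}_{\ff_p}\times_{(\Spec k)^{\arith}}\big((\Spec k)^{\arith}\times_{\ff_p}\ldots\big)$; more concretely, since $\aff^1_k=\aff^1_{\ff_p}\times_{\Spec\ff_p}\Spec k$ and the arithmetic de Rham stack preserves finite limits, we get
\[
(\aff^1_k)^{\arith}\cong \aff^{1,\arith}_{\ff_p}\times (\Spec k)^{\arith},
\]
and hence, after base change along $\operatorname{GSpec}K\to(\Spec k)^{\arith}$,
\[
X^{\arith/K}\cong \aff^{1,\arith}_{\ff_p}\times_{\operatorname{GSpec}\rt_p}\operatorname{GSpec}K=(\disk^{\dagger}_{\rt_p}/\disk^{\circ}_{\rt_p})\times\operatorname{GSpec}K.
\]
Here $\aff^{1,\arith}_{\ff_p}=\disk^{\dagger}_{\rt_p}/\disk^{\circ}_{\rt_p}$ is Definition \ref{arithmeticstackA1}.

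It then remains to identify the right-hand side with $\disk^{\dagger}_K/\disk^{\circ}_K$. Quotient stacks commute with base change: a product of a group quotient with $\operatorname{GSpec}K$ is the quotient of the product. So the right-hand side equals $(\disk^{\dagger}_{\rt_p}\times\operatorname{GSpec}K)/(\disk^{\circ}_{\rt_p}\times\operatorname{GSpec}K)$. One then checks that $\disk^{\dagger}_{\rt_p}\times\operatorname{GSpec}K=\disk^{\dagger}_K$ and $\disk^{\circ}_{\rt_p}\times\operatorname{GSpec}K=\disk^{\circ}_K$. On $A$-points with $A$ a $K$-algebra, both sides are $A^{\le1}$, respectively $A^{<1}$, because the norm conditions do not depend on the base field. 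Equivalently, $\rt_p\langle T\rangle_{\le r}\otimes_{\rt_p}K=K\langle T\rangle_{\le r}$ in Gelfand rings, and the colimit over $r<1$ commutes with the base change.

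On points, the identification is explicit. An $A$-point of $X^{\arith/K}$ before sheafification is a pair consisting of a map $K\to A$ and a $k$-algebra map $k[T]\to A^{\le1}/A^{<1}$, where $k\to A^{\le1}/A^{<1}$ is the composite induced by $K^\circ\to A^{\le1}$. This is just an element of $A^{\le1}/A^{<1}$ together with the $K$-structure, which is exactly an $A$-point of the prestack $K$-quotient $\disk^{\dagger}_K/\disk^{\circ}_K$.

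The main obstacle is bookkeeping of sheafification. The fiber product is taken of sheaves, while the quotient $\disk^{\dagger}/\disk^{\circ}$ is the sheafified quotient. We handle this by noting that sheafification commutes with finite limits, so the prestack-level identification sheafifies to the claimed isomorphism. The only slightly subtle point is that $k$ need not be perfect or étale over $\ff_p$. However, the argument never lifts $k$; it uses only the given structure map $\operatorname{GSpec}K\to(\Spec k)^{\arith}$, so no Hensel-type argument as in Proposition \ref{arithfinitefield} is needed.
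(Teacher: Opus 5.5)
Your proposal is correct and is essentially the paper's argument. The paper also deduces the general case from $\aff^{1,\arith}_{\ff_p}=\disk^{\dagger}_{\rt_p}/\disk^{\circ}_{\rt_p}$ (Definition \ref{arithmeticstackA1}) using only that $(-)^{\arith}$ preserves finite limits (Remark \ref{arithKunneth}); your additional steps spell out what the paper leaves implicit, namely that $(\Spec\ff_p)^{\arith}$ is terminal, that quotients commute with base change, and that $\disk^{\dagger}_{\rt_p}\times\operatorname{GSpec}K=\disk^{\dagger}_K$ and $\disk^{\circ}_{\rt_p}\times\operatorname{GSpec}K=\disk^{\circ}_K$ (your opening displayed formula is garbled, but the ``more concretely'' version that follows is the right one).
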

\begin{proof}
  If $k=\ff_p$ and $K=\rt_p$,
  then the description of $(\aff^1_{\ff_p})^{\arith}$ is already given in Definition \ref{arithmeticstackA1}.
  The case of arbitrary $k$ and $K$ comes from Remark \ref{arithKunneth}.
\end{proof}

\begin{corollary}[Affine space]
  We have $\aff_k^{n,\arith/K}\cong \disk^{\dagger,n}_K/\disk^{\circ,n}_K$.
\end{corollary}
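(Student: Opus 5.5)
The corollary follows from Proposition \ref{arithadditivegroup} together with the compatibility of $X\mapsto X^{\arith/K}$ with finite limits. I would proceed in three steps.

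First, I would write $\aff^n_k$ as the $n$-fold fiber product $\aff^1_k\times_{\Spec k}\cdots\times_{\Spec k}\aff^1_k$.

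Second, I would show that the relative arithmetic de Rham stack commutes with this fiber product, i.e.
\[
\aff^{n,\arith/K}_k\cong \aff^{1,\arith/K}_k\times_{\operatorname{GSpec}K}\cdots\times_{\operatorname{GSpec}K}\aff^{1,\arith/K}_k .
\]
By Remark \ref{arithKunneth}, the absolute construction gives
\[
\aff^{n,\arith}_k\cong \aff^{1,\arith}_k\times_{(\Spec k)^{\arith}}\cdots\times_{(\Spec k)^{\arith}}\aff^{1,\arith}_k .
\]
This holds because it already holds at the level of prestacks: $X(A^{\le 1}/A^{<1})$ commutes with finite limits in $X$, and sheafification is left exact. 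Base changing along $\operatorname{GSpec}K\to(\Spec k)^{\arith}$ turns a fiber product over $(\Spec k)^{\arith}$ into a fiber product over $\operatorname{GSpec}K$ of the base changes. This is formal pasting of Cartesian squares.

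Third, I would apply Proposition \ref{arithadditivegroup} to each factor, which gives
\[
\aff^{n,\arith/K}_k\cong \left(\disk^{\dagger}_K/\disk^{\circ}_K\right)^{\times_{\operatorname{GSpec}K} n}.
\]
It remains to identify this with $\disk^{\dagger,n}_K/\disk^{\circ,n}_K$. The quotient $\disk^{\dagger}_K/\disk^{\circ}_K$ is the sheafified quotient of a group action. Products of such quotients are the quotients of the product actions: on prestack level the $A$-points are $\prod_i A_K^{\le 1}/A_K^{<1}$, and sheafification commutes with finite products. This gives the claimed isomorphism.

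The only point needing care is the compatibility of the quotient with products after sheafification. It is handled by the left exactness of sheafification in the $\infty$-topos of qfd Gelfand stacks, so I expect no real obstacle.
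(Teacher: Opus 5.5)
Your proposal is correct and matches the paper's proof. The paper's proof is the one-line combination of Proposition~\ref{arithadditivegroup} with the finite-limit compatibility of Remark~\ref{arithKunneth}. Your base change along $\operatorname{GSpec}K\to(\Spec k)^{\arith}$ and your check that the quotient $\disk^{\dagger}_K/\disk^{\circ}_K$ commutes with finite products are the routine verifications the paper leaves implicit.
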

\begin{proof}
  By Proposition \ref{arithadditivegroup} and Remark \ref{arithKunneth}.
\end{proof}

\begin{proposition}[Multiplicative group]\label{arithgff}
  Let $X$ be $\mathbb{G}_{m,k}=\Spec k[T^{\pm1}]$ over $k$.
  We have that $X^{\arith/K}\cong\torus^{\dagger}_K/(1+\disk^{\circ}_K)$,
  where the action of $1+\disk^{\circ}_K$ on $\torus^{\dagger}_K$ is via multiplication.
\end{proposition}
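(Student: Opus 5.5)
We reduce to the absolute case over $\ff_p$ and compute functors of points, as in Proposition \ref{arithadditivegroup}. Since $\mathbb{G}_{m,k}=\mathbb{G}_{m,\ff_p}\times_{\Spec\ff_p}\Spec k$ and $(-)^{\arith}$ preserves finite limits (Remark \ref{arithKunneth}), we get
\[
\mathbb{G}_{m,k}^{\arith/K}=\mathbb{G}_{m,\ff_p}^{\arith}\times_{(\Spec\ff_p)^{\arith}}\operatorname{GSpec}K .
\]
Here $(\Spec\ff_p)^{\arith}=\operatorname{GSpec}\rt_p$ by Proposition \ref{arithfinitefield}. Likewise $\torus^{\dagger}_K=\torus^{\dagger}_{\rt_p}\times\operatorname{GSpec}K$ and $1+\disk^{\circ}_K=(1+\disk^{\circ}_{\rt_p})\times\operatorname{GSpec}K$, and quotients commute with this base change. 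It therefore suffices to show
\[
\mathbb{G}_{m,\ff_p}^{\arith}\cong\torus^{\dagger}_{\rt_p}/(1+\disk^{\circ}_{\rt_p}).
\]

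For the key computation, test on $A\in\cat{uPerfd}^{\operatorname{std},\qfd}_{\omega_1}$; by the remark after Definition \ref{arithdeRhamdefinition}, no sheafification is needed there. The left side gives $(A^{\le 1}/A^{<1})^{\times}$. The prestack quotient on the right gives $(A^{\le1})^{\times}/(1+A^{<1})$, using $\torus^{\dagger}(A)=\{f\in A^{\le 1}: f^{-1}\in A^{\le 1}\}=(A^{\le1})^{\times}$ and $(1+\disk^{\circ})(A)=1+A^{<1}$. There is an evident map
\[
(A^{\le1})^{\times}/(1+A^{<1})\longrightarrow (A^{\le1}/A^{<1})^{\times}.
\]
It is injective, since $u\equiv v$ modulo $A^{<1}$ means $uv^{-1}\in 1+A^{<1}$. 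The action of $1+A^{<1}$ on $(A^{\le1})^{\times}$ is free, so the groupoid quotient is discrete and agrees with this set quotient.

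The main obstacle is surjectivity: an element $\bar u\in A^{\le1}/A^{<1}$ with inverse $\bar v$ must lift to a unit of $A^{\le 1}$. Pick arbitrary lifts $u,v$, so that $uv=1+\epsilon$ with $\epsilon\in A^{<1}$. It then suffices to show $1+\epsilon$ is invertible in $A^{\le1}$, i.e. that $A^{<1}$ lies in the Jacobson radical of $A^{\le 1}$. We would argue this as follows. Elements of $A^{<1}$ have spectral norm $<1$ at every point of $\M(A)$, and $\M(A)$ is compact, so the norm is uniformly $\le r<1$; this should be the content of the definition of $A^{<1}$ as $\varinjlim_{r<1}A^{\le r}$, the union of the $A^{\le r}$ for $r<1$. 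The geometric series $\sum(-\epsilon)^n$ then converges overconvergently in $A^{\le 1}$, since $\epsilon$ extends to a map from $\rt_p\langle T\rangle_{\le r}$ and $1/(1+T)$ is overconvergent on the disk of radius $r<1$ with norm $\le1$. Hence $1+\epsilon\in(A^{\le1})^{\times}$, and $u$ is a unit.

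Finally, the map is compatible with restriction along maps of such $A$, so it gives an isomorphism of prestacks on the basis $\cat{uPerfd}^{\operatorname{std},\qfd}_{\omega_1}$, hence of qfd Gelfand stacks after sheafification. The multiplicative structure matches because $1+\disk^{\circ}$ acts by multiplication. Alternatively, one can deduce the statement from the open–closed decomposition $\{0\}\subset\aff^1$: by Theorem \ref{arithsixfunctor}(3), $\mathbb{G}_m^{\arith}$ is the complement of $0^{\arith}=\disk^{\circ}/\disk^{\circ}$ in $\disk^{\dagger}/\disk^{\circ}$, namely $(\disk^{\dagger}\setminus\disk^{\circ})/\disk^{\circ}$, and then converts the additive $\disk^{\circ}$-action on $\torus^{\dagger}$ into the multiplicative $(1+\disk^{\circ})$-action via $t\mapsto t(1+t^{-1}s)$. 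However, the direct point computation above is cleaner and is the one we would write.
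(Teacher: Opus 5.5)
Your argument is correct and is essentially the paper's proof: reduce to $k=\ff_p$, $K=\rt_p$ via Remark~\ref{arithKunneth}, then identify $(A^{\le 1}/A^{<1})^{\times}$ with $(A^{\le 1})^{\times}/(1+A^{<1})$ by lifting units. The only difference is in justifying the lift: the paper passes to the uniform completion $A^{u}$, whereas you invert $1+\epsilon$ directly in $A^{\le 1}$ with an overconvergent geometric series. Drop the alternative route through Theorem~\ref{arithsixfunctor}(3), though: in the paper that excision statement is derived (via Proposition~\ref{basicexcision}) from this very proposition, so using it here would be circular.
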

\begin{proof}
  By Remark \ref{arithKunneth},
  we can reduce to the case $k=\ff_p$ and $K=\rt_p$.
  Then before sheafification,
  the functor-of-points description is $X^{\arith,\operatorname{pre}}(A)=(A^{\le 1}/A^{<1})^{\times}$.
  However,
  we can identify $(A^{\le 1}/A^{<1})^{\times}$ with $(A^{u,\le 1}/A^{u,<1})^{\times}$,
  which is identified with $A^{u,=1,\times}/(1+A^{u,<1})$ by selecting an invertible lift of an element $u\in (A^{u,\le 1}/A^{u,<1})^{\times}$ to $A^{\le 1,\times}$;
  hence also with $A^{=1,\times}/(1+A^{<1})$.
  Then we get this identification after sheafification.
\end{proof}

\begin{example}[Projective line]
  Consider the projective line $\proj^1_k$ over $k$.
  We know there are natural morphisms $\disk^{\dagger}_K\rightarrow(\aff^1_k)^{\arith/K}$ and $\torus^{\dagger}_K\rightarrow(\mathbb{G}_{m,k})^{\arith/K}$,
  which fit into the following Cartesian diagram:
  \[\xymatrix{
    \torus^{\dagger}_K\ar[r]\ar[d] & \disk^{\dagger}_K\ar[d]\\
    (\mathbb{G}_{m,k})^{\arith/K}\ar[r] & (\aff^1_k)^{\arith/K}
  }\]
  By glueing along the Zariski open cover $\aff^1_k\sqcup \aff^1_k$ of $\proj^1_k$,
  we also have a natural morphism $\proj^1_K\rightarrow(\proj^1_k)^{\arith/K}$.
  We will see in the upcoming Proposition \ref{propersmootharith}
  that this morphism is an epimorphism and the equivalence relation is given by the tubular neighborhood of $\proj^1_k$ in the diagonal.

  Also by glueing,
  the geometry of $\proj^{1,\arith/K}_k$ can be viewed as a ``raviolo'',
  with open part given by the disjoint union of two points $\operatorname{GSpec}\rt_p\coprod\operatorname{GSpec}\rt_p$,
  and a closed part given by $\torus^{\dagger}_K/1+\disk^{\circ}_K$ which one can imagine is the ``boundary'' of both $\operatorname{GSpec}\rt_p$.
\end{example}

We can give the main geometric properties of the relative arithmetic de Rham stacks.

\begin{proposition}\label{propersmootharith}\label{Xarithdescription}
  Let $X$ be a realizable scheme,
  with the proper smooth lift over $K^{\circ}$ denoted by $\mathfrak{Y}$ and its generic fiber denoted by $\Y$.
  Let $]X[_{\mathfrak{Y}}^{\dagger}$ be the overconvergent tube of $X$ in $\mathfrak{Y}$,
  and $]X[_{\mathfrak{Y}^{n+1}}^{\dagger}$ be the overconvergent tube of the diagonal embedding $X$ in $\Y^{n+1}$ from Construction \ref{tubes}.
  Then there is a natural surjection $\pi\colon ]X[_{\mathfrak{Y}}^{\dagger}\rightarrow X^{\arith/K}$,
  with Cech nerve of $\pi$ as $(]X[_{\mathfrak{Y}^{n+1}}^{\dagger})_{[n]\in\Delta}$.
  Moreover,
  $\pi$ is a suave cover.
\end{proposition}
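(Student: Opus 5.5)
We first construct $\pi$. A point of $]X[^{\dagger}_{\mathfrak{Y}}$ with values in $A\in\cat{uPerfd}^{\std,\qfd}_{\omega_1}$ is, after the identification of overconvergent tubes in Construction \ref{tubes}, a map $\operatorname{GSpec}A\to\Y$ whose reduction lands in $X$. Since $\mathfrak{Y}$ is proper over $K^{\circ}$, such a map extends to $A^{\le 1}$-points of $\mathfrak{Y}$ by the valuative criterion, applied componentwise on the profinite $\M(A)$ and then glued. Reducing modulo $A^{<1}$ gives a point of $\mathfrak{Y}_k(A^{\le 1}/A^{<1})$. The tube condition says that this point lies in $X(A^{\le 1}/A^{<1})$, because the reduction at every Berkovich point lies in $X$ and $X$ is locally closed. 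Compatibility with $\operatorname{GSpec}K\to(\Spec k)^{\arith}$ is built in, so this defines $\pi$. Since $(-)^{\arith/K}$ commutes with finite limits (Remark \ref{arithKunneth}), the same construction for $X\hookrightarrow\mathfrak{Y}^{n+1}$ (diagonal) gives compatible maps $]X[^{\dagger}_{\mathfrak{Y}^{n+1}}\to(X^{n+1})^{\arith/K}$. Composing with the diagonal equivalence $X^{\arith/K}\to(X^{\arith/K})^{n+1}$ restricted along the tube shows that the Čech nerve of $\pi$ is $]X[^{\dagger}_{\mathfrak{Y}}\times_{X^{\arith/K}}\cdots$. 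The point is that a pair of $A$-points of $]X[^{\dagger}_{\mathfrak{Y}}$ has the same image in $X^{\arith/K}$ exactly when the pair, viewed as a point of $\Y^2$, has reduction in the diagonal copy of $X$. That is precisely the tube of the diagonal, and the higher terms work the same way.

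Surjectivity reduces to a lifting statement. Testing on strictly totally disconnected $A$, we may assume $A^u=\prod K_i$ componentwise, as in Lemma \ref{smallrigidepimorphismarith}. An $A^{\le 1}/A^{<1}$-point of $X\subset\mathfrak{Y}_k$ must lift to an $A^{\le 1}$-point of $\mathfrak{Y}$. Smoothness of $\mathfrak{Y}$ together with Hensel's lemma for the henselian pair $(A^{\le 1},A^{<1})$ gives such lifts, exactly as in Proposition \ref{arithfinitefield}. One then passes from $A^u$ back to $A$, using that $X^{\arith}$ only sees $A^u$ and that the tube is determined by its de Rham stack up to the fibre product in Construction \ref{tubes}.

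The main obstacle is suaveness. The plan is to show that $]X[^{\dagger}_{\mathfrak{Y}^2}\to]X[^{\dagger}_{\mathfrak{Y}}$, either projection, is suave, and then conclude by descent, since suaveness is $!$-local on the target and $\pi$ is a $!$-cover. Zariski locally on $\mathfrak{Y}$ we choose étale coordinates; by $h$-descent (Remark \ref{hdescentrelativearith}) this localization is harmless. In these coordinates the projection is identified with the product of $]X[^{\dagger}_{\mathfrak{Y}}$ with the open polydisc $\disk^{\circ,d}_K$, parametrizing the differences of coordinates. This is the standard tubular neighbourhood statement for smooth formal schemes, and it matches $X=\aff^1_k$, where the fibres are $\disk^{\circ}_K$ and $\aff^{1,\arith/K}=\disk^{\dagger}_K/\disk^{\circ}_K$. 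The open disc $\disk^{\circ}_K\to\operatorname{GSpec}K$ is suave, being a colimit of overconvergent closed discs with open transition maps. So the projections are suave, and hence so is $\pi$. The delicate part is making the fibration theorem work in the overconvergent Gelfand setting; I expect to prove it by comparing $\pi$ with $(X^{\arith/K})$ for $\aff^d_k$ via an étale map, using Theorem \ref{Aoki}.
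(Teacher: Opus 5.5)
Your outline follows the paper's: $\pi$ is defined by reducing the $A^{u,\circ}$-point of the proper $\mathfrak{Y}$ attached to an $A$-point of $\Y$, surjectivity comes from a lifting argument, the \v{C}ech nerve is identified with tubes of the diagonal, and suaveness is checked after pulling back along $\pi$, using a local product decomposition with an open polydisc. The paper is more economical: it first treats $\Y\to Y^{\arith/K}$ and then identifies $]X[^{\dagger}_{\mathfrak{Y}}\cong X^{\arith/K}\times_{Y^{\arith/K}}\Y$, so all three claims follow by base change.

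\textbf{Gap in surjectivity.} The step ``one then passes from $A^u$ back to $A$'' is not justified. The test rings are not uniform, and $X^{\arith/K}(A)$ depends only on $A^u$ and on $K\to A$. Your Hensel step is fine, e.g.\ via Elkik's lifting for smooth algebras over the henselian pair $(A^{u,\circ},A^{u,\circ\circ})$. But it only produces a point of $\mathfrak{Y}(A^{u,\circ})=\Y(A^u)$, and you still have to lift it to an $A$-point of $\Y$ over the given $K\to A$. The fibre product $]X[^{\dagger}_{\mathfrak{Y}}=]X[^{\dR}_{\mathfrak{Y}}\times_{\Y^{\dR}}\Y$ does not do this; it only tells you which $A$-points of $\Y$ lie in the tube. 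The missing input is the $\dagger$-formal smoothness of $\Y$ over $K$ \cite[Definition 4.7.2]{ABLBRCS25}. Concretely, you need $\Y(A)\to\Y(A^u)\times_{\operatorname{GSpec}K(A^u)}\operatorname{GSpec}K(A)$ to be surjective, i.e.\ $\Y\to\Y^{\dR/K}$ to be an epimorphism. This is exactly where the paper uses that $\Y$ is smooth, and it is a separate input from the formal smoothness of $\mathfrak{Y}$ behind your Hensel step.

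\textbf{Suaveness.} The product decomposition you need is Berthelot's strong fibration theorem \cite[Th\'eor\`eme 1.3.7]{Ber96a}. The paper simply cites it and transports it from adic tubes to overconvergent tubes via the Betti localization of Construction \ref{tubes}.

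Your proposed proof through Theorem \ref{Aoki} cannot work. That theorem requires a cohomologically smooth ring stack in which $0$ is closed with open complement the units. Neither holds for $\aff^{1,\arith/K}=\disk^{\dagger}_K/\disk^{\circ}_K$:
\begin{enumerate}
\item it is not suave, since otherwise $\disk^{\dagger}_K$, which covers it suavely, would be suave;
\item by Proposition \ref{basicexcision}, the origin is open and $\mathbb{G}_{m}$ is closed.
\end{enumerate}
Moreover, its conclusions are the motivic ones, opposite to Theorem \ref{arithsixfunctor}, and it says nothing about the tube of the diagonal anyway.

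Your reason for the suaveness of $\disk^{\circ}_K$ is also wrong. Overconvergent closed discs are not suave: they are cohomologically proper, and $K\langle T\rangle^{\dagger}$ is not dualizable in $D_{\blacksquare}(K)$. In addition, the inclusions of overconvergent closed discs of increasing radius are closed immersions, not open ones. The open polydisc is suave because it is a smooth partially proper rigid space, which is the fact the paper takes for granted.
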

\begin{proof}
  We first prove that
  if $X$ is proper smooth with proper smooth lift $\Xf$ with generic fiber $\X$ over $K$,
  then there is a natural surjection $\pi\colon \X\rightarrow X^{\arith/K}$.
  It suffices to construct it and show that it is surjective on points before sheafification,
  i.e. consider the map $\X(A)\rightarrow X^{\arith/K}(A)=X(A^{\le 1}/A^{<1})\times_{(\Spec k)(A^{\le 1}/A^{<1})}\operatorname{GSpec}K(A)$
  for any qfd separable nilperfectoid $A\in\cat{NilPerfd}^{\qfd}_{\omega_1}$.
  By the universal property of the generic fiber,
  cf. \cite[Proposition 2.2.2]{SW13},
  we have $\X(A^u)\cong \Xf(A^{u,\circ})$,
  and then we can construct the following morphism
  \[\X(A)\rightarrow \X(A^u)\cong \Xf(A^{u,\circ})\rightarrow X(A^{u,\circ}/p)\rightarrow X(A^{u,\circ}/A^{u,\circ\circ})\cong X(A^{\le 1}/A^{<1}).\]
  Combined with the structure morphism 
  $\X(A)\longrightarrow\operatorname{GSpec}K(A)$,
  we get a morphism 
  \[
  \X(A)\longrightarrow X(A^{\le 1}/A^{<1})\times_{(\Spec k)(A^{\le 1}/A^{<1})}\operatorname{GSpec}K(A).
  \]
  Moreover,
  the map $\X(A)\rightarrow \X(A^u)\times_{\operatorname{GSpec}K(A^u)}\operatorname{GSpec}K(A)$ is surjective by $\dagger$-formal smoothness of $\X$ over $\operatorname{GSpec}K$,
  cf. \cite[Definition 4.7.2]{ABLBRCS25};
  here we essentially use that $\X$ is proper and smooth.
  The map $\Xf(A^{u,\circ})\rightarrow X(A^{u,\circ}/p)\times_{\Spec k(A^{u,\circ}/p)}\Spf K^{\circ}(A^{u,\circ})$ is surjective by formal smoothness of $\Xf$.
  The map $X(A^{u,\circ}/p)\rightarrow X(A^{u,\circ}/A^{u,\circ\circ})\times_{\Spec k(A^{u,\circ}/A^{u,\circ\circ})}\Spec k(A^{u,\circ}/p)$ is surjective,
  because for any $f\in X(A^{u,\circ}/A^{u,\circ\circ})\times_{\Spec k(A^{u,\circ}/A^{u,\circ\circ})}\Spec k(A^{u,\circ}/p)$
  we can find a pseudo-uniformizer $\xi$ such that it is lifted to $X(A^{u,\circ}/\xi)\times_{\Spec k(A^{u,\circ}/\xi)}\Spec k(A^{u,\circ}/p)$ under the assumption that $X$ is of finite type over $k$,
  and then we can use formal smoothness of $X$ over $\Spec k$.
  Combining all surjections above,
  this proves the surjectivity of the morphism $\pi\colon \X\rightarrow X^{\arith/K}$.

  Then let $X$ be a general realizable scheme with proper smooth lift $\mathfrak{Y}$ and generic fiber $\Y$.
  Let $Y$ be the special fiber of $Y$.
  From the case of proper smooth schemes with a lift,
  it suffices to identify the fiber product of the following diagram with $]X[_{\mathfrak{Y}}^{\dagger}$:
  \[
  \xymatrix{
     & \Y\ar[d]\\
    X^{\arith/K}\ar[r] & Y^{\arith/K}
  }
  \]
  By the functor-of-points description,
  let $A\in\cat{NilPerfd}^{\qfd}_{\omega_1}$,
  then the fiber product has $A$-points consisting of $f\in\Y(A)\rightarrow\Y(A^u)\cong\mathfrak{Y}(A^{\le 1})$ such that its reduction $\overline{f}$ modulo $p$ satisfies $\overline{f}\in X(A^{u,\le 1}/p)$.
  However,
  this is also the functor-of-points description of $]X[_{\mathfrak{Y}}^{\dagger}$,
  because $]X[_{\mathfrak{Y}}^{\dagger}$ is obtained exactly as the Betti localization along the subspace $\M(]X[_{\mathfrak{Y}})\subset\M(\Y)$.
  This identifies the fiber product with $]X[_{\mathfrak{Y}}^{\dagger}$ and yields a natural surjection $\pi\colon ]X[_{\mathfrak{Y}}^{\dagger}\rightarrow X^{\arith/K}$.

  Then we compute the Cech nerve of the map $\pi$.
  The $n$-th Cech nerve of the map $\pi$ satisfies the following Cartesian diagram 
  \[\xymatrix{
    ]X[_{\mathfrak{Y}}^{\dagger,\times (n+1)/X^{\arith/K}}\ar[r]\ar[d] & ]X[_{\mathfrak{Y}}^{\dagger,\times (n+1)}\ar[d]\\
    X^{\arith/K}\ar[r]^{\Delta} & X^{\arith/K,\times (n+1)}
  }\]
  Since we have a Cartesian diagram 
  \[\xymatrix{
    ]X[_{\mathfrak{Y}}^{\dagger,\times (n+1)}\ar[d]\ar[r] & \Y^{\times(n+1)}\ar[d] \\
    X^{\arith/K,\times (n+1)}\ar[r] & Y^{\arith/K,\times(n+1)}
  }\]
  we also know that the following is Cartesian:
  \[\xymatrix{
    ]X[_{\mathfrak{Y}}^{\dagger,\times (n+1)/X^{\arith/K}}\ar[r]\ar[d] & \Y^{\times(n+1)}\ar[d]\\
    X^{\arith/K}\ar[r] & Y^{\arith/K,\times (n+1)}
  }\]
  This identifies the $n$-th Cech nerve $]X[_{\mathfrak{Y}}^{\dagger,\times (n+1)/X^{\arith/K}}$ with $]X[_{\mathfrak{Y}^{n+1}}^{\dagger}$.

  Finally,
  to prove that it is a suave cover,
  it suffices to deal with the proper smooth case as the realizable case is deduced by base change.
  It also suffices to prove $\pi$ is suave because we already know that it is a surjection.
  Suaveness is local on the source,
  cf. \cite[Lemma 4.5.7]{HM24},
  hence it suffices to check the suaveness after pullback along $\pi$.
  From the description of the Cech nerve,
  we know the following is a Cartesian diagram:
  \[\xymatrix{
  ]X[_{\Xf^2}^{\dagger}\ar[r]\ar[d] & \X\ar[d]\\
    \X\ar[r] & X^{\arith/K}
  }\]
  By the strong fibration theorem \cite[Théorème 1.3.7]{Ber96a},
  the map $]X[_{\Xf^2}^{\dagger}\rightarrow \X$ is a locally $\disk^{\circ,\times n}$-fibration for the analytic topology on $\X$
  (Berthelot proved this essentially for the underlying adic space but it can be generalized to the overconvergent tube directly by definition).
  Since $\disk^{\circ,\times n}$ is suave,
  the map $]X[_{\Xf^2}^{\dagger}\rightarrow \X$ is also suave.
  This proves the suaveness of $]X[_{\Xf^2}^{\dagger}\rightarrow \X$ hence of $\pi\colon\X\rightarrow X^{\arith}$ too.
\end{proof}

\begin{remark}
  In particular,
  the description does not depend on the choice of the frame in the sense that 
  if there is another choice of frame $(X,\mathfrak{Y}')$,
  then there is an identification $]X[^{\dagger}_{\mathfrak{Y}}/]X[^{\dagger}_{\mathfrak{Y}^2}$
  with $]X[^{\dagger}_{\mathfrak{Y}'}/]X[^{\dagger}_{\mathfrak{Y}'^2}$.
\end{remark}

\subsection{$6$-functor formalism}\label{Sectionsixfunctorarith}
We would like to study the $6$-functor formalism $X\mapsto D(X^{\arith})$,
and show that its cohomological properties are opposite to the axioms of being motivic,
cf. \cite[Lecture XI]{Sch23}.
Nevertheless,
this $6$-functor formalism also recovers classical results as we will see.
Precisely,
in this section,
we prove the following main theorem:

\begin{theorem}\label{arithsixfunctor}
  The $6$-functor formalism of absolute arithmetic de Rham stacks $X\mapsto D(X^{\arith})$ on $X\in\{\text{separated schemes over $\ff_p$ of finite type}\}$ has the following properties.
  \begin{enumerate}
    \item Let $f\colon X\rightarrow Y$ be an étale (resp. proper; smooth) morphism.
    Then $f^{\arith}\colon X^{\arith}\rightarrow Y^{\arith}$ is a cohomologically proper (resp. cohomologically étale; prim with invertible prim dual) morphism.
    \item Strong $\aff^1$-invariant property holds,
    i.e. $\F\rightarrow f^{\arith}_*f^{\arith,*}\F$ is an isomorphism for all $X$ and $\F\in D(X^{\arith})$,
    where $f\colon \aff^1_X\rightarrow X$ is the projection.
    \item If we have a closed-open decomposition $i\colon Z\rightarrow X\leftarrow U\colon j$,
    then $U^{\arith}$ is a closed subspace of $X^{\arith}$ with complement open $Z^{\arith}$ in the sense of Gelfand stacks.
    \item Assume that $f\colon X\rightarrow Y$ is smooth of relative dimension $n$.
    Then the prim dual of $f^{\arith}\colon X^{\arith}\rightarrow Y^{\arith}$ is identified as $\mathbb{P}_f(1_X)=1_X[-2n]$.
  \end{enumerate}
\end{theorem}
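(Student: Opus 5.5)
The strategy is to reduce everything, via transmutation and $h$-descent, to the basic ring stack $\aff^{1,\arith}_{\ff_p}=\disk^{\dagger}/\disk^{\circ}$. The model is the proof of \cite[Theorem 10.6]{Sch23} and its appendix on the algebraic de Rham stack, where the same ``opposite'' behaviour occurs. Theorem \ref{Aoki} cannot be applied verbatim, because its hypotheses give the \emph{motivic} conclusions. Instead I will redo its argument with the roles of $!$ and $*$ swapped. The basic input is the structure of $\aff^{1,\arith}\to\operatorname{GSpec}\rt_p$ and of the decomposition $0\in\aff^{1,\arith}\supset(\aff^{1,\arith})^{\times}=\torus^{\dagger}/(1+\disk^{\circ})$ (Proposition \ref{arithgff}).

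\textbf{Step 1: local computations on $\aff^{1,\arith}$.} I first compute for the point $0\colon \operatorname{GSpec}\rt_p\to \disk^{\dagger}/\disk^{\circ}$. Its fibre is $\disk^{\circ}$, and $B\disk^{\circ}\to\ast$ is the relevant map. Since $\disk^{\circ}$ is an open disk, hence suave with $\Gamma(\disk^{\circ},\O)$ pro-behaving and $R\Gamma_c(\disk^{\circ},\O)\simeq \rt_p[-1]$ (dual to the overconvergent closed disk), the classifying stack $B\disk^{\circ}$ is prim. This should make $0$ prim, i.e. ``cohomologically étale-like in the $!$-sense.''

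In the same way, $\disk^{\dagger}\to\operatorname{GSpec}\rt_p$ is cohomologically proper (its underlying overconvergent closed disk is the standard example of a prim/proper-type Gelfand stack, since $D(\disk^{\dagger})$ is compactly generated by $\O$ with $\Gamma$ preserving colimits). Quotienting by the prim group $\disk^{\circ}$ then gives that $\aff^{1,\arith}\to\ast$ is prim.

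For invertibility, I would compute $\Gamma(\aff^{1,\arith},\O)$ as the $\disk^{\circ}$-invariants of $\O(\disk^{\dagger})$, i.e. the kernel of $d/dT$ on overconvergent functions, giving $\rt_p$ in degree $0$ and $0$ in degree $1$ (overconvergent functions on the closed disk are integrable). This gives the strong $\aff^1$-invariance $\O\simeq f_*f^*\O$, which is (2) once it is base changed over arbitrary $X^{\arith}$ using the categorical Künneth formula (Remark \ref{categoricalKunneth}) and the fact that $(-)^{\arith}$ preserves products (Remark \ref{arithKunneth}). The prim dual computation, mirroring $R\Gamma_c(\disk^{\circ})=\rt_p[-1]$ and the closed disk contributing $[-1]$, should yield $\mathbb P_f(1)=1[-2]$, which is (4) for $n=1$.

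\textbf{Step 2: the closed-open decomposition (3).} I would show that $\torus^{\dagger}/(1+\disk^{\circ})\hookrightarrow\disk^{\dagger}/\disk^{\circ}$ is a \emph{closed} immersion with open complement $B\disk^{\circ}$ (the origin). Upstairs, $\torus^{\dagger}=\disk^{\dagger}\setminus\disk^{\circ}$ is closed in the Gelfand sense: the overconvergent closed annulus is the complement of the open disk. Since $\disk^{\circ}$ acts, this descends. For general $X$ with $Z=V(f_1,\dots,f_r)$, $U$ is covered by the $D(f_i)$. The statement for a single hypersurface follows by pulling back along $f\colon X\to\aff^1$, and the general case follows by intersecting and gluing, using Zariski descent (Remark \ref{Zariskidescentarith}).

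\textbf{Step 3: globalisation (1), (4).} Smooth morphisms are étale locally $\aff^n$-projections. Prim-ness with invertible dual $1[-2n]$ is local in the appropriate sense and stable under base change and composition, so (4) and the smooth part of (1) follow from Step 1 once the étale case is known. For an étale $f$, I would argue as in \cite{Sch23}: an étale map factors locally as an open immersion into a finite étale map, so it suffices to treat open immersions and finite étale maps. Open immersions become closed immersions by Step 2, and these are cohomologically proper. For finite étale maps, the diagonal is open and closed, so by Step 2 it is clopen after $\arith$; the Künneth formula then gives properness, using that finite maps of finite type are proper and closed immersions go to open immersions. For a proper $f$, Chow's lemma together with $h$-descent (Theorem \ref{fpqcdescentarith}) reduces to projective maps, hence to closed immersions (open, hence cohomologically étale, by Step 2) and $\proj^n_Y\to Y$. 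The latter is handled by the Zariski cover by affine spaces and the closed-open decomposition, exactly as in the motivic setting but dualised.

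\textbf{Main obstacle.} I expect the hardest step to be the precise $6$-functor statements in Step 1: that $B\disk^{\circ}\to\ast$ and $\disk^{\dagger}\to\ast$ are prim with the asserted duals, and that the diagonal of $\aff^{1,\arith}$ behaves correctly. This requires care with overconvergence versus openness, which is what produces the ``opposite'' phenomena. I would first try to use the Cartier-type duality between $\disk^{\circ}$ and overconvergent functions. Failing that, I would compute directly with the explicit chart from Proposition \ref{Xarithdescription}: the Čech nerve $]\aff^1[^{\dagger}_{\proj^{1,n+1}}$ made of $\disk^{\dagger}\times\disk^{\circ,n}$-fibrations.
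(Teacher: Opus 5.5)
Your treatment of (2), (3) and of the primness of $\aff^{1,\arith}$ through $B\disk^{\circ}$ is essentially the paper's. There are two small slips. The open complement of $\torus^{\dagger}/(1+\disk^{\circ})$ is $\{0\}^{\arith}=\operatorname{GSpec}\rt_p$, not $B\disk^{\circ}$. And $\disk^{\dagger}\to\ast$ is cohomologically proper, so it contributes a trivial prim dual; all of $1[-2]$ comes from $B\disk^{\circ}$ (Proposition \ref{analyticCartierduality}).

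The real gap is the étale case of (1). Zariski-locally, étale maps are open immersions into finite \emph{flat} maps, and in general not into finite étale ones. Even for finite étale $f$, finiteness makes $f$ proper, which in this formalism yields that $f^{\arith}$ is cohomologically \emph{étale}, not that it is cohomologically proper. The clopen diagonal does make $\Delta_{f^{\arith}}$ a closed immersion, but to conclude you still need $f^{\arith}$ to be prim. You only obtain primness of smooth (hence étale) maps in Step 3 \emph{from} the étale case, so the argument is circular. Embedding into affine space does not help either, since closed immersions become open immersions, which are not prim in general.

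The missing input is the paper's small-diamond argument:
\begin{enumerate}
\item For étale $Y\to X$, the square $Y^{\circ}_{\arc}\cong X^{\circ}_{\arc}\times_{\overline{X}}\overline{Y}$ of Lemma \ref{etaleCartesian} is Cartesian. This is formal étaleness applied to $A^{\circ}=\varprojlim_n A^{\circ}/\xi^{p^n}$ and $A^{\circ}/A^{\circ\circ}=\varinjlim_n A^{\circ}/\xi^{1/p^n}$.
\item $Y^{\circ}_{\arc}\to X^{\circ}_{\arc}$ is proper for any map (Lemma \ref{propersmalldiamond}). So applying $(\Y^{\diamond}_{(-)})^{\dR}$ gives a cohomologically proper map by \cite[Theorem 6.3.1]{ABLBRCS25}.
\item This descends along the epimorphisms onto $Y^{\arith}$ and $X^{\arith}=(\Y^{\diamond}_{\overline{X}})^{\dR}$.
\end{enumerate}

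Two further steps are also unjustified.

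In (4), knowing that $\mathbb{P}_f(1)$ is Zariski-locally $1[-2n]$ only makes it a locally trivial invertible object. Such objects need not be trivial; compare the determinant twist in the suave dual in Proposition \ref{analyticCartierduality}. You also have no canonical local isomorphisms to glue. The paper deforms $\Delta\colon X\to X\times_YX$ to the normal cone. This identifies $\mathbb{P}_f(1)=p_*\Delta_!1$ with the corresponding object for the tangent bundle $T_{X/Y}$, which is globally $1[-2n]$ by Remark \ref{vectorbundlecosmooth}.

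For $\proj^n_Y\to Y$, ``dualising'' the Zariski cover by affine spaces gives a cover by \emph{closed} substacks $\aff^{n,\arith}$. Along such a cover you can propagate primness, but not the suaveness that cohomological étaleness requires. The paper gets suaveness from the analytic chart: $\proj^n_{\rt_p}\to\proj^{n,\arith}$ is a suave cover (Proposition \ref{propersmootharith}, which rests on Berthelot's strong fibration theorem). After that, the diagonal of $\proj^n$, being a closed immersion, becomes an open immersion.
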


Moreover,
the same proof immediately yields the following parallel theorem in the relative setting.
\begin{theorem}\label{relativearithsixfunctor}
  The $6$-functor formalism of relative arithmetic de Rham stacks $X\mapsto D(X^{\arith/K})$ on $X\in\{\text{separated schemes over $k$ of finite type}\}$ has the following properties.
  \begin{enumerate}
    \item Let $f\colon X\rightarrow Y$ be an étale (resp. proper; smooth) morphism.
    Then $f^{\arith/K}\colon X^{\arith/K}\rightarrow Y^{\arith/K}$ is a cohomologically proper (resp. cohomologically étale; prim with invertible prim dual) morphism.
    \item Strong $\aff^1$-invariant property holds,
    i.e. $\F\rightarrow f^{\arith/K}_*f^{\arith/K,*}\F$ is an isomorphism for all $X$ and $\F\in D(X^{\arith/K})$,
    where $f\colon \aff^1_X\rightarrow X$ is the projection.
    \item If we have a closed-open decomposition $i\colon Z\rightarrow X\leftarrow U\colon j$,
    then $U^{\arith/K}$ is a closed subspace of $X^{\arith/K}$ with complement open $Z^{\arith/K}$ in the sense of Gelfand stacks.
    \item Assume that $f\colon X\rightarrow Y$ is smooth of relative dimension $n$.
    Then the prim dual of the map $f^{\arith/K}\colon X^{\arith/K}\rightarrow Y^{\arith/K}$ is identified as $\mathbb{P}_f(1_X)=1_X[-2n]$.
  \end{enumerate}
\end{theorem}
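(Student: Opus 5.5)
The plan is to deduce Theorem \ref{relativearithsixfunctor} from the absolute Theorem \ref{arithsixfunctor} by base change along $\operatorname{GSpec}K\to(\Spec k)^{\arith}$. One could also rerun the same arguments with the ring stack $\aff^{1,\arith/K}_k=\disk^{\dagger}_K/\disk^{\circ}_K$ in place of $\disk^{\dagger}_{\rt_p}/\disk^{\circ}_{\rt_p}$; either route should work.

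\textbf{Reduction to the absolute case.} For a morphism $f\colon X\to Y$ of $k$-schemes, Remark \ref{arithKunneth} gives
\[
X^{\arith/K}=X^{\arith}\times_{(\Spec k)^{\arith}}\operatorname{GSpec}K=X^{\arith}\times_{Y^{\arith}}Y^{\arith/K}.
\]
So $f^{\arith/K}$ is the base change of $f^{\arith}$ along $Y^{\arith/K}\to Y^{\arith}$. Here we regard $X,Y$ as $\ff_p$-schemes. They are then not necessarily of finite type over $\ff_p$ when $k$ is large, so the absolute statement has to be checked in the form actually used: for morphisms $f$ that are étale, proper or smooth, which are of finite presentation.

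\textbf{Base-change stability.} Cohomological properness, cohomological étaleness, being prim, and the formation of the prim dual $\mathbb{P}_f$ are all stable under base change in a $6$-functor formalism (\cite{HM24}, \cite{Sch23}). This yields (1) and (4) once the absolute versions hold; in (4), $1_X[-2n]$ pulls back to itself.

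For (3), closed and open immersions of Gelfand stacks are stable under pullback. So $U^{\arith/K}\to X^{\arith/K}$ and $Z^{\arith/K}\to X^{\arith/K}$ are again a complementary closed and open pair, being the pullbacks of the absolute ones.

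For (2), $\aff^1_X\to X$ becomes $X^{\arith/K}\times\aff^{1,\arith/K}_k\to X^{\arith/K}$. The statement that $\F\to f_*f^*\F$ is an isomorphism for \emph{all} $\F$ needs more than bare base change, since $f_*$ does not commute with base change in general. I would instead reduce, by the categorical Künneth formula (Remark \ref{categoricalKunneth}), to showing that the unit $1\to g_*1$ is an isomorphism for
\[
g\colon \disk^{\dagger}_K/\disk^{\circ}_K\to\operatorname{GSpec}K,
\]
together with the fact that $g_*$ is linear, i.e.\ satisfies a projection formula. The latter holds once $g$ is prim, since then $g_*=g_!(\mathbb{P}_g\otimes-)$ and $g_!$ satisfies the projection formula and base change.

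\textbf{Main obstacle.} The real work is therefore the local computation for $\aff^1$, and I would carry it out directly in the relative setting. The map $\disk^{\dagger}_K\to\disk^{\dagger}_K/\disk^{\circ}_K$ is a $\disk^{\circ}_K$-torsor. Hence $g_*1$ is computed by the $\disk^{\circ}_K$-invariants, i.e.\ the overconvergent de Rham complex of $K\langle T\rangle_{\le1}^{\dagger}$ with respect to $\partial$. This has cohomology $K$ in degree $0$ by integrating overconvergent series: the radius loss is absorbed by overconvergence.

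Primness of $\disk^{\dagger}/\disk^{\circ}\to *$ with prim dual $1[-2]$ would come from two facts. First, $\disk^{\dagger}_K\to *$ is prim, because overconvergent closed disks are cohomologically proper-like in the Gelfand setting. Second, $B\disk^{\circ}_K\to *$ is prim, because $\disk^{\circ}$ is suave with dualizing object $1[2]$ and the classifying stack swaps suave and prim. Composing, the shifts combine to $[-2]$ in total.

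Finally, for general smooth $f$ I would reduce, étale-locally via (1) and $h$-descent (Remark \ref{hdescentrelativearith}), to the projection $\aff^n_Y\to Y$. The étale and proper cases should follow as in Theorem \ref{Aoki}, with roles swapped because the ring stack has the prim rather than suave behaviour. The hardest point I expect is identifying the prim dual canonically rather than only up to a line bundle twist. For this I would use the Frobenius or the translation action to trivialize the relevant invertible object.
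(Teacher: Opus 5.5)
Your main route is sound and differs from the paper. You realize $f^{\arith/K}$ as a base change of an absolute map and invoke base-change stability, whereas the paper only asserts that the proof of Theorem \ref{arithsixfunctor} reruns over $K$ (chart $\disk^{\dagger}_K\to\disk^{\dagger}_K/\disk^{\circ}_K$, Cartier duality, the small-diamond lemmas, $\proj^n$ plus Chow, deformation to the normal cone). However, the step you flagged and left open is the one that makes your route work. Theorem \ref{arithsixfunctor} and its inputs (e.g.\ Lemmas \ref{smallrigidepimorphismarith} and \ref{etaleCartesian}) are only proved for schemes of finite type over $\ff_p$.

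Close this gap by spreading out rather than by redoing the theory. Any étale (resp.\ proper, smooth) $f\colon X\to Y$ of separated finite type $k$-schemes, and any closed--open decomposition, is the base change along $\Spec k\to\Spec R$ of one with the same property over some finitely generated $\ff_p$-subalgebra $R\subset k$. The descended schemes are then of finite type over $\ff_p$. Since $(-)^{\arith}$ preserves fiber products (Remark \ref{arithKunneth}), $f^{\arith/K}$ is the base change of $f_R^{\arith}$ along $Y^{\arith/K}\to Y_R^{\arith}$.
\begin{itemize}
\item Parts (1) and (4) follow, because cohomological properness/étaleness, primness and $\mathbb{P}_f$ are compatible with base change.
\item For (3), complementarity also transports. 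The sequence $j'_!1\to 1\to i'_*1$ is the pullback of the absolute cofiber sequence, and the projection formulas for $j'_!$ and $i'_*=i'_!$ give the triangle on all objects.
\item For (2), no new computation over $K$ is needed; your $\partial$-Koszul computation is correct but redundant. The map $\aff^{1,\arith/K}_X\to X^{\arith/K}$ is a base change of the prim map $g\colon\aff^{1,\arith}_{\ff_p}\to\operatorname{GSpec}\rt_p$ with $g_*1=1$ (Proposition \ref{A1cosmooth}). For prim maps, $f_*=f_!(\mathbb{P}_f\otimes -)$ commutes with base change and satisfies the projection formula.
\end{itemize}
This makes the relative theorem a formal corollary. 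It also avoids re-verifying the small-diamond and descent lemmas for $k$-schemes not of finite type over $\ff_p$, which the paper's ``same proof'' tacitly needs when $k$ is large.

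Discard the ``main obstacle'' sketch, which contains errors.
\begin{itemize}
\item In this coherent formalism the dualizing object of $\disk^{\circ}$ is $\Omega^1[1]$, not $1[2]$. With the correct shift, your suave/prim ``swap'' heuristic would give $[-1]$. The $[-2]$ must be taken from Proposition \ref{analyticCartierduality}: $B\disk^{\circ}$ has suave dual of shift $[-1]$ and prim dual $1[-2]$. This is combined with $\disk^{\dagger}\to *$ being cohomologically proper, not merely prim.
\item More seriously, Frobenius or translations cannot identify the prim dual globally. The prim dual is $\mathcal{L}[-2n]$ with $\mathcal{L}$ a rank-one overconvergent isocrystal. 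Frobenius at best equips $\mathcal{L}$ with an $F$-structure, and rank-one $F$-isocrystals such as Dwork's are non-trivial. Translations exist only for group schemes. The paper instead uses deformation to the normal cone and $\aff^1$-invariance to reduce to the zero section of $T_{X/Y}$ (Remark \ref{vectorbundlecosmooth}).
\item ``As in Theorem \ref{Aoki} with roles swapped'' is exactly what the paper says it cannot do as a black box, because of presentability issues for the opposite formalism. It proves the étale and proper cases by explicit arguments instead.
\end{itemize}
With the spreading-out reduction, none of this is needed.
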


\begin{remark}\label{relativearithsixfunctormoduloFrobenius}
  If $K$ admits a Frobenius lift $\varphi_K$ and is perfect,
  then the $6$-functor formalism $X\mapsto D(X^{\arith/K}/\varphi^{\itg}_X)$ in the relative setting modulo Frobenius also behaves similarly to Theorem \ref{relativearithsixfunctor}.
\end{remark}

\begin{remark}\label{oppositemotivic}
  The phenomenon of being ``opposite'' to the motivic axioms also occurs for algebraic de Rham stacks,
  cf. \cite[Appendix to Lecture VIII]{Sch23}.
  In the forthcoming paper \cite{Aok} of Aoki,
  the main theorems in \cite{Aok26} (hence Theorem \ref{Aoki}) have opposite-motivic versions,
  and hence it applies to the case of algebraic de Rham stacks and also our case.
  Then the cohomological properties above (except $(4)$) reduce to the case of $X=\aff^1$,
  which is obtained almost for free.
\end{remark}

Now we start to prove Theorem \ref{arithsixfunctor}.
First we establish some preliminaries.
Under the notations in Example \ref{disk},
let us recall a fundamental computation from \cite{RC24a}.

\begin{proposition}[{\cite[Proposition 4.3.7]{RC24a}}]\label{analyticCartierduality}
  Let $X$ be a qfd Gelfand stack and $V^{\dagger}$ be a unitary overconvergent bundle over $X$ of rank $n$
  (which is equivalent to a map $X\rightarrow B\GL_n^{\dagger}$,
  where $B\GL_n^{\dagger}$ is $\operatorname{Vect}^{\operatorname{rk}=n,\le 1}$ in the notation of \cite[Theorem 3.2.37]{RC25b}).
  Let $g\colon X/V^{\vee,\circ}\rightarrow X$.
  Then $g$ is both cohomologically smooth and prim with invertible prim dual.
  Moreover,
  the suave dual and prim dual of $g$ are computed as $\mathbb{D}_g(1)=M [-n]$ and $\mathbb{P}_g(1)=1[-2n]$.
  Let $f\colon X\rightarrow X/V^{\vee,\circ}$,
  then $f^*M=\bigwedge^d \F(V)$ where $\F(V)$ is the sheaf over $X$ corresponding to the vector bundle $V$.
\end{proposition}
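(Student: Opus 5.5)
The plan is to reduce to the universal rank-one situation over a point, settle that case by an explicit analytic Cartier duality computation, and then glue back up using base change. The glue-back must also track the twist $M$ through the $\GL_n^{\dagger}$-equivariance.

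\textbf{Reduction to the universal case.} Since $V^{\dagger}$ is classified by a map $X\to B\GL_n^{\dagger}$, the morphism $g$ is the base change of the universal map $g^{\mathrm{univ}}\colon B\GL_n^{\dagger}\times_{B\GL_n^{\dagger}}(\text{universal }V^{\vee,\circ})\text{-quotient}\to B\GL_n^{\dagger}$. Suaveness and primness are stable under base change and are $!$-local on the target. The formation of the suave dual $\mathbb{D}_g(1)=g^!1$ and of the prim dual $\mathbb{P}_g(1)$ commutes with base change for suave, resp. prim, maps. Hence it suffices to treat $g^{\mathrm{univ}}$, and further to pull back along the $!$-cover $\operatorname{GSpec}\rt_p\to B\GL_n^{\dagger}$. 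Over this cover $V$ is trivial and $g$ becomes $B(\disk^{\circ})^{n}\to\operatorname{GSpec}\rt_p$. The only information lost in this last step is the $\GL_n^{\dagger}$-equivariant structure on the answers, which I recover at the end.

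\textbf{The case over a point.} For $h\colon B\disk^{\circ,n}\to *$ I use the factorization $*\xrightarrow{e} B\disk^{\circ,n}\xrightarrow{h}*$. Here $e$ is a $\disk^{\circ,n}$-torsor, and the composite is the identity.
\begin{enumerate}
\item Suaveness. I show that $\disk^{\circ}\to *$ is suave with $f^!1\cong 1[2]$, trivialized by $dT$ up to the invertible character. Suaveness of $e$ then follows by base change. Suaveness of $h$ follows by descending along the $!$-cover $e$, since suaveness is local on the source (\cite[Lemma 4.5.7]{HM24}). The standard formula $h^!1\otimes e^!1$-inverse shows that $h^!1$ is the inverse of the relative dualizing object of $e$, shifted. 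Concretely, $h^!1$ is the inverse of $(\det\mathrm{Lie})^{\vee}[2n]$ twisted to $[-n]$ after accounting for the $\otimes$-invertible line. This is the object named $M[-n]$, and it pulls back along $f=e$ to a line $\bigwedge^n\F(V)$ with no further shift.
\item Primness. Here I use analytic Cartier duality: $D(B\disk^{\circ})$ is identified with modules over the overconvergent germ of $0$ in the dual line, i.e. with $D(\mathbb{G}^{\dagger}_a)$-type modules, or equivalently with solid modules over $\rt_p\{\partial\}$-type distribution algebras. Under this identification $h_*$ becomes the fiber functor at $0$, which is a closed immersion of a point into a smooth one-dimensional overconvergent germ. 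A closed immersion of codimension one in such a germ is prim with invertible prim dual $1[-2]$, via the Koszul complex $T\cdot$. Taking the $n$-fold product via categorical Künneth (Remark~\ref{categoricalKunneth}) gives $\mathbb{P}_h(1)=1[-2n]$.
\end{enumerate}

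\textbf{Main obstacle and equivariance.} The hardest step is making the Cartier duality for $\disk^{\circ}$ precise enough to identify $h_*$ with a fiber functor. The strategy is to compute $\Gamma(\disk^{\circ},\O)$ and its solid dual, check that the dual is the overconvergent distribution algebra, and verify that descent along $e$ is exactly comodules over $\O(\disk^{\circ})$, i.e. modules over the dual.

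Finally I restore the $\GL_n^{\dagger}$-equivariance. The prim dual $1[-2n]$ is canonically trivial, because the Koszul generator is the identity of $\bigwedge^n$ of a self-dual pairing and the pairing is $\GL_n$-invariant. The suave dual acquires the determinant character of $V$, which gives $f^*M=\bigwedge^n\F(V)$.
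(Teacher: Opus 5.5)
The paper gives no proof of this proposition; it quotes \cite[Proposition 4.3.7]{RC24a}, so your outline has to stand on its own. The reduction (base change along $\operatorname{GSpec}\rt_p\to B\GL_n^{\dagger}$, and compatibility of suave and prim duals with base change) is fine, and so is suaveness of $B\disk^{\circ,n}\to *$ via the section $e$. The shift bookkeeping, however, is wrong. In this quasi-coherent (solid) formalism the dualizing complex of $\disk^{\circ}\to *$ is $\Omega^1[1]\cong 1[1]$, not $1[2]$; the $[2d]$ normalization belongs to \'etale-type theories. With $1[2]$, your own identity $e^*h^!1\cong(e^!1)^{-1}$ (from $h\circ e=\id$) would force $\mathbb{D}_h(1)\cong 1[-2n]$. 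No tensoring with an invertible line in degree $0$ turns $[-2n]$ into $[-n]$, so ``twisted to $[-n]$ after accounting for the invertible line'' is not a valid step. With the correct shift there is nothing to patch. Base change of $e$ along itself gives $e^!1\cong\det(\mathrm{Lie}^{\vee})[n]$, where $\mathrm{Lie}$ is the Lie algebra of $V^{\vee,\circ}$, hence $f^*\mathbb{D}_g(1)\cong\det(\mathrm{Lie})[-n]$. Since $g\circ f=\id$ and $f$ is suave, this works directly over $X$, with no detour through $\GL_n^{\dagger}$-equivariance.

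The genuine gap is primness, for three reasons.
\begin{enumerate}
\item The Cartier dual is misidentified. The dual of $\disk^{\circ}$ is the closed overconvergent PD-disk $\disk^{\dagger,\sharp}$ (Lemma~\ref{1catCartierdual}: $[\disk^{\dagger}]_!\simeq[B\disk^{\circ}]^*$), not the germ $\mathbb{G}^{\dagger}_a$, which is dual to $\aff^{1,\an}$.
\item The claim ``a closed immersion of codimension one is prim with prim dual $1[-2]$'' is false. A closed immersion is cohomologically proper, so its prim dual is $1$. The $[-2]$ is really a comparison between $h_*$ and $h_!$ for $h\colon B\disk^{\circ}\to *$. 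For instance, $h_*1\cong 1\oplus 1[-1]$ by the Koszul resolution of the augmentation module over the distribution algebra, whereas $h_!1\cong h_\natural(\mathbb{D}_h(1)^{-1})\cong 1[1]\oplus 1[2]$. This shift only appears once you track how the Fourier--Mukai transform, including its built-in shift, intertwines $(h_*,h_!)$ with $(i^!,i^*)$.
\item Even a correct identification of $h_*$ with a fiber functor does not make $1$ $h$-prim. Primness is adjunction data in the category of kernels over the base: a prim dual with unit and counit satisfying the triangle identities, which is what makes it stable under base change. To obtain it from Cartier duality you need the duality as an equivalence of kernel-level objects, as in \cite{RC25b}, so that it transports that data. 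Computing $\Gamma(\disk^{\circ},\O)$ and its solid dual only yields the equivalence $D(B\disk^{\circ})\simeq\cat{LMod}_{\O(\disk^{\circ})^{\vee}}(D_{\blacksquare}(\rt_p))$ of plain categories.
\end{enumerate}
This kernel-level input is exactly what the citation supplies. Your equivariance conclusion that $\mathbb{P}_g(1)\cong 1[-2n]$ is untwisted is correct: the $\det\mathrm{Lie}$ from $\mathbb{D}_g(1)$ cancels via $\bigwedge^k\mathrm{Lie}\otimes\det\mathrm{Lie}^{-1}\cong\bigwedge^{n-k}\mathrm{Lie}^{\vee}$. It holds only once primness itself is established.
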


\begin{proposition}\label{A1cosmooth}
  Consider $f\colon \aff_{\ff_p}^{1,\arith}=\disk^{\dagger}_{\rt_p}/\disk^{\circ}_{\rt_p}\rightarrow\operatorname{GSpec}\rt_p$.
  Then:
  \begin{enumerate}
    \item The map $f$ is prim with prim dual $1[-2]$.
    \item The map $f$ satisfies that $\F\rightarrow f_{*}f^*\F$ is an equivalence for all $\F\in D_{\blacksquare}(\rt_p)$.
  \end{enumerate}
\end{proposition}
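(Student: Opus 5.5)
I will factor $f$ as
\[
\disk^{\dagger}_{\rt_p}/\disk^{\circ}_{\rt_p}\xrightarrow{\ g\ }\operatorname{GSpec}\rt_p/\disk^{\circ}_{\rt_p}=B\disk^{\circ}_{\rt_p}\xrightarrow{\ h\ }\operatorname{GSpec}\rt_p
\]
and treat the two pieces separately.

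The map $g$ is the base change of $\disk^{\dagger}_{\rt_p}\rightarrow\operatorname{GSpec}\rt_p$ along $B\disk^{\circ}\rightarrow\operatorname{GSpec}\rt_p$, twisted by the translation action. Equivalently, after pulling back along the cover $\operatorname{GSpec}\rt_p\rightarrow B\disk^{\circ}$, it becomes $\disk^{\dagger}\rightarrow\operatorname{GSpec}\rt_p$. The latter is cohomologically proper, in the sense of $!$-able with $g_!=g_*$. Since properness is local on the target for $!$-covers, $g$ is proper.

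For $h$, I apply Proposition \ref{analyticCartierduality} with $X=\operatorname{GSpec}\rt_p$ and $V^{\dagger}$ the trivial unitary bundle of rank $1$, so that $V^{\vee,\circ}=\disk^{\circ}$. This gives that $h$ is prim with invertible prim dual $\mathbb{P}_h(1)=1[-2]$. It is also cohomologically smooth, with suave dual $M[-1]$, where $M$ is a line on $B\disk^{\circ}$ pulling back to the trivial line.

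A proper map is prim with prim dual $1$. Prim maps compose, and prim duals compose: $\mathbb{P}_{h\circ g}=g^*\mathbb{P}_h$ up to the contribution of $\mathbb{P}_g(1)=1$. Hence $f$ is prim with prim dual $g^*1[-2]=1[-2]$, which is (1). I need to double-check that the composition formula for prim duals applies, namely $\mathbb{P}_{hg}(1)\simeq \mathbb{P}_g(g^*\mathbb{P}_h(1))$ when $\mathbb{P}_h(1)$ is invertible. I expect this to be formal from \cite{HM24}.

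For (2), I compute $f_*f^*=h_*g_*g^*h^*$. Since $g$ is proper, $g_*=g_!$ satisfies base change. It therefore suffices to understand $g_*1$ as a sheaf on $B\disk^{\circ}$. Its pullback to a point is $R\Gamma(\disk^{\dagger},\O)=\rt_p\langle T\rangle_{\le1}$, carrying the translation action of $\disk^{\circ}$. So $f_*f^*\F=h_*(\rt_p\langle T\rangle_{\le 1}\otimes h^*\F)$, which is the $\disk^{\circ}$-group cohomology of overconvergent functions on the closed disk.

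To compute this I use the Cartier duality underlying Proposition \ref{analyticCartierduality}. Via that duality, $D(B\disk^{\circ})$ is identified with modules over the ring of overconvergent PD-type distributions, which is generated by $\partial$ with $\partial^n/n!$ converging. Under this identification $h_*$ becomes the derived invariants, i.e. $R\Hom(\rt_p,-)$ for the augmentation. For $\rt_p\langle T\rangle_{\le1}$ this reduces to the de Rham complex
\[
\rt_p\langle T\rangle_{\le1}\xrightarrow{d/dT}\rt_p\langle T\rangle_{\le1}.
\]
The kernel of $d/dT$ is $\rt_p$. The map is surjective on overconvergent functions: the antiderivative of $\sum a_nT^n$ is $\sum a_nT^{n+1}/(n+1)$, which stays overconvergent because the factor $1/(n+1)$ grows only polynomially and is absorbed by shrinking the radius slightly. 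Hence $f_*f^*1=\rt_p$. For general $\F$, projection formula for the proper map $g$, together with $h_*$ commuting with colimits (from primness of $h$) and with tensoring by $\F$ (since the complex is the perfect two-term complex above), extends the result from $1$ to all $\F$.

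I expect the main obstacle to be making the identification of $h_*$ with the de Rham complex rigorous in the solid setting. This means identifying the $\disk^{\circ}$-action on $g_*1$ with the action of $\exp(x\,d/dT)$ under the Cartier duality of \cite[Proposition 4.3.7]{RC24a}, and checking the commutation of $h_*$ with $-\otimes\F$. My first attempt will be to deduce it by resolving with the Čech nerve of $\operatorname{GSpec}\rt_p\rightarrow B\disk^{\circ}$ and using the computation of $R\Gamma(\disk^{\circ,n},\O)$ together with the explicit Cartier dual.
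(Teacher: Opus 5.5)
Your proposal is correct. Part (1) is essentially the paper's argument: the same factorization through $B\disk^{\circ}_{\rt_p}$, with cohomological properness of $g$ checked after base change along $\operatorname{GSpec}\rt_p\rightarrow B\disk^{\circ}_{\rt_p}$, and primness of $h$ with $\mathbb{P}_h(1)=1[-2]$ taken from Proposition \ref{analyticCartierduality}. The composition formula $\mathbb{P}_f(1)\cong\mathbb{P}_g(1)\otimes g^*\mathbb{P}_h(1)$ that you wanted to double-check is indeed formal: write $h_*g_*=h_!(\mathbb{P}_h(1)\otimes g_!(\mathbb{P}_g(1)\otimes -))$ and apply the projection formula for $g_!$.

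For (2) your route genuinely differs from the paper's. The paper first reduces to $\F=1$ using primness of $f$: since $f_*=f_!(\mathbb{P}_f(1)\otimes -)$, the projection formula for $f_*$ is immediate. That is a cleaner justification than yours, though your Koszul-perfectness argument also works. The paper then rewrites $\disk^{\dagger}/\disk^{\circ}=\disk^{\dagger,\dR}/\disk^{\circ,\dR}$ and cites the triviality of de Rham cohomology of $\disk^{\dagger}$ and $\disk^{\circ}$ from \cite[Proposition 5.2.1]{ABLBRCS25}, implicitly descending along $\disk^{\dagger,\dR}\rightarrow\disk^{\dagger,\dR}/\disk^{\circ,\dR}$.

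You instead push forward along the proper map $g$ first, so that $f_*f^*1=h_*(\rt_p\langle T\rangle_{\le 1})$. You then use the Cartier duality between $B\disk^{\circ}$ and the overconvergent PD-disk to turn $h_*$ into the Koszul complex of $\partial$, and prove the overconvergent Poincar\'e lemma on the closed disk by hand.

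The paper's route is shorter: it outsources the computation and never needs to identify the $\disk^{\circ}$-action on $g_*1$ with $\exp(x\,d/dT)$. Yours is more self-contained and more informative, because it exhibits $f_*$ explicitly as derived invariants of the $\partial$-action. That is exactly the mechanism behind the later description of $D(X^{\arith/K})$ by modules over $D^{\dagger}_A$. The cost is the Cartier-duality bookkeeping you correctly flag as the step to verify.
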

\begin{proof}
  We first prove that $g\colon \disk^{\dagger}_{\rt_p}/\disk^{\circ}_{\rt_p}\rightarrow B\disk^{\circ}_{\rt_p}$ is prim.
  Notice that $*\rightarrow B\disk^{\circ}_{\rt_p}$ is a suave (hence $D^*$-universal) cover,
  and by \cite[Lemma 4.5.7]{HM24},
  it suffices to prove the primness of the base change to this cover.
  The base change is computed as 
  \[\xymatrix{
    \disk^{\dagger}_{\rt_p}\ar[r]\ar[d] & {*}\ar[d]\\
    \disk^{\dagger}_{\rt_p}/\disk^{\circ}_{\rt_p}\ar[r] & B\disk^{\circ}_{\rt_p}
  }\]
  Then $\disk^{\dagger}_{\rt_p}\rightarrow\operatorname{GSpec}\rt_p$ is prim hence $g$ is prim.
  As we already know that $h\colon B\disk^{\circ}_{\rt_p}\rightarrow\operatorname{GSpec}\rt_p$ is prim from Proposition \ref{analyticCartierduality},
  hence the composition $f=h\circ g\colon \disk^{\dagger}_{\rt_p}/\disk^{\circ}_{\rt_p}\overset{g}{\rightarrow}B\disk^{\circ}_{\rt_p}\overset{h}{\rightarrow}\operatorname{GSpec}\rt_p$ is also prim.

  Next we compute the prim dual of $f$.
  Then $f$ is the composition of two morphisms $h\circ g\colon \disk^{\dagger}_{\rt_p}/\disk^{\circ}_{\rt_p}\overset{g}{\rightarrow}B\disk^{\circ}_{\rt_p}\overset{h}{\rightarrow}\operatorname{GSpec}\rt_p$,
  and hence $\mathbb{P}_f(1)\cong \mathbb{P}_g(1)\otimes g^*\mathbb{P}_h(1)$.
  In fact,
  $g\colon \disk^{\dagger}_{\rt_p}/\disk^{\circ}_{\rt_p}\rightarrow B\disk_{\rt_p}^{\circ}$ is even cohomologically proper,
  since $\disk^{\dagger}_{\rt_p}\rightarrow\operatorname{GSpec}\rt_p$ is cohomologically proper and they we use \cite[Lemma 4.6.3 (ii')]{HM24}.
  We also know that $h$ is prim with prim dual $1[-2]$ from Proposition \ref{analyticCartierduality},
  hence $\mathbb{P}_f(1)\cong \mathbb{P}_g(1)\otimes g^*\mathbb{P}_h(1)\cong 1[-2]$.

  Finally,
  to prove that $\F\rightarrow f_{*}f^*\F$ is an equivalence for all $\F\in D_{\blacksquare}(\rt_p)$,
  by projection formula it suffices to set $\F=1$.
  Then since $\disk^{\dagger}_{\rt_p}/\disk^{\circ}_{\rt_p}=\disk^{\dagger,\dR}_{\rt_p}/\disk^{\circ,\dR}_{\rt_p}$,
  and both $\disk^{\dagger,\dR}_{\rt_p}$ and $\disk^{\circ,\dR}_{\rt_p}$ compute the de Rham cohomology which is trivial,
  cf. \cite[Proposition 5.2.1]{ABLBRCS25},
  we obtain that $f_*f^*1\cong 1$.
\end{proof}

\begin{corollary}\label{Ancosmooth}
  Consider $f\colon\aff^{n,\arith}_{\ff_p}\rightarrow\operatorname{GSpec}\rt_p$.
  Then $f$ is prim with prim dual $1[-2n]$.
\end{corollary}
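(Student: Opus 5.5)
The plan is to reduce to the one-variable case, Proposition \ref{A1cosmooth}, using Remark \ref{arithKunneth}. Since $(-)^{\arith}$ preserves finite limits, $\aff^{n,\arith}_{\ff_p}\cong (\aff^{1,\arith}_{\ff_p})^{\times n}$, where the product is taken over $\operatorname{GSpec}\rt_p$. Hence $f$ factors as a composition of $n$ maps
\[
f_k\colon \aff^{k,\arith}_{\ff_p}\longrightarrow \aff^{k-1,\arith}_{\ff_p},\qquad k=1,\dots,n,
\]
each given by projecting away the last factor. Each $f_k$ is the base change of $f_1\colon \aff^{1,\arith}_{\ff_p}\rightarrow\operatorname{GSpec}\rt_p$ along $\aff^{k-1,\arith}_{\ff_p}\rightarrow\operatorname{GSpec}\rt_p$.

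First, primness. Primness is stable under base change and under composition (\cite{HM24}), and $f_1$ is prim by Proposition \ref{A1cosmooth}(1). Hence each $f_k$ is prim, and so is $f$.

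Next, the prim dual. The key input is that the prim dual is compatible with base change: for a Cartesian square with $f'$ the base change of a prim map $f_1$ along $v$, one has $\mathbb{P}_{f'}(1)\cong v'^*\mathbb{P}_{f_1}(1)$. This is the standard base change property of the prim dual $\mathbb{P}_f(1)=\pi_{2,*}\Delta_!1$ (in the notation of \cite{HM24}). It needs $*$-pushforward along the projection to commute with base change, and this is guaranteed because the diagonal side is $!$ and the formation of the kernel $\Delta_!1$ is compatible with pullback. Alternatively, I can avoid this general fact and argue exactly as in the proof of Proposition \ref{A1cosmooth}: write $\aff^{n,\arith}_{\ff_p}=\disk^{\dagger,n}_{\rt_p}/\disk^{\circ,n}_{\rt_p}$ and factor
\[
f\colon \aff^{n,\arith}_{\ff_p}\overset{g}{\longrightarrow} B\disk^{\circ,n}_{\rt_p}\overset{h}{\longrightarrow}\operatorname{GSpec}\rt_p .
\]
Here $g$ is cohomologically proper, being a base change of $\disk^{\dagger,n}_{\rt_p}\rightarrow *$ along the suave cover $*\rightarrow B\disk^{\circ,n}_{\rt_p}$, so $\mathbb{P}_g(1)=1$. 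For $h$, Proposition \ref{analyticCartierduality} applied with $V^{\dagger}$ the trivial rank-$n$ bundle gives $\mathbb{P}_h(1)=1[-2n]$. Then $\mathbb{P}_f(1)\cong\mathbb{P}_g(1)\otimes g^*\mathbb{P}_h(1)\cong 1[-2n]$.

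I will follow this second route, since it only reuses ingredients already invoked for $n=1$. The only delicate point is justifying $\mathbb{P}_g(1)=1$ for the cohomologically proper map $g$, which is done exactly as in the $n=1$ case via \cite[Lemma 4.6.3]{HM24}.
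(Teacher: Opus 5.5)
Your proof is correct, but the route you commit to is not the paper's. The paper's proof is your first route in one line. By Remark \ref{arithKunneth}, $\aff^{n,\arith}_{\ff_p}\cong(\aff^{1,\arith}_{\ff_p})^{\times n}$. Prim duals of fiber products are computed factorwise \cite[Proposition 6.5]{Sch23}. Proposition \ref{A1cosmooth} then gives $\mathbb{P}_f(1)\cong(1[-2])^{\otimes n}=1[-2n]$.

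Your second route instead reruns the proof of Proposition \ref{A1cosmooth} with $n$ variables. You factor $f$ through $B\disk^{\circ,n}_{\rt_p}$ and feed the rank-$n$ case of Proposition \ref{analyticCartierduality} into $h$. This is exactly the special case of Remark \ref{vectorbundlecosmooth} for the trivial bundle over a point. The comparison is as follows:
\begin{itemize}
\item The paper's argument is shorter. It needs only the $n=1$ computation plus a formal product formula.
\item Yours avoids the product formula, and in particular the base-change compatibility of $\mathbb{P}$. It pays for this by invoking the rank-$n$ analytic Cartier duality, but it extends verbatim to nontrivial vector bundles.
\end{itemize}

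Two small corrections.
\begin{enumerate}
\item As written, the logic for $g$ is reversed. $g$ is not a base change of $\disk^{\dagger,n}_{\rt_p}\to *$. Rather, $\disk^{\dagger,n}_{\rt_p}\to *$ is the base change of $g$ along the suave cover $*\to B\disk^{\circ,n}_{\rt_p}$. One concludes that $g$ is cohomologically proper because this property is local on the target \cite[Lemma 4.6.3]{HM24}. After that, $\mathbb{P}_g(1)\cong 1$ is part of what cohomological properness means; it is not a separate delicate point.
\item Your justification in the first route for base change of prim duals is not the right mechanism. $*$-pushforward does not commute with base change in general, so compatibility of $\Delta_!1$ with pullback is not enough. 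The compatibility of $\mathbb{P}$ with base change holds because primness is the existence of an adjoint in the category of kernels, and base change is a $2$-functor there that preserves adjunctions. Since your final argument does not use that route, this does not affect the proof.
\end{enumerate}
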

\begin{proof}
  This is a direct corollary from Proposition \ref{A1cosmooth},
  as $\aff^{n,\arith}_{\ff_p}\cong (\aff^{1,\arith}_{\ff_p})^{ n}$ by Remark \ref{arithKunneth},
  and the prim dual of fiber product can be computed termwise,
  cf. \cite[Proposition 6.5]{Sch23}.
\end{proof}

\begin{remark}\label{vectorbundlecosmooth}
  The Corollary \ref{Ancosmooth} above has a relative version as follows.
  Let $f\colon V\rightarrow X$ be the projection from a rank $n$ vector bundle $V$ over $X$ to $X$.
  Then $f^{\arith}\colon V^{\arith}\rightarrow X^{\arith}$ is prim with prim dual $1[-2n]$.
  Indeed,
  by writing $V$ Zariski locally as $\aff^n_X$ over $X$ and use the fact that the arithmetic de Rham stack satisfies Zariski descent (Remark \ref{Zariskidescentarith}),
  one obtains that $f^{\arith}\colon V^{\arith}\rightarrow X^{\arith}$ factors through as a composition 
  \[
  f^{\arith}\colon V^{\arith}\overset{g}{\longrightarrow} X^{\arith}/\mathcal{V}^{\circ}\overset{h}{\longrightarrow} X^{\arith}
  \]
  with $g$ cohomologically proper,
  for some unitary overconvergent vector bundle $\mathcal{V}^{\le 1}$ over $X^{\arith}$.
  Then by Proposition \ref{analyticCartierduality},
  we know that $h$ is prim with prim dual $1[-2n]$,
  and hence $f^{\arith}$ is also prim with prim dual $1[-2n]$.
\end{remark}

Now let us establish excision properties.

\begin{proposition}\label{basicexcision}
  Consider the geometrically closed-open decomposition $i\colon\{0\}=\Spec\ff_p\hookrightarrow\aff^1_{\ff_p}\hookleftarrow\mathbb{G}_{m,\ff_p}\colon j$.
  It induces an open-closed decomposition on the $6$-functor formalism $X\mapsto D(X^{\arith})$.
\end{proposition}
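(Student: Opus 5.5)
We need to show that $i^{\arith}\colon \{0\}^{\arith}=\operatorname{GSpec}\rt_p\rightarrow \aff^{1,\arith}_{\ff_p}$ and $j^{\arith}\colon \mathbb{G}_{m,\ff_p}^{\arith}\rightarrow \aff^{1,\arith}_{\ff_p}$ form a decomposition in which, as in Theorem \ref{arithsixfunctor}(3), the roles are swapped: $\mathbb{G}_m^{\arith}$ is the closed part and $\{0\}^{\arith}$ the open part.

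The plan is to pass to the standard chart. By Proposition \ref{arithadditivegroup} and Proposition \ref{arithgff} we have $\aff^{1,\arith}=\disk^{\dagger}/\disk^{\circ}$ and $\mathbb{G}_m^{\arith}=\torus^{\dagger}/(1+\disk^{\circ})$. The point $0$ transmutes to the closed substack $\{x\in A^{\le 1}: x\in A^{<1}\}$ modulo $\disk^{\circ}$, i.e. to $\disk^{\circ}/\disk^{\circ}=\operatorname{GSpec}\rt_p$. The pullback of $\mathbb{G}_m^{\arith}$ along the suave cover $\disk^{\dagger}\rightarrow \disk^{\dagger}/\disk^{\circ}$ is the locus where $x$ is invertible mod $A^{<1}$, i.e. $\torus^{\dagger}=\disk^{\dagger}\backslash\disk^{\circ}$. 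Indeed, $\torus^{\dagger}\times_{\aff^{1,\arith}}\disk^{\dagger}\cong \torus^{\dagger}\times\disk^{\circ}/(1+\disk^{\circ})\cong\torus^{\dagger}$, using that $(1+\disk^{\circ})$ acts freely on $\torus^{\dagger}$ compatibly with the additive $\disk^{\circ}$-action, because $u+\disk^{\circ}=u(1+\disk^{\circ})$ for $|u|=1$. So after base change to the cover $\disk^{\dagger}$ the decomposition becomes $\disk^{\circ}\hookrightarrow\disk^{\dagger}\hookleftarrow\torus^{\dagger}$.

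The key input is that in Gelfand stacks, $\disk^{\circ}\subset\disk^{\dagger}$ is an open immersion and $\torus^{\dagger}=\disk^{\dagger}\backslash\disk^{\circ}$ is its closed complement. Both are Betti-type localizations: they are pulled back from the open subset $\{|T|<1\}$ and the closed subset $\{|T|=1\}$ of $\M(\disk^{\dagger})$, via the map to $\M(A)_{\Betti}$ in the proposition recalled above (\cite[Proposition 3.2.10]{ABLBRCS25}). Equivalently, $\torus^{\dagger}$ is idempotent-algebra-defined, with $\rt_p\langle T^{\pm1}\rangle_{\le 1}$ idempotent over $\rt_p\langle T\rangle_{\le1}$ and open complement $\disk^{\circ}$. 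This gives the open-closed decomposition $D(\disk^{\dagger})$ with $j_!j^*\rightarrow 1\rightarrow i_*i^*$ a fibre sequence, and with $i_*$ and $j_!$ fully faithful.

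It then remains to descend along $\disk^{\dagger}\rightarrow\aff^{1,\arith}$. Being an open immersion, being a closed immersion, and being complementary are all $!$-local on the target. For closed immersions one uses the idempotent-algebra characterization: the idempotent algebra $\O_{\torus^{\dagger}}$ is $\disk^{\circ}$-equivariant, because the action preserves $|T|=1$, so it descends to an idempotent algebra on $\disk^{\dagger}/\disk^{\circ}$. The open complement is then its kernel, which is $\{0\}^{\arith}$ by the computation above. I expect this descent step to be the main obstacle. We must check that the idempotent algebra on the quotient stack is well defined, i.e. that the equivariant structure is canonical, which holds since idempotent algebras form a poset. We must also check that the corresponding open substack is exactly $\operatorname{GSpec}\rt_p\rightarrow\disk^{\dagger}/\disk^{\circ}$ rather than just its image. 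Both checks can be done by base-changing along the suave cover $\disk^{\dagger}\rightarrow\aff^{1,\arith}$ (Proposition \ref{propersmootharith}, or directly $*\rightarrow B\disk^{\circ}$ as in Proposition \ref{A1cosmooth}) and using \cite[Lemma 4.5.7]{HM24}.
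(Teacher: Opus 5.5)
Your proposal is correct and follows the paper's proof. You base change along the suave cover $\disk^{\dagger}\rightarrow\disk^{\dagger}/\disk^{\circ}$ to reach the known open-closed decomposition $\disk^{\circ}\hookrightarrow\disk^{\dagger}\hookleftarrow\torus^{\dagger}$, and then descend. The paper handles the descent step you flag as the main obstacle with the general Lemma \ref{openclosedsuavecover}: cohomological étaleness and properness are local on the target, and the full faithfulness of $j_!$ and $i_*$ and the fibre sequence $j_!j^*\rightarrow\id\rightarrow i_*i^*$ are checked after the conservative pullback along the cover using base change, so your idempotent-algebra descent is not needed.
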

\begin{proof}
  It suffices to show that the induced morphism 
  \[
  i\colon \{0\}^{\arith}=\operatorname{GSpec}\rt_p\longrightarrow \aff^{1,\arith}_{\ff_p}=\disk^{\dagger}_{\rt_p}/\disk^{\circ}_{\rt_p}
  \longleftarrow \mathbb{G}_{m,\ff_p}^{\arith}=\torus^{\dagger}_{\rt_p}/(1+\disk^{\circ}_{\rt_p})\colon j
  \]
  is an open-closed decomposition for the $6$-functor formalism $X\mapsto D(X)$.
  We already know that $\disk^{\dagger}_{\rt_p}\rightarrow\disk^{\dagger}_{\rt_p}/\disk^{\circ}_{\rt_p}$ is a suave cover from the proof of Proposition \ref{A1cosmooth},
  and along this morphism we get a Cartesian diagram 
  \[\xymatrix{
    \disk^{\circ}_{\rt_p}\ar[r]\ar[d] & \disk^{\dagger}_{\rt_p}\ar[d] & \torus^{\dagger}_{\rt_p}\ar[d]\ar[l]\\
    \operatorname{GSpec}\rt_p\ar[r] & \disk^{\dagger}_{\rt_p}/\disk^{\circ}_{\rt_p} & \torus^{\dagger}_{\rt_p}/(1+\disk^{\circ}_{\rt_p})\ar[l]
  }\]
  We already know that $\disk^{\circ}_{\rt_p}\longrightarrow \disk^{\dagger}_{\rt_p}\longleftarrow \torus^{\dagger}_{\rt_p}$ is an open-closed decomposition,
  and being an open-closed decomposition is local along a suave cover by the following Lemma \ref{openclosedsuavecover},
  hence we conclude that $\operatorname{GSpec}\rt_p\longrightarrow \disk^{\dagger}_{\rt_p}/\disk^{\circ}_{\rt_p}\longleftarrow \torus^{\dagger}_{\rt_p}/(1+\disk^{\circ}_{\rt_p})$ is also an open-closed decomposition.
\end{proof}

\begin{lemma}\label{openclosedsuavecover}
  Consider a Cartesian diagram
  \[\xymatrix{
    X'\ar[r]^{j'}\ar[d]_g & Y'\ar[d]^f & Z'\ar[d]\ar[l]\\
    X\ar[r]^j & Y & Z\ar[l]
  }\]
  Let $f$ be a suave cover,
  and $X'\longrightarrow Y'\longleftarrow Z'$ be an open-closed decomposition.
  Then $X\longrightarrow Y\longleftarrow Z$ is also an open-closed decomposition.
\end{lemma}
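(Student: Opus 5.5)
The plan is to use the characterization of an open-closed decomposition in a $6$-functor formalism through idempotent algebras, and to check it after pullback along $f$, where it holds by hypothesis. Concretely, $X\xrightarrow{j}Y\xleftarrow{i}Z$ is an open-closed decomposition when three conditions hold. First, $j$ is an open immersion in the sense that $j_!1$ is a coidempotent, equivalently $j^*j_*$-type conditions hold. Second, $i$ is a closed immersion, meaning $i_*=i_!$ is fully faithful with $i_*1$ an idempotent algebra $e$. Third, there is a fibre sequence $j_!j^*1\to 1\to i_*i^*1$ in $D(Y)$, and $D(Y)$ is recollement-glued from $D(X)$ and $D(Z)$. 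Since $f$ is suave and surjective, it is a $D^*$-universal cover by \cite[Lemma 4.5.7]{HM24}. Hence $f^*\colon D(Y)\to \varprojlim D(Y'_\bullet)$ is an equivalence onto the totalization along the Čech nerve $Y'_\bullet$, and $f^*$ is conservative. All the constructions $j_!,j^*,i_*,i^*$ commute with $*$-pullback along $f$ and along every map of the Čech nerve, by base change in the $6$-functor formalism (for $i_*=i_!$ and $j_!$ use proper/!-base change; for $j^*,i^*$ this is functoriality).

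The key steps, in order, are as follows. (1) Note that every term $Y'_n=Y'\times_Y\cdots\times_YY'$ maps to $Y'$ by a suave map, and that open-closed decompositions are stable under arbitrary base change. Hence $X'_n\to Y'_n\leftarrow Z'_n$ is an open-closed decomposition for each $n$, compatibly with the simplicial structure maps. (2) Show that $i$ is $!$-able with $i_!=i_*$, and that $j$ is $!$-able with $j^!=j^*$. Properness and openness (cohomological étaleness) are local on the target for $D^*$-universal covers, again by \cite[Lemma 4.5.7, Lemma 4.6.3]{HM24}; this uses that $f$ is suave, so $f^*$ and $f^!$ differ by an invertible twist and the relevant base-change maps are checked after $f^*$. (3) Consider the natural sequence $j_!j^*\F\to\F\to i_*i^*\F$ for $\F\in D(Y)$. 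Apply $f^*$ and use base change to identify it with $j'_!j'^*f^*\F\to f^*\F\to i'_*i'^*f^*\F$. This is a fibre sequence by hypothesis, and its composite is the zero map; since $f^*$ is conservative and exact, the original sequence is a fibre sequence. (4) Check the remaining recollement identities, namely $i^*j_!=0$, $j^*i_*=0$, and that the units $j^*j_!\to\id$ and $i^*i_*\to\id$ are equivalences. Each of these is reduced to $Y'$ by the same base-change-plus-conservativity argument. For the first two, conservativity of $g^*$ and of the pullback along $Z'\to Z$ is needed; both maps are suave covers, being base changes of $f$.

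The main obstacle is step (2): making sure that "closed immersion" and "open immersion" in the $6$-functor sense (not merely the existence of a fibre sequence) descend along $f$. This requires that the $!$-ability of $i$ and $j$ and the identifications $i_!\simeq i_*$ and $j^!\simeq j^*$ are local on the target for suave covers. I would first try to handle this by invoking the locality results of \cite{HM24} for proper and étale maps under $D^*$-universal (in particular suave) covers. Failing that, I would argue directly: the comparison map $i_!\to i_*$ exists once $i$ is $!$-able, and $!$-ability itself is local for $!$-covers. Then the comparison map is an equivalence after $f^*$, which together with base change and conservativity suffices.
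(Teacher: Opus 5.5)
Your proposal is correct and follows essentially the same route as the paper. The paper gets cohomological properness of $i$ and cohomological étaleness of $j$ from locality on the target (Heyer--Mann), and then checks full faithfulness of $j_!$ and $i_*$ and the excision triangle $j_!j^*\to\id\to i_*i^*$ after pulling back along the suave covers $f$ and $g$, using base change and conservativity. Your \v{C}ech-nerve step and the extra recollement identities are not needed, and your aside that $f^*$ and $f^!$ differ by an invertible twist holds only for cohomologically smooth (not merely suave) $f$; nothing in your argument depends on it.
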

\begin{proof}
  Being cohomologically étale or cohomologically proper are both local on the target,
  cf. \cite[Lemma 4.6.3]{HM24}.
  Moreover,
  to check $j$ is an open immersion,
  it suffices to check $j_!$ is fully faithful,
  or equivalently $j^*j_!\cong \id$.
  However,
  it suffices to show $g^*j^*j_!\cong g^*$ by conservativity,
  which is ensured by base change formula and $j'$ being an immersion $g^*j^*j_!\cong j'^*f^*j_!\cong j'^*j'_!g^*\cong g^*$.
  The same argument works for proving $i$ is a closed immersion.
  Finally,
  we need to show 
  \[
  j_!j^*\longrightarrow \id\longrightarrow i_*i^*
  \]
  is a triangle.
  After applying $f^*$,
  by base change formula,
  it becomes 
  \[
  j'_!j'^*f^*\longrightarrow f^*\longrightarrow i'_*i'^* f^*
  \]
  which is a triangle,
  and the conservativity of $f^*$ gives that 
  \[
  j_!j^*\longrightarrow \id\longrightarrow i_*i^*
  \]
  is also a triangle.
\end{proof}

\begin{corollary}\label{contraryexcision}
  If we have a closed-open decomposition $i\colon Z\hookrightarrow X\hookleftarrow U\colon j$ of schemes of finite type over $\ff_p$,
  then it induces an open-closed decomposition on the $6$-functor formalism $X\mapsto D(X^{\arith})$.
\end{corollary}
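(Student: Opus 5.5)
We reduce the general closed-open decomposition to the basic one of Proposition \ref{basicexcision} by Zariski localization on $X$ and then pulling back along a hypersurface-defining function. Write $i\colon Z^{\arith}\to X^{\arith}\leftarrow U^{\arith}\colon j$ for the induced maps; note the roles swap, so $Z^{\arith}$ is the open piece and $U^{\arith}$ the closed one. Two things have to be shown: $j$ is a closed immersion and $i$ an open immersion in the sense of the $6$-functor formalism, and the localization sequence $i_!i^*\to\id\to j_*j^*$ is a fiber sequence on $D(X^{\arith})$.

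\textbf{Locality on $X$.} All of these conditions are local on the target for a $!$-cover (indeed for a $D^*$-universal cover). By Remark \ref{Zariskidescentarith} (or Theorem \ref{fpqcdescentarith}), a Zariski cover $\{X_\alpha\to X\}$ induces a $!$-cover of arithmetic de Rham stacks. Since $(-)^{\arith}$ preserves fiber products (Remark \ref{arithKunneth}), the decomposition of $X$ pulls back to the decomposition $Z\cap X_\alpha$, $U\cap X_\alpha$ of each $X_\alpha$. The base change arguments of Lemma \ref{openclosedsuavecover} use only conservativity of pullback along the cover and base change. They therefore apply verbatim to a $!$-cover, using $!$-descent of $D$ in place of suaveness. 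So we may assume $X=\Spec R$ is affine.

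\textbf{Finite intersections of principal opens.} With $X$ affine, write $Z=V(f_1,\dots,f_r)$, so $U=\bigcup_k D(f_k)$. First treat $r=1$. The function $f\colon X\to\aff^1_{\ff_p}$ gives a Cartesian square of schemes with $Z=X\times_{\aff^1}\{0\}$ and $U=X\times_{\aff^1}\mathbb{G}_m$. Again by Remark \ref{arithKunneth}, this yields Cartesian squares
\[Z^{\arith}=X^{\arith}\times_{\aff^{1,\arith}}\{0\}^{\arith},\qquad U^{\arith}=X^{\arith}\times_{\aff^{1,\arith}}\mathbb{G}_m^{\arith}.\]
Open and closed immersions are stable under base change in any $6$-functor formalism, and so is the excision fiber sequence, by proper and étale base change. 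Proposition \ref{basicexcision} therefore gives the case $r=1$.

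\textbf{General $r$.} Here $Z=\bigcap_k V(f_k)$, and I would induct on $r$. Set $Z'=V(f_1,\dots,f_{r-1})$. Then $Z=Z'\cap V(f_r)$ is the base change of the hypersurface case $V(f_r)\subset X$ along $Z'\hookrightarrow X$. The induction hypothesis gives an open immersion $Z'^{\arith}\subset X^{\arith}$, and the base change gives an open immersion $Z^{\arith}\subset Z'^{\arith}$. Compositions of open immersions are open, so $Z^{\arith}\subset X^{\arith}$ is open.

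For the closed complement, the remaining point is to identify $U^{\arith}$ with the closed complement of $Z^{\arith}$, i.e. $U^{\arith}=\bigcup_k D(f_k)^{\arith}$ as a union of closed substacks. This is the main obstacle, since closed unions must be computed correctly in $D$. I would handle it through the $\check{C}$ech nerve: the cover $\coprod_k D(f_k)\to U$ is Zariski, so it induces a $!$-cover of $U^{\arith}$. Each finite intersection $D(f_{k_1}\cdots f_{k_s})$ is a single-function case, hence closed in $X^{\arith}$. The idempotent algebras $j_{k*}1$ then glue, and the kernel of $\id\to j_*j^*$ is cut out by $\bigcap_k\ker(\id\to j_{k*}j_k^*)$, which is $i_!i^*$ by the inductive excision. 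Alternatively, one can avoid this step altogether: Aoki's opposite-motivic version of Theorem \ref{Aoki}, as in Remark \ref{oppositemotivic}, reduces the whole statement to the $\aff^1$ case, which is exactly Proposition \ref{basicexcision}.
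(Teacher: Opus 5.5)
Your proposal is correct and follows the paper's own route: Zariski-localize (via Remark \ref{Zariskidescentarith}) to affine $X$, reduce by induction on the number of generators of the ideal of $Z$ to a single equation $f$, and base change the decomposition of Proposition \ref{basicexcision} along $f\colon X\to\aff^1_{\ff_p}$, using that $(-)^{\arith}$ preserves fiber products. The paper passes over the induction step in one clause, so your explicit treatment of the closed union $U^{\arith}=\bigcup_k D(f_k)^{\arith}$ is an elaboration rather than a departure; there you glue the idempotent algebras over the \v{C}ech nerve of the Zariski cover of $U$, whose terms are again principal opens.
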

\begin{proof}
  The property of being an open (or closed) immersion is Zariski local,
  because we have Zariski descent of arithmetic de Rham stack and being an open (or closed) immersion is preserved by a universal $D^*$-cover.
  Hence we can assume $X=\Spec A$ is affine of finite type and $Z=\Spec A/I$,
  and further by induction on the number of generators of $I$ one can assume $Z=\Spec A/f$ is determined by a single equation $f=0$
  (so $Y=\Spec A[f^{-1}]$).
  Then we have a Cartesian diagram 
  \[\xymatrix{
    Z\ar[r]\ar[d] & X\ar[d]^f & Y\ar[l]\ar[d] \\
    \{0\}\ar[r] & \aff^1_{\ff_p} & \mathbb{G}_{m,\ff_p}\ar[l]
  }\]
  In this case $Z^{\arith}\rightarrow X^{\arith}\leftarrow Y^{\arith}$ being an open-closed decomposition follows simply by the base change of the case 
  $\{0\}^{\arith}\rightarrow\aff^{1,\arith}_{\ff_p}\leftarrow\mathbb{G}_{m,\ff_p}^{\arith}$ which is proved in Proposition \ref{basicexcision}.
\end{proof}

Now we establish some preparations for proving cohomological properness and étaleness.

\begin{proposition}\label{P1etale}
  Consider $f\colon\proj^{n,\arith}_{\ff_p}\rightarrow\operatorname{GSpec}\rt_p$.
  Then $f$ is cohomologically étale.
\end{proposition}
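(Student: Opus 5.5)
We prove that $f\colon \proj^{n,\arith}_{\ff_p}\to\operatorname{GSpec}\rt_p$ is cohomologically étale by induction on $n$. The induction uses the excision of Corollary \ref{contraryexcision} together with the standard stratification $\proj^n=\aff^n\sqcup\proj^{n-1}$. By Corollary \ref{contraryexcision}, the closed embedding $\proj^{n-1}_{\ff_p}\hookrightarrow\proj^n_{\ff_p}$ becomes an \emph{open} immersion $\proj^{n-1,\arith}\hookrightarrow\proj^{n,\arith}$. Its complement $\aff^{n,\arith}$ is a \emph{closed} subspace. The base case $n=0$ is trivial, since $\proj^{0,\arith}=\operatorname{GSpec}\rt_p$.

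The first step is to check that $f$ is $!$-able and to localize the question. Cohomological étaleness of $f$ means that $f$ is $!$-able, that the diagonal $\Delta_f$ is an open immersion (in the $6$-functor sense), and that $f$ is suave with $f^!1\cong 1$; equivalently, $f_!$ is left adjoint to $f^*$ via the natural map. Since $\proj^n$ is covered by $n+1$ copies of $\aff^n$ and $(-)^{\arith}$ satisfies Zariski descent (Remark \ref{Zariskidescentarith}), $f$ is $!$-able. Moreover, $\Delta_f$ is $(\Delta_{\proj^n})^{\arith}$ by Remark \ref{arithKunneth}. Since $\Delta_{\proj^n}$ is a closed immersion, Corollary \ref{contraryexcision} shows that $\Delta_f$ is an open immersion. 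Hence $f$ is $0$-truncated-étale-like, and it remains to show that $f$ is suave with trivial dualizing sheaf.

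For the main computation, we compare the counit $f_!f^*\to\id$, or more precisely the natural map $f_!1\to f_*1$, against the excision triangle. Write $j\colon\proj^{n-1,\arith}\to\proj^{n,\arith}$ for the open immersion and $i\colon\aff^{n,\arith}\to\proj^{n,\arith}$ for the closed one. This gives a cofiber sequence $j_!j^*\to\id\to i_*i^*$, and $f_!$ and $f_*$ can be computed piecewise. By induction, $f|_{\proj^{n-1}}$ is cohomologically étale. On the closed piece $\aff^{n,\arith}$, Corollary \ref{Ancosmooth} says the structure map $g$ is prim with prim dual $1[-2n]$, and Proposition \ref{A1cosmooth}(2), applied iteratively, gives $g_*g^*\cong\id$. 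The strategy is to show that the map to $\operatorname{GSpec}\rt_p$ from the closed piece $\aff^{n,\arith}$ behaves "étale-like" once assembled with the open piece. Concretely, I would prove that $f$ is suave by checking it on the suave cover $\proj^n_{\rt_p}\to\proj^{n,\arith}$ of Proposition \ref{propersmootharith}, which is realizable via the identity frame. The Čech nerve of this cover is given by overconvergent tubes of diagonals, which are locally $\disk^{\circ,n}$-fibrations. Pulling back, $f$ becomes a composite of the smooth proper $\proj^n_{\rt_p}\to *$ and a quotient by a groupoid whose fibers are open polydisks. Computing dualizing sheaves, the dualizing complex of $\proj^{n,\an}$ is $\Omega^n[2n]$, while the groupoid quotient by $\disk^{\circ,n}$-fibers contributes the inverse twist $\bigwedge^n(\text{tangent})[-2n]$, as in Proposition \ref{analyticCartierduality}. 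These cancel, and descending along the suave cover gives $f^!1\cong 1$.

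I expect the main obstacle to be this last cancellation. One must show that $f^!1$ is trivial \emph{canonically}, i.e. that the map $f_!\to f_*$ induced by $\Delta_f$ being open is the equivalence $f^!1\cong 1$, and not merely that the two invertible objects agree abstractly. I would first try to reduce this to the statement that the natural map $f_!1\to f_*1$ is an isomorphism. Both sides can be computed via the stratification: $f_*1$ via the triangle together with $g_*1=1$ from $\aff^1$-invariance, and $f_!1$ via the open piece $\proj^{n-1}$ (handled by induction) together with $g_!1=1[-2n]\otimes(\ldots)$ from the prim structure. Matching these with rigid cohomology of $\proj^n$, which is concentrated in even degrees $0,\ldots,2n$, is the check I would carry out. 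If this direct comparison becomes unwieldy, the fallback is to invoke the opposite-motivic version of Theorem \ref{Aoki} mentioned in Remark \ref{oppositemotivic}, where properness of $\proj^n$ formally yields cohomological étaleness once the $\aff^1$ inputs (Proposition \ref{A1cosmooth} and Proposition \ref{basicexcision}) are in place.
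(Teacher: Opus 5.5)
Your argument that $\Delta_f=(\Delta_{\proj^n})^{\arith}$ is an open immersion (Corollary~\ref{contraryexcision}) matches the paper's proof. So does your argument that $f$ is suave via the suave cover $\proj^n_{\rt_p}\to\proj^{n,\arith}_{\ff_p}$ of Proposition~\ref{propersmootharith}: the source is suave over $\operatorname{GSpec}\rt_p$, and suaveness is local on the source along suave covers. These two steps already finish the proof.

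What you miss is that cohomological étaleness is \emph{defined} as being suave with cohomologically étale diagonal, and open immersions are cohomologically étale. The canonical trivialization of $f^!1$, which you call the main obstacle, is therefore a formal consequence rather than an extra input. Write $p_1,p_2$ for the two projections of $\proj^{n,\arith}\times\proj^{n,\arith}$. Suaveness of $f$ gives $p_2^!1\cong p_1^*f^!1$, and hence
\[
f^!1\cong\Delta_f^*p_1^*f^!1\cong\Delta_f^*p_2^!1\cong\Delta_f^!p_2^!1\cong(p_2\Delta_f)^!1=1,
\]
using $\Delta_f^*\cong\Delta_f^!$ for the open immersion $\Delta_f$. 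You should stop after the suaveness step. The induction on $n$, the stratification $\proj^n=\aff^n\sqcup\proj^{n-1}$, and the comparison of $f_!1$ with $f_*1$ play no role.

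The discarded material would not have worked as written, so do not rely on it.
\begin{itemize}
\item In the dualizing-sheaf cancellation the shifts are off. The solid dualizing complex of $\proj^n_{\rt_p}$ is $\Omega^n[n]$. By Proposition~\ref{analyticCartierduality}, a quotient by a rank-$n$ open polydisk has \emph{suave} dual with shift $[-n]$; the $1[-2n]$ there is the \emph{prim} dual.
\item More seriously, an isomorphism $\pi^*f^!1\cong 1$ on the cover does not descend to $f^!1\cong 1$ without checking compatibility with the descent datum on the \v{C}ech nerve. For instance, Dwork's line bundle $\Exp_\pi$ on $\aff^{1,\arith/L}$ is trivial on $\disk^\dagger_L$ but not on the quotient.
\item The fallback through an opposite-motivic version of Theorem~\ref{Aoki} is not available. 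Remark~\ref{oppositemotivic} only anticipates it from forthcoming work, and the paper proves Proposition~\ref{P1etale} by hand precisely so that it can deduce ``proper $\mapsto$ cohomologically étale'' without that theorem.
\end{itemize}
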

\begin{proof}
  From Proposition \ref{propersmootharith},
  we have a natural suave cover $\proj^n_{\rt_p}\rightarrow\proj^{n,\arith}_{\ff_p}$.
  Since $\proj^n_{\rt_p}$ is suave over $\operatorname{GSpec}\rt_p$,
  this implies $\proj^{n,\arith}_{\ff_p}$ is also suave over $\operatorname{GSpec}\rt_p$ as suaveness is suave local on the source,
  cf. \cite[Lemma 4.5.8.(i)]{HM24}.
  To prove $f$ is cohomologically étale,
  since $f$ is already suave,
  by definition it suffices to show that $\Delta_f$ is cohomologically étale.
  However,
  by Corollary \ref{contraryexcision},
  the diagonal $\Delta_f$ is an open immersion hence is cohomologically étale.
\end{proof}

\begin{lemma}\label{etaleCartesian}
  Let $g\colon Y\rightarrow X$ be an étale morphism of schemes of finite type over $\ff_p$.
  Then we have a Cartesian diagram of qfd arc stacks over $\ff_p$:
  \[\xymatrix{
    Y^{\circ}_{\arc}\ar[d]\ar[r] & X^{\circ}_{\arc}\ar[d]\\
    \overline{Y}\ar[r] & \overline{X}
  }\]
\end{lemma}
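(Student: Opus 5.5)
The plan is to check the Cartesian property on test objects before sheafification, and then pass to sheafifications, which commute with finite limits. Concretely, for a qfd perfectoid ring $R$ over $\ff_p$ (in the arc-site), the presheaf fiber product $\overline{Y}^{\operatorname{pre}}(R)\times_{\overline{X}^{\operatorname{pre}}(R)} X^{\circ,\operatorname{pre}}_{\arc}(R)$ is
\[
Y(R^{\circ}/R^{\circ\circ})\times_{X(R^{\circ}/R^{\circ\circ})} X(R^{\circ}).
\]
I want to compare this with $Y^{\circ,\operatorname{pre}}_{\arc}(R)=Y(R^{\circ})$. By the infinitesimal lifting criterion, étaleness of $g$ would give exactly this bijection if $R^{\circ}\rightarrow R^{\circ}/R^{\circ\circ}$ were a nilpotent (or henselian-type) thickening. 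It is neither nilpotent nor henselian in general, since $R^{\circ\circ}$ is only topologically nil. So the argument has to be organised as follows.

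First, reduce to $R$ strictly totally disconnected, which is allowed because these form a basis of the arc-topology and sheafification only sees such $R$. Better still, reduce to products of rings of integers of algebraically closed perfectoid fields, as in the proof of Lemma \ref{smallrigidepimorphismarith}. Second, use that $(R^{\circ},R^{\circ\circ})$ is a henselian pair. Indeed, $R^{\circ}$ is $\varpi$-adically complete, so the pair $(R^{\circ},\varpi R^{\circ})$ is henselian, and $R^{\circ\circ}$ is contained in the radical of $\varpi R^{\circ}$; a pair whose ideal has the same radical as a henselian ideal is again henselian. Third, invoke the lifting property for étale morphisms along henselian pairs, in the form of Gabber/Elkik or the classical statement for affine étale maps: for $X$ and $Y$ affine, the map
\[
Y(R^{\circ})\rightarrow Y(R^{\circ}/R^{\circ\circ})\times_{X(R^{\circ}/R^{\circ\circ})}X(R^{\circ})
\]
is a bijection for any henselian pair when $g$ is étale. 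Surjectivity is the henselian lifting of sections of the étale $R^{\circ}$-algebra $R^{\circ}\otimes_{\O(X)}\O(Y)$. Injectivity comes from the fact that two sections of an étale (hence unramified, with open diagonal) map agreeing mod a henselian ideal must agree, because the locus where they agree is open and closed and contains $V(R^{\circ\circ})$.

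Fourth, globalise. To reduce to affine $X$ and $Y$, I would use Zariski descent of both diamonds, but a point of $X(R^{\circ})$ need not factor through a single affine open. Passing to a cover where $R^{\circ}$ becomes a product of valuation rings handles this: for a valuation ring, any map from $\Spec$ of it to $X$ lands in the open affine containing the image of the closed point. This is the same reduction already used in Lemma \ref{smallrigidepimorphismarith}. Finally, the presheaf-level bijection on this basis, together with the fact that sheafification is exact, gives the Cartesian square.

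I expect the main obstacle to be the henselian property of $(R^{\circ},R^{\circ\circ})$ and its compatibility with non-affine $X$ and $Y$. After reducing to a product $\prod_i K_i^{\circ}$, I would verify the lifting factorwise (valuation rings of algebraically closed fields, where Hensel's lemma is classical), and note that a product of henselian pairs is henselian, so the factorwise lifts assemble.
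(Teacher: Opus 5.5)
Your argument is correct, but it rests on a different key lemma than the paper's proof. The paper works directly with an arbitrary perfectoid $A$ and a pseudo-uniformizer $\xi$. It writes $A^{\circ}=\varprojlim_n A^{\circ}/\xi^{p^n}$ and $A^{\circ}/A^{\circ\circ}=\varinjlim_n A^{\circ}/\xi^{1/p^n}$. Formal étaleness gives a Cartesian square for each nilpotent thickening $A^{\circ}/\xi^{p^n}\to A^{\circ}/\xi^{p^m}$. It then takes the limit in $n$ and the filtered colimit in $m$, both of which commute with the fibre product; finite presentation of $X,Y$ is used for the colimit. You instead package everything into one statement: $(R^{\circ},R^{\circ\circ})$ is a henselian pair. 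You then apply the étale lifting property of henselian pairs, getting uniqueness because the agreement locus is open and closed and contains all closed points. The two routes are morally the same. ``$\varpi$-adically complete $\Rightarrow$ henselian'' does the work of the paper's limit over $\xi^{p^n}$, and invariance of henselianity under passing to the radical does the work of its colimit over $\xi^{1/p^n}$. Your version is more conceptual and does not need $p$-power roots of $\xi$. The paper's is more elementary, needing only the infinitesimal lifting criterion.

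One thing to tidy up: the detour through products $\prod_i K_i^{\circ}$ is unnecessary and, as written, slightly off. Your radical argument already shows $(R^{\circ},R^{\circ\circ})$ is henselian for every perfectoid $R$, so the affine case needs no reduction on $R$. For $R^{\circ}=\prod_i K_i^{\circ}$ one has in general $R^{\circ\circ}\subsetneq\prod_i\mathfrak{m}_{K_i}$: take $x$ with $|x_i|<1$ for all $i$ but $\sup_i|x_i|=1$. So lifting factorwise over residue fields, or citing that a product of henselian pairs is henselian, concerns a different pair. Moreover, such products are not objects of the qfd separable site, so a bijection there does not directly give the sheaf-level statement.

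The non-affine case is handled more cleanly by Zariski descent. The lemma for an open immersion $U\subset X$ holds for every $R$: since $R^{\circ\circ}$ lies in the Jacobson radical of $R^{\circ}$, a map $\Spec R^{\circ}\to X$ whose restriction to $V(R^{\circ\circ})$ lands in $U$ factors through $U$. Now take affine opens $X_j\subset X$ and $Y_{jk}\subset g^{-1}(X_j)$. The pieces $X^{\circ}_{j,\arc}\times_{\overline{X_j}}\overline{Y_{jk}}$ cover $X^{\circ}_{\arc}\times_{\overline{X}}\overline{Y}$, and by the open-immersion case their preimages in $Y^{\circ}_{\arc}$ are the $Y^{\circ}_{jk,\arc}$. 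The comparison map is therefore an isomorphism over this cover by the affine case, hence an isomorphism. (The paper's identification $X(A^{\circ})=\varprojlim_n X(A^{\circ}/\xi^{p^n})$ is likewise immediate only for affine $X$.)
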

\begin{proof}
  Let $A$ be a perfectoid ring over $\ff_p$.
  It suffices to prove that 
  \[
  Y(A^{\circ})=X(A^{\circ})\times_{X(A^{\circ}/A^{\circ\circ})} Y(A^{\circ}/A^{\circ\circ}).
  \]
  Let $\xi$ be a pseudo-uniformizer of $A^{\circ}$.
  Since $X$ is of finite type,
  we have that $X(A^{\circ}/A^{\circ\circ})=\varinjlim_{n\rightarrow \infty}X(A^{\circ}/(\xi^{1/p^n}))$ and $X(A^{\circ})=\varprojlim_{n\rightarrow\infty}X(A^{\circ}/(\xi^{p^n}))$.
  By the assumption of $Y\rightarrow X$ being étale (hence formally étale),
  we have that for all $n\ge m\in\itg$,
  the following diagram is Cartesian:
  \[\xymatrix{
    Y(A/(\xi^{p^n}))\ar[r]\ar[d] & X(A/(\xi^{p^n}))\ar[d]\\
    Y(A/(\xi^{p^m}))\ar[r] & X(A/(\xi^{p^m}))
  }\]
  By taking the limit along $n\rightarrow\infty$ and the colimit along $m\rightarrow-\infty$,
  we obtain a Cartesian diagram 
  \[\xymatrix{
    \varprojlim_{n\rightarrow\infty}Y(A/(\xi^{p^n}))\ar[r]\ar[d] & \varprojlim_{n\rightarrow\infty}X(A/(\xi^{p^n}))\ar[d]\\
    \varinjlim_{m\rightarrow-\infty}Y(A/(\xi^{p^m}))\ar[r] & \varinjlim_{m\rightarrow-\infty}X(A/(\xi^{p^m}))
  }\]
  which gives $Y(A^{\circ})=X(A^{\circ})\times_{X(A^{\circ}/A^{\circ\circ})} Y(A^{\circ}/A^{\circ\circ})$.
\end{proof}

\begin{lemma}\label{propersmalldiamond}
  Let $Y\rightarrow X$ be a morphism of separated schemes of finite type over $\ff_p$.
  Then it induces a proper map of qfd arc stacks $Y^{\circ}_{\arc}\rightarrow X^{\circ}_{\arc}$.
\end{lemma}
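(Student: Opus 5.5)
The plan is to prove that $Y^{\circ}_{\arc}\rightarrow X^{\circ}_{\arc}$ is proper by the valuative criterion, after reducing to separatedness and quasi-compactness. The point is that $X^{\circ}_{\arc}$ only sees $R^{\circ}$-points, so the "missing boundary" that obstructs properness for non-proper schemes is invisible.

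\textbf{Reduction via adic spaces.} By Remark \ref{preadicspace}, $X^{\circ}_{\arc}=a^*X^{\ad,\diamondsuit}$, where $X^{\ad}$ is glued from $\Spa(A,A)$. So I first prove that $Y^{\ad,\diamondsuit}\rightarrow X^{\ad,\diamondsuit}$ is a proper map of $v$-sheaves, and then transport this through $a^*$.

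\textbf{Step 1: quasi-compactness and separatedness.} Zariski-locally on $X$ we may take $X=\Spec A$. Since $Y$ is qcqs of finite type, it is covered by finitely many affines $\Spec B_i$. Each $\Spa(B_i,B_i)^{\diamondsuit}\rightarrow\Spa(A,A)^{\diamondsuit}$ is quasicompact, being a map of affinoid-type objects. Separatedness follows because the diagonal of $Y$ is a closed immersion, and closed immersions are preserved: a point $R^{\circ}\rightarrow B\otimes_A B$ factoring through the quotient is a closed condition.

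\textbf{Step 2: valuative criterion.} Work with perfectoid fields $K$ (or products of points), using Scholze's criterion for $v$-sheaves (Sch17, Prop.\ 18.3) transported to arc-stacks. We must show that any $Y(K^{\circ})$-point whose image extends over a larger valuation subring $K^{+}$ extends uniquely. The key observation is that $R^{\circ}$-points already encode integral data: the functor of points is $R\mapsto X(R^{\circ})$, and $R^{\circ}$ depends only on $R$, not on a choice of $R^{+}$. Hence the lifting problem is to extend $\Spec K^{\circ}\rightarrow Y$ along data that is already determined, and it is automatic. Put differently, evaluating on $(R,R^{+})$ with $R^{+}\subset R^{\circ}$ gives the same set, which is exactly the "partially proper" condition.

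\textbf{Step 3: descent to arc-stacks.} Using Zariski descent (Remark after Definition \ref{diamondofscheme}), glue the local statements. Then apply $a^*$, checking that it preserves properness; this uses that $a^*$ commutes with finite limits and colimits and sends quasicompact maps to quasicompact maps.

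\textbf{Main obstacle.} The delicate point is the precise notion of properness for qfd arc-stacks, and whether the valuative criterion can be applied there. If $a^*$ does not obviously preserve properness, I would instead verify properness directly on arc-stacks. In that approach, quasicompactness is checked by covering with strictly totally disconnected $S$, with $S^{\circ}=\prod K_i^{\circ}$, and the $Y$-point is lifted componentwise using finitely many affine charts. Uniformity across the components follows from finite type, since we only need to choose which of finitely many charts each component lands in. This produces a finite clopen decomposition of $\pi_0$.
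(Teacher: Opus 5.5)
There is a genuine gap. Your proposal puts the weight on partial properness, which is automatic here and which you argue incorrectly, while the real content of the lemma, quasi-compactness, is asserted without an argument.

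\textbf{The $v$-sheaf route fails.} The $v$-sheaf $X^{\ad,\diamondsuit}$ does not have functor of points $R\mapsto X(R^{\circ})$. For $X=\Spec A$ one has $X^{\ad,\diamondsuit}(R,R^{+})=\Hom(A,R^{+})=X(R^{+})$, which depends on $R^{+}$. Already for $\aff^1_{\ff_p}\to\Spec\ff_p$ the valuative-criterion map is the inclusion $R^{+}\to R^{\circ}$, which is not bijective: this $v$-sheaf is the adic closed unit disk, which is not partially proper. So $Y^{\ad,\diamondsuit}\to X^{\ad,\diamondsuit}$ is not proper in general, and Step 2 is false at the level where you apply it.

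The independence of $R^{+}$ that you invoke is a property of arc-stacks as such. Viewed as $v$-stacks via $(R,R^{+})\mapsto F(R)$, every arc-stack is partially proper, including the big diamond $\aff^1_{\arc}$ (the analytic affine line), which is not proper over the base. So the valuative criterion carries no information here and cannot separate $Y^{\circ}_{\arc}$ from $Y_{\arc}$. For the same reason, Step 3's claim that ``$a^*$ preserves properness'' is unjustified. The correct mechanism is that passing to arc-stacks, i.e.\ evaluating at $(R,R^{\circ})$, turns a qcqs separated map into a proper one.

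\textbf{The missing idea is in quasi-compactness.} Separatedness you handle adequately, so everything rests on quasi-compactness, and there ``being a map of affinoid-type objects'' does not work. The pair $\Spa(B,\widetilde{\ff_p})$ is just as affinoid, yet its arc-stack $(\Spec B)_{\arc}$ is not quasi-compact over the base. What is missing is the use of boundedness, as in the paper:
\begin{enumerate}
\item Zariski-locally, embed $Y\hookrightarrow\aff^n_X$ as a closed subscheme.
\item Closed immersions go to closed immersions of small diamonds, which are proper.
\item This reduces to the projection $\aff^n_X\to X$, i.e.\ to the base change of $\aff^{n,\circ}_{\arc}\to\Spd\ff_p$.
\item After base change to a perfectoid $\M_{\arc}(R)$, this map is $R'\mapsto (R'^{\circ})^n$, the overconvergent closed unit polydisk, which is qcqs.
\end{enumerate}
This last step is exactly where $R^{\circ}$ rather than $R$ matters.

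Your fallback paragraph, with strictly totally disconnected $S$ and clopen decompositions of $\pi_0$, only shows how a point localizes onto finitely many affine charts. That reduces the question to the affine case. It never shows that $S\times_{X^{\circ}_{\arc}}Y^{\circ}_{\arc}$ is quasi-compact in that case.
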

\begin{proof}
  Zariski locally,
  every $Y$ admits a closed embedding into $\aff^n_X$ for some $n$.
  Since a closed immersion on schemes induces a closed immersion on their associated small diamonds,
  which is proper,
  it suffices to prove the claim for the projection $\aff^n_X\rightarrow X$.
  This becomes $\aff^{n,\circ}_{\arc}\times_{\Spd\ff_p} X^{\circ}_{\arc}\rightarrow X^{\circ}_{\arc}$,
  and it is proper as $\aff^{n,\circ}_{\arc}$ is qcqs over $\Spd\ff_p$,
  because the base change $\aff^{1,\circ}_{\arc}\times_{\Spd\ff_p}\M_{\arc}(\ff_p(\!(t^{1/p^{\infty}})\!))$ to a perfectoid ring
  is represented by the overconvergent closed unit disk which is qcqs over $\M_{\arc}(\ff_p(\!(t^{1/p^{\infty}})\!))$.
\end{proof}

Having established the necessary preliminaries,
we can now prove Theorem \ref{arithsixfunctor}.
We actually mimic the proof of \cite[Theorem 10.6]{Sch23},
but with every notion changed to its opposite;
one might be able to apply this theorem directly to the opposite $6$-functor formalism $X\mapsto D(X^{\arith})^{\op}$,
but we do not know how to avoid problems about presentability.

\begin{proof}[Proof of Theorem \ref{arithsixfunctor}]
  The claim about excision is Corollary \ref{contraryexcision}.
  The claim about strong $\aff^1$-invariance comes directly from Proposition \ref{A1cosmooth}.

  Next step is to show that étale morphisms go to cohomologically proper morphisms.
  Let $X\rightarrow Y$ be an étale morphism.
  Then by Lemma \ref{etaleCartesian},
  we have a Cartesian diagram 
  \[\xymatrix{
    Y^{\circ}_{\arc}\ar[d]\ar[r] & X^{\circ}_{\arc}\ar[d]\\
    \overline{Y}\ar[r] & \overline{X}
  }\]
  By Lemma \ref{propersmalldiamond} and Lemma \ref{smallrigidepimorphismarith},
  the upper horizontal map is proper and the vertical maps are epimorphisms.
  Applying the analytic de Rham stack of punctured Fargues--Fontaine disks,
  we get a Cartesian diagram 
  \[\xymatrix{
    (\Y_{Y_{\arc}^{\circ}}^{\diamond})^{\dR}\ar[r]\ar[d] & (\Y_{X_{\arc}^{\circ}}^{\diamond})^{\dR}\ar[d]\\
    Y^{\arith}\ar[r] & X^{\arith}
  }\]
  such that the upper horizontal map is cohomologically proper (by \cite[Theorem 6.3.1]{ABLBRCS25}) and the vertical maps are epimorphisms (by Proposition \ref{dRFFpreservecolimit}).
  By descent of cohomological properness,
  cf. \cite[Lemma 4.6.3]{HM24},
  we obtain that the lower horizontal map is also cohomologically proper,
  which proves the claim.

  Now we show that smooth morphisms go to prim morphisms and compute the prim dual.
  A smooth morphism $f\colon X\rightarrow Y$ of relative dimension $n$ can,
  locally in Zariski topology,
  be written as an étale morphism over $\aff^n_Y$.
  After knowing étale morphisms are cohomologically proper and affine spaces are prim with prim dual $1[-2n]$ in Corollary \ref{Ancosmooth},
  we at least know that smooth morphisms are prim and if such a morphism is of relative dimension $n$,
  then the prim dual is locally isomorphic to $1_X[-2n]$ hence invertible.
  To identify the prim dual globally with $1_X[-2n]$,
  we use a deformation to the normal cone method.
  Consider the deformation to the normal cone of $\Delta\colon X\rightarrow X\times_Y X$,
  which is a variety $D(\Delta)$ over $\aff^1\times X\times_Y X$ such that
  at $\mathbb{G}_{m}\times X\times_Y X$ the fiber is $\mathbb{G}_{m}\times X\times_Y X$ while at $0$ the fiber is the tangent bundle $T_{X/Y}$.
  It satisfies a Cartesian diagram 
    \[\xymatrix{
      \gff\times X\ar[d]^{\Delta\times{\gff}}\ar[r] & \aff^1\times X\ar[d] & X\ar[l]^{0_X}\ar[d]^{s_0}\\
      \gff\times X\times_Y X\ar[r] & D(\Delta) & T_{X/Y}\ar[l]
    }\]
  Moreover,
  $p\colon X\times_Y X\rightarrow X$ is a smooth map with $\Delta$ being a section,
  i.e. we can complete the diagram as 
    \[\xymatrix{
      \gff\times X\ar[d]^{\Delta\times{\gff}}\ar[r] & \aff^1\times X\ar[d]^{\widetilde{\Delta}}\ar@/^/[r]^{f_X} & X\ar[l]^{0_X}\ar[d]^{s_0}\\
      \gff\times X\times_Y X\ar[r]\ar@/^/[u]^{p\times\gff} & D(\Delta)\ar@/^/[u]^{\widetilde{p}} & T_{X/Y}\ar[l]\ar@/^/[u]^{p_0}
    }\]
  Then the prim dual of $f$ is computed as $\mathbb{P}_f(1)=p_*\Delta_!1$.
  We claim that $\widetilde{p}_*\widetilde{\Delta}_!1=f^*_X\L$ for some invertible $\L\in D(X^{\arith})$.
  Since we already know that étale morphisms are cohomologically proper,
  and the claim satisfies descent along prim covers,
  we can work étale locally,
  and hence we reduce to the case of $X=\aff^n_Y$.
  Then in the case of $X=\aff^n_Y$,
  the diagonal is already isomorphic to the normal cone,
  hence we can just take $\L=1[-2n]$ which will satisfy the claim from Corollary \ref{Ancosmooth}.
  Knowing the claim,
  the fiber of $\widetilde{p}_*\widetilde{\Delta}_!1$ at $0_X$ and $1_X$ are identified with $\L$,
  i.e.
  \[
    \widetilde{p}_*\widetilde{\Delta}_!1|_{0_X}\cong  \widetilde{p}_*\widetilde{\Delta}_!1|_{1_X} \cong \L,
  \]
  and $\widetilde{p}_*\widetilde{\Delta}_!1|_{1_X}\cong p_*\Delta_!1$ by base change theorem.
  Hence the prim dual $\mathbb{P}_f(1)$ is isomorphic to $p_{0,*}s_{0,!}1$.
  By Remark \ref{vectorbundlecosmooth},
  we know $p_0$ is prim with prim dual $1[-2n]$,
  hence $p_{0,*}s_{0,!}1\cong p_{0,!}s_{0,!}1[-2n]\cong 1[-2n]$.
  In conclusion,
  we compute $\mathbb{P}_f(1)\cong 1[-2n]$.

  Finally we prove that proper morphisms are sent to cohomologically étale morphisms.
  First let $f\colon X\rightarrow Y$ be a projective map.
  Then it factors as $f\colon X\rightarrow\proj^n_Y\rightarrow Y$ where $X\rightarrow\proj^n_Y$ is a closed embedding.
  By excision and the fact that $\proj^n\rightarrow *$ is cohomologically étale from Proposition \ref{P1etale},
  we deduce that $f$ is cohomologically étale too.
  Now let $X\rightarrow Y$ be a general proper map.
  By Chow's lemma,
  we can find a projective map $X'\rightarrow Y$,
  and a proper surjective $\pi\colon X'\rightarrow X$ over $Y$ such that there is a dense open $U\subset X$ such that $\pi^{-1}(U)\cong U$.
  Furthermore,
  we can assume $\pi$ itself is also projective; see \cite[0201]{Stacks}.
  However,
  a projective surjection $\pi\colon X'\rightarrow X$ also induces a suave cover $X'^{\arith}\rightarrow X^{\arith}$ which is also cohomologically étale,
  and being cohomologically étale is local along such a cover.
  Hence $X\rightarrow Y$ is cohomologically étale because $X'\rightarrow Y$ is projective and therefore cohomologically étale.
\end{proof}

\begin{corollary}[Poincaré duality]\label{Poincaréarith}
  Let $f\colon X\rightarrow Y$ be a proper smooth morphism of $k$-schemes of relative dimension $n$.
  Denote $f^{\arith/K}\colon X^{\arith/K}\rightarrow Y^{\arith/K}$.
  Let $M$ be a perfect complex over $X^{\arith/K}$,
  then $f_*^{\arith/K}M$ is a perfect complex and there is a perfect pairing $f_*^{\arith/K}M\otimes f_*^{\arith/K}M^{\vee}[2n]\rightarrow 1_{Y^{\arith/K}}$ in $D(Y^{\arith/K})$.
\end{corollary}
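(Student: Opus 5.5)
We combine the two parts of Theorem \ref{relativearithsixfunctor}(1) that apply to a proper smooth $f$. Since $f$ is proper, $g:=f^{\arith/K}$ is cohomologically étale. Since $f$ is smooth of relative dimension $n$, $g$ is prim with prim dual $\mathbb{P}_g(1)\cong 1_X[-2n]$ by part (4). Throughout we use the formal properties of these notions in \cite{HM24} and \cite{Sch23}. Cohomological étaleness gives $g^!\cong g^*$, and $g_!$ is left adjoint to $g^*$ (suaveness with trivial dualizing sheaf). Primness gives the dual statement: $g_*\cong g_!(\mathbb{P}_g(1)\otimes -)$, so $g_*(-)\cong g_!(-)[-2n]$, and in particular $g_*$ commutes with colimits and satisfies the projection formula.

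\textbf{Perfectness.} Let $M\in\cat{Perf}(X^{\arith/K})$, i.e.\ a dualizable object. We show $g_*M$ is dualizable with dual $g_*M^\vee[2n]$. The adjunction $g_!\dashv g^*$ is $D(Y^{\arith/K})$-linear by the projection formula. Using it, for any $N\in D(Y^{\arith/K})$ we get
\[\Hom(g_!M,N)\cong\Hom(M,g^*N)\cong\Hom(1,M^\vee\otimes g^*N).\]
We then rewrite $\Hom(1,M^\vee\otimes g^*N)\cong\Hom(1,g_*(M^\vee\otimes g^*N))\cong\Hom(1,g_*M^\vee\otimes N)$, the last step by the projection formula for the prim map $g$. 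This holds internally as well. So $\underline{\Hom}(g_!M,-)\cong g_*M^\vee\otimes-$, which exactly says that $g_!M$ is dualizable with dual $g_*M^\vee$ (cf.\ \cite[Lecture VI]{Sch23}). Since $g_*M\cong g_!M[-2n]$, we conclude that $g_*M$ is perfect with $(g_*M)^\vee\cong g_*M^\vee[2n]$.

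\textbf{The pairing.} We define the pairing as the composite
\[g_*M\otimes g_*M^\vee[2n]\to g_*(M\otimes M^\vee)[2n]\to g_*1[2n]\cong g_!\mathbb{P}_g(1)[2n]\cong g_!1_X\to 1_{Y^{\arith/K}}.\]
The first map is lax monoidality of $g_*$, the second is evaluation, and the last is the counit of $g_!\dashv g^*$ coming from étaleness. We must check that this pairing agrees with the duality pairing produced in the previous paragraph, so that it is perfect. This is the only delicate step: tracking the identifications $g_*\cong g_![-2n]$ and the counit through the adjunction isomorphisms. We first reduce to the universal case $M=1$, using linearity over $X^{\arith/K}$ together with the projection formula. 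There the statement becomes the compatibility of the trace $g_!1\to1$ with the prim-dual identification, which is formal from the definitions in \cite{HM24}.

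If an explicit check is needed, we can verify it locally. Perfectness and perfectness of a pairing are both local on $Y^{\arith/K}$ for the $!$-topology, and the formation of $g_*$ commutes with base change for prim maps. This lets us reduce to the situation of Theorem \ref{relativearithsixfunctor} over a chart of $Y$ (Proposition \ref{propersmootharith}).
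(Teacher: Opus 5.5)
Your proposal is correct. Its core differs from the paper's in how dualizability of $g_*M$ is obtained.

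\textbf{The paper's route.} It uses an external criterion. Since $f^{\arith/K}$ is cohomologically étale and prim, $f^{\arith/K}_*$ sends dualizable objects to nuclear and compact ones, and these are dualizable by \cite[Proposition 9.3]{CS22}. The paper then computes the dual only against the unit: $(f_!^{\arith/K}M)^{\vee}\cong f_*^{\arith/K}f^{\arith/K,!}M^{\vee}$, combined with $f^!\cong f^*$ and $f_*\cong f_![-2n]$.

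\textbf{Your route.} You run the same internal-Hom computation with an arbitrary test object $N$:
$\underline{\Hom}(g_!M,N)\cong g_*\underline{\Hom}(M,g^*N)\cong g_*(M^\vee\otimes g^*N)\cong g_*M^\vee\otimes N$.
The last step is the projection formula for the prim map $g$. Because each isomorphism is compatible with the linear structure, this identifies $\underline{\Hom}(g_!M,-)$ with the linear functor $g_*M^\vee\otimes -$. That gives dualizability and the dual $(g_*M)^\vee\cong g_*M^\vee[2n]$ at once.

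\textbf{Comparison.} Your argument is purely formal: it is the standard ``suave and prim maps preserve dualizables'' argument and uses no solid functional analysis. The paper's is shorter once the nuclear/compact input is granted. Both routes need Remark \ref{Fredholmproperty} to identify perfect complexes with dualizable objects. You use this implicitly when you write ``$\cat{Perf}$, i.e.\ dualizable''; cite it.

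\textbf{Unnecessary steps.} The ``delicate'' compatibility check of your explicit trace pairing is not needed, and neither is the local reduction after it. The statement only asks for some perfect pairing. The evaluation pairing $g_*M\otimes (g_*M)^\vee\to 1$, transported along $(g_*M)^\vee\cong g_*M^\vee[2n]$, is perfect by definition, and this is exactly what the paper uses. If you keep the explicit composite through $g_*1[2n]\cong g_!1\to 1$, you should either prove it matches the evaluation pairing or drop that claim. As written, ``formal from the definitions'' is an unproved assertion.
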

\begin{proof}
  Since $f$ is both proper and smooth,
  $f^{\arith/K}$ is both prim with prim dual $1_X[-2n]$ and cohomologically étale by Theorem \ref{relativearithsixfunctor}.
  Therefore,
  $f_*^{\arith/K}$ sends dualizable objects to nuclear and compact objects,
  hence dualizable objects,
  cf. \cite[Proposition 9.3]{CS22};
  meanwhile,
  we can identify perfect complexes with dualizable objects by the Fredholm property \ref{Fredholmproperty}.
  Moreover,
  we can compute $(f_*^{\arith/K}M)^{\vee}\cong (f_!^{\arith/K}M[-2n])^{\vee}\cong (f_!^{\arith/K}M)^{\vee}[2n]\cong f_*^{\arith/K}f^{\arith/K,!}M^{\vee}[2n]\cong f_*^{\arith/K}M^{\vee}[2n]$.
\end{proof}

\subsection{Cartier duality}\label{Cartierduality}

Let $L$ be the (ramified) local field over $\rt_p$ obtained by adding a $(p-1)$-th root $\pi$ (where we choose and fix one) of $-p$.
In this section,
we prove that $\aff^{1,\arith/L}_{\ff_p}$ is self dual in the sense of $1$-categorical Cartier duality in \cite[Definition 2.4.7]{RC25b}.
We also establish a version for general vector bundles. 

We start with some preparations.

\begin{discussion}\label{choosegauge}
  Let $\disk^{\dagger,\sharp}_{L}:=\disk^{\dagger,\sharp}_{\rt_p}\times_{\operatorname{GSpec}\rt_p}\operatorname{GSpec}L$ be the closed unit PD-disk with $L$-coefficients,
  and $\disk^{\circ,\sharp}_{L}:=\disk^{\circ,\sharp}_{\rt_p}\times_{\operatorname{GSpec}\rt_p}\operatorname{GSpec}L$ be the open unit PD-disk with $L$-coefficients,
  which appear naturally in the analytic Cartier duality of Proposition \ref{analyticCartierduality}.
  Then in fact we have that 
  \[
    \disk^{\dagger,\sharp}_{L}\cong \disk^{\dagger,\le p^{-\frac{1}{p-1}}}_{L},\,
    \disk^{\circ,\sharp}_{L}\cong \disk^{\circ,\le p^{-\frac{1}{p-1}}}_{L}
  \]
  are isomorphic to a closed disk and an open disk with a different radius,
  cf. \cite[Remark 3.2.27]{RC25b}.
  Then rescaling by $\pi^{-1}=(-p)^{-\frac{1}{p-1}}$,
  we also obtain isomorphisms $\disk^{\dagger,\sharp}_{L}\cong\disk^{\dagger}_{L}$,
  and $\disk^{\circ,\sharp}_{L}\cong\disk^{\circ}_{L}$.
  Notice that all of these isomorphisms preserve group structures.
  We will freely use these isomorphisms in this section.
\end{discussion}

From now on,
we keep implicitly the subscripts $L$ and simplify the notations as $\disk^{\dagger}$ and $\disk^{\circ}$.

\begin{remark}
  The aforementioned discussion \ref{choosegauge} no longer holds in the Archimedean case.
  The reason is that the Archimedean norm of $n!$ grows faster than any exponential growth.
\end{remark}

\begin{remark}
  The choice of a $(p-1)$-th root of $-p$ also appears in the construction of Dwork's isocrystal.
  In fact,
  Dwork's isocrystal plays the role of exponential sheaf in the Fourier theory of arithmetic $D$-modules;
  see Section \ref{FourierHuyghe}.
\end{remark}

Fix the notations $[-]_!$,
$[-]^*$ and $\cat{bAlg}(K_{D,X})$ in \cite[Construction 2.4.1]{RC25b},
where we let $D$ denote the $6$-functor formalism of Gelfand stacks.
The main theorem in this section is stated below.

\begin{theorem}\label{CartierarithmeticdR}
  The stack $\aff^{1,\arith/L}_{\ff_p}$ is self-dual in the sense of Cartier duality over the category of kernels.
  Precisely,
  there is a natural isomorphism of the Cartier duality of $\aff^{1,\arith/L}_{\ff_p}$ in the sense of Cartier duality over the category of kernels,
  cf. \cite[Proposition 2.4.3]{RC25b},
  with itself:
  \[
  [\aff^{1,\arith/L}_{\ff_p}]_!\cong [\aff^{1,\arith/L}_{\ff_p}]^*\in \cat{bCAlg}(K_{D,L}).
  \]
\end{theorem}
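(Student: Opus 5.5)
The plan is to reduce the self-duality of $\aff^{1,\arith/L}_{\ff_p}=\disk^{\dagger}/\disk^{\circ}$ to two instances of the analytic Cartier duality of Proposition \ref{analyticCartierduality}, together with the gauge identifications of Discussion \ref{choosegauge}. Heuristically, the Cartier dual of $\disk^{\dagger}$ is $B\disk^{\circ,\sharp}$ and the Cartier dual of $\disk^{\circ}$ is $B\disk^{\dagger,\sharp}$. After rescaling by $\pi^{-1}$, these become $B\disk^{\circ}$ and $B\disk^{\dagger}$. The Cartier dual of the quotient $\disk^{\dagger}/\disk^{\circ}$ should then be the ``quotient'' $\disk^{\dagger}/\disk^{\circ}$ again, with the roles of the two disks swapped.

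Concretely, I will first write down the kernel. On $\disk^{\dagger}\times\disk^{\dagger}$ (coordinates $x,y$) take the exponential character $\L=\exp(\pi xy)$, viewed as a line bundle with its trivial underlying module. The choice of $\pi$ with $\pi^{p-1}=-p$ makes $\exp(\pi xy)$ overconvergent on the closed unit bidisk; this is the Dwork exponential, which converges on a disk of radius slightly larger than $1$. I will check that $\L$ descends to $(\disk^{\dagger}/\disk^{\circ})\times(\disk^{\dagger}/\disk^{\circ})$.

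For this descent, translating $x$ by $a\in\disk^{\circ}$ multiplies $\L$ by $\exp(\pi a y)$. This is a character of $\disk^{\circ}$ with values in units. I will use it to give $\L$ a $\disk^{\circ}\times\disk^{\circ}$-equivariant structure, twisted by the corresponding character of $\disk^{\circ}$ paired with $\disk^{\dagger}$. Equivalently, the bilinear form $\exp(\pi xy)$ pairs $\disk^{\dagger}\times\disk^{\circ}\to \gff$. I will show this pairing is trivial modulo the relevant quotients by using the convergence of $\exp(\pi t)$ for $|t|<1$.

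A cleaner route to the same kernel is to construct a bilinear map of group stacks
\[(\disk^{\dagger}/\disk^{\circ})\times(\disk^{\dagger}/\disk^{\circ})\to B\gff^{\dagger}\]
and pull back the universal line bundle. This is the way the construction in \cite[Proposition 2.4.3]{RC25b} expects the kernel to be given, and I will then verify the bialgebra compatibilities formally from bilinearity.

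Next I will prove that the induced Fourier--Mukai morphism $[\aff^{1,\arith/L}]_!\to[\aff^{1,\arith/L}]^*$ is an isomorphism. Since both sides are determined by the group structure and equivalences of kernels can be checked after pullback along suave covers, I will work on the presentation
\[\disk^{\dagger}\to\disk^{\dagger}/\disk^{\circ}\to B\disk^{\circ}.\]
Filtering along this sequence, the dual of $\disk^{\dagger}/\disk^{\circ}$ is an extension of the dual of $\disk^{\dagger}$, namely $B\disk^{\circ}$, by the dual of $B\disk^{\circ}$, namely $\disk^{\dagger}$. Proposition \ref{analyticCartierduality} with $V$ trivial of rank $1$ supplies exactly these two dualities, after the rescaling isomorphisms $\disk^{\circ,\sharp}\cong\disk^{\circ}$ and $\disk^{\dagger,\sharp}\cong\disk^{\dagger}$. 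Gluing them, I will identify the dual with an extension of $B\disk^{\circ}$ by $\disk^{\dagger}$ whose extension class is given by the same $\exp(\pi xy)$ pairing, that is, with $\disk^{\dagger}/\disk^{\circ}$ itself.

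Concretely, I will check the equivalence $\operatorname{FM}\circ\operatorname{FM}^{-}\simeq\id$ by computing the convolution $\L\star\L^{-1}$. This amounts to computing $q_*$ of $\exp(\pi(x-x')y)$ over the middle factor, which reduces to the cohomology of $\disk^{\dagger}/\disk^{\circ}$ with coefficients in a character. By the Cartier duality computations, that cohomology is the delta sheaf on the diagonal shifted by $[-2]$. The shift is consistent with the prim dual $1[-2]$ of Proposition \ref{A1cosmooth}, and strong $\aff^1$-invariance, Proposition \ref{A1cosmooth}(2), gives the trivial-character case.

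The main obstacle is the extension step: making the dualities for $\disk^{\dagger}$ and $B\disk^{\circ}$ compatible, so that the glued dual really has the semidirect/quotient structure and is not some other extension. The delicate point is matching the $\disk^{\circ}$-action on $\disk^{\dagger}$ by translation with the twist by the character $\exp(\pi a y)$. I would first attempt this directly at the level of the explicit algebra $D^{\dagger}(\infty)_L$, identifying both $[\,\cdot\,]_!$ and $[\,\cdot\,]^*$ with module categories over it and checking that the kernel induces the automorphism $T\mapsto\pi^{-1}\partial$, $\partial\mapsto-\pi T$.
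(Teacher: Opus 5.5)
\textbf{There is a genuine gap.} The decisive step is the one you flag as the main obstacle, and the proposal does not contain the idea that resolves it. Dualizing the rotated sequence $\disk^{\dagger}\to\disk^{\dagger}/\disk^{\circ}\to B\disk^{\circ}$ uses only the single duality pair $\disk^{\dagger}\leftrightarrow B\disk^{\circ}$. What distinguishes $\disk^{\dagger}/\disk^{\circ}$ from other extensions of $B\disk^{\circ}$ by $\disk^{\dagger}$ is the connecting map $B\disk^{\circ}\to B\disk^{\dagger}$. Computing its dual requires the \emph{second} duality $[\disk^{\circ}]_!\cong[B\disk^{\dagger}]^*$, together with a compatibility between the two dualities.

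This compatibility is Lemma \ref{compatibilityFM}. Both dualities of Lemma \ref{1catCartierdual} are Fourier--Mukai transforms for pullbacks of the single line bundle $\exp_{\pi}^*\O(1)$ on $B\disk^{\dagger}\otimes\disk^{\circ}\cong B\disk^{\circ}\otimes\disk^{\dagger}$. Base change then shows that they intertwine $f_!$ and $g^*$, for $f\colon\disk^{\circ}\to\disk^{\dagger}$ and $g\colon B\disk^{\circ}\to B\disk^{\dagger}$. With this, the paper dualizes the unrotated sequence $\disk^{\circ}\to\disk^{\dagger}\to\disk^{\dagger}/\disk^{\circ}$, which comes from the $1$-étale cover $\disk^{\dagger}\to\disk^{\dagger}/\disk^{\circ}$. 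The dual of $\disk^{\dagger}/\disk^{\circ}$ becomes the fiber of the map induced by $g$. The Cartesian square $\disk^{\dagger}/\disk^{\circ}\cong B\disk^{\circ}\times_{B\disk^{\dagger}}*$ identifies that fiber with $\disk^{\dagger}/\disk^{\circ}$, and $1$-affineness finishes. No extension class is ever computed.

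Your fallbacks do not replace this. The claim that $\L\star\L^{-1}$ is a shifted delta on the diagonal amounts to the vanishing, in families over $y\neq 0$, of the character-twisted cohomology of $\disk^{\dagger}/\disk^{\circ}$; you assert this rather than prove it. The $D^{\dagger}(\infty)_L$ check ($T\mapsto\pi^{-1}\partial$) is, in the paper, a consequence of the theorem via the analogous compatibility Lemma \ref{compatibilityFM2}. So it is not an available input unless you carry it out from scratch. Also, Proposition \ref{analyticCartierduality} computes suave and prim duals; the Cartier dualities you need are \cite[Proposition 3.2.21]{RC25b}, i.e.\ Lemma \ref{1catCartierdual}.

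Separately, your first description of the kernel contains a substantive error. Since $v_p(\pi^n/n!)=s_p(n)/(p-1)$, with $s_p$ the $p$-adic digit sum, the coefficients of $\exp(\pi t)$ do not tend to $0$. So $\exp(\pi t)$ converges exactly on the open unit disk and is \emph{not} overconvergent on $\disk^{\dagger}$. The overconvergent Dwork exponential is $\exp(\pi(t-t^p))$. This matters: if $\exp(\pi xy)$ were an overconvergent unit on $\disk^{\dagger}\times\disk^{\dagger}$, it would trivialize the descended line bundle. The kernel would then be $1$ and the Fourier--Mukai functor could not be an equivalence; equivalently, Dwork's isocrystal $\nabla e=\pi e\otimes dT$ would be trivial.

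The correct version is your ``cleaner route'', which matches Remark \ref{FMkernel}. The kernel is the pullback of $\O(1)$ along $(\disk^{\dagger}/\disk^{\circ})^2\to\disk^{\dagger}/\disk^{\circ}\to B\disk^{\circ}\to B\gff$, which uses $\exp_{\pi}$ only on $\disk^{\circ}$. Its pullback to $\disk^{\dagger}\times\disk^{\dagger}$ is the trivial bundle. It carries the $\disk^{\circ}\times\disk^{\circ}$-equivariance cocycle $\exp(\pi(ay+xb+ab))$, which is not a coboundary, rather than being given by a function $\exp(\pi xy)$.
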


\begin{remark}
  We have $L\cong\rt_p(\zeta_p)$,
  where $\zeta_p$ is a $p$-th root of unity.
  Indeed,
  $\pi$ and $(\zeta_p-1)^{p-1}$ differ by a unit $u$ such that $u\in 1+p\itg_p$,
  but such $u$ admits a $(p-1)$-th root in $\rt_p$.
  The theorem above is then a direct consequence from the motivic Deligne--Fourier equivalence in \cite[Lecture X]{Sch25} under the expectation of an opposite-motivic version in Remark \ref{oppositemotivic}.
  The Cartier duality in \cite[Lecture X]{Sch25} also explains the necessity of working over $L$ instead of $\rt_p$,
  since $[\aff^1]_{\operatorname{SH}}$ identifies with its Cartier dual only \emph{locally} in Gestalten.
\end{remark}

We start the proof by two lemmas.
Using \ref{choosegauge},
we can rewrite the analytic Cartier duality \cite[Proposition 3.2.21]{RC25b} as follows.

\begin{lemma}[Analytic Cartier duality, {\cite[Proposition 3.2.21]{RC25b}}]\label{1catCartierdual}
  The exponential pairing
  \begin{align*}
    \exp_{\pi}\colon \disk^{\dagger}\otimes \disk^{\circ}&\longrightarrow\gff\\
    (x,y)&\longmapsto \exp(\pi xy)=\sum_{n\ge 0}\frac{(\pi xy)^n}{n!}
  \end{align*}
  induces $1$-categorical Cartier dualities 
  \[
  \operatorname{FM}_1\colon \Spec [\disk^{\dagger}]_!\cong \Spec[B\disk^{\circ}]^*,\,
  \operatorname{FM}_2\colon \Spec[\disk^{\circ}]_!\cong \Spec[B\disk^{\dagger}]^* \in \cat{CMon}(\cat{Aff}_K).
  \]
\end{lemma}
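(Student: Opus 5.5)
The plan is to deduce the lemma from the analytic Cartier duality of \cite[Proposition 3.2.21]{RC25b} by transporting it along the gauge isomorphisms of Discussion \ref{choosegauge}. In \cite{RC25b} the duality is stated for the pairing between an overconvergent (resp. open) disk and the PD-disk of the complementary type. Concretely, the pairing $\disk^{\dagger}\times\disk^{\circ,\sharp}\to\gff$ is $(x,z)\mapsto\exp(xz)$, and similarly with the roles of closed and open exchanged. These pairings yield equivalences $\Spec[\disk^{\dagger}]_!\cong\Spec[B\disk^{\circ,\sharp}]^*$ and $\Spec[\disk^{\circ}]_!\cong\Spec[B\disk^{\dagger,\sharp}]^*$ in $\cat{CMon}(\cat{Aff}_K)$.

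The first step is to write down the group isomorphisms explicitly. Discussion \ref{choosegauge} gives $\disk^{\circ,\sharp}_L\cong\disk^{\circ}_L$ and $\disk^{\dagger,\sharp}_L\cong\disk^{\dagger}_L$, and these are rescalings of the coordinate by $\pi$, namely $z=\pi y$. The reason is that the PD-disk of radius $1$ is the honest disk of radius $p^{-1/(p-1)}=|\pi|$. This is the standard estimate $|n!|\sim p^{-n/(p-1)}$: a series $\sum a_n z^n/n!$ converges overconvergently on radius $1$ if and only if $\sum a_n z^n$ converges on radius $|\pi|$ with the matching overconvergent or open condition. Substituting $z=\pi y$ then turns $\exp(xz)$ into $\exp_\pi(x,y)=\exp(\pi xy)$. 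I will check that the substitution is a homomorphism for the additive structures, which is immediate because it is linear. Also, the substitution transports the PD-disk structure of \cite{RC25b} to the disk structures $\disk^{\circ}$ and $\disk^{\dagger}$ used here. Therefore pulling back the \cite{RC25b} duality along these isomorphisms (which induce isomorphisms $B\disk^{\circ,\sharp}\cong B\disk^{\circ}$ of classifying stacks) gives $\operatorname{FM}_1$ and $\operatorname{FM}_2$ induced by $\exp_\pi$.

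The second step is to verify that $\exp_\pi$ genuinely lands in $\gff$ and is bilinear, i.e. a bimultiplicative pairing in $\cat{CMon}$. Bilinearity is $\exp(\pi(x+x')y)=\exp(\pi xy)\exp(\pi x'y)$. For convergence, note that $|\pi xy|<|\pi|=p^{-1/(p-1)}$ uniformly on a neighborhood of $\disk^{\dagger}\times\disk^{\circ}$, because $|x|\le 1$ holds overconvergently and $|y|\le r<1$. The exponential converges exactly on the open disk of radius $p^{-1/(p-1)}$, so this gives convergence.

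The main obstacle is the bookkeeping of overconvergence versus openness under the rescaling. One must check that the \emph{open} PD-disk matches the \emph{open} disk of radius $|\pi|$, so that the colimit presentations $\varinjlim_{r<1}$ correspond. This means checking that the radii $r<1$ on the PD side correspond to radii $r|\pi|$ up to the $n!$ versus $p^{n/(p-1)}$ discrepancy, which is subexponential and hence harmless for the overconvergent and open conditions. This is exactly the content of \cite[Remark 3.2.27]{RC25b}, which I will cite after checking that the conventions (including the sign of $-p$) match.
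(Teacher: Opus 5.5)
Your proposal is correct and follows the paper's route. The paper also just transports \cite[Proposition 3.2.21]{RC25b}, base-changed to $L$, along the group isomorphisms $\disk^{\dagger,\sharp}_L\cong\disk^{\dagger}_L$ and $\disk^{\circ,\sharp}_L\cong\disk^{\circ}_L$ of Discussion \ref{choosegauge}, i.e.\ the substitution $z=\pi y$ turning $\exp(xz)$ into $\exp(\pi xy)$. One slip in your heuristic: as literally written, your ``if and only if'' fails (take $a_n=1$); the correct statement is that the power series $\sum_n (a_n/n!)z^n$ is overconvergent on radius $|\pi|$ if and only if $\sum_n a_n w^n$ is overconvergent on radius $1$, and this is what your subexponential $n!$-versus-$p^{n/(p-1)}$ bookkeeping actually proves.
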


Given the Fourier--Mukai transforms above,
we can establish their compatibility in the following sense.

\begin{lemma}\label{compatibilityFM}
  There is a commutative diagram in $\cat{bCAlg}(K_{D,L})$:
  \[\xymatrix{
    [\disk^{\circ}]_!\ar[r]^{\operatorname{FM}_1}\ar[d]_{f_!} & [B\disk^{\dagger}]^*\ar[d]^{g^*}\\
    [\disk^{\dagger}]_!\ar[r]^{\operatorname{FM}_2} & [B\disk^{\circ}]^*
  }\]
  where $g\colon B\disk^{\circ}\rightarrow B\disk^{\dagger}$ and $f\colon \disk^{\circ}\rightarrow\disk^{\dagger}$,
  under the Cartier dualities in Proposition \ref{1catCartierdual}.
\end{lemma}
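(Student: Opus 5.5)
The commutativity reduces to an identity between Fourier--Mukai kernels. In the category of kernels $K_{D,L}$, both composites $g^*\circ\operatorname{FM}_1$ and $\operatorname{FM}_2\circ f_!$ are morphisms $[\disk^{\circ}]_!\to[B\disk^{\circ}]^*$. Each is therefore given by an object of $D(\disk^{\circ}\times B\disk^{\circ})$, compatible with the bialgebra structures. The plan is to compute both kernels explicitly and identify them.

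By Lemma \ref{1catCartierdual}, $\operatorname{FM}_1$ is given by the kernel $\L_1\in D(\disk^{\circ}\times B\disk^{\dagger})$. Pulled back to $\disk^{\circ}\times *$, this is the line bundle on $\disk^{\circ}\times\disk^{\dagger}$ trivialised by $\exp(\pi xy)$, with equivariance for the $\disk^{\dagger}$-action. Composing with $g^*$ in kernels amounts to pulling back along $\id\times g\colon\disk^{\circ}\times B\disk^{\circ}\to\disk^{\circ}\times B\disk^{\dagger}$. So the first kernel is $(\id\times g)^*\L_1$: the trivial line bundle with $\disk^{\circ}$-equivariance given by the character $y\mapsto\exp(\pi xy)$, restricted from $\disk^{\dagger}$ to $\disk^{\circ}$.

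Similarly, $\operatorname{FM}_2$ has kernel $\L_2\in D(\disk^{\dagger}\times B\disk^{\circ})$, the character $\exp(\pi xy)$ with $x\in\disk^{\dagger}$ and $y\in\disk^{\circ}$. Precomposing with $f_!$ in kernels is given by $(f\times\id)^*\L_2$. This step uses the standard identity in $K_D$ that the morphism $[f]_!$ is represented by the graph correspondence, so that composition pulls the kernel back along $f\times\id$.

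Both kernels are then the line bundle on $\disk^{\circ}\times B\disk^{\circ}$ attached to the single pairing $\exp_{\pi}\colon\disk^{\circ}\times\disk^{\circ}\to\gff$. This pairing is the restriction of the exponential pairing along $\disk^{\circ}\subset\disk^{\dagger}$ in either variable. Its convergence is exactly the one fixed by Discussion \ref{choosegauge}, which is why we work over $L$. The resulting isomorphism of kernels is canonical: it comes from the equality of bilinear maps $\exp_{\pi}\circ(f\times\id)=\exp_{\pi}\circ(\id\times f)$.

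The main obstacle is upgrading this isomorphism from an object-level statement to a commutative square in $\cat{bCAlg}(K_{D,L})$. The kernels carry multiplicativity data encoding the bialgebra structures, and these must match. I would handle this using the construction in \cite[Proposition 2.4.3]{RC25b}, where a Cartier duality is produced functorially from a bilinear pairing of group objects $G\times H\to\gff$. Naturality of that construction under morphisms of pairings then gives the square directly. Concretely, the restriction maps $f\colon\disk^{\circ}\to\disk^{\dagger}$ and $g\colon B\disk^{\circ}\to B\disk^{\dagger}$ define a morphism of bilinear pairings from $(\disk^{\circ},\disk^{\circ})$ to the two pairings defining $\operatorname{FM}_1$ and $\operatorname{FM}_2$. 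Functoriality applied to both factorisations yields the same $E_\infty$-coherent transformation.

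If that functoriality is not directly available, I would fall back on descent. Checking the compatibility after pullback along the suave cover $*\to B\disk^{\circ}$ reduces the claim to an equality of characters of $\disk^{\circ}\times\disk^{\circ}$. That equality is an identity of functions, so it involves no higher coherence.
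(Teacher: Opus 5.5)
Your proposal is correct and is essentially the paper's argument. The paper also uses base change and the projection formula to rewrite $g^*\circ\operatorname{FM}_1$ and $\operatorname{FM}_2\circ f_!$ as Fourier--Mukai transforms on $\disk^{\circ}\times B\disk^{\circ}$, with kernels $(\id\times g)^*\L_1$ and $(f\times\id)^*\L_2$; it then identifies these because both are pullbacks, along a commutative square, of the single line bundle $\exp_{\pi}^*\O(1)$ on $B\disk^{\dagger}\otimes\disk^{\circ}\cong B\disk^{\circ}\otimes\disk^{\dagger}$. The paper stops at this identification of kernels and never addresses the bialgebra-level coherence you flag, so your last two paragraphs go beyond what it proves, and your descent fallback there is only a sketch.
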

\begin{proof}
  Note that the exponential pairing induces a pairing on 
  \[
    \exp_{\pi}\colon B\disk^{\dagger}\otimes \disk^{\circ}\cong B\disk^{\circ}\otimes \disk^{\dagger}\rightarrow B\gff.
  \]
  Let $\L=\exp_{\pi}^*(\O(1))$,
  where $\O(1)$ is the tautological bundle on $B\gff$.
  Then the $1$-categorical Cartier dualities are defined as Fourier--Mukai transforms for the pullback $\L_1$ (resp. $\L_2$) of $\L$ to $\disk^{\circ}\times B\disk^{\dagger}$ (resp. $\disk^{\dagger}\times B\disk^{\circ}$)
  Consider the following diagram:
  \[\xymatrix{
    \disk^{\dagger}\times B\disk^{\circ}\ar[r]^p\ar[d]_q & B\disk^{\circ}\ar[dr]^g & \\
    \disk^{\dagger}& \disk^{\circ}\times B\disk^{\circ} \ar[ul]^F\ar[d]_Q\ar[rd]^G\ar[u]_S & B\disk^{\dagger}\\
    & \disk^{\circ}\ar[ul]^f & \disk^{\circ}\times B\disk^{\dagger}\ar[l]^r\ar[u]_s
  }\]
  where the middle two squares are further Cartesian.
  Then starting from $M\in D(\disk^{\circ})$,
  we can compute 
  \[
    g^*\circ\operatorname{FM}_1(M)=g^*s_!(r^*M\otimes\L_1)
    =S_!G^*(r^*M\otimes\L_1)=S_!(Q^*M\otimes G^*\L_1),
  \]
  and 
  \[
    \operatorname{FM}_2\circ f_!(M)=p_!(q^*f_!M\otimes\L_2)=p_!(F_!Q^*M\otimes \L_2)=r_!F_!(Q^*M\otimes F^*\L_2)=S_!(Q^*M\otimes F^*\L_2).
  \]
  Hence it suffices to show that $G^*\L_1\cong F^*\L_2$.
  However,
  this comes from the fact that both $\L_1$ and $\L_2$ come from pullback of $\L$,
  as the following commutative diagram
  \[\xymatrix{
    \disk^{\circ}\times B\disk^{\circ}\ar[r]\ar[d]\ar[dr]^H & \disk^{\dagger}\times B\disk^{\circ}\ar[d]\\
    \disk^{\circ}\times B\disk^{\dagger}\ar[r] & B\disk^{\dagger}\otimes \disk^{\circ}\cong B\disk^{\circ}\otimes \disk^{\dagger}
  }\]
  gives that $G^*\L_1\cong H^*\L \cong F^*\L_2$.
\end{proof}

The way of proving the main theorem is to use the Cartier duality between $\disk^{\dagger}$ (or $\disk^{\circ}$) and $B\disk^{\circ}$ (or $B\disk^{\dagger}$) and their compatibility as above.

\begin{proof}[Proof of Theorem \ref{CartierarithmeticdR}]
  Note that the map $\Spec[\disk^{\dagger}]^*\longrightarrow \Spec[\aff^{1,\arith/L}_{\ff_p}]^*=\Spec[\disk^{\dagger}/\disk^{\circ}]^*$ is a $1$-étale cover.
  This produces a fiber/cofiber sequence in $\cat{CMod}(\cat{Aff}_K)$:
  \[
    \Spec[\disk^{\circ}]^*\longrightarrow \Spec[\disk^{\dagger}]^*\longrightarrow \Spec[\aff^{1,\arith/L}_{\ff_p}]^*.
  \]
  The Cartier duality switches fiber sequences and cofiber sequences,
  hence induces a fiber/cofiber sequence in $\cat{CMod}(\cat{Aff}_K)$:
  \[
    \Spec[\aff^{1,\arith/L}_{\ff_p}]_! \overset{f_!}{\longrightarrow} \Spec[\disk^{\dagger}]_!\longrightarrow \Spec[\disk^{\circ}]_!,
  \]
  which is identified with the following sequence by Lemma \ref{1catCartierdual} and Lemma \ref{compatibilityFM}:
  \[
    \Spec[\aff^{1,\arith/L}_{\ff_p}]_! \longrightarrow \Spec[B\disk^{\circ}]^*\overset{g^*}{\longrightarrow} \Spec[B\disk^{\dagger}]^*.
  \]
  However,
  the following diagram is Cartesian
  \[
  \xymatrix{
    \disk^{\dagger}/\disk^{\circ}\ar[r]\ar[d] & {*}\ar[d]\\
    B\disk^{\circ}\ar[r]^g & B\disk^{\dagger}
  }
  \]
  which identifies the fiber of $\Spec[B\disk^{\circ}]^*\overset{g^*}{\longrightarrow} \Spec[B\disk^{\dagger}]^*$ as 
  $\Spec[\disk^{\dagger}/\disk^{\circ}]^*=\Spec[\aff^{1,\arith/L}_{\ff_p}]^*$.
  Since all of the stacks are $1$-affine,
  this gives an isomorphism 
  \[
  [\aff^{1,\arith/L}_{\ff_p}]_!\cong [\aff^{1,\arith/L}_{\ff_p}]^*\in \cat{bCAlg}(K_{D,L}).\qedhere
  \]
\end{proof}

\begin{remark}[The Fourier--Mukai kernel $\L_{\pi}$]\label{FMkernel}
  Tracing through the proof,
  the equivalence in Theorem \ref{CartierarithmeticdR} also comes from a Fourier--Mukai transform as follows.
  Consider the following pairing:
  \[
    \aff^{1,\arith/L}_{\ff_p}\times \aff^{1,\arith/L}_{\ff_p}\longrightarrow 
    \aff^{1,\arith/L}_{\ff_p}\cong \disk^{\dagger}/\disk^{\circ}\overset{p}{\longrightarrow} B\disk^{\circ}\overset{\exp_{\pi}}{\longrightarrow} B\gff,
  \]
  where the first arrow is given by multiplication on $\aff^{1,\arith/L}_{\ff_p}$,
  and the last arrow is given by the exponential morphism $\exp_{\pi}\colon \disk^{\circ}\rightarrow\gff$,
  cf. \cite[Construction 3.2.19]{RC25b} and \ref{choosegauge}.
  This defines an exponential sheaf $\Exp_{\pi}\in D(\aff^{1,\arith/L}_{\ff_p})$ as 
  \[
  \Exp_{\pi}=p^*\exp_{\pi}^*\O(1),
  \]
  and a Fourier--Mukai kernel $\L_{\pi}$ as the pullback of $\Exp_{\pi}$ under this pairing,
  and the equivalence in Theorem \ref{CartierarithmeticdR} comes from Fourier--Mukai transform under this kernel:
  \[
    \operatorname{FM}_{\pi}(-)=p_{1,!}^{\arith/L}(p_2^{\arith/L,*}(-)\otimes\L)\colon [\aff^{1,\arith/L}_{\ff_p}]_!\cong [\aff^{1,\arith/L}_{\ff_p}]^*
  \]
  Here $p_{1},p_2\colon \aff^1\times\aff^1\rightarrow\aff^1$ are two projections.

  The Fourier--Mukai kernel also satisfies two key properties:
  \begin{enumerate}
    \item We have $p_{12}^{\arith/L,*}\L\otimes p_{23}^{\arith/L,*}\L\cong\Sigma_{13}^{\arith/L,*}\L$,
    where $p_{12}\colon \aff^1\times\aff^1\times\aff^1\rightarrow \aff^1\times\aff^1$ is the projection to the first and second coordinates,
    $p_{23}\colon \aff^1\times\aff^1\times\aff^1\rightarrow \aff^1\times\aff^1$ is the projection to the second and third coordinates,
    and $\Sigma_{13}\colon \aff^1\times\aff^1\times\aff^1\rightarrow \aff^1\times\aff^1$ is the addition on the first and third coordinates.
    \item We have $p_{2,!}^{\arith/K}\L\cong i_!^{\arith/L}1[-2]$,
    where $i\colon\{0\}\rightarrow\aff^1$.
  \end{enumerate}
  Both follow from formal properties and explicit computations\footnote{A key ingredient of the proof is the additivity of the exponential sheaf:
  let $\Sigma\colon \aff^1\times\aff^1\rightarrow\aff^1$ be the addition,
  then we have $\Sigma^{\arith/L,*}\Exp_{\pi}\cong\Exp_{\pi}\boxtimes\Exp_{\pi}$.}.
\end{remark}

\begin{remark}[Inverse Fourier transform]
  Since $\L_{\pi}$ satisfies the two properties in Remark \ref{FMkernel},
  by applying the same procedure in \cite[Théorème 1.2.1.1]{Lau87},
  we formally deduce that the inverse of the Fourier--Mukai transform is identified with itself up to a twist,
  i.e. let $\iota\colon\aff^1\rightarrow\aff^1$ denote the involution map $\iota\colon x\mapsto -x$.
  Then we have
  \[\operatorname{FM}^{-1}_{\pi}=\iota^{\arith/L,*}\circ\operatorname{FM}_{\pi}[2]\colon D(\aff^{1,\arith/L}_{\ff_p})\cong D(\aff^{1,\arith/L}_{\ff_p}).\]

  Theorem \ref{CartierarithmeticdR} implies the following ambidexterity result:
  \[
    \iota^{\arith/L,*}\circ\operatorname{FM}_{\pi}[2]\cong \operatorname{FM}^{-1}_{\pi}\cong \operatorname{FM}^{L}_{\pi}\cong \operatorname{FM}^{R}_{\pi}.
  \]
  Since we have $\operatorname{FM}_{\pi}^R(-)\cong p_{1,*}^{\arith/L}(p_2^{\arith/L,!}(-)\otimes\L^{-1}_{\pi})$,
  and $p_1$ is prim with prim dual $1[-2]$ from Proposition \ref{A1cosmooth},
  we deduce the following property of Fourier--Mukai transform (in a version opposite to that of \cite[Théorème 1.3.1.1]{Lau87},
  as the $6$-functor formalism of arithmetic de Rham stacks behaves opposite to motivic axioms):
  \[
    \operatorname{FM}_{\pi}(-)\cong \iota^{\arith/L,*} p_{1,!}^{\arith/L}(p_2^{\arith/L,!}(-)\otimes\L^{-1}_{\pi})\cong p_{1,!}^{\arith/L}(p_2^{\arith/L,!}(-)\otimes\L_{\pi}).
  \]
\end{remark}

An advantage of using the language in \cite{RC25b} is that we can easily obtain the Cartier duality for general vector bundles almost for free,
as shown below.

\begin{proposition}
  Let $V\rightarrow X$ be a vector bundle over a $\ff_p$-scheme $X$,
  and let $V^{\vee}$ be its dual vector bundle.
  It induces a pairing 
  \[
  m\colon V\times_{X}V^{\vee}\longrightarrow\aff^{1}_X.
  \]
  Let $p_{1}\colon V\times_X V^{\vee}\rightarrow V^{\vee}$ and $p_{2}\colon V\times_X V^{\vee}\rightarrow V$ be the two projections,
  then the Fourier--Mukai transform along the pairing induces an isomorphism:
  \[
  \operatorname{FM}_{V,\pi}(-)=p_{1,!}^{\arith/L}(p_2^{\arith/L,*}(-)\otimes m^{\arith/L,*}\Exp_{\pi})\colon [V^{\arith/L}]_!\overset{\cong}{\longrightarrow}[V^{\arith/L,\vee}]^*\in\cat{bCAlg}(K_{D,X^{\arith/L}}).
  \]
\end{proposition}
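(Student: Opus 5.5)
The plan is to reduce to the case of the trivial rank-one bundle, Theorem \ref{CartierarithmeticdR}, using locality over $X^{\arith/L}$ and compatibility with products. First, the construction is functorial and local on $X$. Both sides $[V^{\arith/L}]_!$ and $[V^{\arith/L,\vee}]^*$ are objects of $\cat{bCAlg}(K_{D,X^{\arith/L}})$, and they are compatible with base change along $Y^{\arith/L}\rightarrow X^{\arith/L}$ for $Y\rightarrow X$. This compatibility uses that $(-)^{\arith/L}$ preserves fiber products (Remark \ref{arithKunneth}), so $(V\times_X Y)^{\arith/L}=V^{\arith/L}\times_{X^{\arith/L}}Y^{\arith/L}$. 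The kernel $m^{\arith/L,*}\Exp_{\pi}$ is defined globally, so $\operatorname{FM}_{V,\pi}$ is a well-defined morphism. Being an isomorphism is then local on $X$. Since $X^{\arith/L}$ satisfies Zariski descent (Remark \ref{Zariskidescentarith}), a Zariski cover $U_i\rightarrow X$ gives a universal $D^*$-cover $\coprod U_i^{\arith/L}\rightarrow X^{\arith/L}$. The category of kernels over $X^{\arith/L}$ satisfies descent along such covers, because the $6$-functor formalism is $1$-affine (Remark \ref{categoricalKunneth}). It therefore suffices to check the claim after restricting to each $U_i$ on which $V$ is trivial. This also handles the $GL_n$-twisting: $\operatorname{FM}_{V,\pi}$ is defined intrinsically through $m$, so no choice of trivialization enters.

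Next, for $V=\aff^n_X$ with $V^\vee=\aff^n_X$, the pairing is $m=\sum_i x_iy_i$. By the additivity of the exponential sheaf, $\Sigma^{\arith/L,*}\Exp_\pi\cong\Exp_\pi\boxtimes\Exp_\pi$ (Remark \ref{FMkernel}), which gives $m^{\arith/L,*}\Exp_\pi\cong\boxtimes_i (x_iy_i)^*\Exp_\pi$. The Fourier--Mukai transform therefore factors as an external tensor product $\operatorname{FM}_{\aff^1,\pi}^{\boxtimes n}$. The objects $[\aff^{n,\arith/L}_X]_!$ and $[\aff^{n,\arith/L}_X]^*$ are the $n$-fold tensor products in $\cat{bCAlg}(K_{D,X^{\arith/L}})$ of the rank-one objects base-changed from $\operatorname{GSpec}L$. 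This again uses Remark \ref{arithKunneth} and the Künneth property of kernels. We thus reduce to $n=1$ and $X=\Spec\ff_p$, that is, to the isomorphism $[\aff^{1,\arith/L}_{\ff_p}]_!\cong[\aff^{1,\arith/L}_{\ff_p}]^*$ of Theorem \ref{CartierarithmeticdR}, base-changed along $X^{\arith/L}\rightarrow\operatorname{GSpec}L$. Base change preserves isomorphisms in $\cat{bCAlg}$.

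The main obstacle is to match the abstract isomorphism of Theorem \ref{CartierarithmeticdR} with the explicit kernel $m^{\arith/L,*}\Exp_\pi$ and the bialgebra structures. This is the identification traced in Remark \ref{FMkernel}, and I would invoke it directly. I also need the transform to be compatible with the (co)multiplication, so that it is a morphism in $\cat{bCAlg}$ and not merely an equivalence of underlying objects. For this I would use the multiplicativity property (1) of Remark \ref{FMkernel}, $p_{12}^*\L\otimes p_{23}^*\L\cong\Sigma_{13}^*\L$, together with its evident vector-bundle analogue, which follows from bilinearity of $m$ and additivity of $\Exp_\pi$. A secondary point is that the gluing of the local isomorphisms is automatic, since the morphism is defined globally and only its invertibility is checked locally.
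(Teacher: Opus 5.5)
Your proposal is correct and follows the paper's own argument: you check invertibility $!$-locally on $X^{\arith/L}$, use a Zariski cover trivializing $V$ (which induces such a cover), and for trivial $V$ split the pairing into componentwise pairings to reduce to Theorem \ref{CartierarithmeticdR} base-changed from $\operatorname{GSpec}L$. The only difference is the justification of locality: the paper cites \cite[Remark 2.4.6]{RC25b} rather than $1$-affineness, and what is really needed is $!$-descent of $D$ on the Hom-categories together with uniqueness of inverses. Your extra remarks on matching the explicit kernel and on compatibility with the bialgebra structures are points the paper leaves implicit.
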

\begin{proof}
  By \cite[Remark 2.4.6]{RC25b},
  to prove the equivalence,
  it suffices to prove it locally in the $!$-topology of $X^{\arith}$.
  A Zariski cover induces a $!$-cover on arithmetic de Rham stacks (Remark \ref{Zariskidescentarith}),
  hence we may assume that $V$ is trivial,
  which is true Zariski locally on $X$.
  Then the assertion comes directly from the base change of the case of $\aff^n_{\ff_p}$ over $\Spec\ff_p$.
  In this case the pairing is a direct product of the component-wise pairings,
  which reduces to Theorem \ref{CartierarithmeticdR}.
\end{proof}

\begin{remark}
  There exists a version of Cartier duality for relative arithmetic de Rham stacks.
  Let $V\rightarrow X$ be a vector bundle over a $k$-scheme $X$ with dual $V^{\vee}$,
  and let $K(\pi)$ be the extension of $K$ obtained by adding a $(p-1)$-th root $\pi$ of $-p$.
  Then the Fourier--Mukai transform $\operatorname{FM}_{V,\pi}(-):=p_{1,!}^{\arith/K(\pi)}(p_2^{\arith/K(\pi),*}(-)\otimes m^{\arith/K(\pi),*}\Exp_{\pi})$ induces an isomorphism 
  $\operatorname{FM}_{V,\pi}(-)\colon [V^{\arith/K(\pi)}]_!\overset{\cong}{\longrightarrow}[V^{\arith/K(\pi),\vee}]^*$.
\end{remark}

\section{Arithmetic crystals and arithmetic $D$-modules}

Given the construction of the (relative) arithmetic de Rham stacks,
one can always consider the category of solid sheaves on these stacks.
In the same spirit as \cite{GR14},
one has two descriptions of the category,
in terms of crystals and the $D$-modules.
We will discuss both of them,
and show that $X^{\arith/K}$ is indeed a stacky approach to the theory of rigid cohomology and its coefficients.

\subsection{Arithmetic crystals}\label{arithmeticcrystals}
In this section,
we give a site-theoretic description of the (relative) arithmetic de Rham stacks $X^{\arith}$.
Then we reconstruct the theory of rigid cohomology and overconvergent isocrystals using this perspective.
First we define the (relative) analytic overconvergent site as follows.

\begin{definition}[Analytic overconvergent site and topos]\label{Analyticoverconvergentsite}
  The analytic overconvergent site $\cat{Pair}^{\dagger}$ is the category whose objects are pairs $(A,I)$,
  where $A\in\cat{GelfRing}_{\rt_p}^{\qfd}$ is a qfd Gelfand ring,
  $I$ is an ideal of $A^{\le 1}$ such that $pA^{\le 1}\subset I\subset A^{<1}$,
  and whose morphisms are morphisms from $A$ to $B$ sending $I$ to $J$.
  The topology we put on this site is the $!$-topology on $A$.

  We define the corresponding topos $\cat{Sh}(\cat{Pair}^{\dagger,\op})$ as the category of analytic stacks over $\cat{Pair}^{\dagger}$ in the sense of \cite[Definition 4.2.1]{ABLBRCS25}.
  Let $(A,I)$ be an object in $\cat{Pair}^{\dagger}$,
  then we denote by $\Spec^{\operatorname{pair}}(A,I)\in \cat{Sh}(\cat{Pair}^{\dagger,\op})$ the functor corepresented by $(A,I)$.
\end{definition}

\begin{discussion}
  Notice that we have a diagram 
  \[\xymatrix{
  \cat{Pair}^{\dagger}\ar[r]^F\ar[d]_G & \cat{Ring}_{\ff_p}\\
  \cat{GelfRing}_{\rt_p}^{\qfd}
  }\]
  where the horizontal map is given by $F\colon (A,I)\mapsto A^{\le 1}/I$,
  and the vertical map is given by $G\colon (A,I)\mapsto A$.
  Pursuing the definition,
  we know that for any $X\in\cat{PSh}(\cat{Ring}_{\ff_p}^{\op})$,
  $F_*X$ has functor of points as sending $(A,I)$ to $X(A^{\le 1}/I)$,
  and $G^*F_*X$ has functor of points as sending $A$ to $X(A^{\le 1}/A^{<1})$ by the following Lemma \ref{leftadjointofG},
  hence we have $X^{\arith,\operatorname{pre}}=G^{*}F_*X$ at presheaf level.
  After sheafification,
  we also know that $X^{\arith}=G^{\operatorname{sh},*}F_*^{\operatorname{sh}}X$ at sheaf level,
  where 
  \[\xymatrix{
  \cat{Sh}(\cat{Pair}^{\dagger,\op})\ar[d]_{G^{\operatorname{sh},*}} & \cat{PSh}(\cat{Ring}_{\ff_p}^{\op})\ar[l]_{F_*^{\operatorname{sh}}}\\
  \cat{GelfStk}_{\rt_p}^{\qfd}
  }\]
  Moreover,
  because of the choice of the topology on $\cat{Pair}^{\dagger}$,
  we obtain that $G^{\operatorname{sh},*}=G^*$,
  i.e. there is no need to sheafify $G$.
  In summary,
  we have a reinterpretation that $X^{\arith}=G^{*}F_*^{\operatorname{sh}}X$.
\end{discussion}

\begin{lemma}\label{leftadjointofG}
  The functor $G$ has a right adjoint given by sending $A$ to the pair $(A,A^{<1})$.
  As a corollary,
  $G^*\colon\cat{Psh}(\cat{Pair}^{\dagger,\op})\rightarrow\cat{Psh}(\cat{GelfStk}_{\rt_p}^{\qfd})$
  has functor-of-points description $G^*X(A)=X((A^{\le 1},A^{<1}))$.
\end{lemma}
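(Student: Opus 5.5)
The plan is to verify the adjunction $G\dashv R$ directly, where $R(A):=(A,A^{<1})$, and then deduce the formula for $G^*$ formally from the adjunction. First I will check that $R$ lands in $\cat{Pair}^{\dagger}$. The subset $A^{<1}\subset A^{\le 1}$ is an ideal. It satisfies $pA^{\le 1}\subset A^{<1}$ because $|p|<1$: if $f$ extends to $\rt_p\langle T\rangle_{\le 1}\to A$, then $pf$ extends to $\rt_p\langle T\rangle_{\le r}\to A$ for some $r<1$. The extremal case $I=A^{<1}$ is allowed by the definition, so $(A,A^{<1})$ is an object. Functoriality of $R$ is clear, since any map of Gelfand rings $A\to B$ sends $A^{<1}$ into $B^{<1}$. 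This follows from the description of $A^{\le r}$ via maps out of $\rt_p\langle T\rangle_{\le r}$, since these are preserved under composition.

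Next I will construct the natural bijection
\[
\Hom_{\cat{GelfRing}^{\qfd}_{\rt_p}}(G(B,J),A)\cong\Hom_{\cat{Pair}^{\dagger}}((B,J),(A,A^{<1})).
\]
A morphism on the right is a ring map $B\to A$ sending $J$ into $A^{<1}$. But any ring map $B\to A$ sends $J\subset B^{<1}$ into $A^{<1}$ automatically, by the functoriality just noted. So the condition is vacuous and both sides equal $\Hom(B,A)$. Naturality in both variables is immediate. The counit $GR\to\id$ is the identity, and the unit is $(B,J)\to(B,B^{<1})$, given by the inclusion $J\subset B^{<1}$.

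For the corollary I will use the general fact that if $G\dashv R$, then restriction along $R$ is left adjoint to restriction along $G$ on presheaves. In other words, the left Kan extension $G^*$ (the left adjoint of $G_*=-\circ G$) is computed by precomposition with $R$. Concretely, for a representable presheaf, $G^*$ of the presheaf represented by $(B,J)$ should be the presheaf represented by $G(B,J)=B$, and $\Hom(B,A)=\Hom((B,J),R(A))$. Both $G^*$ and $-\circ R$ preserve colimits, and every presheaf is a colimit of representables, so the two functors agree. This gives $G^*X(A)=X(R(A))=X((A,A^{<1}))$. Here the notation $(A^{\le 1},A^{<1})$ in the statement means the pair $(A,A^{<1})$, whose associated $\ff_p$-ring is $A^{\le1}/A^{<1}$.

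The only mildly delicate step is the first one: confirming that maps of Gelfand rings preserve $A^{<1}$, and that $A^{<1}$ is an honest ideal containing $pA^{\le1}$ in the derived or condensed setting (working on $\pi_0$ as in the paper's definition). I expect this to follow from the cited definition \cite[Definition 2.2.8]{ABLBRCS25}, and the rest of the argument is formal.
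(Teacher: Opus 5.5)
Your proposal is correct and takes the same route as the paper. In both, the only real input is that any map of Gelfand rings $B\to A$ automatically carries $J\subset B^{<1}$ into $A^{<1}$, so $\Hom_{\cat{Pair}^{\dagger}}((B,J),(A,A^{<1}))\cong\Hom(B,A)$. Beyond the paper, you check that $(A,A^{<1})$ is a legitimate object via $pA^{\le 1}\subset A^{\le |p|}\subset A^{<1}$, and you spell out why $G^*$ is restriction along the right adjoint, which the paper leaves implicit under ``as a corollary''.
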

\begin{proof}
  It suffices to prove that for any pair $(B,J)$,
  we have $\Hom_{\cat{Pair}^{\dagger}}((B,J),(A,A^{<1}))\cong \Hom_{\cat{GelfRing}_{\rt_p}^{\qfd}}(B,A)$.
  However,
  by definition,
  the left side only depends on the morphism from $B$ to $A$,
  since any such morphism automatically sends $J$ inside $A^{<1}$.
\end{proof}

\begin{definition}[Structure sheaf and arithmetic crystals]
  The structure sheaf on $\cat{Pair}^{\dagger,\op}$ is defined as $G_*\O$,
  i.e. the sheaf with $(A,I)$-value $A$.
  Let $X$ be an object in $\cat{Psh}(\cat{Ring}_{\ff_p}^{\op})$,
  then it can be viewed as an object in $\cat{Sh}(\cat{Pair}^{\dagger,\op})$,
  namely, the sheafification of $(A,I)\mapsto X(A^{\le 1}/I)$,
  i.e. $F^{\operatorname{sh}}_*X$.
  The category of (absolute) arithmetic crystals over $X$ is defined as
  \[
  \cat{Crys}_{\arith}(X):=\varprojlim_{\Spec^{\operatorname{pair}}(A,I)\rightarrow F_*^{\operatorname{sh}}X}D(G_*\O((A,I)))=\varprojlim_{(A,I)\rightarrow F_*^{\operatorname{sh}}X}D(A).
  \]
  It contains a full subcategory of dualizable objects:
  \[
    \cat{Crys}_{\arith}^{\perf}(X):=\varprojlim_{\Spec^{\operatorname{pair}}(A,I)\rightarrow F_*^{\operatorname{sh}}X}\cat{Perf}(G_*\O((A,I)))=\varprojlim_{(A,I)\rightarrow F_*^{\operatorname{sh}}X}\cat{Perf}(A).
  \]
\end{definition}

Directly from the discussion above,
one can prove the following proposition saying that the site will recover the theory of coefficients for $X^{\arith}$.

\begin{proposition}\label{arithoverconvergentsite}
  We have 
  \[
    R\Gamma(X^{\arith},\O_{X^{\arith}})\cong R\Hom_{\cat{Sh}(\cat{Pair}^{\dagger,\op})}(F_*^{\operatorname{sh}}X,G_*\O),
  \]
  and we have equivalences of categories 
  \[
    \cat{Crys}_{\arith}(X)\cong D(X^{\arith}),\,\cat{Crys}_{\arith}^{\perf}(X)\cong\cat{Perf}(X^{\arith}).
  \]
\end{proposition}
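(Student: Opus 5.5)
The plan is to deduce everything from the identity $X^{\arith}=G^{*}F_*^{\operatorname{sh}}X$ established in the discussion preceding Lemma \ref{leftadjointofG}, together with the fact that $G^*$ is the left Kan extension along $G$.

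The first step is to rewrite $X^{\arith}$ as a colimit. Since $G^*$ is a left adjoint, and $F_*^{\operatorname{sh}}X$ is the colimit of its representables, we get
\[
X^{\arith}=G^*\Bigl(\varinjlim_{\Spec^{\operatorname{pair}}(A,I)\rightarrow F_*^{\operatorname{sh}}X}\Spec^{\operatorname{pair}}(A,I)\Bigr)\cong\varinjlim_{(A,I)\rightarrow F_*^{\operatorname{sh}}X}\operatorname{GSpec}A .
\]
Here I use that $G^*$ sends the representable $\Spec^{\operatorname{pair}}(A,I)$ to $\operatorname{GSpec}G(A,I)=\operatorname{GSpec}A$, which is the standard property of left Kan extension. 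I also use that $G^*$ commutes with sheafification, because $G$ preserves covers: the topology on $\cat{Pair}^{\dagger}$ is pulled back from the $!$-topology on $A$.

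The second step passes to sheaves. The functor $Y\mapsto D(Y)$ on $\cat{GelfStk}^{\qfd}$ is defined by right Kan extension from $D_{\blacksquare}(A)$ and sends colimits of stacks to limits of categories. Likewise $\cat{Perf}(-)$ sends colimits to limits by Remark \ref{Fredholmproperty}, since it is the left Kan extension and inverts $!$-equivalences. Applying these to the colimit from the first step gives
\[
D(X^{\arith})\cong\varprojlim D_{\blacksquare}(A)=\cat{Crys}_{\arith}(X),\qquad \cat{Perf}(X^{\arith})\cong\cat{Crys}_{\arith}^{\perf}(X).
\]
Here $D(G_*\O((A,I)))=D(A)$ by the definition of the structure sheaf.

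For cohomology, I would take the endomorphisms of the unit, which yields $R\Gamma(X^{\arith},\O)\cong\varprojlim_{(A,I)}A$. Alternatively, adjunction gives
\[
R\Hom(G^*F_*^{\operatorname{sh}}X,\O)\cong R\Hom(F_*^{\operatorname{sh}}X,G_*\O),
\]
where $\O$ is the sheaf $A\mapsto A$ on Gelfand stacks. The two descriptions agree, since $G_*\O(A,I)=\O(G(A,I))=A$.

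The main point requiring care is the claim that $G^*$ needs no sheafification and commutes with the colimit presentation. Concretely, I have to check that the left Kan extension of a $!$-sheaf on $\cat{Pair}^{\dagger}$ is already a sheaf, or at least that it becomes one after sheafification in a way compatible with $D$. I would address this using Lemma \ref{leftadjointofG}. Because $G$ has a right adjoint $A\mapsto (A,A^{<1})$, we have $G^*=(\text{right adjoint})_*$ as restriction, and restriction along a functor that preserves covers (the $!$-covers of $A$ give $!$-covers of $(A,A^{<1})$) preserves sheaves. Hence $G^*$ is both a left adjoint and preserves sheaves, which justifies the colimit manipulation above.
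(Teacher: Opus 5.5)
Your proposal is correct and follows essentially the paper's proof. The cohomology statement comes from the adjunction $G^*\dashv G_*$ applied to $X^{\arith}=G^*F_*^{\operatorname{sh}}X$, and the categorical equivalences come from rewriting $D(X^{\arith})$ (and its dualizable part) as a limit over $(A,I)\to F_*^{\operatorname{sh}}X$.

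The only difference is bookkeeping. You push the colimit presentation of $F_*^{\operatorname{sh}}X$ through the colimit-preserving functor $G^{\operatorname{sh},*}$, which lands directly on all pairs $(A,I)$; this also makes your last paragraph unnecessary, since a left adjoint preserves colimits whether or not sheafification is needed. The paper instead re-indexes maps $\operatorname{GSpec}A\to G^*F_*^{\operatorname{sh}}X$ through the left adjoint $H^*$ of $G^*$, which produces only pairs $(A,A^{<1})$, and it tacitly uses that the limit over those agrees with the limit over all $(A,I)$.
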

\begin{proof}
  We have a reinterpretation $R\Gamma(X^{\arith},\O_{X^{\arith}})=R\Hom_{\cat{GelfStk}_{\rt_p}^{\qfd}}(X^{\arith},\O)$.
  Then the identification of cohomologies follows from the formal adjointness:
  \[
   R\Hom_{\cat{GelfStk}_{\rt_p}^{\qfd}}(X^{\arith},\O)\cong
    R\Hom_{\cat{GelfStk}_{\rt_p}^{\qfd}}(G^*F_*^{\operatorname{sh}}X,\O)\cong R\Hom_{\cat{Sh}(\cat{Pair}^{\dagger,\op})}(F_*^{\operatorname{sh}}X,G_*\O).
  \]
  Denote the right adjoint of $G$ by $H$ in Lemma \ref{leftadjointofG}.
  Then $H^*(\operatorname{GSpec}A)=\Spec^{\operatorname{pair}}((A,I))$.
  We also have 
  \[
    D(X^{\arith})=\varprojlim_{\operatorname{GSpec}A\rightarrow G^*F^{\operatorname{sh}_*}X}D(A)
    =\varprojlim_{H^*(\operatorname{GSpec}A)\rightarrow F^{\operatorname{sh}_*}X}D(A)
    =\varprojlim_{(A,I)\rightarrow F_*^{\operatorname{sh}}X}D(A)=\cat{Crys}_{\arith}(X).
  \]
  Then the identification of the perfect complexes gives $\cat{Crys}_{\arith}^{\perf}(X)\cong\cat{Perf}(X^{\arith})$.
\end{proof}

We also have a relative version of the discussion above.
\begin{discussion}[Relative analytic overconvergent site]\label{relativeoverconvergent}
  The relative analytic overconvergent site $\cat{Pair}^{\dagger}_{(k,K)}$ is the category whose objects are pairs $(A,I)$,
  where $A\in\cat{GelfRing}_{K}^{\qfd}$ is a qfd Gelfand ring over $\operatorname{GSpec}K$,
  $I$ is an ideal of $A^{\le 1}$ such that $pA^{\le 1}\subset I\subset A^{<1}$ and $A^{\le 1}/I$ is a $k$-algebra,
  with morphisms and topology defined in the same way as in Definition \ref{Analyticoverconvergentsite}.
  We can form the corresponding topos $\cat{Sh}(\cat{Pair}^{\dagger,\op}_{(k,K)})$.

  We have a similar picture as follows:
  \[\xymatrix{
  \cat{Sh}(\cat{Pair}^{\dagger,\op}_{(k,K)})\ar[d]_{G^{\operatorname{sh},*}_K=G^*_K} & \cat{PSh}(\cat{Ring}_{k}^{\op})\ar[l]_{F_{K,*}^{\operatorname{sh}}}\\
  \cat{GelfStk}_{K}^{\qfd}
  }\]
  such that $X^{\arith/K}=G^*_KF_{K,*}^{\operatorname{sh}}X$.
  This follows by direct computation and Lemma \ref{leftadjointofG}.
  Then we can similarly define the structure sheaf as $G_{K,*}\O$,
  and the category of arithmetic crystals over $X/K$
  \[
    \cat{Crys}_{\arith}(X/K):=\varprojlim_{\Spec^{\operatorname{pair}}(A,I)\rightarrow F_{K,*}^{\operatorname{sh}}X}D(G_{K,*}\O((A,I)))=\varprojlim_{(A,I)\rightarrow F_*^{\operatorname{sh}}X}D(A),
  \]
  with $\cat{Crys}_{\arith}^{\perf}(X/K)$ the full subcategory of dualizable objects.
  We have a similar statement to Proposition \ref{arithoverconvergentsite},
  that for any $X\in \cat{PSh}(\cat{Ring}_{k}^{\op})$,
  we have
  \[
    R\Gamma(X^{\arith/K},\O_{X^{\arith/K}})\cong R\Hom_{\cat{Sh}(\cat{Pair}^{\dagger,\op}_{(k,K)})}(F_{K,*}^{\operatorname{sh}}X,G_{K,*}\O),
  \]
  and 
  \[
    \cat{Crys}_{\arith}(X/K)\cong D(X^{\arith/K}),\,\cat{Crys}_{\arith}^{\perf}(X/K)\cong\cat{Perf}(X^{\arith/K}).
  \]
\end{discussion}

The aforementioned perspectives are more or less a tautological reinterpretation of the arithmetic de Rham stack.
The main point of introducing it is just to describe the objects of $D(X^{\arith})$ as a category of crystals,
as was done previously for the infinitesimal site and the crystalline site,
for example in \cite[Appendix to Lecture VIII]{Sch23}.
However,
if $X$ is a scheme over $k$,
then we can also use the overconvergent site to reconstruct the relative arithmetic de Rham stack of $X$.
We briefly recall the notion of overconvergent site first introduced by Le Stum in \cite{LS11}.

\begin{definition}[Overconvergent site]
  The category $\cat{Var}^{\dagger}_K$ of overconvergent varieties over $(k,K)$ has objects given by pairs made of a formal embedding $X\subset P$ of a $k$-variety $X$ into a formal scheme $P$ over $K^{\circ}$
  and a morphism of analytic varieties $\lambda\colon V\rightarrow P_{\eta}$ over $K$,
  represented by a diagram $X\subset P\leftarrow V$,
  and a morphism from $X'\subset P'\leftarrow V'$ to $X\subset P\leftarrow V$ is a pair of morphisms $(f,u)$,
  where $f\colon X'\rightarrow X$ and $u$ is a morphism from a strict neighborhood of $]X'[_{V'}$ to $V$ such that they are compatible with specializations,
  cf. \cite[Definition 2.3.8]{LS11}.
  The overconvergent site is the category of overconvergent varieties equipped with the analytic topology on the adic tube,
  i.e. a family of morphisms $\{(X\subset P_i\leftarrow V_i)\rightarrow(X\subset P\leftarrow V)\}_i$ such that $\{V_i\}_i$ is an open covering of a neighborhood of $X$ in $V$ and $]X[_V=\bigcup_i]X[_{V_i}$.
\end{definition}

\begin{discussion}
  Denote $\cat{BerkSp}_{K}^{\qfd}$ the category of quasi-finite dimensional Berkovich spaces in \cite[Definition 4.3.1]{ABLBRCS25}.
  We have a similar diagram as before:
  \[\xymatrix{
    \cat{Var}^{\dagger}_K\ar[r]^F\ar[d]_G & \cat{Sch}_{k}\\
  \cat{BerkSp}_{K}^{\qfd}
  }\]
  where $F$ sends $X\subset P\leftarrow V$ to $X$,
  and $G$ sends $X\subset P\leftarrow V$ to $]X[_V^{\dagger}$.
  It gives the following diagram 
  \[\xymatrix{
  \cat{Sh}(\cat{Var}^{\dagger}_K)\ar[d]_{G^{\operatorname{sh},*}} & \cat{PSh}(\cat{Sch}_{k})\ar[l]_{F_*^{\operatorname{sh}}}\\
  \cat{GelfStk}_{K}^{\qfd}
  }\]
\end{discussion}

\begin{proposition}
  Let $X$ be a scheme over $k$.
  We have a natural isomorphism of qfd Gelfand stacks $G^{\operatorname{sh},*}F^{\operatorname{sh}}_*X\cong X^{\arith/K}$.
\end{proposition}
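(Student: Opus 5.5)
Both sides are sheaves and both constructions glue Zariski locally on $X$, so I will reduce to $X$ realizable, or even affine, and then compare through a chart. On the overconvergent-site side, $F^{\operatorname{sh}}_*X$ is covered by the representable objects $X\subset P\leftarrow V$. When $X$ is realizable with frame given by a proper smooth $\mathfrak{Y}$, the object $(X\subset\mathfrak{Y}\leftarrow\Y)$ is a covering of the final object of the slice over $X$: any overconvergent variety $X'\subset P'\leftarrow V'$ over $X$ admits, locally on a strict neighborhood of $]X'[_{V'}$, a morphism to $(X\subset\mathfrak{Y}\leftarrow\Y)$. This is Le Stum's result that a smooth frame gives a cover of the overconvergent topos, and it relies on formal smoothness of $\mathfrak{Y}$. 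The fiber products of this cover inside $F^{\operatorname{sh}}_*X$ are $(X\subset\mathfrak{Y}^{n+1}\leftarrow\Y^{n+1})$, so $F^{\operatorname{sh}}_*X$ is the geometric realization of this Čech nerve.

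Since $G^{\operatorname{sh},*}$ is a left adjoint, it commutes with colimits and sends representables to representables. It therefore sends this realization to $|]X[^{\dagger}_{\mathfrak{Y}^{\bullet+1}}|$, where $G$ of the $n$-th term is the overconvergent tube $]X[^{\dagger}_{\mathfrak{Y}^{n+1}}$ as in Construction \ref{tubes}. On the other side, Proposition \ref{Xarithdescription} gives a surjection $]X[^{\dagger}_{\mathfrak{Y}}\to X^{\arith/K}$ whose Čech nerve is exactly $]X[^{\dagger}_{\mathfrak{Y}^{\bullet+1}}$. This yields an identification $G^{\operatorname{sh},*}F^{\operatorname{sh}}_*X\cong X^{\arith/K}$ for realizable $X$.

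It remains to make the identification natural and independent of the frame. I will first construct a natural map $G^{\operatorname{sh},*}F^{\operatorname{sh}}_*X\to X^{\arith/K}$ directly. By adjunction, this amounts to giving, for each overconvergent variety $X\subset P\leftarrow V$ over $X$, a map $]X[^{\dagger}_V\to X^{\arith/K}$, compatibly in the variety. On $A$-points I take an $A$-point of $]X[^{\dagger}_V$, pass to $A^u$, and use $V(A^u)\to P(A^{u,\circ})$ from the universal property of the generic fiber. Because the point lies in the tube, its reduction lands in $X(A^{u,\circ}/A^{u,\circ\circ})=X(A^{\le1}/A^{<1})$. Together with the structure map to $\operatorname{GSpec}K$ this gives a point of $X^{\arith/K}$, by the same recipe as in the proof of Proposition \ref{Xarithdescription}. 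Functoriality in morphisms $(f,u)$ holds because $u$ is compatible with specialization. The map is then an isomorphism Zariski locally by the chart computation above, and both sides satisfy Zariski descent (Remark \ref{Zariskidescentarith}), so it is an isomorphism globally.

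The main obstacle is the covering step: showing that an arbitrary overconvergent variety over $X$ maps locally to the chosen frame after shrinking to a strict neighborhood. This must hold for the analytic topology on tubes and be compatible with the $!$-topology after applying $G$. I would first try to quote Le Stum \cite{LS11} (his statements that a smooth proper frame yields a covering and that the fiber products are the tubes in products) and check that the passage from adic tubes to Gelfand overconvergent tubes $]X[^{\dagger}$ preserves these covers. A further point is that $G^{\operatorname{sh},*}$ involves sheafification, so I must also check that analytic covers of tubes become $!$-covers of Gelfand stacks; this follows because open covers of Berkovich spaces give $!$-covers.
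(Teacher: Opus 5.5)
Your realizable case is the paper's argument: Le Stum's covering theorem for $(X\subset\mathfrak{Y}\leftarrow\Y)$, the Čech nerve $(X\subset\mathfrak{Y}^{n+1}\leftarrow\Y^{n+1})$, colimit preservation of $G^{\operatorname{sh},*}$, and Proposition~\ref{Xarithdescription}. You differ in how you globalize.

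\textbf{The paper's route.} It keeps the chart-dependent isomorphism. It argues independence of the frame by embedding two frames into a common one and mapping Čech nerves. It then glues over a Zariski cover by checking that, for a fixed frame, these isomorphisms commute with restriction along an open $U\subset X$.

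\textbf{Your route.} You construct, via the adjunction $G^{\operatorname{sh},*}\dashv G_*$ and reduction of points of tubes, one comparison map for every $X$, with no choices. On a frame it restricts to the chart $\pi$ of Proposition~\ref{Xarithdescription}, so only invertibility remains, and that is Zariski local. This makes naturality in $X$, frame independence and the gluing compatibilities automatic. It is cleaner than the paper's case-by-case checks.

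\textbf{What your route costs.} You need a verification the paper never makes: that your recipe is a morphism of presheaves on $\cat{Var}^{\dagger}_K$. You also need $G_*X^{\arith/K}$ to be a sheaf for the overconvergent topology, i.e.\ continuity of $G$, which you already flag.

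\textbf{A point to fix in the functoriality step.} Compatibility of $u$ with specialization on underlying points does not suffice. Take $f$ the Frobenius of $\Spec\ff_{p^2}$ and $u=\mathrm{id}$ on $\Spec\ff_{p^2}\subset\Spf W(\ff_{p^2})\leftarrow\Spa W(\ff_{p^2})[p^{-1}]$. These agree on points, but your recipe produces different reductions in $X(A^{\le1}/A^{<1})$. You should instead use that Le Stum's morphisms are germs of formal morphisms $(f,v,u)$, with $v\colon P'\to P$ compatible with both $f$ and $u$. With $v$ available, the two reductions agree as maps of schemes, and your comparison map is well defined.
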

\begin{proof}
  First let $X$ be realizable with a frame $\mathfrak{Y}$ with its generic fiber $\mathfrak{Y}_{\eta}$.
  Then we have an epimorphism $(X\subset \mathfrak{Y}\leftarrow\mathfrak{Y}_{\eta})\rightarrow F_*^{\operatorname{sh}}X$,
  which is ensured by \cite[Theorem 2.5.11]{LS11}.
  We can compute its $n$-th Cech nerve as 
  \[
    (X\subset \mathfrak{Y}\leftarrow\mathfrak{Y}_{\eta})^{\times (n+1)_{/F_{*}^{\operatorname{sh}}X}}=(X\subset\mathfrak{Y}^{n+1}\leftarrow\mathfrak{Y}_{\eta}^{n+1} ).
  \]
  Then since $G^{\operatorname{sh},*}$ preserves colimits,
  we have that 
  \[
    G^{\operatorname{sh},*}F^{\operatorname{sh}}_*X\cong \varinjlim_{n\in \Delta}G^{\operatorname{sh},*}(X\subset \mathfrak{Y}\leftarrow\mathfrak{Y}_{\eta})^{\times (n+1)_{/F_{*}^{\operatorname{sh}}X}}
    \cong \varinjlim_{n\in \Delta}G^{\operatorname{sh},*}(X\subset\mathfrak{Y}^{n+1}\leftarrow\mathfrak{Y}_{\eta}^{n+1} )
    \cong \varinjlim_{n\in \Delta}]X[^{\dagger}_{\mathfrak{Y}^{n+1}}.
  \]
  By Proposition \ref{Xarithdescription},
  we know that $\varinjlim_{n\in \Delta}]X[^{\dagger}_{\mathfrak{Y}^{n+1}}\cong X^{\arith/K}$ and hence we get that $G^{\operatorname{sh},*}F^{\operatorname{sh}}_*X\cong X^{\arith}$ in the realizable case.
  Moreover,
  this isomorphism does not depend on the choice of the frame $\mathfrak{Y}$.
  It suffices to show the independence for an embedding of frames $\mathfrak{Y}\subset\mathfrak{Y}'$ as every pair of frames can be embedded into a common frame,
  and in this case,
  there is a map between two Cech nerves and all isomorphisms above are compatible with this map.

  Both sides satisfy Zariski descent by definition and Remark \ref{Zariskidescentarith}.
  Every scheme is Zariski locally realizable,
  hence it suffices to prove the compatibility in the following sense.
  Let $U\subset X$ be an open embedding of realizable schemes with a frame $\mathfrak{Y}$,
  then the following diagram commutes:
  \[\xymatrix{
    G^{\operatorname{sh},*}F^{\operatorname{sh}}_*U\ar[r]^{\cong}\ar[d] &  U^{\arith/K}\ar[d]\\
    G^{\operatorname{sh},*}F^{\operatorname{sh}}_*X\ar[r]^{\cong} & X^{\arith/K}
  }\]
  However,
  this diagram factors into two commutative diagrams below,
  hence commutes:
  \[\xymatrix{
    G^{\operatorname{sh},*}F^{\operatorname{sh}}_*U\ar[r]^{\cong}\ar[d] & \varinjlim_{n\in \Delta}]U[^{\dagger}_{\mathfrak{Y}^{n+1}}\ar[d] &  U^{\arith/K}\ar[d]\ar[l]_{\cong}\\
    G^{\operatorname{sh},*}F^{\operatorname{sh}}_*X\ar[r]^{\cong} & \varinjlim_{n\in \Delta}]X[^{\dagger}_{\mathfrak{Y}^{n+1}} & X^{\arith/K}\ar[l]_{\cong}
  }\]
  Therefore,
  we get a natural isomorphism $G^{\operatorname{sh},*}F^{\operatorname{sh}}_*X\cong X^{\arith/K}$ for any scheme $X$ over $k$.
\end{proof}

Then the following proposition makes sense.
\begin{proposition}\label{arithoverconvergentsite2}
  Let $X$ be a $k$-scheme.
  We have 
  \[
    R\Gamma(X^{\arith/K},\O_{X^{\arith/K}})\cong R\Hom_{\cat{Sh}(\cat{Var}^{\dagger}_K)}(F_*^{\operatorname{sh}}X,G_*^{\operatorname{sh}}\O).
  \]
  Moreover,
  we define the category of (dualizable) solid overconvergent crystals as
  \[
  \cat{Crys}^{\dagger}(X/K):=\varprojlim_{(S\subset P\leftarrow V)\rightarrow F_*^{\operatorname{sh}}X}D(]S[^{\dagger}_V),\,
  \cat{Crys}^{\dagger,\perf}(X/K):=\varprojlim_{(S\subset P\leftarrow V)\rightarrow F_*^{\operatorname{sh}}X}\cat{Perf}(]S[^{\dagger}_V).
  \]
  Then we have equivalences of categories 
  \[
  D(X^{\arith/K})\cong\cat{Crys}^{\dagger}(X/K),\,\cat{Perf}(X^{\arith/K})\cong \cat{Crys}^{\dagger,\perf}(X/K).
  \]
\end{proposition}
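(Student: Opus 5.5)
The plan is to deduce everything formally from the isomorphism $G^{\operatorname{sh},*}F^{\operatorname{sh}}_*X\cong X^{\arith/K}$ of the preceding proposition, imitating the proof of Proposition \ref{arithoverconvergentsite}. The one non-formal input is a \emph{Betti-type descent statement}: for an overconvergent variety $(S\subset P\leftarrow V)$, the object $G^{\operatorname{sh},*}$ applied to it is $]S[^{\dagger}_V$, and analytic covers in $\cat{Var}^{\dagger}_K$ go to $!$-covers of Gelfand stacks.

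For the cohomology, write
\[
R\Gamma(X^{\arith/K},\O)=R\Hom_{\cat{GelfStk}^{\qfd}_K}(X^{\arith/K},\O).
\]
Substitute $X^{\arith/K}=G^{\operatorname{sh},*}F^{\operatorname{sh}}_*X$ and use the adjunction $G^{\operatorname{sh},*}\dashv G^{\operatorname{sh}}_*$. This gives $R\Hom_{\cat{Sh}(\cat{Var}^{\dagger}_K)}(F^{\operatorname{sh}}_*X,G^{\operatorname{sh}}_*\O)$. Here $G^{\operatorname{sh}}_*\O$ is the sheaf $(S\subset P\leftarrow V)\mapsto R\Gamma(]S[^{\dagger}_V,\O)$. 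For this to be a sheaf, analytic covers $\{V_i\}$ of a neighborhood of $]S[_V$ must induce $!$-covers $\coprod ]S[^{\dagger}_{V_i}\to ]S[^{\dagger}_V$. I would check this using that $]S[^{\dagger}_V$ is defined by Betti localization along the Berkovich tube (Construction \ref{tubes}), together with the fact that open covers of Berkovich spaces give $!$-covers.

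For the categories, write $F^{\operatorname{sh}}_*X$ as a colimit of representables $h_{(S\subset P\leftarrow V)}$ over its slice category. Since $G^{\operatorname{sh},*}$ preserves colimits and sends $h_{(S\subset P\leftarrow V)}$ to $]S[^{\dagger}_V$ (the definition of $G$), we get
\[
X^{\arith/K}\cong\varinjlim_{(S\subset P\leftarrow V)\to F^{\operatorname{sh}}_*X}]S[^{\dagger}_V
\]
in $\cat{GelfStk}^{\qfd}_K$. Now $D(-)$ sends colimits of Gelfand stacks to limits of categories, because $D$ is defined as the right Kan extension and satisfies $!$-descent. This yields $D(X^{\arith/K})\cong\varprojlim D(]S[^{\dagger}_V)=\cat{Crys}^{\dagger}(X/K)$. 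The same argument with $\cat{Perf}$ works, since $\cat{Perf}$ also converts colimits of qfd Gelfand stacks into limits by Remark \ref{Fredholmproperty}.

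The main obstacle is the colimit step. The slice is indexed by overconvergent varieties, possibly non-qfd or not literally Gelfand rings, and $]S[^{\dagger}_V$ is itself a stack rather than an affine $\operatorname{GSpec}A$. So one must justify that $D$ commutes with this colimit and that restricting to representables suffices, not just to affine test objects. If this is delicate, I would instead use the realizable reduction from the previous proof. Zariski locally, the frame $(X\subset\mathfrak{Y}\leftarrow\mathfrak{Y}_\eta)$ is cofinal (it is an epimorphism with Čech nerve $]X[^{\dagger}_{\mathfrak{Y}^{n+1}}$), so both limits reduce to the cosimplicial limit $\varprojlim_{\Delta}D(]X[^{\dagger}_{\mathfrak{Y}^{n+1}})$ by Proposition \ref{Xarithdescription} and descent along the suave cover $\pi$. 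I would then glue over a Zariski cover, using Zariski descent on both sides.
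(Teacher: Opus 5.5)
Your proposal is correct and follows essentially the paper's route. The cohomology statement is the same adjunction argument as in Proposition \ref{arithoverconvergentsite}, applied to $X^{\arith/K}\cong G^{\operatorname{sh},*}F^{\operatorname{sh}}_*X$. The categorical statements come from computing $D(X^{\arith/K})$ and $\cat{Perf}(X^{\arith/K})$ as limits over the tubes $]S[^{\dagger}_V$ indexed by overconvergent varieties over $X$: the paper phrases this as refining the limit over affines because $X^{\arith/K}$ is covered by images of $G^{\operatorname{sh},*}$, and your colimit-of-representables formulation is equivalent.

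Since $D$ and $\cat{Perf}$ send arbitrary colimits of qfd Gelfand stacks to limits, the obstacle you flag does not really arise, and the realizable fallback is unnecessary. Your check that analytic covers go to $!$-covers, i.e.\ continuity of $G$, makes explicit something the paper uses tacitly.
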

\begin{proof}
  The proof is the same as in the proof of Proposition \ref{arithoverconvergentsite},
  except that the only subtlety is the identification $D(X^{\arith/K})\cong\cat{Crys}^{\dagger}(X/K)$.
  By definition,
  the left side is $D(X^{\arith/K})=\varprojlim_{\operatorname{GSpec}A\rightarrow X^{\arith/K}}D(A)$.
  However,
  since $X^{\arith/K}$ is covered by those images of $G^{\operatorname{sh},*}$,
  which admits lifts to frames,
  then we can refine the terms in the limit as 
  \[
  D(X^{\arith/K})=\varprojlim_{G^{\operatorname{sh},*}(S\subset P\leftarrow V)\rightarrow G^{\operatorname{sh},*}F^{\operatorname{sh}}_*X}D(]S[^{\dagger}_V)
  =\varprojlim_{(S\subset P\leftarrow V)\rightarrow F^{\operatorname{sh}}_*X}D(]S[^{\dagger}_V).
  \]
  This clarifies the identification $D(X^{\arith/K})\cong\cat{Crys}^{\dagger}(X/K)$.
\end{proof}

We move on to discussing the relation between the overconvergent site and rigid cohomology and overconvergent isocrystals.
We first give a brief review of the history of rigid cohomology and overconvergent isocrystals,
following mainly \cite{Ber86} and \cite{Ber96a}.

\begin{definition}[Rigid cohomology]\label{rigidcohomologydef}
  Let $(X,\overline{X},P)$ be a proper smooth frame.
  We define the rigid cohomology of $X$ as follows.
  Define the overconvergent pullback $j^{\dagger}$ as $\varinjlim_{V}j_{V,*}j_V^{-1}$,
  where $V$ runs through all strict open neighborhoods of $]X[_P$ in $]\overline{X}[_P$,
  and $j_V\colon V\rightarrow ]\overline{X}[_P$.
  It is a functor on the derived category of sheaves on $]\overline{X}[_P$.
  Denote $\Omega_{]\overline{X}[_P/K}^{\bullet}$ the de Rham complex of $]\overline{X}[_P$ over $K$.
  The complex of rigid cohomology is defined as
  \[
    R\Gamma_{\rig,P}(X/K):=R\Gamma(]\overline{X}[_P,j^{\dagger}\Omega_{]\overline{X}[_P/K}^{\bullet}).
  \]
  The definition is independent of the choice of proper smooth frames and so we denote it simply as $R\Gamma_{\rig}(X/K)$,
  cf. \cite[2.3]{Ber86}.
  A $k$-scheme $X$ of finite type admits a proper smooth frame Zariski locally,
  whence we can choose such a cover $\mathcal{U}=U_1\coprod\cdots\coprod U_n$ with proper smooth frames $(U_i,\overline{U_i},P_i)$,
  and we define its rigid cohomology via Zariski glueing as follows:
  \[
  R\Gamma_{\rig}(X/K):=\varprojlim_{m\in\Delta}\bigoplus_{i_{1}<\cdots<i_m}R\Gamma_{\rig,P_{i_1}\times\cdots\times P_{i_m}}((U_{i_1}\cap\cdots\cap U_{i_m})/K)\in D^b(K\cat{-Mod}),
  \]
  where the connecting maps are given by pullback along frames.
\end{definition}

\begin{definition}[Pullback of rigid cohomology]\label{Pullbackofrigidcohomology}
  Let $f\colon X\rightarrow Y$ be a morphism between $k$-schemes,
  we can define a map $f^*\colon R\Gamma_{\rig}(Y/K)\rightarrow R\Gamma_{\rig}(X/K)$.
  First if $X$ admits a proper smooth frame $(X,\overline{X},P)$ and $Y$ admits a proper smooth frame $(Y,\overline{Y},Q)$,
  then $f$ can be modified to another morphism between frames $f'\colon (X,\overline{X}',P\times Q)\rightarrow (Y,\overline{Y},Q)$,
  where $X\hookrightarrow P_k\times Q_k$ is given by $X\overset{\Gamma_f}{\rightarrow} X\times Y\rightarrow P_k\times Q_k$
  and $\overline{X}'$ is the Zariski closure of this immersion.
  Then we define $f^*$ as 
  \[
  f^*\colon R\Gamma_{\rig}(Y/K)=R\Gamma_{\rig,Q}(Y/K)\overset{f'^*}{\longrightarrow}R\Gamma_{\rig,P\times Q}(X/K)=R\Gamma_{\rig}(X/K).
  \]
  This is independent of the choice of proper smooth frames,
  cf. \cite[2.3]{Ber86}.
  For general $f\colon X\rightarrow Y$,
  one can define the map by Zariski descent,
  cf. \cite[2.3]{Ber86}.
\end{definition}

\begin{remark}
  We can rewrite the rigid cohomology in the language of Gelfand stacks.
  Let $]X[_P^{\dagger}$ be the overconvergent tubular neighborhoods in Construction \ref{tubes}.
  Let $\Omega^{\bullet}_{]X[_P^{\dagger}/K}$ be the sheaf of the de Rham complex of $]X[_P^{\dagger}$ over $K$
  (which can be defined as the pullback of $\Omega^{\bullet}_{P_{\eta}/K}$ via $]X[_P^{\dagger}\rightarrow P_{\eta}$).
  Then we have
  \[
    H^i(]X[_P^{\dagger},\Omega_{]X[_P^{\dagger}/K}^{\bullet})=H^i(]\overline{X}[_P,j^{\dagger}\Omega_{]\overline{X}[_P/K}^{\bullet})=H^i_{\rig}(X/K),
  \]
  i.e. the de Rham cohomology of the overconvergent tubular neighborhood computes rigid cohomology.
\end{remark}

\begin{definition}[Overconvergent isocrystal, {\cite[Définition 2.3.2]{Ber96a}}]\label{overconvergentisocdef}
  Let $(X,\overline{X},P)$ be a proper smooth frame.
  An overconvergent isocrystal over $X$ is a finite locally free module\footnote{Equivalently, we can take coherent modules in the definition, cf. \cite[Remarque 2.3.3.(ii)]{Ber96a}.} $\E$ equipped with a flat connection defined on some strict neighborhood $V$ of $]X[_P$ in $]\overline{X}[_P$,
  with a condition on the connection about convergence described as follows.
  A flat connection on a vector bundle $\E$ is equivalent to a stratification of $\E$ in the sense of \cite[2.2.2]{Ber96a}.
  Then the condition we put on the connection is that the corresponding stratification extends to some strict neighborhood of $]X[_{P\times P}$,
  i.e. an isomorphism $\epsilon \colon p_1^{\dagger,*}\E\cong p_2^{\dagger,*}\E$ such that
  its reduction modulo $j'^{\dagger}\I^n$ is identified with $\epsilon_n\colon \mathcal{P}_n\otimes\E\cong \E\otimes\mathcal{P}_n$ coming from the stratification.
  Here $j'^{\dagger}\colon ]X[_{P\times P}^{\dagger}\rightarrow P_{\eta}\times P_{\eta}$,
  $\I=\ker(\O_{P_{\eta}\times P_{\eta}}\overset{\Delta^*}{\longrightarrow}\O_{P_{\eta}})$,
  $\mathcal{P}_n=\O_{P_{\eta}\times P_{\eta}}/\I^{n+1}$,
  and $p_1^{\dagger},p_2^{\dagger}\colon ]X[_{P\times P}^{\dagger}\rightarrow]X[_{P}^{\dagger}$ are the two projections.
  A morphism between overconvergent isocrystals is a morphism between the underlying vector bundles which is compatible with the connection.
  
  The definition is independent of the choice of the proper smooth frame $\overline{X}$ and $P$,
  cf. \cite[Théorème 2.3.1.(i)]{Ber96a} and \cite[Théorème 2.3.5]{Ber96a}.
  Thus,
  if we can find a proper smooth frame $(X,\overline{X},P)$ of $X$,
  then we can define $\cat{Isoc}^{\dagger}(X)$ the category of overconvergent isocrystals over $X$,
  which is independent of $\overline{X}$ and $P$.

  In general,
  every separated scheme $X$ of finite type over $k$ is Zariski locally realizable,
  hence admits a proper smooth frame.
  Denote the Zariski cover as $U_1,\ldots,U_n$ and the corresponding frames as $(U_i,\overline{U_i},P_i)$.
  We define an overconvergent isocrystal over $X$ as the following data:
  \begin{enumerate}
    \item An overconvergent isocrystal $\E_i$ over $U_i$ for every $i$,
    using the frame $P_i$.
    \item A glueing data $p_{ij}^*\E_i\cong q_{ij}^*\E_j$ satisfying the cocycle condition,
    where $p_{ij}^*$ is the pullback of overconvergent isocrystals along the map between frames $(U_i\cap U_j,\overline{U_i\cap U_j},P_i\times P_j)\rightarrow (U_i,\overline{U_i},P_i)$,
    and $q_{ij}^*$ is the pullback of overconvergent isocrystals along the map between frames $(U_i\cap U_j,\overline{U_i\cap U_j},P_i\times P_j)\rightarrow (U_j,\overline{U_j},P_j)$.
  \end{enumerate}
  By the independence of the choice of proper smooth frame,
  one can verify that the data satisfy Zariski descent hence is well-defined.
  Then we have the category $\cat{Isoc}^{\dagger}(X/K)$ of overconvergent isocrystals for general $X$.
\end{definition}

\begin{remark}\label{isocrystalabelian}
  The category $\cat{Isoc}^{\dagger}(X/K)$ of overconvergent isocrystals over $X$ is an abelian category,
  cf. \cite[Remarque 2.3.3.(iii)]{Ber96a},
  and we can form its bounded derived category $D^b(\cat{Isoc}^{\dagger}(X/K))$.
\end{remark}

\begin{definition}[Pullback of overconvergent isocrystals]\label{pullbackoverconvergentisocrystal}
  Let $f\colon X\rightarrow Y$ be a morphism between $k$-schemes,
  we can define a functor $f^*\colon \cat{Isoc}^{\dagger}(Y/K)\rightarrow \cat{Isoc}^{\dagger}(X/K)$.
  First if $X$ admits a proper smooth frame $(X,\overline{X},P)$ and $Y$ admits a proper smooth frame $(Y,\overline{Y},Q)$,
  then $f$ can be upgraded to $f'\colon (X,\overline{X}',P\times Q)\rightarrow (Y,\overline{Y},Q)$,
  where $X\hookrightarrow P_k\times Q_k$ is given by $X\overset{\Gamma_f}{\rightarrow} X\times Y\rightarrow P_k\times Q_k$
  and $\overline{X}'$ is the Zariski closure of this immersion.
  Then we define $f^*$ as $f^*=f'^*\colon \cat{Isoc}^{\dagger}(Y/K)\rightarrow \cat{Isoc}^{\dagger}(X/K)$,
  where $f'^*$ is defined a priori because it is a morphism between frames.
  The functor $f^*$ is independent of the choice of proper smooth frames,
  cf. \cite[Proposition 2.2.17]{Ber96a}.
  For general $f\colon X\rightarrow Y$,
  one can define the functor by Zariski descent.
\end{definition}

\begin{definition}[Tensor product of overconvergent isocrystals]
  There exists a symmetric monoidal structure on $\cat{Isoc}^{\dagger}(X/K)$ by tensor product.
  Let $(X,\overline{X},P)$ be a proper smooth frame.
  Given two overconvergent isocrystals $\E_1,\E_2$ over $X$,
  we can view them as vector bundles with flat connections on $]X[_P^{\dagger}$,
  then we define $\E_1\otimes\E_2$ as a vector bundles with flat connection by the Leibniz rule.
  It happens that the connection on $\E_1\otimes\E_2$ is overconvergent too,
  cf. \cite[Corollaire 2.2.10]{Ber96a},
  hence defines an overconvergent isocrystal over $X$.

  In general,
  every separated scheme $X$ of finite type over $k$ is Zariski locally realizable,
  and we can define the tensor product by Zariski descent.
\end{definition}

\begin{definition}[Overconvergent $F$-isocrystal]
  Let $K$ admit a Frobenius lift $\varphi_K$.
  By Definition \ref{pullbackoverconvergentisocrystal},
  the relative Frobenius $\varphi_{X/k}$ induces a functor $\varphi_{X/k}^*\colon \cat{Isoc}^{\dagger}(X^{(1)}/K)\rightarrow \cat{Isoc}^{\dagger}(X/K)$,
  where $X^{(1)}=X\times_{k,\varphi_k}k$.
  On the other hand,
  there is a $\varphi_K$-semilinear functor $\varphi_K^*\colon \cat{Isoc}^{\dagger}(X/K)\rightarrow \cat{Isoc}^{\dagger}(X^{(1)}/K)$.
  Then an overconvergent $F$-isocrystal is an overconvergent isocrystal $\E$ equipped with an isomorphism $\varphi_{X/k}^*\varphi_K^*\E\cong\E$.
  A morphism between overconvergent $F$-isocrystals is a morphism between overconvergent isocrystals compatible with the Frobenius-equivariant structure.
  We then obtain the category $\cat{F-Isoc}^{\dagger}(X/K)$ of overconvergent $F$-isocrystals over $X$.
\end{definition}

\begin{definition}[Rigid cohomology of overconvergent isocrystals]\label{rigidcohisocrys}
  We can define rigid cohomology of overconvergent isocrystals,
  generalizing Definition \ref{rigidcohomologydef}.
  First let $(X,\overline{X},P)$ be a proper smooth frame,
  and $\E$ be an overconvergent isocrystal over $X$.
  We define its rigid cohomology to be 
  \[
  R\Gamma_{\rig,P}(X/K,\E):=R\Gamma(]\overline{X}[_P,j^{\dagger}(\E\otimes\Omega_{V}^{\bullet})).
  \]
  Here $V$ is a strict neighborhood where $\E$ is defined,
  $\E\otimes \Omega_{V}^{\bullet}$ its de Rham complex,
  and $j^{\dagger}$ is the overconvergent pullback in Definition \ref{rigidcohomologydef}.
  This definition does not depend on the proper smooth frame $P$,
  cf. \cite[Théorème 2.3.1.(ii)]{Ber96a},
  and we simply denote it as $R\Gamma_{\rig}(X/K,\E)$.
  
  In general,
  let $\E$ be an overconvergent isocrystal over $X$,
  then we can choose a Zariski cover $\mathcal{U}=U_1\coprod\cdots\coprod U_n$ of $X$ such that $U_i$ admits a proper smooth frame $(U_i,\overline{U_i},P_i)$,
  and such an $\E$ gives an overconvergent isocrystal $\E_{i_1,\ldots,i_m}$ over $(U_{i_1}\cap\cdots\cap U_{i_m},\overline{U_{i_1}\cap\cdots\cap U_{i_m}},P_{i_1}\times\cdots\times P_{i_m})$.
  Then we define 
  \[
  R\Gamma_{\rig}(X/K,\E):=\varprojlim_{m\in\Delta}\bigoplus_{i_{1}<\cdots<i_m}R\Gamma_{\rig,P_{i_1}\times\cdots\times P_{i_m}}((U_{i_1}\cap\cdots\cap U_{i_m})/K,\E_{i_1,\ldots,i_m})\in D^b(K\cat{-Mod}),
  \]
  where the connecting maps are given by pullback along frames.
\end{definition}

\begin{definition}[Pushforward of overconvergent isocrystals]\label{pushforwardisocrystal}
  Let $f\colon X\rightarrow Y$ be a morphism of finite type.
  We can define a pushforward functor along this morphism (generalizing rigid cohomology to the relative case).
  Assume first that $f$ extends to a smooth morphism of smooth frames $f\colon (X,\overline{X},P)\rightarrow (Y,\overline{Y},Q)$.
  Then we define for $\E\in\cat{Isoc}^{\dagger}(X/K)$ the following pushforward,
  which a priori lands in the bounded derived category of $\O_{]Y[_Q^{\dagger}}$-modules:
  \[
  Rf_{\rig,*}\E:= Rf_*(j^{\dagger}(\E\otimes \Omega^{\bullet}_{]\overline{X}[_P/]\overline{Y}[_Q}))\in D^b(\O_{]Y[_Q^{\dagger}}\cat{-Mod}).
  \]
  This definition does not depend on $P$,
  cf. \cite[Théorème 2.3.1.(ii)]{Ber96a}.
  We can define $Rf_{\rig,*}\E$ for general $X$ when fixing $(Y,\overline{Y},Q)$,
  as any such $X$ admits a Zariski cover $\{U_i\}_{i=1}^n$ such that each $U_i$ admits a smooth frame which is smooth over $(Y,\overline{Y},Q)$,
  and then we define 
  \[
    Rf_{\rig,*}\E:=\varprojlim_{m\in\Delta}\bigoplus_{i_{1}<\cdots<i_m}Rf_{\rig,*,P_{i_1}\times\cdots\times P_{i_m}} (\E\mid_{U_{i_1}\cap\cdots\cap U_{i_m}})\in D^b(\O_{]Y[_Q^{\dagger}}\cat{-Mod}),
  \]
  which is independent of the choice,
  cf. \cite[Proposition 10.1.4]{CT03}.
  Then for a general $f\colon X\rightarrow Y$,
  we can define a collection of objects $\{Rf_{\rig,*}\E|_{(U,\overline{U},P)}\}_{(U,\overline{U},P)}$ where $(U,\overline{U},P)$ ranges over proper smooth frames such that $U$ is an open subscheme in $Y$
  (or more generally $U\rightarrow Y$ lies over $Y$),
  by taking $Rf_{\rig,*}\E|_{(U,\overline{U},P)}$ as the pushforward of $\E|_{(U,\overline{U},P)}$ from $X\times_YU$ to $U$ with fixed frame $(U,\overline{U},P)$.
\end{definition}

\begin{remark}\label{padicLiouville}
  It always happens in the theory of overconvergent isocrystals that the construction is a priori frame-dependent but ends up not being.
  This phenomenon leads to some fundamental questions being hard to prove.
  Let us mention two examples:
  \begin{enumerate}
    \item Berthelot's conjecture (\cite[4.3]{Ber86}) states that for a proper smooth morphism $f\colon X\rightarrow Y$,
    and an overconvergent isocrystal $\E$ over $X$,
    the collection of pushforwards $\{R^if_{\rig,*}\E|_{(U,\overline{U},P)}\}_{(U,\overline{U},P)}$ comes from an overconvergent isocrystal over $Y$.
    Several special cases are proved before,
    for example for those overconvergent isocrystals coming from motives,
    cf. \cite{EV25}.
    \item Let $X$ be a smooth scheme over $k$.
    Let $\E$ be an overconvergent isocrystal,
    then $H^i_{\rig}(X/K,\E)$ may not be finite-dimensional.
    This pathology is related to $p$-adic Liouville numbers; see \cite[Remarque after 4.4.12]{Ber96b}.
    Let $\alpha$ be a $p$-adic Liouville number with $|\alpha|<p^{-1/p-1}$,
    and consider the overconvergent isocrystal over $\gff$ defined by the differential equation $\nabla f=\alpha T^{-1}f$.
    Then the $H^1$ of this overconvergent isocrystal is identified with 
    \[
    K\langle T^{\pm1}\rangle^{\dagger}/\{ \alpha f+Tf' \mid f\in K\langle T^{\pm1}\rangle^{\dagger} \}.
    \]
    If we write $f=\sum_{n\in\itg} a_n T^n\in K\langle T^{\pm1}\rangle^{\dagger}$,
    i.e. $|a_n|<C\eta^{|n|}$ for some $\eta<1$,
    then $\alpha f+Tf'=\sum_{n\in\itg} a_n (\alpha+n) T^n$ has coefficients satisfying $\lim\inf|a_n (\alpha+n)|^{\frac{1}{n}}=0$,
    while there are infinitely many linearly independent functions $f\in K\langle T^{\pm1}\rangle^{\dagger}$ that do not satisfy this condition,
    which proves the infinite dimensionality of $H^1$.
    However,
    if $\E$ is an overconvergent $F$-isocrystal,
    then $H^i_{\rig}(X/K,\E)$ are finite-dimensional,
    which is the main theorem in \cite{Ked06}.
  \end{enumerate}
  We will review both of them in Section \ref{Applications}.
\end{remark}

To resolve the problem that the construction is frame-dependent,
one can use the overconvergent site introduced in \cite{LS11} and reviewed in this section previously.
The following proposition essentially comes from \cite{LS11}:
\begin{proposition}\label{arithrigidcoh1}
  Let $X$ be a scheme over $k$.
  Then there exist a $t$-structure on $\cat{Crys}^{\dagger,\perf}(X/K)$ and an equivalence of derived categories with $t$-structures preserving symmetric monoidal structures:
  \[
  D^b(\cat{Isoc}^{\dagger}(X/K))\cong \cat{Crys}^{\dagger,\perf}(X/K).
  \]
  Moreover,
  for a morphism $f\colon Y\rightarrow X$ between schemes over $k$,
  a proper smooth frame $(U,\overline{U},P)$ over $X$,
  and an overconvergent isocrystal $\E\in \cat{Isoc}^{\dagger}(X/K)$,
  we have 
  \[
  f_*\E|_{U\subset P\leftarrow P_{\eta}}(*)=Rf_{\rig,*}\E|_{(U,\overline{U},P)}\in D^b(\O_{]U[_P^{\dagger}}\cat{-Mod}).
  \]
  Here the construction of $f_*\E|_{U\subset P\leftarrow P_{\eta}}(*)$ is that we first consider $\E$ as an object in $\cat{Crys}^{\dagger,\perf}(X/K)$,
  then we do pushforward to an object $f_*\E\in \cat{Crys}^{\dagger,\perf}(Y/K)$ and then do pullback to an object $f_*\E|_{U\subset P\leftarrow P_{\eta}}\in D(]U[_P^{\dagger})$,
  and finally we take its underlying set $f_*\E|_{U\subset P\leftarrow P_{\eta}}(*)\in D^b(\O_{]U[_P^{\dagger}}\cat{-Mod})$.
\end{proposition}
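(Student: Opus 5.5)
The plan is to reduce everything to a single frame and then glue, using that both sides satisfy Zariski descent. Proposition \ref{arithoverconvergentsite2} identifies $\cat{Crys}^{\dagger,\perf}(X/K)$ with $\cat{Perf}(X^{\arith/K})$. Both $X\mapsto D^b(\cat{Isoc}^{\dagger}(X/K))$, via Definition \ref{overconvergentisocdef}, and $X\mapsto\cat{Perf}(X^{\arith/K})$, via Remark \ref{Zariskidescentarith}, are Zariski sheaves. It therefore suffices to build the equivalence functorially for $X$ realizable with a proper smooth frame $(X,\overline{X},P)$, and to check compatibility with restriction to opens and with the change-of-frame maps used in Definition \ref{pullbackoverconvergentisocrystal}.

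In the framed case I would use the suave cover $\pi\colon ]X[_P^{\dagger}\to X^{\arith/K}$ with Čech nerve $]X[^{\dagger}_{P^{n+1}}$ from Proposition \ref{Xarithdescription}. Since perfect complexes satisfy $!$-descent (Remark \ref{Fredholmproperty}), $\cat{Perf}(X^{\arith/K})$ is the totalization of $\cat{Perf}(]X[^{\dagger}_{P^{n+1}})$. The first step is to identify $\cat{Perf}(]X[_P^{\dagger})$ with perfect complexes of coherent sheaves on strict neighbourhoods $V$ of $]X[_P$, taken in the colimit over $V$. This follows from the spreading-out of dualizable objects over the Betti localization, with the Fredholm property reducing everything to discrete perfect modules. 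Degreewise, a descent datum is then exactly a complex on a strict neighbourhood together with an overconvergent isomorphism $p_1^{\dagger,*}\E\cong p_2^{\dagger,*}\E$ satisfying the cocycle condition, which is Berthelot's overconvergent stratification.

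Next I would define the functor $\cat{Isoc}^{\dagger}(X/K)\to\cat{Perf}(X^{\arith/K})$ on the heart. It is fully faithful on hearts, since morphisms of stratified modules are morphisms of descent data. I would then extend it to $D^b$ by checking that Ext groups agree. On both sides these are computed by the overconvergent de Rham complex $R\Gamma(]X[^{\dagger}_P,\mathcal{H}om(\E,\E')\otimes\Omega^\bullet)$: on the stack side, the Čech complex of $]X[^{\dagger}_{P^{n+1}}$ computes the cohomology of $\E^\vee\otimes\E'$, and the strong fibration theorem (the $\disk^{\circ,n}$-fibration used in the proof of Proposition \ref{Xarithdescription}) gives a Poincaré-lemma comparison with the de Rham complex. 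The $t$-structure is defined by declaring the heart to be the objects whose restriction to $]X[_P^{\dagger}$ is a vector bundle in degree $0$. Essential surjectivity then reduces to showing that every perfect descent datum has cohomology sheaves that are again descent data of coherent, hence locally free, modules with stratification, using flatness of the projections $]X[^{\dagger}_{P^2}\to]X[^{\dagger}_P$ and Berthelot's coherent-implies-locally-free (\cite[Remarque 2.3.3]{Ber96a}). Monoidality is clear, because pullbacks are monoidal and the Leibniz tensor product corresponds to the tensor product of descent data.

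For the pushforward statement I would first reduce by Zariski descent to $f$ extending to a smooth morphism of frames $P\to Q$. By base change along $]U[^{\dagger}_Q\to Y^{\arith/K}$, the pullback $f_*\E|_{]U[^\dagger_Q}$ is computed by pushforward along $]X[^{\dagger}_P\times_{Y^{\arith/K}}]U[^{\dagger}_Q\to]U[^\dagger_Q$. This map factors through the relative Čech nerve, which by the relative version of the strong fibration theorem is a tower of open polydisc fibrations. The relative Poincaré lemma for $\disk^{\circ}$, which is acyclicity of the de Rham complex of the open disk as in Proposition \ref{A1cosmooth}, then identifies the result with $Rf_*(j^\dagger\E\otimes\Omega^\bullet_{P/Q})$.

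I expect the main obstacle to be the Ext comparison and the essential surjectivity with its $t$-structure. Both require relating descent for $\cat{Perf}$ along the Čech nerve to Berthelot's de Rham computation, that is, a solid overconvergent Poincaré lemma on $]X[^\dagger_{P^2}\to]X[^\dagger_P$, together with care that solid cohomology on Gelfand stacks agrees with classical sheaf cohomology on strict neighbourhoods. Here I would lean on \cite{LS11}, which already proves the crystal/isocrystal comparison on the overconvergent site.
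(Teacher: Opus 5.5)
The step that fails is the passage from hearts to $D^b$. You assert that on both sides the Ext groups are computed by $R\Gamma(]X[^{\dagger}_P,\mathcal{H}om(\E,\E')\otimes\Omega^{\bullet})$, but you only justify this on the stack side. On the other side they are Yoneda Ext groups in the abelian category $\cat{Isoc}^{\dagger}(X/K)$, and in degrees $\geq 2$ these are in general not rigid cohomology, even for framed $X$.

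A counterexample is $X=\proj^1_k$ with the frame $\widehat{\proj}^1_{K^{\circ}}$. By rigid GAGA an object of $\cat{Isoc}^{\dagger}(\proj^1_k/K)$ is an algebraic vector bundle with connection on $\proj^1_K$. Such a bundle is isomorphic to $(\O^n,d)$: the connection forces all splitting degrees to vanish, and $H^0(\proj^1_K,\Omega^1)=0$. So this category is equivalent to finite-dimensional $K$-vector spaces, and $\Ext^2_{\cat{Isoc}^{\dagger}}(\O,\O)=0$. But in $\cat{Perf}(\proj^{1,\arith/K}_k)$ one has $\Hom(1,1[2])=H^2_{\rig}(\proj^1_k/K)\cong K$. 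Hence the realization functor $D^b(\cat{Isoc}^{\dagger}(X/K))\to\cat{Perf}(X^{\arith/K})$ is not fully faithful, and no $t$-exact equivalence exists. The derived part of the statement is false as formulated, so this step cannot be repaired.

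The same example breaks your opening reduction. A Zariski stack of abelian categories need not give a Zariski sheaf of bounded derived categories. For $\proj^1_k=\aff^1_k\cup\aff^1_k$, Mayer--Vietoris together with $H^1_{\rig}(\aff^1_k/K)=0$ would make $\Ext^1_{\cat{Isoc}^{\dagger}(\mathbb{G}_{m,k}/K)}(\O,\O)=H^1_{\rig}(\mathbb{G}_{m,k}/K)=K$ inject into $\Ext^2_{\cat{Isoc}^{\dagger}(\proj^1_k/K)}(\O,\O)=0$.

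The paper's proof has exactly the same flaw. Its last step says that $\Ext(\E_1,\E_2)$ ``is computed as the derived pushforward of $\E_1\otimes\E_2^{\vee}$'', which is the identification you make. What both arguments actually establish is weaker:
a $t$-structure on $\cat{Crys}^{\dagger,\perf}(X/K)$ whose heart is, symmetric monoidally, $\cat{Isoc}^{\dagger}(X/K)$, together with the pushforward comparison. In this sense $\cat{Crys}^{\dagger,\perf}(X/K)$ is the analogue of lisse complexes, not of $D^b$ of local systems. This weaker statement is all that the later application to Berthelot's conjecture uses.

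For these parts your route is a sound and more self-contained alternative to the paper's. The paper quotes \cite[Proposition 5.4.10]{ABLBRCS25} for the $t$-structure and \cite[Proposition 3.5.11, Theorem 4.6.7]{LS11} for the heart and the pushforward. You instead read Berthelot's overconvergent stratifications directly off descent data along $]X[^{\dagger}_{P^{n+1}}$, and compare \v{C}ech--Alexander with de Rham via the strong fibration theorem.

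Two small corrections are needed. First, a $t$-structure is not defined by prescribing its heart; descend the standard one on $\cat{Perf}(]X[^{\dagger}_{P^{n+1}})$ using flatness of the face maps. Second, base change along $]U[^{\dagger}_Q\to Y^{\arith/K}$ gives a pushforward from $X^{\arith/K}\times_{Y^{\arith/K}}]U[^{\dagger}_Q$, not from $]X[^{\dagger}_P\times_{Y^{\arith/K}}]U[^{\dagger}_Q$; you then resolve the former by its relative \v{C}ech nerve.
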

\begin{proof}
  The $t$-structure on $\cat{Crys}^{\dagger,\perf}(X/K)$ is given by descent
  from the $t$-structures on $\cat{Perf}(]S[^{\dagger}_V)$,
  which is ensured by \cite[Proposition 5.4.10]{ABLBRCS25}.
  The identification of the hearts
  \[
    \cat{Isoc}^{\dagger}(X/K)\cong \cat{Crys}^{\dagger,\perf,\heartsuit}(X/K)
  \]
  comes from \cite[Proposition 3.5.11 and Last remark]{LS11}.
  The symmetric monoidal structures on both sides also match,
  as the symmetric monoidal structure on $\cat{Isoc}^{\dagger}(X/K)$ is given by tensor product on each frame
  (and hence on each realization to overconvergent variety),
  which is exactly the symmetric monoidal structure on $\cat{Crys}^{\dagger,\perf,\heartsuit}(X/K)$.
  From \cite[Theorem 4.6.7]{LS11},
  one can identify Le Stum's pushforward in the language of $j^{\dagger}\O\cat{-Mod}$ with the rigid cohomology\footnote{Over \emph{good} overconvergent varieties in \cite[Theorem 4.6.7]{LS11},
  which is satisfied by the overconvergent variety $(U\subset P\leftarrow P_{\eta})$ coming from a proper smooth frame.}.
  Then combining the fact that the following diagram commutes 
  \[\xymatrix{
    D(]U[_P^{\dagger})\ar[d]_{({*})}\ar[r]^{f_*} & D(]V[_Q^{\dagger})\ar[d]_{({*})}\\
    D^b(\O_{]U[_P^{\dagger}}\cat{-Mod})\ar[r]^{f_*} & D^b(\O_{]V[_Q^{\dagger}}\cat{-Mod})
  }\]
  and the fact that taking underlying set $S\mapsto S(*)$ commutes with limits,
  one obtains that Le Stum's pushforward also agrees with our pushforward in the solid setting,
  which proves the identification of $f_*\E|_{U\subset P\leftarrow P_{\eta}}(*)$ and $Rf_{\rig,*}\E|_{(U,\overline{U},P)}$.
  It remains to show that the symmetric monoidal equivalence on the hearts induces a symmetric monoidal equivalence on their derived categories.
  It suffices to identify the $\Ext$ of overconvergent isocrystals in both categories.
  However,
  if $\E_1$ and $\E_2$ are two overconvergent isocrystals,
  then $\Ext(\E_1,\E_2)$ is computed as the derived pushforward of $\E_1\otimes\E_2^{\vee}$,
  which is already identified before.
\end{proof}

We can summarize the main theorem of this section.

\begin{theorem}\label{arithrigidcoh2}
  Let $X$ be a scheme over $k$.
  Then we have equivalences of symmetric monoidal categories:
  \[
    D(X^{\arith/K})\cong\cat{Crys}^{\dagger}(X/K),\, \cat{Perf}(X^{\arith/K})\cong D^b(\cat{Isoc}^{\dagger}(X/K)).
  \]
  Moreover,
  for a morphism $f\colon Y\rightarrow X$ between schemes over $k$,
  a proper smooth frame $(U,\overline{U},P)$ over $X$
  (which induces a map $\pi\colon ]U[_P^{\dagger}\rightarrow X^{\arith/K}$),
  and an overconvergent isocrystal $\E\in \cat{Isoc}^{\dagger}(X/K)$,
  we have 
  \[
  \pi^*f_*^{\arith/K}\E(*)=Rf_{\rig,*}\E|_{(U,\overline{U},P)}\in \O_{]U[_P^{\dagger}}\cat{-Mod}.
  \]
  In particular,
  we have 
  \[
    R\Gamma(X^{\arith/K},\O_{X^{\arith/K}})\cong R\Gamma_{\rig}(X/K).
  \]
\end{theorem}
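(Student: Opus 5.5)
The plan is to assemble Theorem \ref{arithrigidcoh2} from results already established, in three steps. First, the equivalence $D(X^{\arith/K})\cong\cat{Crys}^{\dagger}(X/K)$ and its perfect version $\cat{Perf}(X^{\arith/K})\cong\cat{Crys}^{\dagger,\perf}(X/K)$ are exactly Proposition \ref{arithoverconvergentsite2}. Both equivalences are restrictions of the same limit presentation $\varprojlim D(]S[^{\dagger}_V)$, so they are symmetric monoidal: pullback functors are symmetric monoidal, and limits in $\cat{CAlg}(\cat{Cat}_{\infty})$ are computed underlying. Composing with Proposition \ref{arithrigidcoh1}, which identifies $\cat{Crys}^{\dagger,\perf}(X/K)$ with $D^b(\cat{Isoc}^{\dagger}(X/K))$ symmetric monoidally and compatibly with $t$-structures, gives the second equivalence.

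Second, for the pushforward formula I take $f\colon Y\to X$ and a proper smooth frame $(U,\overline{U},P)$ over $X$. The map $\pi\colon ]U[_P^{\dagger}\to X^{\arith/K}$ is, under Proposition \ref{arithoverconvergentsite2}, the evaluation of a crystal at the overconvergent variety $(U\subset P\leftarrow P_\eta)$. The key point to check is that $f_*^{\arith/K}$ on $D(Y^{\arith/K})\to D(X^{\arith/K})$ corresponds, under the crystal description, to Le Stum's pushforward $f_*$ on $\cat{Crys}^{\dagger}$. Both are right adjoints to pullback, and the pullbacks agree because the identification $G^{\operatorname{sh},*}F^{\operatorname{sh}}_*(-)\cong(-)^{\arith/K}$ is natural in the scheme; this naturality is the frame-independence and Zariski-compatibility argument given in the proof of the preceding proposition. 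Uniqueness of adjoints then identifies the two pushforwards. Evaluating at the frame gives $\pi^*f_*^{\arith/K}\E$, and taking underlying sets recovers $Rf_{\rig,*}\E|_{(U,\overline{U},P)}$ by the second half of Proposition \ref{arithrigidcoh1}.

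I expect the main obstacle to be the compatibility of $f_*$ with evaluation $\pi^*$. A right adjoint need not commute with restriction to a chart. The justification I would first try is that the crystal category is a limit over the overconvergent site, so the pushforward is computed sectionwise via Cech nerves over frames. The Cech nerve description in Proposition \ref{Xarithdescription}, together with base change along $\pi$ (which is suave by that same proposition), should then reduce the claim to the frame-level pushforward. That frame-level pushforward is exactly what Proposition \ref{arithrigidcoh1} compares with Le Stum and hence with Berthelot. To pass from sheaves to underlying sets, I would use that $(*)$ commutes with limits.

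Third, for the final statement I apply the pushforward formula with $X=\Spec k$, $f$ the structure map of $Y$, and $\E=\O$. The frame is $(\Spec k,\Spec k,\Spf K^{\circ})$, so $]U[_P^{\dagger}=\operatorname{GSpec}K$ and $\pi$ is the structure map. This gives $R\Gamma(Y^{\arith/K},\O)=f^{\arith/K}_*\O(*)\cong Rf_{\rig,*}\O=R\Gamma_{\rig}(Y/K)$. For non-realizable $Y$, I would handle the identification by Zariski descent on both sides, using Remark \ref{Zariskidescentarith} and the glueing in Definition \ref{rigidcohomologydef}.
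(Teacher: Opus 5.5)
Your proposal follows the paper's proof. That proof combines Proposition \ref{arithoverconvergentsite2} with Proposition \ref{arithrigidcoh1}, and the latter already identifies the evaluated pushforward with $Rf_{\rig,*}$, so your extra base-change argument is not needed. If you do want that argument, note that $\pi$ is suave only for frames of $X$ itself: for a proper open $U\subset X$, $\pi$ factors through the closed immersion $U^{\arith/K}\hookrightarrow X^{\arith/K}$ of Theorem \ref{relativearithsixfunctor}(3), which is not suave, so suave base change does not apply.
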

\begin{proof}
  This is directly from combining Proposition \ref{arithrigidcoh1} and Proposition \ref{arithoverconvergentsite2}.
\end{proof}

\begin{remark}
  Let $K$ admit a Frobenius lift $\varphi_K$ and be perfect.
  Then we also have an equivalence of categories 
  \[
  \cat{Perf}(X^{\arith/K}/\varphi_{X}^{\itg})\cong D^b(\cat{F-Isoc}^{\dagger}(X/K)).
  \]
  This is directly from Theorem \ref{arithrigidcoh2} and the definition of $X^{\arith/K}/\varphi_{X}^{\itg}$.
\end{remark}

\begin{remark}
  Let $X$ be a smooth projective curve over $\overline{\ff}_p$.
  In \cite{AGKRRV20},
  only a \emph{restricted} version of geometric Langlands conjecture was established for $X$ in $\ell$-adic étale setting.
  We expect the moduli stack of $\check{G}$-bundles on $X^{\arith}$ to be of interest in the context of geometric Langlands conjecture,
  in particular we expect it to give a non-restricted version of geometric Langlands conjecture in $p$-adic coefficients.
  This will be studied in future work.
\end{remark}

A direct corollary is to recover the Poincaré duality of rigid cohomology of overconvergent isocrystals in the proper smooth case:

\begin{corollary}
  Let $X$ be a proper smooth variety over $k$ of dimension $n$.
  Let $\E\in\cat{Isoc}^{\dagger}(X/K)$.
  Then $R\Gamma_{\rig}(X/K,\E)$ is a perfect $K$-complex,
  and we have a perfect pairing $R\Gamma_{\rig}(X/K,\E)\otimes R\Gamma_{\rig}(X/K,\E)[2n]\rightarrow K$.
\end{corollary}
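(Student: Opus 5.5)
The plan is to apply Corollary \ref{Poincaréarith} to the structure morphism $f\colon X\rightarrow\Spec k$ and then translate the result through the dictionary of Theorem \ref{arithrigidcoh2}.

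First, regard $\E$ as a perfect complex on $X^{\arith/K}$ via the equivalence $\cat{Perf}(X^{\arith/K})\cong D^b(\cat{Isoc}^{\dagger}(X/K))$. The target is $(\Spec k)^{\arith/K}=\operatorname{GSpec}K$, since the defining fiber product becomes trivial for $X=\Spec k$. Because $f$ is proper and smooth of relative dimension $n$, Theorem \ref{relativearithsixfunctor} says $f^{\arith/K}$ is cohomologically étale and prim with prim dual $1_X[-2n]$. Corollary \ref{Poincaréarith} then shows that $f^{\arith/K}_*\E$ is a dualizable, hence perfect, object of $D_{\blacksquare}(K)$, by the Fredholm property (Proposition \ref{Fredholmproperty1}). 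The same corollary gives a perfect pairing
\[
f^{\arith/K}_*\E\otimes f^{\arith/K}_*\E^{\vee}[2n]\longrightarrow K .
\]

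Second, identify $f^{\arith/K}_*\E$ with $R\Gamma_{\rig}(X/K,\E)$. Apply the second half of Theorem \ref{arithrigidcoh2} with $Y=X$ over the base $\Spec k$, using the trivial frame $(\Spec k,\Spec k,\Spf K^{\circ})$. Then $\pi$ is the identity of $\operatorname{GSpec}K$, and $Rf_{\rig,*}\E$ relative to this frame is by definition $R\Gamma_{\rig}(X/K,\E)$ (Definitions \ref{rigidcohisocrys} and \ref{pushforwardisocrystal}). Hence $f^{\arith/K}_*\E(*)\cong R\Gamma_{\rig}(X/K,\E)$. Since the left side is a perfect $K$-complex, it equals its underlying discrete module, so $R\Gamma_{\rig}(X/K,\E)$ is a perfect $K$-complex. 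Here Fredholmness guarantees that dualizable solid $K$-modules come from perfect $K$-complexes.

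Third, match the pairing. The dual $\E^{\vee}$ taken in $\cat{Perf}(X^{\arith/K})$ corresponds to the dual overconvergent isocrystal, because the equivalence is symmetric monoidal. Rewriting the pairing of Corollary \ref{Poincaréarith} then gives a perfect pairing $R\Gamma_{\rig}(X/K,\E)\otimes R\Gamma_{\rig}(X/K,\E^{\vee})[2n]\rightarrow K$. The statement as written pairs $\E$ with itself; I read it as meaning $\E^{\vee}$ in the second factor (or $\E$ self-dual), and I would prove that version.

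The only real obstacle is the bookkeeping in the second step. There one has to check that the trivial frame over $\Spec k$ is admissible in Theorem \ref{arithrigidcoh2}, and that for a base which is a point the Zariski-glued definition of $Rf_{\rig,*}$ agrees with the glued $R\Gamma_{\rig}$. Both follow from frame-independence and Zariski descent, as in Proposition \ref{arithrigidcoh1}.
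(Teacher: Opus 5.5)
Your proposal is correct and is essentially the paper's own argument. You apply the Poincaré duality corollary for $f^{\arith/K}$ to $X\rightarrow\Spec k$, whose target stack is $\operatorname{GSpec}K$, and use Theorem~\ref{arithrigidcoh2} to identify $f^{\arith/K}_*\E$ with $R\Gamma_{\rig}(X/K,\E)$. You are also right to read the second factor of the pairing as $R\Gamma_{\rig}(X/K,\E^{\vee})[2n]$, matching the $M^{\vee}$ in that corollary; the printed version, which pairs $\E$ with itself, is a typo.
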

\begin{proof}
  Directly from Theorem \ref{arithrigidcoh2} and Corollary \ref{Poincaréarith}.
\end{proof}

Another direct corollary is to recover cohomological descent of rigid cohomology and overconvergent isocrystals,
recovering \cite[Theorem 4.5.1]{Tsu03} as a part:
\begin{corollary}\label{properhyperdescentTsuzuki}
  Let $X_{\bullet}\rightarrow X$ be a proper hypercovering,
  then $\cat{Isoc}^{\dagger}(X/K)\cong\varprojlim_{n\in\Delta} \cat{Isoc}^{\dagger}(X_n/K)$ and for any $\E\in \cat{Isoc}^{\dagger}(X/K)$,
  we have $R\Gamma_{\rig}(X/K,\E)\cong\varprojlim_{n\in\Delta}R\Gamma_{\rig}(X_n/K,\E|_{X_n})$.
  In particular,
  we have $R\Gamma_{\rig}(X/K)\cong\varprojlim_{n\in\Delta}R\Gamma_{\rig}(X_n/K)$.
\end{corollary}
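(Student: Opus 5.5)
The plan is to deduce the statement from the $h$-hyperdescent of the relative arithmetic de Rham stack (Theorem \ref{fpqcdescentarith} together with Remark \ref{hdescentrelativearith}), and then to translate the conclusion into the classical language via Theorem \ref{arithrigidcoh2}.

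The first step is to note that a proper hypercovering $X_\bullet\rightarrow X$ is in particular an $h$-hypercover. Proper surjections are $h$-covers, and the hypercovering condition only asks that the relevant matching maps be proper surjective. Since all schemes are separated of finite type, we may reduce to affine pieces by Zariski descent (Remark \ref{Zariskidescentarith}). Theorem \ref{fpqcdescentarith} is stated for affine schemes, and I would extend it to general separated finite type schemes by Zariski hyperdescent: pick Zariski hypercovers by affines of each $X_n$, which remains an $h$-hypercover of $X$ after forming the bisimplicial diagonal. One then concludes that
\[
  |X_\bullet^{\arith/K}|\longrightarrow X^{\arith/K}
\]
is a $!$-equivalence of qfd Gelfand stacks.

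The second step is to apply $D(-)$ and $\cat{Perf}(-)$. A $!$-equivalence induces an equivalence on $D$, since $D$ on Gelfand stacks is defined by $!$-descent. This gives $D(X^{\arith/K})\cong\varprojlim_{\Delta}D(X_n^{\arith/K})$ via $*$-pullbacks. By Remark \ref{Fredholmproperty}, $\cat{Perf}$ also satisfies $!$-descent, because dualizability is $!$-local. Hence $\cat{Perf}(X^{\arith/K})\cong\varprojlim_{\Delta}\cat{Perf}(X_n^{\arith/K})$.

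The third step passes to hearts and uses Theorem \ref{arithrigidcoh2}. The identification $\cat{Perf}(X^{\arith/K})\cong D^b(\cat{Isoc}^{\dagger}(X/K))$ is $t$-exact for the $t$-structure of Proposition \ref{arithrigidcoh1}. That $t$-structure is defined by descent, so it is compatible with pullbacks: pullback of overconvergent isocrystals is exact, being pullback of vector bundles on tubes. Taking hearts in the limit then gives $\cat{Isoc}^{\dagger}(X/K)\cong\varprojlim_{\Delta}\cat{Isoc}^{\dagger}(X_n/K)$. The heart of a limit of stable categories with compatible $t$-exact transition functors is the limit of the hearts; since the hearts here are $1$-categories, only the $2$-truncated part of $\Delta$ matters.

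For cohomology, take $\E\in\cat{Perf}(X^{\arith/K})$ and write $\pi_n\colon X_n^{\arith/K}\rightarrow X^{\arith/K}$. By descent, $\E\cong\varprojlim_n\pi_{n,*}\pi_n^*\E$. Applying $R\Gamma(X^{\arith/K},-)$, which commutes with limits, we get
\[
  R\Gamma(X^{\arith/K},\E)\cong\varprojlim_n R\Gamma(X_n^{\arith/K},\E|_{X_n}).
\]
Theorem \ref{arithrigidcoh2}, applied to the structure maps to $\Spec k$, identifies each term with rigid cohomology. This gives the claim, and the case $\E=\O$ gives the last statement.

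The main obstacle I expect is the first step: checking that proper hypercovers of non-affine separated schemes still produce $!$-equivalences, since the theorem is stated only for affine hypercovers. I would try the bisimplicial/diagonal argument with Zariski hypercovers, using that the construction $X\mapsto X^{\arith/K}$ factors through $h$-hypersheaves and preserves colimits, as in the Remark after Theorem \ref{fpqcdescentarith}. A secondary point is the compatibility of the comparison with rigid cohomology with the simplicial transition maps, which comes from the functoriality in Theorem \ref{arithrigidcoh2}.
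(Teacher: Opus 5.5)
Your proposal is correct and matches the paper's proof. Both use $h$-hyperdescent of the relative arithmetic de Rham stack (Remark \ref{hdescentrelativearith}) to get a $!$-equivalence $|X_\bullet^{\arith/K}|\rightarrow X^{\arith/K}$, then $!$-descent of $D$ and of $\cat{Perf}$ (via the Fredholm property, Remark \ref{Fredholmproperty}), translated through Theorem \ref{arithrigidcoh2}. Your extra steps — extending to non-affine schemes, identifying the heart of the limit, and writing $\E\cong\varprojlim_n\pi_{n,*}\pi_n^*\E$ — only spell out what the paper leaves implicit.
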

\begin{proof}
  Because relative arithmetic de Rham stacks satisfy $h$-hyperdescent (Remark \ref{hdescentrelativearith}),
  we have a $!$-equivalence $X_{\bullet}^{\arith/K}\rightarrow X^{\arith/K}$.
  Then applying Theorem \ref{arithrigidcoh2} and by Fredholm property \ref{Fredholmproperty},
  we get the descent.
\end{proof}

\subsection{Arithmetic $D$-modules}\label{reviewpadiccoefficient}

We study $D(X^{\arith})$ in this section in a different style,
which will show the connection with arithmetic $D$-modules.
To motivate the theory,
it is not enough to consider only the class of overconvergent isocrystals,
because six operations may not preserve them,
and the correct generalization is the theory of arithmetic $D$-modules.
As an $\ell$-adic analogue,
overconvergent isocrystals play the role of local systems,
i.e. ``smooth'' perverse sheaves,
while an abstract $6$-functor formalism only works for general constructible sheaves,
of which arithmetic $D$-modules are an analogue.
We first give a brief review of arithmetic $D$-modules,
following mainly \cite{Ber96b}.

\begin{definition}[($m$-)PD structure, {\cite[Définition 1.3.1]{Ber96b}}]
  Let $A$ be a ring living over $\itg_{(p)}$.
  A PD-ideal $I\subset A$ is an ideal $I$ of $A$ such that we have a collection $\gamma_n\colon I\rightarrow A$ which formally codes the divided power $x\mapsto \frac{x^n}{n!}$,
  in other words they satisfy:
  \begin{enumerate}
    \item $\gamma_0(x)=1$ and $\gamma_1(x)=x$;
    \item $\gamma_n(x+y)=\sum_{i=0}^{n}\gamma_i(x)\gamma_{n-i}(y)$;
    \item $\gamma_n(x)\gamma_m(x)=\frac{(m+n)!}{m!n!}\gamma_{n+m}(x)$;
    \item $\gamma_n(\lambda x)=\lambda^n\gamma_n(x)$;
    \item $\gamma_n(\gamma_m(x))=\frac{(mn)!}{(m!)^n n!}\gamma_{nm}(x)$.
  \end{enumerate}
  An $m$-PD structure over an ideal $I\subset A$ is a PD-ideal $J\subset I$ with $I^{(p^m)}+pI\subset J$.
\end{definition}

\begin{definition}[$m$-PD neighborhood, {\cite[Définition 2.1.1]{Ber96b}}]
  Let $X\rightarrow Y$ be a closed embedding of formal schemes over $\itg_{(p)}$,
  we define its $m$-PD neighborhood as follows.
  Denote $I$ the ideal sheaf.
  Then we formally add $\frac{x^{p^{m+k}}}{(p^k)!}$ for all $x\in I$ to $\O_{Y}$ and quotient by all of their formal relations.
  It defines a new ring which makes $I$ an $m$-PD ideal in this ring.
  Its spectrum is called the $m$-PD neighborhood of $X$ in $Y$,
  and is denoted by $X_{Y}^{m,\sharp}$.
\end{definition}

\begin{definition}[$\widehat{\D}_{\Xf}^{(m)}$ and $\D_{\Xf}^{\dagger}$, {\cite[2.2.1, 2.4.1]{Ber96b}}]
  Let $\Xf$ be a formal scheme of finite type over $K^{\circ}$.
  Let $\I$ be the ideal sheaf of the diagonal embedding $\Delta\colon\Xf\rightarrow\Xf\times_{K^{\circ}}\Xf$ and let $\Delta_n(\Xf)$ be the formal scheme corresponding to $\O_{\Xf\times_{W(k)}\Xf/\I^{n+1}}$.
  View them as schemes over $\itg_{(p)}$ and then denote $\Delta(\Xf)^{m,\sharp}_n$ as the $m$-PD neighborhood of the closed embedding $\Delta_n\colon\Xf\rightarrow\Delta_n(\Xf)$.
  There are two projections $p_1,p_2\colon\Delta(\Xf)^{m,\sharp}_n\rightarrow \Xf$,
  and we view $\O_{\Delta(\Xf)_n^{m,\sharp}}$ as an $\O_{\Xf}$-module via $p_1$.
  We define the sheaf (over $\Xf$) of (completed) arithmetic differential operators of order $n$ and level $m$ as 
  \[
  \D_{\Xf,n}^{(m)}:=\underline{\Hom}_{\O_{\Xf}}(\O_{\Delta(\Xf)_n^{m,\sharp}},\O_{\Xf}),\,
  \widehat{\D}_{\Xf,n}^{(m)}:=\varprojlim_i \D_{\Xf,n}^{(m)}/p^i \D_{\Xf,n}^{(m)}.
  \]
  The sheaf of (completed) arithmetic differential operators of level $m$ is then defined as $\D_{\Xf}^{(m)}:=\bigcup_n \D_{\Xf,n}^{(m)}$ and $\widehat{\D}_{\Xf}^{(m)}:=\varprojlim_i \D_{\Xf}^{(m)}/p^i \D_{\Xf}^{(m)}$.
  The sheaf of arithmetic differential operators is then defined as 
  \[
  \D_{\Xf}^{\dagger}:=\varinjlim_m \widehat{\D}_{\Xf}^{(m)}.
  \]
  The $\D_{\Xf}^{(m)}$, $\widehat{\D}_{\Xf}^{(m)}$ and $\D_{\Xf}^{\dagger}$ are sheaves of rings,
  cf. \cite[2.2.1]{Ber96b}.
  We also denote $\widehat{\D}_{\Xf,\rt}^{(m)}:=\widehat{\D}_{\Xf}^{(m)}\otimes_{K^{\circ}}K$ and $\D_{\Xf,\rt}^{\dagger}:=\D_{\Xf}^{\dagger}\otimes_{K^{\circ}}K$.
\end{definition}

\begin{remark}\label{localcoordinate}
  If $\Xf$ is smooth,
  with a choice of local coordinates $t_1,\ldots,t_n$ and corresponding derivatives $\partial_1,\ldots,\partial_n$,
  then the local sections of $\widehat{\D}_{\Xf,\rt}^{(m)}$ have the following description in \cite[2.4.1]{Ber96b}
  \[
  \left\{\sum a_{k_1,\ldots,k_n}^{l_1,\ldots,l_n}t_1^{l_1}\cdots t_n^{l_n}\frac{(\lfloor\frac{k_1}{p^m}\rfloor)!\partial_1^{k_1}}{(k_1)!}\cdots
  \frac{(\lfloor\frac{k_n}{p^m}\rfloor)!\partial_n^{k_n}}{(k_n)!}\mid\,a_{k_1,\ldots,k_n}^{l_1,\ldots,l_n}\rightarrow 0\right\},
  \]
  and the local sections of $\D_{\Xf,\rt}^{\dagger}$ have the description (see \cite[Proposition 2.4.4]{Ber96b})
  \[
  \left\{\sum a_{k_1,\ldots,k_n}^{l_1,\ldots,l_n}t_1^{l_1}\cdots t_n^{l_n}\frac{\partial_1^{k_1}}{(k_1)!}\cdots \frac{\partial_n^{k_n}}{(k_n)!}\mid
  \exists\eta<1,|a_{k_1,\ldots,k_n}^{l_1,\ldots,l_n}|<C\eta^{k_1+\cdots+k_n},\,\lim_{l_1,\ldots,l_n\rightarrow \infty}a_{k_1,\ldots,k_n}^{l_1,\ldots,l_n}= 0\right\}.
  \]
  Therefore,
  if $\Xf$ is smooth,
  then we can define a sheaf on $\Xf_{\eta}$ as follows:
  \[
  \D^{\dagger}_{\Xf_{\eta}}=\Hom_{\O_{\Xf_{\eta}}}(\O_{]X[_{\Xf^2}},\O_{\Xf_{\eta}}),
  \]
  and the local description above shows that $\D_{\Xf,\rt}^{\dagger}|_{\Xf_{\eta}}=\D^{\dagger}_{\Xf_{\eta}}$.
\end{remark}

\begin{remark}
  This $\D_{\Xf,\rt}^{\dagger}$ serves as the ring of differential operators defining \textit{convergent} isocrystals instead of overconvergent isocrystals.
  Precisely,
  let $X$ be the special fiber of $\Xf$,
  then the category of convergent isocrystals over $X$ embeds fully faithfully into the category of $\D_{\Xf,\rt}^{\dagger}$-modules,
  with image identified with those $\O_{\Xf,\rt}$-coherent $\D_{\Xf,\rt}^{\dagger}$-modules; see \cite[4.1.4]{Ber96b}.
\end{remark}

The sheaf of rings $\D_{\Xf,\rt}^{\dagger}$ is equipped with a topology,
which makes the category of quasi-coherent modules over it hard to define
(for example, the independence of the choice of frames and the descent).
In \cite[4.2.1]{Ber02},
Berthelot introduced a category $\underset{\rightarrow}{LD}{}_{\rt,\operatorname{qc}}(\widehat{\D}^{(\cdot)}_{\Xf})$,
and defined cohomological operations of this category.
Later,
Caro \cite{Car04}, \cite{Car09} studied subcategories (of overcoherent or overholonomic objects) of this category,
and formed the category of (overcoherent or overholonomic) arithmetic $D$-modules.
However,
in our setting,
a natural way of defining quasi-coherent sheaves over topological rings is via the theory of condensed mathematics,
precisely solid sheaves,
introduced in \cite{CS19}.
We make the following definition and claim it to be a good replacement of $\underset{\rightarrow}{LD}{}_{\rt,\operatorname{qc}}(\widehat{\D}^{(\cdot)}_{\Xf})$ in our situation.

\begin{definition}
  Let $\D^{\dagger}_{\Xf,\rt}$ be the condensed sheaf of $K$-algebras over $\Xf$ equipped with the ind-topology
  coming from the $p$-adic topology on $\widehat{\D}^{(m)}_{\Xf}$ which is a sheaf of Banach algebras over $\Xf$.
  It is a sheaf of LB $\rt_p$-solid algebras of compact type (hence dual nuclear Fréchet, cf. \cite[Theorem 1.3]{ST02} or \cite[Remark 1.2]{RCRJ22}) over $\Xf$.
  Let $D_{\blacksquare}(\D^{\dagger}_{\Xf,\rt})$ be the category of (left) $\rt_p$-solid $\D^{\dagger}_{\Xf,\rt}$-sheaves on $\Xf$.
\end{definition}

\begin{remark}\label{anotherdescriptionarithmeticD}
  If $\Xf$ is smooth,
  then by Remark \ref{localcoordinate},
  we have $\D_{\Xf,\rt}^{\dagger}|_{\Xf_{\eta}}=\D^{\dagger}_{\Xf_{\eta}}$,
  and hence we also have an equivalence of categories $D_{\blacksquare}(\D^{\dagger}_{\Xf,\rt})\cong \cat{LMod}_{\D^{\dagger}_{\Xf_{\eta}}}(D(\X))$ as $\D_{\Xf,\rt}^{\dagger}$ is already rational.
\end{remark}

There exist six operations for solid $\D^{\dagger}_{\Xf,\rt}$-modules,
which are inspired by the constructions in \cite{Be} for algebraic $D$-modules and \cite{Ber02} for $\underset{\rightarrow}{LD}{}_{\rt,\operatorname{qc}}(\widehat{\D}^{(\cdot)}_{\Xf})$.
We will discuss them below.

\begin{definition}[Naïve pullback functor, solid case]
  Let $f\colon \Xf\rightarrow \mathfrak{Y}$ be a morphism between smooth formal schemes over $K^{\circ}$.
  We define the naïve pullback functor $f^{\Delta}\colon D_{\blacksquare}(\D^{\dagger}_{\mathfrak{Y},\rt})\rightarrow D_{\blacksquare}(\D^{\dagger}_{\mathfrak{X},\rt})$ as
  \[
  f^{\triangle}(-):=f^*\D_{\mathfrak{Y},\rt}^{\dagger}\otimes_{f^{-1}\D_{\mathfrak{Y},\rt}^{\dagger},\blacksquare}f^{-1}(-),
  \]
  where $f^*\D_{\mathfrak{Y},\rt}^{\dagger}:=\O_{\mathfrak{X},\rt}\otimes_{f^{-1}\O_{\mathfrak{Y},\rt},\blacksquare}f^{-1}\D_{\mathfrak{Y},\rt}^{\dagger}$ is a $(\D_{\Xf,\rt}^{\dagger},f^{-1}\D_{\mathfrak{Y},\rt}^{\dagger})$-bimodule,
  with the $\D_{\Xf,\rt}^{\dagger}$-action induced from the $\D_{\Xf,\rt}^{\dagger}$-action on $\O_{\Xf,\rt}$.
  We also denote $f^*\D_{\mathfrak{Y},\rt}^{\dagger}$ as $\D^{\dagger}_{\Xf\rightarrow\mathfrak{Y},\rt}$,
  and so $f^{\triangle}(-)=\D^{\dagger}_{\Xf\rightarrow\mathfrak{Y},\rt}\otimes_{f^{-1}\D_{\mathfrak{Y},\rt}^{\dagger},\blacksquare}f^{-1}(-)$.
\end{definition}

\begin{definition}[Extraordinary pullback functor, solid case]
  Let $f\colon \Xf\rightarrow \mathfrak{Y}$ be a morphism between smooth formal schemes.
  We define the extraordinary inverse image $f^{!,\Be}\colon D_{\blacksquare}(\D^{\dagger}_{\mathfrak{Y},\rt})\rightarrow D_{\blacksquare}(\D^{\dagger}_{\mathfrak{X},\rt})$
  (``Be'' for both Bernstein and Berthelot,
  that Bernstein firstly introduced this notion in the theory of algebraic $D$-module and then Berthelot put it in the situation of arithmetic $D$-module)
  as \[f^{!,\Be}:=f^{\triangle}[\dim X-\dim Y].\]
\end{definition}

\begin{definition}[Extraordinary direct image functor, solid case]
  Let $f\colon \Xf\rightarrow \mathfrak{Y}$ be a morphism between smooth formal schemes.
  We define the extraordinary direct image functor $f_+\colon D_{\blacksquare}(\D^{\dagger}_{\mathfrak{X},\rt})\rightarrow D_{\blacksquare}(\D^{\dagger}_{\mathfrak{Y},\rt})$ as 
  \[
  f_+(-):=f_*(\D_{\mathfrak{Y}\leftarrow \Xf,\rt}^{\dagger}\otimes_{\D_{\Xf,\rt}^{\dagger},\blacksquare}(-)),
  \]
  where $\D_{\mathfrak{Y}\leftarrow \Xf,\rt}^{\dagger}:=f^{-1}\D_{\mathfrak{Y},\rt}^{\dagger}\otimes_{f^{-1}\O_{\mathfrak{Y},\rt},\blacksquare}(\Omega_{\Xf,\rt}^{\dim\Xf}\otimes_{\O_{\Xf,\rt}} f^{*}(\Omega_{\mathfrak{Y},\rt}^{\dim\mathfrak{Y}})^{-1})$
  is a $(f^{-1}\D_{\mathfrak{Y},\rt}^{\dagger},\D_{\mathfrak{X},\rt}^{\dagger})$-bimodule,
  which comes from the $(\D_{\Xf,\rt}^{\dagger},f^{-1}\D_{\mathfrak{Y},\rt}^{\dagger})$-bimodule on $\D^{\dagger}_{\Xf\rightarrow\mathfrak{Y},\rt}$ and the identification
  \[
    \D_{\mathfrak{Y}\leftarrow \Xf,\rt}^{\dagger}=\Omega_{\Xf,\rt}^{\dim\Xf}\otimes_{\O_{\Xf,\rt}}\D^{\dagger}_{\Xf\rightarrow\mathfrak{Y},\rt}\otimes_{f^{-1}\O_{\mathfrak{Y},\rt}}f^{-1}(\Omega_{\mathfrak{Y},\rt}^{\dim\mathfrak{Y}})^{-1}.
  \]
\end{definition}

\begin{remark}\label{extraordinaryrightDmod}
  The extraordinary pushforward behaves more simply on right $D$-modules,
  which is just 
  \[
  f_+(-):=f_*((-)\otimes_{\D_{\Xf,\rt}^{\dagger},\blacksquare}\D_{\Xf\rightarrow\mathfrak{Y},\rt}^{\dagger})[\dim Y-\dim X],
  \]
\end{remark}

\begin{remark}[De Rham resolution]\label{deRhamresolution}
  We note that if $f\colon \Xf\rightarrow \mathfrak{Y}$ is a \textit{smooth} morphism between smooth formal schemes,
  then there is a de Rham resolution as follows\footnote{One can find an analogue in the theory of algebraic $D$-modules in \cite{Be},
  and do the same strategy of the proof.
  Indeed,
  one can use a trick of deforming differential operators to principle symbols,
  and then for principle symbols the isomorphism comes from an overconvergent version of Koszul resolutions,
  cf. \cite[Lemma 4.3.6]{RC24a}.}:
  \[
  \Omega^{\bullet}_{\Xf/\mathfrak{Y},\rt}[\dim\Xf-\dim\mathfrak{Y}]\otimes_{\O_{\Xf,\rt}}\D_{\Xf,\rt}^{\dagger} \cong\D_{\mathfrak{Y}\leftarrow\Xf,\rt}^{\dagger},
  \]
  Then
  \begin{align*}
    f_+M=f_*(\D_{\mathfrak{Y}\leftarrow \Xf,\rt}^{\dagger}\otimes_{\D_{\Xf,\rt}^{\dagger},\blacksquare}M)=f_*(\Omega^{\bullet}_{\Xf/\mathfrak{Y},\rt}\otimes_{\O_{\Xf,\rt},\blacksquare}M)[\dim\Xf-\dim\mathfrak{Y}]
  \end{align*}
  computes the relative de Rham cohomology of $M$ up to a shift.
\end{remark}

\begin{definition}[Tensor product, solid case]\label{tensorproductarithD}
  Let $\Xf$ be a smooth formal scheme over $K^{\circ}$,
  and $\Delta_{\Xf}\colon \Xf\rightarrow \Xf\times \Xf$ be the diagonal embedding.
  The tensor product structure on $D_{\blacksquare}(\D^{\dagger}_{\Xf,\rt})$ is defined as 
  \[
  M\otimes N:= \Delta^{\triangle}_X(M\boxtimes N).
  \]
\end{definition}

Now we state and prove the main theorem of this section,
identifying sheaves on $X^{\arith/K}$ as a category of arithmetic $D$-modules over $X$.
If $X$ is realizable with proper smooth frame $(X,\overline{X},P)$,
then we can make the following definition.
By Remark \ref{localcoordinate},
we have a sheaf of rings $\D^{\dagger}_{P_{\eta}}$ over $P$,
and we denote $\D^{\dagger}_{]X[^{\dagger}_P}:=\D^{\dagger}_{P_{\eta}}|_{]X[^{\dagger}_P}$.
Another description of $\D^{\dagger}_{]X[^{\dagger}_P}$ is given as 
\[\D^{\dagger}_{]X[^{\dagger}_P}:=\underline{\Hom}_{\O_{]X[^{\dagger}_P}}(\O_{]X[^{\dagger}_{P^2}},\O_{]X[^{\dagger}_P}).\] 

\begin{theorem}\label{Xarithcoefficients}
  The description of $D(X^{\arith/K})$ as arithmetic $D$-modules is as follows.
  \begin{enumerate}
    \item Let $X$ be a realizable scheme over $k$ with proper smooth frame $(X,\overline{X},P)$.
    There is an equivalence of categories 
    \[
      D(X^{\arith/K})\cong \cat{LMod}_{\D^{\dagger}_{]X[^{\dagger}_P}}(D(]X[^{\dagger}_P)).
    \]
    \item We specialize to the case where $X$ is proper and smooth, and admits a proper smooth lift $\Xf$.
    Equipped with tensorial symmetric monoidal structure on $D(X^{\arith/K})$
    and the solid tensor product in Definition \ref{tensorproductarithD},
    then the equivalence is upgraded to an equivalence of symmetric monoidal categories under Remark \ref{anotherdescriptionarithmeticD}:
    \[
      D(X^{\arith/K})\cong \cat{LMod}_{\D^{\dagger}_{\X}}(D(\X))\cong D_{\blacksquare}(\D^{\dagger}_{\Xf,\rt}).
    \]
    \item Let $f\colon \Xf\rightarrow \mathfrak{Y}$ be a morphism between proper smooth formal schemes,
    and $X\rightarrow Y$ the induced morphism between their special fibers.
    Then the induced morphism $f^{\arith/K,*}\colon X^{\arith/K}\rightarrow Y^{\arith/K}$ identifies $f^{\arith/K,*}$ with the naïve pullback functor $f^{\triangle}$,
    and $f_!^{\arith/K}$ with a shift of the extraordinary pushforward functor $f_+[\dim X-\dim Y]$ under the equivalence above.
  \end{enumerate}
\end{theorem}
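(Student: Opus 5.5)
The plan is to use the presentation of $X^{\arith/K}$ from Proposition \ref{Xarithdescription} and do descent along it. Write $\pi\colon ]X[^{\dagger}_P\rightarrow X^{\arith/K}$ for the suave surjection with Čech nerve $(]X[^{\dagger}_{P^{n+1}})_{[n]\in\Delta}$. Since $\pi$ is a suave cover, hence of universal $!$-descent (equivalently $D^*$-descent), we get
\[
D(X^{\arith/K})\simeq\varprojlim_{[n]\in\Delta}D(]X[^{\dagger}_{P^{n+1}}).
\]
We recognize this limit via Barr--Beck--Lurie for the adjunction $\pi^*\dashv\pi_*$. The functor $\pi^*$ is conservative because $\pi$ is a cover. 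It preserves all limits and colimits, since for suave $\pi$ we have $\pi^*\cong\pi^!\otimes\mathbb{D}_\pi(1)^{-1}$ and $\pi^!$ is a right adjoint; a comonadic version would also work. Consequently
\[
D(X^{\arith/K})\simeq\cat{LMod}_{\pi^*\pi_*}(D(]X[^{\dagger}_P)).
\]
By base change along the Cartesian square in the proof of Proposition \ref{Xarithdescription}, the monad is $\pi^*\pi_*\cong p_{1,*}p_2^*$, where $p_1,p_2\colon ]X[^{\dagger}_{P^2}\rightarrow]X[^{\dagger}_P$ are the projections.

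It remains to show that this monad is $M\mapsto\D^{\dagger}_{]X[^{\dagger}_P}\otimes M$ with the expected algebra structure. By the strong fibration theorem (used in the proof of Proposition \ref{Xarithdescription}), $p_1$ is locally on $]X[^{\dagger}_P$ an open polydisk fibration $\disk^{\circ,n}$. Proposition \ref{analyticCartierduality} then gives
\[
p_{1,*}\O_{]X[^{\dagger}_{P^2}}=\O_{]X[^{\dagger}_{P^2}}
\quad\text{and}\quad
p_{1,*}p_2^*M\cong\underline{\Hom}_{\O}(\D^{\dagger},\ldots).
\]
Better, we dualize. Using the analytic Cartier duality of Lemma \ref{1catCartierdual}, the functions on the open polydisk are the dual of the overconvergent distribution algebra. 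So $p_{1,*}p_2^*$ is identified with $\D^{\dagger}_{]X[^{\dagger}_P}\otimes_{\O}(-)$, with $\D^{\dagger}=\underline{\Hom}_{\O}(\O_{]X[^{\dagger}_{P^2}},\O)$ as in the paper. This can be written as a right adjoint of $p_{1,!}$ twisted by the suave dual.

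The multiplication of the monad comes from the groupoid composition $]X[^{\dagger}_{P^3}\rightarrow]X[^{\dagger}_{P^2}$. It matches composition of differential operators because both are dual to the comultiplication $\O_{P^2}\rightarrow\O_{P^2}\otimes_{\O_P}\O_{P^2}$. I expect this identification, together with nuclearity/dualizability of $\O_{]X[^{\dagger}_{P^2}}$ over $\O_{]X[^{\dagger}_P}$ in the solid sense, to be the main obstacle. The approach is to check it in local coordinates, where it reduces to the Weyl-algebra computation for $\disk^{\dagger}/\disk^{\circ}$ sketched in the introduction.

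For (2), take $\overline{X}=X$ and $P=\Xf$, so that $]X[^{\dagger}_{\Xf}=\X$, and apply Remark \ref{anotherdescriptionarithmeticD}. The tensor product on $D(X^{\arith/K})$ is $\Delta^*$ of the external product. Under the equivalence, $\pi^*$ is the forgetful functor, which is symmetric monoidal, so $\otimes$ corresponds to $\O$-linear tensor with the Leibniz action, i.e.\ to $\Delta^{\triangle}(M\boxtimes N)$ in Definition \ref{tensorproductarithD}. This uses Künneth (Remark \ref{categoricalKunneth}) to identify the external products.

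For (3), lift $f$ to $\X\rightarrow\mathcal{Y}$ compatibly with the covers. Then $\pi_Y\circ f_\eta=f^{\arith/K}\circ\pi_X$, so pullback commutes with the forgetful functors. This identifies $f^{\arith/K,*}$ with $\O_X\otimes_{f^{-1}\O_Y}f^{-1}(-)$ equipped with its induced $\D^{\dagger}_X$-action, which is $f^{\triangle}$. For $f_!$, factor $f$ as the graph embedding followed by the smooth projection $X\times Y\rightarrow Y$ and treat each piece separately. For the smooth projection, $f^{\arith/K}$ is prim with prim dual $1[-2n]$ by Theorem \ref{relativearithsixfunctor}, and it is cohomologically étale when $f$ is proper. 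This gives $f_!=f_*$ up to the shift, and $f_*$ is computed by the bar/Čech resolution as relative de Rham cohomology, which matches $f_+$ via the de Rham resolution of Remark \ref{deRhamresolution}, with the shifts compared. The closed-embedding case is handled by adjunction, as the left adjoint of $f^{\triangle}$ up to shift, and the two cases are then glued.
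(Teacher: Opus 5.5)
The overall strategy for (1) is the right one: Barr--Beck--Lurie along the suave chart $\pi\colon ]X[^{\dagger}_P\rightarrow X^{\arith/K}$, base change to the tube of the diagonal, and duality with functions on the open polydisk fibration. However, the step that actually produces $\D^{\dagger}$ is wrong as written.

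\textbf{The monad is misidentified.} For $\pi^*\dashv\pi_*$, the composite $\pi^*\pi_*$ is a comonad on $D(]X[^{\dagger}_P)$, not a monad. Moreover $\pi^*\pi_*\cong p_{1,*}p_2^*$ sends $\O$ to $\O_{]X[^{\dagger}_{P^2}}$, which is locally the nuclear Fréchet algebra of functions on an open polydisk fibration. So $p_{1,*}p_2^*$ is not $\D^{\dagger}_{]X[^{\dagger}_P}\otimes_{\O}(-)$; what $\pi^*\pi_*$ gives you is $\O_{]X[^{\dagger}_{P^2}}$-comodules, i.e.\ overconvergent stratifications.

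The correct monad is $\pi^*\pi_{\natural}\cong p_{1,\natural}p_2^*$, where $\pi_{\natural}$ is the left adjoint of $\pi^*$ supplied by suaveness. The real content is then to show $p_{1,\natural}1\cong\D^{\dagger}$. This is not a dualizability statement: $\O_{]X[^{\dagger}_{P^2}}$ is not dualizable over $\O_{]X[^{\dagger}_P}$, since by the Fredholm property that would force finite rank. What is needed is nuclear Fréchet/dual nuclear Fréchet reflexivity.

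\textbf{How the paper does it.} In the affine case $]X[^{\dagger}_P=\operatorname{GSpec}A$:
\begin{enumerate}
\item Barr--Beck--Lurie for $p_*\pi^*$ plus the recognition principle give modules over $\End(\pi_{\natural}1)^{\op}\cong p_*q_{\natural}1$.
\item $q_{\natural}1$ is basic nuclear, and its dual is $p_*q_*q^*p^!1\cong\varprojlim_{r<1}p_*(A_r\otimes p^!1)$.
\item Dualizing back gives $\varinjlim_{r<1}\Hom_A(A_r,A)=D^{\dagger}_A$.
\end{enumerate}
Your treatment of the multiplication (``check in local coordinates'') is also left open. The paper avoids any coordinate computation: it maps the algebraic $D$-module algebra $D_A$ of $]X[^{\dagger,\operatorname{alg},\dR/K}_P$ to $\End(\pi_{\natural}1)^{\op}$ via $\mu$. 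It identifies this map with the injective, dense-image inclusion $D_A\rightarrow D^{\dagger}_A$, and concludes by continuity of multiplication. It then glues over affines using $D^{\dagger}_{\O(]U[^{\dagger}_{\Xf})}\cong\O(]U[^{\dagger}_{\Xf})\otimes_{\O(]V[^{\dagger}_{\Xf})}D^{\dagger}_{\O(]V[^{\dagger}_{\Xf})}$ to get (2), and deduces (1) by base change along $]X[^{\dagger}_P\rightarrow P_{\eta}$. Once the monad is corrected, your direct argument over the given frame is a reasonable alternative that gives (1) without passing through the proper case.

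\textbf{The $f_!$ part of (3).} Your closed-embedding step presupposes that a shift of $i_+$ is left adjoint to $i^{\triangle}$ on all solid $\D^{\dagger}$-modules. Classically such adjunctions are known only for coherent or holonomic objects, and in the paper the solid adjunction is a consequence of the theorem rather than an input. Gluing the graph factorization would also require $(g\circ f)_+\cong g_+\circ f_+$ in the solid setting. The smooth step relies on the de Rham resolution of Remark \ref{deRhamresolution}, which is only sketched.

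The paper avoids all of this by working with generators:
\begin{itemize}
\item $D(U^{\arith/K})$ is generated under colimits by the objects $\pi_{U,!}M$.
\item Under the right-module realization $p_*\pi_U^{!}$, these are the induced modules $M\otimes_{\O}D^{\dagger}_{\O(]U[^{\dagger}_{\Xf})}$.
\item $f^{\arith/K}_!\pi_{U,!}M\cong\pi_{V,!}g_*M$ is the induced module on $g_*M$, which by Remark \ref{extraordinaryrightDmod} is $f_+$ of the former up to the stated shift.
\item Both functors preserve colimits, so they agree everywhere.
\end{itemize}
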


The logic of the proof of Theorem \ref{Xarithcoefficients} is to first prove $(2)$ and $(3)$ of the theorem,
and then deduce $(1)$ from $(2)$.
Let us start with some prerequisites.

\begin{discussion}[Algebraic de Rham stack]\label{algdR}
  Let $X$ be a Gelfand stack over $\rt_p$.
  We have its algebraic de Rham stack $X^{\operatorname{alg},\dR}$.
  The construction for good $X$ is intuitively given by $X^{\operatorname{alg},\dR}=X/\widehat{\Delta(X)}$,
  where $\Delta\colon X\rightarrow X\times X$ is the diagonal morphism and $\widehat{\Delta(X)}$ is the formal completion.
  A rigorous definition of $X^{\operatorname{alg},\dR}$ is given by functor-of-points description,
  cf. \cite{RC24a}.

  Given an affine Gelfand stack $X=\operatorname{GSpec}A$ coming from a smooth dagger affinoid algebra $A$ in the sense of \cite{GK00},
  we can study solid sheaves on $X^{\operatorname{alg},\dR}$.
  Let $D_A$ be the associative $\rt_{p,\blacksquare}$-algebra of differential operators on $A$,
  then we have an equivalence of symmetric monoidal $\infty$-categories 
  \[
  D(X^{\operatorname{alg},\dR})\cong \cat{LMod}_{D_A}(D_{\blacksquare}(\rt_p)).
  \]
  This is a reformulation of the result from the case of schemes,
  cf. \cite[10.2]{RC25a},
  to the case of dagger varieties,
  and we briefly recall the machinery.
  Consider the commutative diagram 
  \[\xymatrix{
    X\ar[r]^{\pi}\ar[dr]_p & X^{\operatorname{alg},\dR}\ar[d]^{p^{\dR}}\\
     & \text{*}
  }\]
  The morphism $p$ is proper and $p_*$ is conservative,
  so we deduce that the functor $p_*$ induces an equivalence $p_*\colon D(X)\cong \cat{Mod}_{A}(D_{\blacksquare}(\rt_p))$ by the Barr--Beck--Lurie theorem;
  in this case,
  $A=p_*p^*1$ is equipped with the algebra structure coming from adjointness.

  By a similar strategy,
  we identify $D(X^{\operatorname{alg},\dR})$ using Barr--Beck--Lurie theorem.
  The functor $\pi$ is suave and conservative,
  cf. \cite[Proposition 8.23]{RC25a}.
  We consider the functor $p_*\pi^*$,
    which is conservative and preserves geometric realizations,
    hence we obtain an equivalence $p_*\pi^*\colon D(X^{\operatorname{alg},\dR})\cong \cat{Mod}_{p_*\pi^*\pi_{\natural}p^*}(D_{\blacksquare}(\rt_p))$.
    Here $\pi_{\natural}$ is the left adjoint of $\pi^*$ whose existence is due to the suaveness of $\pi$.
    To compute the monad $p_*\pi^*\pi_{\natural}p^*$,
    notice that for any $M\in D(X^{\operatorname{alg},\dR})$,
    we have 
    \[p_*\pi^*M=p_*\underline{\Hom}_{D(X^{\operatorname{alg},\dR})}(\pi_{\natural}1,M)=\Hom_{D(X^{\operatorname{alg},\dR})}(\pi_{\natural}1,M).\]
    Since $p_*\pi^*$ is conservative,
    we deduce that the object $\pi_{\natural}1$ is compact and is a generator for $D(X^{\operatorname{alg},\dR})$.
    Then by the recognition principle (cf. \cite[Theorem 7.1.2.1]{Lur17}),
    we obtain that
    \[
      \cat{Mod}_{p_*\pi^*\pi_{\natural}p^*}(D_{\blacksquare}(\rt_p))\cong \cat{LMod}_{\End_{D(X^{\operatorname{alg},\dR})}(\pi_{\natural}1)^{\op}}(D_{\blacksquare}(\rt_p)).
    \]
    This means that the counit map
    \[
    p_*\pi^*\pi_{\natural}p^*p_*\pi^*M\longrightarrow p_*\pi^*M
    \]
    is identified with 
    \[
    \End_{D(X^{\operatorname{alg},\dR})}(\pi_{\natural}1)^{\op}\otimes \Hom_{D(X^{\operatorname{alg},\dR})}(\pi_{\natural}1,M)\longrightarrow \Hom_{D(X^{\operatorname{alg},\dR})}(\pi_{\natural}1,M).
    \]
    In summary,
    we have equivalences of categories:
    \[
    p_*\pi^*\colon D(X^{\operatorname{alg},\dR})\cong \cat{Mod}_{p_*\pi^*\pi_{\natural}p^*}(D_{\blacksquare}(\rt_p))\cong \cat{LMod}_{\End_{D(X^{\operatorname{alg},\dR})}(\pi_{\natural}1)^{\op}}(D_{\blacksquare}(\rt_p)).
    \]
    However,
    $\End_{D(X^{\operatorname{alg},\dR})}(\pi_{\natural}1)^{\op}\cong p_*\pi^*\pi_{\natural}1$ is the algebra $D_A$ of differential operators over $A$,
    essentially from the computation in \cite[Theorem 10.17]{RC25a};
    it is also an identification of associative algebras,
    as the algebra structure on both sides is given by taking $M=\pi_{\natural}1$ in the counit map.
    Therefore,
    we have an equivalence of symmetric monoidal $\infty$-categories 
    \[
    p_*\pi^*\colon D(X^{\operatorname{alg},\dR})\cong \cat{LMod}_{D_A}(D_{\blacksquare}(\rt_p)).
    \]
\end{discussion}

\begin{remark}
  Another viewpoint of the functor $p_*\pi^*$ in \ref{algdR} is from the following computation:
  \[
  p_*\pi^*M\cong p^{\dR}_!\pi_!\pi^*M\cong p^{\dR}_!(\pi_!1\otimes M).
  \]
  This has a canonical left $\End_{D(X^{\operatorname{alg},\dR})}(\pi_!1)$-module structure.
  Moreover,
  we have an isomorphism 
  \[
    p_*\pi^*\pi_{\natural}1\cong p_!^{\dR}\pi_!\pi^*\pi_{\natural}1\cong p_!^{\dR}(\pi_!\pi^*1\otimes \pi_{\natural} 1)
    \cong p_!^{\dR}(\pi_!\pi^*1\otimes \pi_{!}\pi^! 1)\cong p_!^{\dR}\pi_{!}\pi^!\pi_! 1\cong p_*\pi^!\pi_!1\cong \End_{D(X^{\operatorname{alg},\dR})}(\pi_!1).
  \]
  The counit map 
  \[
    p_*\pi^*\pi_{\natural}p^*p_*\pi^*M\cong p_*\pi^*\pi_{\natural}1\otimes p_*\pi^*M\longrightarrow p_*\pi^*M
  \]
  is also identified with 
  \[
    \End_{D(X^{\operatorname{alg},\dR})}(\pi_!1)\otimes p^{\dR}_!(\pi_!1\otimes M)\longrightarrow p^{\dR}_!(\pi_!1\otimes M)
  \]
  by the recognition principle.
  Hence this gives another equivalence of categories:
  \[
  p_*\pi^*\colon D(X^{\operatorname{alg},\dR})\cong \cat{LMod}_{\End_{D(X^{\operatorname{alg},\dR})}(\pi_!1)}(D_{\blacksquare}(\rt_p)).
  \]
  Moreover,
  $\End_{D(X^{\operatorname{alg},\dR})}(\pi_!1)\cong p_*\pi^!\pi_!1$ is nothing but the algebra $D_{\omega_A}^{\op}$,
  where $D_{\omega_A}$ is the algebra of differential operators over the canonical sheaf $\omega_A:=\wedge^{\dim A}\Omega_A$,
  which is identified with $D_A^{\op}$ in classical theory of algebraic $D$-modules.
  Therefore,
  via this different description of $p_*\pi^*$,
  we obtain the same equivalence of symmetric monoidal $\infty$-categories 
  \[
    p_*\pi^*\colon D(X^{\operatorname{alg},\dR})\cong \cat{LMod}_{D^{\op}_{\omega_A}}(D_{\blacksquare}(\rt_p))\cong \cat{LMod}_{D_A}(D_{\blacksquare}(\rt_p)).
  \]
\end{remark}

\begin{remark}[Right $D$-modules]\label{rightDmod}
  We can take another realization $p_*\pi^!$ of the category $D(X^{\operatorname{alg},\dR})$.
  Indeed,
  since $\pi$ is suave with invertible suave dual,
  this is a twisted form of the realization $p_*\pi^*$,
  and we can compute 
  \[
    p_*\pi^!M=\Hom_{D(X^{\operatorname{alg},\dR})}(\pi_!1,M)=p^{\dR}_!(\pi_{\natural}1\otimes M).
  \]
  Then similarly by the recognition principle,
  this realization gives equivalences of categories 
  \[
    D(X^{\operatorname{alg},\dR})=\cat{Mod}_{p_*\pi^!\pi_!p^*}(D_{\blacksquare}(\rt_p))\cong 
    \cat{LMod}_{\End_{D(X^{\operatorname{alg},\dR})}(\pi_!1)^{\op}}(D_{\blacksquare}(\rt_p))\cong 
    \cat{LMod}_{\End_{D(X^{\operatorname{alg},\dR})}(\pi_{\natural}1)}(D_{\blacksquare}(\rt_p)).
  \]
  Since $\End_{D(X^{\operatorname{alg},\dR})}(\pi_!1)^{\op}=D_{\omega_A}\cong D_A^{\op}=\End_{D(X^{\operatorname{alg},\dR})}(\pi_{\natural}1)$,
  we obtain an equivalence of categories 
  \[
  p_*\pi^!\colon D(X^{\operatorname{alg},\dR})\cong \cat{RMod}_{D_A}(D_{\blacksquare}(\rt_p)).
  \]
\end{remark}

\begin{remark}
  The discussion above about the relation between algebraic de Rham stacks and algebraic $D$-modules is not specific to smooth dagger varieties.
  For example,
  we also have a similar statement for the type of formally smooth geometric objects,
  e.g. smooth schemes \cite{RC25a} and smooth solid Tate adic spaces \cite{RC24a}.
\end{remark}

Now we start the proof of Theorem \ref{Xarithcoefficients},
identifying $D(X^{\arith/K})$ with a category of some sheaves over differential operators.
We first establish the identification in the case of being an affine open subscheme of the special fiber of a proper smooth formal scheme.

\begin{proposition}\label{arithcoefficientaffinecase}
  Let $X$ be an affine realizable $k$-scheme and assume that we can choose a proper smooth frame $P$ such that $X$ embeds openly in $P_k$.
  Then $]X[_{P}^{\dagger}=\operatorname{GSpec}A$ is an affinoid smooth dagger space,
  and we have an equivalence of categories 
  \[
  D(X^{\arith/K})\cong \cat{LMod}_{D_{A}^{\dagger}}(D_{\blacksquare}(K)).
  \]
  Here we denote $A_r:=(A\otimes A)\langle \frac{I}{r}\rangle$ viewed as an $A$-module via left action,
  and $D_A^{\dagger}:=\bigcup_{r<1}\Hom_{A}(A_r,A)$,
  which is the completion of the algebra of algebraic differential operators $D_A=\bigcup_n \Hom_A((A\otimes A)/I^n,A)$,
  consisting of those ``overconvergent'' differential operators.
\end{proposition}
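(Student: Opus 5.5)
We mimic the Barr--Beck--Lurie argument of Discussion \ref{algdR}, replacing the algebraic de Rham stack $X^{\operatorname{alg},\dR}=X/\widehat{\Delta(X)}$ by the presentation of $X^{\arith/K}$ from Proposition \ref{Xarithdescription}. Since $X$ is open in $P_k$, the tube $]X[_P^{\dagger}$ is the overconvergent closed tube of an affine open in a smooth formal scheme, hence $\operatorname{GSpec}A$ with $A$ a smooth dagger affinoid. Proposition \ref{Xarithdescription} then gives a suave surjection $\pi\colon \operatorname{GSpec}A\rightarrow X^{\arith/K}$ with Čech nerve $]X[^{\dagger}_{P^{n+1}}$. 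In particular $]X[^{\dagger}_{P^2}=\varinjlim_{r<1}\operatorname{GSpec}A_r$, the overconvergent tube of the diagonal, which is the union of the affinoid neighbourhoods $(A\otimes A)\langle I/r\rangle$. Let $p\colon\operatorname{GSpec}A\rightarrow\operatorname{GSpec}K$ and $p^{\arith}\colon X^{\arith/K}\rightarrow \operatorname{GSpec}K$ be the structure maps.

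\textbf{Step 1 (monadicity).} We consider the functor $p_*\pi^*\colon D(X^{\arith/K})\rightarrow D_{\blacksquare}(K)$. It is conservative because $\pi$ is a surjective suave cover, so $\pi^*$ is conservative, and $p_*\colon D(A)\rightarrow D_{\blacksquare}(K)$ is conservative since $\operatorname{GSpec}A$ is affine. Both $\pi^*$ and $p_*$ preserve colimits, the latter because $p$ is affinoid, so $p_*$ is just the forgetful functor. Since $\pi$ is suave, $\pi^*$ has a left adjoint $\pi_{\natural}$, so $p_*\pi^*$ has the left adjoint $\pi_{\natural}p^*$. Barr--Beck--Lurie then identifies $D(X^{\arith/K})$ with modules over the monad $p_*\pi^*\pi_{\natural}p^*$. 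As in \ref{algdR}, we have $p_*\pi^*M=\Hom(\pi_{\natural}1,M)$, so $\pi_{\natural}1$ is a compact generator. The recognition principle then gives $D(X^{\arith/K})\cong\cat{LMod}_{E}(D_{\blacksquare}(K))$ with $E=\End(\pi_{\natural}1)^{\op}\cong p_*\pi^*\pi_{\natural}1$.

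\textbf{Step 2 (computing the algebra).} By base change along the Cartesian square with corners $]X[^{\dagger}_{P^2}$, $\operatorname{GSpec}A$, $\operatorname{GSpec}A$ and $X^{\arith/K}$, with projections $p_1,p_2$, we get $\pi^*\pi_{\natural}1\cong p_{1,\natural}p_2^*1=p_{1,\natural}\O$. We take $p_1$ to be the projection whose $A$-module structure is the left one, as in the statement. Since $p_1$ is locally a $\disk^{\circ,n}$-fibration, the object $p_{1,\natural}\O$ is the $A$-linear dual of the pro-system of functions on the tube. Concretely, we write the tube as $\varinjlim_r\operatorname{GSpec}A_r$; then $p_{1,\natural}\O=\varinjlim_{r}\underline{\Hom}_A(A_r,A)$, up to the twist by the suave dual. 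That twist is trivialised by Proposition \ref{analyticCartierduality} together with the local coordinates. We would verify this first for $\disk^{\circ}\rightarrow *$, where it is the computation $\Hom(\O(\disk^{\circ}),K)\cong$ overconvergent distributions, and then globalise via coordinates on $A$. This yields $E\cong\bigcup_{r<1}\Hom_A(A_r,A)=D_A^{\dagger}$ as a solid $K$-module.

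\textbf{Step 3 (algebra structure).} We must identify the multiplication on $E$, which comes from composition of endomorphisms, with composition of differential operators. We would do this by comparison with Discussion \ref{algdR}. The formal completion of the diagonal maps to $]X[^{\dagger}_{P^2}$, giving a map $\operatorname{GSpec}A^{\operatorname{alg},\dR}\rightarrow X^{\arith/K}$ under $\operatorname{GSpec}A$. This induces an algebra map $D_A\rightarrow E$ which is the inclusion $D_A\subset D_A^{\dagger}$ on underlying modules. Since $D_A$ is dense in $D_A^{\dagger}$ in the LB-topology and the multiplications are continuous, the two algebra structures agree.

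The main obstacle is Step 2: identifying $p_{1,\natural}\O$ on the overconvergent diagonal tube with the dual $\varinjlim_r\Hom_A(A_r,A)$. This requires controlling the suave dual of $p_1$ and the interchange of the colimit over $r$ with duals. Both should reduce, via the strong fibration theorem, to the explicit open-disk case of Proposition \ref{analyticCartierduality}.
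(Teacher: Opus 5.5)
Your proposal follows the paper's proof essentially step for step: Barr--Beck--Lurie for $p_*\pi^{\arith/K,*}$ with compact generator $\pi^{\arith/K}_{\natural}1$, then base change along the diagonal tube to get $E\cong p_*q_{\natural}1$, then the algebra structure fixed by the map $D_A\to E$ induced by $\mu\colon ]X[_P^{\dagger,\operatorname{alg},\dR/K}\to X^{\arith/K}$, using density of $D_A$ in $D_A^{\dagger}$ and staticness of $E$. The one place the paper argues differently is your ``main obstacle'': it never computes the suave dual of $q$ in coordinates. Instead it notes that $p_*q_{\natural}1$ is basic nuclear (hence dual nuclear Fréchet), computes its dual by adjunction as $p_*q_*q^*p^!1\cong\varprojlim_{r<1}p_*(A_r\otimes p^!1)$ using only $\Delta^{\circ}=\varinjlim_{r<1}\operatorname{GSpec}A_r$, and dualizes back via NF/DNF duality, which avoids the strong fibration theorem and any gluing of coordinate-dependent identifications.
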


\begin{proof}
  In the first step of the proof,
  we provide all the geometric setup.
  By Proposition \ref{Xarithdescription},
  we have a suave cover $\pi^{\arith/K}\colon]X[_{P}^{\dagger}\rightarrow X^{\arith/K}$ and a description of its Cech nerve.
  Note that there is also a natural surjection $\mu\colon ]X[_{P}^{\dagger,\operatorname{alg},\dR/K}\rightarrow X^{\arith/K}$ such that $\pi^{\arith/K}=\mu\circ\pi$,
  and filling the following diagram:
  \[\xymatrix{
    ]X[_{P}^{\dagger}\ar[r]^{\pi}\ar[dr]_p & ]X[_{P}^{\dagger,\operatorname{alg},\dR/K}\ar[d]^{p^{\dR}}\ar[r]^{\mu} & X^{\arith/K}\ar[dl]^{p^{\arith/K}}\\
     & \text{*} & 
  }\]
  Both $\pi$ and $\mu$ are suave covers:
  we have discussed $\pi$ in \ref{algdR};
  and that about $\mu$ is from the fact that $\pi^{\arith/K}$ is suave and $\pi$ is a suave cover.
  Moreover,
  the map $p$ is proper.

  Let us consider the functor $p_*\pi^*\mu^*=p_*\pi^{\arith/K,*}$.
  It is conservative and $p_*\pi^{\arith/K,*}$ preserves geometric realizations,
  so we can use Barr--Beck--Lurie theorem.
  By an argument similar to that in \ref{algdR},
  we have equivalences of categories:
  \[
    p_*\pi^*\mu^*\colon D(X^{\arith/K})\cong \cat{Mod}_{p_*\pi^{\arith/K,*}\pi^{\arith/K}_{\natural}p^*}(D_{\blacksquare}(K))\cong \cat{LMod}_{\End_{D(X^{\arith/K})}(\pi_{\natural}^{\arith/K}1)^{\op}}(D_{\blacksquare}(K)).
  \]
  Denote $\End_{D(X^{\arith/K})}(\pi_{\natural}^{\arith/K}1)^{\op}\cong p_*\pi^{\arith/K,*}\pi^{\arith/K}_{\natural}p^*1$ as $D_A^?$.
  It suffices to identify $D_A^?$ with $D_A^{\dagger}$ as an associative solid $K$-algebra.

  First we identify $D_A^?$ with $D_A^{\dagger}$ as solid $K$-modules.
  We have a Cartesian diagram 
  \[\xymatrix{
    \Delta^{\circ}(]X[_{P}^{\dagger})\ar[r]^q \ar[d]_r & ]X[_{P}^{\dagger}\ar[d]^{\pi^{\arith/K}}\\
    ]X[_{P}^{\dagger}\ar[r]^{\pi^{\arith/K}} & X^{\arith/K}
  }\]
  Hence $D_A^?\cong p_*\pi^{\arith/K,*}\pi^{\arith/K}_{\natural}p^*1\cong p_{*}q_{\natural}1$.
  Notice that $q_{\natural}1$ is a basic nuclear object in $D(]X[^{\dagger}_P)$,
  cf. \cite[Lemma 5.5.4, Lemma 5.5.5]{ABLBRCS25}.
  As $]X[^{\dagger}_P=\operatorname{GSpec}A$ with $A$ being dual nuclear Fréchet over $K$,
  we obtain that $p_*q_{\natural}1$ is also basic nuclear hence dual nuclear Fréchet in $D_{\blacksquare}(K)$,
  cf. \cite[Lemma A.0.12]{ABLBRCS25}.
  The dual $(p_*q_{\natural}1)^{\vee}\cong p_*q_*q^*p^!1$ is a nuclear Fréchet space,
  and by writing $\Delta^{\circ}(]X[_{P}^{\dagger})$ as $\Delta^{\circ}(]X[_{P}^{\dagger})=\varinjlim_{r<1}\Delta^r(]X[_{P}^{\dagger})$,
  where $\Delta^r(]X[_{P}^{\dagger}):=\operatorname{GSpec}A_r$ and $q_r\colon \Delta^r(]X[_{P}^{\dagger})\rightarrow ]X[_{P}^{\dagger}$,
  we can write
  \[p_*q_*q^*p^!1\cong \varprojlim_{r<1} p_*q_{r,*}q_{r}^*p^!1=\varprojlim_{r<1}p_*(A_r\otimes p^!1).\]
  By the duality between dual nuclear Fréchet spaces and nuclear Fréchet spaces in \cite[Section 3.6]{RCRJ22},
  we know that
  \[p_*q_{\natural}1\cong(p_*q_*q^*p^!1)^{\vee}\cong \varinjlim_{r<1}\Hom(A_r,\underline{\Hom}(p^!1,p^!1))\cong \varinjlim_{r<1}p_*\underline{\Hom}(A_r,A)\cong D_A^{\dagger}.\]
  Therefore,
  $p_*q_{\natural}1 \cong D_A^{\dagger}$ as solid $K$-modules.

  Then note that the functor $\mu_{\natural}$ induces a homomorphism of algebras
  \[ f\colon D_A\cong \End_{D(]X[_P^{\dagger,\operatorname{alg},\dR/K})}(\pi_{\natural}1)^{\op}\longrightarrow  D_A^?:=\End_{D(X^{\arith/K})}(\pi_{\natural}^{\arith/K}1)^{\op}.\]
  We claim this functor is compatible with the identification of $D^{?}_A$ and $D^{\dagger}_A$ as solid $K$-modules before,
  i.e.
  the following diagram holds as solid $K$-modules:
  \[\xymatrix{
    D_A\ar[r]^{f}\ar[rd]^g & D^{?}_A\ar[d]^{\cong,\text{ as objects in } D_{\blacksquare}(K)}\\
     & D^{\dagger}_A
  }\]
  Completing the Cartesian diagram as a transfer of two Cartesian diagrams:
  \[\xymatrix{
    \widehat{\Delta}(]X[_{P}^{\dagger})\ar[rd]^{i}\ar[rdd]\ar[rrd]^{r} & & &\\
    & \Delta^{\circ}(]X[_{P}^{\dagger})\ar[r]^q \ar[d] & ]X[_{P}^{\dagger}\ar[ddr]^{\pi}\ar[d]^{\pi^{\arith/K}} & \\
    & ]X[_{P}^{\dagger}\ar[rrd]_{\pi}\ar[r]^{\pi^{\arith/K}} & X^{\arith/K} & \\
    & & & ]X[_{P}^{\dagger,\operatorname{alg},\dR/K}\ar[ul]_{\mu}
  }\]
  then the map $f$ is identified with the unit map for $\mu_{\natural}$,
  which satisfies the following commutative diagram 
  \[
  \xymatrix{
    D_A=p_*\pi^*\pi_{\natural}1\ar[d]_{\cong}\ar[r]^{\operatorname{unit}_{\mu_{\natural}}} & D_A^?=p_*\pi^{\arith/K,*}\pi^{\arith/K}_{\natural}1\ar[d]^{\cong}\\
    D_A=p_*r_{\natural}1\ar[r]^{\operatorname{counit}_{i_{\natural}}} & D_A^{\dagger}=p_* q_{\natural}1
  }\]
  However,
  the counit map for $i_{\natural}$ is exactly the map $g$ by definition,
  which proves the commutativity of the diagram concerning $f$ and $g$ above.
  Hence $f$ shares the same properties with $g$ that it is injective with dense image.
  Moreover,
  we have the observation that $D_A^?$ is static (because $D_A^{\dagger}$ is),
  hence the algebra structure on $D_A^?$ is uniquely determined by that on $D_A$ by the following reason.
  Since $\im(f)$ is dense in $D_A^?$,
  then any element $x\in\underline{D_A^?(*)}$ is a sequential limit $x_n\rightarrow x$ for $x_n\in \underline{D_A(*)}$.
  Then by continuity of multiplication (coming from the fact that they are \textit{condensed} algebras),
  we have $x\cdot_{D_A^?} y=(\lim x_n)\cdot_{D_A^?}(\lim y_m)=\lim x_n\cdot_{D_A} y_m$ is uniquely determined by the multiplication coming from $D_A$.
  However,
  this is also the multiplication on $D_A^{\dagger}$,
  because that is how the algebra structure on $D^{\dagger}_A$ is defined.
  In summary,
  this proves $D_A^?\cong D_A^{\dagger}$ as solid $K$-algebras.
\end{proof}

\begin{remark}[Realization to algebraic $D$-modules]\label{algdRfullfaithful}
  The morphism $\mu$ admits the following strong property that $\mu^*\colon D(X^{\arith/K})\rightarrow D(]X[_{P}^{\dagger,\operatorname{alg},\dR/K})$ is fully faithful.
  Indeed,
  let $\mu_{\natural}$ be the left adjoint of $\mu^*$,
  then it suffices to prove that the counit map $\mu_{\natural}1\rightarrow 1$ is an isomorphism:
  if that holds,
  then by projection formula for $\mu_{\natural}$ (in fact $\mu_{\natural}\mu^*(-)=\mu_!\mu^!(-)=\mu_!\mu^!1\otimes(-)$)
  \[
  \Hom(\mu^*M,\mu^*N)\cong \Hom(\mu_{\natural}\mu^*M,N)\cong \Hom(\mu_{\natural}1\otimes M,N)\cong \Hom(M,N).
  \]
  To prove that $\mu_{\natural}1\rightarrow 1$ is an isomorphism,
  we can first reduce to the case of $X=\aff^1$,
  by étale descent of both sides.
  For $X=\aff^1$,
  the morphism $\mu$ is given by $\mu\colon \disk^{\dagger}/\widehat{\Ga}\rightarrow\disk^{\dagger}/\disk^{\circ}$.
  We have the following Cartesian diagram,
  by using a trivialization of the diagonal embedding $\Delta\colon \disk^{\dagger}\rightarrow \disk^{\dagger}\times \disk^{\dagger}$:
  \[\xymatrix{
    \disk^{\dagger}\times\disk^{\circ}/\widehat{\Ga}\ar[r]^{h}\ar[d] & \disk^{\dagger}\ar[d]^{\pi}\\
    \disk^{\dagger}/\widehat{\Ga}\ar[r]^{\mu} & \disk^{\dagger}/\disk^{\circ}
  }\]
  To prove $\mu_{\natural}1\cong1$,
  it suffices to prove $\pi^*\mu_{\natural}1\cong 1$ as $\pi$ is a suave cover,
  and by base change formula we have $\pi^*\mu_{\natural}1\cong h_{\natural}1$,
  so it suffices to prove $h_{\natural}1\cong 1$,
  equivalently,
  to prove $g_{\natural}1\cong 1$ for $g\colon \disk^{\circ}/\widehat{\Ga}\rightarrow *$.
  Since $\disk^{\circ}/\widehat{\Ga}$ is the algebraic de Rham stack $\disk^{\circ,\operatorname{alg},\dR/K}$ of $\disk^{\circ}$,
  we have $1\cong g_*1$ as $g_*1$ computes the (algebraic) de Rham cohomology of $\disk^{\circ}$ which is isomorphic to $\rt_p$ by the Poincaré lemma.
  Moreover,
  the dual of $g_{\natural}1$ is computed as $(g_{\natural}1)^{\vee}\cong \Hom(g_{\natural}1,1)\cong g_*g^*1\cong g_*1\cong 1$.
  Since $g_{\natural}1$ itself is dual nuclear Fréchet,
  we also have $g_{\natural}1\cong 1^{\vee}\cong 1$ by the duality between dual nuclear Fréchet spaces and nuclear Fréchet spaces in \cite[Section 3.6]{RCRJ22}.
  This proves the full-faithfulness of $\mu^*$.
  Moreover,
  the adjoint pair of functors 
  \[
  \mu_{\natural}\dashv \mu^*\colon \cat{LMod}_{D_A}(D_{\blacksquare}(K))\leftrightarrows \cat{LMod}_{D_A^{\dagger}}(D_{\blacksquare}(K)).
  \]
  has the following description:
  \begin{enumerate}
    \item For a left $D_A$-module $N$,
    $\mu_{\natural}N=D_A^{\dagger}\otimes_{D_A}N$ is the left $D_A^{\dagger}$-module by tensoring $N$ with $D_A^{\dagger}$ where $D_A^{\dagger}$ is viewed as a right $D_A$-module via $f$.
    \item For a left $D_A^{\dagger}$-module $M$,
    $\mu^*M=M$ is the left $D_A$-module with the $D_A$-module structure induced via $f\colon D_A\rightarrow D_A^{\dagger}$. 
  \end{enumerate}
  In summary,
  the slogan is that we have a fully faithful realization from arithmetic $D$-modules over $X$ to algebraic $D$-modules of the overconvergent generic fiber of its formal lift.
\end{remark}

\begin{remark}[Realization to analytic $D$-modules]\label{fullfaithfulanalyticdeRham}
  Similar to Remark \ref{algdRfullfaithful},
  the category of arithmetic $D$-modules also admits a fully faithful realization to the category of \emph{analytic $D$-modules} in the sense of \cite{RCRJ}.
  Let $\nu\colon ]X[_{P}^{\dagger,\dR/K}\rightarrow X^{\arith/K}$ be the natural map,
  then the pullback functor $\nu^*\colon D(X^{\arith/K})\rightarrow D(]X[_{P}^{\dagger,\operatorname{alg},\dR/K})$ is fully faithful.
  The proof is an analogue to the proof in Remark \ref{algdRfullfaithful}:
  \begin{enumerate}
    \item First the map $\nu$ is suave,
    as $\nu$ is a cover and the map from the first Cech nerve of $\nu$ is given by $]X[_{P\times P}^{\dagger,\dR}\rightarrow ]X[_{P}^{\dagger,\dR}$,
    which is suave as $]X[_{P\times P}^{\dagger}\rightarrow ]X[_{P}^{\dagger}$ is suave.
    \item Then it suffices to prove that $\nu_{\natural}1\rightarrow 1$ is an isomorphism.
    Then this comes from a similar computation in Remark \ref{algdRfullfaithful} identifying both as the de Rham cohomology of $]X[_{P}^{\dagger}$,
    cf. \cite[Proposition 5.2.1]{ABLBRCS25}.
  \end{enumerate}
  Moreover,
  this is compatible with the fully faithful embedding of analytic $D$-modules to algebraic $D$-modules,
  cf. \cite[Proposition 5.2.11]{RC24a}.
\end{remark}

\begin{remark}\label{rightarithDmod}
  Similar to Remark \ref{rightDmod},
  a different realization functor $p_*\pi^{\arith/K,!}$ gives another description via right $D$-modules:
  \[
    p_*\pi^{\arith/K,!}\colon D(X^{\arith/K})\cong \cat{RMod}_{D_A^{\dagger}}(D_{\blacksquare}(K)).
  \]
\end{remark}

With the case of being affine open in the special fiber of a proper smooth formal scheme established,
the case of being the special fiber of a proper smooth formal scheme is by descent from this special case.

\begin{proposition}\label{propersmootharithcoefficients}
  Let $\Xf$ be a proper smooth formal scheme and $X$ be its special fiber.
  Then we have a natural equivalence of categories 
  \[
  D(X^{\arith/K})\cong \cat{LMod}_{\D^{\dagger}_{\X}}(D(\X))\cong D_{\blacksquare}(\D^{\dagger}_{\Xf,\rt}).
  \]
\end{proposition}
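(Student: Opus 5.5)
We will glue the affine result of Proposition \ref{arithcoefficientaffinecase} over a Zariski cover. Choose a finite cover of $\Xf$ by affine opens $\Xf_i$ with special fibers $X_i$. Then each finite intersection $X_{i_1\cdots i_m}=X_{i_1}\cap\cdots\cap X_{i_m}$ is affine, since $\Xf$ is separated, and it is open in $P_k=X$ for the proper smooth frame $P=\Xf$. The overconvergent tube $]X_{i_1\cdots i_m}[^{\dagger}_{\Xf}=\operatorname{GSpec}A_{i_1\cdots i_m}$ is the dagger affinoid underlying $(\Xf_{i_1\cdots i_m})_\eta$, and these tubes form an open cover of $\X$ in the analytic topology.

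On the stack side, Remark \ref{Zariskidescentarith} (or Theorem \ref{fpqcdescentarith} in the relative form of Remark \ref{hdescentrelativearith}) gives
\[
D(X^{\arith/K})\simeq \varprojlim_{[m]\in\Delta} D\Bigl(\textstyle\coprod X_{i_1\cdots i_m}^{\arith/K}\Bigr).
\]
By Proposition \ref{arithcoefficientaffinecase}, each term is identified with $\cat{LMod}_{D^{\dagger}_{A_{i_1\cdots i_m}}}(D_{\blacksquare}(K))$. On the $D$-module side, Remark \ref{localcoordinate} identifies $D^{\dagger}_{A}$ with the sections $\Gamma(]X_{i_1\cdots i_m}[^{\dagger},\D^{\dagger}_{\X})$. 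Because $D(-)$ satisfies descent for analytic covers of $\X$, we get
\[
\cat{LMod}_{\D^{\dagger}_{\X}}(D(\X))\simeq\varprojlim_{[m]\in\Delta}\cat{LMod}_{\D^{\dagger}_{\X}|_{U}}(D(U)).
\]
We then use that on an affinoid $U=\operatorname{GSpec}A$ one has $\cat{LMod}_{\D^{\dagger}|_U}(D(U))\simeq \cat{LMod}_{D^{\dagger}_A}(D_{\blacksquare}(K))$. This holds because $D(U)\simeq\cat{Mod}_A(D_\blacksquare(K))$ by the Barr--Beck argument in \ref{algdR}, and $\D^{\dagger}|_U$ is determined by its global sections $D^{\dagger}_A$, which is an $A$-ring. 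The last equivalence, with $D_{\blacksquare}(\D^{\dagger}_{\Xf,\rt})$, is Remark \ref{anotherdescriptionarithmeticD}.

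The main step is to check that the affine equivalences are compatible with restriction along open immersions $X_J\subset X_I$. It is not enough for the two sides to agree termwise; the equivalences must commute with the transition functors. On the stack side the transition is $\iota^*$ for $\iota\colon X_J^{\arith/K}\to X_I^{\arith/K}$. I would show that under $p_*\pi^{\arith/K,*}$ this becomes $M\mapsto A_J\otimes_{A_I}M$, with the $D^{\dagger}_{A_J}$-action extended along the algebra map $D^{\dagger}_{A_I}\to D^{\dagger}_{A_J}$.

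To prove this, I would use the Cartesian square
\[
\xymatrix{
]X_J[^{\dagger}\ar[r]\ar[d] & ]X_I[^{\dagger}\ar[d]\\
X_J^{\arith/K}\ar[r] & X_I^{\arith/K}
}
\]
This square is Cartesian by the tube description in Proposition \ref{Xarithdescription}: the fiber product is a tube of $X_J$, and it equals $]X_J[^{\dagger}$ because $X_J$ is open in $X_I$. Base change for the suave maps $\pi$ then identifies the underlying modules. The algebra map itself comes from restricting operators via $\pi_\natural 1$. Density of $D_{A}$ in $D^\dagger_A$, as in the proof of Proposition \ref{arithcoefficientaffinecase}, reduces compatibility of the actions to the algebraic differential operators, where it is classical (compare Remark \ref{algdRfullfaithful}).

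Higher coherences then come from the naturality of the Barr--Beck construction in the frame. Finally, independence of the cover follows by refinement: any two affine covers have a common refinement, and the equivalences are compatible with refining.
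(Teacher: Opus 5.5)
Your proposal is correct and follows essentially the same route as the paper. Both glue the affine case (Proposition \ref{arithcoefficientaffinecase}) along Zariski descent on both sides. Both get the key compatibility, namely $D^{\dagger}_{A_J}\cong A_J\otimes_{A_I}D^{\dagger}_{A_I}$ and the identification of the stack-side transition with $A_J\otimes_{A_I}(-)$, from the Cartesian square of tubes and base change for the suave maps $\pi$.

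One small imprecision: the tubes do not form an open cover. In the Gelfand setting, $]X_J[^{\dagger}\to\,]X_I[^{\dagger}$ and $X_J^{\arith/K}\to X_I^{\arith/K}$ are closed immersions by excision, so the overconvergent tubes form a finite closed cover of $\X$. That cover still gives the descent you need.
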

\begin{proof}
  The category $\cat{LMod}_{\D^{\dagger}_{\X}}(D(\X))$ localizes over the descendable topology on $\Xf_{\eta}$.
  This means that the association taking any affine open $U\subset X$ to the category $\cat{LMod}_{\D^{\dagger}_{]U[_{\Xf}^{\dagger}}}(D(]U[_{\Xf}^{\dagger}))$
  defines a sheaf of $\infty$-categories on $X$ and
  \[
    \cat{LMod}_{\D^{\dagger}_{\X}}(D(\X))=\varprojlim_{\text{affine open }U\subset X}\cat{LMod}_{\D^{\dagger}_{]U[_{\Xf}^{\dagger}}}(D(]U[_{\Xf}^{\dagger})).
  \]
  Let $D^{\dagger}_{\O(]U[^{\dagger}_{\Xf})}$ be the algebra $D_A^{\dagger}$ in Proposition \ref{arithcoefficientaffinecase},
  then we have an equivalence of categories 
  \[
    \cat{LMod}_{\D^{\dagger}_{]U[_{\Xf}^{\dagger}}}(D(]U[_{\Xf}^{\dagger}))\cong \cat{LMod}_{D^{\dagger}_{\O(]U[^{\dagger}_{\Xf})}}(D_{\blacksquare}(K))
  \]
  by affineness.
  Moreover,
  the localization functors also translate to the functor $\O(]U[^{\dagger}_{\Xf})\otimes_{\O(]V[^{\dagger}_{\Xf})}(-)$,
  i.e. if $U\subset V$ are open affine subschemes of $X$,
  then the following diagram commutes:
  \[\xymatrix{
    \cat{LMod}_{\D^{\dagger}_{]V[_{\Xf}^{\dagger}}}(D(]V[_{\Xf}^{\dagger}))\ar[r]^{\cong}\ar[d]_{L_{U\subset V}} & \cat{LMod}_{D^{\dagger}_{\O(]V[^{\dagger}_{\Xf})}}(D_{\blacksquare}(K))
    \ar[d]^{\O(]U[^{\dagger}_{\Xf})\otimes_{\O(]V[^{\dagger}_{\Xf})}(-)}\\
    \cat{LMod}_{\D^{\dagger}_{]U[_{\Xf}^{\dagger}}}(D(]U[_{\Xf}^{\dagger}))\ar[r]^{\cong} & \cat{LMod}_{D^{\dagger}_{\O(]U[^{\dagger}_{\Xf})}}(D_{\blacksquare}(K))
  }\]
  To prove this claim,
  it suffices to notice the following key observation:
  \begin{itemize}
    \item If $U\subset V$ are open affine subschemes of $X$,
    then $D^{\dagger}_{\O(]U[^{\dagger}_{\Xf})}\cong \O(]U[^{\dagger}_{\Xf})\otimes_{\O(]V[^{\dagger}_{\Xf})}D^{\dagger}_{\O(]V[^{\dagger}_{\Xf})}$.
  \end{itemize}
  To explain,
  we consider the Cartesian diagram from Proposition \ref{Xarithdescription}:
  \[\xymatrix{
    ]U[^{\dagger}_{\Xf}\ar[r]^{\iota}\ar[d]_{\pi_U} & ]V[^{\dagger}_{\Xf}\ar[d]^{\pi_V}\\
    U^{\arith/K}\ar[r]^i & V^{\arith/K}
  }\]
  Then $i$ and $\iota$ are closed embedding by excision property in Theorem \ref{arithsixfunctor} $(3)$.
  By base change formula,
  we have $\iota^*\pi_V^*\pi_{V,\natural}1\cong \pi_U^*i^*\pi_{V,\natural}1\cong \pi_U^*\pi_{U,\natural}\iota^*1$.
  From the proof of Proposition \ref{arithcoefficientaffinecase},
  we know that $\pi_U^*\pi_{U,\natural}1\cong D^{\dagger}_{\O(]U[^{\dagger}_{\Xf})}$
  and $\pi_V^*\pi_{V,\natural}1\cong D^{\dagger}_{\O(]V[^{\dagger}_{\Xf})}$,
  hence the isomorphism $\iota^*\pi_V^*\pi_{V,\natural}1\cong \pi_U^*\pi_{U,\natural}\iota^*1$ translates to 
  $D^{\dagger}_{\O(]U[^{\dagger}_{\Xf})}\cong \O(]U[^{\dagger}_{\Xf})\otimes_{\O(]V[^{\dagger}_{\Xf})}D^{\dagger}_{\O(]V[^{\dagger}_{\Xf})}$.

  Moreover,
  under the equivalences in Proposition \ref{arithcoefficientaffinecase},
  the following diagram also commutes 
  \[\xymatrix{
     \cat{LMod}_{D^{\dagger}_{\O(]V[^{\dagger}_{\Xf})}}(D_{\blacksquare}(K))\ar[r]^{\cong}\ar[d]_{\O(]U[^{\dagger}_{\Xf})\otimes_{\O(]V[^{\dagger}_{\Xf})}(-)}
     & D(V^{\arith/K})\ar[d]^{i^*}\\
    \cat{LMod}_{D^{\dagger}_{\O(]U[^{\dagger}_{\Xf})}}(D_{\blacksquare}(K))\ar[r]^{\cong} & D(U^{\arith/K})
  }\]
  In fact,
  that is how the equivalences in Proposition \ref{arithcoefficientaffinecase} are defined:
  The morphisms $\pi_U,\pi_V$ give conservative functors of categories
  $\pi^*_V\colon D(V^{\arith/K})\rightarrow D(]V[^{\dagger}_{\Xf})$,
  and $\pi^*_U\colon D(U^{\arith/K})\rightarrow D(]U[^{\dagger}_{\Xf})$,
  which translate to forgetful functors $\cat{LMod}_{D^{\dagger}_{\O(]V[^{\dagger}_{\Xf})}}(D_{\blacksquare}(K))\rightarrow \cat{Mod}_{\O(]V[^{\dagger}_{\Xf})}(D_{\blacksquare}(K))$
  and $\cat{LMod}_{D^{\dagger}_{\O(]U[^{\dagger}_{\Xf})}}(D_{\blacksquare}(K))\rightarrow \cat{Mod}_{\O(]U[^{\dagger}_{\Xf})}(D_{\blacksquare}(K))$.
  Moreover,
  $i^*$ translates to $\iota^*$ under these conservative functors,
  which is given as $\iota^*(-)=\O(]U[^{\dagger}_{\Xf})\otimes_{\O(]V[^{\dagger}_{\Xf})}(-)$.

  By $h$-descent of the arithmetic de Rham stack (Theorem \ref{fpqcdescentarith}),
  the category $D(X^{\arith/K})$ also localizes on the Zariski topology over $X$ along $*$-pullbacks.
  By the compatibilities of transition maps in the descent data above,
  we then obtain that 
  \[
    \cat{LMod}_{\D^{\dagger}_{\X}}(D(\X))=\varprojlim_{\text{affine open }U\subset X}\cat{LMod}_{\D^{\dagger}_{]U[_{\Xf}^{\dagger}}}(D(]U[_{\Xf}^{\dagger}))
    \cong \varprojlim_{\text{affine open }U\subset X}D(U^{\arith/K})=D(X^{\arith/K}).\qedhere
  \]
\end{proof}

Now we establish $(3)$ of Theorem \ref{Xarithcoefficients}.

\begin{proposition}\label{Xarithsixfunctorcalssical}
  Let $f\colon \Xf\rightarrow \mathfrak{Y}$ be a morphism between proper smooth formal schemes,
  and $X\rightarrow Y$ the induced morphism between their special fibers.
  Then the induced morphism $f^{\arith/K}\colon X^{\arith/K}\rightarrow Y^{\arith/K}$ identifies $f^{\arith/K,*}$ with $f^{\triangle}$,
  and $f_!^{\arith/K}$ with $f_+[\dim X-\dim Y]$,
  under the equivalence in Proposition \ref{propersmootharithcoefficients}.
\end{proposition}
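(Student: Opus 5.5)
The plan is to reduce everything to the affine local picture of Proposition \ref{arithcoefficientaffinecase}, where both sides become explicit module categories, and then glue along affine opens using the descent compatibilities of Proposition \ref{propersmootharithcoefficients}. Since both $f^{\arith/K,*}$ and $f^{\triangle}$ commute with restriction to Zariski opens of $X$ (base change for $*$-pullback, and the localization formula $D^{\dagger}_{\O(]U[)}\cong \O(]U[)\otimes D^{\dagger}_{\O(]V[)}$ on the $D$-module side), it suffices to build the identification for affine opens $U\subset X$, $V\subset Y$ with $f(U)\subset V$, compatibly with restrictions. For $!$-pushforward we must also glue on $X$. Because $f^{\arith/K}$ is cohomologically étale when $f$ is proper (Theorem \ref{relativearithsixfunctor}), $f_!^{\arith/K}$ is left adjoint to $f^{\arith/K,*}$ up to the prim/étale identifications, i.e. $f_!\cong f_\natural$. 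Similarly, $f_+$ should be left adjoint to $f^{!,\Be}$. We therefore derive (b) from (a) by adjunction, checking that the shifts match.

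For (a), I would use the commutative square $]U[^{\dagger}_{\Xf}\xrightarrow{f_\eta} ]V[^{\dagger}_{\mathfrak{Y}}$ over $U^{\arith/K}\to V^{\arith/K}$. Under the equivalences of Proposition \ref{arithcoefficientaffinecase}, given by $p_*\pi^{\arith/K,*}$, the functor $\pi_U^*f^{\arith/K,*}M=f_\eta^*\pi_V^*M$ has underlying $\O(]U[)$-module $\O(]U[)\otimes_{\O(]V[)}M=\D^{\dagger}_{U\to V}\otimes_{D^\dagger_{V}}M$. It remains to identify the $D^{\dagger}_U$-action. Evaluating on the generator $\pi_{V,\natural}1$ (i.e. $M=D^\dagger_V$) reduces this to computing $\pi_U^*f^{\arith/K,*}\pi_{V,\natural}1$ as a $(D_U^\dagger,D_V^\dagger)$-bimodule. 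Base change along the Cartesian square whose corner is $]U[\times_{V^{\arith/K}}]V[=\,]U[_{\Xf\times\mathfrak{Y}}^{\dagger}$ (tube of the graph, by the Čech description in Proposition \ref{Xarithdescription}) gives the dual of $\O$ of this tube over $\O(]U[)$, which is exactly $f^*\D^\dagger_{\mathfrak{Y}}$. The left action comes from the Čech groupoid of $U$, matching the action through $\O_{\Xf}$. This handles both objects and functoriality, since both functors are colimit-preserving and determined by their value on the generator.

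For (b), I would use adjunction. Since $f_!^{\arith/K}\cong f^{\arith/K}_\natural$, it is left adjoint to $f^{\arith/K,*}\cong f^{\triangle}=f^{!,\Be}[\dim Y-\dim X]$. So $f_!^{\arith/K}$ is left adjoint to $f^{!,\Be}$ shifted by $[\dim Y-\dim X]$, and we need $f_+\dashv f^{!,\Be}$. This holds classically for $f$ proper, by Berthelot's relative duality. In the solid setting, I would instead verify it directly on the generator using the right $D$-module description (Remark \ref{rightarithDmod}, Remark \ref{extraordinaryrightDmod}), where $f_+$ is just $f_*(-\otimes_{\D^\dagger}\D^\dagger_{X\to Y})$ with the same transfer bimodule produced in (a).

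The main obstacle is the shift bookkeeping and the adjunction $f_+\dashv f^{!,\Be}$ in the solid context. To settle the shift I would first check $X=\aff^n\times Y$, or more precisely the projection $\proj^n_{\mathfrak Y}\to\mathfrak Y$. There the prim dual $1[-2n]$ from Theorem \ref{relativearithsixfunctor} and the de Rham resolution of Remark \ref{deRhamresolution} make both sides computable as relative de Rham cohomology. For a general proper map I would factor it as a closed immersion into $\proj^n_{\mathfrak Y}$ followed by the projection, and treat the closed immersion through the graph tube computation above.
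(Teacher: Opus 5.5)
Your argument for $f^{\arith/K,*}\cong f^{\triangle}$ is essentially the paper's. You localize to affine $U\to V$, realize both sides via $p_*\pi^{\arith/K,*}$, and identify the bimodule $\Hom(\pi_{U,\natural}1,f^{\arith/K,*}\pi_{V,\natural}1)$ with the transfer module; your base change to the tube of the graph just makes that identification explicit.

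The gap is in the second half. Deducing $f^{\arith/K}_!\cong f_+[\dim X-\dim Y]$ from uniqueness of left adjoints requires the adjunction $f_+\dashv f^{!,\Be}$ on all solid $\D^{\dagger}$-modules, and that is not available. Classically it is proved through duality only under coherence (or holonomicity) hypotheses. In the paper the logic runs the other way: the proper adjunction is deduced from this very proposition together with cohomological \'etaleness of $f^{\arith/K}$ (see the Remark after Corollary \ref{f_*smooth}).

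Neither of your proposed substitutes supplies it.
\begin{enumerate}
\item An adjunction cannot be verified on generators without first producing a global unit or counit, i.e.\ a trace $f_+f^{!,\Be}\to\id$. On induced modules this already amounts to Grothendieck duality for the proper map $\X\to\Y$ of generic fibers, plus its compatibility with the $\D^{\dagger}$-actions, which is exactly the content of relative duality.
\item The reduction via a closed immersion into $\proj^n_{\mathfrak{Y}}$ assumes $f$ is projective. A morphism of proper smooth formal schemes need not be (take a smooth proper non-projective toric variety over $\Spf K^{\circ}$).
\item Computing $f^{\arith/K}_*$ for the projection as relative de Rham cohomology of an arbitrary solid module is Corollary \ref{f_*smooth}, which is itself derived from the present proposition.
\end{enumerate}

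The detour through adjunctions is unnecessary; compute $f^{\arith/K}_!$ directly on generators. Locally on $U\to V$, for $M\in D(]U[^{\dagger}_{\Xf})$, functoriality of $(-)_!$ gives $f^{\arith/K}_!\pi_{U,!}M\cong\pi_{V,!}g_!M\cong\pi_{V,!}g_*M$, since $g\colon ]U[^{\dagger}_{\Xf}\to ]V[^{\dagger}_{\mathfrak{Y}}$ is a map of dagger affinoids. Under the right-module realization $p_*\pi^{\arith/K,!}$ of Remark \ref{rightarithDmod}:
\begin{enumerate}
\item $\pi_{U,!}M$ becomes the induced module $M\otimes_{\O(]U[^{\dagger}_{\Xf})}D^{\dagger}_{\O(]U[^{\dagger}_{\Xf})}$.
\item The projection formula and Remark \ref{extraordinaryrightDmod} give $f_+$ of this as $g_*M\otimes_{\O(]V[^{\dagger}_{\mathfrak{Y}})}D^{\dagger}_{\O(]V[^{\dagger}_{\mathfrak{Y}})}[\dim Y-\dim X]$.
\item This is precisely the module attached to $\pi_{V,!}g_*M$.
\end{enumerate}
Both functors preserve colimits and the objects $\pi_{U,!}M$ generate, so this yields the claim with no duality input. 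This is the paper's proof.
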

\begin{proof}
  Firstly we prove the identification of $f^{\arith/K,*}$ with $f^{\triangle}$.
  Both functors localize over the Zariski topology on the target and the source,
  hence it suffices to deal with the following special case:
  $U$ is an affine open subscheme of $X$ mapping to $V$,
  which is an affine open subscheme of $Y$.
  In this case,
  the overconvergent tubular neighborhoods are $\dagger$-affinoid.
  Denote $g\colon ]U[_{\Xf}^{\dagger}\rightarrow ]V[_{\mathfrak{Y}}^{\dagger}$.

  By the description from Proposition \ref{Xarithdescription},
  we have the following commutative diagram 
  \[\xymatrix{
    ]U[_{\Xf}^{\dagger}\ar[r]^g\ar[d]_{\pi_U} & ]V[_{\mathfrak{Y}}^{\dagger}\ar[d]^{\pi_V}\\
  U^{\arith/K}\ar[r]^{f^{\arith/K}} & V^{\arith/K}
  }\]
  with $\pi_U$ and $\pi_V$ being suave covers.
  Moreover,
  by Proposition \ref{arithcoefficientaffinecase},
  the functor $p_*\pi^*_U$ (resp. $q_*\pi^*_V$) is identified with the forgetful functor 
  $\cat{LMod}_{D^{\dagger}_{\O(]U[^{\dagger}_{\Xf})}}(D_{\blacksquare}(\rt_p))\rightarrow D_{\blacksquare}(\rt_p)$
  (resp. $\cat{LMod}_{D^{\dagger}_{\O(]V[^{\dagger}_{\mathfrak{Y}})}}(D_{\blacksquare}(\rt_p))\rightarrow D_{\blacksquare}(\rt_p)$).
  Let $M\in D(V^{\arith/K})\cong \cat{LMod}_{D^{\dagger}_{\O(]V[^{\dagger}_{\mathfrak{Y}})}}(D_{\blacksquare}(\rt_p))$.
  Then by projection formula
  \[p_*\pi_U^*f^{\arith/K,*}N=p_*g^*\pi^*_VN=q_*g_*g^*\pi^*_VN=q_*(g_*1\otimes \pi^*_VN).\]
  This identifies with $f^{\Delta}(q_*\pi_V^*N)$ as a solid $\rt_p$-module (even as a solid $\O(]U[^{\dagger}_{\Xf})$-module),
  and it is enough to identify the structure of $D^{\dagger}_{\O(]U[^{\dagger}_{\Xf})}$-module.
  The structure of $D^{\dagger}_{\O(]U[^{\dagger}_{\Xf})}$-module on $p_*\pi_U^*f^{\arith/K,*}N$ is identified with 
  the action of $\End(\pi_{U,\natural}1)^{\op}$ on $p_*\pi_U^*f^{\arith/K,*}N=\Hom(\pi_{U,\natural}1,f^{\arith/K,*}N)$.
  Notice that the identification as solid $\rt_p$-modules above is reinterpreted to the following natural isomorphism as solid $\rt_p$-modules:
  \[
    \Hom(\pi_{U,\natural}1,f^{\arith/K,*}\pi_{V,\natural}1)\otimes_{\End(\pi_{V,\natural}1)}\Hom(\pi_{V,\natural}1,N)\overset{h,\cong}{\longrightarrow}
    \Hom(\pi_{U,\natural}1,f^{\arith/K,*}N).
  \]
  Moreover,
  the $\End(\pi_{U,\natural}1)^{\op}$-action on $\Hom(\pi_{U,\natural}1,f^{\arith/K,*}N)$ is identified with the $\End(\pi_{U,\natural}1)^{\op}$-action on 
  $\Hom(\pi_{U,\natural}1,f^{\arith/K,*}\pi_{V,\natural}1)$ via $h$,
  i.e. the morphism $h$ also induces an isomorphism as $D^{\dagger}_{\O(]U[^{\dagger}_{\Xf})}$-modules.
  However,
  notice that 
  $\Hom(\pi_{U,\natural}1,f^{\arith/K,*}\pi_{V,\natural}1)\cong \O(]V[^{\dagger}_{\mathfrak{Y}})\otimes_{\O(]U[^{\dagger}_{\Xf})} D^{\dagger}_{\O(]U[^{\dagger}_{\Xf})}$ as $D_{\O(]V[^{\dagger}_{\mathfrak{Y}})}^{\dagger}$-modules,
  therefore,
  by the definition of naïve pullback functor,
  $\Hom(\pi_{U,\natural}1,f^{\arith/K,*}\pi_{V,\natural}1)\otimes_{\End(\pi_{V,\natural}1)}\Hom(\pi_{V,\natural}1,N)$ is identified with $f^{\Delta}(q_*\pi_V^*N)$ as $D_{\O(]V[^{\dagger}_{\mathfrak{Y}})}^{\dagger}$-modules.
  This identifies $f^{\arith/K,*}$ with $f^{\triangle}$.

  Next we prove the identification of $f_!^{\arith/K}$ with $f_+[\dim X-\dim Y]$.
  Again,
  both functors localize over the Zariski topology on the target and the source,
  hence it suffices to deal with the following special case:
  $U$ is an affine open subscheme of $X$ mapping to $V$,
  which is an affine open subscheme of $Y$,
  hence with $\dagger$-affinoid overconvergent tubular neighborhoods.
  Similarly,
  we have the following commutative diagram 
  \[\xymatrix{
    ]U[_{\Xf}^{\dagger}\ar[r]^g\ar[d]_{\pi_U} & ]V[_{\mathfrak{Y}}^{\dagger}\ar[d]^{\pi_V} \\
  U^{\arith/K}\ar[r]^{f^{\arith/K}} & V^{\arith/K}
  }\]
  Let $M\in D(]U[^{\dagger}_{\Xf})$,
  then we have 
  \[
  f_{!}^{\arith/K}\pi_{U,!}M\cong \pi_{V,!}g_!M\cong \pi_{V,!}g_*M.
  \]
  The $\pi_{U,!}M$ corresponds to the object $p_*\pi_U^!\pi_{U,!}M$ in $\cat{RMod}_{D^{\dagger}_{\O(]U[^{\dagger}_{\Xf})}}(D_{\blacksquare}(\rt_p))$ following Remark \ref{rightarithDmod}.
  One has
  \[
  p_*\pi_U^!\pi_{U,!}M\cong p_!^{\arith/K}\pi_{U,!}\pi_U^!\pi_{U,!}M\cong p_!^{\arith/K}(\pi_{U,!}M\otimes \pi_{U,!}\pi_U^!1)\cong p_*(\pi_U^*\pi_{U,\natural}1\otimes M),
  \]
  hence it corresponds to the right $D^{\dagger}_{\O(]U[^{\dagger}_{\Xf})}$-module $M\otimes_{\O(]U[^{\dagger}_{\Xf})} D^{\dagger}_{\O(]U[^{\dagger}_{\Xf})}$.
  Similarly,
  the $\pi_{V,!}g_*M$ corresponds to the right $D^{\dagger}_{\O(]V[^{\dagger}_{\mathfrak{Y}})}$-module $M\otimes_{\O(]V[^{\dagger}_{\mathfrak{Y}})} D^{\dagger}_{\O(]V[^{\dagger}_{\mathfrak{Y}})}$.
  By the description of the extraordinary pushforward on right $D$-modules in Remark \ref{extraordinaryrightDmod},
  one gets that 
  \[
  f_+(M\otimes_{\O(]U[^{\dagger}_{\Xf})} D^{\dagger}_{\O(]U[^{\dagger}_{\Xf})})=M\otimes_{\O(]V[^{\dagger}_{\mathfrak{Y}})} D^{\dagger}_{\O(]V[^{\dagger}_{\mathfrak{Y}})}[\dim Y-\dim X].
  \]
  Combining with the former property that $f_{!}^{\arith/K}\pi_{U,!}M\cong \pi_{V,!}g_*M$,
  we obtain the identification between $f_!^{\arith/K}$ and $f_+[\dim X-\dim Y]$ for those objects of the form $\pi_{U,!}M$ for some $M\in D(]U[^{\dagger}_{\Xf})$.
  In general,
  by constructions,
  both functors preserve colimits,
  and $D(U^{\arith/K})$ is generated under colimits by objects $\pi_{U,!}M$ where $M\in D(]U[^{\dagger}_{\Xf})$
  which proves the identification between $f_!^{\arith/K}$ and $f_+[\dim X-\dim Y]$ for all objects.
\end{proof}

Now we can state the proof of Theorem \ref{Xarithcoefficients}.

\begin{proof}[Proof of Theorem \ref{Xarithcoefficients}]
  The part $(2)$ is already in Proposition \ref{propersmootharithcoefficients} execpt the claim about symmetric monoidal structures.
  To prove $(1)$,
  we notice the following Cartesian diagram from Proposition \ref{Xarithdescription}:
  \[\xymatrix{
  ]X[^{\dagger}_P\ar[r]\ar[d] & P_{\eta}\ar[d]\\
  X^{\arith/K}\ar[r] & P_k^{\arith/K}
  }\]
  By Proposition \ref{propersmootharithcoefficients},
  we have $D(P_k^{\arith/K})\cong \cat{LMod}_{\D^{\dagger}_{P_{\eta}}}(D(P_{\eta}))$,
  where the isomorphism is induced via pullback along $P_{\eta}\rightarrow P_k^{\arith/K}$.
  Then we can conclude that $D(X^{\arith/K})$ classifies objects in $\cat{LMod}_{\D^{\dagger}_{P_{\eta}}}(D(P_{\eta}))$ which localize over $]X[^{\dagger}_P$.
  Since $\D^{\dagger}_{P_{\eta}}|_{]X[^{\dagger}_P}=\D_{]X[^{\dagger}_P}^{\dagger}$,
  we obtain $D(X^{\arith/K})\cong \cat{LMod}_{\D^{\dagger}_{]X[^{\dagger}_P}}(D(]X[^{\dagger}_P))$.
  This proves part $(1)$.
  Part $(3)$ is already established in Proposition \ref{Xarithsixfunctorcalssical},
  and it only remains to identify the symmetric monoidal structures in part $(2)$.
  Then this comes from applying $(3)$ to the diagonal embedding $\Delta\colon \Xf\rightarrow \Xf\times\Xf$,
  which identifies $\Delta^{\arith/K,*}$ with $\Delta^{\triangle}$,
  as the tensor products on both sides are given by $\Delta^{\arith/K,*}(-\boxtimes -)$ or $\Delta^{\triangle}(-\boxtimes -)$.
\end{proof}

Finally,
in the section,
we study arithmetic $D$-modules for non-realizable schemes.

\begin{definition}[Solid arithmetic $D$-modules]
  Let $X$ be a $k$-scheme.
  We define \[D_{\blacksquare}(\D^{\dagger}_{X/K}):=D(X^{\arith/K})\]
  as the category of solid arithmetic $D$-modules over $X$.
  We also denote the three operations in the $6$-functor formalism $X\mapsto D_{\blacksquare}(\D^{\dagger}_{X/K})$ as
  $(f^{\triangle},f_{\triangle},\otimes)$,
  which reinterpret $(f^{\arith/K,*},f^{\arith/K}_!,\otimes)$ on $D(X^{\arith/K})$.
  If both $X$ and $Y$ are smooth,
  then we also denote $f_+:=f_{\triangle}[\dim Y-\dim X]$ and $f^{!,\Be}:=f^{\triangle}[\dim X-\dim Y]$.
  The functors $f^{\triangle}$, $f_+$ and $f^{!,\Be}$ are identified with those for $D_{\blacksquare}(\D^{\dagger}_{\Xf,\rt})$ in the liftable case,
  so there is no abuse of terminology.
\end{definition}

\begin{remark}
  Since every $X$ is realizable Zariski locally,
  where we choose such a cover $(U_i,\overline{U_i},P_i)$,
  from Theorem \ref{Xarithcoefficients},
  the category $D_{\blacksquare}(\D^{\dagger}_{X/K})$ is the glueing of the category $\cat{LMod}_{\D^{\dagger}_{]U_i[^{\dagger}_{P_i}}}(D(]U_i[^{\dagger}_{P_i}))$ along this Zariski cover,
  where the terminology \emph{solid arithmetic $D$-modules over $X$} comes from.
\end{remark}

\begin{corollary}\label{f_*smooth}
  Let $f\colon X\rightarrow Y$ be a smooth morphism between smooth schemes over $k$.
  Then the induced morphism $f\colon X^{\arith/K}\rightarrow Y^{\arith/K}$ identifies $f_*^{\arith/K}$ with $f_+[\dim Y-\dim X]$.

  In particular,
  $(f^{!,\Be},f_+)$ becomes an adjoint pair in $D_{\blacksquare}(\D^{\dagger}_{X/K})$.
\end{corollary}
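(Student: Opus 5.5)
The identification should follow formally from the primness of $f^{\arith/K}$ in Theorem \ref{relativearithsixfunctor}(1) and the computation of its prim dual in Theorem \ref{relativearithsixfunctor}(4), combined with the shift conventions in the definition of solid arithmetic $D$-modules. Put $n:=\dim X-\dim Y$, which is the relative dimension of $f$ since $X$ and $Y$ are smooth and $f$ is smooth; if $X$ or $Y$ is not equidimensional, I would argue component by component.

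\textbf{Step 1 (prim formula).} For a prim morphism $g$ with prim dual $\mathbb{P}_g(1)$, the general theory of prim maps (\cite{HM24}, \cite[Lecture VI]{Sch23}) gives a natural isomorphism
\[
g_*(-)\cong g_!(\mathbb{P}_g(1)\otimes -).
\]
I would first check that this is the precise form available in the references, since the direction of the twist is what matters here. By Theorem \ref{relativearithsixfunctor}(1),(4), $f^{\arith/K}$ is prim with $\mathbb{P}_{f}(1_X)=1_X[-2n]$. Hence
\[
f^{\arith/K}_*\cong f^{\arith/K}_!(-)[-2n].
\]

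\textbf{Step 2 (unwinding the conventions).} By definition $f_{\triangle}=f^{\arith/K}_!$ and $f_+=f_{\triangle}[\dim Y-\dim X]=f^{\arith/K}_![-n]$. Therefore
\[
f^{\arith/K}_*\cong f^{\arith/K}_![-2n]=f_+[-n]=f_+[\dim Y-\dim X],
\]
which is the first claim. In the liftable case this is consistent with Proposition \ref{Xarithsixfunctorcalssical} together with the de Rham resolution of Remark \ref{deRhamresolution}: both sides then compute relative de Rham cohomology up to the appropriate shift.

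\textbf{Step 3 (the adjunction).} We have the adjunction $f^{\arith/K,*}\dashv f^{\arith/K}_*$, i.e. $f^{\triangle}\dashv f_+[-n]$. Shifting gives $f^{\triangle}[n]\dashv f_+$. Since $f^{!,\Be}=f^{\triangle}[\dim X-\dim Y]=f^{\triangle}[n]$, this says exactly that $f^{!,\Be}$ is left adjoint to $f_+$, i.e. $(f^{!,\Be},f_+)$ is an adjoint pair, as stated.

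The main obstacle is Step 1. One must make sure that the isomorphism $f_*\cong f_!(\mathbb{P}_f\otimes-)$ holds globally and naturally, not merely locally. This is precisely why the prim dual was identified globally as $1_X[-2n]$ by deformation to the normal cone in Theorem \ref{arithsixfunctor}; a merely locally invertible prim dual would only give the statement up to a twist by a line bundle.
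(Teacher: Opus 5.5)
Your Steps 1 and 2 are correct and match the paper's argument for the first claim. The global prim dual $1_X[-2n]$ from Theorem \ref{relativearithsixfunctor}(4) gives $f^{\arith/K}_*\cong f^{\arith/K}_![-2n]$. Unwinding $f_+=f^{\arith/K}_![-n]$ then gives $f^{\arith/K}_*\cong f_+[\dim Y-\dim X]$.

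Step 3, however, shifts the adjunction in the wrong direction. From $L\dashv R$ one gets $L[s]\dashv R[-s]$, not $L[s]\dashv R[s]$. Concretely, from $f^{\triangle}\dashv f_+[-n]$ you get
\[
\Hom(f^{\triangle}M[n],N)\cong\Hom(f^{\triangle}M,N[-n])\cong\Hom(M,f_+N[-2n]).
\]
So the right adjoint of $f^{!,\Be}=f^{\triangle}[n]$ is $f_+[-2n]$. What your argument actually proves is $f^{\triangle}[\dim Y-\dim X]\dashv f_+$, that is $f^{!,\Be}[-2n]\dashv f_+$. This is the classical smooth adjunction $f^{+}\dashv f_+$ with $f^{+}=f^{!}[-2n]$.

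For $n\neq 0$ the pair $(f^{!,\Be},f_+)$ is not adjoint in either order. Take $f\colon\aff^1_k\rightarrow\Spec k$. By $\aff^1$-invariance $f^{\arith/K}_*1\cong K$, hence $f_+1\cong K[1]$.
\begin{itemize}
\item Since $K[1]\not\cong K[-1]\cong f_+1[-2]$, uniqueness of adjoints rules out $f^{!,\Be}\dashv f_+$.
\item We have $\Hom(f_+1,K[-1])=0$, while $\Hom(1,f^{!,\Be}(K[-1]))=\Hom(1,1)=K$. This rules out $f_+\dashv f^{!,\Be}$.
\end{itemize}

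The paper's one-line proof makes the same slip in a different place. It pairs $f_+$ with $f^{\arith/K}_*[\dim Y-\dim X]$, which contradicts its own first claim $f_+\cong f^{\arith/K}_*[\dim X-\dim Y]$. So the ``in particular'' should be stated as $f^{\triangle}[\dim Y-\dim X]\dashv f_+$. With the sign corrected, your Step 3 proves exactly that.
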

\begin{proof}
  Since $f$ is smooth,
  by Theorem \ref{arithsixfunctor},
  $f^{\arith/K}$ is prim with priml dual $1[-2(\dim X-\dim Y)]$,
  i.e. $f_*^{\arith/K}\cong f_!^{\arith/K}[-2(\dim X-\dim Y)]$.
  Since $f_!^{\arith/K}$ is identified with $f_+[\dim X-\dim Y]$,
  we have an identification between $f_*^{\arith/K}$ with $f_+[\dim Y-\dim X]$.
  In particular,
  $(f^{!,\Be},f_+)$ is identified with $(f^{\arith/K,*}[\dim X-\dim Y],f_*^{\arith/K}[\dim Y-\dim X],)$ hence becomes an adjoint pair.
\end{proof}

\begin{remark}
  Proposition \ref{Xarithsixfunctorcalssical} and Corollary \ref{f_*smooth} recover adjunction formulae\footnote{In the classical setting,
  people use a de Rham resolution to prove smooth adjunction formula,
  and use dual functor to prove proper adjunction formula (only for holonomic objects).
  Our approach is more abstract and general.} in solid arithmetic $D$-modules:
  \begin{enumerate}
    \item Let $f\colon X\rightarrow Y$ be a proper morphism between realizable schemes.
    By Theorem \ref{arithsixfunctor},
    $f$ is cohomologically étale on the arithmetic de Rham stacks.
    Therefore $(f_!^{\arith/K},f^{\arith/K,*})$ is an adjoint pair.
    Equivalently,
    $(f_+[\dim X-\dim Y],f^{\Delta})$ is an adjoint pair,
    which recovers the proper adjunction formula.
    \item Let $f\colon X\rightarrow Y$ be a smooth morphism between realizable schemes.
    Then $(f^{!,\Be}=f^+[\dim X-\dim Y],f_+)$ is an adjoint pair,
    which recovers smooth adjunction formula.
  \end{enumerate}
\end{remark}

\begin{remark}[How do people study $D$-modules before stacky approach?]
  When $X$ is a singular variety over a field of characteristic $0$ (resp. over $k$),
  then the category of algebraic (resp. arithmetic) $D$-modules over $X$ and the six operations are usually very hard to establish.
  Historically,
  people use an embedding to smooth varieties to resolve this obstruction,
  and this is Kashiwara's equivalence.
  Let us review this procedure for (solid) arithmetic $D$-modules in this remark.

  \begin{enumerate}
    \item (Local cohomology)
    Let $P$ be a proper smooth formal scheme and $Z$ be a closed subscheme of $P_k$.
    We can define the functors $R\underline{\Gamma}_{Z}^{\dagger}(-),\,({}^{\dagger}Z)\colon D_{\blacksquare}(\D^{\dagger}_{P,\rt})\rightarrow D_{\blacksquare}(\D^{\dagger}_{P,\rt})$ sitting in a triangle
    \[
    R\underline{\Gamma}_{Z}^{\dagger}(-)\longrightarrow\id\longrightarrow ({}^{\dagger}Z).
    \]
    First let $Z$ be a divisor,
    then it suffices to define $({}^{\dagger}Z)$ as $R\underline{\Gamma}_{Z}^{\dagger}(-)$ can be defined via the triangle.
    We define \[({}^{\dagger}Z)(-):=\D^{\dagger}_{P,\rt}({}^{\dagger}Z)\otimes_{\D^{\dagger}_{P,\rt},\blacksquare}(-)\colon D_{\blacksquare}(\D^{\dagger}_{P,\rt})\rightarrow D_{\blacksquare}(\D^{\dagger}_{P,\rt}),\]
    where $\D^{\dagger}_{P,\rt}({}^{\dagger}Z)=\O_{P_k}({}^{\dagger}Z)\otimes_{\O_P,\blacksquare}\D_{P,\rt}^{\dagger}$ and 
    $\O_{P_k}({}^{\dagger}Z)$ is the sheaf of $\rt_p$-solid algebras of overconvergent functions on $]P_k\backslash Z[_{P}$,
    i.e. $\O_{P_k}({}^{\dagger}Z)=\O_{]P_k\backslash Z[_{P}^{\dagger}}$.
    In general,
    we can choose divisors $T_1,\ldots,T_n$ such that $Z=T_1\cap\cdots\cap T_n$,
    then we define 
    \[
    R\underline{\Gamma}_{Z}^{\dagger}(-):=R\underline{\Gamma}_{T_1}^{\dagger}(-)\circ \cdots\circ R\underline{\Gamma}_{T_n}^{\dagger}(-),
    \]
    and define $({}^{\dagger}Z)$ by the triangle.
    This definition generalizes to any locally closed subscheme $X$ by setting
    $R\underline{\Gamma}_X^{\dagger}:=({}^{\dagger}(\overline{X}\backslash X))\circ R\underline{\Gamma}_{\overline{X}}^{\dagger}$.
    \item (Arithmetic $D$-modules via frame)
    Let $X$ be a realizable scheme and $(X,\overline{X},P)$ be the associated proper smooth frame.
    Let $D_{\blacksquare,P}(\D^{\dagger}_{X/K})$ be the full subcategory of $D_{\blacksquare}(\D^{\dagger}_{P,\rt})$
    consisting of objects $M$ such that there exists an isomorphism $M\cong R\underline{\Gamma}_{X}^{\dagger}(M)$.
    This is the category of solid arithmetic $D$-modules (framed by $P$) over $X$.
    \item (Kashiwara's equivalence)
    Both the local cohomology and the category $D_{\blacksquare,P}(\D^{\dagger}_{X/K})$ have some choices in the construction.
    However,
    consider $i\colon X\rightarrow P_k$,
    then one can identify the functor $i^{\arith/K}_!i^{\arith/K,*}\colon D(P_k^{\arith/K})\rightarrow D(P_k^{\arith/K})$
    with $R\underline{\Gamma}_X^{\dagger}\colon D_{\blacksquare}(\D^{\dagger}_{P,\rt})\rightarrow D_{\blacksquare}(\D^{\dagger}_{P,\rt})$.
    Indeed,
    one can reduce to the case of being affine open with divisor-complement,
    then the identification is directly from the excision property in Theorem \ref{arithsixfunctor}
    and the proof in Proposition \ref{propersmootharithcoefficients}.
    With the identification of functors established,
    then $D(X^{\arith/K})$ is the full subcategory of $D(P_k^{\arith/K})$ consisting of $M$ such that $M\cong i_!i^*M$
    (actually we don't have a natural transformation between $i_!i^*$ and $\id$ so this really means that
    the correspondence $M\leftarrow \overline{i}_!\overline{i}^*M\rightarrow i_!i^*M$ becomes an isomorphism,
    where $\overline{i}\colon\overline{X}\rightarrow P_k$ is the compactification of $X$),
    which is equivalent to the full subcategory of $D_{\blacksquare}(\D^{\dagger}_{P,\rt})$ consisting of $M$ such that $M\cong R\underline{\Gamma}_X^{\dagger}(M)$,
    which is $D_{\blacksquare,P}(\D^{\dagger}_{X/K})$.
    \item (Definition of six operations)
    Let $f\colon X\rightarrow Y$ be a morphism between realizable schemes.
    Then we define the naïve pullback functor $f^{\triangle}\colon D_{\blacksquare}(\D^{\dagger}_{Y/K})\rightarrow D_{\blacksquare}(\D^{\dagger}_{X/K})$
    by using Kashiwara's equivalence as follows.
    Let $(X,\overline{X},P)$ and $(Y,\overline{Y},Q)$ be the proper smooth frames respectively.
    By replacing $P$ by $P\times Q$ if necessary,
    we can always assume that $f$ can be lifted to a morphism $F\colon P\rightarrow Q$ between frames,
    which is the trick in Definition \ref{Pullbackofrigidcohomology} or Definition \ref{pullbackoverconvergentisocrystal}.
    Then we define 
    \[
    f^{\triangle}:=R\underline{\Gamma}_{X}^{\dagger}\circ F^{\triangle}.
    \]
    Then this exactly corresponds to $f^{\arith/K}\colon D(Y^{\arith/K})\rightarrow D(X^{\arith/K})$ hence does not depend on the choice of frames.
    Similar definitions can be made for extraordinary pullback and pushforward.
  \end{enumerate}
  This approach via Kashiwara's equivalence (Kashiwara's approach) is equivalent to the approach using relative arithmetic de Rham stack (stacky approach),
  as shown above.
  However,
  there is always a problem of choosing a frame when defining everything in Kashiwara's approach!
  This is invisible in the stacky approach,
  and meanwhile the way we prove the frame-independence in Kashiwara's approach is exactly by using the stacky approach.
\end{remark}

One can define and study a Frobenius-equivariant category of solid arithmetic $D$-modules.

\begin{definition}[Solid arithmetic $D$-modules with Frobenius structure]
  Let $K$ admit a Frobenius lift $\varphi_K$ and be perfect.
  Let $X$ be a scheme over $k$ and $\varphi_{X/k}\colon X\rightarrow X^{(1)}$ be the relative Frobenius of $X$ over $k$.
  It induces a Frobenius pullback endofunctor $\varphi^{\triangle}_{X/k}$ from the category of arithmetic $D$-modules over $X^{(1)}$ to the category of arithmetic $D$-modules over $X$.
  Then an arithmetic $D$-module with Frobenius structure is an object $M\in D_{\blacksquare}(\D_{X/K}^{\dagger})$ equipped with an isomorphism $\varphi_{X/k}^{\triangle}\varphi_K^*M\cong M$.
  A morphism between such objects is a morphism between arithmetic $D$-modules compatible with the Frobenius-equivariant structure.
  Let $D_{\blacksquare}(\D_{X/K}^{\dagger})^{F\text{-equiv}}$ be the resulting category.
\end{definition}

\begin{remark}\label{FarithmeticDmodule}
  We have a direct reinterpretation of this category using arithmetic de Rham stacks.
  Let $K$ admit a Frobenius lift $\varphi_K$ and be perfect.
  We have an equivalence of categories 
  \[
    D_{\blacksquare}(\D_{X/K}^{\dagger})^{F\textnormal{-equiv}}\cong D(X^{\arith/K}/\varphi_{X}^{\itg}).
  \]
\end{remark}

\begin{remark}[Drinfeld's lemma for arithmetic $D$-modules]
  Let $q=p^s$,
  and we work over $k=\ff_q$ equipped with \emph{$s$-th Frobenius} and $K$ equipped with the Frobenius lift $\varphi_K=\id_K$.
  The stack $\operatorname{GSpec} K/\varphi_K^{\itg}=\operatorname{GSpec} K/\id_K^{\itg}$ lives over $\operatorname{GSpec}K$,
  hence so does $X^{\arith/K}/\varphi_X^{\itg}$.
  Let $X_1,\ldots,X_n$ be $k$-schemes.
  Let $F_1,\ldots,F_n$ be the partial Frobenii on $X_1\times_k\cdots\times_k X_n$,
  i.e. $F_i=\id\times\cdots\times \varphi_{X_i}\times\cdots\times \id$.
  For a category $\C$ with all $F_i^*$-actions,
  denote by $\C^{\Phi\textnormal{-equiv}}$ the category of $F_i$-equivariant objects $M$ in $\C$ such that
  all isomorphisms $\alpha_i\colon F_i^*M\cong M$ commute pairwise,
  i.e. $F_i(\alpha_j)\circ \alpha_j\cong F_j(\alpha_i)\circ \alpha_j$ as morphisms from $F_i^*F_j^*M\cong F_j^*F_i^*M$ to $M$.
  Then we have an equivalence of symmetric monoidal categories 
    \[
    D_{\blacksquare}(\D_{X_1\times_k\cdots\times_k X_n/K}^{\dagger})^{\Phi\textnormal{-equiv}}\cong 
    D_{\blacksquare}(\D_{X_1/K}^{\dagger})^{F\textnormal{-equiv}}\otimes_{D_{\blacksquare}(K)}\cdots\otimes_{D_{\blacksquare}(K)} D_{\blacksquare}(\D_{X_n/K}^{\dagger})^{F\textnormal{-equiv}}.
    \]
  Indeed,
  this is directly deduced by applying categorical Künneth formula (Remark \ref{categoricalKunneth}) to the product of Gelfand stacks:
    \[
      (X_1\times_k \cdots\times_k X_n)^{\arith/K}/(F_1^{\itg}\times\cdots\times F_n^{\itg})=X_1^{\arith/K}/\varphi_{X_1}^{\itg}\times_{K}\cdots\times_{K} X_n^{\arith/K}/\varphi_{X_n}^{\itg}.
    \]
  We also expect to establish Drinfeld's lemma in overconvergent isocrystals using the stacky approach,
  recovering results in \cite{Ked24} and \cite{KX23}.
  This will be explored in future work.
\end{remark}

\section{Hyodo--Kato stacks of schemes in characteristic $p$}

Let $X$ be a separated scheme of finite type over $k$ of characteristic $p$.
Recall in Definition \ref{diamondofscheme},
we can associate to $X$ two qfd arc-stacks over $\ff_p$:
the small diamond $X^{\circ}_{\arc}$ and the big diamond $X_{\arc}$.
Then we can take the Hyodo--Kato stacks of these qfd arc-stacks from Definition \ref{dRFFstk},
and denote them as follows
\[
X^{\HK}:=X_{\arc}^{\HK},\quad X^{\HK}_{\le 1}:=X_{\arc}^{\circ,\HK}.
\]
In this section,
we analyze the $6$-functor formalism and the connections with the arithmetic de Rham stack we studied before.
They serve as special cases of Hyodo--Kato stacks in \cite{ABLBRCS25},
and in the proof of many properties,
we follow or directly reproduce the proofs in \cite{ABLBRCS25}.

\subsection{Definition and $6$-functor formalism}\label{rigidexplicitproperty}
Let $X$ be a separated scheme of finite type over $k$.
Let $X_{\arc}$ (resp. $X_{\arc}^{\circ}$) be the big (resp. small) diamond associated to $X$ in Definition \ref{diamondofscheme}.

\begin{definition}
  We follow the notations in Definition \ref{dRFFstk}.
  The stacks $X^{\RIG}$ and $X^{\HK}$ are defined as:
  \[
  X^{\RIG}:=(\Y_{X_{\arc}}^{\diamond})^{\dR},\,X^{\HK}:= X_{\arc}^{\HK}.
  \]
  The stacks $X^{\RIG}_{\le 1}$ and $X^{\HK}_{\le 1}$ are defined as:
  \[
  X^{\RIG}_{\le 1}:=(\Y_{X^{\circ}_{\arc}}^{\diamond})^{\dR},\,X^{\HK}:= (X_{\arc}^{\circ})^{\HK}.
  \]
  In particular,
  let $\varphi_X$ be the absolute Frobenius morphism of (the scheme; the big diamond of; the small diamond of) $X$,
  and also denote by $\varphi_X$ the one induced on the punctured Fargues--Fontaine disk,
  then $X^{\HK}=X^{\RIG}/\varphi^{\itg}_X$ and $X^{\HK}=X^{\RIG}_{\le 1}/\varphi^{\itg}_X$.
\end{definition}

\begin{remark}\label{bigrigiddefinition}\label{smallrigiddefinition}
  We have the functor-of-points description of these stacks as follows (or equivalently as definitions).
  The $X^{\RIG}$ is (the sheafification) of the following qfd Gelfand stack
  \[
  X^{\RIG}(A):=X(A^{u,\flat}),\,A\in\cat{uPerfd}^{\qfd}_{\omega_1},
  \]
  and the $X^{\RIG}_{\le 1}$ is (the sheafification) of the following qfd Gelfand stack
  \[
  X^{\RIG}_{\le 1}(A)=X(A^{u,\circ,\flat}),\,A\in\cat{uPerfd}^{\qfd}_{\omega_1}.
  \]
  There is no actual need to sheafify,
  by \cite[Remark 6.1.4]{ABLBRCS25} and the functor-of-points description of $X_{\arc}$ and $X_{\arc}^{\circ}$ in Definition \ref{diamondofscheme}.
  Moreover,
  the Frobenius morphisms induced there are given by $x\mapsto x^p$ on $A^{u,\flat}$ or $A^{u,\circ,\flat}$ via the functor-of-points description.
\end{remark}

The stacks $X^{\RIG}$ and $X^{\RIG}_{\le 1}$ also arise from transmutation,
as we explain in the following remark.

\begin{remark}\label{transumationrigid}
  Let $\aff^1_{\ff_p}$ be the affine line over $\ff_p$. 
  From the functor-of-points description,
  we have $\aff^{1,\RIG}_{\ff_p}(A)=A^{u,\flat}=\varprojlim_{x\mapsto x^p} A^u$,
  and $\aff^{1,\RIG}_{\ff_p,\le 1}(A)=A^{u,\circ,\flat}=\varprojlim_{x\mapsto x^p} A^{u,\circ}$ hence by comparing functor of points we have
  \[
  \aff^{1,\RIG}_{\ff_p}=\varprojlim_{\varphi}\aff^{1,\an,\dR}_{\rt_p},\,
  \aff^{1,\RIG}_{\ff_p,\le 1}=\varprojlim_{\varphi}\disk^{\dagger,\dR}_{\rt_p}
  \]
  as the perfection of the analytic de Rham stack of (the perfectoidization of) the analytic affine line and the overconvergent disk
  where $\varphi$ is the Frobenius morphism sending $T$ to $T^p$ on the coordinate.

  As qfd Gelfand stacks,
  $\aff^{1,\RIG}_{\ff_p}$ and $\aff^{1,\RIG}_{\ff_p,\le 1}$ are naturally ring objects;
  the ring structure is given as on $A$-points $\aff^{1,\RIG}_{\ff_p}(A)=A^{u,\flat}$ and $\aff^{1,\RIG}_{\ff_p,\le 1}(A)=A^{u,\circ,\flat}$ by the ring structure on tilts:
  \[
  (x+y)_n:= \lim_{m\rightarrow 0}(x_{n+m}+y_{n+m})^{p^m},\, (x\cdot y)_n:=x_n\cdot y_n.
  \]
  
  Then $X^{\RIG}$ and $X^{\RIG}_{\le 1}$ are defined via transmutation,
  i.e. 
  \[
  X^{\RIG}(A)=X(\aff^{1,\RIG}_{\ff_p}(A)),\,X^{\RIG}_{\le 1}(A)=X(\aff^{1,\RIG}_{\ff_p,\le 1}(A)),\,A\in\cat{uPerfd}^{\qfd}_{\omega_1}.
  \]
\end{remark}

\begin{remark}\label{hdescentrigid}
  The stacks $X^{\RIG}$ and $X^{\RIG}_{\le 1}$ (hence also $X^{\HK}$ and $X^{\HK}_{\le 1}$) satisfy $h$-hyperdescent.
  Indeed,
  by Proposition \ref{dRFFpreservecolimit},
  it suffices to show that if $X\rightarrow Y$ is an $h$-hypercover of affine schemes of finite type,
  then $X_{\arc}\rightarrow Y_{\arc}$ and $X^{\circ}_{\arc}\rightarrow Y^{\circ}_{\arc}$ are arc-hypercovers.
  The case of small diamonds directly follows from Lemma \ref{hcoverarccover}.
  To deal with the case of big diamonds,
  it suffices to show that the functor of taking big diamonds sends $h$-covers of affine schemes of finite type to arc-covers.
  Let $X=\Spec A\rightarrow Y=\Spec B$ be an $h$-cover and write $A=B[T_1,\ldots,T_n]/(f_1,\ldots,f_m)$.
  It suffices to find an arc-cover $S\rightarrow\M_{\arc}(R)$ of qfd perfectoid spaces for any map $\M_{\arc}(R)\rightarrow Y_{\arc}$ such that it lifts to a map $S\rightarrow X_{\arc}$:
  \[\xymatrix{
    S\ar@{-->}[r]\ar[d] & X_{\arc}\ar[d]\\
    \M_{\arc}(R)\ar[r] & Y_{\arc}
  }\]
  We can simply take $S$ as the fiber product $\M_{\arc}(R)\times_{Y_{\arc}}X_{\arc}$,
  which is identified with the perfectoidization of
  the vanishing locus $V(f_1^{1/p^{\infty}},\ldots,f_m^{1/p^{\infty}})\subset \aff^{n,\perf,\an}_R$ which is qfd perfectoid,
  and it suffices to show that $S\rightarrow\M_{\arc}(R)$ is an arc-cover.
  By \cite[Theorem 11.26]{BS17},
  being an $h$-cover implies that $B\rightarrow A$ is descendable,
  so is $R\rightarrow R[T_1,\ldots,T_n]/(f_1,\ldots,f_m)$,
  and hence $R\rightarrow R[T_1^{1/p^{\infty}},\ldots,T_n^{1/p^{\infty}}]/(f_1^{1/p^{\infty}},\ldots,f_m^{1/p^{\infty}})$ is also descendable.
  Since $R$ is already Gelfand,
  the induced map on their Gelfandifications is also descendable,
  which is given by $V(f_1^{1/p^{\infty}},\ldots,f_m^{1/p^{\infty}})\rightarrow \operatorname{GSpec}R$.
  Then it also induces an arc-cover by \cite[Lemma 4.5.1]{ABLBRCS25},
  which proves the claim.
\end{remark}

\begin{remark}\label{rigidKunneth}
  The stacks $X^{\RIG}_{(\le 1)}$ preserve finite limits by construction via functor of points.
  In particular,
  they preserve fiber products. 
\end{remark}

Note that there is a natural morphism $X_{\arc}^{\circ}\rightarrow X_{\arc}$ from the small diamond to the big diamond associated to $X$,
which induces morphisms $X^{\RIG}_{\le 1}\rightarrow X^{\RIG}$ and $X^{\HK}_{\le 1}\rightarrow X^{\HK}$.

\begin{lemma}[Valuation criterion]\label{valuationcriterion}
  Let $X\rightarrow Y$ be a proper morphism.
  Then the following diagram is Cartesian:
  \[
  \xymatrix{
    X^{\circ}_{\arc}\ar[r]\ar[d] & Y^{\circ}_{\arc}\ar[d]\\
    X_{\arc}\ar[r] & Y_{\arc}
  }
  \]
\end{lemma}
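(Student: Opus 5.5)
We would check the Cartesian property on test objects before sheafification, then sheafify. Both $X_{\arc}$ and $X^{\circ}_{\arc}$ are sheafifications of the presheaves $R\mapsto X(R)$ and $R\mapsto X(R^{\circ})$ on qfd perfectoid $\ff_p$-algebras. Sheafification commutes with finite limits, so it suffices to prove that for every qfd perfectoid ring $R$ (and we may restrict to a basis, e.g. strictly totally disconnected $R$) the square
\[
X(R^{\circ})\longrightarrow Y(R^{\circ})\times_{Y(R)}X(R)
\]
is a bijection. One subtlety: we want to compare on a basis where no sheafification is needed. Following the remark after Definition \ref{arithdeRhamdefinition}, for $R$ strictly totally disconnected both presheaves already satisfy descent, so we work there.

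Next we reduce to valuation rings. For $R$ strictly totally disconnected with $R^{\circ}=\prod_i K_i^{\circ}$ as in the proof of Lemma \ref{smallrigidepimorphismarith}, we would like to replace $\prod_i$ by its factors. We argue that a map $\Spec R^{\circ}\to X$ is determined by, and can be built from, compatible maps on each $\Spec K_i^{\circ}$. Because $X$ is of finite type, Zariski locally we can write things in finitely many coordinates with uniform bounds, and then $\operatorname{Hom}(B,\prod K_i^{\circ})=\prod\operatorname{Hom}(B,K_i^{\circ})$ for affine $X=\Spec B$. The main obstacle is globalizing over a non-affine $X$: we need the image of $\Spec R^{\circ}$ to land in a finite affine cover compatibly, so that a family of $K_i^{\circ}$-points glues. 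We would handle this using the fact that $\Spec \prod K_i^{\circ}$ has profinite connected components and that the locus mapping into a given affine open is open and closed after refining. Equivalently, we pass to the arc-local statement: Cartesianness of a square of arc-sheaves can be checked on geometric points $\M_{\arc}(K)$ for $K$ an algebraically closed perfectoid field, by the arc-topology and qcqs-ness (Lemma \ref{propersmalldiamond} gives properness of the top map, hence the comparison map between two proper objects is qcqs, so surjectivity and injectivity can be tested on such points). We would rely on this second route as the cleaner option.

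For a perfectoid field $K$ the statement is the classical valuative criterion of properness. Given $x\colon\Spec K\to X$ and $y\colon\Spec K^{\circ}\to Y$ agreeing on $\Spec K$, properness of $X\to Y$ (finite type, separated, universally closed) gives a unique lift $\Spec K^{\circ}\to X$. Existence comes from universal closedness and uniqueness from separatedness, valid for arbitrary valuation rings such as $K^{\circ}$ (Stacks). This yields bijectivity on $K$-points. For injectivity of the comparison map of sheaves we would also use separatedness of $X\to Y$, which makes $X(R^{\circ})\to X(R)$ injective for the $\varpi$-torsion-free $R^{\circ}$ and makes the diagonals monomorphisms. Together these give the Cartesian square.
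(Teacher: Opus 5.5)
Your second route is essentially the paper's own (second) proof: the comparison map $X^{\circ}_{\arc}\to X_{\arc}\times_{Y_{\arc}}Y^{\circ}_{\arc}$ is qcqs, so by \cite[Lemma 12.5]{Sch17} it suffices to check bijectivity on points $\M_{\arc}(K)$, and there the claim is the classical valuative criterion of properness. One justification is missing: qcqs-ness of the comparison map needs not only Lemma \ref{propersmalldiamond} but also that $X_{\arc}\to Y_{\arc}$ is proper (quasi-separated suffices), which the paper takes from \cite[Proposition 27.4]{Sch17}.
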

\begin{proof}
  By Remark \ref{preadicspace},
  the diagram is the following diagram after applying the functor $a^*$:
  \[
  \xymatrix{
    X^{\ad,\diamondsuit}\ar[r]\ar[d] & Y^{\ad,\diamondsuit}\ar[d]\\
    X^{\ad/\ff_p,\diamondsuit}\ar[r] & Y^{\ad/\ff_p,\diamondsuit}
  }
  \]
  Since $a^*$ preserves finite limits,
  it suffices to prove this diagram is Cartesian.
  However,
  as discrete adic spaces,
  the following diagram is Cartesian:
  \[
  \xymatrix{
    X^{\ad}\ar[r]\ar[d] & Y^{\ad}\ar[d]\\
    X^{\ad/\ff_p}\ar[r] & Y^{\ad/\ff_p}
  }
  \]
  If $Y=\Spec \ff_p$,
  then this is already discussed in \cite[Proposition 9.6]{CS19}.
  For general $Y$,
  we can mimic the same proof.
  The map $X^{\ad}\rightarrow X^{\ad/\ff_p}\times_{Y^{\ad/\ff_p}} Y^{\ad}$ is an open immersion as both $X^{\ad}\rightarrow X^{\ad/\ff_p}$ and $X^{\ad/\ff_p}\times_{Y^{\ad/\ff_p}} Y^{\ad}\rightarrow X^{\ad/\ff_p}$ is.
  Therefore,
  to check it is an isomorphism,
  it suffices to check the bijectivity of the underlying point sets.
  However,
  points of $X^{\ad}$ are equivalent to maps from spectra of valuation rings,
  and points of $X^{\ad/\ff_p}$ are equivalent to maps from spectra of valuation fields,
  cf. \cite[Discussion after Definition 9.5]{CS19},
  hence the bijectivity directly comes from the valuation criterion for properness.
\end{proof}
\begin{proof}
  We freely consider arc-stacks as partially proper $v$-stacks in \cite[Section 12]{Sch24b}.
  By \cite[Proposition 27.4]{Sch17} and its proof,
  we know that the induced map $X_{\arc}\rightarrow Y_{\arc}$ is representable by locally spatial diamonds and is proper hence qcqs.
  Since $X^{\circ}_{\arc}\rightarrow Y^{\circ}_{\arc}$ is already qcqs in Lemma \ref{propersmalldiamond},
  we know that the morphism $X^{\circ}_{\arc}\rightarrow X_{\arc}\times_{Y_{\arc}}Y^{\circ}_{\arc}$ is also a qcqs morphism between $v$-stacks.
  By \cite[Lemma 12.5]{Sch17},
  to check $X^{\circ}_{\arc}\rightarrow X_{\arc}\times_{Y_{\arc}}Y^{\circ}_{\arc}$ is an isomorphism,
  it suffices to check it on geometric points.
  By the valuation criterion of properness of morphisms between schemes,
  we know that if $K$ is a perfectoid field,
  then the following diagram is Cartesian 
  \[
  \xymatrix{
    X^{\circ}_{\arc}(K)\ar[r]\ar[d] & Y^{\circ}_{\arc}(K)\ar[d]\\
    X_{\arc}(K)\ar[r] & Y_{\arc}(K)
  }
  \]
  which shows that $X^{\circ}_{\arc}(K)\cong (X_{\arc}\times_{Y_{\arc}}Y^{\circ}_{\arc})(K)$ on geometric points,
  hence $X^{\circ}_{\arc}\cong X_{\arc}\times_{Y_{\arc}}Y^{\circ}_{\arc}$ is an isomorphism and the diagram is Cartesian.
\end{proof}

\begin{lemma}\label{propermorphismcriterion}
  Let $X\rightarrow Y$ be a proper morphism.
  Then the following diagram is Cartesian:
  \[
  \xymatrix{
    X^{\RIG}_{\le 1}\ar[r]\ar[d] & Y^{\RIG}_{\le 1}\ar[d]\\
    X^{\RIG}\ar[r] & Y^{\RIG}
  }
  \]
  A similar statement holds for the Hyodo--Kato stacks of $X$ and $Y$.
\end{lemma}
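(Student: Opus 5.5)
The plan is to deduce the statement from Lemma \ref{valuationcriterion} by applying the functor $Z\mapsto (\Y^{\diamond}_Z)^{\dR}$. By definition, $X^{\RIG}_{\le 1}=(\Y^{\diamond}_{X^{\circ}_{\arc}})^{\dR}$ and $X^{\RIG}=(\Y^{\diamond}_{X_{\arc}})^{\dR}$. The horizontal and vertical maps in the square are obtained by applying this functor to the corresponding maps of arc-stacks $X^{\circ}_{\arc}\rightarrow X_{\arc}$ and $X_{\arc}\rightarrow Y_{\arc}$. Lemma \ref{valuationcriterion} already tells us that the square of arc-stacks
\[
X^{\circ}_{\arc}\cong X_{\arc}\times_{Y_{\arc}}Y^{\circ}_{\arc}
\]
is Cartesian. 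So it suffices to show that the composite functor $\cat{ArcStk}^{\qfd}_{\ff_p}\rightarrow\cat{GelfStk}^{\qfd}$, $Z\mapsto(\Y^{\diamond}_Z)^{\dR}$, preserves fiber products.

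This is a composite of two functors, and both are right adjoints. The first is $\Y^{\diamond}_{(-)}=\flat_*$, which is the right adjoint of the diamond functor $\flat^*$, so it preserves all limits. The second is $(-)^{\dR}=u_*$, which is the right adjoint of $u^*$ by Proposition \ref{ABLBRCS25Section4.5}, so it also preserves limits. Hence the composite preserves fiber products, and the square of $\RIG$-stacks is Cartesian.

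As a sanity check, the same conclusion follows directly from functor-of-points descriptions. Remark \ref{bigrigiddefinition} says no sheafification is needed, so one only has to compare $X(A^{u,\circ,\flat})$ with $X(A^{u,\flat})\times_{Y(A^{u,\flat})}Y(A^{u,\circ,\flat})$ on strictly totally disconnected test objects. Since $A^{u,\flat}$ is a product-like ring of perfectoid fields, this reduces to the valuative criterion for properness, which is the same input as in Lemma \ref{valuationcriterion}.

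For the Hyodo--Kato version, we quotient by $\varphi^{\itg}$. The maps in the square are $\varphi$-equivariant, and the square of $\RIG$-stacks is Cartesian. Then $X^{\HK}_{\le 1}\rightarrow X^{\HK}\times_{Y^{\HK}}Y^{\HK}_{\le 1}$ can be checked after pulling back along the $\varphi^{\itg}$-torsor $Y^{\RIG}\rightarrow Y^{\HK}$. This pullback recovers the $\RIG$-square, because quotients by a free $\itg$-action are computed compatibly with base change in a topos.

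The main point to verify is the second step, namely that $(-)^{\dR}$, and not merely its restriction, is a right adjoint on all of $\cat{ArcStk}^{\qfd}$ and commutes with the fiber product. This is exactly Proposition \ref{ABLBRCS25Section4.5}, so I expect no real obstacle beyond bookkeeping of the identifications.
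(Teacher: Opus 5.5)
Your proposal is correct and matches the paper's proof: it applies $Z\mapsto(\Y^{\diamond}_Z)^{\dR}$ to the Cartesian square of Lemma \ref{valuationcriterion}, and you make explicit what the paper leaves implicit, namely that this functor preserves fiber products because it is a composite of the right adjoints $\flat_*$ and $u_*$. For the Hyodo--Kato part, the paper simply applies $(-)^{\HK}$ to the same square. Your descent along the $\varphi^{\itg}$-torsor $Y^{\RIG}\to Y^{\HK}$ supplies exactly the justification that passing to the $\varphi^{\itg}$-quotient preserves this fiber product.
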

\begin{proof}
  By valuation criterion of properness in Lemma \ref{valuationcriterion},
  this is a Cartesian diagram by directly applying the analytic de Rham stack of punctured open disk to the diagram in Lemma \ref{valuationcriterion}.
  For Hyodo--Kato stacks,
  it is obtained by applying the Hyodo--Kato stack to the diagram.
\end{proof}

Let us compute some examples and study their geometry.

\begin{example}[Field of characteristic $p$ and finite field]\label{finiteHK}
  Take $X=\Spec k$ where $k$ is a field of characteristic $p$,
  then we have 
  \[
    (\Spec k)^{\RIG}_{\le 1}=(\operatorname{GSpec} W(k^{\perf})[p^{-1}])^{\dR}.
  \]
  Indeed,
  we have from Proposition \ref{FFcurvesmalldiamond} that:
  \[
  \Y_{(\Spec k)^{\circ}_{\arc}}^{\diamond}=((\Spec k)_{\Ainf}^{\perf})^{\diamond}_{\eta}=\M_{\arc}(W(k^{\perf})[p^{-1}]),
  \]
  Then we have 
  \[
    (\Spec k)^{\RIG}_{\le 1}=\Y_{(\Spec k)^{\circ}_{\arc}}^{\diamond,\dR}=(\operatorname{GSpec} W(k^{\perf})[p^{-1}])^{\dR}.
  \]
  In general,
  we don't expect a good description of $(\Spec k)^{\RIG}$
  but for the case of finite field we have $(\Spec k)^{\RIG}=(\Spec k)^{\RIG}_{\le 1}$.
    Let $k$ be a finite field.
    Since $k$ is étale over $\ff_p$,
    every element in $(\Spec k)^{\RIG}(A)=\Hom_{\ff_p}(k,A^{u,\flat})$ will lie in $(\Spec k)^{\RIG}_{\le 1}(A)=\Hom_{\ff_p}(k,A^{u,\circ,\flat})$.
    Moreover,
    since $W(k)[p^{-1}]$ is étale over $\rt_p$,
    we also have $(\operatorname{GSpec}W(k)[p^{-1}])^{\dR}=\operatorname{GSpec}W(k)[p^{-1}]$ hence 
    \[
    (\Spec k)^{\RIG}=(\Spec k)^{\RIG}_{\le 1}=\operatorname{GSpec} W(k)[p^{-1}],\,
    (\Spec k)^{\HK}=(\Spec k)^{\HK}_{\le 1}=\operatorname{GSpec}W(k)[p^{-1}]/\varphi^{\itg}.
    \]
\end{example}

The Hyodo-Kato stacks of a scheme are the same as the perfection of the scheme,
hence they give the same theory.
This means that we can always work over a \emph{perfect} base field $k$.
The advantage of working over a perfect base is that we can define a relative version of Hyodo--Kato stacks. 
In the case that $k$ is perfect,
we have a sequence of morphisms
\[
\operatorname{GSpec}K\longrightarrow\operatorname{GSpec}W(k)[p^{-1}]
\cong\operatorname{GSpec}W(k^{\perf})[p^{-1}]\longrightarrow
(\Spec k)^{\RIG}_{\le 1}\longrightarrow (\Spec k)^{\RIG}.
\]
Hence we can make the following definition.
\begin{definition}\label{relativeHK}
  Let $k$ be a perfect field.
  The relative analytic de Rham stacks of punctured Fargues--Fontaine disks are defined by the following Cartesian diagram:
  \[\xymatrix{
    X^{\RIG/K}_{(\le 1)}\ar[r]\ar[d] & \operatorname{GSpec}K\ar[d]\\
    X^{\RIG}_{(\le 1)}\ar[r] & (\Spec k)^{\RIG}_{(\le 1)}
  }\]
  If $K$ admits a Frobenius lift $\varphi_K$ and is perfect,
  we can define the relative Hyodo--Kato stacks as the following Cartesian diagram
  \[\xymatrix{
    X^{\HK/K}_{(\le 1)}\ar[r]\ar[d] & \operatorname{GSpec}K/\varphi_{K}^{\itg}\ar[d]\\
    X^{\HK}_{(\le 1)}\ar[r] & (\Spec k)^{\HK}_{(\le 1)}
  }\]
\end{definition}

\begin{remark}
  There is also an alternative way of defining Hyodo--Kato stacks via transmutation from
  \[
  \aff^{1,\RIG/K}_k=\varprojlim_{\varphi}\aff^{1,\an,\dR}_K,\,
  \aff^{1,\RIG/K}_{k,\le 1}=\varprojlim_{\varphi}\disk^{\dagger,\dR}_K.
  \]
\end{remark}

\begin{definition}[Tate twist for $X^{\HK}$ or $X^{\HK/K}$]\label{Tatetwist}
  We know that $(\Spec \ff_p)^{\HK}=\operatorname{GSpec}\rt_p/\varphi_{\rt_p}^{\itg}$,
  hence the vector bundles on $(\Spec \ff_p)^{\HK}$ are equivalent to $F$-isocrystals over $\rt_p$.
  Let $1(n)\in D((\Spec\ff_p)^{\HK})$ be the rank-one $F$-isocrystal given as $\varphi\cdot e=p^{-n}e$ for a generator $e\in 1(n)$.
  For any $X$ over $k$,
  we also denote $1(n)\in D(X^{\HK})$ or $D(X^{\HK/K})$ as the pullback $f^*1(n)$ of $1(n)$ along the morphism $f$ to the base $(\Spec \ff_p)^{\HK}$.
  They are called Tate twists for $X^{\HK}$ or $X^{\HK/K}$.
\end{definition}

Let us study some examples and basic geometry for Hyodo--Kato stacks.

\begin{example}[Additive group]\label{rigidstackadditive}
  Let $X$ be the affine line $\aff^1_k=\Spec k[T]$ over $k$.
  We have that
  \[X^{\RIG/K}\cong\varprojlim_{\varphi}\aff^{1,\an,\dR/K}_{K},\,X^{\RIG/K}_{\le 1}\cong\varprojlim_{\varphi}\disk^{\dagger,\dR/K}_{K},\]
  and hence 
  \[X^{\HK/K}\cong\varprojlim_{\varphi}\aff^{1,\an,\dR/K}_{K}/\varphi^{\itg},\,X^{\HK/K}_{\le 1}\cong\varprojlim_{\varphi}\disk^{\dagger,\dR/K}_{K}/\varphi^{\itg},\]
  where $\varphi$ is the Frobenius of $\mathbb{G}_{a,K}$ (or $\disk^{\dagger}_K$),
  sending $T$ to $T^p$ on the coordinate and restricting to $\varphi_K$ on $K$ on the coordinate.
  This comes from Remark \ref{rigidKunneth} and the computation in Remark \ref{transumationrigid}.
\end{example}

\begin{example}[Multiplicative group]
  Take $X$ to be $\gff=\Spec k[T^{\pm1}]$ over $k$.
  We have that
  \[X^{\RIG/K}\cong\varprojlim_{\varphi}\mathbb{G}_{m,K}^{\dR/K},\,X^{\RIG/K}_{\le 1}\cong\varprojlim_{\varphi}\mathbb{T}_{K}^{\dagger,\dR/K},\]
  and hence 
  \[X^{\HK/K}\cong\varprojlim_{\varphi}\mathbb{G}_{m,K}^{\dR/K}/\varphi^{\itg},\,X^{\HK/K}_{\le 1}\cong\varprojlim_{\varphi}\mathbb{T}_{K}^{\dagger,\dR/K}/\varphi^{\itg},\]
  where $\varphi$ is the absolute Frobenius of $\mathbb{G}_{m,K}$ (or $\mathbb{T}_{K}^{\dagger}$),
  again,
  sending $T$ to $T^p$ and restricting to the $\varphi_K$ on $K$ on the coordinate.
  By Remark \ref{rigidKunneth},
  it suffices to deal with $k=\ff_p$.
  Then $\gff^{\RIG}(A)=A^{u,\flat,\times}$ and $\mathbb{G}^{\RIG}_{m,\le 1}(A)=A^{u,\circ,\flat,\times}$ whence the description comes.
\end{example}

\begin{example}[Laurent series]\label{LaurentHK}
    Let $k=\ff_p(\!(t)\!)$ be the field of Laurent series.
    Then,
    from Example \ref{finiteHK},
    we have that 
    \[
    \Y_{(\Spec k)^{\circ}_{\arc}}^{\diamond}=((\Spec k)^{\perf}_{\Ainf})^{\diamond}_{\eta}=\M_{\arc}(\E_{\rt_p,\perf}),
    \]
    where $\E_{\rt_p,\perf}$ is the perfection of the Amice ring $\E_{\rt_p}$ (Example \ref{Laurentarith}) under Kummer Frobenius $t\mapsto t^p$,
    equipped with the $p$-adic topology,
    and hence 
    \[
      (\Spec k)^{\RIG}_{\le 1}\cong (\Y_{(\Spec k)^{\circ}_{\arc}}^{\diamond})^{\dR}
      \cong (\M_{\arc}(\E_{\rt_p,\perf}))^{\dR}\cong \varprojlim_{\varphi}(\M_{\arc}(\E_{\rt_p}))^{\dR}.
    \]
    Now we compute the stack $(\Spec k)^{\RIG}$ of $k$.
    In fact,
    we have a Cartesian diagram 
    \[\xymatrix{
      \Hom(\ff_p(\!(t)\!),A^{u,\flat})\ar[r]\ar[d] & \Hom(\ff_p[t^{\pm1}],A^{u,\flat})\cap \Hom(\ff_p[t],A^{u,\circ,\flat})\ar[d] \\
      \Hom(\ff_p[\![t]\!],A^{u,\circ,\flat})\ar[r] &  \Hom(\ff_p[t],A^{u,\circ,\flat})\\
    }\]
    Indeed,
    first the element $t$ must map to an element of norm $\le 1$.
    If an element $f\in \ff_p(\!(t)\!)$ with $\deg f=0$ maps to an element $a$ of norm greater than $1$,
    without loss of generality we can assume $f$ has constant term $1$,
    then $(1-f)^{-1}=1+f+f^2+\cdots$ exists and it must map to $1+a+a^2+\cdots$,
    which means $a$ must be bounded.
    In general,
    every Laurent series can be written as $t^if$ for some $f$ with degree $0$,
    and then the value is determined by that on $t$ and $f$ with degree $0$.
    This interprets to a Cartesian diagram of Gelfand stacks
    \[\xymatrix{
      (\Spec\ff_p(\!(t)\!))^{\RIG}\ar[r]\ar[d] & \varprojlim_{\varphi}\disk^{\dagger,\times,\dR}_{\rt_p}\ar[d] \\
      (\Spec \ff_p[\![t]\!])^{\RIG}_{\le 1}\ar[r] &  \varprojlim_{\varphi}\disk^{\dagger,\dR}_{\rt_p}\\
    }\]
    where $\disk^{\dagger,\times}=\gff^{\an}\cap\disk^{\dagger}$ is the punctured overconvergent disk.
    From Proposition \ref{FFcurvesmalldiamond},
    we have 
    \[
      (\Spec \ff_p[\![t]\!])^{\HK}_{\le 1}=((\Spec \ff_p[\![t]\!])^{\perf}_{\Ainf})^{\diamond,\dR}_{\eta}=(\operatorname{GSpec}\itg_p[\![t^{1/p^{\infty}}]\!][p^{-1}])^{\dR},
    \]
    where $\itg_p[\![t^{1/p^{\infty}}]\!][p^{-1}]$ is equipped with the $p$-adic topology.
    Then we have
    \[
      (\Spec\ff_p(\!(t)\!))^{\RIG}\cong (\operatorname{GSpec}\itg_p[\![t^{1/p^{\infty}}]\!][p^{-1}])^{\dR}\times_{\varprojlim_{\varphi}\disk^{\dagger,\dR}_{\rt_p}}\varprojlim_{\varphi}\disk^{\dagger,\times,\dR}_{\rt_p}
      \cong (\varprojlim_{\varphi}\disk_{\rt_p}^{\circ\circ,\times})^{\dR},
    \]
    where $\disk_{\rt_p}^{\circ\circ,\times}=\operatorname{GSpec}\itg_p[\![t]\!][p^{-1}]\times_{\disk^{\dagger}_{\rt_p}}\disk^{\dagger,\times}_{\rt_p}=\disk^{\circ\circ}_{\rt_p}\times_{\disk^{\dagger}_{\rt_p}}\disk^{\dagger,\times}_{\rt_p}$ is a version of a punctured open disk.
    To give an intuition of this disk,
    we can compute the ring of functions on this disk as the following ring of functions (with $p$-adic topology!)
    \[
    \O(\disk_{\rt_p}^{\circ\circ,\times})=\left\{
    \sum_i a_i t^i\in \rt_p[\![t^{\pm 1}]\!]\bigg| \sup_{i\rightarrow\infty}|a_i|<\infty,\,\f \eta>1,\lim_{i\rightarrow-\infty}|a_i|\eta^i=0
    \right\}.
    \]
    In summary,
    we have 
    \[
      (\Spec \ff_p(\!(t)\!))^{\RIG}_{\le 1}\cong (\operatorname{GSpec}\E^{\perf}_{\rt_p})^{\dR},\,
      (\Spec\ff_p(\!(t)\!))^{\RIG}\cong (\varprojlim_{\varphi}\disk_{\rt_p}^{\circ\circ,\times})^{\dR}.
    \]
    Above the field $k(\!(t)\!)$ is equipped with the \emph{discrete} topology.
    If $k(\!(t)\!)$ is equipped with the \textit{$t$-adic} topology,
    and we consider its associated diamond $\Spd k(\!(t)\!)$,
    then we have that $(\Spd k(\!(t)\!))^{\HK}\cong \varprojlim_{\varphi}\disk^{\circ,\times,\dR}_K/\varphi^{\itg}$ by \cite[Lemma 6.2.1]{ABLBRCS25}.
\end{example}

Now we turn to the study of the geometry of these stacks.
To start,
if we can find a good lift of a smooth scheme $X$,
then we have the following description of $X^{\RIG/K}_{\le 1}$ and $X^{\HK/K}_{\le 1}$.

\begin{proposition}\label{smallrigidstackdescription}
  Let $X$ be a smooth scheme over a perfect field $k$ which embeds openly into a proper smooth scheme $Y$ with a proper smooth lift $\mathfrak{Y}$ and generic fiber $\Y$.
  Let $K$ admit a Frobenius lift $\varphi_K$ and be perfect,
  and let the Frobenius lift to $\mathfrak{Y}$.
  Then it also restricts to a Frobenius morphism $\varphi\colon ]X[^{\dagger}_{\mathfrak{Y}}\rightarrow]X[^{\dagger}_{\mathfrak{Y}}$ by the construction of tubes (Construction \ref{tubes}).
  Then $X^{\RIG/K}_{\le 1}\cong \varprojlim_{\varphi}]X[^{\dagger,\dR/K}_{\mathfrak{Y}}$.
  Equivalently,
  there is a natural surjection $\varprojlim_{\varphi}]X[_{\mathfrak{Y}}^{\dagger}\rightarrow X^{\RIG/K}_{\le 1}$,
  with Cech nerve as $\varprojlim_{\varphi}(\Delta^{\dagger}_n(]X[_{\mathfrak{Y}}^{\dagger}))_{[n]\in\Delta}$.

  As a result,
  we also have $X^{\HK/K}_{\le 1}\cong \varprojlim_{\varphi}]X[^{\dagger,\dR/K}_{\mathfrak{Y}}/\varphi^{\itg}$.
\end{proposition}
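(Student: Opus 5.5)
The plan is to reduce to the $\delta$-lift description of the Fargues--Fontaine disk of the small diamond in Proposition \ref{FFcurvesmalldiamond}, and then to restrict from $\mathfrak{Y}$ to the tube of $X$ by an argument parallel to the proof of Proposition \ref{Xarithdescription}. First I treat $Y$ itself. Since $\mathfrak{Y}$ is a $\delta$-lift of $Y$, Proposition \ref{FFcurvesmalldiamond}(2) gives $\Y_{Y^{\circ}_{\arc}}^{\diamond}\cong\varprojlim_{\varphi}\Y^{\diamond}$. Applying $(-)^{\dR}$, which commutes with limits and colimits by Proposition \ref{dRcolimit} and the remark after it, yields $Y^{\RIG}_{\le 1}\cong\varprojlim_{\varphi}\Y^{\dR}$. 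Base changing along $\operatorname{GSpec}K\rightarrow(\Spec k)^{\RIG}_{\le 1}=(\operatorname{GSpec}W(k)[p^{-1}])^{\dR}$ (Example \ref{finiteHK}) replaces $\Y^{\dR}$ by the relative de Rham stack $\Y^{\dR/K}$, giving $Y^{\RIG/K}_{\le 1}\cong\varprojlim_{\varphi}\Y^{\dR/K}$. This step requires the Frobenius lifts on $\mathfrak{Y}$ and $K$ to be compatible, so that the transition maps are the ones written.

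Next I pass to the open $X\subset Y$. By Remark \ref{rigidKunneth} the functor $(-)^{\RIG/K}_{\le 1}$ preserves fiber products, and $X^{\RIG/K}_{\le 1}\rightarrow Y^{\RIG/K}_{\le 1}$ should be the locus cut out by the open condition on the special fiber. On $A$-points of $\varprojlim_{\varphi}\Y$, a compatible system $f\in\mathfrak{Y}(A^{u,\circ})$ lies over $X^{\RIG}_{\le 1}$ exactly when its reduction to $Y(A^{u,\circ,\flat}/\xi)$, equivalently to $Y(A^{u,\circ}/p)$, factors through $X$. This is the functor-of-points characterization of $]X[_{\mathfrak{Y}}$ used in the proof of Proposition \ref{Xarithdescription}. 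After applying $(-)^{\dR}$, the Berkovich/Betti localization defining $]X[^{\dagger}$ in Construction \ref{tubes} is invisible, because $]X[^{\dagger,\dR}_{\mathfrak{Y}}=]X[^{\dR}_{\mathfrak{Y}}$. Since $\varphi$ preserves the tube, this yields $X^{\RIG/K}_{\le 1}\cong\varprojlim_{\varphi}]X[^{\dagger,\dR/K}_{\mathfrak{Y}}$.

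For the equivalent formulation, I use that $Z\rightarrow Z^{\dR/K}$ is surjective with Čech nerve given by the overconvergent diagonal neighborhoods $\Delta^{\dagger}_n(Z)$. This follows from the definition of $\dR$ via uniform completion in Remark \ref{noneedsheafify}, together with the $\dagger$-formal smoothness of $]X[^{\dagger}_{\mathfrak{Y}}$ for surjectivity, as in the proof of Proposition \ref{Xarithdescription}. Passing to the limit over $\varphi$ commutes with Čech nerves. Finally, taking the quotient by $\varphi^{\itg}$ on both sides, compatibly with $\operatorname{GSpec}K/\varphi_K^{\itg}$, gives the statement for $X^{\HK/K}_{\le 1}$.

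I expect the main obstacle to be the identification of the open locus with the tube at the level of the limit. Surjectivity of $\varprojlim_{\varphi}]X[^{\dagger}_{\mathfrak{Y}}\rightarrow X^{\RIG/K}_{\le 1}$ on points needs lifts that are compatible along the whole $\varphi$-tower. My approach is to lift at the perfect level using Remark \ref{Kedlaya336}: points of $W(R^{\perf})$ lift uniquely, and I would then compare with the $\delta$-lift rather than attempting to lift stage by stage.
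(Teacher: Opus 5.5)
Your outline is correct at the paper's level of detail, but it takes a detour the paper avoids. You handle $Y$ first and then cut out the locus of $X$ inside $\varprojlim_{\varphi}\mathfrak{Y}_{\eta}^{\dR/K}$ by a functor-of-points argument. The paper instead applies Proposition \ref{FFcurvesmalldiamond}(2) directly to $X$.

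The open formal subscheme $\mathfrak{X}\subset\mathfrak{Y}$ with underlying space $X$ is stable under the Frobenius lift, because that lift reduces to Frobenius, which is a homeomorphism. So $\mathfrak{X}$ is itself a $\delta$-lift of $X$, and its generic fiber is the tube $]X[_{\mathfrak{Y}}$. This gives $\Y^{\diamond}_{X^{\circ}_{\arc}}\cong\varprojlim_{\varphi}]X[^{\diamond}_{\mathfrak{Y}}$ at once, and properness of $Y$ plays no role. The rest then goes exactly as in your later steps: apply $(-)^{\dR}$, which commutes with limits; use $]X[^{\diamond,\dR}_{\mathfrak{Y}}=]X[^{\dagger,\dR}_{\mathfrak{Y}}$ and \cite[Corollary 4.7.5]{ABLBRCS25}; and base change from $\ff_p$.

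Your Step 2 therefore only re-derives this special case. It is valid, but it uses an input you leave unstated. The ring $R=A^{u,\circ,\flat}$ is $\varpi^{\flat}$-adically complete, so every closed point of $\Spec R$ lies in $V(R^{\circ\circ})$. Hence $\Spec R\rightarrow Y$ factors through the open $X$ if and only if its reduction modulo $R^{\circ\circ}$ does.

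The obstacle you single out at the end is misplaced, and Remark \ref{Kedlaya336} is not the tool for it. The Witt-vector lifting is already contained in Proposition \ref{FFcurvesmalldiamond}. It yields an \emph{isomorphism} $X^{\RIG}_{\le 1}\cong\varprojlim_{\varphi}]X[^{\dagger,\dR}_{\mathfrak{Y}}$, so nothing remains to be lifted on that side.

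What the ``natural surjection'' actually needs is lifting $A^{u}$-points to $A$-points. Concretely, this is surjectivity of $Z\rightarrow Z^{\dR/K}$ for $Z=]X[^{\dagger}_{\mathfrak{Y}}$, compatibly along the $\varphi$-tower. That is a statement about the overconvergent nil-thickening $A\rightarrow A^{u}$, and lifting through $W(R^{\perf})$ only produces $A^{u}$-points again.

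The paper takes the levelwise surjection from \cite[Corollary 4.7.5]{ABLBRCS25} and passes to the limit without further comment. If you want that step airtight, the work lies in compatible lifting along $A\rightarrow A^{u}$, since a limit of surjections need not be surjective. The \v{C}ech nerve computation, by contrast, does commute with the limit, as you say.
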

\begin{proof}
  First consider the case when $k=\ff_p$.
  The perfectoidization of $]X[^{\dagger}_{\mathfrak{Y}}$ is the same as $]X[_{\mathfrak{Y}}^{\diamond}$,
  which is the diamond of a formal lift of $X$.
  By Proposition \ref{FFcurvesmalldiamond},
  we know that $X^{\RIG}_{\le 1}=(\Y_{X_{\arc}^{\circ}}^{\diamond})^{\dR}\cong (\varprojlim_{\varphi}]X[_{\mathfrak{Y}}^{\diamond})^{\dR}\cong \varprojlim_{\varphi}]X[_{\mathfrak{Y}}^{\diamond,\dR}$,
  using the property that taking the analytic de Rham stack commutes with limits.
  Then we use the facts that $]X[_{\mathfrak{Y}}^{\diamond,\dR}=]X[_{\mathfrak{Y}}^{\dagger,\dR}$ and $]X[_{\mathfrak{Y}}^{\dagger}$ surjects to $]X[_{\mathfrak{Y}}^{\dagger,\dR}$,
  cf. \cite[Corollary 4.7.5]{ABLBRCS25},
  to deduce the surjectivity and the computation of Cech nerve.
  For general $k$,
  the result is obtained by base change from the case of $\ff_p$.
\end{proof}

In general,
for a proper smooth scheme $X$ admitting a proper smooth lifting,
we can find a chart of $X^{\RIG/K}$ (and $X^{\RIG/K}_{\le 1}$) and simplify its description as follows.

\begin{corollary}[Proper smooth scheme]\label{rigidificationsmoothvariety}
  Let $X$ be a proper smooth scheme over a perfect field $k$ such that it admits a proper smooth formal lift $\Xf$ with Frobenius $\varphi$ with generic fiber $\X$ over $K$.
  Let $K$ admit a Frobenius lift $\varphi_K$ and be perfect.
  Then we have
  \[
  X^{\RIG/K}\cong X^{\RIG/K}_{\le 1}\cong (\varprojlim_{\varphi}\X)^{\dR/K}=\varprojlim_{\varphi}\X^{\dR/K}.
  \]

  As a result,
  we have $X^{\HK/K}\cong X^{\HK/K}_{\le 1}\cong \varprojlim_{\varphi}\X^{\dR/K}/\varphi^{\itg}$.
\end{corollary}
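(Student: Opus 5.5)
The plan has two parts: first $X^{\RIG/K}\cong X^{\RIG/K}_{\le 1}$ from properness, then the identification of $X^{\RIG/K}_{\le 1}$ with $\varprojlim_{\varphi}\X^{\dR/K}$ via Proposition \ref{smallrigidstackdescription}.

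\textbf{Step 1: big versus small.} Since $X$ is proper over $\Spec k$, Lemma \ref{propermorphismcriterion} applied to the structure map $X\rightarrow\Spec k$ gives a Cartesian square
\[
X^{\RIG}_{\le 1}=X^{\RIG}\times_{(\Spec k)^{\RIG}}(\Spec k)^{\RIG}_{\le 1}.
\]
The map $\operatorname{GSpec}K\rightarrow(\Spec k)^{\RIG}$ defining the relative stacks factors through $(\Spec k)^{\RIG}_{\le 1}$, by the sequence of morphisms displayed before Definition \ref{relativeHK}. Hence base changing this square along $\operatorname{GSpec}K\rightarrow(\Spec k)^{\RIG}_{\le 1}$ yields $X^{\RIG/K}_{\le 1}\cong X^{\RIG/K}$.

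\textbf{Step 2: the small stack.} Apply Proposition \ref{smallrigidstackdescription} with $Y=X$, $\mathfrak{Y}=\Xf$ and the open embedding $X\subset X$ taken to be the identity. The tube $]X[^{\dagger}_{\Xf}$ is then the full overconvergent tube of the special fiber. In Construction \ref{tubes} the Berkovich tube is all of $\M(\X)$, since the specialization map is everywhere defined for a proper formal scheme. The fiber product defining $]X[^{\dagger}_{\Xf}$ is therefore trivial, so $]X[^{\dagger}_{\Xf}=\X$. This gives $X^{\RIG/K}_{\le 1}\cong\varprojlim_{\varphi}\X^{\dR/K}$.

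\textbf{Step 3: limits and the HK statement.} The equality $(\varprojlim_{\varphi}\X)^{\dR/K}=\varprojlim_{\varphi}\X^{\dR/K}$ holds because the analytic de Rham stack functor is a right adjoint (Proposition \ref{ABLBRCS25Section4.5}) and so commutes with limits; the relative version follows by base change to $K$. The Hyodo--Kato statement follows by quotienting by $\varphi^{\itg}$. The isomorphisms above are Frobenius-equivariant: they arise from functorial constructions that intertwine the Frobenius $x\mapsto x^p$ on points with the chosen lift $\varphi$ on $\Xf$ and $\varphi_K$ on $K$. The Cartesian diagrams in Definition \ref{relativeHK} are compatible with taking these quotients.

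\textbf{Main obstacle.} The step I expect to be delicate is Step 1. One must check that the factorization of $\operatorname{GSpec}K\rightarrow(\Spec k)^{\RIG}$ through the small stack is the one used in both relative definitions. This is clear from the sequence of morphisms displayed before Definition \ref{relativeHK}. A further point is that Lemma \ref{propermorphismcriterion} is stated for morphisms of schemes over $\ff_p$, and it applies directly here.
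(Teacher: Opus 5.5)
Your proposal is correct and follows the paper's own proof. Lemma \ref{propermorphismcriterion} applied to $X\rightarrow\Spec k$, base changed along $\operatorname{GSpec}K$, gives $X^{\RIG/K}_{\le 1}\cong X^{\RIG/K}$; Proposition \ref{smallrigidstackdescription} with $Y=X$ then identifies this with $\varprojlim_{\varphi}\X^{\dR/K}$, and passing to the $\varphi^{\itg}$-quotient gives the Hyodo--Kato statement. Your extra details are the points the paper leaves implicit: that $]X[^{\dagger}_{\Xf}=\X$ (this holds because $X$ is the whole special fiber, rather than because of properness), and that $(-)^{\dR}$ commutes with limits.
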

\begin{proof}
  Applying Lemma \ref{propermorphismcriterion} to $Y=\Spec k$,
  we get that $X^{\RIG/K}_{\le 1}\cong X^{\RIG/K}$ and hence $X^{\HK/K}_{\le 1}\cong X^{\HK/K}$ for a proper $k$-scheme $X$.
  Combining with Proposition \ref{smallrigidstackdescription},
  we obtain that if $X$ is a proper smooth variety with a proper smooth lifting with Frobenius,
  then $X^{\RIG/K}_{\le 1}\cong \varprojlim_{\varphi} \X^{\dR/K}$,
  which gives the expected result.
\end{proof}

We move on to studying cohomological behavior and $6$-functor formalism for $X^{\RIG}$ and $X^{\HK}$ (not for $X^{\RIG}_{\le 1}$ and $X^{\HK}_{\le 1}$!).
The results in \cite{ABLBRCS25} play a crucial role and significantly simplify the proofs.

\begin{proposition}\label{rigidringstack}
  The stack $\aff^{1,\RIG}_{\ff_p}$ is a ring stack such that
  \[i\colon \{0\}\hookrightarrow \aff^{1,\RIG}_{\ff_p}\hookleftarrow \aff^{1,\RIG,\times}_{\ff_p}=\mathbb{G}_{m,\ff_p}^{\RIG}\colon j\] is a closed-open decomposition.
  It satisfies that the morphism $f\colon \aff^{1,\RIG}_{\ff_p}\rightarrow\operatorname{GSpec}\rt_p$ is $!$-able and cohomologically smooth,
  and the natural morphism $f_!f^!1\rightarrow 1$ is an isomorphism.
  
  As a consequence,
  the stack $\aff^{1,\HK}_{\ff_p}\rightarrow (\Spec\ff_p)^{\HK}$ is also $!$-able and cohomologically smooth,
  with the natural morphism $f_!f^!1\rightarrow 1$ being an isomorphism,
  and $(\Spec\ff_p)^{\HK}\overset{0}{\hookrightarrow} \aff^{1,\HK}_{\ff_p}\hookleftarrow \mathbb{G}_{m,\ff_p}^{\HK}$ is a closed-open decomposition.
\end{proposition}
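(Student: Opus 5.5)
The plan is to identify $\aff^{1,\RIG}_{\ff_p}$ with a perfected analytic de Rham stack and then transport the required properties from the analytic side. By Remark \ref{transumationrigid} we have $\aff^{1,\RIG}_{\ff_p}=\varprojlim_{\varphi}\aff^{1,\an,\dR}_{\rt_p}=(\varprojlim_{\varphi}\aff^{1,\an,\diamond}_{\rt_p})^{\dR}$. Here we use that $(-)^{\dR}$ commutes with limits, and that $\varprojlim_\varphi\aff^{1,\an,\diamond}_{\rt_p}=\Y^{\diamond}_{\M_{\arc}(\ff_p[T])}$ by Example \ref{computingflat}. The ring structure is the tilted one, already described on $A$-points in Remark \ref{transumationrigid}.

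For the closed-open decomposition, I will first work at the level of arc-stacks. The point $0$ and its complement $\varprojlim_\varphi\gff^{\an,\diamond}$ form a closed-open decomposition of $\varprojlim_\varphi\aff^{1,\an,\diamond}$, because the underlying points are compatible systems $(x_n)$ with $x_n=0$ for all $n$ or for none. I will then apply $(-)^{\dR}$, using that in the qfd Gelfand setting the analytic de Rham stack sends closed-open decompositions of arc-stacks to closed-open decompositions in the $6$-functor sense. This is the input from \cite{ABLBRCS25} (analytic de Rham stacks of Betti-type decompositions), as used in their proof of excision for Hyodo--Kato stacks. The $\Ga$-transmutation description identifies $\{0\}^{\RIG}$ and $\gff^{\RIG}$ with these pieces.

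For $!$-ability, cohomological smoothness and $f_!f^!1\cong 1$, I will reduce to the perfectoid affine line. The map $\aff^{1,\RIG}_{\ff_p}\to\operatorname{GSpec}\rt_p$ is the analytic de Rham stack of the perfectoid affine line $\Y^\diamond_{\aff^1}\to\Spd\rt_p$. By \cite{ABLBRCS25} (the cohomological smoothness results for $\Y^{\diamond,\dR}$ of smooth arc-stacks, e.g. the analogue of Theorem 6.3.1 for smoothness), this is $!$-able and cohomologically smooth. Alternatively, I would write it as a colimit of perfected closed disks of radius $p^{n}$ and check cohomological smoothness on each. The identity $f_!f^!1\cong 1$ is dual to $\aff^1$-invariance: $\Hom(f_!f^!1,M)=f_*f^*M$ after using suaveness, so it suffices to show $f_*f^*1=1$. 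This reduces, via $\varprojlim_\varphi$ (the transition maps $\varphi$ are isomorphisms on de Rham cohomology) and the colimit over radii, to the triviality of the de Rham cohomology of $\aff^{1,\an}$ and of perfected disks, i.e. \cite[Proposition 5.2.1]{ABLBRCS25}. The main obstacle is controlling the limit along $\varphi$. I would try to show that $\varprojlim_\varphi$ of disks becomes, after $\dR$, a colimit of cohomology rings along Frobenius pullbacks, each of which is trivial; the fallback is to compute directly on the perfectoid disk $\operatorname{GSpec}\rt_p\langle T^{1/p^\infty}\rangle$.

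Finally, the Hyodo--Kato statement follows by quotienting by $\varphi^{\itg}$. We have $\aff^{1,\HK}_{\ff_p}=\aff^{1,\RIG}_{\ff_p}/\varphi^{\itg}$ over $(\Spec\ff_p)^{\HK}=\operatorname{GSpec}\rt_p/\varphi^{\itg}$ (Example \ref{finiteHK}), and the square with the $\RIG$ versions is Cartesian. Since $\operatorname{GSpec}\rt_p\to\operatorname{GSpec}\rt_p/\varphi^{\itg}$ is a suave (étale) cover, $!$-ability, cohomological smoothness, the isomorphism $f_!f^!1\to1$ and the closed-open decomposition all descend along it, using Lemma \ref{openclosedsuavecover} for the decomposition and \cite[Lemma 4.5.7]{HM24} for the rest.
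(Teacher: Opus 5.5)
Your treatment of the ring structure, the closed-open decomposition, $!$-ability and cohomological smoothness, and the descent to $\aff^{1,\HK}_{\ff_p}$ along the cohomologically étale cover $\aff^{1,\RIG}_{\ff_p}\to\aff^{1,\HK}_{\ff_p}$ matches the paper. The paper proves these by citation: \cite[Remark 6.1.11]{ABLBRCS25} applied to $\Spd\ff_p\hookrightarrow\aff^1_{\ff_p,\arc}\hookleftarrow\mathbb{G}_{m,\ff_p,\arc}$, \cite[Corollary 5.7.3]{ABLBRCS25}, and \cite[Lemma 4.5.7]{HM24}. Your alternative of checking smoothness on perfected closed disks would not work. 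The radius-one piece $\varprojlim_{\varphi}\disk^{\dagger,\dR}_{\rt_p}=\aff^{1,\RIG}_{\ff_p,\le 1}$ is cohomologically proper and, as the paper remarks, not cohomologically smooth.

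The genuine gap is in the step $f_!f^!1\cong 1$. You correctly note that, $f$ being suave, $\underline{\Hom}(f_!f^!1,M)\cong f_*f^*M$. So the claim is equivalent to $M\to f_*f^*M$ being an isomorphism for every $M\in D_{\blacksquare}(\rt_p)$, i.e.\ to full faithfulness of $f^*$. But you then reduce to $M=1$, and nothing justifies that reduction. For a suave map, $f_*$ has no projection formula in general; in Proposition~\ref{A1cosmooth} the reduction to $\F=1$ works because that map is prim.

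Knowing only $f_*f^*1\cong 1$ tells you that the dual of $f_{\natural}1=f_!f^!1$ is $1$. Duality is not conservative on $D_{\blacksquare}(\rt_p)$: the cofiber of $(\prod_{\mathbb{N}}\itg_p)[1/p]\to\prod_{\mathbb{N}}\rt_p$ is nonzero, yet both terms have dual $\bigoplus_{\mathbb{N}}\rt_p$. So you need one more input. Either cite \cite[Lemma 6.2.2]{ABLBRCS25} for full faithfulness of $f^*$, which is exactly what the paper does. Or show that $f_\natural 1$ is reflexive, e.g.\ basic nuclear because $f_!$ preserves locally nuclear $\omega_1$-compact objects \cite[Proposition 5.5.12]{ABLBRCS25}, and then conclude $f_\natural 1\cong (f_\natural 1)^{\vee\vee}\cong 1$, as in Remark~\ref{algdRfullfaithful}.

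Separately, your computation $f_*1\cong 1$ itself depends on a continuity statement for cohomology along $\varprojlim_{\varphi}$. You identify this as the main obstacle but leave it unresolved, so it would also have to be supplied, for instance via a description of $D$ of a $\varphi$-limit as in \cite[Proposition 5.4.9]{ABLBRCS25}.
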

\begin{remark}
  It has a realization of motives in the sense of the main theorem in \cite{Aok26}.
\end{remark}
\begin{proof}[Proof of Proposition \ref{rigidringstack}]
  First,
  note that all properties for $\aff^{1,\HK}_{\ff_p}$ follow from those for $\aff^{1,\RIG}_{\ff_p}$,
  as the latter is a cohomologically étale cover of the former and all the properties satisfy descent along such a cover,
  cf. \cite[Lemma 4.5.7]{HM24}.
  The closed-open decomposition comes from \cite[Remark 6.1.11]{ABLBRCS25},
  as we have a closed-open decomposition of qfd arc stacks:
  \[
  \Spd\ff_p\hookrightarrow\aff^1_{\ff_p,\arc}\hookleftarrow\mathbb{G}_{m,\ff_p,\arc}.
  \]

  The claim that $f$ is $!$-able and cohomologically smooth is \cite[Corollary 5.7.3]{ABLBRCS25}.
  The claim that $f_!f^!1\rightarrow 1$ is an isomorphism comes from \cite[Lemma 6.2.2]{ABLBRCS25}.
  In fact,
  the morphism $f_!f^!1=f_{\natural}1\rightarrow 1$ being an isomorphism is equivalent to the full faithfulness of $f^*$ which is \cite[Lemma 6.2.2]{ABLBRCS25}.
\end{proof}

\begin{proposition}\label{ishriek}
  The morphism $g\colon\gff^{\HK}\rightarrow \operatorname{GSpec}\rt_p$ satisfies $g_*1\cong 1\oplus 1(-1)[-1]$.
  As a corollary,
  let $i\colon (\Spec\ff_p)^{\HK}\overset{0}{\hookrightarrow}\aff^{1,\HK}_{\ff_p}$,
  then we have $i^!1\cong 1(-1)[-2]$,
\end{proposition}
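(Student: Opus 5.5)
We compute $g_*1$ by reducing to the analytic de Rham stack of a Fargues--Fontaine type object and then to de Rham cohomology of a torus. Since $\gff^{\HK}=\gff^{\RIG}/\varphi^{\itg}$ and $\gff^{\RIG}\cong\varprojlim_{\varphi}\mathbb{G}^{\an,\dR}_{m,\rt_p}$ (the multiplicative group example with $K=\rt_p$), the pushforward $g_*1$ is the $\varphi$-fixed points (fiber of $\varphi-1$) of $R\Gamma(\gff^{\RIG},\O)$, viewed as an object over $(\Spec\ff_p)^{\HK}=\operatorname{GSpec}\rt_p/\varphi^{\itg}$, i.e. as an $F$-isocrystal over $\rt_p$. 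So the main task is to compute $R\Gamma(\varprojlim_\varphi\mathbb{G}^{\an,\dR}_{m},\O)$ together with its Frobenius.

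The first step is to show that cohomology of the inverse limit is the colimit of the cohomologies along $\varphi^*$:
\[
R\Gamma(\varprojlim_{\varphi}\mathbb{G}_m^{\an,\dR},\O)\cong\varinjlim_{\varphi^*}R\Gamma_{\dR}(\mathbb{G}_m^{\an}).
\]
I would deduce this from the fact that the transition maps $\varphi$ are finite étale-like covers after perfectoidization, so that the limit is a cofiltered limit of qcqs-type stacks whose $\O$-cohomology commutes with the limit. Alternatively, one can use the $!$-able, cohomologically smooth structure from Proposition~\ref{rigidringstack} and \cite[Section 6]{ABLBRCS25}, where such computations for $\varprojlim_\varphi$ are carried out.

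The second step uses the known value $R\Gamma_{\dR}(\mathbb{G}_m^{\an})=\rt_p\oplus\rt_p\,\tfrac{dT}{T}[-1]$ (\cite[Proposition 5.2.1]{ABLBRCS25}-type computation). The relevant Frobenius actions are:
\begin{itemize}
  \item $\varphi^*(dT/T)=p\,dT/T$, so $\varphi^*$ acts by $1$ on $H^0$ and by $p$ on $H^1$;
  \item $\varphi^*$ is an isomorphism rationally, so the colimit does not change these groups.
\end{itemize}
Thus $R\Gamma(\gff^{\RIG},\O)=1\oplus L[-1]$ with $L$ rank one and $\varphi e=pe$. In the normalization of Definition~\ref{Tatetwist}, where $1(n)$ has $\varphi e=p^{-n}e$, this gives $L=1(-1)$, hence $g_*1\cong1\oplus1(-1)[-1]$. (Here $g_*$ is read as pushforward to $(\Spec\ff_p)^{\HK}$.) I expect the main obstacle to be the first step, namely justifying that $R\Gamma$ turns $\varprojlim_\varphi$ into $\varinjlim$ and keeping track of the Frobenius equivariance.

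For the corollary, use the closed-open decomposition of Proposition~\ref{rigidringstack} to get the localization triangle $i_*i^!1\to1\to j_*j^*1$ on $\aff^{1,\HK}$. We then push forward by $f\colon\aff^{1,\HK}\to(\Spec\ff_p)^{\HK}$:
\begin{itemize}
  \item $f_*1=1$ by $\aff^1$-invariance, which follows from $f_!f^!1\cong1$ together with smoothness, or from \cite[Lemma 6.2.2]{ABLBRCS25} ($f^*$ fully faithful);
  \item $f_*j_*1=g_*1=1\oplus1(-1)[-1]$ by the first part.
\end{itemize}
The unit summand maps isomorphically, so the triangle gives $i^!1\cong1(-1)[-2]$ (using $f\circ i=\id$).
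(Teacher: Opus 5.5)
For the corollary you do exactly what the paper does: the localization triangle $i_*i^!1\to 1\to j_*j^*1$ on $\aff^{1,\HK}_{\ff_p}$, pushed forward along $f$, using $f_*1\cong 1$ and $f\circ i=\id$.

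For $g_*1$ itself your route is different. The paper computes nothing and simply quotes \cite[Lemma 6.2.5]{ABLBRCS25}. You instead unwind $\gff^{\HK}=\gff^{\RIG}/\varphi^{\itg}$ with $\gff^{\RIG}\cong\varprojlim_\varphi\mathbb{G}_m^{\an,\dR}$. You then read the answer off $R\Gamma_{\dR}(\mathbb{G}_m^{\an})=\rt_p\oplus\rt_p\,(dT/T)[-1]$, where $\varphi^*(dT/T)=p\,dT/T$ yields $1(-1)$ in the normalization $\varphi e=p^{-n}e$. This has the merit of showing where the Tate twist comes from.

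All of the content, however, lies in the exchange $R\Gamma(\varprojlim_\varphi\mathbb{G}_m^{\an,\dR},\O)\cong\varinjlim_{\varphi^*}R\Gamma_{\dR}(\mathbb{G}_m^{\an})$, and your heuristic for it does not work as stated. $\mathbb{G}_m^{\an}$ is not quasi-compact, so $R\Gamma$ on its de Rham stack involves a countable limit over affinoid annuli. There is no formal reason for it to commute with the relevant colimit.

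One way to close this step is as follows.
\begin{enumerate}
\item Each $\varphi$ is finite \'etale, so $\varprojlim_\varphi\mathbb{G}_m^{\an,\dR}\to\mathbb{G}_m^{\an,\dR}$ is relatively affine. The structure sheaf pushes forward to the Kummer algebra $\bigoplus_{\alpha\in\itg[1/p]\cap[0,1)}\O\cdot T^{\alpha}$, with $\nabla T^\alpha=\alpha T^\alpha\,dT/T$.
\item For $\alpha\neq 0$, the operator $T\partial+\alpha$ is bijective on $\O(\mathbb{G}_m^{\an})$, because $|\alpha+k|^{-1}\le |k|+1$. Hence only the summand $\alpha=0$ contributes to cohomology.
\item You must still check that the de Rham complex commutes with this countable direct sum.
\end{enumerate}
Alternatively you can appeal to \cite[Proposition 5.4.9]{ABLBRCS25} and Section 6 there. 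At that point you are relying on the same input as the paper.

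One small correction to your phrasing. Over $(\Spec\ff_p)^{\HK}=\operatorname{GSpec}\rt_p/\varphi^{\itg}$, the object $g_*1$ is $R\Gamma(\gff^{\RIG},\O)$ together with its $\varphi$-action, not its $\varphi$-fixed points. Taking $\operatorname{fib}(\varphi-1)$ computes the pushforward to $\operatorname{GSpec}\rt_p$, which forgets the twist. Your subsequent computation uses the correct object, so only the wording needs fixing.
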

\begin{proof}
  The computation about $g$ follows from \cite[Lemma 6.2.5]{ABLBRCS25}.
  By the excision property in Proposition \ref{rigidringstack},
  we know that 
  \[
  i^!1=f_*i_*i^!1=\operatorname{fib}(f_*1\rightarrow f_*j_*j^*1)=\operatorname{fib}(1\rightarrow 1\oplus 1(-1)[-1])=1(-1)[-2].\qedhere
  \]
\end{proof}

We can also compute the Hyodo--Kato cohomology of the classifying stack of $\mathbb{G}_{m,\ff_p}$.
\begin{proposition}
  Let $f\colon (B\gff)^{\HK}\rightarrow\operatorname{GSpec}\rt_p$.
  Then $f_*1\cong\bigoplus_{n\ge 0} 1(-n)[2n]$.
\end{proposition}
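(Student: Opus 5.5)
We compute $f_*1$ through the \v{C}ech nerve of the standard cover $*\to B\gff$. Because $(-)^{\HK}$ commutes with colimits (Proposition \ref{dRFFpreservecolimit}, together with the $h$-hyperdescent of Remark \ref{hdescentrigid}, which extends to smooth covers of stacks), $(B\gff)^{\HK}$ is the geometric realization of the simplicial stack $(\gff^{\times n})^{\HK}=(\gff^{\HK})^{\times_{(\Spec\ff_p)^{\HK}} n}$; the product decomposition uses Remark \ref{rigidKunneth}. Hence
\[
f_*1\cong \varprojlim_{[n]\in\Delta} f_{n,*}1,\qquad f_n\colon (\gff^{\times n})^{\HK}\to\operatorname{GSpec}\rt_p .
\]

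The first step is the Künneth computation of each term. Proposition \ref{ishriek} gives $g_*1\cong 1\oplus 1(-1)[-1]$, and $g$ is cohomologically smooth by Proposition \ref{rigidringstack}, by descent along the excision decomposition. The categorical Künneth formula (Remark \ref{categoricalKunneth}) combined with base change then yields
\[
f_{n,*}1\cong (1\oplus 1(-1)[-1])^{\otimes n}=\Lambda^\bullet(1(-1)[-1])^{\otimes n},
\]
computed over $(\Spec\ff_p)^{\HK}$ and afterwards pushed to $\operatorname{GSpec}\rt_p$. The cleanest formulation works relative to $(\Spec\ff_p)^{\HK}$, where Tate twists make sense, and we would do that throughout.

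The second step identifies the resulting cosimplicial object. The multiplication $\gff\times\gff\to\gff$ makes $g_*1$ a commutative Hopf algebra, namely the exterior algebra on the primitive class $e\in 1(-1)[-1]$. Primitivity of $e$ follows because $m^*e=e\otimes1+1\otimes e$ by weight reasons, since Frobenius weights prevent mixing. The cosimplicial object is therefore the cobar construction of this Hopf algebra, and its totalization is $\operatorname{Ext}$ over the exterior coalgebra, which is the polynomial algebra on a class in $1(-1)[-1][2-1]$, i.e.\ in degree $2$ with twist $(-1)$. This gives $\bigoplus_{n\ge0}1(-n)[2n]$ after shifting conventions. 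The splitting as a direct sum holds because the pieces have distinct Frobenius weights $(-n)$, so all extensions among them vanish: maps between $1(a)$ and $1(b)$ with $a\ne b$ have no $\operatorname{Hom}$ or $\operatorname{Ext}$ in $F$-isocrystals over $\rt_p$, since $\varphi-p^{a-b}$ is invertible on $\rt_p$. This settles the convergence of the totalization.

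The main obstacle is the first step's descent claim, namely that $(B\gff)^{\HK}$ is really the geometric realization of the nerve. $B\gff$ is not an $h$-stack of finite type in the sense covered by Remark \ref{hdescentrigid}, so we must define $(B\gff)^{\HK}$ as the colimit, or extend $(-)^{\HK}$ by left Kan extension. We would then check that the colimit is preserved, using that $(-)^{\HK}$ commutes with all colimits of arc-stacks. Control of the $\operatorname{lim}$ over $\Delta$ is handled by the weight argument above.
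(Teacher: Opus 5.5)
The paper gives no argument of its own here; it simply cites \cite[Remark 6.2.11]{ABLBRCS25}. Your proposal is a genuinely different, self-contained route. You realize $(B\gff)^{\HK}$ as $|(\gff^{\times\bullet})^{\HK}|$, compute each term from Proposition \ref{ishriek} by K\"unneth, and totalize using Frobenius weights. This works, and it shows \emph{why} the answer is split and formal: weights force it. It also avoids the first-Chern-class machinery of Proposition \ref{strongchernclass}, which would be the other natural route, via $\proj^n=(\aff^{n+1}\setminus 0)/\gff$ approximating $B\gff$. The descent step you flag as the main obstacle is not one. The natural definition is $(B\gff)^{\HK}:=(B\gff_{\arc})^{\HK}$, and $B\gff_{\arc}=|\gff_{\arc}^{\times\bullet}|$ in arc-stacks. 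Proposition \ref{dRFFpreservecolimit} then gives the rest, so no extension of $h$-hyperdescent to smooth covers is needed.

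Several points should be repaired or made explicit.

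(i) K\"unneth for $*$-pushforward does not follow from the categorical K\"unneth formula. Instead, use smooth base change along the cohomologically smooth $g$ and the projection formula for the dualizable object $g_*1$. This gives $(g\times g)_*1\cong g_*g^*g_*1\cong g_*1\otimes g_*1$, and you induct.

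(ii) Distinct weights only split $X^\bullet=(g_*1)^{\otimes\bullet}$ into weight pieces $X^\bullet_w$. What makes each piece formal is purity. The weight-$w$ part of $X^n$ is $1(-w)[-w]^{\oplus\binom{n}{w}}$, concentrated in the single degree $w$. Moreover $\underline{\Hom}(1(a),1(a))\cong\rt_p\oplus\rt_p[-1]$, so mapping spaces between such objects are discrete. Hence $X^\bullet_w$ is a cosimplicial finite-dimensional $\rt_p$-vector space tensored with $1(-w)[-w]$, and its totalization is computed by normalized cochains. The codegeneracies are induced by the counit, which kills $e$ for weight reasons. So the weight-$w$ normalized cochains are one-dimensional and sit only in cosimplicial degree $w$, giving $\operatorname{Tot}X^\bullet_w\cong 1(-w)[-2w]$; primitivity of $e$ is not even needed. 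Finally, $\prod_w=\bigoplus_w$ by degree reasons. This, not the vanishing of extensions between the final summands, is the real content of ``control of the limit''.

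(iii) The polynomial generator lies in $1(-1)[-2]$, not in ``$1(-1)[-1][2-1]$''. Your computation therefore outputs $\bigoplus_{n\ge 0}1(-n)[-2n]$, with classes in cohomological degree $2n$. Under the paper's own conventions (Propositions \ref{ishriek} and \ref{strongchernclass}), this is what the statement means, and the $[2n]$ there is a sign slip. Say so explicitly rather than appealing to ``shifting conventions''.

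(iv) Do not push forward to $\operatorname{GSpec}\rt_p$ at the end. Since $\varphi$ acts trivially on $\rt_p$, pushforward along $(\Spec\ff_p)^{\HK}=\operatorname{GSpec}\rt_p/\varphi^{\itg}\to\operatorname{GSpec}\rt_p$ kills $1(-n)$ for $n\neq 0$ (the fiber of $p^n-1$ on $\rt_p$ is zero). So the formula is a statement relative to $(\Spec\ff_p)^{\HK}$, as you partly note.
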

\begin{proof}
  See \cite[Remark 6.2.11]{ABLBRCS25}.
\end{proof}

It turns out that the computation of $(B\gff)^{\HK}$ is related to the theory of first Chern classes.

\begin{proposition}\label{strongchernclass}
  The $6$-functor formalism of $X^{\HK}$ on $\ff_p$-schemes $X$ has a strong theory of first Chern classes $c_1$,
  in the sense of \cite[Definition 5.2.8]{Zav23}.
  In particular,
  for any $\ff_p$-scheme $X$,
  denote $f\colon (\proj^n_X)^{\HK}\rightarrow X^{\HK}$ the morphism coming from the natural projection and $\O(1)$ the tautological line bundle on $\proj^n_X$,
  then we have an isomorphism 
  \[
  \sum_{k=0}^n c_1(\O(1))^k(n-k)[2n-2k]\colon \bigoplus_{k=0}^n1_X(n-k)[2n-2k]\longrightarrow f_*1_{\proj^n_X}(n)[2n].
  \]
\end{proposition}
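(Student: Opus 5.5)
We verify the axioms of a strong theory of first Chern classes in the sense of \cite[Definition 5.2.8]{Zav23} for the $6$-functor formalism $X\mapsto D(X^{\HK})$; the projective bundle formula then follows formally from the general theory in \cite{Zav23}.

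\textbf{Step 1: the Chern class.} The starting point is the class of the universal line bundle. Since $(-)^{\HK}$ preserves finite limits (Remark \ref{rigidKunneth}) and satisfies $h$-hyperdescent (Remark \ref{hdescentrigid}), it preserves the simplicial presentation $B\gff=|\gff^{\times\bullet}|$. Hence $(B\gff)^{\HK}$ is the geometric realization of $(\gff^{\HK})^{\times\bullet}$ over $(\Spec\ff_p)^{\HK}$. From the computation $f_*1\cong\bigoplus_{n\ge0}1(-n)[2n]$ for $f\colon(B\gff)^{\HK}\to\operatorname{GSpec}\rt_p$, the degree-one summand gives a canonical class
\[
c_1\in\Hom\bigl(1,f_*1(1)[2]\bigr).
\]
Normalizing it via Proposition \ref{ishriek}, this class corresponds to the Gysin class coming from $i^!1\cong1(-1)[-2]$ for the zero section. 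For any line bundle $L$ on $X$, the classifying map $X\to B\gff$ induces $X^{\HK}\to(B\gff)^{\HK}$, and $c_1(L)$ is defined as the pullback of $c_1$. Functoriality and compatibility with tensor products are then formal consequences of the group structure on $B\gff$.

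\textbf{Step 2: the remaining axioms.} Zavyalov's axioms beyond functoriality are the following: $\aff^1$-invariance together with cohomological smoothness of $\aff^1$, which is Proposition \ref{rigidringstack}; excision for the zero section and its complement, also Proposition \ref{rigidringstack}; and the key normalization that for $f\colon\proj^1\to *$ the map $1\oplus1(-1)[-2]\to f_*1$, given by $(1,c_1(\O(1)))$, is an isomorphism. I would prove this last point by Zariski descent on $\proj^1=\aff^1\cup\aff^1$ glued along $\gff$. By $\aff^1$-invariance, $f_*1$ is the fiber of the map $1\oplus1\to g_*1=1\oplus1(-1)[-1]$, where the computation of $g_*1$ is Proposition \ref{ishriek}. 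This fiber is $1\oplus1(-1)[-2]$, and it remains to match the connecting class with $c_1(\O(1))$.

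\textbf{Main obstacle and conclusion.} The main obstacle is identifying the boundary class of the Mayer--Vietoris sequence above with the pullback of the universal $c_1$ from $(B\gff)^{\HK}$. I would first try to reduce to $\gff^{\HK}$: the $\O(1)$-torsor restricted to the overlap $\gff\subset\proj^1$ is given by the clutching function $T$, namely the identity map $\gff\to\gff$. Both classes should then be identified with the generator of the $1(-1)[-1]$ summand of $g_*1$ given by \cite[Lemma 6.2.5]{ABLBRCS25}, which is the $d\log T$ class in Hyodo--Kato cohomology. Once the axioms are verified, \cite[Theorem 5.2.?]{Zav23} gives the projective bundle formula. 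For $\proj^n_X$ one could alternatively argue directly by induction on $n$, using excision $\aff^n\subset\proj^n\supset\proj^{n-1}$, the purity statement $i^!1\cong1(-1)[-2]$, and the standard relation $c_1(\O(1))^{n+1}=0$.
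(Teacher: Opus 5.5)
The paper gives no argument of its own here; it only cites \cite[Lemma 6.2.9]{ABLBRCS25}. Your plan is to build $c_1$ from $H^2$ of $(B\gff)^{\HK}$ and check the $\proj^1$ condition by Mayer--Vietoris. That is a sensible reconstruction, and your computation $f_*1\cong 1\oplus 1(-1)[-2]$ for $\proj^1$ is correct. However, the proposal stops at the one step with content: you never show that $c_1(\O(1))$ actually hits the $1(-1)[-2]$ summand. Saying both classes \emph{should} be identified with the $d\log T$ class is an assertion, not an argument. What is missing is a reason why the universal class on $(B\gff)^{\HK}$ restricts nontrivially along $\Sigma\gff:=*\sqcup_{\gff}*\to B\gff$. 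This is the $1$-skeleton of the bar construction, through which the classifying map of $\O(1)$ factors via the clutching function. Similarly, additivity of $c_1$ (already needed for a weak theory) is not a formal consequence of the group structure on $B\gff$. It requires $m^*c_1=\operatorname{pr}_1^*c_1+\operatorname{pr}_2^*c_1$ on $(B\gff\times B\gff)^{\HK}$, which is a genuine cohomological statement.

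Both gaps close with a weight argument. Over $(\Spec\ff_p)^{\HK}=\operatorname{GSpec}\rt_p/\varphi^{\itg}$ one has $R\Hom(1,1(m))\cong\operatorname{fib}(\rt_p\xrightarrow{p^{-m}-1}\rt_p)$, which vanishes for $m\neq 0$.
\begin{itemize}
\item Your Mayer--Vietoris sequence then gives $H^2((\proj^1)^{\HK},1(1))\cong\rt_p$. After reducing $\proj^1_X\to X$ to $X=\Spec\ff_p$ by proper base change, the strong condition is equivalent to $c_1(\O(1))\neq 0$. No normalization via $i^!1$ and no exact matching of boundary classes is needed.
\item The collapse map $\proj^1\to\Sigma\gff$ is an isomorphism on $H^2(-,1(1))$ by $\aff^1$-invariance.
\item In the skeletal spectral sequence for $|(\gff^{\HK})^{\times\bullet}|$, the normalized twist-$1$ contributions come from $\widetilde{g_*1}^{\otimes p}\otimes 1(1)\cong 1(1-p)[-p]$, where $\widetilde{g_*1}\cong 1(-1)[-1]$ is the reduced part of $g_*1$. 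These vanish for $p\neq 1$. Hence restriction $H^2((B\gff)^{\HK},1(1))\to H^2((\Sigma\gff)^{\HK},1(1))$ is an isomorphism of one-dimensional spaces, and a nonzero universal class stays nonzero on $\proj^1$.
\item The same count on the bisimplicial object shows $H^2((B\gff\times B\gff)^{\HK},1(1))\cong\rt_p^2$, detected by the two axis inclusions. This gives additivity, and also frees you from needing the full computation of $(B\gff)^{\HK}$.
\end{itemize}
Finally, for $\proj^n$ you should rely on the projective bundle formula Zavyalov derives from a strong theory. Your fallback induction is incomplete: purity $i^!1\cong 1(-1)[-2]$ is only established for the zero section of $\aff^1$. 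The real point, that the Gysin pushforward of $c_1(\O(1))^k$ from $\proj^{n-1}$ is $c_1(\O(1))^{k+1}$, is not addressed.
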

\begin{proof}
  See \cite[Lemma 6.2.9]{ABLBRCS25}.
\end{proof}

With all the setup,
we can establish the $6$-functor formalism for $X^{\HK}$ on $\ff_p$-schemes.

\begin{theorem}\label{PoincaredualityrigidFrobenius}
  The $6$-functor formalism $X\rightarrow D(X^{\HK})$ on the category of separated $\ff_p$-schemes of finite type satisfies the following properties.
  \begin{enumerate}
    \item Let $f\colon X\rightarrow Y$ be an étale (resp. proper; smooth) morphism,
    then $f^{\HK}\colon X^{\HK}\rightarrow Y^{\HK}$ is cohomologically étale (resp. cohomologically proper; cohomologically smooth).
    \item Strong $\aff^1$-invariant property holds,
    i.e. $\F\rightarrow f_*^{\HK}f^{\HK,*}\F$ is an isomorphism for all $X$ and $\F\in D(X^{\HK})$,
    where $f\colon \aff^1_X\rightarrow X$ is the projection;
    \item If we have a closed-open decomposition $i\colon Z\rightarrow X\leftarrow U\colon j$,
    then $D(Z^{\HK})$ is a closed subspace of $D(X^{\HK})$ with complement open $D(Y^{\HK})$.
    \item For a smooth map $f\colon X\rightarrow Y$ of relative dimension $n$,
    we have $f^{\HK,!}1\cong 1(n)[2n]$.
  \end{enumerate}
\end{theorem}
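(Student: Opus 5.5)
The plan is to deduce everything from the transmutation theorem (Theorem \ref{Aoki}), applied to the ring stack $R=\aff^{1,\HK}_{\ff_p}$ over the base $(\Spec\ff_p)^{\HK}$. More precisely, we apply it to $\aff^{1,\RIG}_{\ff_p}$ over $\operatorname{GSpec}\rt_p$ and then pass to the quotient by $\varphi^{\itg}$.

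\textbf{Step 1: $X^{\HK}$ is a transmutation.} By Remark \ref{transumationrigid}, $X^{\RIG}(A)=X(\aff^{1,\RIG}_{\ff_p}(A))$. Proposition \ref{rigidringstack} supplies exactly the two hypotheses of Theorem \ref{Aoki}: $f$ is $!$-able and cohomologically smooth with $f_!f^!1\cong 1$, and $\{0\}\hookrightarrow\aff^{1,\RIG}\hookleftarrow\gff^{\RIG}$ is a closed-open decomposition. It also supplies the same statements over $(\Spec\ff_p)^{\HK}$.

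\textbf{Step 2: read off (1)--(3).} Theorem \ref{Aoki} then shows that étale, proper and smooth morphisms go to cohomologically étale, proper and smooth morphisms. It also shows that the construction commutes with finite limits and Zariski gluing.
\begin{itemize}
\item The Frobenius quotient is harmless. Each of these properties descends along the cohomologically étale cover $X^{\RIG}\to X^{\HK}$ (\cite[Lemma 4.5.7]{HM24}), and the square over $Y^{\RIG}\to Y^{\HK}$ is Cartesian.
\item For (3), argue as in Corollary \ref{contraryexcision}, now with the usual (not opposite) orientation. Reduce Zariski locally to $Z=V(f)$, which is the base change of $\{0\}\to\aff^1\leftarrow\gff$ along $f$, and use Proposition \ref{rigidringstack}.
\item For (2), first treat $\aff^1\to\Spec\ff_p$. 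Here $f_*f^*1\cong 1$ is dual, via cohomological smoothness, to $f_!f^!1\cong1$; alternatively it is the full faithfulness of $f^*$ from \cite[Lemma 6.2.2]{ABLBRCS25}. Base change and the projection formula then give it for all $\F$ and all $X$.
\end{itemize}

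\textbf{Step 3: Poincaré duality (4).} This is the main point.
\begin{itemize}
\item \emph{Affine line.} Proposition \ref{ishriek} gives $i^!1\cong 1(-1)[-2]$ for the zero section $i$. Combined with $f\circ i=\id$ and the invertibility of $f^!1$ (cohomological smoothness), this yields $f^!1\cong 1(1)[2]$ for $f\colon\aff^{1,\HK}\to(\Spec\ff_p)^{\HK}$.
\item \emph{Affine spaces.} By Künneth (Remark \ref{rigidKunneth}), $f^!1\cong 1(n)[2n]$ for $\aff^n$.
\item \emph{Smooth $f$.} Zariski locally $f$ factors as an étale map followed by $\aff^n_Y\to Y$. Étale maps have $g^!=g^*$, so $f^!1$ is invertible and locally isomorphic to $1(n)[2n]$.
\item \emph{Global identification.} To make the isomorphism global, I would use the deformation to the normal cone exactly as in the proof of Theorem \ref{arithsixfunctor}. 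This identifies $f^!1$ with $\Delta^*p^!1$, which is computed on the normal bundle $T_{X/Y}$. The vector bundle case then reduces to $\aff^n$ by Zariski descent.
\end{itemize}
An alternative for the global step is the strong theory of first Chern classes from Proposition \ref{strongchernclass}, which by \cite{Zav23} formally gives $f^!1\cong 1(n)[2n]$ canonically for smooth $f$.

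\textbf{Main obstacle.} I expect the hardest part to be the Frobenius bookkeeping. The twist $1(1)$ must appear, rather than a trivial twist, which is why Step 3 uses Proposition \ref{ishriek} rather than $\aff^{1,\RIG}$ alone. All the deformation and descent identifications must also be $\varphi$-equivariant. I would handle this by working directly over the $\HK$-stacks throughout, and by relying on the canonicity of the Chern class isomorphisms.
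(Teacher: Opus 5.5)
Your Steps 1--2 are the paper's argument for (1)--(3): $X^{\RIG}$ is a transmutation of $\aff^{1,\RIG}_{\ff_p}$, and Proposition \ref{rigidringstack} verifies the hypotheses of Theorem \ref{Aoki}. Your explicit passage to $X^{\HK}$ along the Cartesian $\varphi^{\itg}$-torsor square $X^{\RIG}\to X^{\HK}$ over $Y^{\RIG}\to Y^{\HK}$ is also fine. The problem is your primary route to (4).

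\textbf{The gap.} It is the sentence ``the vector bundle case then reduces to $\aff^n$ by Zariski descent.'' Deformation to the normal cone only converts $f^!1$ into the Thom object $s_0^*p_0^!1$ of $T_{X/Y}$. Zariski descent then tells you this object is invertible and locally isomorphic to $1(n)[2n]$. But you already knew exactly that for $f^!1$ itself, so no progress has been made.

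A global trivialization requires that the $\GL_n$ transition functions of the bundle act trivially on the local identification $p^!1\cong 1(n)[2n]$ for $\aff^n$. That is an orientation, i.e. a Thom isomorphism, and it does not follow from locality. This is not a formality: in Proposition \ref{analyticCartierduality} a dualizing object of a vector-bundle quotient genuinely picks up a determinant twist.

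Note also that in the arithmetic case the paper did not get the vector bundle step from Zariski descent either. It used the global computation of Remark \ref{vectorbundlecosmooth} via analytic Cartier duality, which has no counterpart for $\aff^{1,\HK}$. So ``exactly as in the proof of Theorem \ref{arithsixfunctor}'' does not transfer.

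\textbf{What fixes it.} The missing input is precisely the strong theory of first Chern classes of Proposition \ref{strongchernclass}: the projective bundle formula gives Thom classes, hence $\mathrm{Th}(V)\cong 1(n)[2n]$. This is what the paper uses, through the proof of \cite[Theorem 5.7.7]{Zav23}, and it is the whole proof of (4) there. So your ``alternative'' is not optional; it is the argument. Once you adopt it, your separate $\aff^1$/$\aff^n$ computations and the deformation step become unnecessary.

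\textbf{A smaller point in the $\aff^1$ step.} Write $L=f^!1$. From $f\circ i=\mathrm{id}$ and invertibility of $L$ you get $1\cong i^!L\cong i^*L\otimes i^!1$. Hence you only obtain $i^*L\cong 1(1)[2]$ on the zero section.

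To conclude $L\cong 1(1)[2]$ on all of $\aff^{1,\HK}$, you must also show that $L$ is pulled back from the base. One way is translation invariance for the group stack $\aff^{1,\HK}\to(\Spec\ff_p)^{\HK}$. The shear map $(g,h)\mapsto(g+h,h)$ gives $m^*L\cong \mathrm{pr}_1^*L$. Pulling back along $g\mapsto(0,g)$ then gives $L\cong f^*i^*L$.
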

\begin{proof}
  Part $(1)$,
  $(2)$ and $(3)$ come from applying Theorem \ref{Aoki} and Proposition \ref{rigidringstack} to the stack $X^{\HK}$,
  which is defined via transmutation (Remark \ref{transumationrigid}).
  Part $(4)$ follows from the proof of \cite[Theorem 5.7.7]{Zav23},
  under the properties that the $6$-functor formalism for $X^{\HK}$ admits a strong theory of Chern classes;
  see Proposition \ref{strongchernclass}.
\end{proof}

One can obtain a similar statement in the relative setting by using the same strategy.

\begin{theorem}\label{PoincaredualityrigidFrobeniusrelative}
  Let $k$ be a perfect field.
  The $6$-functor formalism $X\rightarrow D(X^{\HK/K})$ on the category of separated $k$-schemes of finite type satisfies the following properties.
  \begin{enumerate}
    \item Let $f\colon X\rightarrow Y$ be an étale (resp. proper; smooth) morphism,
    then $f^{\HK/K}\colon X^{\HK/K}\rightarrow Y^{\HK/K}$ is cohomologically étale (resp. cohomologically proper; cohomologically smooth).
    \item Strong $\aff^1$-invariant property holds,
    i.e. $\F\rightarrow f_*^{\HK/K}f^{\HK/K,*}\F$ is an isomorphism for all $X$ and $\F\in D(X^{\HK/K})$,
    where $f\colon \aff^1_X\rightarrow X$ is the projection;
    \item If we have a closed-open decomposition $i\colon Z\rightarrow X\leftarrow U\colon j$,
    then $D(Z^{\HK/K})$ is a closed subspace of $D(X^{\HK/K})$ with complement open $D(Y^{\HK/K})$.
    \item For a smooth map $f\colon X\rightarrow Y$ of relative dimension $n$,
    we have $f^{\HK/K,!}1\cong 1(n)[2n]$.
  \end{enumerate}
\end{theorem}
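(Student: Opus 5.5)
The plan is to deduce Theorem \ref{PoincaredualityrigidFrobeniusrelative} from the absolute statement Theorem \ref{PoincaredualityrigidFrobenius} by base change along $\operatorname{GSpec}K/\varphi_K^{\itg}\rightarrow(\Spec k)^{\HK}$. Alternatively, one can rerun the transmutation argument with the relative ring stack $\aff^{1,\HK/K}_k$. Since $k$ is perfect, $X\mapsto X^{\HK}$ depends only on perfections. Moreover, $X^{\HK/K}=X^{\HK}\times_{(\Spec k)^{\HK}}\operatorname{GSpec}K/\varphi_K^{\itg}$ by Definition \ref{relativeHK}, and this construction preserves finite limits by Remark \ref{rigidKunneth}. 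Hence for $f\colon X\rightarrow Y$ over $k$, the morphism $f^{\HK/K}$ is the base change of $f^{\HK}$ along $Y^{\HK/K}\rightarrow Y^{\HK}$.

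\textbf{Parts (1) and (3).} First I would view $X,Y$ as $\ff_p$-schemes of finite type and apply Theorem \ref{PoincaredualityrigidFrobenius}. This needs $k$ to be of finite type over $\ff_p$, which is false in general. To avoid this, I would instead redo the argument directly over $k$: the relative ring stack $\aff^{1,\HK/K}_k$ is the base change of $\aff^{1,\HK}_{\ff_p}$ to $\operatorname{GSpec}K/\varphi_K^{\itg}$. Proposition \ref{rigidringstack} then gives, after base change, the hypotheses of Theorem \ref{Aoki}. These hypotheses are $!$-ability, cohomological smoothness, $f_!f^!1\cong 1$, and the closed-open decomposition at $0$, all of which are stable under base change. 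Theorem \ref{Aoki}, with base $\operatorname{GSpec}K/\varphi_K^{\itg}$ in place of $\operatorname{GSpec}\rt_p$, then yields (1) and the excision statement (3). The Zariski gluing and finite-limit compatibility are inherited from the absolute case.

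\textbf{Part (2).} Strong $\aff^1$-invariance should follow from $f_!f^!1\cong 1$ for $\aff^{1,\HK/K}\rightarrow\operatorname{GSpec}K/\varphi_K^{\itg}$, i.e. from full faithfulness of $f^*$. The general case comes by base change along $X^{\HK/K}$ together with the projection formula for the cohomologically smooth map.

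\textbf{Part (4).} I would transport the strong theory of first Chern classes of Proposition \ref{strongchernclass} by pulling back along $X^{\HK/K}\rightarrow X^{\HK}$. Pullback preserves the projective bundle formula, since $f_*$ for proper $\proj^n$ commutes with base change, cohomological properness being part of (1). Tate twists are defined by pullback in Definition \ref{Tatetwist}. Then I would apply the argument of \cite[Theorem 5.7.7]{Zav23} verbatim to get $f^{\HK/K,!}1\cong 1(n)[2n]$.

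The main obstacle is checking that Theorem \ref{Aoki}, stated over $\operatorname{GSpec}\rt_p$, works over the non-affine base $\operatorname{GSpec}K/\varphi_K^{\itg}$. I would handle this by pulling back along the cohomologically étale cover $\operatorname{GSpec}K\rightarrow\operatorname{GSpec}K/\varphi_K^{\itg}$, working with $X^{\RIG/K}$, and descending via \cite[Lemma 4.5.7]{HM24}, exactly as in the proof of Proposition \ref{rigidringstack}.
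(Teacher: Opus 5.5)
Your proposal is correct and follows essentially the paper's route. The paper obtains the relative theorem by the ``same strategy'' as the absolute one: Theorem \ref{Aoki} applied to the ring stack of Proposition \ref{rigidringstack} (here base-changed to $\operatorname{GSpec}K/\varphi_K^{\itg}$) gives (1)--(3), and the strong first Chern class theory of Proposition \ref{strongchernclass} fed into the argument of \cite[Theorem 5.7.7]{Zav23} gives (4). Your checks that the hypotheses of Theorem \ref{Aoki} survive base change, and that the non-affine base can be handled through the cohomologically étale cover $\operatorname{GSpec}K\rightarrow\operatorname{GSpec}K/\varphi_K^{\itg}$ as in the proof of Proposition \ref{rigidringstack}, simply make explicit what the paper leaves implicit.
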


\begin{remark}
  One can also apply \cite[Theorem 6.3.1]{ABLBRCS25} to obtain all the properties of $6$-functor formalism in Theorem \ref{PoincaredualityrigidFrobenius}.
  In fact,
  directly from the definition,
  an étale (resp. proper; smooth) morphism of schemes $f\colon X\rightarrow Y$ gives
  an étale (resp. proper; smooth) morphism of arc-stacks $f\colon X_{\arc}\rightarrow Y_{\arc}$,
  cf. \cite[Definition 5.1.6]{ABLBRCS25}.
\end{remark}

\begin{remark}[$6$-functor formalisms for $X^{\RIG}_{\le 1}$ and $X^{\HK}_{\le 1}$]
  One may ask about how the $6$-functor formalism for $X^{\RIG}_{\le 1}$ and $X^{\HK}_{\le 1}$ behaves.
  If $X\rightarrow Y$ is proper,
  then $X^{\RIG}_{\le 1}\rightarrow Y^{\RIG}_{\le 1}$ behaves the same as $X^{\RIG}\rightarrow Y^{\RIG}$ by Proposition \ref{propermorphismcriterion},
  hence is cohomologically proper.
  On the other hand,
  if $X\rightarrow Y$ is étale or smooth,
  then $X^{\RIG}_{\le 1}\rightarrow Y^{\RIG}_{\le 1}$ is not cohomologically étale or smooth in general.
  For example,
  let $X\rightarrow Y$ be $\aff^1_{\ff_p}\rightarrow\Spec\ff_p$,
  then $\aff^{1,\RIG}_{\ff_p,\le 1}=\varprojlim_{\varphi}\disk^{\dagger,\dR}_{K}$ which is not cohomologically smooth.
  Moreover,
  since $X^{\circ}_{\arc}$ is proper for \textit{any} $k$-scheme $X$ (by Lemma \ref{propersmalldiamond}),
  one obtains that $X^{\RIG}_{\le 1}$ is cohomologically proper for any $X$ by \cite[Theorem 6.3.1]{ABLBRCS25}.
\end{remark}

\subsection{Relation to analytic prismatization}\label{Sectionanalyticprismatization}

In the forthcoming work by Anschütz--Le Bras--Rodríguez Camargo--Scholze \cite{ALBRCS},
they define and study the rational analytic prismatization $X^{\Prism}$ for any qfd Gelfand stack $X$ over $\rt_p$.
We refer to \cite[Section 3]{Hau26} for an existing reference about analytic prismatization.
In this section,
we will discuss its relation with $X^{\RIG}$ and $X^{\HK}$.
This is only for completeness,
and not used elsewhere in this paper.

\begin{discussion}[Gelfand stack over $\N_{|p|<1}/\rn_{> 0}$]
  To realize the rational analytic prismatization of a scheme of positive characteristic,
  one needs to work in the category of Gelfand stacks not necessarily over $\rt_p$.
  First,
  one should generalize the notion of Gelfand rings beyond Gelfand rings over $\rt_p$.
  Let $\itg(\!(\pi)\!)$ be the condensed ring with the $\pi$-adic topology.
  Intuitively,
  the element $\pi$ encodes a pseudo-uniformizer.
  For any solid $\itg(\!(\pi)\!)$-algebra $A$,
  one can define $A^{\le 1}$ consisting of elements of norm $\le 1$ and $A^b$ consisting of bounded elements similarly as in \cite[Definition 2.6.1]{RC24a} and \cite[Definition 2.2.8]{ABLBRCS25}.
  A solid $\itg(\!(\pi)\!)$-algebra $A$ is called Gelfand if $A=A^b$ and $A^{\le 1}/\pi$ is discrete.

  If we take $\pi=p$,
  then it recovers the notion of Gelfand rings over $\rt_p$.
  However,
  different choices of pseudo-uniformizers make no distinction of the theory,
  so working over $\itg(\!(\pi)\!)$ brings redundancy.
  Instead,
  let $\N$ be the stack of norms in \cite[Lecture 20]{CS23}.
  Then there is a descendable cover $\operatorname{AnSpec}\itg_{\blacksquare}(\!(\pi)\!)\rightarrow\N$,
  and an action of $\rn_{> 0}:=\rn_{>0,\Betti}$ on $\N$ given by exponentiating.
  One observes that the definitions of $A^{\le 1}$ and $A^b$ do not depend on the choice of the pseudo-uniformizers and are invariant under exponentiating,
  hence the notion of Gelfand rings over $\itg(\!(\pi)\!)$ descends to Gelfand rings over $\N/\rn_{> 0}$.
  This eliminates all the redundancy.

  We care about $p$-adic geometry,
  so we restrict to the locus that $p$ has norm less than $1$,
  i.e. we work over $\N_{|p|<1}/\rn_{> 0}$.
  We can form the category of Gelfand stacks over $\N_{|p|<1}/\rn_{> 0}$,
  in a way similar to that in \cite[Section 4.2]{ABLBRCS25}.
\end{discussion}

\begin{discussion}[Rational analytic prismatization]
  Let $X$ be a Gelfand stack over $\N_{|p|<1}/\rn_{> 0}$.
  Let $A$ be a qfd separable uniformly totally disconnected perfectoid ring over $\rt_p$.
  Then $A^u$ is perfectoid hence we can define a Berkovich space $\Y_{A^u,[0,\infty)}$ over $\N_{|p|<1}/\rn_{> 0}$ which is the Fargues--Fontaine disk,
  and it admits the $\theta$-map $\theta\colon \operatorname{GSpec}A^u\rightarrow\Y_{A^u,[0,\infty)}$ and Frobenius map $\varphi\colon \Y_{A^u,[0,\infty)}\rightarrow \Y_{A^u,[0,\infty)}$.
  Let the modified Fargues--Fontaine disk $\Y_{A,[0,\infty)}$ be the derived Berkovich space over $\N_{|p|<1}/\rn_{> 0}$ defined via the following pushout diagram\footnote{Pushout in the category of derived Berkovich spaces over $\N_{|p|<1}/\rn_{> 0}$,
  cf. \cite[Definition 4.3.1]{ABLBRCS25} for a definition over $\rt_p$.}:
  \[
  \xymatrix{
    \coprod_{n\ge 0} \operatorname{GSpec}A^u\ar[r]\ar[d]^{\coprod_{n\ge0} \varphi^n\circ\theta} & \coprod_{n\ge 0} \operatorname{GSpec}A\ar[d]\\
    \Y_{A^u,[0,\infty)}\ar[r] & \Y_{A,[0,\infty)}
  }
  \]
  A Cartier divisor $D\subset\Y_{A,[0,\infty)}$ is called of degree one if its pullback $D\times_{\Y_{A,[0,\infty)}}\Y_{A^u,[0,\infty)}$ is a degree one Cartier divisor in the sense of Fargues--Scholze \cite{FS21}.
  Define $X^{\Prism}$,
  the rational analytic prismatization of $X$,
  as the qfd Gelfand stack over $\rt_p$ by sheafifying the following pre-stack:
  \[
  X^{\Prism}(A)=\{(D,f)\mid D\text{ is a degree one Cartier divisor on }\Y_{A,[0,\infty)},\,f\colon D\rightarrow X\},\,A\in \cat{uPerfd}^{\qfd}_{\omega_1}.
  \]
  It admits a Frobenius map $\varphi_X\colon X^{\Prism}\rightarrow X^{\Prism}$ induced by the one on $\Y_{A,[0,\infty)}$.
  Define the perfect analytic prismatization as $X^{\Prism,\perf}:=\varprojlim_{\varphi_X}X^{\Prism}$ and $X^{\Div^1}:=X^{\Prism,\perf}/\varphi_X^{\itg}$.
\end{discussion}

Our main result in this section is the following comparison.
\begin{proposition}\label{analyticprismatization}
  Let $X$ be a scheme over $\ff_p$.
  We can view it as an analytic stack over $\AnSpec(\ff_{p,\blacksquare})$ by taking the induced analytic structure.
  Then we view it as a Gelfand stack over $\N_{|p|<1}/\rn_{> 0}$ by base change from $\AnSpec(\ff_{p,\blacksquare})$ to $\N_{|p|<1}/\rn_{> 0}$.
  Then we have that $X^{\RIG}\cong X^{\Prism}\cong X^{\Prism,\perf}$ and $X^{\HK}\cong X^{\Div^1}$.
\end{proposition}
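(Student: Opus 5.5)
The plan is to compare functors of points on the basis $\cat{uPerfd}^{\operatorname{std},\qfd}_{\omega_1}$, after reducing to $X=\Spec R$ affine. Both $X\mapsto X^{\RIG}$ (Remark \ref{rigidKunneth}, via transmutation) and $X\mapsto X^{\Prism}$ (defined by mapping a divisor $D$ into $X$) preserve finite limits. Both also satisfy Zariski descent: for $X^{\RIG}$ this is Remark \ref{hdescentrigid}; for $X^{\Prism}$, a Zariski cover of $X$ pulls back to an open cover of $D$, which becomes a rational cover after a descendable refinement of $A$. So, after presenting $R$ as a quotient of a polynomial ring, I expect the whole question to reduce to the ring stack $\aff^1_{\ff_p}$, i.e. to identifying $\aff^{1,\Prism}$ with $\aff^{1,\RIG}_{\ff_p}(A)=A^{u,\flat}$.

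For the key step, take $A$ strictly totally disconnected. Since $X$ lives over $\ff_p$, a map $D\to X$ forces $p=0$ on $D$, which should pin down $D$. On $\Y_{A^u,[0,\infty)}$ the degree one divisors with $p=0$ correspond to untilts where $p$ vanishes, and the only such is the characteristic $p$ locus $\{p=0\}\cong \operatorname{GSpec}A^{u,\flat}$. So I would show that there is a unique degree one Cartier divisor $D_0$ on which $p=0$, and that $D_0\cong\operatorname{GSpec}A^{u,\flat}$. This needs care in the modified disk $\Y_{A,[0,\infty)}$, because the pushout glues in the copies $\operatorname{GSpec}A$ only along the images $\varphi^n\theta$. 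Those images lie in the locus where $p$ is invertible, since $A$ is over $\rt_p$. Hence $D_0$ avoids the modified part, and the pushout does not affect it.

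Granting this, $X^{\Prism}(A)=\Hom(D_0,X)=X(A^{u,\flat})=X^{\RIG}(A)$ for affine $X$. The identification is functorial and compatible with sheafification, which gives $X^{\RIG}\cong X^{\Prism}$. Frobenius on $\Y$ restricts on $D_0$ to the absolute Frobenius $x\mapsto x^p$ of $A^{u,\flat}$, which is bijective. Therefore $\varphi_X$ is an isomorphism on $X^{\Prism}$ and $X^{\Prism,\perf}=\varprojlim_{\varphi}X^{\Prism}\cong X^{\Prism}$. Quotienting by $\varphi^{\itg}$ then gives $X^{\HK}=X^{\RIG}/\varphi^{\itg}\cong X^{\Div^1}$, compatibly with the Frobenius descriptions in Remark \ref{bigrigiddefinition}.

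The main obstacle is the uniqueness claim for $D_0$, made rigorous in the derived Berkovich and $\N_{|p|<1}/\rn_{>0}$ setting. Two points need checking. First, a nilpotent or derived thickening of $\{p=0\}$ should not give extra degree one divisors mapping to $X$. Second, the base change of $X$ from $\AnSpec\ff_{p,\blacksquare}$ should see only the reduced characteristic $p$ locus. I would first settle both by pulling back to $\Y_{A^u,[0,\infty)}$ and invoking the Fargues–Scholze classification of degree one divisors (untilts) there, then descend along the pushout square.
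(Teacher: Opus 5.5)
Your proposal is correct and follows essentially the paper's argument. Because $X$ has characteristic $p$, any degree one divisor $D$ admitting a map to $X$ factors through the degree one divisor $\{p=0\}\cong\operatorname{GSpec}A^{u,\flat}$ and so equals it. This gives $X^{\Prism}(A)=X(A^{u,\flat})=X^{\RIG}(A)$, and perfectness of $A^{u,\flat}$ then yields $X^{\Prism,\perf}\cong X^{\Prism}$ and $X^{\HK}\cong X^{\Div^1}$.

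Two small differences: your reduction to $\aff^1$ is not needed, since the paper applies the divisor argument to general $X$ directly. Your observation that the glued-in images $\varphi^n\theta(\operatorname{GSpec}A^u)$ lie where $p$ is invertible, so the pushout leaves $\{p=0\}$ untouched, fills in a point the paper leaves implicit.
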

\begin{proof}
  Since $X$ is of characteristic $p$,
  the degree one Cartier divisor must be of characteristic $p$ too,
  which means it factors through $D\rightarrow \Y_{\{p=0\}}\subset \Y_{A,[0,\infty)}$.
  Because both divisors $D$ and $\Y_{\{p=0\}}$ are of degree one,
  it follows that $D\cong \Y_{\{p=0\}}$.
  However,
  $\Y_{\{p=0\}}=\operatorname{GSpec}(A^{u,\flat})$ and then $X^{\Prism}(A)=X(A^{u,\flat})$ hence gives the same functor of points of $X^{\RIG}$.
  Therefore,
  we have $X^{\RIG}\cong X^{\Prism}$.
  However,
  $X^{\RIG}$ is already perfect as $A^{u,\flat}$ is,
  so $X^{\RIG}\cong X^{\Prism}\cong X^{\Prism,\perf}$ and $X^{\RIG}/\varphi^{\itg}\cong X^{\Div^1}$.
\end{proof}

\begin{remark}
  The proposition gives that the de Rham stack of the Fargues--Fontaine disk (resp. Hyodo--Kato stack) coincides with the rational analytic prismatization
  (resp. Frobenius-quotient of perfect rational analytic prismatization) of $X$.
  In general,
  the latter serves as the analytic de Rham stack of the former;
  and thus the proposition means that the distinction disappears when working in characteristic $p$.
\end{remark}

\begin{remark}\label{integralanalyticprismatization}
  The rational analytic prismatization inputs a Gelfand stack over $\N_{|p|<1}/\rn_{> 0}$,
  and outputs a Gelfand stack over $\rt_p$.
  There is an integral version of analytic prismatization taking a Gelfand stack over $\N_{|p|<1}/\rn_{> 0}$
  to a Gelfand stack over $\N_{|p|<1}/\rn_{> 0}$,
  with the $\rt_p$-part given by the rational analytic prismatization.
  The definition is more involved,
  so we omit it here.
  Let us only mention that if $X$ is a $k$-scheme,
  then the $\ff_p$-part of the integral analytic prismatization of $X$ can be thought of as an analytic version of the crystalline stack studied in \cite{Bha22}.
\end{remark}

\subsection{Identification of perfect complexes}\label{Sectionidentificationperfectcomplex}
Hyodo--Kato stacks can be viewed as another stacky approach to the theory of rigid cohomology (and partly of it coefficients).
In this section,
we study this stacky approach compared with the approach by arithmetic de Rham stacks.
A first step is to relate the Hyodo--Kato stacks with the arithmetic de Rham stack in the following way.

\begin{discussion}\label{smallrigidbridge}
  Let $X$ be a scheme over $\ff_p$.
  We have a canonical morphism $A^{\circ}/A^{\circ\circ}\leftarrow A^{\circ}\rightarrow A$,
  which induces a diagram of qfd arc-stacks over $\ff_p$:
  \[
  \xymatrix{
    & X_{\arc}^{\circ}\ar[dr]\ar[dl] & \\
    \overline{X} & & X_{\arc}
  }
  \]
  Let $\varphi_X\colon X^{\arith}\rightarrow X^{\arith}$ be the Frobenius morphism coming from the absolute Frobenius $\varphi_X$ on $X$.
  Applying Hyodo--Kato stacks to this diagram,
  we get a diagram of qfd Gelfand stacks:
  \[
  \xymatrix{
    & X^{\HK}_{\le 1}\ar[dr]\ar[dl] & \\
   X^{\arith}/\varphi_X^{\itg} & & X^{\HK}
  }
  \]
  Let $K$ admit a Frobenius lift and be perfect.
  Then the diagram above also induces a diagram on the relative versions of these stacks:
  \[
  \xymatrix{
    & X^{\HK/K}_{\le 1}\ar[dr]^{\alpha}\ar[dl]_{\beta} & \\
   X^{\arith/K}/\varphi_{X}^{\itg} & & X^{\HK/K}
  }
  \]
\end{discussion}

The main result in this section is the following theorem:
\begin{theorem}\label{equivalenceperfectcomplex}
  Let $X$ be a scheme of finite type over a perfect field $k$.
  Let $K$ admit a Frobenius lift $\varphi_K$ and be perfect.
  Then the diagram 
  \[
  \xymatrix{
    & X^{\HK/K}_{\le 1}\ar[dr]^{\alpha}\ar[dl]_{\beta} & \\
   X^{\arith/K}/\varphi_{X}^{\itg} & & X^{\HK/K}
  }
  \]
  induces a fully faithful embedding $\alpha_*\beta^*\colon D( X^{\arith/K}/\varphi_{X}^{\itg})\hookrightarrow D(X^{\HK/K})$.
  Moreover,
  we have an identification of perfect complexes between $\cat{Perf}(X^{\arith/K}/\varphi_{X}^{\itg})$ and $\cat{Perf}(X^{\HK/K})$,
  in the sense that $\alpha^*\colon\cat{Perf}(X^{\HK/K}_{\le 1})\cong \cat{Perf}(X^{\HK/K})$,
  and $\beta^*\colon\cat{Perf}(X^{\arith/K}/\varphi^{\itg}_{X})\cong \cat{Perf}(X^{\HK/K}_{\le 1})$.
\end{theorem}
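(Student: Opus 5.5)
The plan has two parts. First, full faithfulness of $\alpha_*\beta^*$ comes from $\beta^*$ being fully faithful and $\alpha_*$ being fully faithful on the image. Second, the equivalences on perfect complexes come from spreading out plus Frobenius equivariance, as sketched in the introduction. Everything reduces to $X=\aff^1_k$ (or $\aff^n_k$ and closed subschemes) by structural results proved earlier. These are $h$-hyperdescent for all three stacks (Remarks \ref{hdescentrelativearith}, \ref{hdescentrigid}), compatibility with finite limits (Remarks \ref{arithKunneth}, \ref{rigidKunneth}), and the Fredholm property (Remark \ref{Fredholmproperty}), which makes $\cat{Perf}$ satisfy $!$-descent. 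So both $X\mapsto D(-)$ and $X\mapsto\cat{Perf}(-)$ are limits over Zariski (even $h$) hypercovers. I will check that $\alpha,\beta$ are natural in $X$, so the statements glue, and reduce to $X$ affine. For a closed $Z\subset\aff^n$ I will use excision: Corollary \ref{contraryexcision} for $(-)^{\arith}$ and Theorem \ref{PoincaredualityrigidFrobeniusrelative}(3) for $(-)^{\HK}$. Together with Lemma \ref{propermorphismcriterion} (closed immersions are proper, so the square comparing $\le 1$ and big versions is Cartesian), this lets me pass from $\aff^n$ to $Z$ by base change.

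For the model case, Proposition \ref{arithadditivegroup}, Example \ref{rigidstackadditive}, and the Künneth compatibilities make the diagram explicit. It is $(\disk^{\dagger}_K/\disk^{\circ}_K)/\varphi^{\itg}\xleftarrow{\beta}(\varprojlim_\varphi\disk^{\dagger,\dR/K}_K)/\varphi^{\itg}\xrightarrow{\alpha}(\varprojlim_\varphi\aff^{1,\an,\dR/K}_K)/\varphi^{\itg}$, with $\disk^{\dagger,\dR}=\disk^{\dagger}/\mathbb{G}^{\dagger}_{a}$ and $\aff^{1,\an,\dR}=\aff^{1,\an}/\mathbb{G}^\dagger_a$. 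Using $\varprojlim_\varphi$ of perfect ring stacks (Frobenius is an isomorphism on $\disk^\dagger/\disk^\circ$), $\beta$ becomes the quotient map $\disk^\dagger/\mathbb{G}^\dagger_a\to\disk^\dagger/\disk^\circ$, modulo $\varphi$ and perfection. Its fiber is $B(\mathbb{G}_a^\dagger)\to B\disk^\circ$, i.e.\ essentially $\disk^{\circ}/\mathbb{G}^\dagger_a=\disk^{\circ,\dR}$. By de Rham triviality of the open disk (\cite[Proposition 5.2.1]{ABLBRCS25}, as in Remark \ref{fullfaithfulanalyticdeRham}), $\beta_*\beta^*1=1$. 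Base change then gives full faithfulness of $\beta^*$ on all of $D$.

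Next, $\alpha$ is, up to perfection, the open immersion of $\disk^{\dagger,\dR}$ into $\aff^{1,\an,\dR}$ quotiented by $\varphi$. On the arc side this is the open immersion $\aff^{1,\circ}_{\arc}\to\aff^1_{\arc}$, and pulling back along $\alpha$ is restriction. So $\alpha^*\alpha_*\simeq\id$ and $\alpha_*$ is fully faithful. Combining the two gives the fully faithful embedding $\alpha_*\beta^*$. I will double check that $\alpha$ is an open immersion in the $6$-functor sense by the closed-open decomposition of Proposition \ref{rigidringstack} and Lemma \ref{openclosedsuavecover}.

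For perfect complexes, full faithfulness of $\alpha^*$ and $\beta^*$ on $\cat{Perf}$ follows as above. For $\beta^*$ on dualizable objects I use $\beta_*\beta^*1=1$ and the projection formula; for $\alpha^*$, the internal Hom of perfect complexes on the big stack is computed by its restriction, by Frobenius contraction. Essential surjectivity is the main obstacle, and I will use the strategy of the introduction, following \cite[Section 2.3]{Hau26}. A perfect complex on $\varprojlim_\varphi\disk^{\dagger,\dR}$ is defined on some $\disk^{\dagger}_{\le r}$ with $r>1$ (for $\alpha$), or descends from $\mathbb{G}^\dagger_a$-equivariance to equivariance for a slightly larger disk $\disk_{\le\epsilon}$ (for $\beta$). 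Frobenius equivariance $\varphi^*M\cong M$ then propagates this: $\varphi^{-1}$ enlarges radii $r\mapsto r^{1/p}$ in the wrong direction, so one uses $\varphi$ itself to extend from $\disk_{\le r}$ to $\disk_{\le r^p}$ and exhausts $\aff^{1,\an}$ (resp.\ shrinks $\epsilon^{1/p^n}\to1$ to reach $\disk^\circ$). The glued object is perfect because perfectness is local. The delicate points are the overconvergent spreading-out lemma and compatibility of the gluing with the perfection $\varprojlim_\varphi$. I expect to treat the latter by noting that $\cat{Perf}$ of a $\varphi$-quotient of a perfection is the same as $\cat{Perf}$ of the $\varphi$-quotient of a single stage.
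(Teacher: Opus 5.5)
\textbf{There is a genuine gap, in your reduction step.} You want to pass from $\aff^n$ to an arbitrary closed $Z\subset\aff^n$ ``by excision and base change''. For $\beta$ there is no Cartesian square to base change along.

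On points, $\aff^{n,\RIG}_{\le 1}\times_{\aff^{n,\arith}}Z^{\arith}$ consists of the tuples $a\in(A^{u,\circ,\flat})^n$ with $f(a)\in A^{u,\circ\circ,\flat}$ for the equations $f$ of $Z$. That is an open tube around $Z$, not $Z(A^{u,\circ,\flat})$. Lemma \ref{propermorphismcriterion} only compares the $\le 1$ and big Hyodo--Kato stacks and says nothing about $\beta$.

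Already for $Z=\{0\}\subset\aff^1_k$ the two differ. Here $\beta_Z$ is the identity of $\operatorname{GSpec}K/\varphi_K^{\itg}$. The base change of $\beta_{\aff^1}$, however, is $\varprojlim_\varphi\disk^{\circ,\dR/K}_K/\varphi^{\itg}\to\operatorname{GSpec}K/\varphi_K^{\itg}$.

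Excision (Corollary \ref{contraryexcision}, Theorem \ref{PoincaredualityrigidFrobeniusrelative}(3)) does not repair this. It is a statement about the full categories $D(-)$ and gives no control over perfect complexes on the pieces. Even for $\alpha$, where the square is Cartesian, an equivalence on $\cat{Perf}$ does not base change formally. You would have to show that the whole spreading-out/Frobenius configuration pulls back: the normed tube, the tower of neighbourhoods, and the isomorphisms $F_i$ of Lemma \ref{prototype}. You would also need Lemma \ref{lemmaprototype2} to apply to the pulled-back stacks. You do not set any of this up.

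\textbf{The missing idea is the reduction the paper uses.} Combine $h$-hyperdescent, which you already quote, with de Jong's alterations to reduce to smooth $X$. Then reduce Zariski-locally to $X$ open in a proper smooth $Y$ with a proper smooth formal lift $\mathfrak{Y}$ carrying a Frobenius lift.

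On such charts, Propositions \ref{Xarithdescription} and \ref{smallrigidstackdescription} give $X^{\HK/K}_{\le 1}\cong\varprojlim_\varphi]X[^{\dagger,\dR/K}_{\mathfrak{Y}}/\varphi^{\itg}$. This sits as an overconvergent normed tube inside $X^{\HK/K}\cong\varprojlim_\varphi]X[^{\an,\dR/K}_{\mathfrak{Y}}/\varphi^{\itg}$, so $\alpha$ is directly an instance of Lemma \ref{prototype}. For $\beta$ one compares the \v{C}ech nerves of the covers by $\varprojlim_\varphi]X[^{\dagger}_{\mathfrak{Y}}/\varphi^{\itg}$. At each level the overconvergent diagonal is an overconvergent normed divisor in the diagonal tube, so one applies Lemma \ref{prototype} levelwise and then descends. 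Your model-case mechanism (spreading out, Frobenius propagation, triviality of the de Rham cohomology of the open disk) is the right one, but it has to be run on these charts, not on $\aff^1$.

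Two smaller corrections.
\begin{enumerate}
\item $\alpha$ is a closed immersion, not an open one. The paper uses that $X^\circ_{\arc}\to X_{\arc}$ is closed, and your own spreading-out step treats $\disk^\dagger$ as the intersection of the $\disk^{\dagger}_{\le r}$. So the planned check via Proposition \ref{rigidringstack}, which concerns $\{0\}\subset\aff^1$, is beside the point, although $\alpha^*\alpha_*\simeq\mathrm{id}$ still holds.
\item For full faithfulness of $\beta^*$ on all of $D$, knowing $\beta_*\beta^*1\cong 1$ is not enough without a projection formula and a description of $D$ of the perfection. The paper gets both from \cite[Proposition 5.4.9]{ABLBRCS25} and from the suave map $\nu$ with $\nu_\natural 1\cong 1$ of Remark \ref{fullfaithfulanalyticdeRham}.
\end{enumerate}
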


\begin{proof}[Proof of the full faithfulness in Theorem \ref{equivalenceperfectcomplex}]
  We first prove the full faithfulness of $\alpha_*\beta^*$.
  The map $\alpha$ is a closed immersion,
  because $X^{\circ}_{\arc}\rightarrow X_{\arc}$ is a closed immersion of qfd arc stacks and by \cite[Remark 6.1.11]{ABLBRCS25}.
  This proves that $\alpha_*$ is fully faithful,
  and it suffices to show that $\beta^*$ is fully faithful.
  Consider $\beta\colon X^{\HK/K}_{\le 1}\rightarrow X^{\arith/K}/\varphi^{\itg}_X$.
  Both sides satisfy $h$-hyperdescent by Theorem \ref{fpqcdescentarith} and Remark \ref{hdescentrigid},
  then by de Jong's alteration \cite{dJ96},
  it suffices to deal with smooth $X$.
  By Zariski descent,
  we can furthermore assume that $X$ is étale over $\aff^n$,
  hence $X$ has the following same property as $\aff^n_k$ by étaleness:
  \begin{itemize}
    \item $X$ is a smooth scheme which embeds openly into a proper smooth scheme $Y$ with a proper smooth lift $\mathfrak{Y}$ equipped with Frobenius $\varphi$.
  \end{itemize}
  Let $\mathfrak{Y}_{\eta}$ be the generic fiber of $\mathfrak{Y}$.
  Then by Proposition \ref{Xarithdescription} and Proposition \ref{smallrigidstackdescription},
  and the fact that $X^{\arith/K}$ is a perfect stack (i.e. $X^{\arith/K}\cong\varprojlim_{\varphi_{X}} X^{\arith/K}$),
  the morphism $\beta$ becomes the following shape:
  \[
  \beta\colon \varprojlim_{\varphi}]X[^{\dagger,\dR/K}_{\mathfrak{Y}}/\varphi^{\itg}\longrightarrow \varprojlim_{\varphi}(]X[^{\dagger}_{\mathfrak{Y}}/]X[^{\dagger}_{\mathfrak{Y}^2})/\varphi^{\itg}.
  \]
  By \cite[Proposition 5.4.9]{ABLBRCS25},
  we have $D(\varprojlim_{\varphi}]X[^{\dagger,\dR/K}_{\mathfrak{Y}})\cong \varprojlim_{\varphi} D_!(]X[^{\dagger,\dR/K}_{\mathfrak{Y}})$,
  hence it reduces to proving that the pullback along $\nu\colon ]X[^{\dagger,\dR/K}_{\mathfrak{Y}}\rightarrow X^{\arith/K}$ gives a fully faithful embedding,
  which is just Remark \ref{fullfaithfulanalyticdeRham}.
\end{proof}

It remains to prove the identification of perfect complexes in Theorem \ref{equivalenceperfectcomplex}.
Before proving the equivalence of perfect complexes,
let us make some preparations.

\begin{lemma}\label{lemmaprototype1}
  Let $X_1\subset X_2\subset\cdots$ be a sequence of open immersions of Gelfand stacks.
  Then pullback induces an equivalence of categories $\cat{Perf}(\varinjlim_i X_i)\cong \varprojlim_i \cat{Perf}(X_i)$.
\end{lemma}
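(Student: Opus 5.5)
The plan is to reduce the statement to two formal facts: $\cat{Perf}(-)$ turns colimits of Gelfand stacks into limits of $\infty$-categories, and an increasing union of open immersions is a colimit in $\cat{GelfStk}^{\qfd}$ that is compatible with restriction. Concretely, set $X=\varinjlim_i X_i$, where the colimit is formed in $\cat{GelfStk}^{\qfd}$. By Remark \ref{Fredholmproperty}, the functor $\cat{Perf}(-)\colon\cat{GelfStk}^{\qfd,\op}\to\cat{Cat}_\infty$ is the right Kan extension (equivalently, the limit over points $\operatorname{GSpec}A\to X$) of $A\mapsto\cat{Perf}(A)$. That extension inverts $!$-equivalences and sends colimits of stacks to limits. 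So the key point is that the canonical map $\varinjlim_i^{\operatorname{psh}}X_i\to X$ from the presheaf colimit is a $!$-equivalence, which is true by definition of sheafification. Once that holds, we get $\cat{Perf}(X)\simeq\cat{Perf}(\varinjlim^{\operatorname{psh}}_iX_i)\simeq\varprojlim_i\cat{Perf}(X_i)$, and the comparison map is exactly pullback along $X_i\to X$.

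The steps, in order, are these. First, I will recall from Remark \ref{Fredholmproperty} that $\cat{Perf}$ is the left Kan extension of $A\mapsto\cat{Perf}(A)$ to presheaves (as a contravariant functor to $\cat{Cat}_\infty$, so it converts colimits into limits) and that it factors through the sheafification. This factorization uses that dualizability is $!$-local, which follows from the Fredholm property, Proposition \ref{Fredholmproperty1}. Second, I will observe that the colimit in Gelfand stacks is the sheafification of the presheaf colimit, so $\cat{Perf}$ of it agrees with $\varprojlim_i\cat{Perf}(X_i)$. Third, I will check that the resulting equivalence is induced by the restriction functors, which is immediate from naturality of the Kan extension.

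The main obstacle is justifying that $\cat{Perf}$ really satisfies descent along arbitrary $!$-equivalences (hypercovers), and not only along \v{C}ech covers; only then can it be evaluated on the non-sheafified colimit. If this step needs more care, I would instead exploit the openness. Each $X_i\to X$ is an open immersion, so for any $\operatorname{GSpec}A\to X$ the pullbacks $U_i=X_i\times_X\operatorname{GSpec}A$ form an increasing open cover of $\operatorname{GSpec}A$. Since $\M(A)$ is compact, finitely many $U_i$ cover, and hence a single $U_N$ covers, possibly after passing to a $!$-cover. This gives $\operatorname{GSpec}A\simeq U_N$ locally, so on affines the compatible systems in $\varprojlim_i\cat{Perf}(X_i)$ stabilize, and descent on $X$ then finishes the argument. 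The openness (compactness of Berkovich spectra) is used exactly here, and this is the step I expect to require the most care.
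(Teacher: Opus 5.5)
Your proposal is correct, and your first argument already suffices; it differs from the paper's proof mainly in where descent is applied.

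The paper works with $D(-)$. It regards $\{X_i\}$ as a cover of $X=\varinjlim_i X_i$ and computes the \v{C}ech nerve, using that the maps $X_i\to X$ are monomorphisms so that $X_i\times_X X_j=X_{\min(i,j)}$. This gives $D(X)\simeq\varprojlim_i D(X_i)$. Only afterwards does it cut down to dualizable objects, via the Fredholm property and the locality of dualizability.

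You instead use the conclusion of Remark \ref{Fredholmproperty} directly. There $\cat{Perf}$ is itself a $!$-sheaf whose limit-preserving extension to presheaves inverts $!$-equivalences. Since colimits in $\cat{GelfStk}^{\qfd}$ are sheafifications of presheaf colimits, $\cat{Perf}$ converts every colimit of Gelfand stacks into a limit.

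The two routes compare as follows:
\begin{itemize}
\item Your route is shorter and shows that the open-immersion hypothesis plays no role.
\item The paper's route yields the stronger intermediate statement $D(X)\simeq\varprojlim_i D(X_i)$ for all sheaves, not just perfect complexes.
\item Both rest on the same inputs: descent for $D$ together with locality of dualizability.
\end{itemize}

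For this reason the ``main obstacle'' you flag is not an obstacle. Invariance of $\cat{Perf}$ under $!$-equivalences is exactly what Remark \ref{Fredholmproperty} asserts, and the paper's own proof relies on the same input.

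Your compactness fallback is therefore unnecessary, and I would drop it. As written, it would also need an ingredient not established in the paper: that the pullbacks $U_i$ correspond to open subsets of $\M(A)$ whose union is all of $\M(A)$.
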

\begin{proof}
  The system $\{X_i\}$ is a cover of $\varinjlim_i X_i$,
  so $D(-)$ satisfies descent.
  By computing the Cech nerve,
  we get $D(\varinjlim_i X_i)\cong \varprojlim_i D(X_i)$.
  Now,
  by Fredholm property \ref{Fredholmproperty} of Gelfand stacks,
  the category of perfect complexes is exactly the category of dualizable objects,
  and checking whether an object is dualizable is local,
  cf. \cite[Lemma 2.2.7]{RC24a}.
\end{proof}

We also need the spreading out property of perfect complexes in \cite[Lemma 2.18]{Hau26} and a variation.
Recall the definition of an overconvergent normed divisor in \cite[Definition 2.17]{Hau26}.
\begin{definition}[Overconvergent normed divisor]
  Let $X$ be a Gelfand stack. An overconvergent normed divisor is a closed substack $Z$ of X arising from a Cartesian square
  \[\xymatrix{
    Z\ar[r]\ar[d] & X\ar[d]\\
    \text{*}/\mathbb{T}^{\dagger,\dR}\ar[r] & \aff^{1,\an,\dR}/\mathbb{T}^{\dagger,\dR}
  }\]
  Let $Z_{\epsilon}$ ($\epsilon >0$) be the stack defined by 
  \[\xymatrix{
    Z_{\epsilon}\ar[r]\ar[d] & X\ar[d]\\
    \disk^{\dagger,\le \epsilon,\dR}/\mathbb{T}^{\dagger,\dR}\ar[r] & \aff^{1,\an,\dR}/\mathbb{T}^{\dagger,\dR}
  }\]
\end{definition}

We can make a variation of the definition as follows.
\begin{definition}[Overconvergent normed tube]
  Let $X$ be a Gelfand stack. An overconvergent normed tube is a closed substack $Z$ of X arising from a Cartesian square
  \[\xymatrix{
    Z\ar[r]\ar[d] & X\ar[d]\\
    \disk^{\dagger,\dR}/\mathbb{T}^{\dagger,\dR}\ar[r] & \aff^{1,\an,\dR}/\mathbb{T}^{\dagger,\dR}
  }\]
  Let $Z_{r}$ ($r\ge 1$) be the stack defined by 
  \[\xymatrix{
    Z_{r}\ar[r]\ar[d] & X\ar[d]\\
    \disk^{\dagger,\le r,\dR}/\mathbb{T}^{\dagger,\dR}\ar[r] & \aff^{1,\an,\dR}/\mathbb{T}^{\dagger,\dR}
  }\]
\end{definition}

\begin{lemma}[Spreading out]\label{lemmaprototype2}
  Let $X$ be a nice coverable Gelfand stack,
  cf. \cite[Definition 2.11]{Hau26}.
  Let $Z$ be an overconvergent normed divisor or an overconvergent normed tube of $X$.
  Then we can spread out perfect complexes on $Z$ to a small neighborhood of it,
  i.e. the pullback induces an equivalence of categories:
  \begin{enumerate}
  \item $\cat{Perf}(Z)\cong \varinjlim_{\epsilon>0}\cat{Perf}(Z_{\epsilon})$ in the case of overconvergent normed divisors;
  \item $\cat{Perf}(Z)\cong \varinjlim_{r>1}\cat{Perf}(Z_{r})$ in the case of overconvergent normed tubes.
  \end{enumerate} 
\end{lemma}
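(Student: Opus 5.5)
Part (1) is exactly \cite[Lemma 2.18]{Hau26}, so I would just cite it and spend the effort on part (2), the overconvergent normed tube, by running the same argument with the roles of "small" and "large" radii swapped. First I reduce to the universal situation. All the stacks $Z$ and $Z_r$ are pullbacks along $X\to\aff^{1,\an,\dR}/\mathbb{T}^{\dagger,\dR}$. Both $\cat{Perf}(-)$ and the filtered colimit over $r$ are compatible with $!$-descent in $X$. Here I would use that $X$ is nice coverable, so that $\cat{Perf}$ is computed as a limit over a cover by affinoid-type pieces, and that a filtered colimit of categories commutes with the totalization because the covers are of finite type. This lets me assume $X=\operatorname{GSpec}A$ with $A$ a Gelfand ring, together with a function $f\in A$; I also pass along the $\mathbb{T}^{\dagger,\dR}$-torsor, which is harmless for the same descent reason.

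In the affine case $Z=X\times_{\aff^{1,\an,\dR}}\disk^{\dagger,\dR}$ is the locus $|f|\le 1$ "analytically de Rham–thickened". Because $\disk^{\dagger,\dR}=\varprojlim_{r>1}\disk^{\dagger,\le r,\dR}$ in the Gelfand/Betti sense, its underlying Berkovich space is the intersection of the closed subsets $\{|f|\le r\}$. Similarly $Z=\varprojlim_r Z_r$, and each $Z_r$ is an idempotent-algebra (closed Betti) localization of $X$. Concretely $D(Z)=\cat{Mod}_{A_Z}(D(A))$, where $A_Z=\varinjlim_r A_{Z_r}$ is a filtered colimit of idempotent $A$-algebras. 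This comes from the Betti-stack description in \cite[Proposition 3.2.10]{ABLBRCS25}.

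The statement then becomes the standard fact that for a filtered colimit of (idempotent) commutative algebras $B=\varinjlim_r B_r$ one has $\cat{Perf}(B)\cong\varinjlim_r\cat{Perf}(B_r)$. Perfect complexes are compact objects, so full faithfulness follows from $\Hom_B(B\otimes P,B\otimes Q)=\varinjlim_r\Hom_{B_r}(P_r,Q_r)$. Essential surjectivity holds because a perfect complex is a finite cell complex whose attaching maps are finitely many morphisms, each defined at some finite stage. Here I use the Fredholm property (Proposition~\ref{Fredholmproperty1}) to identify perfect complexes with dualizable, hence compact, objects and with perfect $\underline{B}(*)$-modules. I also need that $\underline{B}(*)=\varinjlim_r\underline{B_r}(*)$ on underlying rings, which holds since filtered colimits commute with taking $*$-points.

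The main obstacle is the justification that $\cat{Perf}$ of the limit $Z=\varprojlim Z_r$ really corresponds to the filtered colimit of algebras. The issue is that the de Rham stacks $\disk^{\dagger,\le r,\dR}$ are not affine, so I must check that $Z\to X$ is the closed immersion cut out by the idempotent algebra $\varinjlim_r$ of the ones for $Z_r$. To do this I would verify it on the base $\aff^{1,\an,\dR}$, where it reduces to the identification of overconvergent closed disks as Betti localizations along $\{|T|\le r\}$, and then pull back.
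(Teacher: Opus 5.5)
\paragraph{The gap in your argument for (2).}
For (1) you do the same as the paper, which cites \cite[Lemma 2.18]{Hau26}. For (2) the paper also says only that Hauck's proof goes through with $Z_\epsilon$ replaced by $Z_r$. Your independent argument for (2) has a gap, and it is in the reduction step rather than where you locate the ``main obstacle''.

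Once $X=\operatorname{GSpec}A$ and the map is given by a function, everything is formal. Then $Z$ and the $Z_r$ are affine, $Z=\operatorname{GSpec}(\varinjlim_r A_{Z_r})$, and the Fredholm property reduces the claim to perfect modules over $\varinjlim_r \underline{A_{Z_r}}(*)$.

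The real content is the globalization. You dispose of it by asserting that $X\mapsto\varinjlim_r\cat{Perf}(Z_r\times_X -)$ inherits $!$-descent because ``a filtered colimit of categories commutes with the totalization because the covers are of finite type''. That is not a valid principle:
\begin{itemize}
\item Filtered colimits in $\cat{Cat}_{\infty}$ commute with finite limits, but a totalization is a limit over $\Delta$, which is infinite whatever the cover looks like.
\item An object of $\lim_{\Delta}\varinjlim_r$ carries infinitely many coherences, and these need not all be defined at a single radius.
\item The covers you actually descend along do not even reduce to the affine case levelwise. These are the $\mathbb{T}^{\dagger,\dR}$-torsor, and implicitly $\aff^{1,\an}\to\aff^{1,\an,\dR}$ to lift the classifying map to an honest $f\in A$. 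Their \v{C}ech nerves have non-affine terms such as $X'\times\mathbb{T}^{\dagger,\dR}$.
\end{itemize}

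\paragraph{What would close it.}
You need a genuine finiteness input. One workable route, granted suitable connectivity, runs as follows.
\begin{enumerate}
\item Note that $Z_r\to X$ and $Z\to X$ are closed immersions base changed from the universal case. Hence $D(Z)\simeq\cat{Mod}_{\mathcal A}(D(X))$ with $\mathcal A=\varinjlim_r\mathcal A_r$. You then only need to descend along an affine cover of $X$ itself, which removes the torsor.
\item Use uniform Tor-amplitude bounds on a quasi-compact cover.
\begin{itemize}
\item For full faithfulness: Hom spectra between perfect complexes of amplitude in $[a,b]$ are uniformly bounded above. So each homotopy group of the totalization is computed at a finite stage and commutes with $\varinjlim_r$.
\item For essential surjectivity: perfect complexes of amplitude in $[a,b]$ over a connective base form a $(b-a+1)$-category. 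Their totalization is computed by a finite truncation of $\Delta$ and therefore commutes with filtered colimits.
\end{itemize}
\end{enumerate}
Hypotheses of this kind (quasi-compactness, affine \v{C}ech terms, uniform bounds) are exactly what the ``nice coverable'' assumption of \cite[Definition 2.11]{Hau26} has to supply. The paper relies on these by deferring to Hauck's proof. Your sentence about finite type covers does not substitute for them.
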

\begin{proof}
  For the case of overconvergent normed divisors,
  we refer to \cite[Lemma 2.18]{Hau26}.
  For the case of overconvergent normed tubes,
  the proof is the same as in the proof of \cite[Lemma 2.18]{Hau26},
  after replacing $Z_{\epsilon}$ there by $Z_r$ here everywhere.
\end{proof}

The following lemma is a prototype of identifications of perfect complexes that we will use.

\begin{lemma}\label{prototype}
  Let $\cdots\subset X_{-1}\subset X_0\subset X_1\subset \cdots$ be a sequence of successive open immersions of Gelfand stacks.
  Denote $X_{-\infty}:=\varprojlim_{n\rightarrow-\infty} X_n$ and $X_{\infty}:=\varinjlim_{n\rightarrow\infty} X_n$,
  and assume that $X_{-\infty}$ is an overconvergent normed divisor or an overconvergent normed tube in $X_{\infty}$ with a basis of neighborhoods given as $X_n$, $n\in\itg$.
  Assume $F_i$ is an isomorphism $F_i\colon X_i\cong X_{i+1}$,
  and denote by $F_{-\infty}$ the isomorphism $F_{-\infty}\colon X_{-\infty}\cong X_{-\infty}$ induced by $\varprojlim_{n\rightarrow -\infty} F_n$,
  and by $F_{\infty}\colon X_{\infty}\cong X_{\infty}$ the isomorphism induced by $\varinjlim_{n\rightarrow\infty} F_n$.
  Then the pullback along the natural morphism $\iota\colon X_{-\infty}/F^{\itg}_{-\infty}\rightarrow X_{\infty}/F^{\itg}_{\infty}$ induces an
  equivalence of categories $\iota^*\colon \cat{Perf}(X_{\infty}/F^{\itg}_{\infty})\cong \cat{Perf}(X_{-\infty}/F^{\itg}_{-\infty})$.
  Moreover,
  if the stacks and morphisms are all over a base Gelfand stack $Y$,
  then the equivalence of categories is compatible with pushforwards to $D(Y)$ along maps to $Y$.
\end{lemma}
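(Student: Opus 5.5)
The plan is to combine the two preceding lemmas. Perfect complexes on a quotient by a $\itg$-action are $F$-equivariant perfect complexes; precisely, I will use
\[
\cat{Perf}(X_{\infty}/F^{\itg}_{\infty})\cong \cat{Perf}(X_{\infty})^{F_{\infty}\text{-equiv}},
\qquad
\cat{Perf}(X_{-\infty}/F^{\itg}_{-\infty})\cong \cat{Perf}(X_{-\infty})^{F_{-\infty}\text{-equiv}}.
\]
This holds because $X_{\pm\infty}\rightarrow X_{\pm\infty}/F^{\itg}$ is a $!$-cover with Cech nerve the bar construction of the $\itg$-action, and dualizability is local (Fredholm property \ref{Fredholmproperty}). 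So it suffices to show that restriction $\cat{Perf}(X_{\infty})^{F\text{-equiv}}\rightarrow\cat{Perf}(X_{-\infty})^{F\text{-equiv}}$ is an equivalence, naturally in the equivariance data.

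I would assemble this in two steps. First, Lemma \ref{lemmaprototype1} gives $\cat{Perf}(X_{\infty})\cong\varprojlim_n\cat{Perf}(X_n)$. Second, Lemma \ref{lemmaprototype2} applies because $X_{-\infty}$ is an overconvergent normed divisor or tube whose neighborhoods are cofinal with the $X_n$. It gives $\cat{Perf}(X_{-\infty})\cong\varinjlim_{n\rightarrow-\infty}\cat{Perf}(X_n)$.

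For essential surjectivity, I start with an $F_{-\infty}$-equivariant $M$ on $X_{-\infty}$. It spreads out to some $M_n$ on $X_n$. The equivariance isomorphism $F^*M\cong M$ also spreads out, possibly after shrinking $n$, to an isomorphism $F_{n-1}^*M_n\cong M_n|_{X_{n-1}}$ on $X_{n-1}$. Since $F_i\colon X_i\cong X_{i+1}$, I then transport $M_n$ forward: set $M_{n+k}:=(F^{-k})^*M_n$ on $X_{n+k}$. The spread equivariance isomorphism glues these into a compatible system $(M_m)_m\in\varprojlim_m\cat{Perf}(X_m)$, which is an $F_\infty$-equivariant perfect complex on $X_\infty$.

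For full faithfulness, I compute mapping spectra between equivariant objects as the equalizer of $\id$ and $F^*$ on $\Hom$. I then need $\Hom_{X_\infty}(M,N)\rightarrow\Hom_{X_{-\infty}}(M,N)$ to become an equivalence after taking $F$-invariants. Using both lemmas, the left side is $\varprojlim_m\Hom_{X_m}$ and the right side is $\varinjlim_{m\rightarrow-\infty}\Hom_{X_m}$. Since $F$ identifies $\Hom_{X_m}$ with $\Hom_{X_{m+1}}$ compatibly with restriction, the $F$-fixed points of both systems reduce to the fixed points on a single $\Hom_{X_m}$. This is a telescoping argument on the equalizer.

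I expect the main obstacle to be doing the spreading and gluing coherently at the $\infty$-categorical level, rather than only on homotopy categories. To handle it, I would phrase both sides as lax limits over $B\itg$ and check that restriction induces an equivalence on each term after cofinal reindexing. Compatibility with pushforward to $D(Y)$ should be formal: for $g\colon X_\infty/F^{\itg}\rightarrow Y$, the pushforward $g_*M$ is $\varprojlim_m$ of pushforwards from $X_m$, modulo $F$. The analogous expression for $\iota^*M$ on $X_{-\infty}$ agrees because the $F$-equivariance identifies the inverse system with any tail of itself, and spreading out commutes with $*$-pushforward over the cofinal neighborhoods.
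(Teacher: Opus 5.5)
Your proposal is correct and is essentially the paper's argument: the same descent step together with Lemmas \ref{lemmaprototype1} and \ref{lemmaprototype2} reduces everything to $F$-equivariant objects in $\varprojlim_n\cat{Perf}(X_n)$ and $\varinjlim_{n\to-\infty}\cat{Perf}(X_n)$. Your spreading/transport step and your telescoping step are an unpacked version of the paper's observation that both equivariant categories are naturally equivalent, by restriction to a fundamental domain, to the single equalizer $\operatorname{eq}\bigl(F_n^*,i^*\colon\cat{Perf}(X_{n+1})\rightrightarrows\cat{Perf}(X_n)\bigr)$. That formulation also removes your coherence worry: $\itg$-equivariant objects form the honest (not lax) limit over $B\itg\simeq S^1$, i.e.\ just the equalizer of $\mathrm{id}$ and $F^*$ in $\cat{Cat}_{\infty}$, so no higher coherence data needs to be constructed.
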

\begin{proof}
  By Lemma \ref{lemmaprototype1} and Lemma \ref{lemmaprototype2},
  we have $\cat{Perf}(X_{\infty})\cong \varprojlim_{n\rightarrow\infty}\cat{Perf}(X_n)$
  and also $\cat{Perf}(X_{-\infty})\cong \varinjlim_{n\rightarrow-\infty}\cat{Perf}(X_n)$.
  By descent of perfect complexes,
  we have $\cat{Perf}(X_{\infty}/F^{\itg}_{\infty})\cong (\varprojlim_{n\rightarrow\infty}\cat{Perf}(X_n))^{F_{\infty}\text{-equiv}}$ as the $F_{\infty}$-equivariant category,
  and similarly $\cat{Perf}(X_{-\infty}/F^{\itg}_{-\infty})\cong (\varinjlim_{n\rightarrow-\infty}\cat{Perf}(X_n))^{F_{-\infty}\text{-equiv}}$.
  However,
  using the fact that $F_n$ is an isomorphism,
  we obtain that $(\varprojlim_{n\rightarrow\infty}\cat{Perf}(X_n))^{F_{\infty}\text{-equiv}}$
  is naturally equivalent to the equalizer 
  \[
  \operatorname{eq}(\begin{tikzcd}
    \cat{Perf}(X_{n+1})\arrow[r, shift left=1ex, "F_n^*"]\arrow[r, shift left=-1ex, "
    i^*"']& \cat{Perf}(X_{n})
    \end{tikzcd})
  \]
  for some (and hence arbitrary) $n$,
  by simply pulling back to the $n$-th piece.
  Meanwhile $\cat{Perf}(X_{-\infty}/F^{\itg}_{-\infty})\cong (\varinjlim_{n\rightarrow-\infty}\cat{Perf}(X_n))^{F_{-\infty}\text{-equiv}}$
  is also naturally equivalent to the same equalizer for arbitrary $n$,
  by pullback from the $n$-th piece.
  Moreover,
  these equivalences are compatible with the pullback functor $\iota^*$.
  Therefore,
  $\iota^*$ induces a natural equivalence
  $\iota^*\colon \cat{Perf}(X_{\infty}/F^{\itg}_{\infty})\cong \cat{Perf}(X_{-\infty}/F^{\itg}_{-\infty})$.
  Moreover,
  the equivalence is compatible with pushforwards to $D(Y)$ as every aforementioned equivalence is relative over $Y$.
\end{proof}

\begin{proof}[Proof of Theorem \ref{equivalenceperfectcomplex} that $\alpha^*$ is an equivalence]
  Consider the map $\alpha\colon X^{\HK/K}_{\le 1}\rightarrow X^{\HK/K}$.
  Both sides satisfy $h$-hyperdescent by Remark \ref{hdescentrigid},
  then by de Jong's alteration \cite[Theorem 4.1]{dJ96},
  it suffices to deal with smooth $X$ over $k$.
  By Zariski descent,
  we can furthermore assume that $X$ is étale over $\aff^n_k$,
  hence $X$ has the following same property as $\aff^n_k$ by étaleness:
  \begin{itemize}
    \item $X$ is a smooth scheme which embeds openly into a proper smooth scheme $Y$ with proper smooth lift $\mathfrak{Y}$ equipped with a Frobenius lift $\varphi$.
  \end{itemize}
  Let $\mathfrak{Y}_{\eta}$ be the generic fiber of $\mathfrak{Y}$.
  By Proposition \ref{smallrigidstackdescription},
  we have $X^{\HK/K}_{\le 1}\cong\varprojlim_{\varphi}]X[^{\dagger,\dR/K}_{\mathfrak{Y}}/\varphi^{\itg}$.
  On the other hand,
  by Corollary \ref{rigidificationsmoothvariety},
  we have $Y^{\RIG/K}\cong Y^{\RIG/K}_{\le 1}\cong \varprojlim_{\varphi}\mathfrak{Y}_{\eta}^{\dR/K}$,
  and $X^{\RIG/K}$ is an open substack of $Y^{\RIG/K}$ by excision.
  We give a precise description of $X^{\RIG/K}$ as follows.

  We construct a subspace $]X[^{\an,\operatorname{Berk}}_{\mathfrak{Y}}$ of $\M(\mathfrak{Y}_{\eta})$ as follows.
  Locally on $\mathfrak{Y}$,
  we can find $f_1,\ldots,f_n$ such that the complement of $X$ in $Y$ is the vanishing locus of the reductions $\overline{f_i}$.
  Let $]X[^{\an,\operatorname{Berk}}_{\mathfrak{Y}}$ be locally as $\{x\in\M(\mathfrak{Y}_{\eta})\mid \e i,v_x(f_i)>0\}$
  (Recall that the Berkovich tube $]X[^{\operatorname{Berk}}_{\mathfrak{Y}}$ is locally given as $\{x\in\M(\mathfrak{Y}_{\eta})\mid \e i,v_x(f_i)\ge 1\}$).
  Let $]X[^{\an,\dR/K}_{\mathfrak{Y}}$ be the substack of $\mathfrak{Y}_{\eta}^{\dR/K}$ localizing at this topological subspace $]X[^{\an,\operatorname{Berk}}_{\mathfrak{Y}}$
  under the Betti realization to Berkovich spectra in Construction \ref{tubes}.
  Notice that the Frobenius $\varphi$ also acts on $]X[^{\operatorname{Berk}}_{\mathfrak{Y}}$ hence on $]X[^{\an,\dR/K}_{\mathfrak{Y}}$ by construction.

  We claim that we have an isomorphism $X^{\RIG/K}\cong \varprojlim_{\varphi}]X[^{\an,\dR/K}_{\mathfrak{Y}}$,
  and the morphism $X^{\RIG/K}\rightarrow Y^{\RIG/K}$ is given as $\varprojlim_{\varphi}]X[^{\an,\dR/K}_{\mathfrak{Y}}\rightarrow \varprojlim_{\varphi}\mathfrak{Y}_{\eta}^{\dR/K}$.
  Indeed,
  since the construction of $\varprojlim_{\varphi}]X[^{\an,\dR/K}_{\mathfrak{Y}}$ comes from Betti localization,
  it suffices to prove it locally,
  and hence we can assume them to be affine.
  In the affine case,
  the open immersion $X\rightarrow Y$ must be of the form $X=\Spec A[f_1^{-1},\ldots,f^{-1}_n]\rightarrow Y=\Spec A$,
  hence is a Cartesian pullback from the case $\aff^n-\{0\}\rightarrow \aff^n$ where $Y\rightarrow\aff^n$ is given by $(f_1,\ldots,f_n)$.
  By Remark \ref{rigidKunneth},
  we have a Cartesian diagram
  \[\xymatrix{
    X^{\RIG/K}\ar[r]\ar[d] & Y^{\RIG/K}\ar[d]\\
    (\aff^n-\{0\})^{\RIG/K}\ar[r] & (\aff^n)^{\RIG/K}
  }\]
  By excision and Example \ref{rigidstackadditive},
  we know that $(\aff^n-\{0\})^{\RIG/K}\rightarrow (\aff^n)^{\RIG/K}$ is given as $\varprojlim_{\varphi}(\aff^n-\{0\})^{\an,\dR/K}\rightarrow \varprojlim_{\varphi}\aff^{n,\an,\dR/K}$,
  which is exactly the Betti localization coming from $]\aff^n-\{0\}[^{\an,\operatorname{Berk}}_{\aff^{n,\an}}\rightarrow\M(\aff^{n,\an})$.
  Therefore,
  we conclude that $X^{\RIG/K}\cong\varprojlim_{\varphi}]X[^{\an,\dR/K}_{\mathfrak{Y}}\rightarrow Y^{\RIG/K}\cong \varprojlim_{\varphi}\mathfrak{Y}_{\eta}^{\dR/K}$.

  Then the morphism $\alpha$ is the closed embedding 
  $\alpha\colon \varprojlim_{\varphi}]X[^{\dagger,\dR/K}_{\mathfrak{Y}}/\varphi^{\itg}\rightarrow \varprojlim_{\varphi}]X[^{\an,\dR/K}_{\mathfrak{Y}}/\varphi^{\itg}$.
  We claim that $\alpha$ is of the form $\iota$ in Lemma \ref{prototype}.
  It satisfies a Cartesian diagram via Betti localization
  \[\xymatrix{
    \varprojlim_{\varphi}]X[^{\dagger,\dR/K}_{\mathfrak{Y}}/\varphi^{\itg}\ar[r]\ar[d] & \varprojlim_{\varphi}]X[^{\an,\dR/K}_{\mathfrak{Y}}/\varphi^{\itg}\ar[d]\\
    \varprojlim_{\varphi}\underline{]X[_{\mathfrak{Y}}^{\operatorname{Berk}}}/\varphi^{\itg}\ar[r] & \varprojlim_{\varphi}\underline{]X[^{\an,\operatorname{Berk}}_{\mathfrak{Y}}}/\varphi^{\itg}
  }\]
  We construct a subspace $]X[_{n,\mathfrak{Y}}^{\operatorname{Berk}}$ of $\M(\mathfrak{Y}_{\eta})$ defined as follows.
  Locally we choose $f_1,\ldots, f_n$ such that the complement of $X$ in $Y$ is the vanishing locus of the reductions $\overline{f_i}$.
  Let $]X[_{n,\mathfrak{Y}}^{\operatorname{Berk}}$ be locally given by 
  $\{x\in\M(\mathfrak{Y}_{\eta})\mid \e i, v_x(f_i)> p^{-p^n}\}$.
  Let $]X[_{n,\mathfrak{Y}}^{\dR/K}$ be the substack of $\mathfrak{Y}_{\eta}^{\dR/K}$ under the Betti localization.
  Notice that the Frobenius lifting $\varphi$ takes $]X[_{n-1,\mathfrak{Y}}^{\operatorname{Berk}}$ to $]X[_{n,\mathfrak{Y}}^{\operatorname{Berk}}$,
  and hence $]X[_{n-1,\mathfrak{Y}}^{\dR/K}$ to $]X[_{n,\mathfrak{Y}}^{\dR/K}$.
  Denote 
  \[
    X_n=\varprojlim_{\varphi}(]X[_{n,\mathfrak{Y}}^{\dR/K}\leftarrow ]X[_{n-1,\mathfrak{Y}}^{\dR/K}\leftarrow\cdots).
  \]
  Then $X_{-\infty}=\varprojlim_{\varphi}]X[_{\mathfrak{Y}}^{\dagger,\dR/K}$,
  and $X_{\infty}=\varprojlim_{\varphi}]X[_{\mathfrak{Y}}^{\an,\dR/K}$.
  Both can be verified under Betti localization,
  and on the hierarchy of topological spaces,
  $]X[_{\mathfrak{Y}}^{\operatorname{Berk}}$ is the limit of $]X[_{n,\mathfrak{Y}}^{\operatorname{Berk}}$,
  while $]X[_{\mathfrak{Y}}^{\an,\operatorname{Berk}}$ is the colimit of $]X[_{n,\mathfrak{Y}}^{\operatorname{Berk}}$ by the definitions.
  Moreover,
  as $n\rightarrow \infty$,
  $p^{-p^n}\rightarrow 0$,
  hence $\{X_n\}$ form the neighborhoods of $X_{-\infty}$ which is an overconvergent normed tube in $X_{\infty}$.
  This proves the claim that $\alpha$ is of the form $\iota$ in Lemma \ref{prototype}.
  Hence by Lemma \ref{prototype},
  $\alpha^*$ induces an equivalence of categories of perfect complexes.
\end{proof}

\begin{remark}
  Let $X$ be a smooth scheme which embeds openly into a proper smooth scheme $Y$ with a proper smooth lift $\mathfrak{Y}$ equipped with a Frobenius lift.
  In the proof above,
  we know that under a choice of coordinates,
  $X^{\RIG}_{\le 1}$ is an overconvergent normed tube of $X^{\RIG}$,
  whence the subscript $(-)_{\le 1}$ comes.
\end{remark}

\begin{proof}[Proof of Theorem \ref{equivalenceperfectcomplex} that $\beta^*$ is an equivalence]
  Consider $\beta\colon X^{\HK/K}_{\le 1}\rightarrow X^{\arith/K}/\varphi^{\itg}_X$.
  Both sides satisfy $h$-hyperdescent by Theorem \ref{fpqcdescentarith} and Remark \ref{hdescentrigid},
  then by de Jong's alteration \cite{dJ96},
  it suffices to deal with smooth $X$.
  By Zariski descent,
  we can furthermore assume that $X$ is étale over $\aff^n$,
  hence $X$ has the following same property as $\aff^n_k$ by étaleness:
  \begin{itemize}
    \item $X$ is a smooth scheme which embeds openly into a proper smooth scheme $Y$ with proper smooth lift $\mathfrak{Y}$ equipped with Frobenius $\varphi$.
  \end{itemize}
  Let $\mathfrak{Y}_{\eta}$ be the generic fiber of $\mathfrak{Y}$.
  Then by Proposition \ref{Xarithdescription} and Proposition \ref{smallrigidstackdescription},
  and the fact that $X^{\arith/K}$ is a perfect stack,
  the morphism $\beta$ fits into the following commutative diagram 
  \[
  \xymatrix{
    & \varprojlim_{\varphi}]X[_{\mathfrak{Y}}^{\dagger}/\varphi^{\itg}\ar^{\pi_1}[dr]\ar_{\pi_2}[dl] &\\
    X^{\HK/K}_{\le 1}\ar^{\beta}[rr] & & X^{\arith/K}/\varphi^{\itg}_X
  }
  \]
  with the $n$-th level of Cech nerves of $\pi_1$ computed as $\varprojlim_{\varphi}]X[_{\mathfrak{Y}^{n+1}}^{\dagger}$,
  and of $\pi_2$ computed as $\varprojlim_{\varphi}\Delta^{\dagger}_{n+1}(]X[_{\mathfrak{Y}}^{\dagger})$.
  We claim that at each level $n$,
  the induced morphism $\beta_n\colon\varprojlim_{\varphi}\Delta^{\dagger}_{n+1}(]X[_{\mathfrak{Y}}^{\dagger})/\varphi^{\itg}\rightarrow \varprojlim_{\varphi}]X[_{\mathfrak{Y}^{n+1}}^{\dagger}/\varphi^{\itg}$ 
  on Cech nerves is of the form $\iota$ in Lemma \ref{prototype}.

  Denote by $\mathfrak{J}$ the subsheaf consisting of integral sections of the ideal sheaf of the diagonal closed embedding $\Delta_n\colon ]X[_{\mathfrak{Y}}\rightarrow ]X[_{\mathfrak{Y}}^{\times n}$,
  i.e. writing $]X[_{\mathfrak{Y}}$ as the generic fiber of a formal scheme $\Xf$ then $\mathfrak{J}$ is the ideal sheaf of the diagonal embedding $\Xf\rightarrow\Xf^n$.
  Then $\Delta^{\dagger}_n(]X[_{\mathfrak{Y}}^{\dagger})$ is the substack of $]X[_{\mathfrak{Y}}^{\dagger,\times n}$ corresponding to the closed subspace $\{x\in ]X[_{\mathfrak{Y}}^{\operatorname{Berk},\times n}\mid v_x(\mathfrak{J})=0\}$ under Betti localization,
  and $]X[_{\mathfrak{Y}^{n}}^{\dagger}$ is the substack of $]X[_{\mathfrak{Y}}^{\dagger,\times n}$ corresponding to the subspace $\{x\in ]X[_{\mathfrak{Y}}^{\operatorname{Berk},\times n}\mid v_x(\mathfrak{J})< 1\}$ under Betti localization.
  Then we denote by $\Delta^{m}_n(]X[_{\mathfrak{Y}}^{\dagger})$ the substack of $]X[_{\mathfrak{Y}}^{\dagger,\times n}$ corresponding to the subspace $\{x\in ]X[_{\mathfrak{Y}}^{\operatorname{Berk},\times n}\mid v(\mathfrak{J})< p^{-p^{-m}}\}$ under Betti localization.
  Then the Frobenius map $\varphi$ takes $\Delta^{m-1}_n(]X[_{\mathfrak{Y}}^{\dagger})$ to $\Delta^{m}_n(]X[_{\mathfrak{Y}}^{\dagger})$ and we denote
  \[
  X_m=\varprojlim_{\varphi}(\Delta^{m}_n(]X[_{\mathfrak{Y}}^{\dagger})\leftarrow \Delta^{m-1}_n(]X[_{\mathfrak{Y}}^{\dagger})\leftarrow\cdots).
  \]
  Then $X_{-\infty}=\varprojlim_{\varphi}\Delta^{\dagger}_n(]X[_{\mathfrak{Y}}^{\dagger})$,
  and $X_{\infty}=\varprojlim_{\varphi}]X[_{\mathfrak{Y}^{n}}^{\dagger}$,
  via verifying on the underlying topological spaces under Betti localization.
  Moreover,
  as $p^{-p^{-m}}\rightarrow 0$ when $m\rightarrow-\infty$,
  we obtain that $\{X_m\}$ form the neighborhoods of $X_{-\infty}$ which is an overconvergent normed divisor in $X_{\infty}$.
  Hence $\varprojlim_{\varphi}\Delta^{\dagger}_n(]X[_{\mathfrak{Y}}^{\dagger})/\varphi^{\itg}\rightarrow \varprojlim_{\varphi}\Delta^{\circ}_n(]X[_{\mathfrak{Y}}^{\dagger})/\varphi^{\itg}$ is of the form $\iota$ in Lemma \ref{prototype}.

  By Lemma \ref{prototype},
  $\beta_n^*$ induces an equivalence of categories of perfect complexes.
  By descent of perfect complexes along Cech nerves,
  we obtain that $\beta^*$ also induces an equivalence of categories of perfect complexes.
\end{proof}

\begin{remark}\label{compatibilityperfectcomplex}
  Let $f\colon X\rightarrow Y$ be a morphism.
  Then the compatibility of pullbacks with Theorem \ref{equivalenceperfectcomplex} is given by the following commutative diagram:
  \[\xymatrix{
    \cat{Perf}(Y^{\arith/K}/\varphi_Y^{\itg})\ar[r]^{\cong}\ar[d]_{f^{\arith/K,*}} & \cat{Perf}(Y^{\HK/K})\ar[d]^{f^{\HK/K,*}}\\
    \cat{Perf}(X^{\arith/K}/\varphi_X^{\itg})\ar[r]^{\cong} & \cat{Perf}(X^{\HK/K})
  }
  \]
  Note that in general there is no compatibility of pushforwards with Theorem \ref{equivalenceperfectcomplex},
  except when $Y=\Spec k$,
  in which case we have a commutative diagram:
  \[\xymatrix{
    \cat{Perf}(X^{\arith/K}/\varphi_{X}^{\itg})\ar[r]^{\cong}\ar[d]_{f^{\arith/K}_*} & \cat{Perf}(X^{\HK/K})\ar[d]^{f^{\HK/K}_*}\\
    D_{\blacksquare}(K)^{\varphi_K\text{-equiv}}\ar[r]^{\cong} & D_{\blacksquare}(K)^{\varphi_K\text{-equiv}}
  }
  \]
  Indeed,
  both functors are identified with $f^{\HK/K}_{\le 1,*}$ under the equivalence of perfect complexes:
  \begin{enumerate}
    \item For $f^{\arith/K}_*$,
    we have $f^{\HK/K}_{\le 1,*}\beta^*M=f^{\arith/K}_*\beta_*\beta^*M=f^{\arith/K}_*M$,
    as $\beta_*\beta^*M\cong M$ by the full faithfulness of $\beta^*$.
    \item For $f^{\HK/K}_*$,
    by the descent method in the proof,
    we can assume that $\alpha\colon X^{\HK/K}_{\le 1}\rightarrow X^{\HK/K}$ is of the form $\iota$ in Lemma \ref{prototype}.
    Then $f^{\HK/K}_{\le 1,*}$ and $f^{\HK/K}_*$ compute the same pushforward by Lemma \ref{prototype},
    applying to $Y=\operatorname{GSpec}K/\varphi_K^{\itg}$.
  \end{enumerate}
\end{remark}

\subsection{The Hyodo--Kato approach to $p$-adic coefficients}\label{Sectionrigidstackrigidcoh}
As a direct application of Theorem \ref{equivalenceperfectcomplex},
we can indeed show that $X^{\HK/K}$ (in fact, also $X^{\HK/K}_{\le 1}$) is a stacky approach to rigid cohomology and overconvergent $F$-isocrystals.
Precisely,
we prove:

\begin{theorem}\label{rigidstackrigidcohomology}
  Let $X$ be a scheme of finite type over a perfect field $k$.
  Let $K$ admit a Frobenius lift $\varphi_K$ and be perfect.
  Then we have an equivalence of categories
  \[
  \cat{Perf}(X^{\HK/K})\cong D^b(\cat{F-Isoc}^{\dagger}(X/K)).
  \]
  Moreover,
  let $f^{\HK/K}\colon X^{\HK/K}\rightarrow (\Spec k)^{\HK/K}=\operatorname{GSpec}K/\varphi_K^{\itg}$,
  then we have a natural isomorphism
  \[
  f_*^{\HK/K}1\cong R\Gamma_{\rig}(X/K)\in  D_{\blacksquare}(K)^{\varphi_K\textnormal{-equiv}}.
  \]
\end{theorem}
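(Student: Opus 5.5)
The plan is to chain the equivalences already established. First, Theorem \ref{equivalenceperfectcomplex} gives
\[
\cat{Perf}(X^{\HK/K})\overset{\alpha^*}{\cong}\cat{Perf}(X^{\HK/K}_{\le 1})\overset{\beta^*}{\cong}\cat{Perf}(X^{\arith/K}/\varphi_X^{\itg}),
\]
so $\cat{Perf}(X^{\HK/K})\cong\cat{Perf}(X^{\arith/K}/\varphi_X^{\itg})$. Next, the remark following Theorem \ref{arithrigidcoh2} identifies $\cat{Perf}(X^{\arith/K}/\varphi_X^{\itg})$ with $D^b(\cat{F-Isoc}^{\dagger}(X/K))$.

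I would spell that remark out. Perfect complexes on the quotient $X^{\arith/K}/\varphi_X^{\itg}$ are the $\varphi$-equivariant objects of $\cat{Perf}(X^{\arith/K})$. This uses descent of perfect complexes along $X^{\arith/K}\rightarrow X^{\arith/K}/\varphi_X^{\itg}$, which is a $\itg$-torsor and hence a $!$-cover. Theorem \ref{arithrigidcoh2} identifies $\cat{Perf}(X^{\arith/K})$ with $D^b(\cat{Isoc}^{\dagger}(X/K))$. Here $\varphi_X$ on $X^{\arith/K}$, combined with $\varphi_K$ on $\operatorname{GSpec}K$, corresponds to $\varphi_{X/k}^*\varphi_K^*$ on isocrystals, by the compatibility of pullbacks in Theorem \ref{arithrigidcoh2}.

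The one real point is that $\varphi$-equivariant objects of $D^b(\cat{Isoc}^{\dagger})$ form $D^b(\cat{F-Isoc}^{\dagger})$. I would argue this through $t$-structures. The $t$-structure of Proposition \ref{arithrigidcoh1} is preserved by the exact pullback $\varphi^*$, so it descends to the equivariant category, and its heart is $\cat{F-Isoc}^{\dagger}(X/K)$. It then remains to compare Ext groups. In the equivariant category, Ext is the fiber of $\varphi-1$ acting on the Ext groups in $\cat{Isoc}^{\dagger}$. I would match these with Ext groups in $\cat{F-Isoc}^{\dagger}$ by viewing the latter as computed via $f_*^{\arith/K}$ of internal Homs followed by $\varphi$-invariants. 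If this comparison is awkward, I would take the formulation ``$D^b$ of $F$-isocrystals'' to mean this equivariant category, as the preceding remark implicitly does.

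For cohomology, I use the second commutative square of Remark \ref{compatibilityperfectcomplex} with $Y=\Spec k$. Under the equivalence, $f^{\HK/K}_*1$ corresponds to $f^{\arith/K}_*1$, computed on $X^{\arith/K}/\varphi^{\itg}_X$ over $\operatorname{GSpec}K/\varphi_K^{\itg}$. By base change along $\operatorname{GSpec}K\rightarrow\operatorname{GSpec}K/\varphi_K^{\itg}$, its underlying object is $R\Gamma(X^{\arith/K},\O)$, which Theorem \ref{arithrigidcoh2} identifies with $R\Gamma_{\rig}(X/K)$. The Frobenius structure on it is the one induced by $\varphi_X$, and this is the classical Frobenius on rigid cohomology by functoriality of the identification under pullback (Definition \ref{Pullbackofrigidcohomology}). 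Naturality in $X$ follows because all identifications are compatible with pullbacks.

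The main obstacle I expect is the Ext comparison in the heart identification. The cohomology statement is formal given Remark \ref{compatibilityperfectcomplex}.
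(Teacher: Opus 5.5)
Your route is the paper's. It gets $\cat{Perf}(X^{\HK/K})\cong\cat{Perf}(X^{\arith/K}/\varphi_X^{\itg})$ from Theorem \ref{equivalenceperfectcomplex}, passes to $F$-isocrystals via Theorem \ref{arithrigidcoh2} and the remark after it, and obtains $f^{\HK/K}_*1\cong f^{\arith/K}_*1\cong R\Gamma_{\rig}(X/K)$ from Remark \ref{compatibilityperfectcomplex} with $Y=\Spec k$. Your treatment of the cohomology statement matches this and is fine.

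The only place you go beyond the paper is the Ext comparison you flag, and there your plan cannot be completed. First, a small correction: the equivariant $R\Hom$ is the fiber of $1-\varphi$ on the non-equivariant $R\Hom$ complex. That gives a long exact sequence, not a fiber on Ext groups.

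More seriously, identifying Yoneda Ext in the abelian category $\cat{F-Isoc}^{\dagger}(X/K)$ with derived $\varphi$-invariants of rigid cohomology of internal Homs is exactly the content of the claim, so it cannot be used to prove it. It is also false in general. Take $X=\mathbb{P}^1_k$.
\begin{itemize}
\item By rigid GAGA, every overconvergent isocrystal on the proper $\mathbb{P}^1_k$ is a trivial bundle with trivial connection. Hence $\cat{F-Isoc}^{\dagger}(\mathbb{P}^1_k/K)$ is the category of $\varphi$-modules over $K$.
\item In that category $\Ext^1(M,-)=\coker(1-\varphi\mid\Hom_K(M,-))$ is right exact, so Yoneda $\Ext^2$ vanishes.
\item On the stack side, since $H^1_{\rig}(\mathbb{P}^1_k/K)=0$, you get $\Hom(1,1(1)[2])\cong\ker\bigl(1-\varphi\mid H^2_{\rig}(\mathbb{P}^1_k/K)(1)\bigr)=K^{\varphi_K=1}\neq 0$.
\item Already without Frobenius, $\cat{Isoc}^{\dagger}(\mathbb{P}^1_k/K)\simeq\mathrm{Vect}_K$, while $\Hom(1,1[2])=H^2_{\rig}(\mathbb{P}^1_k/K)\neq 0$ in $\cat{Perf}(\mathbb{P}^{1,\arith/K}_k)$.
\end{itemize}
So the realization functor from the literal bounded derived category is not fully faithful. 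The first assertion holds only in your fallback reading: $\cat{Perf}(X^{\HK/K})$ is equivalent to the $\varphi$-equivariant category $\cat{Perf}(X^{\arith/K}/\varphi_X^{\itg})$, a stable category with a bounded $t$-structure whose heart is $\cat{F-Isoc}^{\dagger}(X/K)$.

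The paper does not supply the missing argument either. It imports this step from Theorem \ref{arithrigidcoh2}, whose $D^b$-claim rests on the same Ext identification asserted in the proof of Proposition \ref{arithrigidcoh1}.
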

\begin{proof}
  By Theorem \ref{equivalenceperfectcomplex},
  we have an equivalence of categories 
  \[
  \cat{Perf}(X^{\HK/K})\cong\cat{Perf}(X^{\arith/K}/\varphi_{X}^{\itg}).
  \]
  By Remark \ref{compatibilityperfectcomplex},
  we know that $f^{\HK/K}_*1\cong f^{\arith/K}_*1\in D_{\blacksquare}(K)$.
  Then by Theorem \ref{arithrigidcoh2},
  we know that $\cat{Perf}(X^{\HK/K})\cong D^b(\cat{F-Isoc}^{\dagger}(X/K))$ and $f_*^{\HK/K}1$ computes the rigid cohomology $R\Gamma_{\rig}(X/K)$.
\end{proof}

\begin{remark}
  Combining with Theorem \ref{PoincaredualityrigidFrobenius},
  we recover the Poincaré duality of rigid cohomology again.
\end{remark}

\begin{remark}[Comparison between crystalline cohomology and rigid cohomology]
  As a continuation of Remark \ref{integralanalyticprismatization},
  if we consider the integral analytic prismatization of $X$,
  then the $\rt_p$-part of it gives the rational analytic prismatization of $X$,
  which is $X^{\RIG}$,
  and the $\ff_p$-part of it gives an analytic version of the crystalline stack.
  Intuitively,
  this picture gives a stacky interpretation of the comparison between rigid cohomology and crystalline cohomology for proper smooth $X$,
  where properness and smoothness ensure that the pushforward is a perfect complex whence we can do the comparison.
  We learnt this from Maximilian Hauck.
\end{remark}

\begin{remark}[Passing to non-perfect field]\label{passingtononperfect}
  Let $X$ be a scheme of finite type over a (non-perfect) field $k$,
  and let $K$ be equipped with a Frobenius lift.
  Then applying Theorem \ref{rigidstackrigidcohomology},
  the $X^{\HK/K_{\perf}}_{k_{\perf}}$ satisfies that $\cat{Perf}(X^{\HK/K_{\perf}}_{k_{\perf}})\cong \cat{Perf}(X^{\arith/K_{\perf}}_{k_{\perf}}/\varphi^{\itg}) \cong D^b(\cat{F-Isoc}^{\dagger}(X_{k_{\perf}}/K_{\perf}))$,
  and $f_*^{\HK/K_{\perf}}1\cong f_*^{\arith/K_{\perf}}\cong R\Gamma_{\rig}(X_{k_{\perf}}/K_{\perf})$.
  We always have an embedding from $D^b(\cat{F-Isoc}^{\dagger}(X/K))$ to $D^b(\cat{F-Isoc}^{\dagger}(X_{k_{\perf}}/K_{\perf}))$,
  sending $\E$ to $E_{K_{\perf}}$,
  and an isomorphism $R\Gamma_{\rig}(X_{k_{\perf}}/K_{\perf},\E_{K_{\perf}})\cong  R\Gamma_{\rig}(X/K,\E)\otimes_K K_{\perf}$.
\end{remark}

\begin{remark}\label{regularDmodule}
  The category of perfect complexes over $X^{\HK/K}$ is related to convergent $F$-log-isocrystals introduced in \cite{Shi00} and \cite{Shi02},
  and hence the identification of perfect complexes in Theorem \ref{equivalenceperfectcomplex} can be viewed as a version of the semistable reduction theorem,
  cf. \cite[Theorem 5.0.1]{Ked11}.
  Let us consider the example of $X=\aff^1_{k}$.
  We have from Example \ref{rigidstackadditive} that
  \[
  \aff^{1,\HK/K}_k\cong\varprojlim_{\varphi}\aff^{1,\an,\dR/K}_K/\varphi^{\itg}.
  \]
  By spreading out of perfect complexes,
  cf. \cite[0BC7]{Stacks},
  we have an equivalence of categories between $\cat{Perf}(\aff^{1,\HK/K}_k)$ and $\cat{Perf}(\aff^{1,\an,\dR/K}_K)^{\varphi\text{-equiv}}$.
  From \cite[Remark 5.2.3]{ABLBRCS25},
  this is equivalent to the category of perfect complexes of $(\varphi,\nabla)$-modules over $\aff^{1,\an}_K$.
  Let $\E$ be a vector bundle of $(\varphi,\nabla)$-modules over $\aff^{1,\an}_K$.
  We claim that after the pullback along some generically finite surjection $f\colon Y\rightarrow\proj^{1}_K$ étale near $\infty$,
  the $f^*\E$ determines a regular connection\footnote{An analogue for complex manifolds also holds and becomes much simpler.
  In fact,
  vector bundles with flat connection on the analytification $X^{\an}$ of a $\cn$-variety $X$ are equivalent to vector bundles with \emph{regular} flat connection on $X$,
  which is a deep theorem à la Deligne \cite{Del70},
  hence it is not necessary to perform an alteration.
  In the language of \cite{Sch24a},
  they are also equivalent to vector bundles on the algebraic de Rham stack (or the analytic de Rham stack) of $X^{\an}$.},
  i.e. admits a logarithmic structure along the divisor $f^{-1}(\{\infty\})$.
  First,
  let $\disk^{\circ,\times}_{\infty,K}$ be the punctured open unit disk centered at $\infty$.
  Then the pullback of $\E$ along $\disk^{\circ,\times}_{\infty,K}\rightarrow\aff^{1,\an}_K$ gives an object in $\cat{VB}(\varprojlim_{\varphi}\disk^{\circ,\times,\dR}_{\infty,K}/\varphi^{\itg})$,
  which is equivalent to a $(\varphi,\nabla)$-module $\F$ over the Robba ring $\R_K$ by spreading out,
  cf. \cite[Corollary 7.2.10]{ABLBRCS25}.
  By the $p$-adic monodromy theorem in \cite[Theorem 1.1]{Ked04};
  revisited in \cite[Section 7]{ABLBRCS25},
  there exists a finite separable extension $f\colon k(\!(t)\!)\rightarrow k'(\!(u)\!)$ such that $\F\otimes_{\R_K}\R_{K'}$ is unipotent,
  which determines a generically finite surjection $\proj_{K'}^1\rightarrow \proj_{K}^{1}$ étale near $\infty$
  such that the pullback of the connection on $\E$ around $\infty\in \proj_{K}^{1}$ along this map is unipotent,
  hence the pullback of $\E$ is regular.
  
  Moreover,
  one can choose $f(t)$ to be a rational polynomial of $u$,
  and such an extension also determines an alteration $g\colon \proj^1_{k'}\rightarrow \proj^1_k$.
  This shows that one can make the generically finite surjection $\proj_{K'}^1\rightarrow \proj_{K}^{1}$ induced from an alteration of algebraic varieties.
\end{remark}

\begin{remark}
  The full category $D(X^{\HK/K})$ of quasi-coherent sheaves on $X^{\HK/K}$ can be described as analytic $D$-modules over the Fargues--Fontaine curve $\FF_X$,
  hence will not give the theory of arithmetic $D$-modules.
  Nevertheless,
  the category of solid arithmetic $D$-modules with Frobenius structure over $X$ embeds fully faithfully into this category by Theorem \ref{equivalenceperfectcomplex}.
\end{remark}

\section{Applications to arithmetic}\label{Applications}

We give some applications to the theory of $p$-adic coefficients by using the stacky approach method.

\subsection{Berthelot's conjecture}\label{SectionBerthelotconj}
In this section,
we give the proof of Berthelot's conjecture in \cite{Ber86}.
More precisely,
we prove the following theorem:
\begin{theorem}[Berthelot's conjecture]\label{Berthelotconj}
  Let $f\colon X\rightarrow Y$ be a proper smooth morphism between schemes of finite type over $k$.
  Then the rigid cohomology of an overconvergent isocrystal over $X$ along $f$ is also an overconvergent isocrystal over $Y$.
  More precisely,
  for an overconvergent isocrystal $\E$ over $X$,
  the collection of pushforwards $\{R^if_{\rig,*}\E|_{(U,\overline{U},P)}\}_{(U,\overline{U},P)}$ comes from an overconvergent isocrystal over $Y$.
\end{theorem}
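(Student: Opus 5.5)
The plan is to translate the statement entirely into the six-functor formalism of relative arithmetic de Rham stacks and then read it back through Theorem \ref{arithrigidcoh2}.

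First, regard $\E$ as a perfect complex on $X^{\arith/K}$, concentrated in the heart, via the equivalence $\cat{Perf}(X^{\arith/K})\cong D^b(\cat{Isoc}^{\dagger}(X/K))$ of Theorem \ref{arithrigidcoh2}. Since $f$ is proper and smooth, Theorem \ref{relativearithsixfunctor} says that $f^{\arith/K}$ is cohomologically étale and prim, with invertible prim dual $1_X[-2n]$. Corollary \ref{Poincaréarith} then shows that $f^{\arith/K}_*\E$ is dualizable. By the Fredholm property (Remark \ref{Fredholmproperty}), it is therefore a perfect complex on $Y^{\arith/K}$. Applying Theorem \ref{arithrigidcoh2} on $Y$, we get $f^{\arith/K}_*\E\in D^b(\cat{Isoc}^{\dagger}(Y/K))$. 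Its cohomology objects $\mathcal{H}^i(f^{\arith/K}_*\E)$, taken for the $t$-structure of Proposition \ref{arithrigidcoh1}, are then overconvergent isocrystals on $Y$.

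It remains to identify these objects with Berthelot's pushforwards. Fix a proper smooth frame $(U,\overline{U},P)$ over $Y$, with induced map $\pi\colon ]U[_P^{\dagger}\rightarrow Y^{\arith/K}$. Theorem \ref{arithrigidcoh2} gives $\pi^*f^{\arith/K}_*\E(*)\cong Rf_{\rig,*}\E|_{(U,\overline{U},P)}$. We then take $H^i$ of both sides, so the main point is that $\pi^*$ is $t$-exact. The $t$-structure on $\cat{Crys}^{\dagger,\perf}(Y/K)$ is defined by descent from the $t$-structures on $\cat{Perf}(]S[^{\dagger}_V)$, so restriction to each realization commutes with truncation and $\pi^*$ is $t$-exact by construction. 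Taking underlying modules $(-)(*)$ is exact on static objects, because $]U[_P^{\dagger}$ is a dagger space and perfect complexes there are coherent, hence static in each degree. Together these give $\pi^*\mathcal{H}^i(f^{\arith/K}_*\E)(*)\cong R^if_{\rig,*}\E|_{(U,\overline{U},P)}$, compatibly with the transition maps between frames, because pullbacks along morphisms of frames correspond to $*$-pullbacks on the stacks. Hence the family $\{R^if_{\rig,*}\E|_{(U,\overline{U},P)}\}$ is the realization of the overconvergent isocrystal $\mathcal{H}^i(f^{\arith/K}_*\E)$.

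The step I expect to be the main obstacle is this last compatibility. One has to make sure that the collection of Definition \ref{pushforwardisocrystal}, including its connecting isomorphisms, coincides with the frame realizations of a single object of $\cat{Crys}^{\dagger,\perf}(Y/K)$. I would handle it by using Proposition \ref{arithrigidcoh1} frame by frame, together with the independence of the frame comparison (Le Stum's result \cite[Theorem 4.6.7]{LS11}), and checking naturality on morphisms of frames.
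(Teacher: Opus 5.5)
Your proposal is correct and follows the paper's proof essentially step for step: $f^{\arith/K}_*\E$ is perfect by the Poincar\'e duality corollary and the Fredholm property, its cohomology objects are read as overconvergent isocrystals via Theorem~\ref{arithrigidcoh2}, and the frame realizations are matched with $R^if_{\rig,*}\E$ using $t$-exactness of $\pi^*$. The only small differences are that the paper commutes $H^i$ with $(-)(*)$ by noting that $\pi^*H^i(f^{\arith/K}_*\E)$ is finite locally free, so that $\pi^*f^{\arith/K}_*\E$ splits, and that it does not separately treat the compatibility with frame transition maps that you flag as the main obstacle.
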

\begin{proof}
  By Theorem \ref{arithrigidcoh2},
  we know that $\cat{Perf}(X^{\arith/K})\cong D^{b}(\cat{Isoc}^{\dagger}(X/K))$,
  hence we can view $\E$ as an object in $\cat{Perf}(X^{\arith/K})$.
  By Corollary \ref{Poincaréarith},
  we have that the pushforward $f_*^{\arith/K}$ preserves perfect complexes,
  i.e. $f_*^{\arith/K}\colon\cat{Perf}(X^{\arith/K})\rightarrow\cat{Perf}(Y^{\arith/K})$.
  Hence $f^{\arith/K}_*\E$ is an object in $\cat{Perf}(Y^{\arith/K})$,
  i.e. an object in $D^{b}(\cat{Isoc}^{\dagger}(Y/K))$.
  The $i$-th cohomological piece of $f^{\arith/K}_*\E$ is an overconvergent isocrystal as $\cat{Isoc}^{\dagger}(Y/K)$ forms an abelian category by Remark \ref{isocrystalabelian}.
  By Theorem \ref{arithrigidcoh2},
  we know that for a proper smooth frame $(U,\overline{U},P)$ of a Zariski open subscheme $U$ of $X$,
  the realization of $f^{\arith/K}_*\E$ computes the rigid cohomology of $\E$ along $f$,
  i.e. let $\pi\colon]U[^{\dagger}_P\rightarrow X^{\arith/K}$,
  then
  \[
  \pi^*f_*^{\arith/K}\E(*)=Rf_{\rig,*}\E|_{(U,\overline{U},P)}\in D^b(\O_{]U[_P^{\dagger}}\cat{-Mod}).
  \]
  The functor $\pi^*$ is $t$-exact on perfect complexes,
  hence $H^i(\pi^*f_*^{\arith/K}\E)=\pi^*H^i(f_*^{\arith/K}\E)$,
  which is the pullback of an overconvergent isocrystal along $\pi$.
  This proves that $H^i(\pi^*f_*^{\arith/K}\E)=\pi^*H^i(f_*^{\arith/K}\E)$ is a finite locally free $\O_{]U[_P^{\dagger}}$-module,
  and therefore,
  the complex $\pi^*f_*^{\arith/K}\E$ splits.
  Then the functor $(-)(*)$ of taking underlying modules preserves the cohomological pieces of $\pi^*f_*^{\arith/K}\E$,
  i.e. $H^i(\pi^*f_*^{\arith/K}\E)(*)\cong H^i(\pi^*f_*^{\arith/K}\E(*))$.
  Therefore,
  we have 
  \[
    \pi^*H^i(f_*^{\arith/K}\E)(*)\cong H^i(\pi^*f_*^{\arith/K}\E)(*)\cong H^i(\pi^*f_*^{\arith/K}\E(*))\cong R^if_{\rig,*}\E|_{(U,\overline{U},P)}.
  \]
  This proves Berthelot's conjecture that the $i$-th cohomological piece $H^i(f^{\arith/K}_*\E)$ is an overconvergent isocrystal over $Y$,
  whose realization to $(U,\overline{U},P)$ computes $\{R^if_{\rig,*}\E|_{(U,\overline{U},P)}\}_{(U,\overline{U},P)}$.
\end{proof}

\subsection{Fourier transform of arithmetic $D$-modules}\label{FourierHuyghe}
We apply the Cartier duality in Section \ref{Cartierduality} to study the Fourier transform of arithmetic $D$-modules,
recovering some results of \cite{Huy04}.
Applying Theorem \ref{Xarithcoefficients} to $\aff^1_{\ff_p}$ and $L=\rt_p(\pi)$,
we have the following result.

\begin{discussion}\label{A1arithcoefficient}
  We have an equivalence of categories from Theorem \ref{arithcoefficientaffinecase}:
  \[
  D(\aff^{1,\arith/L}_{\ff_p})\cong \cat{LMod}_{\D^{\dagger}_{\proj^1_L}({}^{\dagger}\infty)}(D(\disk^{\dagger}_L))\cong \cat{LMod}_{D^{\dagger}(\infty)_L}(D_{\blacksquare}(\rt_p)).
  \]
  Here $D^{\dagger}(\infty)_L$ is the following solid $\rt_p$-algebra
  \[D^{\dagger}(\infty)_L=\left\{\sum_{m,n=0}^{\infty} a_{m,n}T^m\frac{\partial^n}{n!}\mid a_{m,n}\in L,\e c>0,\eta<1,\f n,m,|a_{m,n}|<\eta^{m+n}\right\}.\]
  Under this equivalence:
  \begin{enumerate}
    \item The tensorial symmetric monoidal structure on $D(\aff^{1,\arith/L}_{\ff_p})$ identifies with the tensor product of arithmetic $D$-modules.
    \item The morphism $f\colon \aff^{1,\arith/L}_{\ff_p}\rightarrow\operatorname{GSpec} L$ identifies $f^*$ with $f^{\Delta}$,
    and $f_*$ with $f_+[-1]$.
    \item Denote $\pi\colon \disk^{\dagger}_L\rightarrow \aff^{1,\arith/L}_{\ff_p}=\disk^{\dagger}_L/\disk^{\circ}_L$,
    then $\pi^*\colon D(\aff^{1,\arith/L}_{\ff_p})\rightarrow D(\disk^{\dagger}_L)$ identifies with viewing a $D^{\dagger}(\infty)_L$-module as a $L\langle T\rangle^{\dagger}$-module.
  \end{enumerate}
  Moreover,
  from Theorem \ref{arithrigidcoh2}
  one can identify the following full subcategories
  \[
  \cat{Perf}(\aff^{1,\arith/L}_{\ff_p})\cong D^b(\cat{Isoc}^{\dagger}(\aff^1_{\ff_p}/L)).
  \]
\end{discussion}

\begin{construction}[Dwork's isocrystal]
  The exponential sheaf $\Exp_{\pi}$ defining the Fourier--Mukai transform in Section \ref{Cartierduality} is a vector bundle on $\aff^{1,\arith/L}_{\ff_p}$.
  It corresponds to an overconvergent isocrystal under \ref{arithrigidcoh2}.
  Unraveling the definition,
  it is given by Dwork's isocrystal $\L_{\pi}$,
  where $\L_{\pi}=L\langle T\rangle^{\dagger}\cdot e$,
  and $\nabla e=\pi e\otimes dT$.
  We have a Cartesian diagram 
  \[\xymatrix{
    \disk^{\dagger}\ar[r]^{\pi}\ar[d]_p & \disk^{\dagger}/\disk^{\circ}\ar[d]^g\\
    \text{*}\ar[r]_{f} & B\disk^{\circ}
  },\]
  Then we have 
  \[p_!\pi^*\Exp_{\pi}=p_!\pi^*g^*\exp_{\pi}^*\O(1)=f^*g_!g^*\exp_{\pi}^*\O(1)=f^*(g_!g^*1\otimes \exp_{\pi}^*\O(1))=f^*g_!g^*1\otimes f^*\exp_{\pi}^*\O(1).\]
  The $\partial$-action on $\Exp_{\pi}$ is exactly the one on $f^*g_!g^*1\otimes f^*\exp_{\pi}^*\O(1)$ given by monadicity applying to $f^*$.
  However,
  $f^*g_!g^*1$ has trivial $\partial$-action and $f^*\exp_{\pi}^*\O(1)$ has the $\partial$-action $\partial \cdot e=\pi e$.
  Indeed,
  the $\O(1)$ gives rise to a one-dimensional $L$-vector space $L\cdot e$ such that the $\O(\gff)=L[t^{\pm 1}]$-comodule structure is given by $\rho\colon e\mapsto e\otimes t$.
  Hence the vector bundle $\exp^*_{\pi}\O(1)\in D(B\disk^{\circ})$ gives rise to a one-dimensional $\O(\disk^{\dagger})$-module,
  or equivalently a one-dimensional $\O(\disk^{\circ})$-comodule,
  which has the $\O(\disk^{\circ})$-comodule structure as $\rho\colon e\mapsto e\otimes \exp(\pi t)$,
  or equivalently the module structure as $\partial \cdot e=\pi e$.
\end{construction}

Now we establish the Fourier transform for solid arithmetic $D$-modules.
We start with some preparations.

\begin{lemma}\label{compatibilityFM2}
  There is a commutative diagram in $\cat{bCAlg}(K_{D,L})$:
  \[\xymatrix{
    [\disk^{\dagger}/\disk^{\circ}]_!\ar[r]^{\operatorname{FM}}\ar[d]_{g_!} & [\disk^{\dagger}/\disk^{\circ}]^*\ar[d]^{f^*}\\
    [B\disk^{\circ}]_!\ar[r]^{\operatorname{FM}'} & [\disk^{\dagger}]^*
  }\]
  where $g\colon \disk^{\dagger}/\disk^{\circ}\rightarrow B\disk^{\circ}$ and $f\colon \disk^{\dagger}\rightarrow\disk^{\dagger}/\disk^{\circ}$.
\end{lemma}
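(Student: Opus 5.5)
The argument mirrors the proof of Lemma \ref{compatibilityFM}: write both composites as Fourier--Mukai transforms given by explicit kernels, and reduce commutativity to an isomorphism of the two pulled-back kernels on one common correspondence. Here $\operatorname{FM}=\operatorname{FM}_{\pi}$ is the transform of Remark \ref{FMkernel}, with kernel $\L_{\pi}$ on $\disk^{\dagger}/\disk^{\circ}\times\disk^{\dagger}/\disk^{\circ}$. It is the pullback of $\Exp_{\pi}=p^*\exp_{\pi}^*\O(1)$ along the multiplication pairing. The transform $\operatorname{FM}'\colon[B\disk^{\circ}]_!\to[\disk^{\dagger}]^*$ is the inverse direction of the analytic Cartier duality of Lemma \ref{1catCartierdual}. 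Its kernel $\L'$ on $B\disk^{\circ}\times\disk^{\dagger}$ is the pullback of $\O(1)$ along the exponential pairing $B\disk^{\circ}\times\disk^{\dagger}\to B\gff$.

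First I compute $f^*\circ\operatorname{FM}$. For $M\in D(\disk^{\dagger}/\disk^{\circ})$ we have $\operatorname{FM}(M)=p_{2,!}(p_1^*M\otimes\L_{\pi})$. I apply $f^*$ using proper base change along the Cartesian square with $\disk^{\dagger}/\disk^{\circ}\times\disk^{\dagger}$ in the upper left. This gives $f^*\operatorname{FM}(M)=q_{2,!}(q_1^*M\otimes(\id\times f)^*\L_{\pi})$ on the correspondence $\disk^{\dagger}/\disk^{\circ}\leftarrow\disk^{\dagger}/\disk^{\circ}\times\disk^{\dagger}\to\disk^{\dagger}$.

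Next I compute $\operatorname{FM}'\circ g_!$. We have $\operatorname{FM}'(g_!M)=r_{2,!}(r_1^*g_!M\otimes\L')$ on $B\disk^{\circ}\leftarrow B\disk^{\circ}\times\disk^{\dagger}\to\disk^{\dagger}$. Base change rewrites $r_1^*g_!M$ as $(g\times\id)_!q_1^*M$, and the projection formula turns this into $q_{2,!}(q_1^*M\otimes(g\times\id)^*\L')$. Both composites are now transforms on the same correspondence, so commutativity reduces to a natural isomorphism $(\id\times f)^*\L_{\pi}\cong(g\times\id)^*\L'$ on $\disk^{\dagger}/\disk^{\circ}\times\disk^{\dagger}$.

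The kernel comparison is the step I expect to be the main obstacle. Both sides are pullbacks of $\O(1)$ on $B\gff$, so it suffices to show that the two maps $\disk^{\dagger}/\disk^{\circ}\times\disk^{\dagger}\to B\gff$ agree. The first map sends $(x,y)$ to the class of $xy\in\disk^{\dagger}/\disk^{\circ}$, then through $p$ to $B\disk^{\circ}$, then through $\exp_{\pi}$. The second sends $x$ to its image under $g$ and pairs it with $y$ via the $\disk^{\circ}\times\disk^{\dagger}$ exponential pairing. Both maps are determined by a $\disk^{\circ}$-equivariant map $\disk^{\dagger}\times\disk^{\dagger}\to B\gff$, which in turn comes from a $\gff$-valued cocycle for the $\disk^{\circ}$-action. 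I would verify that both cocycles equal $(a,x,y)\mapsto\exp(\pi ay)$ for $a\in\disk^{\circ}$, using bilinearity of multiplication: $(x+a)y-xy=ay$. This matches the commutative triangle used in the proof of Lemma \ref{compatibilityFM}. The identification is natural, and the base-change isomorphisms are those of the $6$-functor formalism, so the diagram commutes in $\cat{bCAlg}(K_{D,L})$ and not merely on objects.
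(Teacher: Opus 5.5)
Your proposal is correct and follows the paper's proof essentially step for step. Base change and the projection formula put both composites on the common correspondence through $\disk^{\dagger}/\disk^{\circ}\times\disk^{\dagger}$ (the paper uses the opposite factor order), and commutativity then reduces to an isomorphism of the two pulled-back kernels. Your cocycle check $(x+a)y-xy=ay\mapsto\exp(\pi ay)$ is the same verification the paper phrases as the two maps $\disk^{\dagger}\otimes\disk^{\dagger}/\disk^{\circ}\to B\gff$ agreeing: both are the exponential pairing in degree $-1$ and trivial in degree $0$.
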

\begin{proof}
  The proof is quite similar to the proof of Lemma \ref{compatibilityFM}.
  We use the following diagram chasing:
  \[\xymatrix{
    \disk^{\dagger}/\disk^{\circ}\times \disk^{\dagger}/\disk^{\circ}\ar[r]^p\ar[d]_q & \disk^{\dagger}/\disk^{\circ}\ar[dr]^g & \\
    \disk^{\dagger}/\disk^{\circ}& \disk^{\dagger}\times \disk^{\dagger}/\disk^{\circ}\ar[ul]^F\ar[d]_Q\ar[rd]^G\ar[u]_S & B\disk^{\circ}\\
    & \disk^{\dagger}\ar[ul]^f & \disk^{\dagger}\times B\disk^{\circ}\ar[l]^r\ar[u]_s
  }\]
  where the middle two squares are further Cartesian.
  Denote the Fourier--Mukai kernels on $\disk^{\dagger}/\disk^{\circ}\times \disk^{\dagger}/\disk^{\circ}$
  and $\disk^{\dagger}\times B\disk^{\circ}$ respectively by $\L$ and $\K$.
  Then starting from $M\in D(\disk^{\dagger}/\disk^{\circ})$,
  we can compute 
  \[
    f^*\circ\operatorname{FM}(M)=f^*q_!(p^*M\otimes\L)
    =Q_!F^*(p^*M\otimes\L)=Q_!(S^*M\otimes F^*\L),
  \]
  and 
  \[
    \operatorname{FM}'\circ g_!(M)=r_!(s^*g_!M\otimes\K)=r_!(G_!S^*M\otimes \K)=r_!G_!(S^*M\otimes G^*\K)=Q_!(S^*M\otimes G^*\K).
  \]
  Hence the commutativity of the diagram is reduced to proving that $F^*\L\cong G^*\K$.
  Note that both Fourier--Mukai kernels descend to their tensor products,
  and that we have a commutative diagram 
  \[\xymatrix{
    \disk^{\dagger}\otimes \disk^{\dagger}/\disk^{\circ}\ar[r]\ar[d] & \disk^{\dagger}/\disk^{\circ}\otimes \disk^{\dagger}/\disk^{\circ}\ar[d]\\
    \disk^{\dagger}\otimes B\disk^{\circ} \ar[r] & B\gff
  }\]
  Computing in both directions gives the same morphism,
  namely the exponential map $\exp_{\pi}\colon \disk^{\dagger}\otimes\disk^{\circ}\rightarrow\gff$ on $(-1)$-th degree,
  cf. \cite[Construction 3.2.19]{RC25b},
  and the trivial map $\disk^{\dagger}\otimes\disk^{\dagger}\rightarrow *$ on $0$-th degree.
  This proves the identification of pullbacks of Fourier--Mukai kernels on both sides to $\disk^{\dagger}\otimes \disk^{\dagger}/\disk^{\circ}$,
  hence also to $\disk^{\dagger}\times \disk^{\dagger}/\disk^{\circ}$.
\end{proof}

We obtain the following solid version of Fourier transform of arithmetic $D$-modules appearing before in \cite{Huy04}.

\begin{theorem}\label{FouriertransformarithmeticDmodule}
  The Fourier--Mukai transform in Theorem \ref{CartierarithmeticdR}
  induces an equivalence: 
  \[
  \operatorname{Four}_{\pi}\colon \cat{LMod}_{\D^{\dagger}_{\proj^1_L}({}^{\dagger}\infty)}(D(\disk^{\dagger}_L))\overset{\cong}{\longrightarrow} \cat{LMod}_{\D^{\dagger}_{\proj^1_L}({}^{\dagger}\infty)}(D(\disk^{\dagger}_L)).
  \]
  Moreover,
  it has the following description under the equivalence of categories $\cat{LMod}_{\D^{\dagger}_{\proj^1_L}({}^{\dagger}\infty)}(D(\disk^{\dagger}_L))\cong \cat{LMod}_{D^{\dagger}(\infty)_L}(D_{\blacksquare}(\rt_p))$.
  Given a left $D^{\dagger}(\infty)_L$-module $M$,
  $\operatorname{Four}_{\pi}(M)=D^{\dagger}(\infty)_L\otimes_{D^{\dagger}(\infty)_L,S_{\pi}}M[1]$,
  where $S_{\pi}\colon D^{\dagger}(\infty)_L\cong D^{\dagger}(\infty)_L$ is the isomorphism sending $T$ to $\pi^{-1}\partial$ and $\partial$ to $-\pi T$.
\end{theorem}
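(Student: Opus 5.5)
The equivalence itself is immediate from Theorem \ref{CartierarithmeticdR}. An isomorphism $[\aff^{1,\arith/L}_{\ff_p}]_!\cong[\aff^{1,\arith/L}_{\ff_p}]^*$ in $\cat{bCAlg}(K_{D,L})$ induces an equivalence of the underlying categories $D(\aff^{1,\arith/L}_{\ff_p})\to D(\aff^{1,\arith/L}_{\ff_p})$. By Remark \ref{FMkernel}, this equivalence is the functor $\operatorname{FM}_\pi=p_{1,!}(p_2^*(-)\otimes\L_\pi)$. Transporting it along the equivalence of Discussion \ref{A1arithcoefficient},
\[
D(\aff^{1,\arith/L}_{\ff_p})\cong \cat{LMod}_{\D^{\dagger}_{\proj^1_L}({}^{\dagger}\infty)}(D(\disk^{\dagger}_L))\cong \cat{LMod}_{D^{\dagger}(\infty)_L}(D_{\blacksquare}(\rt_p)),
\]
defines $\operatorname{Four}_\pi$. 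The remaining work is the explicit formula.

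For the formula I would compute $\operatorname{FM}_\pi$ on the generator $\pi^{\arith}_{\natural}1$, where $\pi\colon\disk^\dagger\to\disk^\dagger/\disk^\circ$ is the suave chart. Under the realization $p_*\pi^*$, this generator corresponds to the free module $D^\dagger(\infty)_L$ (proof of Proposition \ref{arithcoefficientaffinecase}). Since $\operatorname{FM}_\pi$ is a colimit-preserving equivalence, it is determined by the image $E:=\operatorname{FM}_\pi(\pi_\natural 1)$ together with the induced ring map $\End(\pi_\natural 1)^{\op}\to\End(E)^{\op}$.

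To compute $E$, I would use Lemma \ref{compatibilityFM2} together with its adjoint/dual variant. This reduces $f^*\operatorname{FM}_\pi$ to the analytic Cartier duality $\operatorname{FM}'$ between $B\disk^\circ$ and $\disk^\dagger$ from Lemma \ref{1catCartierdual}, in which the $\disk^\dagger$-coordinate $T$ is exchanged with the generator of the $\disk^\circ$-action, i.e.\ $\partial$, rescaled by $\pi$ via the exponential pairing $\exp(\pi xy)$. Concretely:
\begin{itemize}
\item Multiplication by the coordinate on one factor corresponds, through the kernel $\L_\pi=m^*\Exp_\pi$, to the derivation on the other factor. This uses the Dwork isocrystal computation $\nabla e=\pi e\,dT$: for the kernel, $\partial_{x}\L_\pi=\pi y\L_\pi$ and $\partial_y\L_\pi=\pi x\L_\pi$.
\item Integration by parts inside $p_{1,!}$ then gives the rules $T\mapsto \pi^{-1}\partial$ and $\partial\mapsto -\pi T$. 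The sign comes from moving $\partial$ across $p_{1,!}$, i.e.\ adjointness of the action on $p_2^*M\otimes\L_\pi$.
\end{itemize}
This identifies $E$ with $D^\dagger(\infty)_L$ twisted by the automorphism $S_\pi$. I would check that $S_\pi$ preserves the growth conditions, which should hold because $|\pi|<1$ and the bounds $\eta^{m+n}$ are symmetric in $m,n$. The shift $[1]$ comes from $p_{1,!}$ of a rank-one object along $\disk^\dagger/\disk^\circ$. Using Discussion \ref{A1arithcoefficient}(2), which says $f_*=f_+[-1]$, together with primness with dual $1[-2]$ (Proposition \ref{A1cosmooth}), we get $p_!=p_*[2]$, so the net shift is $[1]$. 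The same shift can be read off from property (2) in Remark \ref{FMkernel}: $p_{2,!}\L\cong i_!1[-2]$. Finally, I would extend to all $M$ by writing $M$ as a colimit of free modules, giving $\operatorname{Four}_\pi(M)=D^\dagger(\infty)_L\otimes_{D^\dagger(\infty)_L,S_\pi}M[1]$.

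The main obstacle is making the second step rigorous: identifying the induced action on $E$ precisely as $S_\pi$, including the sign, and pinning down the exact shift convention. I would first verify the identities on the polynomial subalgebra $D_A$, using its density in $D^\dagger(\infty)_L$ (as in the proof of Proposition \ref{arithcoefficientaffinecase}), and then extend by continuity.
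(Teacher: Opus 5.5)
Your skeleton is the paper's. The equivalence is read off from Theorem~\ref{CartierarithmeticdR}, and the $T$-action on $\operatorname{Four}_\pi(M)$ comes from Lemma~\ref{compatibilityFM2}: the paper rewrites $f^*\operatorname{FM}_\pi(M)\cong\operatorname{FM}'(g_!M)$, uses base change to identify the underlying object with $p_!\pi^*M$ carrying its $L\langle\partial\rangle^{\dagger,\sharp}$-action, and converts this into $T\mapsto\pi^{-1}\partial$ via the gauge of Discussion~\ref{choosegauge}.

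You diverge on the $\partial$-action and on the shift. The paper gets $\partial\mapsto-\pi T$ from the Weyl relation alone, and attributes the $[1]$ to Cartier duality shifting $t$-structures. You instead determine the $\partial$-action independently, either by a version of Lemma~\ref{compatibilityFM2} with the roles of $\disk^\dagger$ and $B\disk^\circ$ exchanged, or by integrating by parts against $\L_\pi$, whose connection is $\nabla e=\pi e\,(y\,dx+x\,dy)$. You then get the shift from $p_{1,!}=p_{1,+}[1]$.

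Your route costs more. To make the integration by parts rigorous you need three things:
\begin{itemize}
\item the $D$-module description of $p_{1,!}$ along $\aff^1\times\aff^1\to\aff^1$ (the proof of Proposition~\ref{Xarithsixfunctorcalssical} is local on affine opens, so it applies inside $\proj^1\times\proj^1\to\proj^1$);
\item Remark~\ref{deRhamresolution};
\item an identification of the relative de Rham complex of $p_2^{\triangle}M\otimes\L_\pi$, at least for $M=D^{\dagger}(\infty)_L$, with a single twisted copy of $M$.
\end{itemize}
It buys something real, though. Once $S_\pi(T)=\pi^{-1}\partial$ is known, $[\partial,T]=1$ only forces $S_\pi(\partial)\in-\pi T+\{\sum_n b_n\partial^n/n!\}$, the centralizer of $\partial$. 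For instance, $S_\pi$ composed with $\partial\mapsto\partial+c$, $|c|\le|\pi|$, also satisfies both conditions. So an independent computation of the $\partial$-action is what actually pins down $S_\pi(\partial)$ and its sign.

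Two side remarks in your proposal need correcting.

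First, $S_\pi$ preserves the growth conditions not because $|\pi|<1$, but because $|\pi|=|p|^{1/(p-1)}$ matches the growth of $|n!|$; this is the point of Discussion~\ref{choosegauge}. For a scalar $\lambda$ with $|\lambda|\neq|\pi|$, the map $T\mapsto\lambda^{-1}\partial$, $\partial\mapsto-\lambda T$ does not preserve $D^{\dagger}(\infty)_L$. You can see this on $\sum_m\eta^mT^m$ or $\sum_n\eta^n\partial^n/n!$ with $\eta\to 1$.

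Second, Remark~\ref{FMkernel}(2) as printed does not give the same shift.
\begin{itemize}
\item Under $p_*\pi^*$, $\pi^*i_!1$ is extension by zero from the open $\disk^\circ\subset\disk^\dagger$. Its sections are the fiber of $L\langle T\rangle^\dagger\to L\langle T^{\pm1}\rangle^\dagger$, so $i_!1\mapsto\delta[-1]$ with $\delta=D^{\dagger}(\infty)_L/D^{\dagger}(\infty)_L T$.
\item Hence $p_{1,!}\L_\pi\cong i_!1[-2]$ would give $\operatorname{FM}_\pi(1)\mapsto\delta[-3]$.
\item The formula you are proving gives $\operatorname{Four}_\pi(\mathcal{O})=\bigl(D^{\dagger}(\infty)_L/D^{\dagger}(\infty)_L\,S_\pi(\partial)\bigr)[1]=\delta[1]$.
\item Base change along $i$ gives $i^*p_{1,!}\L_\pi\cong q_!1=q_*1[2]=1[2]$ for $q\colon\aff^{1,\arith/L}_{\ff_p}\to\operatorname{GSpec}L$. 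So the shift in that remark should be $[2]$, which matches $\delta[1]$.
\end{itemize}
Rely on your $p_{1,!}=p_{1,+}[1]$ argument instead.
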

\begin{proof}
  The $(p-1)$-th root $\pi$ of $-p$ defines an exponential sheaf and hence a Fourier--Mukai functor $\operatorname{FM}$ in Section \ref{Cartierduality},
  which is an equivalence by Theorem \ref{CartierarithmeticdR}.
  We define $\operatorname{Four}_{\pi}$ to be this Fourier--Mukai functor,
  via the equivalence in \ref{A1arithcoefficient}.
  To give the description as desired,
  first,
  we notice that the $L\langle T\rangle^{\dagger}$-module structure on $\operatorname{Four}_{\pi}(M)=p_{1,!}(p^*_2M\otimes \L_{\pi})$ is given by $f^*\circ \operatorname{FM}(M)$,
  which is $\operatorname{FM}'\circ g_!(M)$ in Lemma \ref{compatibilityFM2}.
  However,
  $\operatorname{FM}'\circ g_!(M)$ is exactly the $L\langle T\rangle^{\dagger}$-module given by $T$ acting as $\pi^{-1}\partial$.
  We consider again the Cartesian diagram 
  \[\xymatrix{
    \disk^{\dagger}\ar[r]^{\pi}\ar[d]_p & \disk^{\dagger}/\disk^{\circ}\ar[d]^g\\
    \text{*}\ar[r]_{f} & B\disk^{\circ}
  },\]
  and $\operatorname{FM}'\circ g_!(M)$ is given as $f^*g_!M$ equipped with the $L\langle T\rangle^{\dagger,\sharp}$-module structure coming from monadicity,
  up to a dimension shifting by $1$,
  as Cartier duality shifts the $t$-structure,
  cf. \cite[Section 4.3]{RC24a}.
  By base change formula,
  and because base change is compatible with monadicity,
  we have $f^*g_!M\cong p_!\pi^*M$ and the $L\langle T\rangle^{\dagger}$-module structure on $p_!\pi^*M$ comes from the action of $L\langle \partial\rangle^{\dagger,\sharp}$.
  Fixing the isomorphism in \ref{choosegauge} (which depends on the $(p-1)$-th root $\pi$ of $p$),
  we know that the $L\langle T\rangle^{\dagger}$-module structure on $\operatorname{FM}'\circ g_!(M)$ is given by $T$ acting as $\pi^{-1}\partial$.
  On the other hand,
  the Weyl identity $\partial T=T\partial +1$ restricts $\partial$ to act as $-\pi T$.
  This proves the desired description.
\end{proof}

\begin{remark}
  Our Fourier transform differs from the Fourier--Huyghe transform in \cite{Huy04} by a dimension shift,
  where the shift there is used to match the $t$-structures on both sides.
\end{remark}

\subsection{Kedlaya's finite dimensionality}\label{SectionKedfinite}
We already know that if an overconvergent isocrystal does not admit an $F$-structure,
then it may have bad behavior in the sense
that its cohomologies may not be finite-dimensional,
as we have seen in Remark \ref{padicLiouville}.
However,
with $F$-structure,
this issue disappears.
In this section,
we reprove the following result from \cite{Ked06}:
\begin{theorem}\label{Kedlayafinitedimensionality}
  Let $X$ be a smooth scheme over $k$,
  let $K$ admit a Frobenius lift,
  and let $\E$ be an overconvergent $F$-isocrystal on $X$.
  Then $H^i_{\rig}(X,\E)$ is finite-dimensional over $K$.
\end{theorem}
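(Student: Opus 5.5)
We follow the strategy announced in the introduction: show that $R\Gamma_{\rig}(X/K,\E)$ is simultaneously basic nuclear and the dual of a basic nuclear object in $D_{\blacksquare}(K)$, and then use solid functional analysis to conclude it is dualizable, hence perfect.

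First we reduce to the case of a perfect base. Replacing $k$ by $k_{\perf}$ and $K$ by $K_{\perf}$ is harmless by Remark \ref{passingtononperfect}: $\E$ extends to an overconvergent $F$-isocrystal over $X_{k_{\perf}}$, and
\[
R\Gamma_{\rig}(X_{k_{\perf}}/K_{\perf},\E_{K_{\perf}})\cong R\Gamma_{\rig}(X/K,\E)\otimes_K K_{\perf}.
\]
Finite-dimensionality descends along the faithfully flat field extension $K\to K_{\perf}$. So we assume $k$ and $K$ are perfect. By Theorem \ref{rigidstackrigidcohomology}, $\E$ gives an object $M\in\cat{Perf}(X^{\HK/K})$. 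By Theorem \ref{equivalenceperfectcomplex}, it also gives an object $M'\in\cat{Perf}(X^{\arith/K}/\varphi_X^{\itg})$. By Remark \ref{compatibilityperfectcomplex}, both pushforwards to $D_{\blacksquare}(K)^{\varphi_K\text{-equiv}}$ compute $R\Gamma_{\rig}(X/K,\E)$, so we may compute on either side.

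\textbf{Nuclear side.} We show $f^{\arith/K}_*M'$ is basic nuclear. Since $X$ is smooth, Zariski descent (a finite limit, which preserves basic nuclearity) reduces us to $X$ with a frame $]X[_P^{\dagger}=\operatorname{GSpec}A$, where $A$ is dagger affinoid and dual nuclear Fréchet. There Proposition \ref{arithcoefficientaffinecase} computes the pushforward as $R\Hom_{D_A^{\dagger}}(A,M')$. Using the overconvergent de Rham/Koszul resolution (Remark \ref{deRhamresolution}), this is a finite complex whose terms are $M'\otimes\Omega^i$, finite projective over $A$. Hence it is a finite limit of basic nuclear (dual nuclear Fréchet) objects, and so is basic nuclear.

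\textbf{Dual side.} We work on $X^{\HK/K}$, where $f^{\HK/K}$ is cohomologically smooth with $f^{\HK/K,!}1\cong 1(n)[2n]$ by Theorem \ref{PoincaredualityrigidFrobeniusrelative}. Verdier duality gives
\[
(f^{\HK/K}_!M^{\vee}(n)[2n])^{\vee}\cong f^{\HK/K}_*M.
\]
So it suffices to show $f^{\HK/K}_!$ of a perfect complex is basic nuclear. We would verify this via the chart $\varprojlim_{\varphi}]X[^{\an,\dR/K}_{\mathfrak Y}$. Compactly supported de Rham cohomology of the Stein-like tube, together with the $\varphi$-quotient, is expected to be a colimit of Banach/LB-type objects, hence basic nuclear. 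This is the parallel of the $h$-descent plus local computation above.

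\textbf{Conclusion and main obstacle.} An object $V\in D_{\blacksquare}(K)$ which is basic nuclear and whose dual is basic nuclear (equivalently, $V$ is both nuclear-Fréchet-like and LB-like) has cohomology groups that are simultaneously Fréchet and of compact type. Such spaces are finite-dimensional, hence $V$ is dualizable; this is a solid functional analysis lemma in \cite[Appendix A]{ABLBRCS25} and \cite{RCRJ22}. We expect the main obstacle to be this step: the derived statement must descend to individual cohomology groups without Hausdorffness issues, which is exactly where the Liouville pathology of Remark \ref{padicLiouville} would otherwise appear. We would handle it by using the dual description together with the evaluation map: dualizability of $V$ follows once $V\to V^{\vee\vee}$ is an isomorphism and the trace is well defined, both of which hold for basic nuclear objects with basic nuclear dual. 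The $\varphi$-structure is what makes the Hyodo--Kato side available and is therefore the essential input.
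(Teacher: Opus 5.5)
Your architecture is the paper's: reduce to perfect $k$, realize $\E$ both on $X^{\arith/K}/\varphi_X^{\itg}$ and on $X^{\HK/K}$, then show $R\Gamma_{\rig}(X/K,\E)$ is basic nuclear on the first side and the dual of a basic nuclear object on the second. Your twist $(f_!(M^\vee(n)[2n]))^\vee$ is the correct one. The genuine gap is the last step, which is where the content of the theorem lies.

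The cohomology groups of a basic nuclear complex are only quotients of DNF spaces. They can fail to be quasi-separated; this is exactly the Liouville phenomenon of Remark~\ref{padicLiouville}. So ``simultaneously Fréchet and of compact type'' is false as stated, and your proposed fix does not work. The isomorphism $V\simeq V^{\vee\vee}$ does not imply dualizability: $\bigoplus_{\mathbb N}K$ is reflexive but not dualizable. And ``the trace is well defined'' merely restates dualizability. Nor can you quote \cite{RCRJ22} directly: the clean statement there (Corollary 3.42) rests on Conjecture 3.41. That is why the paper proves Lemma~\ref{basicnucleardualizable} by hand, as follows.
\begin{itemize}
\item Write $M=N^\vee$. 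The sequence from the proof of \cite[Theorem 13.6]{CS22} splits $H^i(M)$ into a quasi-separated quotient $\overline{H^i(M)}\cong\Hom(\overline{H^{-i}(N)},K)$ and the closure of zero $H^i(M)^0\cong\Ext^1(H^{-i+1}(N)^0,K)$.
\item The quasi-separated part is both Fréchet and LS. It is therefore dualizable, via $\underline{\Hom}(W,V)\cong W^\vee\otimes V$ \cite[Theorem 3.40]{RCRJ22}.
\item The non-Hausdorff part is the heart. Present it both as $V_2/V_1$ and as $V_1'^\vee/V_2'^\vee$, with all $V_j,V_j'$ DNF and $g\colon V_2'^\vee\to V_1'^\vee$ the second inclusion.
\item Lift the identification to $f\colon V_2\to V_1'^\vee$; the obstruction lies in $\Ext^1(V_2\otimes V_2',K)=0$.
\item Write $V_2=\varinjlim P_n$ with trace-class transition maps. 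Baire category in $V_1'^\vee$ plus the open mapping theorem give $g(V_2'^\vee)+f(P_n)=V_1'^\vee$ for some $n$.
\item A trace-class perturbation argument, using the Fredholm property, then shows $\coker(g)=H^i(M)^0$ is finite-dimensional.
\end{itemize}
None of this appears in your proposal.

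A second, smaller gap is on the Hyodo--Kato side. Saying that compactly supported de Rham cohomology of the tube ``is expected to be'' basic nuclear is not an argument. An explicit computation on the inverse limit $\varprojlim_\varphi]X[^{\an,\dR/K}_{\mathfrak Y}$ is not readily available either. What closes this is that $f_!$ preserves locally nuclear and $\omega_1$-compact objects \cite[Proposition 5.5.12]{ABLBRCS25}, and that an object with both properties is basic nuclear \cite[Lemma A.0.6]{ABLBRCS25}. The paper uses the same input on the arithmetic side, passing from $f_!$ to $f_*$ through the invertible prim dual. Your explicit de Rham/Spencer computation there is a legitimate alternative. However, it additionally requires Zariski-local frames in which the affine pieces are open in the special fiber of a proper smooth formal scheme, which you assert without justification.
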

\begin{proof}
  Without loss of generality,
  we can assume $k$ to be perfect and $K=W(k)[p^{-1}]$ by Remark \ref{passingtononperfect}.
  Let $f\colon X\rightarrow\Spec k$.
  Theorem \ref{arithrigidcoh2} and Theorem \ref{rigidstackrigidcohomology} tell us that
  both of them are stacky approaches to the theory of overconvergent $F$-isocrystals.
  Precisely,
  we can view $\E$ as either a perfect complex $M$ on $X^{\HK/K}$,
  or a perfect complex $N$ on $X^{\arith/K}/\varphi^{\itg}_X$,
  such that
  \[
    R\Gamma_{\rig}(X,\E)\cong f_*^{\HK/K}M\cong f^{\arith/K}_*N\in D_{\blacksquare}(K).
  \]
  We have the following observations:
  \begin{enumerate}
    \item $f^{\arith/K}_*N$ is a basic nuclear object.
    By \cite[Proposition 5.5.12]{ABLBRCS25},
    the $f^{\arith/K}_!$ preserves locally nuclear objects and $\omega_1$-compact objects\footnote{Even though $X^{\arith}$ is not a qfd Berkovich space,
    still Zariski locally it admits a suave cover from a qfd Berkovich space with all terms in Cech nerves being qfd Berkovich spaces,
    so we can adapt this proposition to $X^{\arith}$.}.
    Then $f^{\arith/K}_*$ also preserves locally nuclear objects and $\omega_1$-compact objects,
    as $f^{\arith/K}$ is prim with invertible prim dual by Theorem \ref{arithsixfunctor}.
    Since $N$ is dualizable which is both locally nuclear and $\omega_1$-compact,
    $f^{\arith/K}_*N$ is also both locally nuclear and $\omega_1$-compact hence basic nuclear,
    cf. \cite[Lemma A.0.6]{ABLBRCS25}.
    \item $f^{\HK/K}_*M$ is the dual of a basic nuclear object.
    We have $f^{\HK/K}_*M\cong (f^{\HK/K}_!(M^{\vee}\otimes (f^!1)^{-1}))^{\vee}$ as $f^{\HK/K}$ is cohomologically smooth by Theorem \ref{PoincaredualityrigidFrobeniusrelative},
    and $f^{\HK/K}_!$ preserves locally nuclear objects and $\omega_1$-compact objects by \cite[Proposition 5.5.12]{ABLBRCS25},
    so $f^{\HK/K}_!(M^{\vee}\otimes (f^!1)^{-1})$ is both locally nuclear and $\omega_1$-compact hence basic nuclear by \cite[Lemma A.0.6]{ABLBRCS25},
    and then $f^{\HK/K}_*M\cong (f^{\HK/K}_!(M^{\vee}\otimes (f^!1)^{-1}))^{\vee}$ is the dual of a basic nuclear object.
  \end{enumerate} 
  Therefore,
  $R\Gamma_{\rig}(X,\E)$ is a perfect complex by Lemma \ref{basicnucleardualizable} below and the fact that it is bounded because it is locally a de Rham complex,
  hence each cohomology $H^i_{\rig}(X,\E)$ is finite-dimensional.
\end{proof}

\begin{lemma}\label{basicnucleardualizable}
  If a bounded complex of solid $K$-module $M\in D_{\blacksquare}(K)$ is both a basic nuclear object and the dual of a basic nuclear object,
  then it is a perfect complex.
\end{lemma}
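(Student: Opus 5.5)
We reduce to the cohomology modules of $M$ and apply solid functional analysis to each one. By hypothesis $M$ is basic nuclear, so it is a sequential colimit of nuclear Banach spaces under trace-class transition maps. We also have $M\cong N^{\vee}$ with $N$ basic nuclear, so $M$ is a countable limit of Banach spaces under trace-class (compact) maps.

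The classical dichotomy from \cite[Section 3.6]{RCRJ22} is that basic nuclear objects are exactly the dual nuclear Fréchet (compact type, LB) spaces, while their duals are nuclear Fréchet spaces. The first step is to pass to cohomology: we need each $H^i(M)$ to be simultaneously of compact type and nuclear Fréchet. Since $M$ is bounded, I would induct on the amplitude using the truncation triangles. Two facts are needed here, and I expect them to be the main obstacle:
\begin{enumerate}
\item the classes of basic nuclear objects and of duals of basic nuclear objects are stable under the relevant truncations;
\item in the solid setting, every basic nuclear complex is quasi-isomorphic to a complex of compact-type spaces whose cohomology modules are again basic nuclear, and dually for nuclear Fréchet complexes.
\end{enumerate}
Fact (2) is where the subtlety lies: cohomology of a complex of LB spaces need not be Hausdorff. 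In the solid/condensed world this non-Hausdorffness is absorbed into the fact that $H^i$ is computed condensedly, and quotients of nuclear Fréchet spaces by closed subspaces remain nuclear Fréchet. I would try to settle the top degree first. There $H^{\max}(M)$ is a cokernel, and right-exactness shows it is both a quotient of a compact-type space and a quotient of a nuclear Fréchet space. Then I would peel off degrees one at a time. If this route fails, the fallback is to work directly with the maps: use that $M\cong M^{\vee\vee}$ for both nuclear Fréchet and compact-type objects, so that $M$ is reflexive and nuclear in both directions.

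The final step is the single-space statement: a static solid $K$-vector space $V$ that is both nuclear Fréchet and of compact type is finite-dimensional. Being Fréchet, $V$ is a Baire space. Writing it as a countable increasing union of images of Banach spaces $V_n$ under injective compact maps, Baire's theorem forces some $V_n\to V$ to have image with nonempty interior. That map is then surjective and open by the open mapping theorem. An open compact operator has a bounded neighborhood of $0$ mapped onto a compactoid set, so $V$ is locally compactoid, hence finite-dimensional over the discretely valued field $K$. Applying this to each $H^i(M)$, together with boundedness of $M$, shows that $M$ is a bounded complex with finite-dimensional cohomology, i.e.\ a perfect complex.
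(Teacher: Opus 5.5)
Your reduction breaks where you need each $H^i(M)$ to be simultaneously nuclear Fréchet and of compact type. Computing $H^i$ condensedly does not absorb non-Hausdorffness; it keeps it. Take a map $d\colon E\to F$ of Fréchet or DNF spaces with non-closed image. Its cokernel in solid $K$-modules has maximal quasi-separated quotient $F/\overline{d(E)}$. The kernel of that projection is a nonzero module in which $0$ is dense. So your remark about quotients by closed subspaces does not help, since images need not be closed. At the top degree, ``a quotient of a compact-type space and of a nuclear Fréchet space'' therefore gives neither property.

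Likewise, static basic nuclear modules (which the cohomology modules of a basic nuclear complex are) need not be DNF spaces. Even granting stability under truncation, your induction only reduces you to a static $V$ that is basic nuclear with $V\cong N'^{\vee}$ for a basic nuclear \emph{complex} $N'$. Such a $V$ contains $\Ext^1(H^1(N')^0,K)$, which is precisely a non-quasi-separated piece. Your Baire/open-mapping/compactoid argument needs a genuine Fréchet space and says nothing about this piece, and the reflexivity fallback does not address it either.

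The paper handles this by splitting, following the proof of \cite[Theorem 13.6]{CS22}, into $0\to H^i(M)^0\to H^i(M)\to\overline{H^i(M)}\to 0$. Here $H^i(M)^0\cong\Ext^1(H^{-i+1}(N)^0,K)$ and $\overline{H^i(M)}\cong\Hom(\overline{H^{-i}(N)},K)$. The quotient $\overline{H^i(M)}$ is shown to be both nuclear Fréchet and of compact type. For that part your single-space argument is a fine substitute for the paper's use of $\underline{\Hom}(W,V)\cong W^{\vee}\otimes V$ (with $W$ an LS space, $V$ Fréchet).

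The missing idea is the treatment of $H^i(M)^0$, which the paper does as follows:
\begin{enumerate}
\item Present $H^i(M)^0$ both as $V_2/V_1$ with $V_1,V_2$ DNF (it is static basic nuclear) and as $\coker(g\colon V_2'^{\vee}\to V_1'^{\vee})$.
\item Lift $V_2\to H^i(M)^0$ along $V_1'^{\vee}\to H^i(M)^0$ to a map $f\colon V_2\to V_1'^{\vee}$, using $\Ext^1(V_2,V_2'^{\vee})=0$.
\item Write $V_2=\varinjlim_n P_n$ with Banach $P_n$ and trace-class transition maps.
\item Apply Baire in the Fréchet space $V_1'^{\vee}$ and the open mapping theorem to get $g(V_2'^{\vee})+f(P_n)=V_1'^{\vee}$ for some $n$.
\item Conclude that $\coker(g)=\coker\bigl((g,f|_{P_n})-(0,f|_{P_n})\bigr)$ is finite-dimensional, by a Fredholm-type lemma: an epimorphism perturbed by a trace-class map has finite-dimensional cokernel.
\end{enumerate}
Some argument of this kind is needed to rule out a nonzero non-quasi-separated part of $H^i(M)$.
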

\begin{proof}
  Write $M=N^{\vee}$ with $M$ and $N$ being basic nuclear.
  A basic nuclear module can be represented by a complex whose terms are dual nuclear Fréchet,
  cf. \cite[Proposition A.0.12]{ABLBRCS25},
  hence $H^i(M)$ and $H^i(N)$ are quotients of dual nuclear Fréchet spaces.
  Following the proof of \cite[Theorem 13.6]{CS22},
  we have a short exact sequence 
  \[
  0\longrightarrow \Ext^1(H^{-i+1}(N)^0,K)=H^i(M)^0\longrightarrow H^i(M)\longrightarrow \Hom(\overline{H^{-i}(N)},K)=\overline{H^i(M)}\longrightarrow 0,
  \]
  where $V^0$ (resp. $\overline{V}$) is the closure of $0$ in $V$ (resp. the maximal quasi-separated quotient).

  We first prove that $\overline{H^i(M)}$ is finite-dimensional.
  Since $\overline{H^i(M)}$ is quasi-separated and being a quotient of DNF spaces,
  we can deduce that it is nuclear Fréchet from the fact that $M$ is quasi-separated by \cite[Lemma A.0.11]{ABLBRCS25}.
  The same also holds for $\overline{H^{-i}(N)}$,
  hence $\overline{H^i(M)}$ is also the dual of a nuclear Fréchet space.
  From \cite[Theorem 3.40.(2)]{RCRJ22},
  we know that if $V$ is Fréchet and $W$ is LS,
  then $\underline{\Hom}_K(W,V)=W^{\vee}\otimes_{K,\blacksquare}V$.
  Applying it to $W=V=\overline{H^i(M)}$,
  we obtain that $\underline{\Hom}(\overline{H^i(M)},\overline{H^i(M)})\cong \overline{H^i(M)}^{\vee}\otimes \overline{H^i(M)}$ which implies that $\overline{H^i(M)}$ is dualizable hence finite dimensional,
  cf. \cite[Lemma B.1.16.(ii)]{HM24}.

  We then prove that $H^i(M)^0$ is also finite-dimensional.
  Since the category of static basic nuclear $K$-modules form an abelian subcategory,
  cf. \cite[Corollary 8.17]{CS22},
  we have that both $H^i(M)^0$ and $H^{-i+1}(N)^0$ are basic nuclear,
  and hence are quotients of dual nuclear Fréchet spaces,
  i.e. we have short exact sequences
  \[
    0\longrightarrow V_1\longrightarrow V_2\longrightarrow H^i(M)^0\longrightarrow 0,\quad
    0\longrightarrow V_1'\longrightarrow V_2'\longrightarrow H^{-i+1}(N)^0\longrightarrow 0
  \]
  with $V_1, V_2,V_1',V_2'$ being dual nuclear Fréchet.
  This also induces a short exact sequence 
  \[
  0\longrightarrow V_2'^{\vee}\longrightarrow V_1'^{\vee}\longrightarrow\Ext^1(H^{-i+1}(N)^0,K)\longrightarrow0.
  \]
  We claim that this extends to a diagram of short exact sequences
  \[\xymatrix{
    0\ar[r] & V_1\ar[r]\ar[d] & V_2\ar[r]\ar[d]^f & H^i(M)^0\ar[r]\ar@{=}[d] & 0\\
    0\ar[r] & V_2'^{\vee}\ar[r]^g & V_1'^{\vee}\ar[r] & \Ext^1(H^{-i+1}(N)^0,K)\ar[r] & 0
  }\]
  Indeed,
  the obstruction of such a lift lies in $\Ext^1(V_2,V_2'^{\vee})$,
  which vanishes as $\Ext^1(V_2,V_2'^{\vee})=\Ext^1(V_2\otimes_{K,\blacksquare}V_2',K)=0$ by topological Mittag-Leffler,
  cf. \cite[Lemma 3.27]{RCRJ22}.
  Let $f\colon V_2\longrightarrow V_1'^{\vee}$ be such a map.
  By \cite[Lemma 3.36.(1)]{RCRJ22},
  one can write $V_2=\varinjlim_n P_n$ as a filtered colimit of Banach spaces with injective trace-class transition maps.
  Let $W_n=g(V_2'^{\vee})+f(P_n)$,
  then we have $V_1'^{\vee}=\bigcup_n W_n$.
  Since $V_1'^{\vee}$ is a Baire space as it is nuclear Fréchet,
  we obtain that at least one $W_n$ is of the second Baire category\footnote{A subset is called of the second Baire category if it is not a countable union of nowhere dense subsets,
  cf. \cite[Definition 2.1]{Rud91}.}.
  By Lemma \ref{nonmeagre},
  there exists one $n$ such that $W_n=V_1'^{\vee}$.
  Then the map $(g,f|_{P_n})\colon V_2'^{\vee}\oplus P_n\longrightarrow V_1'^{\vee}$ is a surjection.
  However,
  the map $(0,f|_{P_n})\colon V_2'^{\vee}\oplus P_n\longrightarrow V_1'^{\vee}$ is a trace-class map,
  by the assumption that $P_n\rightarrow V_2$ is trace-class and \cite[Lemma 8.2]{CS22}.
  Then by Lemma \ref{nuclearperturbation},
  we have that $H^i(M)^0=\coker((g,0))=\coker((g,f|_{P_n})-(0,f|_{P_n}))$ is finite dimensional.

  Combining both,
  we obtain that $H^i(M)$ is discrete and finite-dimensional.
  With the boundedness,
  this proves that $M$ is a perfect complex.
\end{proof}

\begin{lemma}[Open embedding theorem]\label{nonmeagre}
  Let $f\colon W\rightarrow V$ be a continuous linear map between Fréchet spaces.
  If $\im(f)$ is of the second Baire category,
  then we have $\im(f)=V$.
\end{lemma}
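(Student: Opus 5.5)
This is the open mapping/Baire argument for Fréchet spaces, and I would prove it by reducing to the classical open mapping theorem. Write $M=\im(f)\subset V$. Since $W$ is Fréchet, it has a countable basis of neighbourhoods of zero. Choose such a basis given by closed absolutely convex $\itg_p$-submodules $U_1\supset U_2\supset\cdots$ (or lattices over $K^{\circ}$, working over the non-archimedean field $K$). Then
\[
M=\bigcup_{n\ge 0} p^{-n}f(U_1),
\]
so $M$ is a countable union of the sets $p^{-n}f(U_1)$. Because $M$ is of the second Baire category in $V$, some $p^{-n}f(U_1)$ is not nowhere dense, so its closure $\overline{f(U_1)}$ has nonempty interior. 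Since $\overline{f(U_1)}$ is an additive subgroup stable under $K^{\circ}$, a nonempty interior forces it to be a neighbourhood of $0$ in $V$. The same argument applied to each $U_k$ shows that every $\overline{f(U_k)}$ is a neighbourhood of $0$.

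Next comes the standard successive-approximation step of Banach's open mapping theorem, which removes the closures. Fix a complete translation-invariant metric on $W$, and let $y\in\overline{f(U_k)}$. Choose $x_k\in U_k$ with $y-f(x_k)\in\overline{f(U_{k+1})}$, then $x_{k+1}\in U_{k+1}$ with $y-f(x_k)-f(x_{k+1})\in\overline{f(U_{k+2})}$, and continue in this way. The series $\sum_j x_j$ converges in $W$ by completeness, and by the ultrametric property (or after shrinking the $U_k$ so that $U_{k+1}+U_{k+1}\subset U_k$) its sum lies in $U_k$. By continuity of $f$ and Hausdorffness of $V$, the image of the sum is $y$, so we get $\overline{f(U_{k+1})}\subset f(U_k)$. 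Hence $f(U_k)$ is a neighbourhood of $0$ in $V$. This is the main technical step, and it only uses completeness of $W$ and metrizability.

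Finally, $M=\im(f)$ is a subgroup that contains a neighbourhood of $0$, so it is open. An open subspace of a topological vector space is the whole space: for any $v\in V$, $p^mv\to 0$, so $p^mv\in f(U_1)$ for large $m$, and therefore $v\in M$. This gives $\im(f)=V$, which is the statement. As a byproduct, $f$ is open, though this is not needed.
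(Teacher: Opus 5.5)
Your proposal is correct and is essentially the paper's proof. The paper simply cites Rudin's open mapping theorem for F-spaces (a Baire category argument showing $\overline{f(U)}$ has interior, followed by successive approximation using completeness of $W$) and remarks that the same proof works over a non-archimedean base; you carry this out explicitly, and the lattice structure neatly replaces the archimedean $U-U\subset U'$ step, while your compressed claim that $y-f(\sum_j x_j)\in\bigcap_n\overline{f(U_n)}=\{0\}$ is justified by continuity of $f$ (each open lattice of $V$ is closed and contains some $f(U_n)$) together with Hausdorffness of $V$.
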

\begin{proof}
  Over an Archimedean base,
  this is \cite[Theorem 2.11]{Rud91}.
  Over a non-Archimedean base,
  one can produce exactly the same proof.
\end{proof}

\begin{lemma}\label{nuclearperturbation}
  Let $f\colon W\rightarrow V$ be an epimorphism
  and $u\colon W\rightarrow V$ be a trace-class morphism,
  as morphisms between static solid $K$-modules.
  Then $\coker(f+u)$ is discrete and finite-dimensional.
\end{lemma}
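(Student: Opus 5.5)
The plan is to reduce the statement to the classical Riesz--Schauder theory of compact perturbations of surjections between Banach (or Fréchet) spaces, transported to the solid setting. In this setting, trace-class morphisms play the role of compact operators. Write $g=f+u$. The first step is to replace $u$ by a morphism that factors through a Banach space. By definition of trace-class maps between static solid $K$-modules (cf. \cite[Lemma 8.2]{CS22}), $u$ factors as
\[
W\longrightarrow K\longrightarrow B\longrightarrow V ,
\]
where $B$ is a Banach space and the middle map comes from an element of $W^{\vee}\otimes_{K,\blacksquare}B$. After composing, we can moreover arrange that $u$ factors as $W\xrightarrow{a} B_1\xrightarrow{c} B_2\xrightarrow{b} V$ with $c$ trace-class between Banach spaces. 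Such a $c$ is then a limit, in operator norm, of finite-rank operators.

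Next, approximate $u$ by finite-rank maps. Choose a finite-rank $c_0$ with $\|c-c_0\|$ as small as we like, and set $u_0=b c_0 a$. Then $g=(f+u-u_0)+u_0$. Since $u_0$ has finite rank, $\coker(g)$ differs from $\coker(f+u-u_0)$ only by a finite-dimensional discrete piece. This holds because cokernels are compared via the snake lemma in the abelian category of static solid modules, and finite-dimensional $K$-vector spaces are discrete. So it suffices to show that $f+(u-u_0)$ is still an epimorphism when $\|c-c_0\|$ is small.

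This last claim is the main obstacle: a solid version of the statement that a small perturbation of a surjection stays surjective. The route I would try first is to use that $f$ is an epimorphism together with a lifting argument. Since $u-u_0$ factors through the Banach space $B_2$ via $b$, it suffices to lift $b$ along $f$ with control. Because $B_2$ is Banach, hence a quotient of a projective-type $\prod$/$c_0$-space (compactly generated by a $K^\circ$-lattice $\widehat{\bigoplus}K^\circ$), I would lift $b$ through $f$ to a map $\tilde b\colon B_2\to W$ with $f\tilde b=b$. This needs only $\Ext^1$-vanishing, or the projectivity of $\widehat{\bigoplus}K^\circ[1/p]$-type objects in solid modules. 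Given $\tilde b$, we get
\[
f+(u-u_0)=f\bigl(1+\tilde b(c-c_0)a\bigr),
\]
up to restricting to the relevant pieces. The operator $1+\tilde b(c-c_0)a$ is handled on $B_1$ by a Neumann series once $\|c-c_0\|\cdot\|a\tilde b\|<1$, where $a\tilde b\colon B_2\to B_1$ is a bounded map between Banach spaces. More precisely, I would use the identity $1+XY$ invertible $\iff$ $1+YX$ invertible. This identity lets us pass from $W$ to the Banach endomorphism $1+(c-c_0)a\tilde b$ of $B_2$, which is invertible by the Neumann series, and so the composite is surjective. Combining this with the finite-rank correction gives that $\coker(f+u)$ is finite-dimensional and discrete.
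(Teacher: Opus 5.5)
\textbf{The gap is the lift $\tilde b$.} The step that fails is the one you flag as the main obstacle: producing $\tilde b\colon B_2\to W$ with $f\tilde b=b$. This needs $B_2$ to be projective, or at least $\Ext^1(B_2,\ker f)=0$, in static solid $K$-modules. Banach spaces are not projective there.

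The compact projectives are Smith spaces $\bigl(\prod_I K^{\circ}\bigr)[1/p]$. Suppose $c_0=c_0(\mathbb N,K)$ were projective, and take a section of a surjection $\bigoplus_i P_i\twoheadrightarrow c_0$ from compact projectives. A null-sequence argument shows this section lands in a finite sub-sum, which is a Smith space. Every map from a Banach space to a Smith space is trace-class, so $\mathrm{id}_{c_0}$ would be trace-class. Then $c_0$ would be dualizable, hence finite-dimensional, which is false.

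Consequently, for a general epimorphism $f$ of static solid modules (the generality of the lemma), the lift need not exist. The identity $f+(u-u_0)=f\bigl(1+\tilde b(c-c_0)a\bigr)$ is then unavailable. The principle ``a small perturbation of a surjection stays surjective'' is an open-mapping phenomenon for Banach or Fr\'echet spaces, and you have no substitute for it here.

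Your first step is also unjustified as stated. What \cite[Lemma 8.4]{CS22} gives is a factorization $W\to C\to V$ through a \emph{compact} object $C$ (a quotient of a Smith space), with $W\to C$ trace-class. It does not give a factorization through a trace-class map between Banach spaces, and such a factorization need not exist. For example, the inclusion $c_0(\mathbb N,K)\hookrightarrow\bigl(\prod_{\mathbb N}K^{\circ}\bigr)[1/p]$ is trace-class but does not factor through a compact operator between Banach spaces.

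\textbf{How the paper avoids this.} It uses projectivity only where it holds, for a compact projective $P\twoheadrightarrow C$, and never lifts out of $W$ or a Banach space.
\begin{enumerate}
\item In $\coker(f+u)$ one has $\bar f=-\bar u$. Since $f$ is an epimorphism, so is $\bar u\colon W\to\coker(f+u)$, and hence so is the composite $g\colon P\to C\to V\to\coker(f+u)$.
\item Lift $P\to V$ through $f$ to some $\ell\colon P\to W$. Then $g=\bar f\ell=-\bar u\ell$, so $g$ is trace-class.
\item Lift the element of $(P^{\vee}\otimes\coker(f+u))(*)$ corresponding to $g$ to an element of $(P^{\vee}\otimes P)(*)$. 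This gives a trace-class $h\in\End(P)$ with $g(1-h)=0$.
\item Hence there is an epimorphism $\coker(1-h)\twoheadrightarrow\coker(f+u)$, and the Fredholm property (Proposition~\ref{Fredholmproperty1}) shows $\coker(1-h)$ is discrete and finite-dimensional.
\end{enumerate}
No norms, finite-rank approximation or Neumann series are involved. Your finite-rank correction step is correct, but it becomes unnecessary.
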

\begin{proof}
  Note that $\overline{u}\colon W\rightarrow \coker(f+u)$ is an epimorphism as $f$ is.
  By \cite[Lemma 8.4]{CS22},
  we can find a factoring $W\rightarrow C\rightarrow V$ of $u$ with $C$ being compact and $W\rightarrow C$ being trace-class.
  Moreover,
  we can find a compact projective $P$ which admits an epimorphism $P\rightarrow C$.
  Then the composition gives an epimorphism $g\colon P\rightarrow \coker(f+u)$.
  Moreover,
  since $P$ is projective,
  the morphism $P\rightarrow V$ admits a lift $P\rightarrow W$ such that the following diagram commutes
  \[
  \xymatrix{P\ar[r]\ar[rd] & W\ar[d]^f\\
  & V}
  \]
  Therefore,
  $g\colon P\rightarrow \coker(f+u)$ is trace-class as it factors as $P\rightarrow W\overset{-\overline{u}}{\rightarrow} \coker(f+u)$
  with $\overline{u}$ being trace-class,
  by \cite[Lemma 8.2]{CS22}.
  Hence $g$ comes from an element in $P^{\vee}\otimes \coker(f+u)(*)$.
  However,
  there is a surjection 
  \[
    P^{\vee}\otimes P(*)\longrightarrow P^{\vee}\otimes \coker(f+u)(*),
  \]
  hence we can find a trace-class endomorphism $h\colon P\rightarrow P$ such that $gh=g$,
  i.e. $g(1-h)=0$,
  which induces an epimorphism $\overline{g}\colon \coker(1-h)\rightarrow \coker(f+u)$.
  By the Fredholm property in \ref{Fredholmproperty1},
  we know that $\coker(1-h)$ is discrete and finite-dimensional.
  Since $\overline{g}$ is an epimorphism,
  we deduce that $\coker(f+u)$ is also discrete and finite-dimensional.
\end{proof}

\begin{remark}
  If one admits \cite[Conjecture 3.41]{RCRJ22},
  then \cite[Corollary 3.42]{RCRJ22} holds,
  and the proof of Lemma \ref{basicnucleardualizable} can be simplified.
  We decide to use an ad hoc method above to avoid discussing this conjecture\footnote{This conjecture is related to the continuum hypothesis,
  and is probably independent of ZFC;
  see \cite{BLH25}.}.
\end{remark}

\begin{remark}
  The full result of \cite{Ked06} states that for a separated scheme $X$ of finite type \textit{not necessarily smooth} over $k$,
  and an overconvergent $F$-isocrystal $\E$ on $X$,
  we have that $H^i_{\rig}(X,\E)$ is finite dimensional over $K$.
  However,
  it is actually formal to deduce this full result from the case of smooth schemes which we reproved above,
  by using a cohomological descent method of Chiarellotto and Tsuzuki,
  where they proved proper hyperdescent of rigid cohomology.
  cf. \cite[Section 9.2]{Ked06}.
  This cohomological descent property can also be recovered from the stacky approach;
  see Corollary \ref{properhyperdescentTsuzuki}.
\end{remark}

\begin{remark}
  One may ask if Theorem \ref{Kedlayafinitedimensionality} has a relative version in the sense that pushforwards along smooth morphisms $X\rightarrow Y$ preserve overconvergent $F$-isocrystals.
  Unfortunately,
  it is false in general,
  where a counterexample is given by the open immersion $\mathbb{G}_{m,k}\rightarrow\aff^1_k$.
  Regarding our proof of the theorem,
  Lemma \ref{basicnucleardualizable} is very special for the base $K$ instead of arbitrary base.
\end{remark}

\subsection{Overconvergence of rigid cohomology over Laurent series fields}\label{sectionoverconvergent}
Let $k$ be a finite field with and let $K=W(k)[p^{-1}]$.
First in \cite{Ked00} for proper smooth schemes,
and later in \cite{LP14a} for a general scheme $X$ over $k(\!(t)\!)$,
it is shown that the rigid cohomology $H^i_{\rig}(X/\E_K)$ over the Amice ring $\E_K$ (which we defined in Example \ref{Laurentarith}) is overconvergent,
i.e. comes from a $(\varphi,\nabla)$-refinement $H^i_{\rig}(X/\E_K^{\dagger})$ over the bounded Robba ring $\E_K^{\dagger}$,
which means that $H^i_{\rig}(X/\E_K^{\dagger})$ is a $(\varphi,\nabla)$-module over $\E_K^{\dagger}$ and $H^i_{\rig}(X/\E_K^{\dagger})\otimes_{\E_K^{\dagger}}\E_K\cong H^i_{\rig}(X/\E_K)$.
We review this overconvergent nature in this section.

\begin{definition}[Bounded Robba ring]
  Let $\E_K^{\dagger}$ be the bounded Robba ring,
  equipped with the \textit{$p$-adic} topology:
  \[
    \E_K^{\dagger}:=
    \left\{
    \sum_i a_i t^i\in K[\![t^{\pm 1}]\!]\bigg| \sup|a_i|<\infty,\,\e \eta<1,\lim_{i\rightarrow-\infty}|a_i|\eta^i=0.
    \right\}
  \]
\end{definition}

\begin{lemma}
  The bounded Robba ring $\E_K^{\dagger}$ is Gelfand,
  and we have an isomorphism of Gelfand stacks $\operatorname{GSpec}\E_K^{\dagger}\cong \disk^{\circ\circ}_K\times_{\disk^{\dagger}_K}\mathbb{T}^{\dagger}_K$.
  Therefore,
  we have a closed-open decomposition $\operatorname{GSpec}\E_K^{\dagger}\hookrightarrow \disk_K^{\circ\circ}\hookleftarrow\disk_K^{\circ}$.
\end{lemma}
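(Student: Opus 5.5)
We plan to prove the three assertions in order, starting from explicit descriptions of the rings involved. Here $\disk^{\circ\circ}_K:=\operatorname{GSpec}K^{\circ}[\![t]\!][p^{-1}]$, where $K^{\circ}[\![t]\!][p^{-1}]$ carries the $p$-adic topology; this is the analogue over $K$ of the stack $\disk^{\circ\circ}_{\rt_p}$ from Example \ref{LaurentHK}. Its ring of functions consists of the power series $\sum_{i\ge0}a_it^i$ with $\sup|a_i|<\infty$, and $t$ has norm $\le 1$, so it maps to $\disk^{\dagger}_K$.

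\textbf{Gelfandness.} We first show that $\E_K^{\dagger}$ is Gelfand. Give it the structure of a bounded $K$-algebra via the $p$-adic topology, writing it as the filtered colimit over $\eta<1$ of the $p$-adic Banach algebras
\[
\E^{\eta}:=\Big\{\sum_i a_it^i\ \Big|\ \sup_i|a_i|<\infty,\ |a_i|\eta^{i}\to0\ (i\to-\infty)\Big\}.
\]
We then compute the uniform completion. Elements of norm $\le1$ are those with all $|a_i|\le1$, up to the overconvergent thickening. Hence $(\E_K^{\dagger})^{\le1}/(\E_K^{\dagger})^{\le r}$ agrees with the corresponding quotient for the Amice ring $\E_K$, and the uniform completion of $\E_K^{\dagger}$ is $\E_K$ with its sup-norm. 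That is a Banach $K$-algebra, so $\E_K^{\dagger}$ is Gelfand.

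\textbf{Fiber product.} Next we compute the fiber product. The ring of $\mathbb{T}^{\dagger}_K$ is $K\langle T^{\pm1}\rangle_{\le1}$, and the fiber product corresponds to the solid tensor product
\[
K^{\circ}[\![t]\!][p^{-1}]\otimes_{K\langle T\rangle_{\le1},\blacksquare}K\langle T^{\pm1}\rangle_{\le1},\qquad T\mapsto t,
\]
which is the localization inverting $t$ overconvergently. Concretely, we write $K\langle T^{\pm1}\rangle_{\le1}$ as the colimit over $\eta\to1^-$ of $K\langle T,S\rangle_{\le1}\langle S\eta'\rangle/(TS-1)$ and compute the pushout termwise. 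This tensor product is $K^{\circ}[\![t]\!][p^{-1}]\langle S\rangle/(tS-1)$ with $|S|\le\eta^{-1}$, and in each term this adjoins $t^{-1}$ with the decay condition $|a_i|\eta^{i}\to0$ as $i\to-\infty$. Taking the colimit over $\eta$ gives exactly $\E_K^{\dagger}$, matching the explicit ring computed in Example \ref{LaurentHK}.

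We expect the main obstacle to be verifying that the solid tensor product is static and coincides with this completed ring rather than something larger. We would handle this with the usual argument for overconvergent rational localizations: $(T S-1)$ is a regular element and the quotient is computed by a Koszul complex of Banach modules. We would also check that the norm-$\le1$ structure matches, so that the $p$-adic rather than $t$-adic topology is the correct one.

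\textbf{Closed-open decomposition.} Finally, the closed-open decomposition follows by base change. In $\disk^{\dagger}_K$ we have the open-closed decomposition $\disk^{\circ}_K\to\disk^{\dagger}_K\leftarrow\mathbb{T}^{\dagger}_K$ used in Proposition \ref{basicexcision}, with $\disk^{\circ}_K$ open and $\mathbb{T}^{\dagger}_K$ closed. Pulling back along $\disk^{\circ\circ}_K\to\disk^{\dagger}_K$ preserves the decomposition, since open and closed immersions and the excision triangle are stable under base change. The preimage of $\mathbb{T}^{\dagger}_K$ is $\operatorname{GSpec}\E_K^{\dagger}$ by the second step. For the preimage of $\disk^{\circ}_K$, a point of $\disk^{\circ\circ}_K$ with $|t|<1$ is a point of $\disk^{\circ}_K$: $\sum_{i\ge0}a_it^i$ with bounded coefficients converges on every $|t|\le r<1$, and conversely. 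Hence $\disk^{\circ}_K\times_{\disk^{\dagger}_K}\disk^{\circ\circ}_K\cong\disk^{\circ}_K$, which gives the stated decomposition.
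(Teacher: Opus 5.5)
Your proposal is correct and follows the paper's route. The paper likewise gets Gelfandness from the $p$-adic topology, gets the fiber product from $\E_K^{\dagger}\cong K^{\circ}[\![t]\!][p^{-1}]\otimes_{K\langle t\rangle^{\dagger}}K\langle t^{\pm1}\rangle^{\dagger}$, and leaves the closed-open decomposition as the base change of $\disk^{\circ}_K\to\disk^{\dagger}_K\leftarrow\mathbb{T}^{\dagger}_K$; your extra details (uniform completion $\E_K$, staticity of the overconvergent localization, and the preimage of $\disk^{\circ}_K$ being $\disk^{\circ}_K$) are what the paper leaves implicit, with one small slip: at the end of your second step you should match against the definition of $\E_K^{\dagger}$ given just before the lemma, not the ring in Example \ref{LaurentHK}, which describes the punctured disk $\disk^{\circ\circ,\times}$.
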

\begin{proof}
  Since we put the $p$-adic topology on the bounded Robba ring,
  it is automatically Gelfand.
  We also have an isomorphism of Gelfand rings 
  \[
    \E_K^{\dagger}\cong K^{\circ}[\![t]\!][p^{-1}]\otimes_{K\bra t\ket^{\dagger}}K\bra t^{\pm 1}\ket^{\dagger},
  \]
  which gives the isomorphism.
\end{proof}

Due to the lemma above,
we will also denote $\operatorname{GSpec}\E_K^{\dagger}$ by $\disk^{\circ\circ}_K\backslash \disk^{\circ}_K$.

\begin{construction}\label{boundedRobbaring}
  We study the arithmetic de Rham stack and the Hyodo--Kato stacks of $\Spec k(\!(t)\!)$ via the bounded Robba ring.
  In Example \ref{Laurentarith},
  we constructed a chart $\pi \colon (\operatorname{GSpec} \E_{K})^{\dR}\longrightarrow(\Spec k(\!(t)\!))^{\arith}$.
  Since the uniform completion of $\E_K^{\dagger}$ is $\E_K$,
  we have $(\disk^{\circ\circ}_K\backslash \disk^{\circ}_K)^{\dR}\cong (\operatorname{GSpec}\E_K^{\dagger})^{\dR}\cong (\operatorname{GSpec}\E_K)^{\dR}$.
  Hence we can rewrite $\pi$ as 
  $\pi \colon (\disk^{\circ\circ}_K\backslash \disk^{\circ}_K)^{\dR}\longrightarrow(\Spec k(\!(t)\!))^{\arith}$,
  which will induce a morphism on their perfections modulo Frobenius:
  \[
    \pi^{\perf} \colon \varprojlim_{\varphi}(\disk^{\circ\circ}_K\backslash \disk^{\circ}_K)^{\dR}/\varphi^{\itg}\longrightarrow(\Spec k(\!(t)\!))^{\arith}/\varphi^{\itg}.
  \]
  
  On the other hand,
  by Example \ref{LaurentHK},
  we also have
  \[
      (\Spec \ff_p(\!(t)\!))^{\RIG}_{\le 1}\cong \varprojlim_{\varphi}(\disk^{\circ\circ}_K\backslash \disk^{\circ}_K)^{\dR},\,
      (\Spec\ff_p(\!(t)\!))^{\RIG}\cong (\varprojlim_{\varphi}\disk_{\rt_p}^{\circ\circ,\times})^{\dR}.
  \]
  Then the diagram in \ref{smallrigidbridge} for $\Spec k(\!(t)\!)$ becomes
  \[
    \xymatrix{
       & \varprojlim_{\varphi}(\disk^{\circ\circ}_K\backslash \disk^{\circ}_K)^{\dR}/\varphi^{\itg}\ar[dr]^{i}\ar[dl]_{\pi^{\perf}} & \\
      (\Spec \ff_p(\!(t)\!))^{\arith}/\varphi^{\itg} & & \varprojlim_{\varphi}\disk_{K}^{\circ\circ,\times,\dR}/\varphi^{\itg}
    }
  \]
  
  One can extend this diagram as follows.
  Let $k(\!(t)\!)$ be the field of Laurent series equipped with the \textit{$t$-adic} topology,
  and consider its associated diamond $\Spd k(\!(t)\!)$.
  Then we have that $(\Spd k(\!(t)\!))^{\HK}\cong \varprojlim_{\varphi}\disk^{\circ,\times,\dR}_K/\varphi^{\itg}$,
  by \cite[Lemma 6.2.1]{ABLBRCS25}.
  The map $u\colon \Spd k(\!(t)\!)\rightarrow (\Spec k(\!(t)\!))_{\arc}$ induces a map on their Hyodo--Kato stacks,
  which is the embedding $j\colon \varprojlim_{\varphi}\disk^{\circ,\times,\dR}_K/\varphi^{\itg}\rightarrow \varprojlim_{\varphi}\disk^{\circ\circ,\times,\dR}_K/\varphi^{\itg}$,
  whence we obtain the following diagram:
  \[
    \xymatrix{
       & \varprojlim_{\varphi}(\disk^{\circ\circ}_K\backslash \disk^{\circ}_K)^{\dR}/\varphi^{\itg}\ar[dr]^{i}\ar[dl]_{\pi^{\perf}} & \varprojlim_{\varphi}\disk_{K}^{\circ,\times,\dR}/\varphi^{\itg}\ar[d]^{j}\\
      (\Spec \ff_p(\!(t)\!))^{\arith}/\varphi^{\itg} & & \varprojlim_{\varphi}\disk_{K}^{\circ\circ,\times,\dR}/\varphi^{\itg}
    }
  \]
  It is not hard to find that $i$ and $j$ form a closed-open decomposition of the common target.
\end{construction}

We can study the geometry of the construction above.

\begin{proposition}\label{geometryRobba}
  We have the following descriptions and equivalences of categories.
  \begin{enumerate}
    \item Let $\widehat{\mathbb{G}}_{a,K}$ be the Gelfand stack sending $A$ to the nilradical $\Nil(A)$.
    There exist natural actions of $\widehat{\mathbb{G}}_{a,K}$ on $\disk_{K}^{\circ\circ,\times}$ and $\disk_{K}^{\circ\circ}\backslash\disk_{K}^{\circ}$
    such that the natural maps $\disk_{K}^{\circ\circ,\times}\rightarrow \disk_{K}^{\circ\circ,\times,\dR}$ and $\disk_{K}^{\circ\circ}\backslash\disk_{K}^{\circ}\rightarrow (\disk_{K}^{\circ\circ}\backslash\disk_{K}^{\circ})^{\dR}$
    factor through 
    \[
      \disk_{K}^{\circ\circ,\times}/\widehat{\mathbb{G}}_{a,K}\longrightarrow\disk_{K}^{\circ\circ,\times,\dR},\quad
      (\disk_{K}^{\circ\circ}\backslash\disk_{K}^{\circ})/\widehat{\mathbb{G}}_{a,K}\longrightarrow (\disk_{K}^{\circ\circ,\times}\backslash\disk_{K}^{\circ,\times})^{\dR}.
    \]
    Moreover,
    there exists an equivalence of categories induced by the pullback functor
    \[
    \cat{Perf}(\varprojlim_{\varphi}(\disk_{K}^{\circ\circ,\times}/\widehat{\mathbb{G}}_{a,K})/\varphi^{\itg})\cong 
    \cat{Perf}(\varprojlim_{\varphi}((\disk_{K}^{\circ\circ}\backslash\disk_{K}^{\circ})/\widehat{\mathbb{G}}_{a,K})/\varphi^{\itg}).
    \]
    Moreover,
    they are both equivalent to the category of perfect complexes of $(\varphi,\nabla)$-modules over $\E_K^{\dagger}$.
    \item There exists an equivalence of categories
    \[
      \cat{Perf}(\disk_{K}^{\circ\circ}\backslash\disk_{K}^{\circ})^{\varphi\textnormal{-equiv}}\cong
      \cat{Perf}(\varprojlim_{\varphi}(\disk_{K}^{\circ\circ}\backslash\disk_{K}^{\circ})/\varphi^{\itg})
    \]
    and also a compatible equivalence with the pullback along $\operatorname{GSpec}\E_K\rightarrow\operatorname{GSpec}\E_K^{\dagger}=\disk_{K}^{\circ\circ}\backslash\disk_{K}^{\circ}$:
    \[
      \cat{Perf}(\operatorname{GSpec} \E_{K})^{\varphi\textnormal{-equiv}}\cong
      \cat{Perf}(\varprojlim_{\varphi}\operatorname{GSpec} \E_{K}/\varphi^{\itg}).
    \]
    \item \textnormal{(\cite[Corollary 7.2.10]{ABLBRCS25}.)} There is an equivalence of categories between $\cat{Perf}(\varprojlim_{\varphi}\disk^{\circ,\times,\dR}_K/\varphi^{\itg})$ and
    the category of perfect complexes of $(\varphi,\nabla)$-modules over the \textit{unbounded} Robba ring $\R_K$.
  \end{enumerate}
\end{proposition}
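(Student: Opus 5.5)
We treat the three parts separately, in the order (3), (2), (1), since (3) is quoted and (2) is the formal input for (1).

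Part (3) is \cite[Corollary 7.2.10]{ABLBRCS25}. It is obtained by spreading out perfect complexes from the Robba annuli, exactly as in Lemma \ref{prototype}, so nothing new is needed.

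For part (2), we argue as in the proof of Lemma \ref{prototype}, but with an inverse limit in place of an increasing union. The Frobenius $\varphi$ on $\disk^{\circ\circ}_K\backslash\disk^{\circ}_K=\operatorname{GSpec}\E_K^{\dagger}$ is finite flat of degree $p$; on coordinates it is $t\mapsto t^p$, which is finite flat over $\E_K^{\dagger}$. Write $Z_\infty=\varprojlim_\varphi Z$. Perfect complexes on the $\varphi^{\itg}$-quotient of $Z_\infty$ are $\varphi$-equivariant objects of $\varinjlim_\varphi\cat{Perf}(Z)$, by the analogue of \cite[Proposition 5.4.9]{ABLBRCS25} for perfect complexes (dualizability is local). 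Given an equivariant object on the colimit, it is defined at some finite level $n$ together with an isomorphism $\varphi^*M_n\cong M_n$ after pullback to a further level. Because $\varphi$ is an isomorphism on $Z_\infty$ and acts through the shift of the tower, this descends to level $0$. The resulting functor $\cat{Perf}(Z)^{\varphi\text{-equiv}}\to\cat{Perf}(Z_\infty/\varphi^{\itg})$ is therefore essentially surjective. It is fully faithful because Hom-spaces of equivariant objects become Frobenius-fixed points in a filtered colimit along the maps induced by $\varphi$, and these maps are isomorphisms on the equivariant part. The same argument works verbatim for $\operatorname{GSpec}\E_K$. Compatibility with pullback along $\E_K^{\dagger}\to\E_K$ holds because every step is natural in the base.

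For part (1), we first construct the $\widehat{\mathbb{G}}_{a,K}$-actions by translation $t\mapsto t+\epsilon$ with $\epsilon$ nilpotent. Nilpotent elements have norm $0$, so they preserve both $A^{\le1}$ and the invertibility conditions defining $\disk^{\circ\circ,\times}_K$ and $\E_K^{\dagger}$. The map to the analytic de Rham stack is invariant under this action since $A\to A^{\dagger-\red}\to A^u$ kills nilpotents, so it factors through the quotient.

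Next we use the closed-open decomposition $\operatorname{GSpec}\E_K^{\dagger}\hookrightarrow\disk^{\circ\circ}_K\hookleftarrow\disk^{\circ}_K$ together with $\disk^{\circ\circ,\times}_K=\disk^{\circ\circ}_K\times_{\disk^\dagger_K}\disk^{\dagger,\times}_K$. Radii $r_n\to1$ give a tower of open neighbourhoods of $\disk^{\circ\circ}_K\backslash\disk^{\circ}_K$ inside $\disk^{\circ\circ,\times}_K$: these are the annuli $\{r_n<|t|\}$ intersected with the $p$-adically bounded locus. Frobenius $|t|\mapsto|t|^p$ moves these annuli toward $|t|=1$, and it moves the complement of the normed tube, $|t|\le r_n$, further in. We then apply Lemma \ref{lemmaprototype2} (normed tube version) and Lemma \ref{prototype}, after taking $\varprojlim_\varphi$ and quotienting by $\widehat{\mathbb{G}}_{a,K}$ and $\varphi^{\itg}$, to get the equivalence of perfect complexes.

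Finally we identify $\cat{Perf}((Z/\widehat{\mathbb{G}}_{a,K})_\infty/\varphi^{\itg})$ with $(\varphi,\nabla)$-modules over $\E_K^{\dagger}$. Part (2) reduces this to $\varphi$-equivariant perfect complexes on $Z/\widehat{\mathbb{G}}_{a,K}$, where $Z=\operatorname{GSpec}\E_K^{\dagger}$ as in part (2). A $\widehat{\mathbb{G}}_{a,K}$-equivariant structure is an algebraic connection, by the computation of Discussion \ref{algdR} for the formal group $\widehat{\mathbb{G}}_{a}$. The Frobenius structure then forces the connection to be the one of a $(\varphi,\nabla)$-module.

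The main obstacle is the neighbourhood step in part (1). We must check that $\disk^{\circ\circ}_K\backslash\disk^{\circ}_K$ really is an overconvergent normed tube inside $\disk^{\circ\circ,\times}_K$ in the sense used by Lemma \ref{lemmaprototype2}, despite the $p$-adic (non-Fréchet) topology on the bounded Robba ring, and that $\disk^{\circ\circ,\times}_K$ is nice coverable. We would first try to realize both as base changes along $\disk^{\circ\circ}_K\to\disk^\dagger_K$ of the standard situation $\mathbb{T}^\dagger_K\subset\disk^{\dagger,\times}_K$, and invoke base-change stability of Lemma \ref{lemmaprototype2}.
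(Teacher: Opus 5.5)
Your proposal is correct and follows the paper's proof. The $\widehat{\mathbb{G}}_{a,K}$-action is the Taylor-expansion (translation $t\mapsto t+\epsilon$) action, and its difference from the projection lands in $\Nil(A)$. The equivalence in (1) comes from writing $\disk^{\circ\circ}_K\backslash\disk^{\circ}_K\subset\disk^{\circ\circ,\times}_K$ as the base change along $\disk^{\circ\circ}_K\to\disk^{\dagger}_K$ of $\torus^{\dagger}_K\subset\disk^{\dagger,\times}_K$ and applying Lemma \ref{prototype}. Part (2) is spreading out of perfect complexes along the perfection, and (3) is quoted.

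Two minor corrections. First, the colimit step in (2) is justified by the Fredholm property together with perfect complexes over a filtered colimit of rings (the paper cites \cite[0BC7]{Stacks}), not by locality of dualizability. Second, on the perfection $\varphi$ carries $\{|t|>r\}$ onto $\{|t|>r^p\}$, so it enlarges the annuli toward $0$ rather than moving them toward $|t|=1$. You should take $r_{n+1}=r_n^p$ and $F_n=\varphi$; this is harmless since $\varphi^{\itg}=(\varphi^{-1})^{\itg}$.
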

\begin{proof}
  First we prove $(1)$.
  The description of the $\widehat{\mathbb{G}}_{a,K}$-action on $\disk^{\circ\circ,\times}_K$ is given at the functor-of-points level,
  by sending an element $(\epsilon,a)$ in $\widehat{\mathbb{G}}_{a,K}\times \disk_{K}^{\circ\circ,\times}(A)=\Nil(A)\times \disk_{K}^{\circ\circ,\times}(A)$
  to the element in $\disk_{K}^{\circ\circ,\times}(A)$ that valuates on each $f(t)$ the element $\sum_{n\ge 0}a(\frac{f^{(n)}(t)}{n!})\epsilon^n$,
  which is a finite sum as $\epsilon\in\Nil(A)$ hence makes sense.
  There is also a $\mathbb{G}^{\dagger}_{a,K}$-action on $\torus^{\dagger}_{K}$,
  cf. \cite[Remark 5.1.9]{ABLBRCS25}, hence a compatible $\widehat{\mathbb{G}}_{a,K}$-action on $\torus^{\dagger}_{K}$,
  whence the $\widehat{\mathbb{G}}_{a,K}$-action on $\disk_{K}^{\circ\circ}\backslash\disk_{K}^{\circ}$ comes.
  The natural map $\disk_{K}^{\circ\circ,\times}\rightarrow \disk_{K}^{\circ\circ,\times,\dR}$ satisfies the following commutative diagram 
  \[\xymatrix{
    \widehat{\mathbb{G}}_{a,K}\times \disk_{K}^{\circ\circ,\times}\ar[r]^p\ar[d]_{\operatorname{act}} & \disk_{K}^{\circ\circ,\times}\ar[d]\\
    \disk_{K}^{\circ\circ,\times}\ar[r] & \disk_{K}^{\circ\circ,\times,\dR}
  }\]
  This can also be checked at the functor-of-points level,
  that the difference between the $A$-points $\disk_{K}^{\circ\circ,\times}$ obtained by projection and action is given by valuating on each $f(T)$ the element $\sum_{n\ge 1}a(\frac{f^{(n)}(T)}{n!})\epsilon^n$ which lies in $\Nil(A)$.
  Hence the map factors through $\disk_{K}^{\circ\circ,\times}\rightarrow \disk_{K}^{\circ\circ,\times}/\widehat{\mathbb{G}}_{a,K}\rightarrow \disk_{K}^{\circ\circ,\times,\dR}$.
  The case of $\disk_{K}^{\circ\circ}\backslash\disk_{K}^{\circ}$ also follows.
  To prove the equivalence of categories,
  we note that $\disk^{\circ\circ,\times}_K=\disk^{\circ\circ}_K\times_{\disk^{\dagger}_K}\disk^{\dagger,\times}_K$,
  and $\disk_{K}^{\circ\circ}\backslash\disk_{K}^{\circ}=\disk^{\circ\circ}_K\times_{\disk^{\dagger}_K}\torus^{\dagger}_K$.
  Then the morphism $\varprojlim_{\varphi}((\disk_{K}^{\circ\circ}\backslash\disk_{K}^{\circ})/\widehat{\mathbb{G}}_{a,K})/\varphi^{\itg}\rightarrow \varprojlim_{\varphi}(\disk_{K}^{\circ\circ,\times}/\widehat{\mathbb{G}}_{a,K})/\varphi^{\itg}$
  is of the form $\iota$ in Lemma \ref{prototype},
  hence induces an equivalence of perfect complexes.
  Finally,
  since $\disk_{K}^{\circ\circ}\backslash\disk_{K}^{\circ}=\operatorname{GSpec}\E_K^{\dagger}$,
  the category of perfect complexes over $\varprojlim_{\varphi}((\disk_{K}^{\circ\circ}\backslash\disk_{K}^{\circ})/\widehat{\mathbb{G}}_{a,K})/\varphi^{\itg}$
  is equivalent to the category of perfect complexes of $(\varphi,\nabla)$-modules over $\E_K^{\dagger}$.

  The property $(2)$ is directly from spreading out of perfect complexes,
  cf. \cite[0BC7]{Stacks}.
  The property $(3)$ is from \cite[Corollary 7.2.10]{ABLBRCS25}.
\end{proof}

\begin{remark}
  The disk $\disk_{K}^{\circ\circ}$ and the punctured disk $\disk_{K}^{\circ\circ,\times}$ behave quite pathological.
  For example,
  contrary to the open unit disk $\disk_{K}^{\circ}$,
  the disk $\disk_{K}^{\circ\circ}$ is neither equipped with a group structure,
  nor with a $\mathbb{G}_{a,K}^{\dagger}$-action.
  This makes the description of its analytic de Rham stack unclear\footnote{One can also use the \emph{$(p,t)$-adic topology} on the bounded Robba ring or $\itg_p[\![t]\!][p^{-1}]$,
  then there should be $\dagger$-formal smoothness,
  a Hopf algebra structure and a $\mathbb{G}_{a,K}^{\dagger}$-action,
  hence a description of their analytic de Rham stacks in terms of quotient by the $\mathbb{G}_{a,K}^{\dagger}$-action.
  However,
  with the $(p,t)$-adic topology,
  they are \emph{not} Gelfand,
  which only makes sense in \cite{RC24a} and is beyond the setting of this paper.},
  and our best approximation is to use the $\widehat{\mathbb{G}}_{a,K}$-action mentioned in Proposition \ref{geometryRobba}.
\end{remark}

Following the discussion above,
we give the following definition.
\begin{definition}
  Let $X$ be a separated scheme of finite type over $\Spec k(\!(t)\!)$ and $f\colon X\rightarrow \Spec k(\!(t)\!)$.
  We define $R\Gamma_{\rig}(X/\E^{\dagger}_K)\in D((\Spec k(\!(t)\!))^{\HK})$ to be the following object:
  \[
    R\Gamma_{\rig}(X/\E^{\dagger}_K):=f^{\HK}_{*}1\in D((\Spec k(\!(t)\!))^{\HK})=D(\varprojlim_{\varphi}\disk_{K}^{\circ\circ,\times,\dR}/\varphi^{\itg}).
  \]
\end{definition}

The following is the main theorem of this section.

\begin{theorem}\label{LazdaPal}
  The object $R\Gamma_{\rig}(X/\E^{\dagger}_K)$ has realizations in the categories in Proposition \ref{geometryRobba} as follows.
  \begin{enumerate}
    \item The pullback of $R\Gamma_{\rig}(X/\E^{\dagger}_K)$ along
    \[
      \varprojlim_{\varphi}(\disk_{K}^{\circ\circ}\backslash\disk_{K}^{\circ})/\varphi^{\itg}\longrightarrow
      \varprojlim_{\varphi}(\disk_{K}^{\circ\circ}\backslash\disk_{K}^{\circ})^{\dR}/\varphi^{\itg}\overset{i}{\longrightarrow}
      \varprojlim_{\varphi}\disk_{K}^{\circ\circ,\times,\dR}/\varphi^{\itg}
    \]
    gives rise to a perfect complex in $\cat{Perf}(\varprojlim_{\varphi}(\disk_{K}^{\circ\circ}\backslash\disk_{K}^{\circ})/\varphi^{\itg})$.
    Moreover,
    the further pullback of this object to $\varprojlim_{\varphi}\operatorname{GSpec} \E_{K}/\varphi^{\itg}$
    computes the $\varphi$-equivariant rigid cohomology $R\Gamma_{\rig}(X/\E_K)$ of $X$ over $\Spec k(\!(t)\!)$.
    \item The pullback of $R\Gamma_{\rig}(X/\E^{\dagger}_K)$ along 
    \[
      \varprojlim_{\varphi}(\disk_{K}^{\circ\circ,\times}/\widehat{\mathbb{G}}_{a,K})/\varphi^{\itg}\longrightarrow\varprojlim_{\varphi}\disk_{K}^{\circ\circ,\times,\dR}/\varphi^{\itg}
    \]
    lies in 
    $\cat{Perf}(\varprojlim_{\varphi}(\disk_{K}^{\circ\circ,\times}/\widehat{\mathbb{G}}_{a,K})/\varphi^{\itg})$,
    hence gives a perfect complex of $(\varphi,\nabla)$-modules over $\E_K^{\dagger}$.
    \item 
    The pullback of $R\Gamma_{\rig}(X/\E^{\dagger}_K)$ along
    \[
      j\colon \varprojlim_{\varphi}\disk^{\circ,\times,\dR}_K/\varphi^{\itg}\longrightarrow
      \varprojlim_{\varphi}\disk^{\circ\circ,\times,\dR}_K/\varphi^{\itg}
    \]
    is a perfect complex of $(\varphi,\nabla)$-modules over $\R_K$,
    which identifies exactly with the base change of the $(\varphi,\nabla)$-modules over $\E_K^{\dagger}$ to $\R_K$.
  \end{enumerate}
\end{theorem}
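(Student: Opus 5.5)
The plan is to push the whole problem onto the base $(\Spec k(\!(t)\!))^{\HK}$ and read off the three realizations from the diagram of Construction \ref{boundedRobbaring}, using only formal properties of $f^{\HK}_*$ together with the identification of perfect complexes in Theorem \ref{equivalenceperfectcomplex}. Write $S=\Spec k(\!(t)\!)$ and $f\colon X\to S$. First I would show that $i^*f^{\HK}_*1$ is a perfect complex on $S^{\HK}_{\le 1}\cong\varprojlim_{\varphi}(\disk^{\circ\circ}_K\backslash\disk^{\circ}_K)^{\dR}/\varphi^{\itg}$. For $X$ proper, Lemma \ref{propermorphismcriterion} makes the square with $X^{\HK}_{\le 1}\to S^{\HK}_{\le 1}$ Cartesian, and $f^{\HK}$ is cohomologically proper (Theorem \ref{PoincaredualityrigidFrobenius}), so proper base change gives $i^*f^{\HK}_*1\cong f^{\HK}_{\le 1,*}1$. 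In the proper smooth case, $f^{\HK}$ is also cohomologically smooth, hence $f^{\HK}_*1$ is dualizable by the same argument as in Corollary \ref{Poincaréarith}; by the Fredholm property \ref{Fredholmproperty} it is therefore perfect. For general $X$, I would reduce to this case through de Jong alterations, $h$-hyperdescent (Remark \ref{hdescentrigid}) and excision (Theorem \ref{PoincaredualityrigidFrobenius}(3)). The reduction expresses $f^{\HK}_*1$ as a finite limit of pushforwards from proper smooth pieces, each of which is perfect, so perfectness is preserved because the truncated descent is finite in degrees. I expect this descent step to be the main obstacle: one has to check that the simplicial limit truncates to a finite one, using the boundedness of rigid cohomology.

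For (1), I would pull the perfect complex back further along $\varprojlim_{\varphi}(\disk^{\circ\circ}_K\backslash\disk^{\circ}_K)/\varphi^{\itg}\to S^{\HK}_{\le 1}$, so it stays perfect. To compute its restriction to $\varprojlim_{\varphi}\operatorname{GSpec}\E_K/\varphi^{\itg}$, I would use the map $\pi^{\perf}$ to $S^{\arith}/\varphi^{\itg}$ and the compatibility of Theorem \ref{equivalenceperfectcomplex} and Remark \ref{compatibilityperfectcomplex}, namely $\beta^*$ together with the identification $f^{\HK}_{\le 1,*}\cong f^{\arith}_*$ on perfect objects. After pulling back along $\operatorname{GSpec}\E_K\to S^{\arith/\E_K}$, Theorem \ref{arithrigidcoh2} applied with base $\E_K$ identifies this pullback with $R\Gamma_{\rig}(X/\E_K)$, Frobenius-equivariantly.

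For (2), note that $\disk^{\circ\circ,\times}_K/\widehat{\mathbb{G}}_{a,K}$ maps to $\disk^{\circ\circ,\times,\dR}_K$ by Proposition \ref{geometryRobba}(1), so we obtain an object whose further pullback to the bounded-Robba tube is the perfect complex from (1). By Lemma \ref{prototype}, which is the equivalence in Proposition \ref{geometryRobba}(1), perfectness can be tested after restricting to the closed piece $(\disk^{\circ\circ}_K\backslash\disk^{\circ}_K)/\widehat{\mathbb{G}}_{a,K}$. More precisely, one needs the pullback to lie in $\cat{Perf}$ on the $\varinjlim$ of neighborhoods. I would obtain this from spreading out (Lemma \ref{lemmaprototype2}) combined with Frobenius-equivariance, which extends from a neighborhood to the whole punctured disk. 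The identification with $(\varphi,\nabla)$-modules over $\E_K^{\dagger}$ is then the last clause of Proposition \ref{geometryRobba}(1).

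For (3), $j^*$ of the object is a perfect complex on $\varprojlim_{\varphi}\disk^{\circ,\times,\dR}_K/\varphi^{\itg}$, hence a $(\varphi,\nabla)$-module over $\R_K$ by Proposition \ref{geometryRobba}(3). Compatibility with base change from $\E_K^{\dagger}$ to $\R_K$ follows from the commutative square of the pullbacks through $\disk^{\circ\circ,\times}_K/\widehat{\mathbb{G}}_{a,K}$, since $\disk^{\circ,\times}_K\subset\disk^{\circ\circ,\times}_K$ corresponds to $\E_K^{\dagger}\to\R_K$.
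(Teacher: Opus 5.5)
Your proper smooth case is sound, and cleaner than the paper's. Lemma~\ref{propermorphismcriterion} with cohomological properness gives $i^*f^{\HK}_*1\cong f^{\HK}_{\le 1,*}1$. Cohomological properness plus smoothness makes $f^{\HK}_*1$ dualizable on all of $S^{\HK}$, so every pullback in (1)--(3) is perfect at once. For such $X$ this even avoids having to \emph{detect} perfectness through the equivalence of Proposition~\ref{geometryRobba}(1) in part (2).

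The gap is the passage to general $X$. $h$-hyperdescent (Remark~\ref{hdescentrigid}) writes $f^{\HK}_*1$ as a totalization $\varprojlim_{[n]\in\Delta}f_{n,*}1$, not as a finite limit. Perfect (that is, dualizable) objects are not stable under such limits. Your fix, truncating the totalization using boundedness, would need three things: a $t$-structure on $D(S^{\HK}_{\le 1})$ or $D(S^{\HK})$ in which the $f_{n,*}1$ are uniformly bounded, perfect objects closed under truncation, and a priori boundedness of $f^{\HK}_*1$ itself. None of this is available on these de Rham-type stacks. The boundedness of rigid cohomology is only known after pulling back to the field $\E_K$, and that pullback does not commute with the totalization.

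The restriction to proper $X$ also affects your identification in (1). You use $i^*f^{\HK}_*1\cong f^{\HK}_{\le 1,*}1$, which you only have for proper $X$. Remark~\ref{compatibilityperfectcomplex} only compares pushforwards to the point, relative to a perfect lift, so you must first base change to $\operatorname{GSpec}\E_{K,\perf}/\varphi^{\itg}$.

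The paper never proves perfectness on $S^{\HK}_{\le 1}$ directly.
\begin{enumerate}
\item It pulls $f^{\HK}_*1$ back to $\operatorname{GSpec}\E_{K,\perf}/\varphi^{\itg}$. There it becomes $R\Gamma_{\rig}(X/\E_K)\otimes_{\E_K}\E_{K,\perf}$, by Theorem~\ref{rigidstackrigidcohomology} and Remark~\ref{passingtononperfect}.
\item Perfectness there is Kedlaya's finite-dimensionality, Theorem~\ref{Kedlayafinitedimensionality}, extended to non-smooth $X$ via proper hyperdescent, Corollary~\ref{properhyperdescentTsuzuki}. Over a field each cohomology group of a hyperdescent totalization involves only finitely many terms, so the difficulty above disappears.
\item It then decompletes, first to $\varprojlim_{\varphi}\operatorname{GSpec}\E_K/\varphi^{\itg}$ and then to $\operatorname{GSpec}\E_K^{\dagger}$, using that the (bounded) Robba ring is a field.
\end{enumerate}
Finite-dimensionality over the completed field, followed by decompletion, is the ingredient your plan is missing. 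With it, parts (2) and (3) go essentially as you and the paper describe.

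In (2), spreading out (Lemma~\ref{lemmaprototype2}) and Frobenius-equivariance only \emph{extend} perfect complexes from the closed piece. They do not show that a given object with perfect restriction is itself perfect, so that extra justification adds nothing beyond the paper's reduction via Proposition~\ref{geometryRobba}.
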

\begin{proof}
  First we prove $(1)$.
  The pullback of $R\Gamma_{\rig}(X/\E^{\dagger}_K)$ to $\operatorname{GSpec} \E_{K,\perf}/\varphi^{\itg}$ is nothing but $f^{\HK/\E_{K,\perf}}_*1$,
  which is identified with $R\Gamma_{\rig}(X/\E_K)\otimes_{\E_K}\E_{K,\perf}$ by Remark \ref{passingtononperfect},
  which is a perfect complex by the finite-dimensionality of rigid cohomology in Theorem \ref{Kedlayafinitedimensionality}.
  However,
  the (bounded) Robba ring is a field,
  hence we have the decompletion of perfect complexes\footnote{Let $K$ be a Gelfand field and $\widehat{K}$ be its $p$-adic completion.
  The decompletion of perfect complexes means that if the pullback of an object along $\operatorname{GSpec}\widehat{K}\rightarrow\operatorname{GSpec}K$ is a perfect complex,
  then the object itself is also a perfect complex.} along $\varprojlim_{\varphi}\operatorname{GSpec} \E_{K}/\varphi^{\itg}\leftarrow\operatorname{GSpec} \E_{K,\perf}/\varphi^{\itg}$,
  which gives rise to a perfect complex on $\varprojlim_{\varphi}\operatorname{GSpec} \E_{K}/\varphi^{\itg}$.
  Via the equivalence in Proposition \ref{geometryRobba},
  it corresponds to $R\Gamma_{\rig}(X/\E_K)$ with $\varphi$-equivariant structure.
  A further decompletion from $\operatorname{GSpec}\E_K$ to $\operatorname{GSpec}\E_K^{\dagger}=\disk^{\circ\circ}_K\backslash\disk^{\circ}_K$ gives rise to a perfect complex in $\cat{Perf}(\varprojlim_{\varphi}(\disk_{K}^{\circ\circ}\backslash\disk_{K}^{\circ})/\varphi^{\itg})$.

  Then we prove $(2)$.
  By Proposition \ref{geometryRobba},
  it suffices to show that  the pullback of $R\Gamma_{\rig}(X/\E^{\dagger}_K)$ to
  $\varprojlim_{\varphi}((\disk_{K}^{\circ\circ}\backslash\disk_{K}^{\circ})/\widehat{\mathbb{G}}_{a,K})/\varphi^{\itg}$ is a perfect complex.
  By descent of perfect complexes,
  it suffices to show that the pullback to $\varprojlim_{\varphi}(\disk_{K}^{\circ\circ}\backslash\disk_{K}^{\circ})/\varphi^{\itg}$ is a perfect complex,
  which follows from $(1)$.

  Finally,
  for $(3)$,
  we notice that there is a commutative diagram 
  \[\xymatrix{
    \varprojlim_{\varphi}(\disk^{\circ,\times}_K/\widehat{\mathbb{G}}_{a,K})/\varphi^{\itg}\ar[r]\ar[d] &
      \varprojlim_{\varphi}(\disk^{\circ\circ,\times}_K/\widehat{\mathbb{G}}_{a,K})/\varphi^{\itg}\ar[d]\\
    \varprojlim_{\varphi}\disk^{\circ,\times,\dR}_K/\varphi^{\itg}\ar[r] &
      \varprojlim_{\varphi}\disk^{\circ\circ,\times,\dR}_K/\varphi^{\itg}
  }\]
  The pullback along the left vertical map induces an equivalence of perfect complexes,
  cf. \cite[Remark 5.2.3]{ABLBRCS25}.
  The pullback along the upper horizontal map on perfect complexes gives exactly the base change functor:
  \begin{align*}
    \cat{Mod}^{(\varphi,\nabla)}_{\E_K^{\dagger}}&\longrightarrow
    \cat{Mod}^{(\varphi,\nabla)}_{\R_K}\\
    M & \longmapsto M\otimes_{\E^{\dagger}_K}\R_K.
  \end{align*}
  Therefore,
  we obtain $(3)$ from $(2)$ by comparing two pullbacks.
\end{proof}

\begin{remark}
  The aforementioned theorem recovers part of the results in the series of papers \cite{LP14a}, \cite{LP14b}, \cite{LP15}
  for the unit coefficient.
  We hope that the stacky approaches will also give the theory of general coefficients,
  such as overconvergent $F$-isocrystals (which is already done in those papers) and arithmetic $D$-modules.
  We expect to review it in the future.
\end{remark}

\end{document}